\newtheorem{theorem}{Theorem}
\newtheorem{lemma}[theorem]{Lemma}
\newtheorem{proposition}[theorem]{Proposition}
\newtheorem{corollary}[theorem]{Corollary}
\newtheorem{definition}[theorem]{Definition}
\newtheorem{conjecture}[theorem]{Conjecture}
\theoremstyle{remark}
\newtheorem{remark}[theorem]{Remark}
\numberwithin{theorem}{section}
\numberwithin{equation}{section}
\renewcommand{\)}{\textup{)}}
\DeclareMathOperator{\Ad}{Ad}
\DeclareMathOperator{\Bun}{Bun}
\DeclareMathOperator{\Aut}{Aut}
\DeclareMathOperator{\Out}{Out}
\DeclareMathOperator{\GL}{GL}
\DeclareMathOperator{\SL}{SL}
\DeclareMathOperator{\PGL}{PGL}
\DeclareMathOperator{\depth}{depth}
\DeclareMathOperator{\Def}{Def}
\DeclareMathOperator{\PDef}{PDef}
\DeclareMathOperator{\Frac}{Frac}
\DeclareMathOperator{\Frob}{Frob}
\DeclareMathOperator{\Hom}{Hom}
\DeclareMathOperator{\rank}{rank}
\DeclareMathOperator{\Sets}{Sets}
\DeclareMathOperator{\Gal}{Gal}
\DeclareMathOperator{\Stab}{Stab}
\DeclareMathOperator{\End}{End}
\DeclareMathOperator{\tr}{tr}
\DeclareMathOperator{\Lie}{Lie}
\DeclareMathOperator{\Res}{Res}
\DeclareMathOperator{\Pic}{Pic}
\DeclareMathOperator{\Isom}{Isom}
\DeclareMathOperator{\Ind}{Ind}
\DeclareMathOperator{\res}{res}
\DeclareMathOperator{\Map}{Map}
\DeclareMathOperator{\Art}{Art}
\DeclareMathOperator{\Sh}{Sh}
\DeclareMathOperator{\Spec}{Spec}
\DeclareMathOperator{\Eis}{Eis}
\DeclareMathOperator{\Vect}{Vect}
\DeclareMathOperator{\Whit}{Whit}
\DeclareMathOperator{\IC}{IC}
\newcommand{\cA}{{\mathcal A}}
\newcommand{\cB}{{\mathcal B}}
\newcommand{\cC}{{\mathcal C}}
\newcommand{\cD}{{\mathcal D}}
\newcommand{\cE}{{\mathcal E}}
\newcommand{\cF}{{\mathcal F}}
\newcommand{\cG}{{\mathcal G}}
\newcommand{\cH}{{\mathcal H}}
\newcommand{\cL}{{\mathcal L}}
\newcommand{\cO}{{\mathcal O}}
\newcommand{\cP}{{\mathcal P}}
\newcommand{\cQ}{{\mathcal Q}}
\newcommand{\cS}{{\mathcal S}}
\newcommand{\cX}{{\mathcal X}}
\newcommand{\cY}{{\mathcal Y}}
\newcommand{\cZ}{{\mathcal Z}}
\newcommand{\fra}{{\mathfrak a}}
\newcommand{\frb}{{\mathfrak b}}
\newcommand{\frg}{{\mathfrak g}}
\newcommand{\frh}{{\mathfrak h}}
\newcommand{\frl}{{\mathfrak l}}
\newcommand{\ffrm}{{\mathfrak m}}
\newcommand{\frn}{{\mathfrak n}}
\newcommand{\frp}{{\mathfrak p}}
\newcommand{\frq}{{\mathfrak q}}
\newcommand{\frt}{{\mathfrak t}}
\newcommand{\frz}{{\mathfrak z}}
\newcommand{\bbA}{{\mathbb A}}
\newcommand{\bbC}{{\mathbb C}}
\newcommand{\bbF}{{\mathbb F}}
\newcommand{\bbG}{{\mathbb G}}
\newcommand{\bbP}{{\mathbb P}}
\newcommand{\bbQ}{{\mathbb Q}}
\newcommand{\bbR}{{\mathbb R}}
\newcommand{\bbT}{{\mathbb T}}
\newcommand{\bbZ}{{\mathbb Z}}
\newcommand{\ilim}{\mathop{\varinjlim}\limits}
\newcommand{\plim}{\mathop{\varprojlim}\limits}
\newcommand{\hG}{{\widehat{G}}}
\newcommand{\hH}{{\widehat{H}}}
\newcommand{\hB}{{\widehat{B}}}
\newcommand{\hM}{{\widehat{M}}}
\newcommand{\hT}{{\widehat{T}}}
\renewcommand{\Re}{\text{Re }}
\DeclareMathOperator{\im}{im}
\DeclareMathOperator{\inv}{inv}
\newcommand{\dquot}{{\,\!\sslash\!\,}}
\newcommand{\nc}{\newcommand}
\nc{\renc}{\renewcommand}
\nc{\ssec}{\subsection}
\nc{\sssec}{\subsubsection}
\nc{\on}{\operatorname}
\nc\ol{\overline}
\nc\wt{\widetilde}
\nc\tboxtimes{\wt{\boxtimes}}
\nc{\alp}{\alpha}
\nc{\BunBb}{\overline{\Bun}_B}
\title{$\hG$-local systems on smooth projective curves are potentially automorphic}
\author{Gebhard B\"ockle, Michael Harris, Chandrashekhar Khare, and Jack A. Thorne}
\begin{document}
\maketitle
\begin{abstract}
Let $X$ be a smooth, projective, geometrically connected curve over a finite field $\bbF_q$, and let $G$ be a split semisimple algebraic group over $\bbF_q$. Its dual group $\hG$ is a split reductive group over $\bbZ$. Conjecturally, any $l$-adic $\hG$-local system on $X$ (equivalently, any conjugacy class of continuous homomorphisms $\pi_1(X) \to \hG(\overline{\bbQ}_l)$) should be associated to an everywhere unramified automorphic representation of the group $G$. 

We show that for any homomorphism $\pi_1(X) \to \hG(\overline{\bbQ}_l)$ of Zariski dense image, there exists a finite Galois cover $Y \to X$ over which the associated local system becomes automorphic. 
\end{abstract}
\setcounter{tocdepth}{2}
\tableofcontents
\section{Introduction}

Let $X$ be a smooth, projective, geometrically connected curve over the finite field $\bbF_q$, and let $G$ be a split semisimple algebraic group over $\bbF_q$. Let $\hG$ denote the dual group of $G$, considered as a split semisimple group scheme over $\bbZ$. Fix a prime $l \nmid q$ and an algebraic closure $\overline{\bbQ}_l$ of $\bbQ_l$. In the Langlands program, one considers a conjectural duality between the following two kinds of objects:
\begin{itemize}
\item Everywhere unramified automorphic representations of the adele group $G(\bbA_K)$: these appear as irreducible subrepresentations $\Pi$ of the space of functions $f : G(K) \backslash G(\bbA_K) / G(\prod_v \cO_{K_v}) \to \overline{\bbQ}_l$.
\item $\hG$-local systems on $X$: equivalently, $\hG(\overline{\bbQ}_l)$-conjugacy classes of continuous homomorphisms $\sigma : \pi_1(X) \to \hG(\overline{\bbQ}_l)$.
\end{itemize}
(We omit here the extra conditions required in order to get a conjecturally correct statement.) In recent work \cite{Laf12}, V. Lafforgue has established one direction of this conjectural correspondence: he associates to each everywhere unramified, cuspidal automorphic representation $\Pi$ of $G(\bbA_K)$ a corresponding homomorphism $\sigma_\Pi : \pi_1(X) \to \hG(\overline{\bbQ}_l)$. The goal of this paper is to establish the following `potential' converse to this result.
\begin{theorem}\label{thm_intro_theorem}
Let $\sigma : \pi_1(X) \to \hG(\overline{\bbQ}_l)$ be a continuous homomorphism which has Zariski dense image. Then we can find a finite Galois extension $K' / K$ and a cuspidal automorphic representation $\Pi = \otimes'_v \Pi_v$ of $G(\bbA_{K'})$ satisfying the following condition: for every place $v$ of $K'$, $\Pi_v^{G(\cO_{K'_v})} \neq 0$, and $\Pi_v$ and $\sigma|_{W_{K'_v}}$ are matched under the unramified local Langlands correspondence.
\end{theorem}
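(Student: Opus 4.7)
My strategy follows the three-part template familiar from potential automorphy theorems for number fields (Harris--Shepherd-Barron--Taylor and its descendants), adapted here to the function field setting and to $\hG$-valued Galois representations. The three pillars are: (i) a Galois deformation theory for continuous homomorphisms $\pi_1(X) \to \hG$, together with an automorphy lifting theorem of $R = T$ type; (ii) a Moret--Bailly style mechanism for establishing residual automorphy of the mod-$l$ reduction $\bar\sigma$ after passing to a suitable cover; and (iii) as the bridge between Galois and automorphic sides, V.~Lafforgue's construction of $\hG$-valued parameters attached to cuspidal automorphic forms, whose excursion operators will be used to identify a Hecke algebra with a quotient of the universal deformation ring.

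The first step is to develop the deformation theory. I would classify $\hG$-valued lifts of $\bar\sigma$ with specified local conditions: unramified at places where $\sigma$ is unramified, Taylor--Wiles-type conditions at a carefully chosen auxiliary finite set of places, and local conditions matching $\sigma$ at its remaining ramified places. The Zariski density hypothesis on $\sigma$ ensures that, for a density-one set of primes $l$, the image of $\bar\sigma$ contains a regular semisimple element and is large enough in $\hG(\overline{\bbF}_l)$ for the Taylor--Wiles patching argument to operate (in particular its centralizer is that of a regular element). Combining patching techniques in the spirit of Calegari--Geraghty with Lafforgue's excursion operators, which furnish homomorphisms from the excursion algebra to the Hecke algebra acting on the relevant space of automorphic forms, one should prove that if $\bar\sigma$ is automorphic then so is $\sigma$.

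The second step is residual automorphy. A Moret--Bailly argument applied to a suitable auxiliary moduli space should produce a finite Galois cover $Y \to X$ over which $\bar\sigma|_{\pi_1(Y)}$ agrees with the residual representation attached to an already-known automorphic form. Natural sources of such known residual automorphy include representations induced from characters of tori (via class field theory on some further cover) and representations obtained by functorial transfer from $\GL_n$ through a faithful representation of $\hG$, exploiting the strong automorphy results available over function fields going back to L.~Lafforgue. Once residual automorphy over $Y$ is in hand, the $R = T$ theorem of Step~1 applied to $\sigma|_{\pi_1(Y)}$ produces the desired cuspidal $\Pi$ on $G(\bbA_{K'})$, with $K'$ the function field of $Y$, matching $\sigma|_{W_{K'_v}}$ at every place under the unramified local Langlands correspondence.

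The principal obstacle is Step~1: proving the $R = T$ theorem for $\hG$-valued representations in the function field context. In this setting classical Hecke eigenvalues are replaced by Lafforgue's excursion operators, so the standard $R = T$ bookkeeping must be re-engineered around the excursion algebra; the Taylor--Wiles method demands auxiliary primes whose contribution to this algebra is precisely controlled; and the local deformation rings at all places must be analyzed in a manner compatible with the unramified local Langlands correspondence, with no help from $p$-adic Hodge theory on the Galois side. A secondary difficulty, essential to the potentiality of the conclusion, is arranging a uniform supply of residually automorphic base cases in Step~2 that, on some cover, can be matched to an arbitrary Zariski-dense $\bar\sigma$.
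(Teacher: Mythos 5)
The skeleton you describe --- $\hG$-valued deformation theory plus an automorphy lifting theorem driven by Lafforgue's excursion algebra, combined with a Moret--Bailly argument to establish residual automorphy over a cover --- is indeed the architecture of the paper. But your Step~2 has a genuine gap at its core: the supply of residually automorphic base points. Transfer from $\GL_n$ through a faithful representation of $\hG$ is not available: L.~Lafforgue's theorem produces automorphic representations of $\GL_n$, and descending these to $G$ is precisely the unknown direction of functoriality (the paper notes that descent \`a la Ginzburg--Rallis--Soudry or Arthur would handle split classical groups, but nothing of the sort exists for exceptional $G$). Your other suggestion --- inducing from characters of tori --- is the right instinct, but the mechanism that makes it work is not class field theory: the paper's base points are \emph{Coxeter parameters} (homomorphisms with image in $N_{\hG}(\hT)$ projecting onto the cyclic group generated by a Coxeter element of the Weyl group), and their cuspidal automorphy is deduced from Braverman--Gaitsgory's geometrization of Eisenstein series. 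This input from the geometric Langlands program is essential and irreplaceable; without it you have no universally automorphic representations with which to start the ladder.

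A second gap: you implicitly vary the prime $l$ (``for a density-one set of primes $l$\dots''), but the theorem fixes $l$ and you give no mechanism for changing it. The paper places $\sigma$ in a compatible system (via Chin's independence-of-$l$ results, again resting on L.~Lafforgue), and the endgame is a two-prime ``chutes and ladders'': a single Moret--Bailly step cannot connect $\overline{\sigma}$ (which after a good choice of prime has huge image, e.g.\ $\hG(\bbF_l)$) directly to a Coxeter parameter (which has small finite image), since such representations are never congruent. One needs an auxiliary compatible system $(R_\lambda)_\lambda$ with image $\hG(\bbZ_l)$ at suitable primes, itself manufactured by Moret--Bailly together with the Khare--Wintenberger lifting method, whose key input is Gaitsgory's proof of de Jong's conjecture (to show the relevant deformation rings are finite flat over $\cO$). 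Moret--Bailly then simultaneously matches $\overline{\rho}_{\lambda_0}$ with $\overline{R}_{\lambda_0}$ and $\overline{R}_{\lambda_1}$ with the reduction of the Coxeter parameter over a common cover; automorphy propagates from the Coxeter parameter up to $R_{\lambda_1}$ by the lifting theorem, across to $R_{\lambda_0}$ by compatibility of the system, and finally to $\rho_{\lambda_0}$ by the lifting theorem again. These ingredients --- compatible systems, the auxiliary system with big image, and the geometric-Langlands automorphy of Coxeter parameters --- are absent from your plan and cannot be routed around.
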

One expects that this theorem should be true with $K' = K$, but we are not able to prove this. The theorem is already known in the case $G = \PGL_n$, for then the entire global Langlands correspondence is known in its strongest possible form, by work of L. Lafforgue \cite{Laf02} (and in this case one can indeed take $K' = K$). Moreover, it is likely that analytic arguments used to establish functoriality for classical groups over number fields can be extended to function fields.  Combined with the results of \cite{Laf02}, our main theorem  would then follow easily for split classical groups, again with $K' = K$. \footnote{Either the descent method of Ginzburg, Rallis, and Soudry \cite{GRS} or Arthur's more general approach using the twisted trace formula \cite{Art13} would yield automorphy under the hypotheses of our main theorem.}    However, for semisimple groups in the exceptional series, this is the first theorem of this kind. We note that using the deformation-theoretic techniques that we develop in this paper, one can construct plentiful examples of Zariski dense representations $\sigma$ to which the above theorem applies, for any group $G$.

We comment on the hypotheses of the theorem. Zariski density of the representation is convenient at several points. From the automorphic perspective, taking Arthur's conjectures into account, it removes the possibility of having to deal with automorphic representations which are cuspidal but non-tempered. Zariski density also has the important consequence that $\sigma$ can be placed in a compatible system of $\hG$-local systems that is determined uniquely up to equivalence by the conjugacy classes of Frobenius elements. Without this density condition, it is not even clear what should be meant by the phrase `compatible system', and the definition we use here (see Definition \ref{def_compatible_sytems}) should therefore be regarded as provisional. (For a possible solution to these issues, see \cite{Dri16}.) A related point is that it is subtle to define what it means for a Galois representation to be automorphic; this is discussed further (along with the simplifying role played by Zariski density of representations) in \S \ref{sec_summary_of_lafforgue}. 

The condition that $\sigma$ be everywhere unramified (or in other words, that $X$ be projective) is less serious. Indeed, V. Lafforgue's work attaches Galois representations to cuspidal automorphic representations with an arbitrary level of ramification, and Theorem \ref{thm_intro_theorem} immediately implies an analogue where $\sigma$ is allowed to be finitely ramified at a finite number of points. It seems likely that one could prove the same result with no restrictions on the ramification of $\sigma$ (other than ramification at a finite number of points), using the techniques of this paper, but we leave this extension to a future work.

The proof of Theorem \ref{thm_intro_theorem} uses similar ingredients to existing potential automorphy theorems for $G = \GL_n$ over number fields (see e.g. \cite{Bar14}). Among these, we mention:
\begin{enumerate}
\item The construction of $\hG$-valued Galois representations associated to automorphic forms on $G$.
\item Automorphy lifting theorems for $\hG$-valued Galois representations.
\item Existence of local systems with `big mod $l$ monodromy'.
\item Existence of `universally automorphic $\hG$-valued Galois representations'.
\end{enumerate}
Once these ingredients are in place, we employ the `chutes and ladders' argument diagrammed in \cite[pp. 1136--1137]{Ell05} in order to conclude Theorem \ref{thm_intro_theorem}. We now comment on each of these ingredients (i) -- (iv) in turn.

\begin{enumerate} \item As mentioned above, V. Lafforgue has constructed the Galois representations associated to cuspidal automorphic forms on the group $G$ \cite{Laf12}. In fact, he goes much farther, constructing a commutative ring of endomorphisms $\cB$ of the space $\cA_0$ of cusp forms, which contains as a subring the ring generated by Hecke operators at unramified places, and which is generated by so-called `excursion operators'. Lafforgue then defines a notion of pseudocharacter for a general reductive group (in a fashion generalizing the definition for $\GL_n$ given by Taylor \cite{Tay91}) and shows that the absolute Galois group of $K = \bbF_q(X)$ admits a pseudocharacter valued in this algebra $\cB$ of excursion operators. Furthermore, he shows that over an algebraically closed field, pseudocharacters are in bijection with completely reducible Galois representations into $\hG$, up to $\hG$-conjugation. This leads to a map from the set of prime ideals of $\cB$ to the set of completely reducible Galois representations. This work is summarized in \S \ref{sec_automorphic_forms}.

A well-known result states that deforming the pseudocharacter of an (absolutely) irreducible representation into $\GL_n$ is equivalent to deforming the representation itself (see e.g.\ \cite{Car94, Rou96}). The first main contribution of this paper is to generalize this statement to an arbitrary reductive group $\hG$. The key hypothesis we impose is that the centralizer in $\hG$ of the representation being deformed is as small as possible, i.e.\ the centre of $\hG$. If $\hG = \GL_n$, then Schur's lemma says that this condition is equivalent to 
 irreducibility (i.e.\ that the image be contained in no proper parabolic subgroup of $\hG$), but in general this condition is strictly stronger. Nevertheless, it means that after localizing an integral version of Lafforgue's algebra $\cB$ at a suitable maximal ideal, we can construct $\cB$-valued Galois representations, in a manner recalling the work of Carayol \cite{Car94}.
\item Having constructed suitable integral analogues of Lafforgue's Galois representations, we are able to prove an automorphy lifting theorem (Theorem \ref{thm_R_equals_B} and Corollary \ref{cor_automorphy_lifting}) using a generalization of the Taylor--Wiles method. The key inputs here are an understanding of deformation theory for $\hG$-valued Galois representations (which we develop from scratch here, although this idea is not original to this paper) and a workable generalization of the notion of `big' or `adequate' subgroup (see e.g.\ \cite{Tho12} for more discussion of the role these notions play in implementations of the Taylor--Wiles method). It seems likely that one could get by with a weaker notion than this (indeed, the notion of `$\hG$-abundance' that we introduce here is closer to the `enormous' condition imposed in \cite{Cal15,Kha15} than `big' or `adequate'), but it is enough for the purposes of this paper.
\item A key step in proving potential automorphy theorems is showing how to find extensions of the base field $K$ over which two given compatible systems of Galois representations are `linked' by congruences modulo primes. The reader familiar with works such as \cite{She97, Har10} may expect that in order to do this, we must construct families of $\hG$-motives (whatever this may mean). For example, in \cite{Har10} the authors consider a family of projective hypersurfaces over $\bbP^1_\bbQ$; the choice of a rational point $z \in \bbP^1_\bbQ(F)$ ($F$ a number field) determines a compatible family of Galois representations of $\Gamma_F = \Gal(F^s / F)$ acting on the primitive cohomology of the corresponding hypersurface. By contrast, here we are able to get away with compatible families of $\hG$-Galois representations (without showing they arise from motives). The reasons for this have to do with the duality between the fields whose Galois groups we are considering and the fields of rational functions on our parameter spaces of Galois representations; see for example the diagram (\ref{eqn_diagram_of_galois_groups}) below. 
\item Informally, we call a representation $\sigma : \Gamma_K \to \hG(\overline{\bbQ}_l)$ universally automorphic if for any finite separable extension $K' / K$, the restricted representation $\sigma|_{\Gamma_{K'}} : \Gamma_{K'} \to \hG(\overline{\bbQ}_l)$ is automorphic (in the sense of being associated to a prime ideal of the algebra $\cB$ of excursion operators). Of course, this is conjecturally true for any representation $\sigma$ (say of Zariski dense image), but representations for which this property can be established unconditionally play an essential role in establishing potential automorphy. We define a class of representations, called Coxeter parameters, which satisfy the condition of `$\hG$-abundance' required to apply our automorphy lifting theorems and which can be shown, in certain circumstances, to have a property close to `universal automorphy' (see Lemma \ref{lem_Coxeter_parameters_rational_over_Z_l}).

This class of Coxeter parameters generalizes, in some sense, the class of representations into $\GL_n$ which are induced from a normal subgroup with quotient cyclic of order $n$. Such representations of absolute Galois groups of number fields into $\GL_n$ are often known to be automorphic, thanks to work of Arthur--Clozel \cite{Art89} which uses the trace formula. In contrast, we deduce the automorphy of Coxeter parameters from work of Braverman--Gaitsgory on geometrization of classical Eisenstein series \cite{Bra02}. Thus the geometric Langlands program plays an essential role in the proof of Theorem \ref{thm_intro_theorem}, although it does not appear in the statement.
\end{enumerate} 
We now describe the organization of this paper. We begin in \S \ref{sec_invariant_theory} by reviewing some results from geometric invariant theory. The notion of pseudocharacter is defined using invariant theory, so this is essential for what follows. In \S \ref{sec_pseudocharacters_and_their_deformation_theory}, we review V. Lafforgue's notion of $\hG$-pseudocharacter and prove our first main result, showing how deforming pseudocharacters relates to deforming representations in good situations. In \S \ref{sec_galois_representations_and_their_deformation_theory}, we give the basic theory of deformations of $\hG$-valued representations. We also develop the Khare--Wintenberger method of constructing characteristic 0 lifts of mod $l$ Galois representations in this setting. This is a very useful application of the work of L. Lafforgue \cite{Laf02} for $\GL_n$ and the solution of de Jong's conjecture by Gaitsgory \cite{Gaitsgory}.

In \S \ref{sec_compatible_systems_of_galois_representations}, we make a basic study of compatible systems of $\hG$-valued representations. The work of L. Lafforgue again plays a key role here, when we cite the work of Chin \cite{Chi04} to show that any $\hG$-valued representation lives in a compatible system. We also make a study of compatible systems with Zariski dense image, and show that they enjoy several very pleasant properties. In \S \ref{sec_local_calculation}, we make some local calculations having to do with representation theory of $p$-adic groups at Taylor--Wiles primes. These calculations will be familiar to experts, and require only a few new ideas to deal with the possible presence of pseudocharacters (as opposed to true representations). In \S \ref{sec_automorphic_forms}, the longest section of this paper, we finally discuss automorphic forms on the group $G$ over the function field $K$, and review the work of V. Lafforgue. We then prove an automorphy lifting theorem, which is the engine driving the proof of our main Theorem \ref{thm_intro_theorem}. 

The remaining three sections are aimed at proving this potential automorphy result. In \S \ref{sec_application_of_moret-bailly} we introduce some moduli spaces of torsors; it is here that we are able to avoid the detailed study of families of motives that occurs in \cite{Har10}. In \S \ref{sec_class_of_universally_automorphic_galois_representations} we give the definition of Coxeter parameters, make a study of their basic properties, and deduce their universal automorphy from the work of Braverman--Gaitsgory. Finally in \S \ref{sec_potential_automorphy} we return to the `chutes and ladders' argument and apply everything that has gone before to deduce our main results.

At the end of the paper are two appendices, written by Gaitsgory, which establish two key properties of the automorphic functions attached to the geometric Eisenstein series constructed by Braverman--Gaitsgory. The role played by these is discussed more in the proof of Theorem \ref{thm_automorphy_of_Coxeter_homomorphisms}.
\subsection{Acknowledgments}

We all thank Gaitsgory, Labesse, Lemaire, Moeglin, Raskin, Waldspurger, and especially Genestier and Lafforgue for a number of helpful conversations. In addition, we are very grateful to Gaitsgory for providing the appendices to this paper, proving  properties of the special automorphic functions constructed in his paper with Braverman; we also thank the referee for pointing out that there were no adequate references for these facts in the literature.  G.B.  was supported by the DFG grants FG 1920 and SPP 1489. M.H.'s research received funding from the European Research Council under the European Community's Seventh Framework Programme (FP7/2007-2013) / ERC Grant agreement no. 290766 (AAMOT).  M.H. was partially supported by NSF Grant DMS-1404769. C.K.  was partially supported by NSF grants DMS-1161671  and DMS-1601692, and by a Humboldt Research Award, and thanks the Tata Institute of Fundamental Research, Mumbai for its support. This research was partially conducted during the period J.T. served as a Clay Research Fellow. J.T.'s work received funding from the European Research Council (ERC) under the European Union's Horizon 2020 research and innovation programme (grant agreement No 714405).  Both G.B. and J.T. thank M.H. and the IHES for an invitation through AAMOT.

\section{Notation and preliminaries}\label{sec_notation}

If $K$ is a field, then we will generally write $K^s$ for a fixed choice of separable closure and $\Gamma_K = \Gal(K^s / K)$ for the corresponding Galois group. If $K$ is a global field and $S$ is a finite set of places, then $K_S$ will denote the maximal subextension of $K^s$, unramified outside $S$, and $\Gamma_{K, S} = \Gal(K_S / K)$. If $v$ is a place of $K$, then $\Gamma_{K_v} = \Gal(K_v^s / K_v)$ will denote the decomposition group, and $\Gamma_{K_v} \to \Gamma_K$ the homomorphism corresponding to a fixed choice of $K$-embedding $K^s \hookrightarrow K_v^s$. If $v \not\in S$, then $\Frob_v \in \Gamma_{K, S}$ denotes a choice of geometric Frobenius element at the place $v$. If $K = \bbF_q(X)$ is the function field of a smooth projective curve $X$, then we will identify the set of places of $K$ with the set of closed points of $X$. If $v \in X$ is a place we write $q_v = \# k(v) = \# (\cO_{K_v} / \varpi_v \cO_{K_v})$ for the size of the residue field at $v$. We write $| \cdot |_v$ for the norm on $K_v$, normalized so that $|\varpi_v|_v = q_v^{-1}$; then the product formula holds. We write $\Art_{K_v} : K^\times \to \Gamma_{K_v}^\text{ab}$ for the Artin map of local class field theory, normalized to send uniformizers to geometric Frobenius elements. We write $\widehat{\cO}_{K} = \prod_{v \in X} \cO_{K_v}$. We will write $W_{K_v}$ for the Weil group of the local field $K_v$. 

In this paper, we consider group schemes both over fields and over more general bases. If $k$ is a field, then we will call a smooth affine group scheme over $k$ a linear algebraic group over $k$.  Many classical results in invariant theory are proved in the setting of linear algebraic groups. We will also wish to allow possibly non-smooth group schemes; our conventions are discussed in \S \ref{sec_invariants_over_a_field}. A variety over $k$ is, by definition, a reduced $k$-scheme of finite type.

If $G, H, \dots$ are group schemes over a base $S$, then we use Gothic letters $\frg, \frh, \dots$ to denote their Lie algebras, and $G_T, \frg_T, \dots$ to denote the base changes of these objects relative to a scheme $T \to S$. If $G$ acts on an $S$-scheme $X$ and $x \in X(T)$, then we write $Z_G(x)$ or $Z_{G_T}(x)$ for the scheme-theoretic stabilizer of $x$; it is a group scheme over $T$. We denote the centre of $G$ by $Z_G$. We say that a group scheme $G$ over $S$ is reductive if $G$ is smooth and affine with reductive (and therefore connected) geometric fibres.

When doing deformation theory, we will generally fix a prime $l$ and an algebraic closure $\overline{\bbQ}_l$ of $\bbQ_l$. A finite extension $E / \bbQ_l$ inside $\overline{\bbQ}_l$ will be called a coefficient field; when such a field $E$ has been fixed, we will write $\cO$ or $\cO_E$ for its ring of integers, $k$ or $k_E$ for its residue field, and $\varpi$ or $\varpi_E$ for a choice of uniformizer of $\cO_E$. We write $\cC_\cO$ for the category of Artinian local $\cO$-algebras with residue field $k$; if $A \in \cC_\cO$, then we write $\ffrm_A$ for its maximal ideal. Then $A$ comes with the data of an isomorphism $k \cong A / \ffrm_A$. 

\subsection{The dual group and groups over $\bbZ$}\label{sec_reductive_groups_over_Z}

In this paper, we will view the dual group of a reductive group as a split reductive group over $\bbZ$. We now recall what this means. We first recall that a root datum is a 4-tuple $(M, \Phi, M^\vee, \Phi^\vee)$ consisting of the following data:
\begin{itemize}
\item A finite free $\bbZ$-module $M$, with $\bbZ$-dual $M^\vee$. We write $\langle \cdot, \cdot \rangle : M \times M^\vee \to \bbZ$ for the tautological pairing. 
\item Finite subsets $\Phi \subset M - \{ 0 \}$ and $\Phi^\vee \subset M^\vee - \{ 0 \}$, stable under negation, and equipped with a bijection $a \leftrightarrow a^\vee$, $\Phi \leftrightarrow \Phi^\vee$.
\end{itemize}
We require that for all $a \in \Phi$, $\langle a, a^\vee \rangle = 2$, and the reflections $s_a(x) = x - \langle x, a^\vee \rangle a$ and $s_{a^\vee}(y) = y - \langle a, y \rangle a^\vee$ preserve $\Phi \subset M$ and $\Phi^\vee \subset M^\vee$, respectively. In this paper, we also require root data to be reduced, in the sense that if $a, n a \in \Phi$ for some $n \in \bbZ$, then $n \in \{ \pm 1 \}$. A based root datum is a 6-tuple $(M, \Phi, R, M^\vee, \Phi^\vee, R^\vee)$, where $(M, \Phi, M^\vee, \Phi^\vee)$ is a root datum, $R \subset \Phi$ is a root basis, and $R^\vee = \{ a^\vee \mid a \in R \}$. (See \cite[\S 1.4]{Con14}.)

If $S$ is a connected scheme, a reductive group $G$ over $S$ is said to be split if there exists a split maximal torus $T \subset G$ such that each non-zero root space $\frg_a \subset \frg$ ($a \in M = X^\ast(T)$) is a free $\cO_S$-module of rank 1. See  \cite[Definition 5.1.1]{Con14}. This condition follows from the existence of a split maximal torus if $\Pic(S) = 1$, which will always be the case in examples we consider. Associated to the triple $(G, T, M)$ is a root datum $(M, \Phi, M^\vee, \Phi^\vee)$, where $\Phi \subset M = X^\ast(T)$ is the set of roots and $\Phi^\vee \subset M^\vee = X_\ast(T)$ is the set of coroots. If  $N \subset M$ is a subgroup containing $\Phi$, then there is a subgroup $Q \subset T$ such that $X^\ast(Q) \cong M / N$. In fact, we have $Q \subset Z_G$, the quotients $G' = G / Q$ and $T' = T / Q$ exist, and $T'$ is a split maximal torus of the split reductive group $G'$. The morphism $G \to G'$ is smooth if and only if the order of the torsion subgroup of $M/N$ is invertible on $S$. In particular, if we take $N$ to be the subgroup of $M$ generated by $\Phi$, then the quotient has trivial centre, and is what we call the adjoint group $G^\text{ad}$ of $G$. The formation of the adjoint group of a split reductive group commutes with base change.  (See \cite[Corollary 3.3.5]{Con14}.)

The further choice of a Borel subgroup $T \subset B \subset G$ containing the split maximal torus $T$ determines a root basis $R \subset \Phi$, hence a based root datum $(M, \Phi, R, M^\vee, \Phi^\vee, R^\vee)$. When this data has been fixed, we will refer to a parabolic subgroup $P$ of $G$ containing $B$ as a standard parabolic subgroup of $G$. These subgroups are in bijection with the subsets of $R$. If $I \subset R$ is a subset, then the corresponding parabolic $P_I$ admits a semidirect product decomposition $P_I = M_I N_I$, where $N_I$ is the unipotent radical of $P_I$ and $M_I$ is the unique Levi subgroup of $P_I$ containing $T$ (see \cite[Proposition 5.4.5]{Con14}.) We refer to $M_I$ as the standard Levi subgroup of $P_I$. It is reductive, and its root datum with respect to $T$ is $(M, \Phi_I, M^\vee, \Phi_I^\vee)$, where $\Phi_I \subset \Phi$ is the set of roots which are sums of elements of $I$. The intersection $B \cap M_I$ is a Borel subgroup of $M_I$, and the based root datum of $T \subset B\cap M_I \subset M_I$ is $(M, \Phi_I, I, M^\vee, \Phi_I^\vee, I^\vee)$.

We now define the dual group. Let $k$ be a field, and let $G$ be a split reductive group over $k$. (We will only need to consider the split case.) To any split maximal torus and Borel subgroup $T \subset B \subset G$, we have associated the based root datum $(M, \Phi, R, M^\vee, \Phi^\vee, R^\vee)$. The dual group $\hG$ is a tuple $(\hG, \hB, \hT)$ consisting of a split reductive group $\hG$ over $\bbZ$, as well as a split maximal torus and Borel subgroup $\hT \subset \hB \subset \hG$, together with an identification of the based root datum with the dual root datum $(M^\vee, \Phi^\vee, R^\vee, M, \Phi, R)$. Then $\hG$ is determined up to non-unique isomorphism. For any split maximal torus and Borel subgroup $T' \subset B' \subset G$, there are canonical identifications
\[ X^\ast(T') \cong X^\ast(T) \cong X_\ast(\hT) \text{ and }X_\ast(T') \cong X_\ast(T) \cong X^\ast(\hT). \]
If $I \subset R$ is a subset, then the standard Levi subgroup $\hM_I \subset \hG$ of based root datum $(M^\vee, \Phi^\vee_I, I^\vee, M, \Phi_I, I)$ contains the split maximal torus and Borel subgroup $\hT \subset \hB \cap \hM_I \subset \hM_I$ and can be identified as the dual group of $M_I$. (See \cite[\S 3]{Bor79a}.)

\subsection{Chebotarev density theorem}

At several points in this paper, we will have to invoke the Chebotarev density theorem over function fields. Since this works in a slightly different way to the analogous result in the number field case, we recall the statement here. We take $X$ to be a geometrically connected, smooth, projective curve over $\bbF_q$, $K = \bbF_q(X)$, $S$ a finite set of places of $K$, and $\Gamma_{K, S}$ the Galois group of the maximal extension unramified outside $S$. This group then sits in a short exact sequence
\[ \xymatrix@1{ 1 \ar[r] & \overline{\Gamma}_{K, S} \ar[r] & \Gamma_{K, S} \ar[r] & \widehat{\bbZ} \ar[r] & 1,} \]
where the quotient $\widehat{\bbZ}$ corresponds to the everywhere unramified scalar extension $\overline{\bbF}_q \cdot K / K$, and the element $1 \in \widehat{\bbZ}$ acts as geometric Frobenius on $\overline{\bbF}_q$. The theorem is now as follows (see \cite[Theorem 4.1]{Cha97}).
\begin{theorem}\label{thm_chebotarev_density}
Suppose given a commutative diagram of groups and continuous homomorphisms
\[ \xymatrix@1{ 1 \ar[r] & \overline{\Gamma}_{K, S} \ar[d]^{\lambda_0} \ar[r] & \Gamma_{K, S} \ar[d]^{\lambda} \ar[r] & \widehat{\bbZ} \ar[d]^{1 \mapsto \gamma} \ar[r] & 1 \\
1 \ar[r] & G_0 \ar[r] & G \ar[r]^m & \Gamma \ar[r] & 1,} \]
where $G$ is finite, $\lambda_0$ is surjective, and $\Gamma$ is abelian. Let $C \subset G$ be a subset invariant under conjugation by $G$. Then we have
\[  \frac{ \# \{ v \in X \mid q_v = q^n, \lambda(\Frob_v) \in C \} }{ \# \{ v \in X \mid q_v = q^n \} } = \frac{ \# C \cap m^{-1}(\gamma^n) }{\# G_0} + O(q^{-{n/2}}), \]
where the implicit constant depends on $X$ and $G$, but not on $n$.
\end{theorem}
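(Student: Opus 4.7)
My strategy is to translate the given diagram into a finite \'etale Galois cover of curves over $\bbF_q$ and count closed points via the Grothendieck--Lefschetz trace formula combined with Deligne's Weil II bounds.

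Since $G$ is finite, $\lambda$ factors through a finite quotient $H = \im \lambda \subseteq G$; by surjectivity of $\lambda_0$ we have $G_0 \subseteq H$, and since $m(H) \supseteq \langle \gamma \rangle$ the coset $m^{-1}(\gamma^n)$ lies inside $H$. The kernel of $\lambda$ cuts out a finite Galois extension $L/K$ with group $H$, unramified outside $S$, corresponding to a connected finite \'etale $H$-Galois cover $\pi : Y \to U := X \setminus S$. For $v \in U$ with $\deg v = n$ the image of $\Frob_v$ in $\widehat{\bbZ}$ is the $n$th power of geometric Frobenius, so $\lambda(\Frob_v)$ lands in the $G_0$-coset $m^{-1}(\gamma^n)$ automatically. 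Setting $\tilde C := C \cap m^{-1}(\gamma^n)$ and viewing $\phi := \mathbf{1}_{\tilde C}$ as a class function on $H$, the quantity to estimate is then $N(n) := \sum_{v \in U,\, \deg v = n} \phi(\Frob_v)$, the places in $S$ contributing an $O(1)$ error.

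Next, I Fourier-decompose $\phi = \sum_{\rho \in \widehat H} a_\rho \chi_\rho$ into irreducible characters of $H$, splitting the sum by a Clifford-theoretic dichotomy applied to $G_0 \triangleleft H$: an irreducible $\rho$ either (a) factors through $H/G_0 \subseteq \Gamma$ (equivalently $\rho|_{G_0}$ is trivial, using that the constituents of $\rho|_{G_0}$ form a single $H/G_0$-orbit in $\widehat{G_0}$), or (b) has $\rho|_{G_0}$ free of the trivial constituent. Type-(a) representations are $1$-dimensional (since $H/G_0$ is abelian) and constant on $m^{-1}(\gamma^n)$, giving $a_\chi = \tfrac{|\tilde C|}{|H|}\overline{\chi(\gamma^n)}$; Fourier inversion on $H/G_0$ collapses $\sum_{\chi \in \widehat{H/G_0}} a_\chi \chi(\gamma^n) = \tfrac{|\tilde C|}{|H|} \cdot |H/G_0| = \tfrac{|\tilde C|}{|G_0|}$, producing the desired main term.

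For type-(b) irreducibles $\rho$ I attach the lisse $\overline{\bbQ}_l$-sheaf $\mathcal{F}_\rho$ on $U$ pulled back along $\pi_1(U) \twoheadrightarrow H \xrightarrow{\rho} \GL(V_\rho)$. Grothendieck--Lefschetz together with a M\"obius argument to isolate closed points of exact degree $n$ gives
\[ n \sum_{v \in U,\, \deg v = n} \chi_\rho(\Frob_v) = \sum_i (-1)^i \tr\bigl(\Frob_q^n \bigm| H^i_c(U_{\overline{\bbF}_q}, \mathcal{F}_\rho)\bigr) + O(q^{n/2}). \]
Affineness of $U$ kills $H^0_c$; the type-(b) hypothesis forces $H^2_c(U,\mathcal{F}_\rho) = (V_\rho)_{G_0}(-1) = 0$; and Deligne's Weil II bounds the $H^1_c$-contribution by $O(q^{n/2})$, with implicit constants depending only on $\dim \rho$ and the Euler characteristic of $U$. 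Summing over the finitely many irreducibles yields $N(n) = \tfrac{|\tilde C|}{|G_0|} \#\{v : \deg v = n\} + O(q^{n/2})$, and dividing by $\#\{v \in X : q_v = q^n\} = q^n/n + O(q^{n/2}/n)$ (the Weil bound for $X$ itself) gives the stated asymptotic with relative error $O(q^{-n/2})$. The main obstacle I anticipate is the bookkeeping separating arithmetic from geometric contributions: combining Clifford theory (to see that exactly the reps factoring through $H/G_0$ have non-vanishing $H^2_c$) with Fourier inversion on a single $G_0$-coset, so that the main term comes out to $|\tilde C|/|G_0|$ rather than the naive $|\tilde C|/|H|$.
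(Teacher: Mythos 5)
Your argument is correct, and it is essentially the standard proof of the effective function-field Chebotarev theorem given in the reference the paper cites for this statement (Chavdarov, [Cha97, Theorem 4.1]): reduce to the finite quotient $H=\im\lambda$, split $\mathbf{1}_{\tilde C}$ by Clifford theory relative to $G_0\triangleleft H$, and control the non-trivial part by Grothendieck--Lefschetz plus Weil II, the key points being $(V_\rho)_{G_0}=0$ for type-(b) constituents and the Fourier inversion on the single coset $m^{-1}(\gamma^n)$ producing $\#\tilde C/\#G_0$ rather than $\#\tilde C/\#H$. One cosmetic remark: when $S=\emptyset$ the open $U=X$ is projective rather than affine, but your vanishing of $H^0_c$ still holds for type-(b) representations since $H^0_c\subseteq H^0(X_{\overline{\bbF}_q},\mathcal F_\rho)=V_\rho^{G_0}=0$, so the argument goes through unchanged.
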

\begin{corollary}\label{cor_equivalence_of_representations_with_similar_trace}
\begin{enumerate} \item The set $\{ \Frob_w \}$ of Frobenius elements, indexed by places of $K_S$ not dividing $S$, is dense in $\Gamma_{K, S}$.
\item Let $l$ be a prime not dividing $q$, and let $\rho, \rho' : \Gamma_{K, S} \to \GL_n(\overline{\bbQ}_l)$ be continuous semisimple representations such that $\tr \rho(\Frob_v) = \tr \rho'(\Frob_v)$ for all $v \not\in S$. Then $\rho \cong \rho'$. 
\end{enumerate}
\end{corollary}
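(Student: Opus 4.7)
Both statements are essentially direct consequences of Theorem \ref{thm_chebotarev_density}; the plan is to feed an arbitrary finite continuous quotient of $\Gamma_{K, S}$ into that theorem for (i), and then to deduce (ii) by combining (i) with the standard fact that semisimple characteristic-zero representations are determined by their trace functions.

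For (i), I note first that the set $\{\Frob_w : w \nmid S\}$ is exactly the union, over $v \notin S$, of the $\Gamma_{K, S}$-conjugacy classes of the Frobenius elements $\Frob_v$; in particular it is conjugation-invariant. Since $\Gamma_{K, S}$ is profinite, density amounts to showing that for every continuous surjection $\lambda : \Gamma_{K, S} \twoheadrightarrow G$ with $G$ finite and every $g \in G$, some $\lambda(\Frob_v)$ is conjugate to $g$. To put this into the framework of Theorem \ref{thm_chebotarev_density}, I would set $G_0 := \lambda(\overline{\Gamma}_{K, S})$ and $\Gamma := G / G_0$; as a quotient of $\widehat{\bbZ}$, the group $\Gamma$ is finite cyclic (hence abelian), generated by the image $\gamma$ of $1$. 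Take $C$ to be the conjugacy class of $g$, which lies entirely in a single fibre of $m$ because $\Gamma$ is abelian. Since $\Gamma$ is finite cyclic, we have $m(g) = \gamma^{n_0}$ for some integer $n_0 \geq 0$, so for any $n \equiv n_0 \pmod{|\Gamma|}$ with $n$ large enough to dominate the error term, Theorem \ref{thm_chebotarev_density} yields
\[
\frac{\#\{v : q_v = q^n, \ \lambda(\Frob_v) \in C\}}{\#\{v : q_v = q^n\}} \ = \ \frac{\#C}{\#G_0} + O(q^{-n/2}) \ > \ 0,
\]
producing the desired $v$.

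For (ii), I would observe that $\Gamma_{K, S}$ is compact, so both $\rho$ and $\rho'$ factor through $\GL_n(E)$ for a common finite extension $E / \bbQ_l$, and the trace functions $\tr \rho, \tr \rho' : \Gamma_{K, S} \to E$ are continuous. By hypothesis they agree on all Frobenius elements, hence by (i) on a dense subset, and hence everywhere. The standard theorem --- proved, for instance, by noting that the values $\tr \rho(g^k)$ for $k \geq 1$ determine the characteristic polynomial of each $\rho(g)$, which in characteristic $0$ determines the semisimple representation up to isomorphism --- then yields $\rho \cong \rho'$.

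There is no substantive obstacle; the one subtlety to keep in mind is that in the function-field setting a Frobenius element $\Frob_v$ only projects to a positive-integer power of the topological generator of $\widehat{\bbZ}$, so in the Chebotarev step above one must exploit the finiteness of $\Gamma$ to ensure that one reaches every element of $G$ modulo $G_0$.
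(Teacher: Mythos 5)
Your proposal is correct and follows exactly the route the paper takes (the paper's own proof is just the two-sentence remark that (ii) follows from (i) and (i) follows from Theorem \ref{thm_chebotarev_density} applied to finite quotients). Your fleshing-out of the Chebotarev step — choosing $G_0 = \lambda(\overline{\Gamma}_{K,S})$, using that $\Gamma = G/G_0$ is finite cyclic so every fibre of $m$ is reached by some power $\gamma^n$ with $n$ large — is the intended argument, and the deduction of (ii) via continuity of traces and the standard characteristic-zero semisimplicity fact is likewise standard and sound.
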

\begin{proof}
The second part follows from the first. The first part follows from Theorem \ref{thm_chebotarev_density}, applied to the finite quotients of $\Gamma_{K, S}$.
\end{proof}

\section{Invariant theory}\label{sec_invariant_theory}

In this section we recall some results in the invariant theory of reductive groups acting on affine varieties. We describe in these terms what it means for group representations valued in reductive groups to be completely reducible or irreducible. We first consider the theory over a field in \S \ref{sec_invariants_over_a_field}, and then consider extensions of some of these results for actions over a discrete valuation ring in \S \ref{sec_invariants_over_a_DVR}.

\subsection{Classical invariant theory}\label{sec_invariants_over_a_field}

Let $k$ be a field. 
\begin{lemma}
Let $G$ be a linear algebraic group over $k$ which acts on an integral affine variety $X$. Let $x \in X(k)$. Then:
\begin{enumerate}
\item The image of the orbit map $\mu_x : G \to X$, $g \mapsto g x$, is an open subset of its closure. We endow the image $G \cdot x$ with its induced reduced subscheme structure, and call it the orbit of $x$. It is smooth over $k$, and invariant under the action of $G$ on $X$.
\item The following are equivalent:
\begin{enumerate}
\item The map $G \to G \cdot x$ is smooth.
\item The centralizer $Z_G(x)$ is smooth over $k$.
\end{enumerate}
If these equivalent conditions hold, then we say that the orbit $G \cdot x$ is separable. 
\end{enumerate}
\end{lemma}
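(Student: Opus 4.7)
My plan is to treat (i) and (ii) separately, using standard tools from classical invariant theory together with fppf descent.

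For (i), I would first establish the topological statement that $\mu_x(G)$ is open in its closure. By Chevalley's theorem on images of morphisms of finite type, $\mu_x(G)$ is a constructible subset of $X$, hence contains a nonempty open subset $U$ of its closure $Z = \overline{\mu_x(G)}$. The image is stable under the $G(k^s)$-action on $X$, because $g\cdot(g'\cdot x)=(gg')\cdot x$; therefore $\mu_x(G)=\bigcup_{g\in G(k^s)} g\cdot U$ is a union of translates of $U$, each open in $Z$, and so $\mu_x(G)$ is open in $Z$. I then equip $G\cdot x$ with its reduced induced subscheme structure, and $G$-invariance under the action of $G$ on $X$ is automatic from the definition.

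Smoothness of $G\cdot x$ I would deduce from the claim that the orbit map $\mu_x : G \to G\cdot x$ is fppf. Surjectivity is by construction, and local finite presentation is inherited from finite typeness of $G \to X$. For flatness, generic flatness yields a nonempty open $V\subset G\cdot x$ over which $\mu_x$ is flat, so the flat locus of $\mu_x$ is a nonempty open subset of $G$. Crucially, this locus is stable under left-translation by $G(k^s)$, because each such translation is an automorphism of $G$ intertwining $\mu_x$ with the corresponding automorphism of $G\cdot x$; since $G(k^s)$ acts transitively on itself, the flat locus contains every $k^s$-point of $G$ and thus coincides with $G$. Smoothness of $G\cdot x$ over $k$ then follows from smoothness of $G$ by fppf descent of smoothness along the cover $\mu_x$.

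For (ii), once $\mu_x$ is known to be fppf, it is smooth if and only if its geometric fibres are smooth. The scheme-theoretic fibre over $x$ is, by definition, the centralizer $Z_G(x)$, and for any $k^s$-point $g\cdot x \in G\cdot x$, $G$-equivariance identifies the fibre over $g\cdot x$ with the left translate $g \cdot Z_G(x)$, which is isomorphic as a $k^s$-scheme to $Z_G(x)_{k^s}$. Thus all geometric fibres of $\mu_x$ are smooth if and only if $Z_G(x)$ is smooth over $k$, which yields the stated equivalence.

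The main technical obstacle I foresee is establishing the fppf property of $\mu_x$, in particular propagating flatness from the generic locus to all of $G$ via $G$-equivariance and openness of the flat locus. With this in hand, part (i) is immediate from fppf descent of smoothness along $\mu_x$, and (ii) reduces to the standard fibrewise criterion for smoothness of an fppf morphism.
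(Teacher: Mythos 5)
Your proof is correct and follows essentially the same route as the paper's: Chevalley's theorem plus translation by group elements over a field extension for openness, generic flatness propagated by translation to get that $\mu_x$ is faithfully flat, descent of smoothness along this cover, and the fibrewise criterion (the fibres being translates of $Z_G(x)$) for part (ii). The only difference is cosmetic: the paper is slightly more careful that the translates $g\cdot U$ live over an extension field and descend to open subsets of $Z$ via the flat projection $Z_{k''}\to Z$, a point you should make explicit but which does not affect the argument.
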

\begin{proof}
By Chevalley's theorem, the image $\mu_x(G) \subset X$ is a constructible subset of $X$, so contains a dense open subset $U$ of its closure $Z \subset X$. We can find a finite extension $k' / k$ and $g \in G(k')$ such that $g x = y$ lies in $U(k')$. Let $z \in \mu_x(G)$ be a closed point; then we can find a finite extension $k'' / k'$, a point $z'' \in \mu_x(G)(k'')$ lying above $z$, and $h \in G(k'')$ such that $h x = z''$. Then $h g^{-1} y = z''$, and hence $h g^{-1} U_{k''}$ is an open subset of $Z_{k''}$ which contains $z''$. Its image under the flat morphism $Z_{k''} \to Z$ is therefore an open subset containing the closed point $z$, which also lies in $\mu_x(G)$. This shows that $\mu_x(G)$ is indeed open in $Z$. The same argument using generic flatness shows that the induced map $G \to G \cdot x$ is faithfully flat, hence (since $G$ is smooth) $G \cdot x$ is geometrically reduced. The same argument once more implies that $G \cdot x$ is even smooth over $k$. The second part of the lemma now follows from the differential criterion of smoothness.
\end{proof}
Now let $G$ be a reductive group over $k$, and let $X$ be an integral affine variety on which $G$ acts. We write $X \dquot G = \Spec k[X]^G$ for the categorical quotient; since $G$ is reductive, it is again an integral affine variety, which is normal if $X$ is (see e.g. \cite[\S 2]{Bar85}). The quotient map $\pi : X \to X \dquot G$ has a number of good properties:
\begin{proposition}\label{prop_git_over_a_field}
Let notation be as above. Then:
\begin{enumerate}
\item Let $K/k$ be an algebraically closed overfield. Then $\pi$ is surjective and $G$-equivariant at the level of $K$-points.
\item If $W \subset X$ is a $G$-invariant closed set, then $\pi(W)$ is closed.
\item If $W_1, W_2 \subset X$ are disjoint $G$-invariant closed sets, then $\pi(W_1)$ and $\pi(W_2)$ are disjoint.
\item For any point $x \in (X \dquot G)(k)$, the fibre $\pi^{-1}(x)$ contains a unique closed $G$-orbit.
\item For any affine open subset $U \subset X \dquot G$, there is a natural identification $U = \pi^{-1}(U) \dquot G$.
\end{enumerate}
\end{proposition}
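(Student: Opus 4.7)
The plan is to derive (i)--(iv) from a single classical separation lemma for reductive actions, and to handle (v) by a direct computation with invariants.

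The key input, a consequence of the geometric reductivity of $G$ (Mumford--Nagata--Haboush), is the following: if $W_1, W_2 \subset X$ are disjoint $G$-invariant closed subsets, then there exists $f \in k[X]^G$ with $f|_{W_1} = 0$ and $f|_{W_2} = 1$. This is proved by applying geometric reductivity to the $G$-equivariant surjection $k[X] \twoheadrightarrow k[W_1] \oplus k[W_2]$ and lifting the $G$-fixed idempotent $(0,1)$; in positive characteristic one first lifts some $p$-power and then exploits the fact that $(0,1)$ is idempotent to descend. I would either establish this lemma or cite a standard reference in geometric invariant theory. Given it, (iii) is immediate: the separating invariant $f$ descends to a function on $X \dquot G$ that vanishes on $\pi(W_1)$ and equals $1$ on $\pi(W_2)$. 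Property (ii) follows similarly: after base change to an algebraic closure to test closedness on points, given $y \notin \pi(W)$ the subsets $W$ and $\pi^{-1}(y)$ are disjoint, closed, and $G$-invariant, and the separating invariant descends to a function vanishing on $\pi(W)$ but nonzero at $y$.

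For surjectivity at $K$-points in (i), I argue by contradiction. If $y \in (X \dquot G)(K)$ has empty fibre then $1 \in \ffrm_y \cdot k[X]_K$, giving an expression $1 = \sum \pi^{\ast}(f_i) g_i$ with $f_i \in \ffrm_y$ and $g_i \in k[X]_K$. Geometric reductivity provides, after possibly replacing $1$ by a $p$-power (which is still a unit), an expression with $g_i \in k[X]_K^G = k[X \dquot G]_K$, forcing $\ffrm_y$ to contain a unit. Property (iv) then follows formally: the fibre $\pi^{-1}(y)$ is a nonempty closed $G$-invariant subset, so by Noetherian induction on dimension of orbit closures it contains at least one closed orbit, and two distinct closed orbits in the same fibre would be disjoint closed $G$-invariants mapping to a single point, contradicting (iii).

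For (v), the map $\pi$ is affine, so $\pi^{-1}(U) = U \times_{X \dquot G} X$ and $k[\pi^{-1}(U)] = k[U] \otimes_{k[X]^G} k[X]$; it therefore suffices to show that the formation of $G$-invariants commutes with the flat base change $k[X]^G \to k[U]$. Writing $k[U]$ as a filtered colimit of localizations of $k[X]^G$, this reduces to the case of a single localization, which is straightforward since $G$ acts trivially on the multiplicative set being inverted. The main obstacle throughout is the characteristic-$p$ subtlety, which prevents naive use of a Reynolds operator in the separation lemma and in step (i); but the $p$-power refinements needed are by now standard, so no essentially new difficulty arises in this proposition.
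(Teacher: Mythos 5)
The paper does not prove this proposition at all: it simply cites \cite[Theorem 3]{Ses77}. What you have written is essentially the standard derivation of these facts from geometric reductivity (Mumford--Nagata--Haboush/Seshadri), i.e.\ a reconstruction of the proof behind the citation. Your treatment of (i)--(iv) is sound: the separation lemma via lifting the idempotent $(0,1)$ (with the $p$-power descent), the deduction of (ii) by separating $W$ from a fibre $\pi^{-1}(y)$ (note you need (i) first, so that this fibre is nonempty, and you should invoke constructibility of $\pi(W)$ plus flat descent of closedness along $k \to \overline{k}$ to pass from closed points to the topological statement), the Nagata-type lemma $I\,k[X] \cap k[X]^G \subseteq \sqrt{I}$ for (i), and minimal-dimension orbits plus (iii) for (iv). One recurring point you should make explicit is that $(k[X]\otimes_k K)^{G_K} = k[X]^G \otimes_k K$, which is needed both in (i) and in the base-change steps of (ii); it follows from flatness of $K$ over $k$ since invariants are the kernel of $a \mapsto \sigma(a) - a\otimes 1$.

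There is a genuine error in your argument for (v). You claim that for an affine open $U \subset X \dquot G$ the ring $k[U]$ is a filtered colimit of localizations of $k[X]^G$. This is false for a general affine open: $\cO(U)$ is an equalizer (an inverse limit) over a covering by principal opens $D(f_i)$, not a direct limit of localizations, and affine opens need not be principal. The step is unnecessary, however: $U \hookrightarrow X\dquot G$ is an open immersion, so $k[U]$ is flat over $k[X]^G$, and the same kernel-of-coaction argument as above shows that formation of invariants commutes with \emph{any} flat base change on $k[X]^G$. Hence $k[\pi^{-1}(U)]^G = \bigl(k[X]\otimes_{k[X]^G} k[U]\bigr)^G = k[X]^G \otimes_{k[X]^G} k[U] = k[U]$ directly, which is exactly the form of the statement the paper later records over a DVR in Proposition \ref{prop_git_over_a_ring}(iv). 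With that repair, your proof is complete.
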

\begin{proof}
See \cite[Theorem 3]{Ses77}.
\end{proof}
Richardson studied the varieties $G^n$, with $G$ acting by diagonal conjugation \cite{Ric88}. His results were extended to characteristic $p$ by Bate, Martin, and R\"ohrle \cite{Bat05}. We now recall some of these results.
\begin{definition}
Let $G$ be a reductive group over $k$, and let $H \subset G$ be a closed linear algebraic subgroup. Suppose that $k$ is algebraically closed.
\begin{enumerate} \item 
We say that $H$ is $G$-completely reducible if for any parabolic subgroup $P \subset G$ containing $H$, there exists a Levi subgroup of $P$ containing $H$. We say that $H$ is $G$-irreducible if there is no proper parabolic subgroup of $G$ containing $H$.
\item We say that $H$ is strongly reductive in $G$ if $H$ is not contained in any proper parabolic subgroup of $Z_G(S)$, where $S \subset Z_G(H)$ is a maximal torus.
\end{enumerate}
\end{definition}
When the overgroup $G$ is clear from the context, we will say simply that $H$ is completely reducible (resp. strongly reductive). 
\begin{theorem}\label{thm_richardson_simultaneous_conjuation_and_stability} Let $G$ be a reductive group over $k$, and suppose that $k$ is algebraically closed. Let $x = (g_1, \dots, g_n)$ be a tuple in $G^n(k)$, and let $H \subset G$ be the smallest closed subgroup containing each of $g_1, \dots, g_n$.
\begin{enumerate}
\item The $G$-orbit of $x$ in $G^n$ is closed if and only if $H$ is strongly reductive, if and only if $H$ is $G$-completely reducible.
\item The $G$-orbit of $x$ in $G^n$ is stable (i.e.\ closed, with $Z_G(x)$ finite modulo $Z_G$) if and only if $H$ is $G$-irreducible.
\end{enumerate}
\end{theorem}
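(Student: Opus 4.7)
The plan is to deduce both parts from the Hilbert--Mumford numerical criterion by translating the behaviour of one-parameter subgroups acting on $G^n$ into parabolic-subgroup data. For any cocharacter $\lambda \colon \bbG_m \to G$, the set $P_\lambda$ of $g \in G$ for which $\lim_{t \to 0}\lambda(t) g \lambda(t)^{-1}$ exists is a parabolic of $G$, with Levi $L_\lambda = Z_G(\lambda)$ and unipotent radical $U_\lambda$; conversely every parabolic of $G$ and every Levi decomposition thereof arises in this way. For the diagonal conjugation action on $G^n$, the limit $x' := \lim_{t \to 0}\lambda(t) \cdot x$ therefore exists if and only if $H \subset P_\lambda$, and when it does, $x' \in L_\lambda^n$. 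By the Hilbert--Mumford criterion, $G \cdot x$ is closed if and only if every such limit lies in $G \cdot x$, and is stable if and only if in addition no non-trivial cocharacter fixes $x$ modulo $Z_G$.

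For part (i), suppose first that $G \cdot x$ is closed and that $H \subset P = P_\lambda$. Then $x' \in L_\lambda^n$ lies in $G \cdot x$, so $x' = h \cdot x$ for some $h \in G$. A Kempf--Rousseau-type refinement of Hilbert--Mumford allows $h$ to be chosen in $P_\lambda$; writing $h = u \ell$ with $u \in U_\lambda$ and $\ell \in L_\lambda$ and unwinding yields $u^{-1} H u \subset L_\lambda$, so $\mathrm{Int}(u)(L_\lambda)$ is a Levi of $P$ containing $H$, and $H$ is $G$-completely reducible. Conversely, if $H$ is $G$-completely reducible and $\lambda$ is a cocharacter for which $x'$ exists, then $H \subset P_\lambda$, and some Levi $L = \mathrm{Int}(u)(L_\lambda)$ with $u \in U_\lambda$ contains $H$; the tuple $u^{-1} \cdot x$ then lies in $L_\lambda^n$, and since $\mathrm{Int}(\lambda(t))$ contracts $U_\lambda$ to the identity, a short calculation gives $x' = u^{-1} \cdot x \in G \cdot x$, so $G \cdot x$ is closed. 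The remaining equivalence with strong reductivity is the main theorem of Bate--Martin--R\"ohrle \cite{Bat05}, which I would invoke as a black box; this is the technical crux in positive characteristic, where averaging arguments available in characteristic zero fail and one must proceed by delicate parabolic-descent arguments on cocharacters.

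Turning to part (ii), the orbit $G \cdot x$ is stable if and only if it is closed and $Z_G(x)/Z_G$ is finite. Since $Z_G(x)$ coincides with the centralizer $Z_G(H)$, finiteness modulo $Z_G$ is equivalent to $Z_G(H)^\circ \subset Z_G$, i.e.\ to $Z_G(H)$ containing no torus not already inside $Z_G$. If $H$ is $G$-irreducible then $H$ is \emph{a fortiori} $G$-completely reducible, so (i) gives closedness; and a torus $S \subset Z_G(H)$ not contained in $Z_G$ would force $H \subset Z_G(S)$, a Levi of a proper parabolic, contradicting irreducibility. Conversely, if the two stability conditions hold and $H$ lies in some proper parabolic $P$, then by (i) a Levi $L \subset P$ contains $H$, and the centre of $L$ contains a torus $S$ not contained in $Z_G$ with $S \subset Z_G(L) \subset Z_G(H)$, contradicting finiteness. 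The main obstacle throughout is therefore the closedness $\Rightarrow$ $G$-complete reducibility implication of (i) in positive characteristic; the rest is a translation between the cocharacter language of GIT and the parabolic--Levi language of reductive groups.
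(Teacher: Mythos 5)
Your proposal is correct in outline but takes a genuinely different route from the paper: the paper proves nothing here, simply citing \cite[Theorem 16.4, Proposition 16.7]{Ric88} for the characterizations of closed and stable orbits in terms of strong reductivity and $G$-irreducibility, and \cite[Theorem 3.1]{Bat05} for the equivalence of strong reductivity with $G$-complete reducibility. You instead reconstruct the geometric argument (essentially the one underlying those references): translating cocharacters into parabolics with their Levi decompositions, and running the Kempf--Hesselink form of the Hilbert--Mumford criterion on $G^n$. What your approach buys is a self-contained proof of ``closed $\Leftrightarrow$ $G$-cr'' and of part (ii), at the cost of re-doing work that the paper outsources; what the paper's approach buys is brevity and a clean division of labour between Richardson's GIT results and the Bate--Martin--R\"ohrle comparison theorem.

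Two steps in your sketch deserve more care. First, the assertion that $h$ with $x' = h\cdot x$ can be chosen in $P_\lambda$ (equivalently, that a limit $\lim_{t\to 0}\lambda(t)\cdot x$ lying in $G\cdot x$ already lies in $R_u(P_\lambda)\cdot x$) is not a formal consequence of the Hilbert--Mumford criterion; it is a genuine lemma (\cite[Lemma 2.12]{Bat05}, building on Kempf) and should be cited as such rather than waved at as a ``refinement''. Second, in part (ii) you identify finiteness of $Z_G(x)/Z_G$ with the absence of a non-central torus in $Z_G(H)$; a positive-dimensional connected group need not contain a non-central torus, since $Z_G(H)^\circ$ could a priori have a non-trivial unipotent part. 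This is patched by Borel--Tits: a non-trivial connected unipotent subgroup of $Z_G(H)$ would place $H$ inside a proper parabolic, again contradicting $G$-irreducibility. With these two points made precise, your argument is a valid substitute for the paper's citations.
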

\begin{proof}
The characterization in terms of $G$-strong reductivity or irreducibility is \cite[Theorem 16.4, Proposition 16.7]{Ric88}. The equivalence between strong reductivity and $G$-complete reducibility is \cite[Theorem 3.1]{Bat05}.
\end{proof}
We apply these techniques to representation theory as follows. 
\begin{definition}\label{def_irreducibility_and_strong_irreducibility}
Let $\Gamma$ be an abstract group, and let $G$ be a reductive group over $k$.
\begin{enumerate} \item A homomorphism $\rho : \Gamma \to G(k)$ is said to be absolutely $G$-completely reducible (resp. absolutely $G$-irreducible) if the Zariski closure of $\rho(\Gamma)$ is $G$-completely reducible (resp. $G$-irreducible) after extension of scalars to an algebraic closure of $k$.
\item A homomorphism $\rho : \Gamma \to G(k)$ is said to be absolutely strongly $G$-irreducible if it is absolutely $G$-irreducible and for any other homomorphism $\rho' : \Gamma \to G(k)$ such that for all $f \in k[G]^G$ and for all $\gamma \in \Gamma$ we have $f(\rho(\gamma)) = f(\rho'(\gamma))$, $\rho'$ is also absolutely $G$-irreducible.
\end{enumerate}
\end{definition}
We observe that the Zariski closure of $\rho(\Gamma)$ is always a linear algebraic group, and that formation of this Zariski closure commutes with extension of the base field. The notions of $G$-irreducibility and $G$-complete reducibility have been studied by Serre \cite{Ser05}. We will occasionally use the word `semisimple' as a synonym for `$G$-completely reducible'. The notion of strong $G$-irreducibility is slightly unnatural, but we will require it during later arguments.

We now assume for the remainder of \S \ref{sec_invariants_over_a_field} that $k$ is algebraically closed and that $G$ is a reductive group over $k$. If $\rho : \Gamma \to G(k)$ is any representation, then we can define its semisimplification $\rho^\text{ss}$ as follows: choose a parabolic subgroup $P$ containing the image of $\rho(\Gamma)$, and minimal with respect to this property. Choose a Levi subgroup $L \subset P$. Then the composite $\rho^\text{ss} : \Gamma \to P(k) \to L(k) \to G(k)$ is $G$-completely reducible, and (up to $G(k)$-conjugation) independent of the choice of $P$ and $L$ (see \cite[Proposition 3.3]{Ser05}). This operation has an interpretation in invariant theory as well:
\begin{proposition}\label{prop_contraction_to_levi_quotient}
Let $g = (g_1, \dots, g_n) \in  G^n(k)$, let $x = \pi(g) \in (G^n \dquot G)(k)$, and let $P$ be a parabolic subgroup of $G$ minimal among those containing $g_1, \dots, g_n$. Let $L$ be a Levi subgroup of $P$. Then:
\begin{enumerate}
\item There exists a cocharacter $\lambda : \bbG_m \to G$ such that $L = Z_G(\lambda)$ and $P = \{ x \in G \mid \lim_{t \to 0} \lambda(t) x \lambda(t)^{-1} \text{ exists} \}$, with unipotent radical $\{ x \in G \mid \lim_{t \to 0} \lambda(t) x \lambda(t)^{-1} = 1 \}$.  
\item Let $g' = (g'_1, \dots, g'_n)$, where $g'_i = \lim_{t \to 0} \lambda(t) g_i \lambda(t)^{-1}$. Then $g'$ has a closed orbit in $G^n$ and $\pi(g') = \pi(g)$. Therefore $G \cdot g'$ is the unique closed orbit of $G$ in $\pi^{-1}(x)$. 
\end{enumerate}
\end{proposition}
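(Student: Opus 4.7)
Part (i) I would take as standard structure theory: given a parabolic $P$ with Levi factor $L$, one produces a cocharacter $\lambda$ with image in the connected centre of $L$ such that $L = Z_G(\lambda)$ and $P = P(\lambda)$ is the attractor parabolic, with unipotent radical consisting of those $x \in G$ with $\lim_{t \to 0} \lambda(t) x \lambda(t)^{-1} = 1$. This is classical, e.g.\ from \cite{Con14}.

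For part (ii), the key observation is that the map $P \to L$ sending $x \mapsto \lim_{t \to 0} \lambda(t) x \lambda(t)^{-1}$ is precisely the Levi projection $\pi_P \colon P \to P/N \cong L$, where $N$ is the unipotent radical of $P$. Indeed, writing $x = \ell u$ with $\ell \in L$, $u \in N$, we have $\lambda(t) x \lambda(t)^{-1} = \ell \cdot \lambda(t) u \lambda(t)^{-1}$ (since $L$ centralizes $\lambda$), and the second factor tends to $1$. Therefore $g' = (\pi_P(g_1), \dots, \pi_P(g_n)) \in L^n(k) \subset G^n(k)$. The map $\bbG_m \to G^n$, $t \mapsto \lambda(t) \cdot g$, extends to a morphism $\bbA^1 \to G^n$ carrying $0$ to $g'$. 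Post-composing with $\pi \colon G^n \to G^n \dquot G$ gives a morphism $\bbA^1 \to G^n \dquot G$ which by $G$-invariance is constant of value $\pi(g)$ on $\bbG_m$, hence identically $\pi(g)$. Evaluating at $t = 0$ yields $\pi(g') = \pi(g)$.

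To conclude that $G \cdot g'$ is closed, I would apply Theorem \ref{thm_richardson_simultaneous_conjuation_and_stability}: it suffices to show the Zariski closure $H'$ of the subgroup generated by the $g'_i$ is $G$-completely reducible. By construction $H' = \pi_P(H) \subset L$, where $H$ is the corresponding closure for the original tuple $g$. I claim $H'$ is $L$-irreducible: if $H' \subset Q'$ for a proper parabolic $Q' \subsetneq L$, then $Q'N$ is a parabolic of $G$ properly contained in $P$ and containing $H$ (since the fibres of $\pi_P$ through $H$ lie in $HN \subset Q'N$), contradicting minimality of $P$. In particular $H'$ is trivially $L$-completely reducible (its only containing parabolic of $L$ is $L$ itself). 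Finally, uniqueness of the closed orbit in $\pi^{-1}(\pi(g))$ is Proposition \ref{prop_git_over_a_field}(iv), applied once we know $G \cdot g'$ is closed.

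The only genuinely non-formal input is the implication ``$L$-completely reducible $\Rightarrow$ $G$-completely reducible'' for a subgroup of a Levi subgroup, which is a serious result in positive characteristic and is what makes the whole framework of $G$-complete reducibility useful; happily it is precisely the content of \cite[Thm.~3.1]{Bat05} (via the equivalence with strong reductivity), which is already cited above. Beyond this, the argument is really just carefully interpreting the cocharacter limit as the Levi projection and then chasing definitions.
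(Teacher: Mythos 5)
Your proof is correct and follows essentially the same route as the paper: identify the cocharacter limit with the Levi projection $P \to L$, use minimality of $P$ to get $L$-irreducibility of the image, and invoke ``$L$-irreducible $\Rightarrow$ $G$-completely reducible'' (the paper cites \cite[Proposition 3.2]{Ser05} for this step rather than deriving it from \cite{Bat05}) together with Theorem \ref{thm_richardson_simultaneous_conjuation_and_stability} and Proposition \ref{prop_git_over_a_field}(iv). Your explicit $\bbA^1$-extension argument for $\pi(g') = \pi(g)$ is just a spelled-out version of what the paper leaves implicit.
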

\begin{proof}
The first part follows from \cite[Th\'eor\`eme 4.15]{Bor65}. For the second part, it follows from the definition of $g'$ that $g' \in \pi^{-1}(x)(k)$ and that $g'$ equals the image of $g \in P(k)$ in $L(k)$ (viewing $L$ as a quotient of $P$). The minimality of $P$ implies that the subgroup of $L$ generated by the components of $g'$ is $L$-irreducible, hence $G$-completely reducible (by \cite[Proposition 3.2]{Ser05}), hence $g'$ has a closed orbit in $G^n$. 
\end{proof}
\begin{proposition}\label{prop_parabolics_of_minimal_dimension}
Let $\Gamma \subset G(k)$ be a subgroup. Then:
\begin{enumerate}
\item Let $P \subset G$ be a parabolic subgroup which contains $\Gamma$, and which is minimal with respect to this property. If $\Gamma$ is $G$-completely reducible, and $L \subset P$ is a Levi subgroup containing $\Gamma$, then $\Gamma \subset L$ is $L$-irreducible.
\item The parabolic subgroups $P \subset G$ which contain $\Gamma$, and which are minimal with respect to this property, all have the same dimension. 
\end{enumerate}
\end{proposition}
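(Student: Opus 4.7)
For part (i), I would argue by contradiction. Let $N$ denote the unipotent radical of $P$, so $P = L \ltimes N$. If $Q \subsetneq L$ were a proper parabolic of $L$ containing $\Gamma$, then $Q \cdot N$ would be a parabolic subgroup of $G$ properly contained in $P$ but still containing $\Gamma$, contradicting the minimality of $P$. Hence no such $Q$ exists, and $\Gamma$ is $L$-irreducible.

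For part (ii), I would first reduce to the finitely generated case: by Noetherianity of $G$, there exist $g_1, \ldots, g_n \in \Gamma$ whose subgroup has the same Zariski closure as $\Gamma$, and a parabolic subgroup of $G$ contains $\Gamma$ if and only if it contains $g := (g_1, \ldots, g_n) \in G^n$. Given two minimal parabolics $P_1, P_2$ containing $\Gamma$, apply Proposition \ref{prop_contraction_to_levi_quotient}(i) to pick cocharacters $\lambda_i : \bbG_m \to G$ with $P_i = P(\lambda_i)$ and $L_i = Z_G(\lambda_i)$. Form the contractions $g^{(i)} := \lim_{t \to 0} \lambda_i(t)\, g\, \lambda_i(t)^{-1} \in L_i^n$. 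By Proposition \ref{prop_contraction_to_levi_quotient}(ii), each orbit $G \cdot g^{(i)}$ is closed in $G^n$ and satisfies $\pi(g^{(i)}) = \pi(g)$. Uniqueness of the closed $G$-orbit in the fibre $\pi^{-1}(\pi(g))$ (Proposition \ref{prop_git_over_a_field}(iv)) then yields $h \in G(k)$ with $h g^{(1)} h^{-1} = g^{(2)}$. Moreover, the proof of Proposition \ref{prop_contraction_to_levi_quotient}(ii) establishes that $\Gamma^{(i)} := \langle g_1^{(i)}, \ldots, g_n^{(i)} \rangle \subset L_i$ is $L_i$-irreducible, hence $\Gamma^{(2)} = h \Gamma^{(1)} h^{-1}$ is $(h L_1 h^{-1})$-irreducible as well.

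The heart of the argument is then the following sublemma: if a subgroup $H$ of $G$ lies in two Levi subgroups $L$ and $L'$ of $G$ and is irreducible in each, then $L$ and $L'$ are $G$-conjugate. Applied to $H = \Gamma^{(2)}$ with $L = L_2$ and $L' = h L_1 h^{-1}$, this yields $\dim L_1 = \dim L_2$; and since any parabolic of $G$ with Levi $L$ has dimension $\dim L + (\dim G - \dim L)/2$, we conclude $\dim P_1 = \dim P_2$. To prove the sublemma, I would verify that $Z(L)^0$ is a maximal torus of $Z_G(H)^0$: any torus $T$ with $Z(L)^0 \subseteq T \subseteq Z_G(H)^0$ must lie in $Z_G(Z(L)^0) = L$, hence in $Z_L(H)^0$, and Theorem \ref{thm_richardson_simultaneous_conjuation_and_stability}(ii)---applied to the $L$-irreducible tuple $g^{(i)}$, whose $L$-orbit in $L^n$ is stable, so that $Z_L(H)/Z(L)$ is finite---forces $Z_L(H)^0 = Z(L)^0$, whence $T = Z(L)^0$. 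The same argument gives that $Z(L')^0$ is a maximal torus of $Z_G(H)^0$, so the two tori are $G$-conjugate, and therefore $L = Z_G(Z(L)^0)$ and $L' = Z_G(Z(L')^0)$ are $G$-conjugate. The main obstacle is this sublemma, the crux of which is translating $L$-irreducibility into the stability statement $Z_L(H)^0 = Z(L)^0$ via Theorem \ref{thm_richardson_simultaneous_conjuation_and_stability}(ii).
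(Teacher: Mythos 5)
Your argument is correct, but it takes a genuinely different route from the paper, whose entire proof is a citation of \cite[Proposition 3.3]{Ser05}: Serre's proof shows directly that any two minimal parabolics containing $\Gamma$ share a common Levi subgroup, and the dimension formula $\dim P = \tfrac{1}{2}(\dim G + \dim L)$ finishes the job. You instead stay inside the GIT framework the paper has already set up: contraction along the cocharacters of Proposition \ref{prop_contraction_to_levi_quotient} produces $L_i$-irreducible tuples lying over the same point of $G^n \dquot G$, uniqueness of the closed orbit in the fibre (Proposition \ref{prop_git_over_a_field}) conjugates them into each other, and your sublemma — that a subgroup irreducible in two Levi subgroups forces those Levis to be conjugate, via the identification $Z_L(H)^\circ = Z(L)^\circ$ coming from the stability criterion of Theorem \ref{thm_richardson_simultaneous_conjuation_and_stability}(ii) and the conjugacy of maximal tori of $Z_G(H)$ — then gives $\dim L_1 = \dim L_2$, which is even slightly stronger than what the dimension count needs. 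Both arguments terminate in the same formula for $\dim P$, and your part (i) (pushing a proper parabolic $Q \subsetneq L$ up to $QN \subsetneq P$) is the standard argument. What your version buys is self-containedness relative to the tools already developed in \S\ref{sec_invariant_theory}, at the cost of length; one small point worth making explicit is that $Z_G(H)$ need not be smooth in positive characteristic, so the conjugacy of maximal tori should be applied to the (reduced identity component of the) centralizer over the algebraically closed field $k$, which is harmless since all tori are contained in it.
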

\begin{proof}
See \cite[Proposition 3.3]{Ser05} and its proof, which shows that if $P, P'$ are parabolic subgroups of $G$ containing $\Gamma$, minimal with this property, that $P$ and $P'$ contain a common Levi subgroup; this implies that $P$, $P'$ have the same dimension, because of the formula $\dim P = \frac{1}{2}(\dim G + \dim L)$.
\end{proof}
We conclude this section with some remarks about separability. A closed linear algebraic subgroup $H \subset G$ is said to be separable in $G$ if its scheme-theoretic centralizer $Z_G(H)$ is smooth. If $H$ is topologically generated by elements $g_1, \dots, g_n \in G(k)$, then $H$ is separable if and only if the orbit of $(g_1, \dots, g_n)$ inside $G^n$ is separable.
\begin{theorem}\label{thm_all_subgroups_are_separable} Suppose that one of the following holds:
\begin{enumerate}
\item The characteristic of $k$ is very good for $G$.
\item $G$ admits a faithful representation $V$ such that $(\GL(V), G)$ is a reductive pair, i.e.\ $\frg \subset \frg\frl(V)$ admits a $G$-invariant complement.
\end{enumerate}
Then any linear algebraic closed subgroup of $G$ is separable in $G$.
\end{theorem}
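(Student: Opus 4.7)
The plan is to show, for each closed linear algebraic subgroup $H \subset G$, that the scheme-theoretic centralizer $Z_G(H)$ is smooth, equivalently that $\dim Z_G(H) = \dim \frg^H$ (with $\frg^H$ the $H$-invariants under the adjoint action), since the containment $\Lie Z_G(H) \subset \frg^H$ always holds.

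First I would reduce to the finitely generated case. By Noetherianity of $G$, the descending chain of scheme-theoretic intersections $\bigcap_{h \in S} Z_G(h)$ indexed by finite $S \subset H(k)$ stabilizes at some $\{g_1,\dots,g_n\}$; since $k$ is algebraically closed and $H$ is smooth, $H(k)$ is Zariski-dense in $H$, and one verifies that this stabilized intersection coincides with $Z_G(H)$ scheme-theoretically. Setting $g = (g_1,\dots,g_n) \in G^n$, the centralizer $Z_G(H)$ is then the scheme-theoretic stabilizer of $g$ under the diagonal conjugation action of $G$ on $G^n$, and its smoothness is equivalent to separability of the $G$-orbit $G \cdot g$. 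Under the right-translation identification $T_g(G^n) \cong \frg^n$, the orbit-map differential reads $d\mu_g^G(X) = (X - \Ad(g_i)(X))_{i=1}^n$, with kernel $\frg^H$, so separability is the assertion $T_g(G \cdot g) = \im d\mu_g^G$.

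For hypothesis (ii), I would follow the reductive-pair argument (cf.\ \cite{Bat05}). Viewing $g$ inside $\GL(V)^n$ via the faithful representation, centralizers in $\GL(V)$ are automatically smooth, being of the form $\End_H(V) \cap \GL(V)$ for the linear subspace $\End_H(V) \subset \End(V)$; thus $T_g(\GL(V) \cdot g) = \im d\mu_g^{\GL(V)}$. The $G$-invariant decomposition $\frg\frl(V) = \frg \oplus \ffrm$ is preserved by each $\Ad(g_i)$, so $d\mu_g^{\GL(V)}$ respects the decomposition, sending $\frg$ into $\frg^n$ and $\ffrm$ into $\ffrm^n$. Hence $\im d\mu_g^{\GL(V)} \cap \frg^n = \im d\mu_g^G$, and the chain of inclusions
\[ \im d\mu_g^G \subset T_g(G \cdot g) \subset T_g(\GL(V) \cdot g) \cap T_g(G^n) = \im d\mu_g^{\GL(V)} \cap \frg^n = \im d\mu_g^G \]
collapses to equality, yielding separability.

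For hypothesis (i), the strategy is to reduce to (ii) by producing a faithful $V$ for which $(\GL(V), G)$ is a reductive pair. In very good characteristic the centre $Z_G$ is smooth and $\frg$ carries a non-degenerate $G$-invariant symmetric bilinear form generalizing the Killing form; combining the adjoint action of $G$ on $\frg$ with a suitable faithful representation of $Z_G$, one can produce a faithful $V$ so that $\frg \hookrightarrow \frg\frl(V)$ admits a $G$-equivariant complement. I expect this construction to be the main technical obstacle, typically requiring reduction to the simple case and a case-by-case verification along the classification of simple algebraic groups in good characteristic. Once such a $V$ is in hand, the argument for (ii) applies and the theorem follows.
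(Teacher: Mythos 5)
The paper itself does not prove this statement: it is quoted directly from Bate--Martin--R\"ohrle--Tange (\cite[Theorem 1.2, Corollary 2.13]{Bat10}), so you are reconstructing the content of the cited reference rather than a proof the authors wrote out. Your treatment of hypothesis (ii) is correct and is exactly the standard Richardson reductive-pair argument underlying \cite[Corollary 2.13]{Bat10}: centralizers of tuples in $\GL(V)$ are open subschemes of linear spaces, hence smooth, and the $\Ad(g_i)$-stable splitting $\frg\frl(V) = \frg \oplus \ffrm$ forces $\im d\mu^{\GL(V)}_g \cap \frg^n = \im d\mu^G_g$; the reduction to finite tuples via Noetherianity and density of $H(k)$ in the smooth group $H$ is also fine.

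The gap is in hypothesis (i). Your plan is to deduce (i) from (ii) by producing a faithful $V$ with $(\GL(V), G)$ a reductive pair, but you do not construct it, and the construction you gesture at does not go through as stated. The adjoint representation is not faithful when $Z_G \neq 1$, so it cannot serve as $V$ for, say, $\SL_n$ or $\Spin_n$; the natural module is not faithful for $\Spin_n$ either. Moreover, the natural way to split $\frg$ off from $\frg\frl(\frg)$ is the orthogonal complement for the trace form of the adjoint representation, i.e.\ the Killing form, and nondegeneracy of the Killing form is a genuinely different condition from very good characteristic (the Springer--Steinberg form invoked in the paper is not the Killing form). The actual argument in \cite{Bat10} does not find a single reductive pair for $G$: it first reduces to the simple, simply connected (or adjoint) case using that separability of centralizers transfers along \emph{separable isogenies} and direct products --- an ingredient absent from your sketch and essential for types $B$, $D$, $E_6$, $E_7$ --- and only then exhibits reductive pairs case by case (natural modules for classical types in the image of the isogeny, the adjoint module for the exceptional types where $G^{\mathrm{sc}} = G^{\mathrm{ad}}$). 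Without the isogeny-transfer lemma and the case-by-case verification, part (i) remains unproved.
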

\begin{proof}
This is \cite[Theorem 1.2]{Bat10} and \cite[Corollary 2.13]{Bat10}.
\end{proof}
We recall that if $G$ is simple, then the characteristic $l$ is said to be very good if it satisfies the following conditions, relative to the (absolute) root system of $G$:
\begin{center}
\begin{tabular}{|l|l|}
\hline
Condition     & Types              \\ \hline
$l \nmid n+1$ & $A_n$              \\ \hline
$l \neq 2$    & $B, C, D, E, F, G$ \\ \hline
$l \neq 3$    & $E, F, G$          \\ \hline
$l \neq 5$    & $E_8$              \\ \hline
\end{tabular}
\end{center}
Then $\frg$ is a simple Lie algebra, which is self-dual as a representation of $G$ (because there exists a non-degenerate $G$-invariant symmetric bilinear form on $G$; see for example \cite[Lemma I.5.3]{Spr70}). In general, we say that $l$ is a very good characteristic for $G$ if it is very good for each of the simple factors of $G$. In this case $\frg$ is a semisimple Lie algebra and the map $G \to G^\text{ad}$ is smooth; indeed, its kernel is the centre of $G$, which is smooth by Theorem \ref{thm_all_subgroups_are_separable}. 

We will often impose the condition that $l = \text{char } k$ is prime to the order of the Weyl group of $G$; this is convenient, as the following lemma shows.
\begin{lemma}
Suppose that $l = \text{char }k$ is positive and prime to the order of the Weyl group of $G$. Then $l$ is a very good characteristic for $G$.
\end{lemma}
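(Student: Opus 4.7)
The plan is a direct case-by-case check: the definition of ``very good'' bans certain small primes depending on the type of each simple factor of $G$, and in every case those banned primes already divide the order of the Weyl group of that factor. Hence, contrapositively, a prime $l$ coprime to $|W(G)|$ automatically avoids all the bad primes.

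First I would reduce to the case where $G$ is simple. By definition, $l$ is a very good characteristic for $G$ if it is very good for each simple factor. On the other hand, the Weyl group of $G$ decomposes as the product of the Weyl groups of its simple factors, so a prime dividing the order of the Weyl group of any simple factor divides $|W(G)|$. Thus if $l \nmid |W(G)|$, then $l$ is coprime to $|W|$ for each simple factor, and it suffices to treat the simple case.

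Now I would go through the classification. For type $A_n$ the Weyl group is the symmetric group $S_{n+1}$ of order $(n+1)!$; if $l$ is a prime dividing $n+1$, then $l \leq n+1$ so $l \mid (n+1)!$, whence $l \nmid |W|$ forces $l \nmid n+1$. For types $B_n$, $C_n$, $D_n$ the Weyl group has even order (a signed or even-signed permutation group), so $l \nmid |W|$ forces $l \neq 2$. For the exceptional types $E_6, E_7, E_8, F_4, G_2$ one reads off from the standard tables of Weyl group orders
\[ |W(G_2)| = 12,\quad |W(F_4)| = 1152,\quad |W(E_6)| = 51840, \]
\[ |W(E_7)| = 2903040,\quad |W(E_8)| = 696729600, \]
that all of these are divisible by $6$, so $l \nmid |W|$ forces $l \neq 2, 3$; and $|W(E_8)|$ is in addition divisible by $5$, so in type $E_8$ we also get $l \neq 5$. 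Assembling the cases gives the lemma.

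There is no real obstacle here: the argument is purely combinatorial, and the only point where one must be slightly careful is keeping the conventions of the ``very good'' table aligned with the factorization of $|W|$ for each simple type. In particular no structural result about reductive group schemes is needed.
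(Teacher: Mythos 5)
Your argument is correct and is essentially the paper's proof, which simply cites ``inspection of the tables [Bou68, Planches I--IX]''; you have just written out the inspection explicitly (reduction to simple factors, then checking that each banned prime from the very-good table divides the corresponding Weyl group order). No issues.
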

\begin{proof}
Inspection of the tables \cite[Planches I -- IX]{Bou68}.
\end{proof}

\subsection{Invariants over a DVR}\label{sec_invariants_over_a_DVR}

Let $l$ be a prime, and let $E \subset \overline{\bbQ}_l$ be a coefficient field, with ring of integers $\cO$ and residue field $k$. If $Y$ is an $\cO$-scheme of finite type, and $y \in Y(k)$, then we write $Y^{\wedge, y}$ for the functor $\cC_\cO \to \Sets$ which sends an Artinian local $\cO$-algebra $A$ with residue field $k$ to the pre-image of $y$ under the map $Y(A) \to Y(k)$. This functor is pro-represented by the complete Noetherian local $\cO$-algebra $\widehat{\cO}_{Y, y}$. We observe that if $f : Y \to Z$ is a morphism of $\cO$-schemes of finite type over $\cO$, then $f$ is \'etale at $y$ if and only if the induced natural transformation $Y^{\wedge, y} \to Z^{\wedge, f(y)}$ is an isomorphism. 

Let $G$ be a reductive group over $\cO$. Let $X$ be an integral affine flat $\cO$-scheme of finite type on which $G$ acts, and let $x \in X(k)$. We write $G^{\wedge} = G^{\wedge, e}$ for the completion at the identity; it is a group functor, and there is a natural action 
\begin{equation}\label{eqn_completed_action} G^{\wedge} \times X^{\wedge, x} \to X^{\wedge, x}.
\end{equation}
We define the categorical quotient $X \dquot G = \Spec \cO[X]^G$ and the quotient morphism $\pi_X : X \to X \dquot G$. This space has a number of good properties that mirror what happens in the case of field coefficients:
\begin{proposition}\label{prop_git_over_a_ring}
Let notation be as above.
\begin{enumerate}
\item The space $X \dquot G$ is an integral $\cO$-scheme of finite type, and the quotient map $\pi : X \to X \dquot G$ is $G$-equivariant. If $X$ is normal, then $X \dquot G$ is normal.
\item For any homomorphism $\cO \to K$ to an algebraically closed field, the map $\pi : X(K) \to (X \dquot G)(K)$ is surjective and identifies the set  $(X \dquot G)(K)$ with the quotient of $X(K)$ by the following equivalence relation: $x_1 \sim x_2$ if and only if the closures of the $G$-orbits of $x_1$ and $x_2$ inside $X \otimes_\cO K$ have non-empty intersection. 

In particular, each orbit in $X(K)$ has a unique closed orbit in its closure, and $\pi$ induces a bijection between the closed orbits in $X(K)$ and the set $(X \dquot G)(K)$.
\item For all closed $G$-invariant subsets $W \subset X$, $\pi(W)$ is closed; and if $W_1, W_2$ are disjoint closed $G$-invariant subsets of $X$, then $\pi(W_1)$ and $\pi(W_2)$ are disjoint.
\item The formation of invariants commutes with flat base change. More precisely, if $R$ is a flat $\cO$-algebra, then the canonical map $\cO[X]^G \otimes_\cO R \to R[X_R]^{G_R}$ is an isomorphism. In particular, if $G$ acts trivially on $X$ then there is a canonical isomorphism $(G \times X) \dquot G \cong X$ (where $G$ acts on itself by left translation). 
\item Let $x \in X(k)$ have a closed $G_k$-orbit, and let $U \subset X$ be a $G$-invariant open subscheme containing $x$. Then there exists $f \in \cO[X]^G$ such that $f(x) \neq 0$. Let $D_{X \dquot G}(f) \subset X \dquot G$ denote the open subscheme where $f$ is non-vanishing. Then we can moreover choose $f$ so that $D_X(f) = \pi_X^{-1}(D_{X \dquot G}(f)) \subset U$, and there is a canonical isomorphism $D_X(f) \dquot G \cong D_{X \dquot G}(f)$.
\end{enumerate}
\end{proposition}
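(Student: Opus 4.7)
The plan is to reduce each assertion to the corresponding statement over a field (Proposition~\ref{prop_git_over_a_field}) or to cite directly the geometric invariant theory over a Noetherian base developed by Seshadri in \cite{Ses77}, where split reductive groups acting on finite-type affine schemes are treated in essentially this setting. A convenient order is (iv) $\Rightarrow$ (i) $\Rightarrow$ (ii),(iii) $\Rightarrow$ (v).

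First I would establish flat base change (iv). The invariant ring $\cO[X]^G$ is the equalizer in $\cO$-modules of the two maps $\cO[X] \rightrightarrows \cO[G] \otimes_\cO \cO[X]$ given by $f \mapsto \mu^\ast f$ and $f \mapsto 1 \otimes f$, where $\mu : G \times X \to X$ is the action. Tensoring this equalizer with a flat $\cO$-algebra $R$ commutes with the kernel of the difference map, yielding $\cO[X]^G \otimes_\cO R \cong R[X_R]^{G_R}$. The canonical isomorphism $(G \times X) \dquot G \cong X$ in the trivial-action case is then the special instance where $\mu$ coincides with the second projection. Next, (i): finite generation of $\cO[X]^G$ as an $\cO$-algebra is Seshadri's main result in \cite{Ses77}; integrality of $X \dquot G$ is immediate since $\cO[X]^G \subset \cO[X]$ is a subring of a domain; $\cO$-flatness of $X \dquot G$ follows because $\cO[X]^G$ is a submodule of the torsion-free $\cO$-module $\cO[X]$. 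For normality, if $g \in \Frac(\cO[X]^G)$ is integral over $\cO[X]^G$, then $g$ lies in $\Frac(\cO[X])$ and is integral over $\cO[X]$, so $g \in \cO[X]$ by normality of $\cO[X]$; since $\Frac(\cO[X]^G) \subset \Frac(\cO[X])^G$, one has $g \in \cO[X] \cap \Frac(\cO[X])^G = \cO[X]^G$.

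For (ii) and (iii) I would cite Seshadri directly: surjectivity of $\pi$ on $K$-points for any algebraically closed overfield $K$ of $\cO$, the bijection between closed orbits and $(X \dquot G)(K)$, and the closedness and separation properties for $G$-invariant closed subsets are all proved in \cite{Ses77} as part of the formal analogue of Proposition~\ref{prop_git_over_a_field} over a Noetherian base. The key underlying fact is the geometric reductivity of split reductive $G$ over $\cO$, which guarantees that for $G$-stable ideals $I_1, I_2 \subset \cO[X]$ with $I_1 + I_2 = \cO[X]$, one has $I_1^G + I_2^G = \cO[X]^G$; this yields separation of disjoint closed invariants on passage to $X \dquot G$. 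The only mildly delicate point is that in (ii) the homomorphism $\cO \to K$ need not be flat, so flat base change cannot be invoked directly; here Seshadri's arguments with geometric points over an arbitrary Noetherian base are what is needed.

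Finally, (v) follows from (iii) by a straightforward ideal-theoretic argument. Since $X \to \Spec \cO$ is $G$-equivariant with trivial action on the base, the orbit $G \cdot x$ lies in the closed fibre $X_k$ and coincides with the $G_k$-orbit $G_k \cdot x \subset X_k$. By assumption this orbit is closed in $X_k$, and since $X_k$ is closed in $X$, it is closed in $X$. Setting $W = X \setminus U$, which is closed and $G$-invariant, the invariance of $U$ forces $G \cdot x \cap W = \emptyset$. By (iii), $\pi(G \cdot x)$ and $\pi(W)$ are disjoint closed subsets of the affine scheme $X \dquot G$; writing $J_1, J_2$ for their ideals in $\cO[X]^G$, disjointness gives $J_1 + J_2 = \cO[X]^G$, so $1 = g_1 + g_2$ with $g_i \in J_i$, and $f = g_2$ satisfies $f \in I(W) \cap \cO[X]^G$ and $f(\pi(x)) = 1$. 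The equality $D_X(f) = \pi^{-1}(D_{X \dquot G}(f))$ is tautological for invariant $f$, and $D_X(f) \dquot G = D_{X \dquot G}(f)$ follows from (iv) applied to the flat localization $\cO[X]^G \to \cO[X]^G[f^{-1}]$. The main conceptual obstacle is concentrated in the inputs from \cite{Ses77}, particularly the mixed-characteristic geometric reductivity needed to pass from the field case to the $\cO$-coefficient case in parts (ii) and (iii).
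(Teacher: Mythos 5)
Your proposal is correct and follows essentially the same route as the paper: both rest on Seshadri's mixed-characteristic results from \cite{Ses77} for parts (i)--(iv) (the paper cites \cite[Theorem 3]{Ses77} together with the fact that $\cO$ is universally Japanese, and \cite[Lemma 2]{Ses77} for flat base change, which is exactly your equalizer argument), and both deduce (v) from (iii) by separating the closed orbit $G_k \cdot x$ from the closed invariant set $X \setminus U$ with an invariant function. Two small points: the paper first records a $G$-equivariant closed immersion of $X$ into a finite free $G$-module (via \cite[Proposition 3]{Ses77}) as the reduction needed to invoke Seshadri's theorem, which your write-up elides; and in (v) the identification $D_X(f) \dquot G \cong D_{X \dquot G}(f)$ is not literally an instance of (iv) (which concerns base change over $\cO$, not over $\cO[X]^G$), though the same flatness-of-localization argument for the equalizer gives $(\cO[X]_f)^G = (\cO[X]^G)_f$.
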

\begin{proof}
We first observe that $X$ admits a $G$-equivariant closed immersion into $V$, where $V$ is a $G$-module which is finite free as an $\cO$-module. Indeed, we choose $\cO$-algebra generators $f_1, \dots, f_r \in \cO[X]$. Then \cite[Proposition 3]{Ses77} shows that we can find a $G$-submodule $V \subset \cO[X]$ containing $f_1, \dots, f_r$ which is finite as an $\cO$-module. Since $X$ is flat, $V$ is free over $\cO$, and the surjection $S(V^\vee) \to \cO[X]$ then corresponds to the desired $G$-equivariant closed immersion $X \hookrightarrow V$. Most of the above now follows from \cite[Theorem 3]{Ses77} and the fact that $\cO$ is excellent, hence a universally Japanese ring (see \cite[Tag 07QS]{stacks-project}).

The fact that formation of quotient commutes with flat base change is \cite[Lemma 2]{Ses77}. For the final part, we observe that the complement $X - U$ is a $G$-invariant closed subset disjoint from the orbit of $x$, so $\pi_X(X - U) \subset X \dquot G$ is a closed subset not containing $\pi_X(x)$. We can therefore find $f \in \cO[X]^G$ such that $D_{X \dquot G}(f)$ has trivial intersection with $\pi_X(X-U)$. Then $D_X(f)$ satisfies the desired properties.
\end{proof}
Note that (ii) shows that for any homomorphism $\cO \to K$ to an algebraically closed field, the natural map $X_K \dquot G_K \to (X \dquot G)_K$ induces a bijection on $K$-points. This observation will play an important role in our study of pseudocharacters below. However, the algebras $\cO[X]^G \otimes_\cO K$ and $K[X_K]^{G_K}$ are not in general isomorphic. 

We must now establish a special case (Proposition \ref{prop_free_action_on_completion}) of Luna's \'etale slice theorem (\cite[Proposition 7.6]{Bar85}) in mixed characteristic. It seems likely that one can prove a general version of this result using the arguments of \emph{op. cit.}, but to do so here would take us too far afield. We will therefore use these arguments to prove just what we need here.
\begin{lemma}\label{lem_etale_maps_are_etale_on_quotients}
Let $X, Y$ be normal affine integral $\cO$-schemes, flat of finite type, on which $G$ acts. Let $\phi : Y \to X$ be a finite $G$-equivariant morphism, and let $y \in Y(k)$ be a point satisfying the following conditions:
\begin{enumerate}
\item $\phi$ is \'etale at $y$.
\item The orbit $G_k \cdot y$ is closed in $Y_k$.
\item The orbit $G_k \cdot \phi(y)$ is closed in $X_k$.
\item The restriction of $\phi$ to $G_k \cdot y$ is injective at the level of geometric points. 
\end{enumerate}
 Then the induced morphism $\phi \dquot G : Y \dquot G \to X \dquot G$ is \'etale at $\pi_Y(y)$.
\end{lemma}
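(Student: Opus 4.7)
My plan is to check étaleness of $\phi \dquot G$ at $\pi_Y(y)$ by showing that the induced map on complete local rings
\[ \widehat{\cO}_{X \dquot G, \pi_X(\phi(y))} \longrightarrow \widehat{\cO}_{Y \dquot G, \pi_Y(y)} \]
is an isomorphism; equivalently, that the natural transformation of pro-representable functors $(Y \dquot G)^{\wedge, \pi_Y(y)} \to (X \dquot G)^{\wedge, \pi_X(\phi(y))}$ on $\cC_\cO$ is an isomorphism. This suffices because $\phi \dquot G$ is a morphism of finite type between Noetherian schemes.

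The first step is to shrink $Y$ so that $\phi$ is étale on all of $Y$. The étale locus of $\phi$ is open in $Y$ and is $G$-stable by $G$-equivariance, so it contains the entire closed orbit $G_k \cdot y$. Proposition \ref{prop_git_over_a_ring}(v) produces some $f \in \cO[Y]^G$ for which the principal $G$-invariant open $D_Y(f)$ lies in the étale locus and contains $y$, and for which $D_Y(f) \dquot G \cong D_{Y \dquot G}(f)$; so replacing $Y$ by $D_Y(f)$ preserves all hypotheses while leaving the completed local ring at $\pi_Y(y)$ unchanged. After this reduction, étaleness of $\phi$ along $G_k \cdot y$ (obtained from étaleness at $y$ by $G_k$-translation), combined with finiteness and with injectivity on geometric points of this orbit, forces $\phi$ to restrict to an isomorphism of closed orbits $G_k \cdot y \xrightarrow{\sim} G_k \cdot \phi(y)$. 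Moreover, étaleness of $\phi$ at $y$ gives an isomorphism of functors $Y^{\wedge, y} \xrightarrow{\sim} X^{\wedge, \phi(y)}$ that is $G^\wedge$-equivariant for the formal action (\ref{eqn_completed_action}).

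The core step is to show that, for a point $x_0 \in X(k)$ with closed $G_k$-orbit, the completed quotient functor $(X \dquot G)^{\wedge, \pi_X(x_0)}$ depends only on the pair $\bigl(X^{\wedge, x_0},\, G^\wedge\text{-action}\bigr)$ — and symmetrically for $Y$ at $y$. Granted this, the $G^\wedge$-equivariant isomorphism $Y^{\wedge, y} \xrightarrow{\sim} X^{\wedge, \phi(y)}$ immediately yields the desired isomorphism of completed quotients, proving the lemma. Concretely, one needs the natural map $X^{\wedge, x_0}(A) \to (X \dquot G)^{\wedge, \pi_X(x_0)}(A)$ to be surjective and to have fibres cut out by an equivalence relation expressible purely in terms of the $G^\wedge(A)$-action. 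One should not expect the fibres to be bare $G^\wedge(A)$-orbits — a simple example such as $\bbG_m$ acting on $\bbA^1$ already shows the induced equivalence is strictly coarser — but a relation generated by closures of $G$-orbits, as in Proposition \ref{prop_git_over_a_ring}(ii)--(iii), should suffice.

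The principal obstacle is making this identification of completed quotient functors rigorous. It is essentially a mixed-characteristic version of Luna's étale slice theorem (or its fundamental lemma), and Luna's classical proof linearises a neighborhood of the closed orbit using reductivity of the stabiliser $Z_G(x_0)$ — a hypothesis not available here, since we impose no smoothness or reductivity on $Z_G(y)$. My plan to replace this is to exploit Proposition \ref{prop_git_over_a_ring}(iv) (commutation of invariants with flat base change) applied to completion along the smooth closed subscheme $G_k \cdot x_0$: this completion is faithfully flat, so invariants commute with it, and the identification problem reduces to a transverse direction where the $G^\wedge$-action factors through the (possibly non-smooth) stabiliser $Z_{G_k}(x_0)$ acting on a formal slice. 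Orbit separation from Proposition \ref{prop_git_over_a_ring}(ii)--(iii), combined with the closed-orbit hypothesis, lifts $A$-points of the quotient to $A$-points of $X$ specialising to $G_k \cdot x_0$, while $G^\wedge$-equivariance controls the resulting ambiguity. The bulk of the technical work sits in this transverse/stabiliser analysis; the remaining steps are essentially formal.
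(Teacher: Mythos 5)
Your overall strategy---transporting the completed local ring of the quotient through the $G^{\wedge}$-equivariant isomorphism $Y^{\wedge,y}\cong X^{\wedge,\phi(y)}$ supplied by \'etaleness of $\phi$ at $y$---concentrates all of the difficulty into your ``core step'', and that step has a genuine gap. First, the concrete description you propose for $(X\dquot G)^{\wedge,\pi_X(x_0)}$ (surjectivity of $X^{\wedge,x_0}(A)\to (X\dquot G)^{\wedge,\pi_X(x_0)}(A)$ together with an equivalence relation generated by orbit closures) fails without a hypothesis on the stabilizer: for $\bbG_m$ acting on $\bbA^2$ with weights $(1,-1)$ and $x_0$ the origin (a closed orbit), the quotient is $\Spec\cO[xy]$, and the $A$-point $xy\mapsto a$ with $a\in\ffrm_A\setminus\ffrm_A^2$ does not lift to the formal neighbourhood of the origin. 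Since Lemma \ref{lem_etale_maps_are_etale_on_quotients} imposes no condition on $Z_{G_k}(y)$, you cannot assume this pathology away. Second, the tool you invoke to repair this---Proposition \ref{prop_git_over_a_ring}(iv)---asserts that invariants commute with flat base change of the coefficient ring $\cO$, not with flat ring maps out of $\cO[X]$ such as completion along the ideal of $G_k\cdot x_0$; outside characteristic $0$ there is no Reynolds operator, and invariants do not commute with such completions in general. What you are really asking for is a mixed-characteristic form of Luna's fundamental lemma, which is at least as deep as the statement being proved; note that the paper deduces Proposition \ref{prop_free_action_on_completion} \emph{from} this lemma, so your route would invert the logical order unless you supply an independent proof of the slice-type statement, which your sketch does not.

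The paper's proof is entirely different and never touches formal neighbourhoods of $X\dquot G$: it normalizes $X$ and $X\dquot G$ in the Galois closure $L$ of $\Frac \cO[Y]/\Frac \cO[X]$ and applies the ramification-theoretic criterion of Lemma \ref{lem_galois_implies_etale}, namely that \'etaleness at a point of the normalization in $L^{\cH}$ is equivalent to containment of the stabilizer of a geometric point above it in $\cH$. Hypothesis (i) gives this containment for the tower over $X$; hypotheses (ii)--(iv) are used to show that the stabilizer relevant to the tower over $X\dquot G$ moves the chosen geometric point only within the fibre over $y$, hence is contained in the same subgroup $\cH$. You may want to compare your reduction steps (which are fine as far as they go) with that argument, but as written your proposal does not constitute a proof.
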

\begin{proof}
Let $E = \Frac H^0(Y, \cO_Y)$, let $K = \Frac H^0(X, \cO_X)$, and let $L$ denote the Galois closure of $E/K$. Let $\cG = \Gal(L / K)$ and $\cH = \Gal(L/E)$. We observe that $Y$ is identified with the normalization of $X$ in $E$; we write $Z$ for the normalization of $X$ in $L$. Then $Z$ is a normal integral flat $\cO$-scheme. It is of finite type, because $\cO$ is universally Japanese. We also note that $Y \dquot G$ is identified with the normalization of $X \dquot G$ in $E$. Indeed, the normality of $X$ implies the normality of $X \dquot G$, and similarly for $Y$, by Proposition \ref{prop_git_over_a_ring} and the morphism $Y \dquot G \to X \dquot G$ is finite, because it is integral and of finite type. We give the argument for integrality: let $a \in H^0(Y, \cO_Y)^G$, and let $f(T) = T^n + a_1 T^{n-1} + \dots + a_n \in K[X]$ denote the characteristic polynomial of multiplication by $a$ on $E$. Since $a \in H^0(Y, \cO_Y)$ is integral over $H^0(X, \cO_X)$, all of the $a_i$ lie in $H^0(X, \cO_X)$ (\cite[Ch. 5. \S 1.6, Cor. 1]{Bou98}). We also see that the $a_i$ lie in $K^G$, hence in $K^G \cap H^0(X, \cO_X) = H^0(X, \cO_X)^G$, showing that $a$ is integral over $H^0(X, \cO_X)^G$.

We write $Z'$ for the normalization of $X \dquot G$ in $L$. Then $Z'$ is also a normal integral flat $\cO$-scheme of finite type. Both $Z, Z'$ receive natural actions of the group $\cG$, and there is a natural map $Z \to Z'$ respecting this action. Moreover, we can identify $\cO[Y] = \cO[Z]^\cH$, $\cO[X] = \cO[Z]^\cG$, $\cO[Y \dquot G] = \cO[Z']^\cH$ and $\cO[X \dquot G] = \cO[Z']^\cG$. (Here we need to use the fact that $\cO[X]^G$ is integrally closed in $\cO[X]$, and similarly for $Y$; compare \cite[4.2.3]{Bar85}.) In order to show that the map $Y \dquot G \to X \dquot G$ is \'etale at the point $\pi_Y(y)$, we will make appeal to the following lemma (cf. \cite[2.3.1]{Bar85}, where it is stated for varieties of finite type over a field):
\begin{lemma}\label{lem_galois_implies_etale}
Let $A$ be an excellent normal domain with field of fractions $K$, and let $L/K$ be a Galois extension of group $\cG$. Let $\cH \subset \cG$ be a subgroup, and let $E = L^\cH$, $B$ the integral closure of $A$ in $E$, $C$ the integral closure of $A$ in $L$. Let $\overline{c}$ be a geometric point above a closed point of $\Spec C$, and let $\overline{b}$, $\overline{a}$ denote its images in $\Spec B$ and $\Spec A$, respectively. Then the morphism $\Spec B \to \Spec A$ is \'etale at $\overline{b}$ if and only if $\Stab_\cG(\overline{c}) \subset \cH$. 
\end{lemma}
\begin{proof}
For such extensions, being \'etale is equivalent to being unramified \cite[Tag 0BTF]{stacks-project}. The desired characterization then follows from \cite[Ch. V, \S 2.3, Prop. 7]{Bou98}.
\end{proof}
We fix a geometric point $\overline{z}$ of $Z$ above $y$, and write $\overline{z}'$ for its image in $Z'$, $\overline{y}'$ for its image in $Y \dquot G$, and $\overline{x}'$ for its image in $X \dquot G$. We let $x = \phi(y)$. We get a commutative diagram
\[ \xymatrix{ Z \ar[r]^\nu \ar[d]_\psi & Z' \ar[d]^{\psi'} \\
Y \ar[r]^{\pi_Y} \ar[d]_{\phi} & Y \dquot G \ar[d]^{\phi \dquot G} \\
X \ar[r]^{\pi_X} & X \dquot G. } \]
Since $\phi$ is \'etale at the point $y$, Lemma \ref{lem_galois_implies_etale} implies that $\Stab_{\cG}(\overline{z}) \subset \cH$. On the other hand, we have $\psi(\Stab_{\cG}(\overline{z}') \cdot \overline{z}) \subset \pi_Y^{-1}(\overline{y}') \cap \phi^{-1}(G_k \cdot x)$. Since $\phi$ is finite, $G_k \cdot x = \phi(G_k \cdot y)$ is a closed orbit, hence $\phi^{-1}(G_k \cdot x)$ is closed, and a union of finitely many $G_k$-orbits each of which has the same dimension as $G_k \cdot x$. Therefore $\phi^{-1}(G_k \cdot x)$ is a disjoint union of closed $G_k$-orbits. The inverse image $\pi_Y^{-1}(\overline{y}')$ contains a unique closed $G_k$-orbit, namely $G_k \cdot y$, so we find that $\psi(\Stab_{\cG}(\overline{z}')\cdot \overline{z}) \subset G_k \cdot y$. 

By assumption, the restriction of $\phi$ to $G_k \cdot y$ is injective at the level of geometric points. Since $\phi\psi(\Stab_{\cG}(\overline{z}') \cdot \overline{z}) = \{ x \} = \{ \phi(y) \}$, it follows that $\psi(\Stab_{\cG}(\overline{z}') \cdot \overline{z}) = \{ y \}$. The group $\cH$ acts transitively on the fibre $\psi^{-1}(y)$ (\cite[Tag 0BRI]{stacks-project}), so we find $\Stab_{\cG}(\overline{z}') \cdot \overline{z} \subset \cH \cdot \overline{z}$, hence $\Stab_{\cG}(\overline{z}') \subset \Stab_{\cG}(\overline{z}) \cH = \cH$. The result now follows from one more application of Lemma \ref{lem_galois_implies_etale}.
\end{proof}
\begin{proposition}\label{prop_free_action_on_completion}
Suppose that $X$ is an integral affine smooth $\cO$-scheme on which $G$ acts. Let $x \in X(k)$ be a point with $G_k \cdot x$ closed, and $Z_{G_k}(x)$ scheme-theoretically trivial. Then:
\begin{enumerate}
\item The action $G^{\wedge} \times X^{\wedge, x} \to X^{\wedge, x}$ (as in (\ref{eqn_completed_action}) above) is free (i.e.\ free on $A$-points for every $A \in \cC_\cO$).
\item The natural map $\pi : X \to X \dquot G$ induces, after passage to completions, an isomorphism
\[ X^{\wedge, x} / G^{\wedge} \cong (X \dquot G)^{\wedge, \pi(x)}. \]
\end{enumerate} 
\end{proposition}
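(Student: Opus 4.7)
The plan is to prove this as a special case of Luna's étale slice theorem, adapted to a DVR base and the case of trivial stabilizer, by constructing a formal slice through $x$. The key geometric input is that, since $Z_{G_k}(x) = \Spec k$ scheme-theoretically, the differential $d\mu_x : \frg_k \to T_x X_k$ of the orbit map $\mu_x : G_k \to X_k$ at $e$ has kernel $\Lie Z_{G_k}(x) = 0$ and so is injective.

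First I will construct the slice: using the $\cO$-smoothness of $X$ at $x$, I choose a closed subscheme $S \subset X$, smooth over $\cO$ of relative dimension $\dim X - \dim G$, passing through $x$, such that $T_x S_k$ is a direct complement to the image of $d\mu_x$ inside $T_x X_k$. (Concretely, one finds étale coordinates on $X$ over $\cO$ near $x$ and takes $S$ to be a coordinate subspace of the right dimension.) Then the action morphism $\alpha : G \times S \to X$, $(g,s) \mapsto g \cdot s$, has differential at $(e,x)$ equal to the sum map $\frg_k \oplus T_x S_k \to T_x X_k$, which is an isomorphism by construction. Hence $\alpha$ is étale at $(e,x)$ and induces an isomorphism of formal completions
\[ \alpha^\wedge : G^\wedge \times S^{\wedge,x} \xrightarrow{\sim} X^{\wedge,x}, \]
equivariant for the $G^\wedge$-action by left translation on the first factor. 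This immediately gives part (i): freeness of the $G^\wedge$-action on $X^{\wedge,x}$ reduces to freeness of left translation on $G^\wedge$.

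For (ii), the above decomposition shows that the functorial quotient $X^{\wedge,x}/G^\wedge$ is canonically $S^{\wedge,x}$, so it suffices to show that the composite $\sigma : S \hookrightarrow X \xrightarrow{\pi} X \dquot G$ is étale at $x$, as this will induce the desired isomorphism $S^{\wedge,x} \cong (X \dquot G)^{\wedge,\pi(x)}$. This is a form of Luna's fundamental lemma. To apply Lemma \ref{lem_etale_maps_are_etale_on_quotients}, I reduce $\alpha$ to a finite étale morphism: after shrinking $S$ so that $\alpha$ is étale on a $G$-invariant Zariski neighborhood of $(e,x)$, the image $\im \alpha$ is a $G$-invariant open of $X$, and Proposition \ref{prop_git_over_a_ring}(v) lets me replace $X$ by a basic $G$-invariant open $D_X(f) \subset \im \alpha$ with $f(x) \neq 0$ satisfying $D_X(f) \dquot G = D_{X \dquot G}(f)$. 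A further shrinking (via Zariski's main theorem, factoring the étale $\alpha$ through an open immersion into a finite étale cover) should produce a $G$-equivariant finite étale morphism to which Lemma \ref{lem_etale_maps_are_etale_on_quotients} applies at $(e,x)$. Its hypotheses (i)--(iii) follow from étaleness of $\alpha$ and the closed-orbit assumption on $G_k \cdot x$; hypothesis (iv), injectivity of $\alpha$ along the orbit $G_k \cdot (e,x)$, is exactly the condition $Z_{G_k}(x) = 1$. The conclusion of the lemma then gives étaleness of $\sigma$ at $x$.

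The main obstacle is this reduction step in (ii): passing from $\alpha$ étale only on an open neighborhood of $(e,x)$ to a globally finite étale $G$-equivariant morphism satisfying all four hypotheses of Lemma \ref{lem_etale_maps_are_etale_on_quotients}. The $G$-invariant open neighborhood must be chosen so that the closed-orbit and injectivity-on-orbit conditions at $(e,x)$ survive each shrinking. A secondary technical point, also worth careful attention, is the construction of the slice $S$ smooth over the DVR $\cO$ rather than merely over the residue field $k$; this is where the hypothesis that $X$ is smooth over $\cO$ (and not only its special fibre) enters.
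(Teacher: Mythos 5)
Your proposal follows essentially the same route as the paper's proof: a slice $S$ through $x$, smooth over $\cO$ with special fibre transverse to the orbit, gives an \'etale action map $G \times S \to X$ at $(e,x)$ and hence part (i), and part (ii) is reduced to Lemma \ref{lem_etale_maps_are_etale_on_quotients} after passing to a finite model. The reduction step you flag as the main obstacle is handled in the paper by taking $X'$ to be the normalization of $X$ in $G \times S$ — by Zariski's main theorem $G \times S \hookrightarrow X'$ is a $G$-stable open immersion and $X' \to X$ is finite (not finite \emph{\'etale}, but the lemma only needs \'etaleness at the point $i(1,x)$), after which one chooses $f \in \cO[X']^G$ vanishing on $X' - i(G \times S)$ and nonzero at $i(1,x)$ to identify $D_{X'\dquot G}(f)$ with $D_S(f)$.
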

\begin{proof}
We apply \cite[Proposition 7.6]{Bar85} to obtain a locally closed subscheme $S_0 \subset X_k$ such that $x \in S_0(k)$ and the orbit map $G_k \times S_0 \to X_k$ is \'etale. In particular, $S_0$ is smooth over $k$, and we can find a sequence $\overline{f}_1, \dots, \overline{f}_r \in \cO_{X_k, x}$ of elements generating the kernel of $\cO_{X_k, x} \to \cO_{S_0, x}$ and with linearly independent image in the Zariski cotangent space of $\cO_{X_k, x}$. We lift these elements arbitrarily to $f_1, \dots, f_r \in \cO_{X, x}$. Let $U$ be a Zariski open affine neighbourhood of $x$ in $X$ such that $f_1, \dots, f_r \in \cO_X(U)$, and let $S = V(f_1, \dots, f_r)$. After possibly shrinking $U$, $S$ is an integral locally closed subscheme of $X$, smooth over $\cO$, such that $\cO_{S, x} / (\lambda) = \cO_{S_0, x}$. In particular, the action map $G \times S \to X$ is \'etale at $x$: it is unramified at $x$, by construction, and flat at $x$ by the fibral criterion of flatness \cite[Tag 039B]{stacks-project}. This shows the first part of the proposition.

After possibly shrinking $S$, we can assume that $\phi : G \times S \to X$ is \'etale everywhere; in particular, it is quasi-finite. We let $G$ act on $G \times S$ be left multiplication on $G$; then the map $\phi$ is $G$-equivariant. Let $X'$ denote the normalization of $X$ in $G \times S$. Then $i : G \times S \to X'$ is an open immersion (by Zariski's main theorem \cite[Tag 03GS]{stacks-project}) and $\eta : X' \to X$ is finite. Moreover, $X'$ is integral, affine, and flat of finite type over $\cO$. There is a unique way to extend the action of $G$ on $G \times S$ to an action on $X'$ (the key point being that normalization commutes with smooth base change \cite[Tag 03GV]{stacks-project}, so the action map $G \times X' \to X'$ exists by normalization).

We now apply Lemma \ref{lem_etale_maps_are_etale_on_quotients} to the point $i(1,x)$ of $X'(k)$; it follows that the induced morphism $\eta \dquot G : X' \dquot G \to X \dquot G$ is \'etale at $i(1,x)$. In order to be able to apply the lemma, we must check that $G_k \cdot i(1,x)$ is closed inside $X'$. Indeed, $G_k \cdot \phi(x)$ is closed inside $X$, by hypothesis, and $\eta$ is finite, so $\eta^{-1}(G_k \cdot \phi(x))$ is a closed subset of $X'_k$ consisting of finitely many $G_k$-orbits, each of the same dimension. They must therefore all be closed, implying that $G_k \cdot i(1,x)$ is itself closed. 

This also shows that $\pi_{X'}(i(1,x)) \not\in \pi_{X'}(X' - i(G \times S))$ (using the third part of Proposition \ref{prop_git_over_a_ring}). The set $\pi_{X'}(X' - i(G \times S)) \subset X' \dquot G$ is closed, so we can find a function $f \in \cO[X']^G$ such that $f(X' - i(G \times S)) = 0$ and $f(i(1, x)) \neq 0$. It then follows that $i : D_{G \times S}(f) \to D_{X'}(f)$ is an isomorphism, and $D_{X' \dquot G}(f) \cong D_{X'}(f) \dquot G \cong D_{G \times S}(f) \dquot G \cong D_{S}(f)$, hence $i \dquot G$ induces an open immersion $D_{S}(f) \to X' \dquot G$. The set $D_{S}(f)$ contains $\pi_{G \times S}(1,x)$, so this completes the proof that $\phi \dquot G = (\eta \dquot G) \circ (i \dquot G)$ is \'etale at $\pi_{G \times S}(1,x)$. 
\end{proof}
In the next section, we will apply this proposition in the following situation: $\widehat{G}$ is a split reductive group over $\cO$, $X = \widehat{G}^n$ for some $n \geq 1$, and $G = \widehat{G}^\text{ad}$ acts on $X$ by simultaneous conjugation. If $g_1, \dots, g_n \in \widehat{G}(k)$ are elements which generate a $\hG$-irreducible subgroup with scheme-theoretically trivial centralizer in $\hG^\text{ad}$, then the point $x = (g_1, \dots, g_n) \in X(k)$ satisfies the conclusion of the proposition.

\section{Pseudocharacters and their deformation theory}\label{sec_pseudocharacters_and_their_deformation_theory}

In this section, we define what it means to have a pseudocharacter of a group valued in a split reductive group $\hG$. We also prove the fundamental results that completely reducible representations  biject with pseudocharacters (when the coefficient ring is an algebraically closed field); and that representations biject with pseudocharacters (when the coefficient ring is Artinian local) provided the residual representation is sufficiently non-degenerate. 

Let $\hG$ be a split reductive group over $\bbZ$. 
\begin{definition}\label{def_pseudocharacter}
Let $A$ be a ring, and let $\Gamma$ be a group. A $\hG$-pseudocharacter $\Theta$ of $\Gamma$ over $A$ is a collection of algebra maps $\Theta_n : \bbZ[\hG^n]^{\hG} \to \Map(\Gamma^n, A) $ for each $n \geq 1$,
satisfying the following conditions:
\begin{enumerate}
\item For each $n, m \geq 1$ and for each map $\zeta : \{ 1, \dots, m \} \to \{1, \dots, n \}$, $f \in \bbZ[\hG^m]^\hG$, and $\gamma_1, \dots, \gamma_n \in \Gamma$, we have
\[ \Theta_n(f^\zeta)(\gamma_1, \dots, \gamma_n) = \Theta_m(f)(\gamma_{\zeta(1)}, \dots, \gamma_{\zeta(m)}), \]
where $f^\zeta(g_1, \dots, g_n) = f(g_{\zeta(1)}, \dots, g_{\zeta(m)})$.
\item For each $n \geq 1$, for each $\gamma_1, \dots, \gamma_{n+1} \in \Gamma$, and for each $f \in \bbZ[\hG^n]^\hG$, we have 
\[ \Theta_{n+1}(\hat{f})(\gamma_1, \dots, \gamma_{n+1}) = \Theta_n(f)(\gamma_1, \dots, \gamma_n \gamma_{n+1}), \]
where $\hat{f}(g_1, \dots, g_{n+1}) = f(g_1, \dots, g_n g_{n+1})$.
\end{enumerate}
\end{definition}
\begin{remark}
Let $\cO$ be a flat $\bbZ$-algebra, and suppose that $A$ is an $\cO$-algebra. Then it is equivalent to give a pseudocharacter $\Theta$ over $A$ or a collection of $\cO$-algebra maps $\Theta'_n : \cO[\hG^n]^\hG \to \Map(\Gamma^n, A)$ satisfying the same axioms with respect to $f \in \cO[\hG^n]^\hG$. Indeed, this follows from the fact that $\cO[\hG^n]^\hG = \bbZ[\hG^n]^\hG \otimes_\bbZ \cO$ (\cite[Lemma 2]{Ses77}). It is important to note that this may fail to be true when $\cO$ is no longer flat (for example, if $\cO$ is a field of characteristic $p > 0$), but nevertheless Theorem \ref{thm_pseudocharacters_biject_with_representations_over_fields} below is still true in this case. 
\end{remark}
The following lemma justifies our initial interest in pseudocharacters. 
\begin{lemma}
Let $A$ be a ring, and let $\Gamma$ be a group. Suppose given a homomorphism $\rho : \Gamma \to \hG(A)$. Then the collection of maps $\Theta_n(f)(\gamma_1, \dots, \gamma_n) = f(\rho(\gamma_1), \dots, \rho(\gamma_n))$ is a pseudocharacter, which depends only on $\rho$ up to $\hG(A)$-conjugation.
\end{lemma}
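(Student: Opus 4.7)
The lemma is a direct verification, and the plan is simply to unwind the definitions. The key input is that for any tuple $(\gamma_1, \ldots, \gamma_n) \in \Gamma^n$, evaluation at the $A$-point $(\rho(\gamma_1), \ldots, \rho(\gamma_n)) \in \hG^n(A)$ defines a ring homomorphism $\bbZ[\hG^n] \to A$, and restricting to $\hG$-invariants produces an algebra map $\bbZ[\hG^n]^\hG \to A$. Bundling this over all $(\gamma_1,\ldots,\gamma_n)$ yields the algebra map $\Theta_n : \bbZ[\hG^n]^\hG \to \Map(\Gamma^n, A)$, so the algebra-map part of Definition \ref{def_pseudocharacter} is automatic.

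Next I would check the two compatibility axioms. For axiom (i), given $\zeta : \{1,\ldots,m\} \to \{1,\ldots,n\}$ and $f \in \bbZ[\hG^m]^\hG$, one simply unfolds
\[ \Theta_n(f^\zeta)(\gamma_1,\ldots,\gamma_n) = f^\zeta(\rho(\gamma_1),\ldots,\rho(\gamma_n)) = f(\rho(\gamma_{\zeta(1)}),\ldots,\rho(\gamma_{\zeta(m)})) = \Theta_m(f)(\gamma_{\zeta(1)},\ldots,\gamma_{\zeta(m)}), \]
using only the definition of $f^\zeta$. For axiom (ii), the computation
\[ \Theta_{n+1}(\hat{f})(\gamma_1,\ldots,\gamma_{n+1}) = f(\rho(\gamma_1),\ldots,\rho(\gamma_n)\rho(\gamma_{n+1})) = f(\rho(\gamma_1),\ldots,\rho(\gamma_n\gamma_{n+1})) = \Theta_n(f)(\gamma_1,\ldots,\gamma_n\gamma_{n+1}) \]
uses the fact that $\rho$ is a group homomorphism, which is precisely where that hypothesis enters.

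Finally, for conjugation invariance, suppose $\rho' = g\rho g^{-1}$ for some $g \in \hG(A)$. Since $f \in \bbZ[\hG^n]^\hG$ is invariant under the diagonal conjugation action,
\[ f(\rho'(\gamma_1),\ldots,\rho'(\gamma_n)) = f(g\rho(\gamma_1)g^{-1},\ldots,g\rho(\gamma_n)g^{-1}) = f(\rho(\gamma_1),\ldots,\rho(\gamma_n)), \]
so the pseudocharacters attached to $\rho$ and $\rho'$ coincide. There is no genuine obstacle here — the only subtlety to keep in mind is that one works in the scheme-theoretic sense with $\hG$-invariant polynomials over $\bbZ$, so that the identities above are identities of $A$-valued functions for arbitrary base rings $A$, but since the $A$-points of $\hG$ and of $\hG^n \dquot \hG$ are computed by base change from the affine coordinate rings this is automatic.
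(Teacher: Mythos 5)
Your verification is correct and is exactly the content behind the paper's one-line proof (``Immediate from the definitions''): the algebra-map property, both axioms of Definition \ref{def_pseudocharacter}, and conjugation invariance all follow by unwinding definitions precisely as you do. Nothing further is needed.
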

\begin{proof}
Immediate from the definitions.
\end{proof}
If $\rho$ is a homomorphism as in the lemma, then we will write $\tr \rho = (\Theta_n)_{n \geq 1}$ for its associated pseudocharacter.

We can change the ring and the group:
\begin{lemma}\label{lem_change_of_base_ring_or_group_for_pseudocharacters} Let $A$ be a ring, and let $\Gamma$ be a group. 
\begin{enumerate}
\item  Let $h : A \to A'$ be a ring map, and let $\Theta = (\Theta_n)_{n \geq 1}$ be a pseudocharacter over $A$. Then $h_\ast(\Theta) = (h \circ \Theta_n)_{n \geq 1}$ is a pseudocharacter over $A'$.
\item Let $\phi : \Delta \to \Gamma$ be a homomorphism, $\Theta$ a pseudocharacter of $\Gamma$ over $A$. Then the collection $\phi^\ast \Theta = (\Theta_n \circ \phi)_{n \geq 1}$ is a pseudocharacter of $\Delta$ over $A$.
\item Let $N \subset \Gamma$ be a normal subgroup, and let $\phi : \Gamma \to \Gamma / N$ be the quotient homomorphism. Then the map $\Theta \mapsto \phi^\ast \Theta$ defines a bijection between the set of pseudocharacters of $\Gamma/N$ over $A$ and the set of pseudocharacters $\Xi = (\Xi_n)_{n \geq 1}$ of $\Gamma$ over $A$ such that for all $n \geq 1$, the map $\Xi_n$ takes values in $\Map((\Gamma/N)^n, A) \subset \Map(\Gamma^n, A)$.
\end{enumerate}
\end{lemma}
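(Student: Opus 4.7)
The plan is to verify the two axioms of Definition~\ref{def_pseudocharacter} directly in each case; all three parts are purely formal and rely only on the functoriality of $\Map(-,-)$ in both arguments, together with the universal property of the quotient group $\Gamma/N$ in part (iii).  No invariant-theoretic input is required beyond what is already built into the definition of a pseudocharacter.

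For (i), the ring homomorphism $h : A \to A'$ induces, for each $n$, an algebra homomorphism $\Map(\Gamma^n,A) \to \Map(\Gamma^n,A')$ by postcomposition, and $h \circ \Theta_n$ is then again an algebra map out of $\bbZ[\hG^n]^{\hG}$.  The two defining identities of Definition~\ref{def_pseudocharacter} are pointwise equalities in $A$, so applying $h$ pointwise transports them to the corresponding identities for $h_\ast\Theta$.

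For (ii), the group homomorphism $\phi : \Delta \to \Gamma$ induces algebra maps $\Map(\Gamma^n,A) \to \Map(\Delta^n,A)$ by restriction along $\phi^n : \Delta^n \to \Gamma^n$, since the algebra structure on these mapping spaces is pointwise.  The reindexing axiom is automatic, involving only permutations and diagonals of tuples.  The multiplication axiom survives because $\phi(\delta_n\delta_{n+1}) = \phi(\delta_n)\phi(\delta_{n+1})$.

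For (iii), one direction is a special case of (ii), and it is visible that $\phi^\ast\Theta_n(f)$ factors through $(\Gamma/N)^n$.  Conversely, given a pseudocharacter $\Xi$ of $\Gamma$ whose values lie in $\Map((\Gamma/N)^n,A)$, the surjectivity of $\phi^n : \Gamma^n \twoheadrightarrow (\Gamma/N)^n$ lets us define $\Theta_n(f)$ as the unique function satisfying $\Theta_n(f)\circ\phi^n = \Xi_n(f)$; the algebra-map property and both axioms for $\Theta$ then follow from the corresponding statements for $\Xi$, with normality of $N$ entering only to ensure that $\Gamma/N$ carries a group structure for which the projection is a homomorphism (so that the product $\gamma_n\gamma_{n+1}$ descends).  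Injectivity of $\Theta \mapsto \phi^\ast\Theta$ follows once more from the surjectivity of $\phi^n$.  The main point is simply to record that base change along $h$, restriction along $\phi$, and inflation from a quotient respect the pseudocharacter formalism and may therefore be used without further comment in the sequel; there is no substantive obstacle.
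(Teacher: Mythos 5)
Your proof is correct and is exactly the verification that the paper compresses into the phrase ``Immediate from the definitions'': all three parts are formal consequences of functoriality of $\Map(-,-)$ and the universal property of the quotient. No further comment is needed.
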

\begin{proof}
Immediate from the definitions.
\end{proof}
\begin{theorem}\label{thm_pseudocharacters_biject_with_representations_over_fields}
Let $\Gamma$ be a group, and let $k$ be an algebraically closed field. Then the assignment $\rho \mapsto \Theta = \tr \rho$ induces a bijection between the following two sets:
\begin{enumerate}
\item The set of $\hG(k)$-conjugacy classes of $\hG$-completely reducible homomorphisms $\rho : \Gamma \to \hG(k)$.
\item The set of $\hG$-pseudocharacters $\Theta$ of $\Gamma$ over $k$.
\end{enumerate}
\end{theorem}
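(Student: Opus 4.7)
The map $\rho\mapsto\tr\rho$ is well-defined on $\hG(k)$-conjugacy classes by the preceding lemma, so the plan is to prove it is both injective and surjective. Injectivity will reduce quickly to geometric invariant theory via Richardson's theorem, while surjectivity is the main work.

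\textbf{Injectivity.} Suppose $\rho_1,\rho_2:\Gamma\to\hG(k)$ are $\hG$-completely reducible with common pseudocharacter $\Theta$. By Noetherianity of $\hG$, I pick $\gamma_1,\ldots,\gamma_n\in\Gamma$ so that $\{\rho_i(\gamma_j)\}_j$ Zariski-generates the Zariski closure $H_i$ of $\rho_i(\Gamma)$ for both $i=1,2$. Since each $H_i$ is $\hG$-completely reducible, Theorem~\ref{thm_richardson_simultaneous_conjuation_and_stability} says both tuples $(\rho_i(\gamma_j))_j$ have closed $\hG$-orbits in $\hG(k)^n$. The identity $\tr\rho_1=\tr\rho_2$ puts the two tuples in a common fibre of $\hG^n\to\hG^n\dquot\hG$; combining Proposition~\ref{prop_git_over_a_field} with the remark following Proposition~\ref{prop_git_over_a_ring} (which identifies $k$-points of $(\hG^n\dquot\hG)_k$ with those of $\hG_k^n\dquot\hG_k$), the two closed orbits must coincide, yielding $h\in\hG(k)$ with $h\rho_1(\gamma_j)h^{-1}=\rho_2(\gamma_j)$ for all $j$. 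To extend the conjugation to an arbitrary $\gamma\in\Gamma$, I rerun the argument on $(\gamma_1,\ldots,\gamma_n,\gamma)$ to produce a new conjugator $h'$; then $h^{-1}h'$ centralizes the Zariski generators and hence lies in $Z_\hG(H_1)(k)$, which in turn centralizes $\rho_1(\gamma)$, giving $h\rho_1(\gamma)h^{-1}=\rho_2(\gamma)$.

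\textbf{Surjectivity.} Each tuple $\underline\gamma\in\Gamma^n$ determines a $k$-algebra map $\bbZ[\hG^n]^{\hG}\to k$ via $\Theta_n$, which by the same invariant-theoretic inputs as above corresponds to a unique closed $\hG(k)$-orbit $O_{\underline\gamma}\subset\hG(k)^n$. The chain of scheme-theoretic stabilizers of closed-orbit representatives can only shrink as one enlarges the tuple, and by Noetherianity of $\hG$ it stabilizes after finitely many steps; I pick a \emph{saturating} base tuple $(\delta_1,\ldots,\delta_M)$ at this stable stage, so that $Z:=Z_\hG((g_1,\ldots,g_M))$ for any representative $(g_i)\in O_{(\delta_1,\ldots,\delta_M)}$ equals the stabilizer of a representative of every further extension $O_{(\delta_1,\ldots,\delta_M,\gamma_1,\ldots,\gamma_k)}$. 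Fix one such $(g_1,\ldots,g_M)$. For every $\gamma\in\Gamma$ I will define $\rho(\gamma):=g_\gamma$ to be the unique element of $\hG(k)$ such that $(g_1,\ldots,g_M,g_\gamma)\in O_{(\delta_1,\ldots,\delta_M,\gamma)}$. \emph{Existence}: for any representative $(h_1,\ldots,h_{M+1})\in O_{(\delta,\gamma)}$, saturation yields $Z\subseteq Z_\hG((h_1,\ldots,h_M))$, so $\dim\hG\cdot(h_1,\ldots,h_M)\leq\dim O_{(\delta_1,\ldots,\delta_M)}$; non-closed orbits in the same GIT fibre strictly contain $O_{(\delta_1,\ldots,\delta_M)}$ in their closure and hence have strictly larger dimension, so the orbit of $(h_1,\ldots,h_M)$ must itself equal $O_{(\delta_1,\ldots,\delta_M)}$, and conjugating moves it onto $(g_1,\ldots,g_M)$. \emph{Uniqueness}: two candidates for $g_\gamma$ differ by $Z(k)$-conjugation on the last coordinate, which is trivial because saturation forces $Z$ to centralize $g_\gamma$. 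Multiplicativity $\rho(\gamma_1\gamma_2)=g_{\gamma_1}g_{\gamma_2}$ then follows from axiom (i) of Definition~\ref{def_pseudocharacter} (applied to the two forgetful projections to identify the last two coordinates of the unique representative of $O_{(\delta,\gamma_1,\gamma_2)}$ over $(g_i)$ as $g_{\gamma_1}$ and $g_{\gamma_2}$) together with axiom (ii) applied to the multiplication map $\hG^{M+2}\to\hG^{M+1}$, plus uniqueness. To ensure $\hG$-complete reducibility I finally replace $\rho$ by its semisimplification (Proposition~\ref{prop_contraction_to_levi_quotient}), which preserves the pseudocharacter.

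\textbf{Main obstacle.} The critical geometric input is the existence clause in the construction of $g_\gamma$: the closed orbit $O_{(\delta,\gamma)}$ must meet the slice $\{(g_1,\ldots,g_M)\}\times\hG$. My argument achieves this through a dimension count resting on the saturation property, whose availability in turn rests on Noetherianity of stabilizers of $\hG$-orbits in $\hG^n$. Once saturation and existence are in hand, the translation of axioms (i) and (ii) of a pseudocharacter into the homomorphism property of $\rho$ is an essentially formal consequence of the uniqueness half of the construction.
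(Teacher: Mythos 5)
Your overall architecture is the same as the paper's (which follows Lafforgue): fix a ``maximal'' base tuple $\delta$, define $\rho(\gamma)$ as the unique element extending a fixed closed-orbit representative of $T(\delta)$, and derive the homomorphism property from the uniqueness clause. But your treatment of the crucial existence step is genuinely different. The paper selects $\delta$ by first maximizing the dimension of a minimal parabolic containing $H(\delta)$, and proves that the truncation $(h_1,\dots,h_M)$ of $T(\delta,\gamma)$ has a closed orbit by a combinatorial argument with parabolics and Levi subgroups (passing to $Q N_P$ and forcing $Q = M_P$ by a dimension count on parabolics). You instead select $\delta$ by minimizing the stabilizer and prove closedness of the truncated orbit by a pure GIT dimension count: any non-closed orbit in the fibre has strictly larger dimension than the unique closed orbit it degenerates to, while saturation bounds the truncation's orbit dimension from above by $\dim O_{(\delta)}$. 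This is a clean and valid substitute for the parabolic argument, and it renders the paper's condition (i) on $\delta$ unnecessary; the paper's conditions (ii) and (iii) (minimal $\dim Z$, then minimal $\#\pi_0(Z)$) are exactly what your ``saturation'' amounts to. Your injectivity argument and the final semisimplification step also match the paper's in substance.

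The one step that needs repair is your justification of saturation. As written, ``the chain of scheme-theoretic stabilizers of closed-orbit representatives can only shrink as one enlarges the tuple'' is circular: to compare $Z_\hG(T(\delta,\gamma))$ with $Z_\hG(T(\delta))$ as subgroups you must first know that the truncation $T(\delta,\gamma)_{1..M}$ is conjugate to $T(\delta)$ --- which is precisely the existence statement you are trying to deduce from saturation. What holds unconditionally is only a dimension inequality: $Z_\hG(T(\delta,\gamma))\subseteq Z_\hG(T(\delta,\gamma)_{1..M})$, and since $O_{(\delta)}$ lies in the closure of the orbit of $T(\delta,\gamma)_{1..M}$ (stabilizers grow under degeneration to the closed orbit), one gets $\dim Z_\hG(T(\delta,\gamma))\leq\dim Z_\hG(T(\delta))$ but no containment. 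The fix is the paper's two-step selection: first choose $\delta$ minimizing $\dim Z_\hG(T(\delta))$ over all tuples; this alone already makes your dimension count go through (equal dimensions plus $O_{(\delta)}\subseteq\overline{\hG\cdot T(\delta,\gamma)_{1..M}}$ and irreducibility force the truncated orbit to equal $O_{(\delta)}$), after which $Z_\hG(g_1,\dots,g_M,g_\gamma)$ is a finite-index subgroup of $Z$. Then, among dimension-minimizers, additionally minimize $\#\pi_0(Z_\hG(T(\delta)))$ to force this finite-index subgroup to be all of $Z$; this equality of component groups is genuinely needed for your uniqueness step ($Z(k)$ centralizing $g_\gamma$) and is not supplied by ``Noetherianity'' alone. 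With that emendation your argument is complete.
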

\begin{proof}
The proof of this theorem is due to Lafforgue \cite[\S 5]{Laf13}; we review it here as preparation for the infinitesimal version of the next section, and because some modifications are required in the case of positive characteristic. Before reading the proof, we invite the reader to first become reacquainted with Proposition \ref{prop_git_over_a_field}.

We first show how to construct a representation from a pseudocharacter $(\Theta_n)_{n \geq 1}$. For any $n \geq 1$, the map $\hG^n(k) \to (\hG^n \dquot \hG)(k)$ induces a bijection between the set of $\hG(k)$-conjugacy classes of tuples $(g_1, \dots, g_n)$ which generate a $\hG$-completely reducible subgroup of $\hG$, and the set $(\hG^n \dquot \hG)(k)$ (as follows from part (iv) of Proposition \ref{prop_git_over_a_field} and Theorem \ref{thm_richardson_simultaneous_conjuation_and_stability}). The datum of $\Theta_n$ determines for each tuple $\gamma = (\gamma_1, \dots, \gamma_n) \in \Gamma^n$ a point $\xi_\gamma \in (\hG^n \dquot \hG)(k)$, these points satisfying certain compatibility relations corresponding to conditions (i) and (ii) of Definition \ref{def_pseudocharacter}. We write $T(\gamma)$ for a representative of the orbit in $\hG^n(k)$ corresponding to $\xi_\gamma$.

Let $H(\gamma)$ denote the Zariski closure of the subgroup of $\hG(k)$ generated by the entries of $T(\gamma)$. For every $\gamma \in \Gamma^n$, we define $n(\gamma)$ to be the dimension of a parabolic $P \subset \hG_k$ minimal among those containing $H(\gamma)$; by Proposition \ref{prop_parabolics_of_minimal_dimension}, this is independent of the choice of $P$ satisfying this condition. 

Let $N = \sup_{n \geq 1, \gamma \in \Gamma^n} n(\gamma)$. We fix a choice of integer $n \geq 1$ and element $\delta = (\delta_1, \dots, \delta_n) \in \Gamma^n$ satisfying the following conditions:
\begin{enumerate}
\item $n(\delta) = N$.
\item For any $n' \geq 1$ and $\delta' \in \Gamma^{n'}$ also satisfying (i), we have $\dim Z_{\hG_k}(H(\delta)) \leq \dim Z_{\hG_k}(H(\delta'))$.
\item For any $n' \geq 1$ and $\delta' \in \Gamma^{n'}$ also satisfying (i) and (ii), we have $\# \pi_0( Z_{\hG_k}(H(\delta)) ) \leq \# \pi_0( Z_{\hG_k}(H(\delta')) )$.
\end{enumerate}
Write $T(\delta) = (g_1, \dots, g_n)$. We are going to show that for every $\gamma \in \Gamma$, there exists a unique element $g \in \hG(k)$ such that $(g_1, \dots, g_n, g)$ is $\hG(k)$-conjugate to $T(\delta_1, \dots, \delta_n, \gamma)$. 

We first show the existence of such an element $g$. Let $T(\delta_1, \dots, \delta_n, \gamma) = (h_1, \dots, h_n, h)$. We claim that in fact $(h_1, \dots, h_n)$ has a closed $\hG_k$-orbit in $\hG_k^n$; this is equivalent, by Theorem \ref{thm_richardson_simultaneous_conjuation_and_stability}, to the assertion that the elements $h_1, \dots, h_n \in \hG(k)$ generate a $\hG$-completely reducible subgroup. Note that $(h_1, \dots, h_n)$ lies above $\xi_\delta \in (\hG^n \dquot \hG)(k)$ (by (i) of Definition \ref{def_pseudocharacter}), so this claim will show that $(h_1, \dots, h_n)$ is $\hG(k)$-conjugate to $T(\delta)$.

To this end, let $P \subset \hG_k$ be a parabolic subgroup minimal among those containing $H(\delta_1, \dots, \delta_n, \gamma)$. We can find a Levi subgroup $M_P$ of $P$ which contains $H(\delta_1, \dots, \delta_n, \gamma)$. Let $N_P$ denote the unipotent radical of $P$, and let $Q$ be a minimal parabolic of $M_P$ containing $h_1, \dots, h_n$. Let $M_Q$ be a Levi subgroup of $Q$, and let $h_1', \dots, h_n' \in M_Q(k)$ denote the images of the elements $h_1, \dots, h_n$ in $M_Q(k)$. Then the elements $h_1', \dots, h_n'$ generate an $M_Q$-irreducible subgroup, which is therefore $\hG$-completely reducible (by \cite[Proposition 3.2]{Ser05}). It follows from Proposition \ref{prop_contraction_to_levi_quotient} that the tuple $(h_1', \dots, h_n')$ is $\hG(k)$-conjugate to $T(\delta) = (g_1, \dots, g_n)$. 

In particular, $Q$ contains a conjugate of $T(\delta)$, which implies that $Q N_P$ contains a conjugate of $T(\delta)$. Since $Q N_P$ is a parabolic subgroup of $G$, we obtain
\[ n(\delta) = N \leq \dim Q N_P \leq \dim P \leq N. \]
It follows that equality holds, $Q N_P = P$, and hence $Q = M_P$ and $h_i = h_i'$ for each $i = 1, \dots, n$. This shows that $(h_1, \dots, h_n)$ has a closed $\hG_k$-orbit in $\hG_k^n$. Consequently, we can find an element $x \in \hG(k)$ such that $x(h_1, \dots, h_n) x^{-1} = (g_1, \dots, g_n)$. We can now take $g = x h x^{-1}$.

This shows the existence of an element $g \in \hG(k)$ such that $(g_1, \dots, g_n, g)$ is $\hG(k)$-conjugate to $T(\delta_1, \dots, \delta_n, \gamma)$. To show that this element is unique, suppose $g'$ is another such element. Then we can find $y \in \hG(k)$ such that $y (g_1, \dots, g_n, g) y^{-1} = (g_1, \dots, g_n, g')$. In particular, we have $y \in Z_{\hG}(g_1, \dots, g_n)(k)$. We therefore need to show that 
\[ Z_{\hG}(g_1, \dots, g_n)(k) = Z_{\hG}(g_1, \dots, g_n, g)(k). \]
 The first group obviously contains the second. The defining properties (i), (ii) and (iii) of $\delta \in \Gamma^n$ then show that these groups must in fact be equal.

This establishes the claim, and defines a map $\gamma \in \Gamma \mapsto g = \rho(\gamma) \in \hG(k)$. We must now show that this map $\rho : \Gamma \to \hG(k)$ is a homomorphism. Let $\gamma, \gamma' \in \Gamma$. We claim that there exist $g, g' \in G(k)$ such that $(g_1, \dots, g_n, g, g')$ is $\hG(k)$-conjugate to $T(\delta_1, \dots, \delta_n, \gamma, \gamma')$, and that the pair $(g, g')$ is unique with this property. 

Let $(h_1, \dots, h_n, h, h') = T(\delta_1, \dots, \delta_n, \gamma, \gamma')$, and let $P$ be a parabolic containing the elements $h_1, \dots, h_n, h, h'$, and minimal with respect to this property. Let $M_P$ be a Levi subgroup of $P$ also containing these elements, $N_P$ the unipotent radical of $P$. Then we have $\dim P = n(\delta_1, \dots, \delta_n, \gamma, \gamma') \leq n(\delta_1, \dots, \delta_n)$. Let $Q$ be a minimal parabolic of $M_P$ containing $h_1, \dots, h_n$, and let $M_Q$ be a Levi subgroup, $h_1', \dots, h_n'$ the images of $h_1, \dots, h_n$ in $M_Q(k)$. Then the tuple $(h_1', \dots, h_n')$ is $\hG(k)$-conjugate to $(g_1, \dots, g_n)$ (again by Proposition \ref{prop_contraction_to_levi_quotient} and (i) of Definition \ref{def_pseudocharacter}); it follows that $n(\delta_1, \dots, \delta_n) = N \leq \dim Q N_P \leq \dim P \leq N$, so equality holds, $M_Q = Q = M_P$ and $Q N_P = P$, and we can find $y \in \hG(k)$ such that $y (h_1, \dots, h_n) y^{-1} = (g_1, \dots, g_n)$. We then take $(g, g') = y (h, h') y^{-1}$. The argument that this pair is unique is the same as above.

We claim that the tuples $(g_1, \dots, g_n, g)$, $(g_1, \dots, g_n, g')$, and $(g_1, \dots, g_n, g g')$ all have closed orbits in $\hG^{n+1}$. This claim will show, together with parts (i) and (ii) of Definition \ref{def_pseudocharacter}, that $\rho(\gamma) = g$, $\rho(\gamma') = g'$, and that $\rho(\gamma \gamma') = g g'$. We just show the claim in the case of $(g_1, \dots, g_n, g g')$, the others being similar. Let $P \subset \hG_k$ be a parabolic subgroup minimal among those containing $g_1, \dots, g_n, g, g'$. Then $P$ contains $g_1, \dots, g_n$, so $n(\delta) = N \leq \dim P \leq N$. Therefore equality holds, and $P$ is also minimal among those parabolic subgroups of $\hG_k$ containing $g_1, \dots, g_n$. 

Let $M_P$ be a Levi subgroup of $P$ also containing $g_1, \dots, g_n, g, g'$. Then the tuple $g_1, \dots, g_n$ is $M_P$-irreducible, hence $g_1, \dots, g_n, g g'$ is $M_P$-irreducible, hence $\hG$-completely reducible. It then follows from Theorem \ref{thm_richardson_simultaneous_conjuation_and_stability} that $g_1, \dots, g_n, g g'$ has a closed orbit in $\hG^{n+1}$. This shows that $\rho(\gamma \gamma') = \rho(\gamma) \rho (\gamma')$.

We have now shown how, given a pseudocharacter $\Theta$ over the algebraically closed field $k$, to construct a representation $\rho : \Gamma \to \hG(k)$. This representation moreover satisfies the condition $\tr \rho = \Theta$, or equivalently that for any $m \geq 1$, $\gamma = (\gamma_1, \dots, \gamma_m) \in \Gamma^m$, and $f \in \bbZ[\hG^n]^{\hG}$, we have the formula
\[ f(\rho(\gamma_1), \dots, \rho(\gamma_m)) = \Theta_m(f)(\gamma). \]
The proof of this is very similar to the verification that $\rho$ is a homomorphism, so we omit it. In order to complete the proof of the theorem, it remains to show that if $\rho, \rho'$ are two $\hG$-completely reducible homomorphisms with $\tr \rho = \tr \rho'$, then they are in fact $\hG(k)$-conjugate. 

Let us therefore fix a $\hG$-completely reducible homomorphism $\rho : \Gamma \to \hG(k)$. We will show that we can recover $\rho$ from its associated pseudocharacter $\Theta = \tr \rho$; given the constructive argument above, this is no longer surprising. We let the elements $\xi_\gamma \in (\hG^n \dquot \hG)(k)$, $T(\gamma) \in \hG^n(k)$ be as defined above. By \cite[Lemma 2.10]{Bat05}, we can find elements $\delta_1, \dots, \delta_n \in \Gamma$ such that for any parabolic subgroup $P \subset \hG_k$, and for any Levi subgroup $L \subset P$, $P$ contains $\rho(\Gamma)$ if and only if $P$ contains $\rho(\delta_1), \dots \rho(\delta_n)$, and likewise for $L$. In particular, our assumption that $\rho$ is $\hG$-completely reducible implies that $(\rho(\delta_1), \dots, \rho(\delta_n)) = (g_1, \dots, g_n)$ (say) has a closed $\hG_k$-orbit in $\hG_k^n$. After possibly augmenting the tuple $(\delta_1, \dots, \delta_n)$, we can assume moreover that $Z_{\hG}(g_1, \dots, g_n)(k) = Z_{\hG}(\rho(\Gamma))(k)$. 

Let $\gamma \in \Gamma$. We claim that $\rho(\gamma) = g$ is uniquely determined by the condition that $(g_1, \dots, g_n, g)$ is $\hG$-conjugate to $T(\delta_1, \dots, \delta_n, \gamma)$. It satisfies this property because $(g_1, \dots g_n, g)$ has a closed orbit, by construction, and because $\rho$ has associated pseudocharacter $\Theta$. On the other hand, if $g'$ is another element with this property, then we can find $x \in Z_{\hG}(\rho(\Gamma))$ such that $x g x^{-1} = g'$. Since $g \in \rho(\Gamma)$, this implies that $g = g'$, as required. 
\end{proof}
\begin{definition}
Let $R$ be a topological ring, and let $\Gamma$ be a topological group. A pseudocharacter $\Theta = (\Theta_n)_{n \geq 1}$ over $R$ is said to be continuous if for each $n \geq 1$, the map $\Theta_n$ takes values in the subset $\Map_{\text{cts}}(\Gamma^n, R) \subset \Map(\Gamma^n, R)$ of continuous maps.
\end{definition}
\begin{proposition}\label{prop_continuous_pseudocharacters_biject_with_continuous_representations}
Suppose that $\Gamma$ a profinite group, that $k$ is an algebraically closed topological field, and that $\rho : \Gamma \to \hG(k)$ is a $\hG$-completely reducible representation with $\tr \rho = \Theta$. Then:
\begin{enumerate}
\item If $\rho$ is continuous, then $\Theta$ is continuous.
\item If $k$ admits a rank 1 valuation and is of characteristic 0 \(e.g.\ $k = \overline{\bbQ}_l$\) and $\Theta$ is continuous, then $\rho$ is continuous.
\item If $k$ is endowed with the discrete topology \(e.g. $k = \overline{\bbF}_l$\) and $\Theta$ is continuous, then $\rho$ is continuous.
\end{enumerate}
\end{proposition}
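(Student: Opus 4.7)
The statement decomposes into three assertions of increasing difficulty, which I would address in the order (i), (iii), (ii).

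For (i), the identity $\Theta_n(f)(\gamma_1,\ldots,\gamma_n) = f(\rho(\gamma_1),\ldots,\rho(\gamma_n))$ exhibits $\Theta_n(f)$ as the composition $\Gamma^n \xrightarrow{\rho^n} \hG(k)^n \xrightarrow{f} k$ of continuous maps (the second because $f$ is a regular function on an affine $k$-scheme, hence continuous in any ring topology on $k$). So continuity of $\rho$ transfers to $\Theta$ immediately.

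For (iii), where $k$ is discrete, I would prove that $\rho$ is locally constant. Fix $\delta = (\delta_1,\ldots,\delta_n) \in \Gamma^n$ as in the constructive proof of Theorem \ref{thm_pseudocharacters_biject_with_representations_over_fields}, so that for each $\gamma \in \Gamma$, $\rho(\gamma)$ is uniquely characterized by the condition that $(\rho(\delta_1),\ldots,\rho(\delta_n),\rho(\gamma))$ has closed $\hG$-orbit with image $\xi_{(\delta,\gamma)} \in (\hG^{n+1}\dquot\hG)(k)$. Since $\hG^{n+1}\dquot\hG = \Spec \bbZ[\hG^{n+1}]^{\hG}$ is of finite type over $\bbZ$, choose finitely many $\bbZ$-algebra generators $f_1,\ldots,f_N \in \bbZ[\hG^{n+1}]^{\hG}$. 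For any $\gamma_0 \in \Gamma$, continuity of each $\Theta_{n+1}(f_i)$ combined with discreteness of $k$ makes
\[ U := \bigcap_{i=1}^N \{\gamma \in \Gamma : \Theta_{n+1}(f_i)(\delta,\gamma) = \Theta_{n+1}(f_i)(\delta,\gamma_0)\} \]
an open neighborhood of $\gamma_0$. On $U$ all invariants agree, so $\xi_{(\delta,\gamma)} = \xi_{(\delta,\gamma_0)}$ as $k$-points, and uniqueness forces $\rho(\gamma) = \rho(\gamma_0)$.

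For (ii), the argument of (iii) collapses: equality of $\Theta_{n+1}(f_i)$-values is not an open condition on $k$ when $k$ is non-discrete. I would instead reduce to the classical pseudocharacter theorem for $GL_N$. Choose a faithful linear representation $V$ of $\hG$ of dimension $N$, giving a closed immersion $\hG \hookrightarrow GL(V)$ and hence a topological embedding $\hG(k) \hookrightarrow GL_N(k)$; let $\rho_V$ denote the composite $\Gamma \to \hG(k) \to GL_N(k)$. Because $k$ has characteristic $0$ and $\rho$ is $\hG$-completely reducible, the Zariski closure of $\rho(\Gamma)$ is reductive, so $V$ is semisimple as a $\Gamma$-module and $\rho_V$ is semisimple in the classical sense. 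The functions $(h_1,\ldots,h_k) \mapsto \tr_V(h_1 \cdots h_k)$ on $\hG^k$ are pulled back from $GL_N$-invariants and thus lie in $\bbZ[\hG^k]^{\hG}$; the hypothesis that $\Theta$ is continuous therefore implies that the classical (Taylor-style) trace pseudocharacter of $\rho_V$ is continuous. The classical theorem of Taylor \cite{Tay91}, as refined by Rouquier \cite{Rou96}, then gives continuity of $\rho_V$, and continuity of $\rho$ follows since $\hG(k) \hookrightarrow GL_N(k)$ is a topological embedding.

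The principal obstacle is (ii); (i) and (iii) are essentially formal. A direct, intrinsic argument for (ii) would need to show that the injection $\rho(\Gamma) \hookrightarrow (\hG^{n+1}\dquot\hG)(k)$, $\rho(\gamma) \mapsto \xi_{(\delta,\gamma)}$, is a homeomorphism onto its image in the $k$-analytic topology. This would reduce to a scheme-theoretic claim about the fibers of $g \mapsto [(g_1,\ldots,g_n,g)]$ at points of $\rho(\Gamma)$ and would likely require a mixed-characteristic Luna slice theorem, as in \cite[Proposition 7.6]{Bar85}. The proposed reduction to $GL_N$ sidesteps this geometric work by invoking pre-existing continuous pseudocharacter theory, which is available because $k$ has characteristic $0$.
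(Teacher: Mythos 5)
Your proposal is correct. Parts (i) and (iii) coincide in substance with the paper's argument: for (iii) the paper likewise fixes a tuple $(\gamma_1,\dots,\gamma_n)$ as in the uniqueness half of Theorem \ref{thm_pseudocharacters_biject_with_representations_over_fields}, uses that $\bbZ[\hG^{n+1}]^{\hG}$ is a finitely generated $\bbZ$-algebra together with discreteness of $k$, and concludes that $\rho$ is trivial on an open normal subgroup (i.e.\ factors through a finite quotient) — the same argument you give, packaged as ``locally constant at $\gamma_0 = 1$'' plus normality rather than local constancy at every point. The genuine divergence is in (ii): the paper gives no argument at all, deferring to the proof of \cite[Proposition 5.7]{Laf13}, whereas you reduce to the classical $\GL_N$ theory via a faithful representation $V$ of $\hG$. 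Your reduction is sound: $\hG$-complete reducibility in characteristic $0$ does make $\rho_V$ semisimple, the functions $\tr_V(h_{1}\cdots h_{k})$ do lie in $\bbZ[\hG^k]^{\hG}$ (take $V$ defined over $\bbZ$, or use $\bbQ[\hG^k]^{\hG}=\bbZ[\hG^k]^{\hG}\otimes\bbQ$), and a closed immersion $\hG\hookrightarrow\GL(V)$ induces a topological embedding on $k$-points for the valuation topology. The one point you should make explicit is that Taylor's and Rouquier's continuity statements are usually formulated for $\overline{\bbQ}_l$; for a general algebraically closed $k$ of characteristic $0$ with a rank $1$ valuation you should instead quote the underlying mechanism, namely that nondegeneracy of the trace form on the semisimple subalgebra $B\subset M_N(k)$ spanned by $\rho_V(\Gamma)$ expresses each matrix entry of $\rho_V(\gamma)$ as a fixed $k$-linear combination of the continuous functions $\gamma\mapsto \tr_V\rho_V(\gamma\gamma_j)$ for finitely many fixed $\gamma_j$. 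With that caveat your route has the advantage of being self-contained relative to standard $\GL_N$ pseudocharacter theory, at the cost of introducing an auxiliary faithful representation; the paper's (i.e.\ Lafforgue's) route stays intrinsic to $\hG$.
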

\begin{proof}
The first part is clear from the definition of $\tr \rho$. The proof of the second part is contained in the proof of \cite[Proposition 5.7]{Laf13}. For the third part, we mimic the proof of uniqueness in Theorem \ref{thm_pseudocharacters_biject_with_representations_over_fields} to show that $\rho$ factors through a discrete quotient of $\Gamma$. First, we can find elements $\gamma_1, \dots, \gamma_n \in \Gamma$ such that if $g_i = \rho(\gamma_i)$, $i = 1, \dots, n$, then $\rho(\Gamma)$ is contained in the same Levi and parabolic subgroups of $\hG_k$ as $(g_1, \dots, g_n)$, and we have $Z_{\hG}(g_1, \dots, g_n) = Z_{\hG}(\rho(\Gamma))$. The proof of Theorem \ref{thm_pseudocharacters_biject_with_representations_over_fields} shows that we can then recover $\rho$ uniquely as follows: for each $\gamma \in \Gamma$, $\rho(\gamma)$ is the unique element $g \in \hG(k)$ such that $(g_1, \dots, g_n, g)$ has a closed orbit in $\hG(k)$ and for all $f \in \bbZ[\hG^{n+1}]^{\hG}$, we have $f(g_1, \dots, g_n, g) = \Theta_{n+1}(f)(\gamma_1, \dots, \gamma_n, \gamma)$. 

We now observe that for any $f \in \bbZ[\hG^{n+1}]^{\hG}$, the map $\Gamma \to k$, $\gamma \mapsto \Theta_{n+1}(f)(\gamma_1, \dots, \gamma_n, \gamma)$, is continuous. On the other hand, $\bbZ[\hG^{n+1}]^{\hG}$ is a $\bbZ$-algebra of finite type (\cite[Theorem 3]{Ses77}). It follows that we can find an open normal subgroup $N \subset \Gamma$ such that for all $f \in \bbZ[\hG^{n+1}]^{\hG}$ and for all $\gamma \in N$, we have $\Theta_{n+1}(f)(\gamma_1, \dots, \gamma_n, \gamma) = \Theta_{n+1}(f)(\gamma_1, \dots, \gamma_n, 1)$, hence $f(g_1, \dots, g_n, \rho(\gamma)) = f(g_1, \dots, g_n, 1)$. The above characterization of $\rho$ now shows that this forces $\rho(\gamma) = 1$, hence $\rho$ factors through the finite quotient $\Gamma / N$, and is \emph{a fortiori} continuous.
\end{proof}

\begin{theorem}\label{thm_reduction_modulo_l} Let $l$ be a prime, and let $\Gamma$ be a profinite group.
\begin{enumerate}
\item Let $\Theta$ be a continuous $\hG$-pseudocharacter of $\Gamma$ over $\overline{\bbQ}_l$. Then there exists a coefficient field $E \subset \overline{\bbQ}_l$ such that $\Theta$ takes values in $\cO_E$.
\item Let $\rho : \Gamma \to \hG(\overline{\bbQ}_l)$ be a continuous homomorphism. Then after replacing $\rho$ by a $\hG(\overline{\bbQ}_l)$-conjugate, we can find a coefficient field $E \subset \overline{\bbQ}_l$ such that $\rho$ takes values in $\hG(\cO_E)$. 
\item Let $\rho : \Gamma \to \hG(\overline{\bbQ}_l)$ be a continuous homomorphism. Choose a conjugate $\rho'$ with values in $\hG(\cO_E)$ for some coefficient field $E \subset \overline{\bbQ}_l$. Then the semisimplification $\overline{\rho} : \Gamma \to \hG(\overline{\bbF}_l)$ of $\rho' \text{ mod }\varpi \cO_E$ is continuous and, up to $\hG(\overline{\bbF}_l)$-conjugacy, independent of any choices.
\end{enumerate}
\end{theorem}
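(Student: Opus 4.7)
The plan is to prove (ii) first, then deduce (i) immediately, and obtain (iii) from (i) together with Theorem \ref{thm_pseudocharacters_biject_with_representations_over_fields}.

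For (ii), I endow $\overline{\bbQ}_l$ with the colimit topology of its finite subextensions. Since $\Gamma$ is profinite, the continuous image $\rho(\Gamma)$ factors through $\hG(E_0)$ for some coefficient field $E_0$ and is compact there. The crucial input is then a standard consequence of Bruhat--Tits theory: because $\hG$ is split reductive over $\cO_{E_0}$, the subgroup $\hG(\cO_{E_0})$ is the stabilizer in $\hG(E_0)$ of a hyperspecial vertex $x_0$ of the building $\mathcal{B}(\hG, E_0)$. The compact subgroup $\rho(\Gamma)$ fixes some point $x \in \mathcal{B}(\hG, E_0)$; after passing to a sufficiently large finite extension $E/E_0$, this point $x$ becomes a hyperspecial vertex of $\mathcal{B}(\hG, E)$ (as the set of hyperspecial vertices becomes denser under base change), hence lies in the $\hG(E)$-orbit of $x_0$. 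Any conjugator $g \in \hG(E) \subset \hG(\overline{\bbQ}_l)$ moving $x$ to $x_0$ then satisfies $g \rho(\Gamma) g^{-1} \subset \hG(\cO_E)$.

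Given (ii), part (i) is immediate: by Theorem \ref{thm_pseudocharacters_biject_with_representations_over_fields} and Proposition \ref{prop_continuous_pseudocharacters_biject_with_continuous_representations}(ii), $\Theta$ is the pseudocharacter of some continuous completely reducible $\rho : \Gamma \to \hG(\overline{\bbQ}_l)$; after conjugating $\rho$ to take values in $\hG(\cO_E)$, the formula $\Theta_n(f)(\gamma) = f(\rho(\gamma_1), \dots, \rho(\gamma_n))$ exhibits $\Theta$ as $\cO_E$-valued. For (iii), fix a conjugate $\rho'$ of $\rho$ into $\hG(\cO_E)$. The reduction $\rho' \bmod \varpi_E : \Gamma \to \hG(k_E)$ has finite image and is continuous; its semisimplification $\overline{\rho}$, obtained by composition with algebraic maps to a Levi quotient and then the inclusion of the Levi, is likewise continuous. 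The pseudocharacter $\tr \overline{\rho}$ equals the mod-$\varpi_E$ reduction of $\Theta = \tr \rho$ (semisimplification preserves the pseudocharacter by Proposition \ref{prop_contraction_to_levi_quotient}), and this reduction manifestly does not depend on the choice of $\rho'$. Consequently, any two choices yield $\hG$-completely reducible representations into $\hG(\overline{\bbF}_l)$ with the same pseudocharacter, which by Theorem \ref{thm_pseudocharacters_biject_with_representations_over_fields} must be $\hG(\overline{\bbF}_l)$-conjugate.

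The substantive work is concentrated in the Bruhat--Tits step in (ii), namely the fact that every compact subgroup of $\hG(\overline{\bbQ}_l)$ is, after $\hG(\overline{\bbQ}_l)$-conjugation, contained in $\hG(\cO_E)$ for some coefficient field $E$. An alternative would be to embed $\hG \hookrightarrow \GL_N$ and first arrange that the image preserves an $\cO$-lattice, but one would then still have to transport the integral structure from $\GL_N$ back to $\hG$, a task essentially equivalent to the building argument above.
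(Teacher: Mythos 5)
Your proposal is correct and follows essentially the same route as the paper: part (ii) via Bruhat--Tits theory (the compact image lands in $\hG(E_0)$ for a finite extension $E_0/\bbQ_l$, fixes a point of the building, which after a finite base change may be taken hyperspecial and hence conjugate to the standard one), with (i) deduced from (ii) and (iii) obtained by reduction together with Theorem \ref{thm_pseudocharacters_biject_with_representations_over_fields}. The only steps to tighten are the parenthetical claim that the fixed point ``becomes a hyperspecial vertex after base change'' --- an arbitrary fixed point need not, and one must choose it via \cite[Lemma 2.4]{Lar95} (which the paper cites), producing a fixed point that becomes hyperspecial over a totally ramified extension --- and the initial factorization of $\rho(\Gamma)$ through $\hG(E_0)$, which the paper justifies by a Baire category argument for the valuation topology on $\overline{\bbQ}_l$ rather than by passing to the colimit topology.
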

\begin{proof}
The first part of the theorem follows from the second: given $\Theta$, we can find a continuous representation $\rho : \Gamma \to \hG(\cO_E)$ such that $\Theta = \tr \rho$, hence $\Theta$ takes values in $\cO_E$. To prove the second part, we will use Bruhat--Tits theory (see \cite{Tit79}). Let us first note that a standard argument using the Baire category theorem shows that $\rho(\Gamma)$ is contained in $\hG(E)$ for some finite extension $E / \bbQ_l$. (Indeed, $\rho(\Gamma)$ is a Baire space which is exhausted by the closed subgroups $\rho(\Gamma) \cap \hG(E)$, as $E$ varies over all finite extensions of $\bbQ_l$ inside $\overline{\bbQ}_l$; therefore one of these contains an open subgroup of $\rho(\Gamma)$, therefore of finite index. Enlarging $E$, it will then have the desired property.)

Let $\cD \hG$ denote the derived subgroup of $\hG$. Then $\hG(E)$ acts on the building $\cB(\cD \hG, E)$. Let $\hG(E)^0$ denote the subgroup of elements $g \in G(E)$ such that for all $\chi \in X^\ast(\hG)$, $\chi(g) \in \cO_E^\times$. The maximal compact subgroups of $\hG(E)$ can all be realized as stabilizers in $\hG(E)^0$ of centroids of facets in $\cB(\cD \hG, E)$. There is a unique hyperspecial point $x_0 \in \cB(\cD \hG, E)$ such that $\Stab_{\hG(E)^0}(x_0) = \hG(\cO_E)$. 

If $E' / E$ is a finite extension, then there is an inclusion $i_{E, E'} : \cB(\cD \hG, E) \hookrightarrow \cB(\cD \hG, E')$ that is equivariant for the action of $\hG(E) \subset \hG(E')$. According to \cite[Lemma 2.4]{Lar95}, we can find a totally ramified extension $E' / E$ and a point $x \in \cB(\cD \hG, E)$ such that $i_{E, E'}(x)$ is hyperspecial and stabilized by $\rho(\Gamma)$. Since all hyperspecial vertices are conjugate under the action of $\hG^\text{ad}(E')$, this means that after replacing $\rho$ by a $\overline{\bbQ}_l$-conjugate, and possibly enlarging $E'$ further, $\rho(\Gamma) \subset \hG(\cO_{E'})$. This establishes the second part of the theorem.

After reducing modulo the maximal ideal of $\cO_{E'}$ and semisimplifying, we get the desired representation $\overline{\rho} : \Gamma \to \hG(\overline{\bbF}_l)$. It remains to check that this representation is continuous and, up to $\hG(\overline{\bbF}_l)$-conjugacy, independent of any choices. It is continuous because $\rho' \text{ mod }\ffrm_E$ is continuous. Its isomorphism class is independent of choices by Theorem \ref{thm_pseudocharacters_biject_with_representations_over_fields} and because $\tr \overline{\rho} = \overline{\Theta}$ depends only on the original representation $\rho$.
\end{proof}
\begin{definition}\label{def_reduction_modulo_l}
Let $l$ be a prime, and let $\Gamma$ be a profinite group.
\begin{enumerate}
\item If $\rho : \Gamma \to \hG(\overline{\bbQ}_l)$ is a continuous representation, then we write $\overline{\rho} : \Gamma \to \hG(\overline{\bbF}_l)$ for the semisimple residual representation associated to it by Theorem \ref{thm_reduction_modulo_l}, and call it the reduction modulo $l$ of $\rho$.
\item If $\Theta$ is a continuous pseudocharacter over $\overline{\bbQ}_l$, then we write $\overline{\Theta}$ for the continuous pseudocharacter over $\overline{\bbF}_l$ which is the reduction of $\Theta$ modulo the maximal ideal of $\overline{\bbZ}_l$, and call it the reduction modulo $l$ of $\Theta$.
\end{enumerate}
\end{definition}
If $\hG = \GL_n$, then these notions of reduction modulo $l$ are the familiar ones. 
\subsection{Artinian coefficients}

Now fix a prime $l$, and let $E \subset \overline{\bbQ}_l$ be a coefficient field. Let $\Gamma$ be a profinite group. Let $\overline{\rho} : \Gamma \to \hG(k)$ be a representation with associated pseudocharacter $\overline{\Theta} = \tr \overline{\rho}$. In this situation, we can introduce the functor $\text{PDef}_{\overline{\Theta}} : \cC_\cO \to \Sets$ which associates to any $A \in \cC_\cO$ the set of pseudocharacters $\Theta$ over $A$ with $\Theta \text{ mod } \ffrm_A = \overline{\Theta}$. We also introduce the functor $\text{Def}_{\overline{\rho}} : \cC_\cO \to \Sets$ which associates to any $A$ in $\cC_\cO$ the set of conjugacy classes of homomorphisms $\rho : \Gamma \to \hG(A)$ such that $\rho \text{ mod }\ffrm_A = \overline{\rho}$ under the group $\ker (\hG^\text{ad}(A) \to \hG^\text{ad}(k))$. (This deformation functor will be studied further in \S \ref{sec_galois_representations_and_their_deformation_theory} below.)

We then have the following infinitesimal version of Theorem \ref{thm_pseudocharacters_biject_with_representations_over_fields}, which is an analogue of Carayol's lemma for pseudocharacters valued in $\GL_n$ \cite{Car94}: 
\begin{theorem}\label{thm_pseudocharacters_biject_with_representations_over_artinian_rings}
With notation as above, suppose further that the centralizer of $\overline{\rho}$ in $\hG_k^\text{ad}$ is scheme-theoretically trivial and that $\overline{\rho}$ is absolutely $\hG$-completely reducible. Then the map $\rho \mapsto \tr \rho$ induces an isomorphism of functors $\Def_{\overline{\rho}} \to \PDef_{\overline{\Theta}}$.
\end{theorem}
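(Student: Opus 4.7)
The forward map $\rho \mapsto \tr\rho$ is clearly well-defined on $\Def_{\overline{\rho}}(A)$, since conjugation by $\ker(\hG^\text{ad}(A) \to \hG^\text{ad}(k))$ preserves every $\hG$-invariant function of the values of $\rho$. The plan for constructing its inverse is to apply Proposition \ref{prop_free_action_on_completion} to $X = \hG^{n+m}$ with $G = \hG^\text{ad}$ acting by simultaneous conjugation, for varying $m \geq 0$, in direct analogy with the uniqueness half of the proof of Theorem \ref{thm_pseudocharacters_biject_with_representations_over_fields}.

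First I would observe that the hypotheses in fact force $\overline{\rho}$ to be absolutely $\hG$-irreducible: if $\overline{\rho}(\Gamma)$ were contained in a proper parabolic then, by $\hG$-complete reducibility, it would lie in a proper Levi $L \subsetneq \hG_k$, and the centre of $L$ would project to a non-trivial subgroup of $\hG^\text{ad}_k$ centralising $\overline{\rho}(\Gamma)$. Next, following the endgame of the proof of Theorem \ref{thm_pseudocharacters_biject_with_representations_over_fields}, I would use \cite[Lemma 2.10]{Bat05} together with a Noetherian argument on scheme-theoretic centralizers to fix elements $\delta_1, \dots, \delta_n \in \Gamma$ such that the tuple $(g_1, \dots, g_n) := (\overline{\rho}(\delta_i))$ is $\hG$-irreducible and satisfies $Z_{\hG^\text{ad}_k}(g_1, \dots, g_n) = Z_{\hG^\text{ad}_k}(\overline{\rho}(\Gamma)) = 1$ scheme-theoretically. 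Then for any $\gamma_1, \dots, \gamma_m \in \Gamma$ the extended tuple $x_m := (g_1, \dots, g_n, \overline{\rho}(\gamma_1), \dots, \overline{\rho}(\gamma_m))$ remains $\hG$-irreducible (any proper parabolic containing it would already contain the $\hG$-irreducible subtuple $(g_1, \dots, g_n)$) with trivial centralizer, so Proposition \ref{prop_free_action_on_completion} applies, yielding a free action of $(\hG^\text{ad})^\wedge$ on $(\hG^{n+m})^{\wedge, x_m}$ and an isomorphism of its quotient with $(\hG^{n+m} \dquot \hG^\text{ad})^{\wedge, \pi(x_m)}$.

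Given $\Theta \in \PDef_{\overline{\Theta}}(A)$, the map $\Theta_{n+m}$ determines, for each $(\gamma_1, \dots, \gamma_m) \in \Gamma^m$, a point $\xi_{\gamma_1, \dots, \gamma_m} \in (\hG^{n+m} \dquot \hG^\text{ad})(A)$ deforming $\pi(x_m)$. I would choose an arbitrary lift $(h_1, \dots, h_n) \in \hG(A)^n$ of $(g_1, \dots, g_n)$ above $\xi_\emptyset$; it is unique up to the free action of $\ker(\hG^\text{ad}(A) \to \hG^\text{ad}(k))$, which is exactly the equivalence relation defining $\Def_{\overline{\rho}}(A)$. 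For each $\gamma \in \Gamma$, the $(\hG^\text{ad})^\wedge$-equivariant forget-the-last-coordinate projection $(\hG^{n+1})^{\wedge, x_1} \to (\hG^n)^{\wedge, x_0}$ and axiom (i) of Definition \ref{def_pseudocharacter} then pick out a unique element $\rho(\gamma) \in \hG(A)$ such that $(h_1, \dots, h_n, \rho(\gamma))$ is the lift of $x_1$ above $\xi_\gamma$. To check that $\rho$ is a homomorphism, I would apply the same reasoning with $m = 2$: the unique lift $(h_1, \dots, h_n, u_1, u_2)$ above $\xi_{\gamma_1, \gamma_2}$ has $u_i = \rho(\gamma_i)$ by the two forget-a-coordinate projections combined with axiom (i), while multiplying the last two coordinates and invoking axiom (ii) identifies $(h_1, \dots, h_n, u_1 u_2)$ with the unique lift above $\xi_{\gamma_1 \gamma_2}$, forcing $\rho(\gamma_1 \gamma_2) = u_1 u_2$. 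The identity $\tr\rho = \Theta$ and the injectivity of $\Def_{\overline{\rho}}(A) \to \PDef_{\overline{\Theta}}(A)$ both fall out of this same freeness/uniqueness game for general $m$.

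The main obstacle is essentially absorbed by Proposition \ref{prop_free_action_on_completion}; what is left is the bookkeeping to ensure that the single choice of lift $(h_1, \dots, h_n)$ propagates consistently through the definition of $\rho(\gamma)$ as $\gamma$ varies, and this coherence is precisely what the freeness of the completed $\hG^\text{ad}$-action delivers.
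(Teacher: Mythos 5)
Your proposal is correct and follows essentially the same route as the paper: choose a tuple $(\overline{g}_1,\dots,\overline{g}_n)$ in the image of $\overline{\rho}$ with closed orbit and scheme-theoretically trivial centralizer in $\hG^{\text{ad}}_k$, apply Proposition \ref{prop_free_action_on_completion} to identify $(\hG^{n+m})^{\wedge, x}/\hG^{\text{ad},\wedge}$ with the completion of the invariant-theoretic quotient, and use the Cartesian forget-a-coordinate squares (both horizontal arrows being $\hG^{\text{ad},\wedge}$-torsors) to propagate a single choice of lift into a well-defined homomorphism. The only cosmetic differences are that you invoke \cite[Lemma 2.10]{Bat05} to select the $\delta_i$ where the paper simply takes elements covering the finite image of $\overline{\rho}$, and that you spell out the $m=2$ verification of multiplicativity which the paper leaves as an easy check.
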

\begin{proof}
If $n \geq 1$, let $X_n = \hG^n_\cO$, $Y_n = X_n \dquot \hG^\text{ad}_\cO$, $\pi : X_n \to Y_n$ the quotient map. We will use the following consequence of Proposition \ref{prop_free_action_on_completion}:
\begin{itemize}
\item Let $x = (\overline{g}_1, \dots, \overline{g}_n) \in X_n(k)$ be a point with scheme-theoretically trivial centralizer in $\hG_k^\text{ad}$ and closed $\hG$-orbit in $\hG_k^n$. Then $\hG^{\text{ad}, \wedge}_\cO$ acts freely on $X_n^{\wedge, x}$, and the map $X_n^{\wedge, x} \to Y_n^{\wedge, \pi(x)}$ factors through an isomorphism $X_n^{\wedge, x} / \hG^{\text{ad}, \wedge}_\cO \cong Y_n^{\wedge, \pi(x)}$.
\end{itemize}
Suppose that $A \in \cC_\cO$, and let $\Theta \in \PDef_{\overline{\Theta}}(A)$. We will construct a preimage $\rho : \Gamma \to \hG(A)$. Let $\gamma_1, \dots, \gamma_n \in \Gamma$ be elements such that $\overline{g}_1 = \overline{\rho}(\gamma_1), \dots, \overline{g}_n = \overline{\rho}(\gamma_n)$ cover $\overline{\rho}(\Gamma)$. Then $Z_{\hG^\text{ad}}(\overline{g}_1, \dots, \overline{g}_n) = \{ 1 \}$. Let $x = (\overline{g}_1, \dots, \overline{g}_n)$. Then $\Theta_n$ determines a point of $Y_n^{\wedge, \pi(x)}(A)$, and we choose $(g_1, \dots, g_n) \in X_n^{\wedge, x}(A)$ to be an arbitrary pre-image of this point. By the above bullet point, any other choice is conjugate to this one by a unique element of $\hG_\cO^{\text{ad}, \wedge}(A)$.

Let $\gamma \in \Gamma$ be any element, and let $y = (\overline{g}_1, \dots, \overline{g}_n, \overline{\rho}(\gamma))$. We have a commutative diagram, given by forgetting the last entry:
\[ \xymatrix{ X_{n+1}^{\wedge, y} \ar[r] \ar[d] & Y_{n+1}^{\wedge, \pi(y)} \ar[d] \\
X_n^{\wedge, x} \ar[r] & Y_n^{\wedge, \pi(x)}. } \]
The horizontal arrows are both $\hG_\cO^{\text{ad}, \wedge}$-torsors, which implies that this diagram is even Cartesian, hence there is a unique tuple $(g_1, \dots, g_n, g) \in X_{n+1}^{\wedge, y}(A)$ which lifts $(g_1, \dots, g_n)$ and which has image in $Y_{n+1}^{\wedge, \pi(y)}$ corresponding to $\Theta_{n+1}(\gamma_1, \dots, \gamma_n, \gamma)$. 

It is now an easy verification that the assignment $\gamma \leadsto g = \rho(\gamma)$ is a homomorphism $\rho : \Gamma \to \hG(A)$ with $\tr \rho = \Theta$. This shows that the natural transformation $\Def_{\overline{\rho}} \to \PDef_{\overline{\Theta}}$ is surjective. It is clear from the construction that it is also injective, so this completes the proof. 
\end{proof}

\section{Galois representations and their deformation theory}\label{sec_galois_representations_and_their_deformation_theory}

Let $\hG$ be a split semisimple group over $\bbZ$. We fix a prime $l$ which is a very good characteristic for $\hG$, as well as a coefficient field $E \subset \overline{\bbQ}_l$. Let $\Gamma$ be a profinite group satisfying Mazur's condition $\Phi_l$ \cite{Maz89}. In this section, we consider the deformation theory of representations of $\Gamma$ to $\hG$ with $l$-adic coefficients. We first describe the abstract theory, and then specialize to the case where $\Gamma = \Gamma_{K, S}$ is the Galois group of a global field of positive characteristic. 

In this case there are many effective tools available, such as the Galois cohomology of global fields, the work of L. Lafforgue on the global Langlands correspondence for $\GL_n$ \cite{Laf02}, and Gaitsgory's solution of de Jong's conjecture \cite{Gaitsgory}. We will apply these results to get a good understanding of Galois deformation rings, even before we begin to make a direct connection with automorphic forms on $G$ (see for example Theorem \ref{thm_khare_wintenberger_method}).

\subsection{Abstract deformation theory}\label{sec_abstract_deformation_theory}

We start with a fixed absolutely $\hG$-irreducible homomorphism $\overline{\rho} : \Gamma \to \hG(k)$.
\begin{lemma}\label{lem_triviality_of_scheme_theoretic_centralizer_in_very_good_characteristic}
The scheme-theoretic centralizer of $\overline{\rho}(\Gamma)$ in $\hG_k^\text{ad}$ is \'etale over $k$, and $H^0(\Gamma, \widehat{\frg}_k) = 0$.
\end{lemma}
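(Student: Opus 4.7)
The plan is to realize $Z := Z_{\hG_k^\text{ad}}(\overline{\rho}(\Gamma))$ as the stabilizer, for the diagonal conjugation action of $\hG_k^\text{ad}$ on $\hG_k^n$, of a suitable finite tuple in $\overline{\rho}(\Gamma)$, and then to show that this stabilizer is both smooth and $0$-dimensional. First I would invoke \cite[Lemma 2.10]{Bat05} (already exploited in the proof of Theorem \ref{thm_pseudocharacters_biject_with_representations_over_fields}) to produce elements $\gamma_1, \dots, \gamma_n \in \Gamma$ such that, writing $\overline{g}_i = \overline{\rho}(\gamma_i)$, the tuple $(\overline{g}_1,\dots,\overline{g}_n)$ lies in exactly the same parabolic and Levi subgroups of $\hG_k$ as $\overline{\rho}(\Gamma)$ and has the same Zariski closure. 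Since a scheme-theoretic centralizer depends only on the Zariski closure of the set being centralized, and since the adjoint action is faithful, $Z$ coincides scheme-theoretically with $\Stab_{\hG_k^\text{ad}}(\overline{g}_1,\dots,\overline{g}_n)$.

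For smoothness, I note that $l$ is very good for $\hG^\text{ad}$ because it is for $\hG$ (identical root systems). Theorem \ref{thm_all_subgroups_are_separable} then implies that every closed linear algebraic subgroup of $\hG_k^\text{ad}$ is separable, so its scheme-theoretic centralizer is smooth; applied to the Zariski closure of $\overline{\rho}(\Gamma)$ inside $\hG_k^\text{ad}$, this yields smoothness of $Z$. For $0$-dimensionality, absolute $\hG$-irreducibility of $\overline{\rho}$ together with the choice of the $\gamma_i$ implies that $(\overline{g}_1,\dots,\overline{g}_n)$ generates a $\hG$-irreducible subgroup, so by Theorem \ref{thm_richardson_simultaneous_conjuation_and_stability}(ii) its $\hG_k$-orbit is stable, i.e. closed with stabilizer finite modulo $Z_{\hG}$. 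Pushing to $\hG_k^\text{ad}$ makes the stabilizer outright finite, so $Z$ is both smooth and of dimension zero, hence étale.

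The vanishing of $H^0(\Gamma, \widehat{\frg}_k)$ follows by identifying this invariant subspace with the tangent space at the identity of $Z$: a dual number $1+\epsilon X$ with $X \in \widehat{\frg}_k$ lies in the centralizer iff $\Ad(\overline{\rho}(\gamma))(X)=X$ for all $\gamma$, which is the definition of $H^0(\Gamma,\widehat{\frg}_k)$; here I use that very good characteristic makes the isogeny $\hG \to \hG^\text{ad}$ smooth with étale kernel $Z_{\hG}$, so that $\widehat{\frg}_k$ is canonically also $\Lie(\hG_k^\text{ad})$. Since $Z$ is étale, $\Lie(Z)=0$ and the assertion follows. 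I foresee no serious obstacle; the only delicate point is ensuring the scheme-theoretic identifications among the various centralizers (of $\overline{\rho}(\Gamma)$, of its Zariski closure, of the chosen finite tuple) are correct, but these come directly from the definitions and the faithfulness of the adjoint action.
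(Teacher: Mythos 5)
Your proof is correct and follows essentially the same route as the paper's: identify $H^0(\Gamma,\widehat{\frg}_k)$ with the tangent space of the scheme-theoretic centralizer, get smoothness from Theorem \ref{thm_all_subgroups_are_separable} (very good characteristic) and finiteness from Theorem \ref{thm_richardson_simultaneous_conjuation_and_stability} via $\hG$-irreducibility. The reduction to a finite tuple via \cite[Lemma 2.10]{Bat05} is harmless but superfluous here, since $\overline{\rho}(\Gamma)\subset\hG(k)$ is already a finite set and the cited theorems apply directly to its Zariski closure.
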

\begin{proof}
Since $H^0(\Gamma, \widehat{\frg}_k)$ can be identified with the group of $k[\epsilon]$-points of the scheme-theoretic centralizer $Z_{\hG^\text{ad}}(\overline{\rho}(\Gamma))$, it is enough to show that this centralizer is \'etale. By Theorem \ref{thm_all_subgroups_are_separable}, this happens exactly when this centralizer is finite; and this is true, by Theorem \ref{thm_richardson_simultaneous_conjuation_and_stability}.
\end{proof}
\begin{definition}
Let $A \in \cC_\cO$. A lifting of $\overline{\rho}$ over $A$ is a homomorphism $\rho : \Gamma \to \hG(A)$ such that $\rho \text{ mod } \ffrm_A = \overline{\rho}$. Two liftings $\rho, \rho'$ are said to be strictly equivalent if there exists $g \in \ker(\hG(A) \to \hG(k))$ such that $g \rho g^{-1} = \rho'$. A strict equivalence class of liftings over $A$ is called a deformation over $A$.
\end{definition}
\begin{remark}\label{rem_smoothness} In the definition of strict equivalence, it would be equivalent to consider conjugation by $\ker(\hG^{\text{ad}}(A) \to \hG^{\text{ad}}(k))$, since the natural map $\hG_\cO^{\wedge} \to \hG_\cO^{\text{ad}, \wedge}$ is an isomorphism (because $\hG$ is semisimple and we work in very good characteristic).
\end{remark}
\begin{definition}
We write $\Def_{\overline{\rho}} : \cC_\cO \to \Sets$ for the functor which associates to $A \in \cC_\cO$ the set of deformations of $\rho$ over $A$.
\end{definition}
\begin{proposition}
The functor $\Def_{\overline{\rho}}$ is pro-represented by a complete Noetherian local $\cO$-algebra $R_{\overline{\rho}}$.
\end{proposition}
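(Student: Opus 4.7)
The plan is to verify Schlessinger's representability criteria (H1)--(H4) for $\Def_{\overline{\rho}}$ on $\cC_\cO$. These concern the natural map
\[ \alpha : \Def_{\overline{\rho}}(A' \times_A A'') \to \Def_{\overline{\rho}}(A') \times_{\Def_{\overline{\rho}}(A)} \Def_{\overline{\rho}}(A'') \]
attached to morphisms $A' \to A$, $A'' \to A$ in $\cC_\cO$ with $A' \to A$ surjective: (H1) asks that $\alpha$ be surjective when the surjection is small, (H2) that $\alpha$ be bijective when $A = k$ and $A'' = k[\epsilon]$, (H3) that the tangent space be finite-dimensional, and (H4) that $\alpha$ be bijective whenever $A' \to A$ is a small extension. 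Satisfaction of all four implies pro-representability by a complete Noetherian local $\cO$-algebra.

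I would begin by identifying the tangent space via the usual cocycle construction: a lifting to $k[\epsilon]$ has the form $\gamma \mapsto (1 + \epsilon \phi(\gamma))\overline{\rho}(\gamma)$ for a $1$-cocycle $\phi : \Gamma \to \widehat{\frg}_k$, and strict equivalence corresponds to coboundaries, giving a canonical isomorphism $\Def_{\overline{\rho}}(k[\epsilon]) \cong H^1(\Gamma, \widehat{\frg}_k)$. Mazur's condition $\Phi_l$ then ensures this space is finite-dimensional, yielding (H3). For (H1) and (H2), given liftings $\rho' : \Gamma \to \hG(A')$ and $\rho'' : \Gamma \to \hG(A'')$ whose reductions to $A$ are strictly equivalent, one lifts the conjugating element along the smooth map $\hG^{\text{ad},\wedge}_\cO \to \hG^{\text{ad},\wedge}_\cO$ (here Remark~\ref{rem_smoothness} identifies $\hG^\wedge_\cO \cong \hG^{\text{ad},\wedge}_\cO$, using that $l$ is a very good characteristic), replaces $\rho''$ by a strictly equivalent lifting so that $\rho'$ and $\rho''$ have literally equal reductions to $A$, and glues them to a lifting over $A' \times_A A''$ using that $\hG$-points commute with fibre products.

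The crux of the argument, and the only place where $\hG$-irreducibility of $\overline{\rho}$ is essential, is the injectivity in (H4). Suppose $\rho_1, \rho_2$ are two liftings over $A' \times_A A''$ whose images in $\hG(A')$ and $\hG(A'')$ are strictly equivalent via elements $g' \in \ker(\hG^{\text{ad}}(A') \to \hG^{\text{ad}}(k))$ and $g'' \in \ker(\hG^{\text{ad}}(A'') \to \hG^{\text{ad}}(k))$. The images of $g'$ and $g''$ in $\hG^{\text{ad}}(A)$ a priori differ by an element $c \in \ker(\hG^{\text{ad}}(A) \to \hG^{\text{ad}}(k))$ which centralizes the common deformation to $A$, and hence centralizes $\overline{\rho}(\Gamma)$. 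By Lemma~\ref{lem_triviality_of_scheme_theoretic_centralizer_in_very_good_characteristic}, the scheme-theoretic centralizer $Z_{\hG^{\text{ad}}}(\overline{\rho}(\Gamma))$ is étale over $k$, so its $A$-points reducing to the identity in $k$ are trivial; therefore $c = 1$, the elements $g'$ and $g''$ agree over $A$, and they glue to a strict equivalence $g \in \ker(\hG^{\text{ad}}(A' \times_A A'') \to \hG^{\text{ad}}(k))$ between $\rho_1$ and $\rho_2$. With all four Schlessinger conditions verified, the existence of $R_{\overline{\rho}}$ follows at once.
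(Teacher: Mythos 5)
Your proof is correct, but it takes a different route from the paper. The paper does not verify Schlessinger's criteria directly: it introduces the framed (lifting) functor $\Def^\square_{\overline{\rho}}$, observes that it is obviously pro-representable, notes that the group functor $\hG^{\text{ad},\wedge}_\cO$ acts \emph{freely} on it --- freeness being exactly the content of Lemma \ref{lem_triviality_of_scheme_theoretic_centralizer_in_very_good_characteristic} --- and then invokes a general result on quotients of pro-representable functors by free actions of smooth group functors (\cite[Proposition 2.5]{Kha09}) to conclude. Your argument and the paper's are really two packagings of the same mathematics: the lifting/adjusting of conjugating elements along small extensions in your (H1)/(H2) step is the formal smoothness of $\hG^{\text{ad},\wedge}_\cO$, and the injectivity in your (H4) step --- the element $c$ centralizing the deformation over $A$ must be trivial because $H^0(\Gamma,\widehat{\frg}_k)=0$, applied inductively up the powers of $\ffrm_A$ --- is precisely the freeness of the action. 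What the paper's route buys is brevity, at the cost of citing an external general lemma; what your route buys is a self-contained verification that makes explicit exactly where the triviality of the centralizer enters. One small point of precision: the scheme-theoretic centralizer $Z_{\hG^{\text{ad}}}(\overline{\rho}(\Gamma))$ is a $k$-scheme, so "its $A$-points reducing to the identity" should be read as the statement that an element of $\ker(\hG^{\text{ad}}(A)\to\hG^{\text{ad}}(k))$ commuting with the deformation is trivial, proved by induction on the length of $A$ using $H^0(\Gamma,\widehat{\frg}_k)=0$; this is how the paper phrases the freeness and is worth spelling out.
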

\begin{proof}
Let $\Def^\square_{\overline{\rho}}$ denote the functor of liftings of $\overline{\rho}$. Then the group functor $\hG_\cO^{\text{ad}, \wedge}$ acts freely on $\Def^\square_{\overline{\rho}}$, by Lemma \ref{lem_triviality_of_scheme_theoretic_centralizer_in_very_good_characteristic}, and there is a natural transformation $\Def^\square_{\overline{\rho}} \to \Def_{\overline{\rho}}$ that induces, for any $A \in \cC_\cO$, an isomorphism $\Def^\square_{\overline{\rho}}(A) / \hG_\cO^{\text{ad},\wedge}(A) \cong \Def_{\overline{\rho}}(A)$. It is easy to see that the functor $\Def^\square_{\overline{\rho}}$ is pro-represented by a complete Noetherian local $\cO$-algebra with residue field $k$. It now follows from \cite[Proposition 2.5]{Kha09} (quotient by a free action) that $\Def_{\overline{\rho}}$ is itself pro-representable.
\end{proof}
\begin{proposition}\label{prop_presentation_of_deformation_ring}
There exists a presentation $R_{\overline{\rho}} \cong \cO \llbracket  X_1, \dots, X_g \rrbracket / (f_1, \dots, f_r)$, where $g = \dim_k H^1(\Gamma, \widehat{\frg}_k)$ and $r = \dim_k H^2(\Gamma, \widehat{\frg}_k)$. (These dimensions are finite because we are assuming that the group $\Gamma$ satisfies Mazur's property $\Phi_l$.)
\end{proposition}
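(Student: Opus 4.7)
The plan is to execute the standard Mazur-style presentation argument via Galois cohomology, adapted to the $\hG$-valued setting. The two key inputs are a tangent-space computation and an obstruction theory, both of which are clean here thanks to the hypothesis that $l$ is very good for $\hG$.

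First, I would identify the tangent space. A deformation over $k[\epsilon]$ is represented by a lifting of the form $\rho(\gamma) = (1 + \epsilon u(\gamma))\overline{\rho}(\gamma)$ for a unique map $u : \Gamma \to \widehat{\frg}_k$, where $1 + \epsilon \widehat{\frg}_k$ is identified with the kernel of $\hG(k[\epsilon]) \to \hG(k)$ via the formal-group structure of $\hG$ at the identity (valid because we are in very good characteristic, so this kernel coincides with $\widehat{\frg}_k$ as a $\Gamma$-module for the adjoint action). The condition that $\rho$ is a homomorphism translates to $u$ being a $1$-cocycle for the adjoint action of $\Gamma$ on $\widehat{\frg}_k$ via $\overline{\rho}$, and strict equivalence corresponds precisely to altering $u$ by a coboundary. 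This yields $\Def_{\overline{\rho}}(k[\epsilon]) \cong H^1(\Gamma, \widehat{\frg}_k)$. Taking $g = \dim_k H^1(\Gamma, \widehat{\frg}_k)$, one can then produce a surjection $R := \cO\llbracket X_1, \dots, X_g \rrbracket \twoheadrightarrow R_{\overline{\rho}}$ inducing an isomorphism on tangent spaces; let $I$ denote its kernel.

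Next, I would set up the obstruction theory. For a small surjection $A' \twoheadrightarrow A$ in $\cC_\cO$ with kernel $J$ satisfying $\ffrm_{A'} J = 0$, and a lift $\rho : \Gamma \to \hG(A)$ of $\overline{\rho}$, choose any set-theoretic lift $\tilde\rho : \Gamma \to \hG(A')$ with $\tilde\rho(1) = 1$. The failure of $\tilde\rho$ to be a homomorphism, measured by $c(\gamma_1, \gamma_2) = \tilde\rho(\gamma_1\gamma_2)\tilde\rho(\gamma_2)^{-1}\tilde\rho(\gamma_1)^{-1}$, takes values in $\ker(\hG(A') \to \hG(A)) \cong \widehat{\frg}_k \otimes_k J$ (again via the exponential identification in very good characteristic), is a $2$-cocycle for the adjoint Galois action, and its class $\mathrm{ob}(\rho) \in H^2(\Gamma, \widehat{\frg}_k) \otimes_k J$ is the obstruction to lifting $\rho$ to $\hG(A')$; when it vanishes, the set of lifts is a torsor under $H^1(\Gamma, \widehat{\frg}_k) \otimes_k J$.

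Finally, I would run the closing argument. For any nonzero $k$-linear functional $\phi \in (I/\ffrm_R I)^\vee$, set $J_\phi := \ker\phi + \ffrm_R I \subsetneq I$; then $R/J_\phi \twoheadrightarrow R_{\overline{\rho}}$ is a small extension with one-dimensional kernel $I/J_\phi \cong k$. By universality, the universal deformation $\rho^{\mathrm{univ}}$ over $R_{\overline{\rho}}$ cannot lift to $\hG(R/J_\phi)$: any such lift would correspond to a section of $R/J_\phi \twoheadrightarrow R_{\overline{\rho}}$, forcing $J_\phi = I$. Thus $\mathrm{ob}(\rho^{\mathrm{univ}}) \in H^2(\Gamma, \widehat{\frg}_k) \otimes_k (I/J_\phi) \cong H^2(\Gamma, \widehat{\frg}_k)$ is nonzero. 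The assignment $\phi \mapsto \mathrm{ob}(\rho^{\mathrm{univ}})$ is $k$-linear in $\phi$ (by linearity of the obstruction construction in the kernel of the small extension), giving an injection $(I/\ffrm_R I)^\vee \hookrightarrow H^2(\Gamma, \widehat{\frg}_k)$. Hence $\dim_k I/\ffrm_R I \leq r$, and topological Nakayama produces the desired presentation. The main technical point — and the only real obstacle — is the identification of the kernel of $\hG(A') \to \hG(A)$ with $\widehat{\frg}_k \otimes_k J$ as a $\Gamma$-module; very good characteristic is what makes this identification both possible and compatible with the adjoint action, after which the tangent-space and obstruction computations reduce to their classical $\GL_n$ analogues.
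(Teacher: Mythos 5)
Your proposal is correct and is precisely the ``well-known calculation with cocycles, which exactly parallels that done by Mazur'' that the paper invokes without spelling out: tangent space computation identifying $\Def_{\overline{\rho}}(k[\epsilon])$ with $H^1(\Gamma,\widehat{\frg}_k)$, obstruction classes in $H^2(\Gamma,\widehat{\frg}_k)\otimes_k J$ for small extensions, and the dual-functional argument bounding the number of generators of the relation ideal. The only cosmetic remark is that the identification of $\ker(\hG(A')\to\hG(A))$ with $\widehat{\frg}_k\otimes_k J$ requires only smoothness of $\hG$ over $\cO$ (cf.\ the citation of \cite[Proposition 6.2]{Pin98} elsewhere in the paper), not the very good characteristic hypothesis, though invoking the latter does no harm.
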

\begin{proof}
This follows from a well-known calculation with cocyles, which exactly parallels that done by Mazur \cite{Maz89}. 
\end{proof}
We now suppose that we are given a representation $i : \hG \to \GL(V)$ of finite kernel ($V$ a finite free $\bbZ$-module) such that $i \overline{\rho} : \Gamma \to \GL(V_k)$ is absolutely irreducible and $l > 2(\dim V - 1)$.
\begin{lemma}\label{lem_adjoint_module_is_semisimple}
With these assumptions, $\frg\frl(V_k)$ is a semisimple $k[\Gamma]$-module and the map $\widehat{\frg}_k \to \frg\frl(V_k)$ is split injective.
\end{lemma}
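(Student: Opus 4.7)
The plan is to reduce both assertions to standard inputs: Serre's theorem on tensor products of semisimple representations for the semisimplicity of $\frg\frl(V_k)$, and the behaviour of $\hG$ in very good characteristic for injectivity of the differential.

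First I would establish semisimplicity of $\frg\frl(V_k)$ as a $k[\Gamma]$-module. As $\Gamma$-representations there is a canonical isomorphism $\frg\frl(V_k) \cong V_k \otimes_k V_k^\vee$. Since $i\overline{\rho}$ is absolutely irreducible, both $V_k$ and $V_k^\vee$ are simple, hence semisimple, $k[\Gamma]$-modules. I would then apply Serre's theorem (``Semisimplicity and tensor products of group representations'', J.\ Algebra 194 (1997)) which says that if $V_1, V_2$ are semisimple representations of any group over a field of characteristic $l > 0$ with $\dim V_1 + \dim V_2 \leq l+1$, then $V_1 \otimes V_2$ is semisimple. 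Our hypothesis $l > 2(\dim V - 1)$ is exactly the statement $2 \dim V \leq l+1$, so this applies to give semisimplicity of $\frg\frl(V_k)$.

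Next I would produce the splitting. The differential $di \colon \widehat{\frg}_k \to \frg\frl(V_k)$ is manifestly $\Gamma$-equivariant (since $i$ is defined over $\bbZ$), so once it is known to be injective the semisimplicity just established gives a $\Gamma$-stable complement and hence a splitting. To check injectivity of $di$, note that $\ker(i)$ is a finite normal subgroup scheme of the semisimple group $\hG$, hence contained in $Z_{\hG}$. We are working in very good characteristic for $\hG$: as recorded in the discussion preceding Theorem~\ref{thm_all_subgroups_are_separable} and the subsequent lemma, this forces $Z_{\hG,k}$ to be \'etale (the centre of $\hG$ in each simple type is $\mu_{n+1}, \mu_4, \mu_2, \mu_3, \dots$, and the very good condition is exactly the requirement that $l$ be coprime to its order). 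Thus $\ker(i)_k$ is \'etale, has trivial Lie algebra, and $di$ is injective on $\widehat{\frg}_k$.

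Combining the two steps: $\widehat{\frg}_k$ embeds $\Gamma$-equivariantly into the semisimple module $\frg\frl(V_k)$, so the embedding is split. The only substantive step is the invocation of Serre's theorem; the numerical bound in the hypothesis is tight for this to apply. Verifying injectivity of $di$ is routine once one unpacks what very good characteristic buys, and this is where the semisimplicity and smoothness of the centre are used.
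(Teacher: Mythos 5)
Your first step is correct and is essentially the paper's argument: the paper also writes $\frg\frl(V_k) \cong V_k \otimes_k V_k^\vee$ and invokes Serre's tensor-product semisimplicity theorem (cited there as \cite[Corollaire 5.5]{Ser05}, which is the Bourbaki account of the J.\ Algebra result you quote), with $l > 2(\dim V - 1)$ supplying exactly the needed bound $\dim V + \dim V^\vee \leq l+1$.

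The injectivity step, however, has a genuine gap. The assertion that a finite normal subgroup scheme of a connected semisimple group is central is false in characteristic $l > 0$: the Frobenius kernel $\hG_{(1)} \subset \hG_k$ is a finite, normal, non-central, non-\'etale subgroup scheme. More tellingly, your injectivity argument never uses the hypothesis $l > 2(\dim V - 1)$, whereas that hypothesis is genuinely needed. The danger is precisely that $V_k$ could be (or factor through) a Frobenius twist, in which case $di_k = 0$ even though the generic-fibre kernel is finite and central: for $\hG = \SL_2$ and $V_k = L(l) = L(1)^{(1)}$ one has $\ker(i_k) \supseteq (\SL_2)_{(1)}$, which is still a finite scheme, so ``finite kernel'' alone does not rescue you, and you cannot deduce $\ker(i_k) \subset Z_{\hG_k}$ from the generic fibre without flatness of $\ker(i)$ over $\bbZ$, which is not given. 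What rules this out is a weight estimate: since $i\overline{\rho}$ is absolutely irreducible, $V_{\overline{\bbQ}}$ is an irreducible highest weight module, and the bound on $\dim V$ forces $\langle \lambda, 2\rho^\vee\rangle < l$ (low height; check on the principal $\SL_2$), so $V_k$ cannot be a Frobenius twist on any simple factor; since $\widehat{\frg}_k$ is a sum of simple ideals in very good characteristic, the ideal $\ker(di_k)$ must then vanish. This is the route the paper takes, via \cite[Lemma 1.2]{Lie96}. (A small additional inaccuracy: very good characteristic is strictly stronger than $l$ being prime to $\# Z_{\hG}$ — e.g.\ $l = 2$ is excluded for $E_8$ although $Z_{E_8}$ is trivial — though the implication you actually need, that $Z_{\hG,k}$ is \'etale, does hold.) Your final step, that injectivity plus semisimplicity of the target yields a splitting, is fine.
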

\begin{proof}
The semisimplicity of $\frg\frl(V_k)$ follows from \cite[Corollaire 5.5]{Ser05}, the irreducibility of $i \overline{\rho}$, and our hypothesis on $l$. Since $i \overline{\rho}$ is absolutely irreducible, we see that $V_{\overline{\bbQ}}$ is an irreducible highest weight module of $\hG_{\overline{\bbQ}}$. Our hypothesis on $l$ then implies that it is of low height, in the sense that for a given set of positive roots $\Phi^+$ such that $V_{\overline{\bbQ}}$ has highest weight $\lambda$, we have $\sum_{\alpha \in \Phi^+} \langle \lambda, \alpha^\vee \rangle < l$. (This condition can be checked on the principal $\SL_2$, and we have an explicit bound on the dimension of the $\SL_2$-submodules that can occur.) This, together with our hypothesis that $i$ has finite kernel, implies that the map $\widehat{\frg}_k \to \frg\frl(V_k)$ is injective (apply for example \cite[Lemma 1.2]{Lie96}). 
\end{proof}
The map $\rho \mapsto i \rho$ leads to a natural transformation $\Def_{\overline{\rho}} \to \Def_{i \overline{\rho}}$, hence (by Yoneda) a map $R_{i \overline{\rho}} \to R_{\overline{\rho}}$.
\begin{proposition}\label{prop_map_of_deformation_rings_is_finite}
The map $R_{i \overline{\rho}} \to R_{\overline{\rho}}$ is surjective.
\end{proposition}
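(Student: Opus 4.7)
The plan is to reduce the statement, via a standard complete Nakayama argument, to injectivity of the map between (reduced) tangent spaces, and then to identify that map with a map of Galois cohomology groups for which injectivity follows immediately from Lemma~\ref{lem_adjoint_module_is_semisimple}.

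First I would recall that for a local $\cO$-algebra homomorphism $\phi : R_{i\overline{\rho}} \to R_{\overline{\rho}}$ between complete Noetherian local $\cO$-algebras with common residue field $k$, surjectivity of $\phi$ is equivalent to surjectivity of the induced map on relative cotangent spaces
\[ \ffrm_{R_{i\overline{\rho}}}/(\ffrm_{R_{i\overline{\rho}}}^2, \varpi) \longrightarrow \ffrm_{R_{\overline{\rho}}}/(\ffrm_{R_{\overline{\rho}}}^2, \varpi). \]
Dualizing, it suffices to show that the induced map on reduced tangent spaces $\Def_{\overline{\rho}}(k[\epsilon]) \to \Def_{i\overline{\rho}}(k[\epsilon])$ is injective.

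Next I would identify these tangent spaces with Galois cohomology. Using Remark~\ref{rem_smoothness} (which ensures $\hG^{\wedge}_\cO \cong \hG^{\text{ad},\wedge}_\cO$, so that conjugation by $\ker(\hG(k[\epsilon]) \to \hG(k))$ is parametrized by $\widehat{\frg}_k$) together with the cocycle calculation of Proposition~\ref{prop_presentation_of_deformation_ring}, one has $\Def_{\overline{\rho}}(k[\epsilon]) = H^1(\Gamma, \widehat{\frg}_k)$ and similarly $\Def_{i\overline{\rho}}(k[\epsilon]) = H^1(\Gamma, \frg\frl(V_k))$. The natural transformation $\rho \mapsto i\rho$ corresponds on tangent spaces to the map on $H^1$ induced by the differential $di : \widehat{\frg}_k \to \frg\frl(V_k)$.

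Finally, the key input—and the only place where the hypotheses on $i$ and $l$ enter—is Lemma~\ref{lem_adjoint_module_is_semisimple}, which says that $di$ realises $\widehat{\frg}_k$ as a $k[\Gamma]$-direct summand of $\frg\frl(V_k)$. Choosing a $k[\Gamma]$-equivariant splitting yields a splitting of the induced map $H^1(\Gamma, \widehat{\frg}_k) \to H^1(\Gamma, \frg\frl(V_k))$, which is therefore injective. There is no real obstacle here; the only subtlety is checking that the identification of tangent spaces is compatible with the functoriality $\rho \mapsto i\rho$, which follows directly from unwinding the cocycle description.
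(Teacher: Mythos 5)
Your argument is correct and is essentially the paper's own proof: reduce surjectivity to injectivity of the induced map on tangent spaces $H^1(\Gamma, \widehat{\frg}_k) \to H^1(\Gamma, \frg\frl(V_k))$, and deduce that injectivity from the split injection $\widehat{\frg}_k \hookrightarrow \frg\frl(V_k)$ of Lemma \ref{lem_adjoint_module_is_semisimple}. The additional details you supply (dualizing to cotangent spaces, compatibility of the cocycle identification with $\rho \mapsto i\rho$) are accurate elaborations of what the paper leaves implicit.
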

\begin{proof}
Since we deal with complete local $\cO$-algebras with the same residue field, this can be checked on the level of tangent spaces: we must show that the map $H^1(\Gamma, \widehat{\frg}_k) \to H^1(\Gamma, \frg\frl(V_k))$ is injective. This follows from Lemma \ref{lem_adjoint_module_is_semisimple}.
\end{proof}
We record a lemma which generalizes an observation of Wiles for $\GL_2$ (see \cite[Proposition 1.2]{Wil95}).
\begin{lemma}\label{lem_completion_in_generic_fibre_a_la_Kisin}
Let $E'/E$ be a finite extension, and suppose given a homomorphism $f : R_{\overline{\rho}}[1/l] \to E'$ of $E$-algebras. Let $\rho_f : \Gamma \to \hG(E')$ denote the specialization along $f$ of a representative of the universal deformation of $\overline{\rho}$, and let $\frp = \ker f$. Then there is a canonical isomorphism 
\[ \frp / \frp^2 \otimes_{k(\frp)} E' \cong H^1(\Gamma, \widehat{\frg}_{E'})^\vee \]
of $E'$-vector spaces. In particular, if $H^1(\Gamma, \widehat{\frg}_{E'}) = 0$ then $\Spec R_{\overline{\rho}}[1/l]$ is formally unramified over $\Spec E$ at $\frp$. 
\end{lemma}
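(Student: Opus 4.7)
The plan is to identify $\frp/\frp^2 \otimes_{k(\frp)} E'$ with the cotangent space, over $E'$, of a generic-fibre deformation functor for $\rho_f$, and then to compute the corresponding tangent space via the standard cocycle dictionary, which will yield $H^1(\Gamma, \widehat{\frg}_{E'})$. The final assertion on formal unramifiedness is immediate from (1), since $\frp/\frp^2$ is a $k(\frp)$-vector space and tensoring with the finite extension $E'/k(\frp)$ is faithfully flat.

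I would begin by fixing notation: let $R = R_{\overline{\rho}}$, write $\widehat{R}_\frp$ for the completion of $R[1/l]$ at $\frp$, a complete Noetherian local $E$-algebra with residue field $k(\frp) \subset E'$. Its cotangent space is $\frp \widehat{R}_\frp / \frp^2 \widehat{R}_\frp \cong \frp/\frp^2$. After base change along $k(\frp) \hookrightarrow E'$, I claim that $\widehat{R}_\frp \widehat\otimes_{k(\frp)} E'$ pro-represents the following deformation functor $\Def_{\rho_f}$ on the category $\cC_{E'}$ of Artinian local $E'$-algebras with residue field $E'$: send $A \in \cC_{E'}$ to the set of strict equivalence classes of continuous homomorphisms $\widetilde{\rho} : \Gamma \to \hG(A)$ lifting $\rho_f$ (with strict equivalence meaning conjugation by $\ker(\hG^{\mathrm{ad}}(A) \to \hG^{\mathrm{ad}}(E'))$).

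To establish this pro-representability, I would, given $A \in \cC_{E'}$ and a lift $\widetilde{\rho}:\Gamma \to \hG(A)$, use a Baire category argument as in the proof of Theorem \ref{thm_reduction_modulo_l} to produce an $\cO$-order $A_0 \subset A$, finite over $\cO$ with residue field $k'' \supset k$ a finite extension, such that $\widetilde{\rho}$ factors through $\hG(A_0)$ and $A_0 \otimes_\cO E = A$. Then $\widetilde{\rho}$ is a deformation over $A_0$ of $\overline{\rho} \otimes_k k''$, and by universality of the base change $R \otimes_\cO W(k'')$ we obtain a classifying map $R \to A_0 \hookrightarrow A$, which after inverting $l$ gives an $E$-algebra map $R[1/l] \to A$ lifting $f$. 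Uniqueness up to strict equivalence is inherited from that of the universal deformation, and the resulting map factors through the completion at $\frp$ and then through $\widehat R_\frp\widehat\otimes_{k(\frp)} E'$. Conversely, every such ring map induces a lift by pushforward of the universal representation.

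Having pro-representability, the cotangent space of $\widehat R_\frp \widehat\otimes_{k(\frp)} E'$ is $\frp/\frp^2 \otimes_{k(\frp)} E'$, and I only need to compute the tangent space $\Def_{\rho_f}(E'[\epsilon]/\epsilon^2)$. A lift of $\rho_f$ to $\hG(E'[\epsilon])$ has the form $\widetilde{\rho}(\gamma) = \bigl(1 + \epsilon\, c(\gamma)\bigr)\rho_f(\gamma)$ for a unique continuous map $c : \Gamma \to \widehat{\frg}_{E'}$; the cocycle condition is equivalent to $\widetilde{\rho}$ being a homomorphism, and strict equivalence corresponds to modifying $c$ by a coboundary. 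The scheme-theoretic centralizer of $\rho_f(\Gamma)$ in $\hG^{\mathrm{ad}}_{E'}$ is finite (it maps to the finite \'etale centralizer of $\overline{\rho}$ guaranteed by Lemma \ref{lem_triviality_of_scheme_theoretic_centralizer_in_very_good_characteristic} after choosing an integral model of $\rho_f$), so $H^0(\Gamma, \widehat{\frg}_{E'}) = 0$ and the action by coboundaries is free. Hence $\Def_{\rho_f}(E'[\epsilon]/\epsilon^2) \cong H^1(\Gamma, \widehat{\frg}_{E'})$, and the duality of tangent and cotangent spaces yields the claimed isomorphism.

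The main obstacle is the pro-representability step, specifically the careful transit between the $\cO$-algebra structure used to define $R_{\overline{\rho}}$ and the $E'$-algebra structure relevant to deformations of $\rho_f$. In particular, one has to check that every continuous lift to $\hG(A)$ (for $A \in \cC_{E'}$) descends to a lift over an $\cO$-finite order $A_0$ whose residue field may be a proper extension of $k$, and then to verify that the universal property passes correctly through the base change $R \otimes_\cO W(k'') \to A_0 \to A$; once this is in place, the remaining cocycle calculation is routine.
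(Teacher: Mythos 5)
Your argument is correct in outline, but it is a genuinely different proof from the one in the paper, and it is worth comparing the two. You compute $\frp/\frp^2\otimes_{k(\frp)}E'$ by establishing that the completed local ring at $\frp$ (after the base change to $E'$) pro-represents a deformation functor of $\rho_f$ on $\cC_{E'}$, and then reading off the tangent space by the cocycle dictionary; this is exactly Kisin's generic-fibre technique, and the descent to an $\cO$-order $A_0\subset A$ is indeed the crux. The paper avoids generic-fibre pro-representability altogether: after extending scalars so that $k(\frp)=E=E'$, the point $f$ comes from an integral point $R_{\overline{\rho}}\to\cO$ with kernel $\frq$ satisfying $\frq[1/l]=\frp$, and one computes
\[ \Hom_\cO(\frq/\frq^2,\cO/(\varpi^n))\cong H^1(\Gamma,\widehat{\frg}_{\cO/(\varpi^n)}) \]
by identifying both sides with $\cO$-algebra maps $R_{\overline{\rho}}/\frq^2\to\cO\oplus\epsilon\,\cO/(\varpi^n)$; the universal property is only ever invoked for (pro-)objects of $\cC_\cO$, so no new representability statement is needed, and one concludes by passing to the inverse limit over $n$ (using Mazur's condition $\Phi_l$ for finiteness) and inverting $l$. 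The trade-off: the paper's route is shorter and stays entirely within the deformation theory already set up in \S\ref{sec_abstract_deformation_theory}, while yours proves a stronger structural statement (identification of the whole completed local ring with a generic-fibre deformation ring) at the cost of the pro-representability argument.

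One point in your proof deserves more care than you give it. When you descend $\widetilde{\rho}$ to $\hG(A_0)$ for an $\cO$-order $A_0$ with residue field $k''$, you need the reduction of $\widetilde{\rho}$ modulo $\ffrm_{A_0}$ to be $\hG(k'')$-conjugate to $\overline{\rho}\otimes_k k''$ before the universal property of $R_{\overline{\rho}}\otimes_\cO W(k'')$ can be applied; a priori an arbitrary integral model only has the right semisimplified reduction. This is where the standing hypothesis that $\overline{\rho}$ is absolutely $\hG$-irreducible (with the centralizer control of Lemma \ref{lem_triviality_of_scheme_theoretic_centralizer_in_very_good_characteristic}) must be used, in the spirit of Theorem \ref{thm_reduction_modulo_l} and Theorem \ref{thm_pseudocharacters_biject_with_representations_over_fields}. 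It is a fixable technical point, not a fatal gap, but as written your appeal to ``universality of the base change'' glosses over it.
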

\begin{proof}
Let $R'_{\overline{\rho}}$ denote the universal deformation ring as defined on the category $\cC_{\cO_{E'}}$. Then there is a canonical isomorphism $R'_{\overline{\rho}} \cong R_{\overline{\rho}} \otimes_{\cO_E} \cO_{E'}$, and a calculation shows that after this extension of scalars, we are free to assume that the prime ideal $\frp \in \Spec R_{\overline{\rho}}[1/l]$ has residue field $k(\frp) = E = E'$. Let $\frq = \ker( R_{\overline{\rho}} \to E)$; then $\frq[1/l] = \frp$ and $\frq / \frq^2$ is a finite $\cO$-module. For any $n \geq 1$, there is an isomorphism
\[ \Hom_\cO(\frq / \frq^2, \cO / (\varpi^n)) \cong H^1(\Gamma, \widehat{\frg}_{\cO / (\varpi^n)}), \]
both sides being identified with the set of $\cO$-algebra maps $R_{\overline{\rho}} / \frq^2 \to \cO \oplus \epsilon \cO / (\varpi^n)$. The result now follows on passing to the inverse limit and inverting $l$.
\end{proof}
\subsection{The case $\Gamma = \Gamma_{K, S}$}\label{sec_galois_deformation_theory}

 We keep the assumptions of \S \ref{sec_abstract_deformation_theory} and now make a particular choice of $\Gamma$. Let $\bbF_q$ be a finite field of characteristic not $l$, and let $X$ be a smooth, projective, geometrically connected curve over $\bbF_q$, $K = \bbF_q(X)$, and $S$ a finite set of places of $K$. We now take $\Gamma = \Gamma_{K, S}$ to be the Galois group of the maximal extension of $K$ unramified outside $S$ (see \S \ref{sec_notation}). This group satisfies Mazur's condition $\Phi_l$, so we immediately obtain:
\begin{proposition}
The functor $\Def_{\overline{\rho}}$ of deformations $[ \rho : \Gamma_{K, S} \to \hG(A) ]$ is represented by a complete Noetherian local $\cO$-algebra $R_{\overline{\rho}, S}$.
\end{proposition}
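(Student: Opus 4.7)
The plan is to reduce this proposition to the general representability result already proved earlier in §\ref{sec_abstract_deformation_theory}, which asserts pro-representability of $\Def_{\overline{\rho}}$ for any profinite group $\Gamma$ satisfying Mazur's condition $\Phi_l$. All I have to check is that $\Gamma = \Gamma_{K,S}$ satisfies $\Phi_l$; the resulting representing object is then christened $R_{\overline{\rho}, S}$, with the subscript recording its dependence on the chosen set $S$ of allowed ramification.

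The sole content of the proof is therefore the verification of $\Phi_l$. I would recall that $\Phi_l$ demands that $\Hom_{\text{cts}}(\Gamma', \bbF_l)$ be finite-dimensional for every open subgroup of finite index $\Gamma' \subset \Gamma_{K,S}$. By the Galois correspondence, any such $\Gamma'$ is itself of the form $\Gamma_{K', S'}$, where $K'/K$ is the finite separable extension (inside $K_S$) cut out by $\Gamma'$ and $S'$ is the set of places of $K'$ lying over $S$. Thus $\Phi_l$ for $\Gamma_{K,S}$ reduces to the finiteness of $H^1(\Gamma_{K', S'}, \bbF_l)$ for an arbitrary such pair $(K', S')$.

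This finiteness is a standard consequence of étale cohomology for function fields: since $l \nmid q = \cha \bbF_q$, the constant sheaf $\bbF_l$ is $l$-torsion invertible on the smooth affine curve $U' = X' \setminus S'$ (where $X'$ is the smooth projective model of $K'$), and one has a canonical identification $H^1(\Gamma_{K', S'}, \bbF_l) \cong H^1_{\text{ét}}(U', \bbF_l)$; the latter is finite, being the étale cohomology of a constructible sheaf on a variety of finite type over a finite field. (Alternatively, one can invoke the classical Euler--Poincaré formula for curves over finite fields with coefficients of order prime to the characteristic.) With $\Phi_l$ established, the earlier proposition applies verbatim to produce the desired ring $R_{\overline{\rho}, S}$.

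There is no genuine obstacle here: the whole argument is a cohomological finiteness check grafted onto a general theorem already proved. The only subtlety worth flagging is that one must verify $\Phi_l$ not just for $\Gamma_{K,S}$ itself but for all of its finite-index open subgroups; but as noted above these are themselves Galois groups of the same type, so no new ingredients are needed.
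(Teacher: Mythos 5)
Your proposal is correct and follows exactly the paper's route: the paper simply asserts that $\Gamma_{K,S}$ satisfies Mazur's condition $\Phi_l$ and invokes the abstract pro-representability result from the preceding subsection, with no further argument. Your verification of $\Phi_l$ (identifying finite-index open subgroups with groups $\Gamma_{K',S'}$ and using finiteness of $H^1$ with $\bbF_l$-coefficients for curves over finite fields) is the standard justification the paper leaves implicit.
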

We add $S$ to the notation since we will later want to vary it. 

We note that if $M$ is a discrete $k[\Gamma_{K, S}]$-module, finite-dimensional as $k$-vector space, then there are two natural cohomology groups that can be associated to it: the usual Galois cohomology $H^i(\Gamma_{K, S}, M)$, and the \'etale cohomology $H^i(X - S, M)$ of the associated sheaf on $X - S$. These groups are canonically isomorphic if either $S$ is non-empty or $X$ is not a form of $\bbP^1$. Since we are assuming that $\overline{\rho}$ exists, one of these conditions is always satisfied. In particular, we have access to the Euler characteristic formula and the Poitou--Tate exact sequence for the groups $H^i(\Gamma_{K, S}, M)$, even in the case where $S$ is empty.
\begin{proposition}\label{prop_presentation_of_deformation_ring_in_semisimple_galois_case}
There is a presentation $R_{\overline{\rho}, S} \cong \cO \llbracket  X_1, \dots X_g \rrbracket / (f_1, \dots, f_g)$, where $g = \dim_k H^1(\Gamma_{K, S}, \widehat{\frg}_k)$.
\end{proposition}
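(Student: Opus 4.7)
The plan is to bootstrap from Proposition~\ref{prop_presentation_of_deformation_ring}, which already supplies an isomorphism $R_{\overline{\rho},S} \cong \cO\llbracket X_1,\dots,X_g\rrbracket/(f_1,\dots,f_r)$ with $g = \dim_k H^1(\Gamma_{K,S}, \widehat\frg_k)$ and $r = \dim_k H^2(\Gamma_{K,S}, \widehat\frg_k)$. Since zero relations may always be appended, it suffices to establish the inequality $\dim_k H^2(\Gamma_{K,S}, \widehat\frg_k) \leq \dim_k H^1(\Gamma_{K,S}, \widehat\frg_k)$; padding the tuple $(f_1,\dots,f_r)$ will then produce a presentation with $g$ generators and $g$ relations.

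The engine will be the vanishing of the global Euler characteristic on function fields for finite $p$-coprime coefficients. Via the identification $H^i(\Gamma_{K,S}, M) = H^i(X - S, \widetilde M)$---applicable here because the existence of $\overline\rho$ forces either $S \neq \emptyset$ or $X$ not to be a form of $\bbP^1$---I would feed $M$ into the Hochschild--Serre spectral sequence $H^i(\bbF_q, H^j((X-S)_{\overline\bbF_q}, \widetilde M)) \Rightarrow H^{i+j}(X-S, \widetilde M)$. Since $\dim N^{\mathrm{Frob}} = \dim N_{\mathrm{Frob}}$ for every finite $\bbF_q$-module $N$, the arithmetic Euler characteristic over $\bbF_q$ of each row vanishes, whence $\sum_i (-1)^i \dim_k H^i(\Gamma_{K,S}, M) = 0$. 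Applied to $M = \widehat\frg_k$ and combined with $\dim_k H^0 = 0$ from Lemma~\ref{lem_triviality_of_scheme_theoretic_centralizer_in_very_good_characteristic}, this reduces the proposition to the vanishing $H^3(\Gamma_{K,S}, \widehat\frg_k) = 0$.

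When $S \neq \emptyset$, the curve $X - S$ is affine, so $\Gamma_{K,S}$ has $l$-cohomological dimension $2$ and $H^3$ vanishes automatically. When $S = \emptyset$, I would invoke Artin--Verdier duality on the proper smooth curve $X$ to rewrite $H^3(X, \widehat\frg_k) \cong H^0(X, \widehat\frg_k^{\vee}(1))^{\vee}$; because $l$ is very good for $\hG$, the Killing form identifies $\widehat\frg_k^{\vee} \cong \widehat\frg_k$ as $\hG$-representations, so this becomes $H^0(X, \widehat\frg_k(1))^{\vee}$, and the task reduces to checking that the inverse cyclotomic character does not appear as a one-dimensional $\Gamma_{K,\emptyset}$-subrepresentation of $\widehat\frg_k$.

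The main obstacle is precisely this final step in the $S = \emptyset$ case: such a subrepresentation could appear whenever $\overline\rho$ has small image, and ruling it out under the sole hypothesis of absolute $\hG$-irreducibility requires analysing the semisimple decomposition of $\widehat\frg_k$ as a $\Gamma_{K,\emptyset}$-module---at minimum using that $\widehat\frg_k$ is semisimple as a representation of the Zariski closure of $\overline\rho(\Gamma_{K,\emptyset})$, a consequence of the very-good-characteristic hypothesis. The case $S \neq \emptyset$, by contrast, drops out immediately from the Euler--Poincar\'e calculation above.
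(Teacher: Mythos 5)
Your overall route is the same as the paper's: combine Proposition~\ref{prop_presentation_of_deformation_ring} with the global Euler characteristic formula and the vanishing $H^0(\Gamma_{K,S},\widehat{\frg}_k)=0$ supplied by Lemma~\ref{lem_triviality_of_scheme_theoretic_centralizer_in_very_good_characteristic}. The paper simply quotes the identity $h^0-h^1+h^2=0$ from Milne and concludes $r=g$ at once (it gets an equality $h^1=h^2$, so no padding is needed). Your derivation of the Euler characteristic via Hochschild--Serre over $\overline{\bbF}_q$ is fine, and your observation that for $S=\emptyset$ the alternating sum must a priori include an $H^3$-term is a legitimate point: when $X$ is proper of genus $\geq 1$, $\Gamma_{K,\emptyset}=\pi_1(X)$ has $l$-cohomological dimension $3$, and $H^3(\Gamma_{K,\emptyset},M)\cong H^0(\Gamma_{K,\emptyset},M^\vee(1))^\vee$ is already nonzero for $M=k$ with trivial action when $q\equiv 1 \bmod l$; so the three-term identity is not automatic in that case, and the telescoping argument only yields $h^1-h^2+h^3=0$ once $h^0=0$.

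The difficulty is that your proposal stops exactly where the work begins. For $S=\emptyset$ you reduce the statement to $H^0(\Gamma_{K,\emptyset},\widehat{\frg}_k(1))=0$ and then declare this "the main obstacle" without proving it; that is a genuine gap, not a detail. Worse, this vanishing does not follow from the standing hypothesis that $\overline{\rho}$ is absolutely $\hG$-irreducible: nothing in irreducibility prevents $k(-1)$ from occurring as a summand of $\widehat{\frg}_k$. For example, with $\hG=\SL_2$ and $\overline{\rho}$ induced from a character of $\Gamma_{K(\zeta_l)}$ in a situation where $[K(\zeta_l):K]=2$, the summand of $\widehat{\frg}_k$ on which $\Gamma_K$ acts by the quadratic character of $K(\zeta_l)/K$ is exactly $k(\overline{\epsilon})=k(-1)$, so $H^3\neq 0$; your computation then gives $h^2=h^1+h^3>h^1=g$, and the padding trick runs in the wrong direction (one cannot discard relations, only add them). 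So the step you defer is genuinely obstructed in the stated generality, and your strategy cannot be completed without either an extra hypothesis forcing $H^0(\Gamma_{K,\emptyset},\widehat{\frg}_k^\vee(1))=0$ --- this is precisely what the $\hG$-abundance condition of Definition~\ref{def_abundant_subgroups} delivers later in the paper, since $H^0(\Gamma_K,\widehat{\frg}_k^\vee(1))$ injects into $H^0(\overline{\rho}(\Gamma_{K(\zeta_l)}),\widehat{\frg}_k^\vee)$ --- or a different argument entirely.
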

\begin{proof}
Let $h^i = \dim_k H^i$. The Euler characteristic formula (\cite[Theorem 5.1]{Mil06}) says $h^0(\Gamma_{K, S}, \widehat{\frg}_k) - h^1(\Gamma_{K, S}, \widehat{\frg}_k) + h^2(\Gamma_{K, S}, \widehat{\frg}_k) = 0$. Lemma \ref{lem_triviality_of_scheme_theoretic_centralizer_in_very_good_characteristic} implies that $h^0 = 0$. The result then follows from Proposition \ref{prop_presentation_of_deformation_ring}.
\end{proof}
\begin{proposition}\label{prop_triviality_of_tangent_space_in_generic_fibre_of_galois_deformation_ring}
Let $E'/E$ be a finite extension, and suppose given a homomorphism $f : R_{\overline{\rho}, S}[1/l] \to E'$ of $E$-algebras. Let $\rho_f : \Gamma \to \hG(E')$ denote the specialization along $f$ of a representative of the universal deformation of $\overline{\rho}$, and let $\frp = \ker f$. Then $\Spec R_{\overline{\rho}, S}[1/l]$ is formally unramified over $\Spec E$ at $\frp$. 
\end{proposition}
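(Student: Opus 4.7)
The plan is to apply Lemma \ref{lem_completion_in_generic_fibre_a_la_Kisin}, which reduces the claim to the vanishing $H^1(\Gamma_{K,S}, \widehat{\frg}_{E'}) = 0$, where $\widehat{\frg}_{E'}$ is viewed as a $\Gamma_{K,S}$-module via $\Ad\circ\rho_f$. The Euler-characteristic formula $h^0 - h^1 + h^2 = 0$ for continuous cohomology of $\Gamma_{K,S}$ (the same identity used in the proof of Proposition \ref{prop_presentation_of_deformation_ring_in_semisimple_galois_case}, extended to $E'$-coefficients via a stable lattice and passage to the limit) then reduces the problem to showing that both $H^0(\Gamma_{K,S}, \widehat{\frg}_{E'})$ and $H^2(\Gamma_{K,S}, \widehat{\frg}_{E'})$ vanish.

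For $H^0$: after replacing $\rho_f$ by a $\hG(E')$-conjugate with values in $\hG(\cO_{E'})$ (Theorem \ref{thm_reduction_modulo_l}), one has the $\Gamma$-stable lattice $\widehat{\frg}_{\cO_{E'}} \subset \widehat{\frg}_{E'}$. Its module of invariants is torsion-free over $\cO_{E'}$, and reduction modulo $\varpi_{E'}$ gives an injection into $H^0(\Gamma_{K,S}, \widehat{\frg}_k)$. The latter vanishes by Lemma \ref{lem_triviality_of_scheme_theoretic_centralizer_in_very_good_characteristic}, since $\overline{\rho}$ is absolutely $\hG$-irreducible and $l$ is very good. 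Hence the lattice invariants are zero, and inverting $\varpi_{E'}$ yields $H^0(\Gamma_{K,S}, \widehat{\frg}_{E'}) = 0$.

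For $H^2$: by global Tate duality for the function field $K$, together with the self-duality of $\widehat{\frg}$ in very good characteristic (using the non-degenerate $\hG$-invariant symmetric bilinear form on $\widehat{\frg}$ noted following Theorem \ref{thm_all_subgroups_are_separable}), one obtains
\[ H^2(\Gamma_{K,S}, \widehat{\frg}_{E'}) \;\cong\; H^0(\Gamma_{K,S}, \widehat{\frg}_{E'}(1))^\vee. \]

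The main obstacle is showing this last group vanishes. A nonzero class would produce a line $L \subset \widehat{\frg}_{E'}$ stable under $\Ad\rho_f$ on which $\Gamma_{K,S}$ acts by $\chi_{\mathrm{cyc}}^{-1}$; since $\mu_{l^\infty} \subset \overline{\bbF}_q$, the cyclotomic character is trivial on the geometric subgroup $\Gamma_{\overline{K},S}$, so $L$ would be pointwise fixed by $\rho_f(\Gamma_{\overline{K},S})$, and a geometric Frobenius would act on it by the scalar $q$. The cleanest route I see to exclude this is via purity: composing $\rho_f$ with a faithful $\hG$-representation and invoking Chin's theorem together with L. Lafforgue's theorem (the compatible-system input, to be developed in \S \ref{sec_compatible_systems_of_galois_representations}) places the resulting Galois representation in a pure compatible system; since $\widehat{\frg}$ embeds $\hG$-equivariantly in the endomorphism bundle of any faithful representation, $\Ad\rho_f$ is pure of weight $0$, and the Weil bound then forbids the scalar $q$ from being a Frobenius eigenvalue on $\widehat{\frg}_{E'}$, giving the required contradiction.
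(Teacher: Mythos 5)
Your reduction via Lemma \ref{lem_completion_in_generic_fibre_a_la_Kisin}, the Euler characteristic formula, and the $H^0$ step (a stable lattice plus $H^0(\Gamma_{K,S}, \widehat{\frg}_k)=0$ from Lemma \ref{lem_triviality_of_scheme_theoretic_centralizer_in_very_good_characteristic}) are all sound. The gap is the duality you use for $H^2$. Global duality for $\Gamma_{K,S}$ does not give $H^2(\Gamma_{K,S}, M) \cong H^0(\Gamma_{K,S}, M^\vee(1))^\vee$; the Poitou--Tate sequence ends with
\[ \Sigma \longhookrightarrow H^2(\Gamma_{K,S}, M) \longrightarrow \bigoplus_{v\in S} H^2(K_v, M) \longrightarrow H^0(\Gamma_{K,S}, M^\vee(1))^\vee \longrightarrow 0, \]
where $\Sigma$ is the dual of the kernel of the localization map on $H^1(\Gamma_{K,S}, M^\vee(1))$. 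So $H^2$ is controlled by an $H^1$-type group of the twist (plus local terms), not by an $H^0$. In the extreme case $S=\emptyset$ this is just Poincar\'e duality on the projective curve, $H^2(X,\cF)\cong H^1(X,\cF^\vee(1))^\vee$: the group $\Gamma_{K,\emptyset}$ has cohomological dimension $3$ for prime-to-$p$ torsion coefficients, so the pairing is $H^i \leftrightarrow H^{3-i}$, not $H^i \leftrightarrow H^{2-i}$.

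Equivalently, in the Hochschild--Serre decomposition over $\overline{\Gamma}_{K,S} = \pi_1(\overline X - \overline S)$, the group $H^2(\Gamma_{K,S},\widehat{\frg}_{E'})$ contains the Frobenius coinvariants of $H^1(\overline{X}-\overline{S},\cF)$ as a direct summand, and your purity argument only excludes the eigenvalue $q$ on the $H^0$ of the geometric group; it says nothing about this $H^1$ piece. Ruling it out requires knowing that $H^1$ of the geometric fundamental group is mixed of weights $\geq 1$, via Deligne's Weil II applied to the weight-$0$ sheaf attached to $\Ad\rho_f$ (purity being supplied by L.~Lafforgue's Th\'eor\`eme VII.6, as you correctly anticipate). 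That is precisely the content of the paper's proof, which applies the weight argument directly to $H^1(\Gamma_{K,S},\widehat{\frg}_{E})$ through inflation--restriction, with no need for the Euler characteristic or duality at all. Once your duality step is corrected, your route collapses onto the same weight computation in degree one; as written, the argument does not close.
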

\begin{proof}
 We can again assume that $E' = E$. By Lemma \ref{lem_completion_in_generic_fibre_a_la_Kisin}, it is enough to show that the group $H^1(\Gamma_{K, S}, \widehat{\frg}_{E})$ vanishes. We will show this using the theory of weights. Let $\overline{K} = \overline{\bbF}_q \cdot K$, a subfield of $K^s$, and let $\overline{\Gamma}_{K, S} = \Gal(K_S / \overline{K})$. Then we have a short exact sequence of profinite groups
\[ \xymatrix@1{ 1 \ar[r] & \overline{\Gamma}_{K, S} \ar[r] & \Gamma_{K, S} \ar[r] & \widehat{\bbZ} \ar[r] & 1,} \]
where the element $1 \in \widehat{\bbZ}$ is the geometric Frobenius. Corresponding to this short exact sequence we have an inflation restriction exact sequence
\[ \xymatrix@1{ 0 \ar[r] & H^1(\widehat{\bbZ}, H^0(\overline{\Gamma}_{K, S}, \widehat{\frg}_E)) \ar[r] & H^1(\Gamma_{K, S}, \widehat{\frg}_E) \ar[r] & H^1(\overline{\Gamma}_{K, S}, \widehat{\frg}_E). } \]
Let $\overline{X} = X_{\overline{\bbF}_q}$, $\overline{S} \subset \overline{X}$ the divisor living above $S$. Then there are canonical isomorphisms for $j = 0, 1$:
\[ H^j(\overline{\Gamma}_{K, S}, \widehat{\frg}_E) \cong H^j( \overline{X} - \overline{S}, \cF), \]
where $\cF$ is the lisse $E$-sheaf on $\overline{X} - \overline{S}$ corresponding to the representation $\widehat{\frg}_E$ of $\pi_1(\overline{X} - \overline{S}) \cong \overline{\Gamma}_{K, S}$. (The implicit geometric point of $\overline{X} - \overline{S}$ is the one corresponding to the fixed separable closure $K^s$ of the function field of $X$.) We note that the representation $\rho_f$ is absolutely $\hG$-irreducible, because $\overline{\rho}$ is. Let $H$ denote the Zariski closure of $\rho_f(\Gamma_{K, S}) \subset \hG(E)$. It follows that the identity component of $H$ is a semisimple group. Indeed, $H$ is reductive, because $\rho_f$ is absolutely $\hG$-irreducible; and then semisimple, because $\hG$ is semisimple. In particular, the irreducible constituents of the $E[\Gamma_{K, S}]$-module $\widehat{\frg}_E$ have determinant of finite order. 

We find that the sheaf $\cF$ is punctually pure of weight 0 \cite[Th\'eor\`eme VII.6]{Laf02}, so Deligne's proof of the Weil conjectures \cite{Del80} shows that each group $H^j(\overline{\Gamma}_{K, S}, \widehat{\frg}_E)$, endowed with its Frobenius action, is mixed of weights $\geq j$. In particular, we get $H^0(\widehat{\bbZ}, H^1(\overline{\Gamma}_{K, S}, \widehat{\frg}_E)) = 0$. On the other hand, the space $H^1(\widehat{\bbZ}, H^0(\overline{\Gamma}_{K, S}, \widehat{\frg}_E))$ is isomorphic to the space of Frobenius coinvariants in $H^0(\overline{\Gamma}_{K, S}, \widehat{\frg}_E)$, which can be non-zero only if the space of Frobenius invariants, otherwise known as $H^0(\Gamma_{K, S}, \widehat{\frg}_E)$, is non-zero. However, this space must be zero because the space $H^0(\Gamma_{K, S}, \widehat{\frg}_k)$ is zero, by Lemma \ref{lem_triviality_of_scheme_theoretic_centralizer_in_very_good_characteristic}.
\end{proof}
\begin{theorem}\label{thm_finite_flatness_for_SL_N}
Suppose that $\hG = \SL_n$. Then $R_{\overline{\rho}, S}$ is a reduced finite flat complete intersection $\cO$-algebra.
\end{theorem}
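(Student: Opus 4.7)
The plan combines Proposition \ref{prop_presentation_of_deformation_ring_in_semisimple_galois_case}'s presentation $R := R_{\overline{\rho}, S} \cong \cO\llbracket X_1, \ldots, X_g\rrbracket / (f_1, \ldots, f_g)$ (with $g = \dim_k H^1(\Gamma_{K,S}, \widehat{\frg}_k)$), the formal unramifiedness of the generic fibre (Proposition \ref{prop_triviality_of_tangent_space_in_generic_fibre_of_galois_deformation_ring}), and L.~Lafforgue's global Langlands correspondence for $\GL_n$ over function fields \cite{Laf02} together with Gaitsgory's theorem \cite{Gaitsgory}. The strategy is to pin down $R[1/l]$ using the automorphic input, then transfer this information to $R$ via the presentation.

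To see that $R[1/l]$ is a finite \'etale $E$-algebra: via the standard embedding $i\colon \SL_n \hookrightarrow \GL_n$, each continuous $\overline{\bbQ}_l$-lift $\rho$ of $\overline{\rho}$ gives an absolutely irreducible $n$-dimensional Galois representation with trivial determinant, unramified outside $S$. Because the kernel of $\GL_n(\cO_{E'}) \twoheadrightarrow \GL_n(k_{E'})$ is pro-$l$ while wild inertia at each $v \in S$ is pro-$p$ (for $p \neq l$), the wild Artin conductor of $\rho$ at $v$ matches that of $\overline{\rho}$, with a parallel uniform bound on the tame part; hence the Artin conductor of $\rho$ is bounded uniformly in terms of $\overline{\rho}$. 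By L.~Lafforgue, $\rho$ corresponds to a cuspidal automorphic representation of $\GL_n(\bbA_K)$ of this bounded conductor, and there are only finitely many such. Combined with the formal unramifiedness of Proposition \ref{prop_triviality_of_tangent_space_in_generic_fibre_of_galois_deformation_ring}, it follows that $R[1/l]$ is a finite product of finite extensions of $E$, hence reduced and $0$-dimensional.

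Thus $\dim R \leq 1$; since at least one lift exists (produced via L.~Lafforgue, or via the Khare--Wintenberger construction of \S \ref{sec_galois_representations_and_their_deformation_theory} which uses \cite{Gaitsgory}), we in fact have $\dim R = 1$. In the regular ambient ring $\cO\llbracket X_1, \dots, X_g\rrbracket$ of dimension $g+1$, cutting by $g$ relations to reach dimension $1$ forces $(f_1, \dots, f_g)$ to be a regular sequence, so $R$ is a complete intersection and, in particular, Cohen--Macaulay. The serious remaining point is $\cO$-flatness: the formal data above are not sufficient (as the toy example $\cO\llbracket X\rrbracket/(\varpi X)$ illustrates), and one must rule out minimal primes of $R$ containing $\varpi$ --- equivalently, irreducible components of $\Spec R$ supported entirely on the special fibre. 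This is achieved by producing, via L.~Lafforgue's theorem and Gaitsgory's theorem, enough characteristic-$0$ lifts of $\overline{\rho}$ to meet every component. Once $\cO$-flatness is in hand, finiteness over $\cO$ is immediate ($\varpi$ is a regular element of a $1$-dimensional Cohen--Macaulay ring, so $R/\varpi R$ is Artinian and Nakayama supplies finitely many $\cO$-module generators of $R$); and reducedness follows at once, because any nilpotent element of $R$ would inject into $R[1/l]$ by flatness, contradicting \'etaleness of the latter.

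The main obstacle is the $\cO$-flatness claim, whose proof requires the full lift-existence strength of \cite{Laf02} and \cite{Gaitsgory}; the remaining steps are routine commutative algebra once the generic fibre has been analysed.
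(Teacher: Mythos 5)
There is a genuine gap: your argument controls only the generic fibre $R[1/l]$, whereas the theorem lives or dies with the special fibre $R/(\varpi)$, which you never bound. Knowing that $R[1/l]$ is finite \'etale over $E$ does \emph{not} give $\dim R \leq 1$: the presentation $\cO\llbracket X_1,\dots,X_g\rrbracket/(f_1,\dots,f_g)$ allows components of $\Spec R$ contained entirely in $V(\varpi)$, and these can have dimension as large as $g$ (e.g.\ $\cO\llbracket X_1,X_2\rrbracket/(\varpi X_1,\varpi X_2)$ has generic fibre $E$ but dimension $2$). So the deduction ``finite generic fibre $\Rightarrow \dim R = 1 \Rightarrow (f_1,\dots,f_g)$ is a regular sequence'' collapses, and with it the complete-intersection and Cohen--Macaulay properties on which your flatness and finiteness steps rest; even the existence of a single characteristic-zero lift does not follow from what you have shown. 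Your proposed remedy --- ``produce enough characteristic-$0$ lifts to meet every component'' --- is a restatement of what must be proved rather than an argument: neither \cite{Laf02} nor \cite{Gaitsgory} provides a mechanism for placing a characteristic-zero point on a prescribed component of $\Spec R$. (Invoking the Khare--Wintenberger construction to produce even one lift is moreover circular, since Theorem \ref{thm_khare_wintenberger_method} is deduced from the present theorem.)

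The missing idea, which is the heart of the paper's proof, is to bound the special fibre directly: one shows that $R/(\varpi)$ is a \emph{finite} $k$-algebra by de Jong's argument. If it were infinite, one could (after enlarging $k$) find a $k$-algebra map $R/(\varpi)\to k\llbracket t\rrbracket$ with open image; the resulting deformation $\Gamma_{K,S}\to\SL_n(k\llbracket t\rrbracket)$ has finite image of $\overline{\Gamma}_{K,S}$ by Gaitsgory's proof of de Jong's conjecture, hence finite image altogether (absolute irreducibility forces the centre of the image into the finite centre of $\SL_n(k\llbracket t\rrbracket)$), hence is the trivial deformation, contradicting openness of the image. Once $R/(\varpi)$ is finite, $(f_1,\dots,f_g,\varpi)$ is a system of parameters in the regular local ring $\cO\llbracket X_1,\dots,X_g\rrbracket$, hence a regular sequence; this yields finiteness, $\cO$-flatness and the complete-intersection property in one stroke, and reducedness then follows from generic \'etaleness (Proposition \ref{prop_triviality_of_tangent_space_in_generic_fibre_of_galois_deformation_ring}) exactly as you say at the end. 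Your conductor-bounding argument for the finiteness of $R[1/l]$ via L.~Lafforgue's correspondence is a reasonable observation, but it addresses the wrong fibre and is not needed once the above is in place.
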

\begin{proof}
If $n = 1$, the result is trivial, so we may assume $n \geq 2$, hence $l > 2$ (because we work in very good characteristic). We first show that $R_{\overline{\rho}, S}$ is finite flat over $\cO$. By Proposition~\ref{prop_presentation_of_deformation_ring_in_semisimple_galois_case}, it suffices to show that $R_{\overline{\rho}, S}/(\varpi)$ is a finite $k$-algebra, as then $R_{\overline{\rho}, S}/(\varpi)$ is a complete intersection and $\varpi$ is a non-zero divisor on $R_{\overline{\rho}, S}$, hence $R_{\overline{\rho}, S}$ is finite over $\cO$ and $\cO$-torsion-free, hence flat (see for example \cite[Theorem 2.1.2]{Bru93}). We follow a similar argument to \cite[\S3]{deJong}. We first observe that de Jong's conjecture, \cite[Conjecture 1.1]{deJong}, was proved for $l>2$ by Gaitsgory in \cite[Theorem~3.6]{Gaitsgory}. It asserts that the image of the group $\overline{\Gamma}_{K, S}$ under any continuous representation $\rho:\Gamma_{K, S}\to\GL_n(\overline{k((t))})$ is finite.

Suppose for contradiction that $R_{\overline{\rho}, S}/(\varpi)$ is infinite. After perhaps enlarging $k$, we can (as in \cite[3.14]{deJong}) find a $k$-algebra homomorphism $\alpha:R_{\overline{\rho}, S}/(\varpi) \to k\llbracket t \rrbracket$ with open image. Let $\rho:\Gamma\to \SL_n(k\llbracket t \rrbracket)$ be the pushforward of a representative of the universal deformation. By the above theorem of Gaitsgory, $\rho$ factors via a quotient $\Gamma_{K, S} \to \Gamma_0$ that fits into a commutative diagram of groups with exact rows
\[ \xymatrix@1{ 1 \ar[r] & \overline{\Gamma}_{K, S}  \ar[d] \ar[r] & \Gamma_{K, S}  \ar[d]\ar[r] & \widehat{\bbZ} \ar[d]^= \ar[r] & 1 \\
1 \ar[r] & \overline{\Gamma}_0 \ar[r] & \Gamma_0 \ar[r] & \widehat{\bbZ} \ar[r] & 1,} \]
where $\overline{\Gamma}_0$ is \emph{finite}. In particular, the second row of this diagram is split and the centre $Z_{\Gamma_0} \subset \Gamma_0$ is open.  By the absolute irreducibility of $\overline\rho$ the centre of $\Gamma_0$ is mapped to the centre of $\SL_n(k\llbracket t \rrbracket)$ under $\rho$. This centre is finite, so we deduce that $\rho(\Gamma_0)$ is finite, hence (applying \cite[Lemma 3.15]{deJong}) that $\rho$ is strictly equivalent to the trivial deformation of $\overline\rho$ to $k\llbracket t \rrbracket$. From the universality of $R_{\overline{\rho}, S}/(\varpi)$ for deformations to complete Noetherian local $k$-algebras with residue field $k$, one deduces that $\alpha$ factors via $k$, contradicting the openness of the image of~$\alpha$ in $k\llbracket t \rrbracket$.

We have shown that $R_{\overline{\rho}, S}$ is a finite flat complete intersection $\cO$-algebra. In particular, it is reduced if and only if it is generically reduced, e.g.\ if $R_{\overline{\rho}, S}[1/l]$ is an \'etale $E$-algebra. This follows from Proposition \ref{prop_triviality_of_tangent_space_in_generic_fibre_of_galois_deformation_ring}, and this completes the proof.
\end{proof}
We now combine this theorem with Proposition \ref{prop_map_of_deformation_rings_is_finite} to obtain the following result for a general semisimple group $\hG$.
\begin{theorem}\label{thm_khare_wintenberger_method}
Suppose that there exists a representation $i : \hG \to \GL(V)$ of finite kernel such that $i \overline{\rho} : \Gamma_{K, S} \to \GL(V_{k})$ is absolutely irreducible and $l > 2(\dim V - 1)$. Then $R_{\overline{\rho}, S}$ is a reduced finite flat complete intersection $\cO$-algebra. In particular, there exists a finite extension $E'/E$ and a continuous homomorphism $\rho : \Gamma_{K, S} \to \hG(\cO_{E'})$ such that $\rho \text{ mod }(\varpi_{E'}) = \overline{\rho}$.
\end{theorem}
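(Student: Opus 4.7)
The strategy is to reduce to the $\SL_n$ case already handled by Theorem~\ref{thm_finite_flatness_for_SL_N} via the surjection supplied by Proposition~\ref{prop_map_of_deformation_rings_is_finite}, then to use the Euler characteristic presentation of Proposition~\ref{prop_presentation_of_deformation_ring_in_semisimple_galois_case} to upgrade finiteness over $\cO$ to the complete intersection property. First, I would observe two preliminary facts. Since $\hG$ is semisimple it has no nontrivial characters, so the representation $i : \hG \to \GL(V)$ in fact factors through $\SL(V) \cong \SL_n$ with $n = \dim V$, and hence $i \overline{\rho}$ may be regarded as an absolutely irreducible representation into $\SL_n(k)$. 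Second, $\overline{\rho}$ itself is absolutely $\hG$-irreducible: if it landed in a proper parabolic $P = LN \subset \hG$, then either $V^N \subsetneq V$, in which case $V^N$ is a proper $\overline{\rho}(\Gamma_{K,S})$-stable subspace of $V$, or $V^N = V$, in which case the connected unipotent group $N$ lies in $\ker i$, contradicting the finiteness of $\ker i$. Either way we contradict absolute irreducibility of $i\overline\rho$, so $\overline{\rho}$ satisfies the standing hypothesis of \S\ref{sec_abstract_deformation_theory} and all the earlier results apply to it.

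Now I would apply Theorem~\ref{thm_finite_flatness_for_SL_N} to $i\overline{\rho}$ to deduce that $R_{i\overline{\rho}, S}$ is a reduced finite flat complete intersection $\cO$-algebra, and invoke Proposition~\ref{prop_map_of_deformation_rings_is_finite} (whose hypothesis $l > 2(\dim V - 1)$ is exactly what we have assumed, via Lemma~\ref{lem_adjoint_module_is_semisimple}) to obtain a surjection $R_{i\overline{\rho}, S} \twoheadrightarrow R_{\overline{\rho}, S}$. This immediately shows that $R_{\overline{\rho}, S}$ is finite as an $\cO$-module. Next I would combine this finiteness with the presentation $R_{\overline{\rho}, S} \cong \cO\llbracket X_1, \dots, X_g \rrbracket / (f_1, \dots, f_g)$ of Proposition~\ref{prop_presentation_of_deformation_ring_in_semisimple_galois_case}: the quotient $R_{\overline{\rho}, S}/(\varpi)$ is a finite-dimensional $k$-algebra of Krull dimension $0$, so $\bar f_1, \dots, \bar f_g$ is a system of parameters in the Cohen--Macaulay ring $k\llbracket X_1, \dots, X_g \rrbracket$ and therefore a regular sequence. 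It follows that $\varpi, f_1, \dots, f_g$ is a regular sequence in $\cO\llbracket X_1, \dots, X_g \rrbracket$, so $R_{\overline{\rho}, S}$ is a complete intersection, $\varpi$ is a non-zero-divisor on it, and hence $R_{\overline{\rho}, S}$ is $\cO$-flat.

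For reducedness, I would apply Proposition~\ref{prop_triviality_of_tangent_space_in_generic_fibre_of_galois_deformation_ring}, which, combined with the finiteness of $R_{\overline{\rho}, S}[1/l]$ over $E$, shows that the generic fibre is a finite formally unramified, hence étale, $E$-algebra, and in particular reduced. Finally, to produce the desired lift, note that $R_{\overline{\rho}, S} \neq 0$ (it has $k$ as a residue quotient) and is $\cO$-flat, so $R_{\overline{\rho}, S}[1/l]$ is a nonzero finite product of finite field extensions of $E$; choosing any factor $E'/E$ gives a map $R_{\overline{\rho}, S} \to E'$ which factors through $\cO_{E'}$ by $\cO$-integrality, and pushing forward the universal deformation along this map yields the required continuous homomorphism $\rho : \Gamma_{K,S} \to \hG(\cO_{E'})$ with $\rho \bmod \varpi_{E'} = \overline\rho$. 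The main obstacle in all this is really the complete intersection step, which rests on Theorem~\ref{thm_finite_flatness_for_SL_N} and ultimately on Gaitsgory's solution of de Jong's conjecture; the passage from the $\SL_n$ case to general semisimple $\hG$ is then essentially formal, given the comparison of deformation rings via $i$ and the $\hG$-irreducibility argument above.
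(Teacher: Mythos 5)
Your proposal is correct and follows essentially the same route as the paper: apply Theorem \ref{thm_finite_flatness_for_SL_N} to $i\overline{\rho}$, transfer finiteness via the surjection of Proposition \ref{prop_map_of_deformation_rings_is_finite}, upgrade to a flat complete intersection via the balanced presentation of Proposition \ref{prop_presentation_of_deformation_ring_in_semisimple_galois_case}, and deduce reducedness from Proposition \ref{prop_triviality_of_tangent_space_in_generic_fibre_of_galois_deformation_ring} together with the fact that a complete intersection is reduced iff generically reduced. The extra details you supply (that $i$ factors through $\SL(V)$, the regular-sequence argument, the deduction of $\hG$-irreducibility of $\overline{\rho}$ from irreducibility of $i\overline{\rho}$ — the latter being a standing hypothesis of the section anyway) are all consistent with what the paper leaves implicit.
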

\begin{proof}
Our assumptions imply that $l$ is a very good characteristic for $\SL(V)$. Theorem \ref{thm_finite_flatness_for_SL_N} implies that $R_{i \overline{\rho}, S}$ is a finite $\cO$-algebra. Proposition \ref{prop_map_of_deformation_rings_is_finite} then implies that $R_{\overline{\rho}, S}$ is a finite $\cO$-algebra. Proposition \ref{prop_presentation_of_deformation_ring_in_semisimple_galois_case} then implies that $R_{\overline{\rho}, S}$ is in fact a finite flat complete intersection $\cO$-algebra. Proposition \ref{prop_triviality_of_tangent_space_in_generic_fibre_of_galois_deformation_ring} then implies that $R_{\overline{\rho}, S}[1/l]$ is an \'etale $E$-algebra; in particular, it is reduced. Since a complete intersection ring is reduced if and only if it is generically reduced, we find that $R_{\overline{\rho}, S}$ is in fact reduced. (This is the same argument we have already applied in the case $G = \SL_n$ in the proof of Theorem \ref{thm_finite_flatness_for_SL_N}.) This completes the proof.
\end{proof}
\subsection{Taylor--Wiles places}

We continue with the notation of \S \ref{sec_galois_deformation_theory}. Thus $\hG$ is semisimple, $S$ is a finite set of places of $K = \bbF_q(X)$, and $\overline{\rho} : \Gamma_{K, S} \to \hG(k)$ is absolutely $\hG$-irreducible. We can and do assume, after possibly enlarging $E$, that for every regular semisimple element $h \in \overline{\rho}(\Gamma_{K, S})$, the torus $Z_{\hG}(h)^\circ \subset \hG_k$ is split. Let $\rho_S : \Gamma_{K, S} \to \hG(R_{\overline{\rho}, S})$ denote a representative of the universal deformation. We fix a split maximal torus $\hT \subset \hG$, and write $T$ for the split torus over $\bbZ$ with $X_\ast(T) = X^\ast(\hT)$.
\begin{lemma}\label{lem_local_structure_at_TW_primes}
Let $v \in S$ be a prime such that $\overline{\rho}|_{\Gamma_{K_v}}$ is unramified, $q_v \equiv 1 \text{ mod }l$,  and $\overline{\rho}(\Frob_v)$ is regular semisimple. Let $T_v = Z_\hG(\overline{\rho}(\Frob_v))^\circ$, and choose an inner isomorphism $\varphi : \hT_k \cong T_v$ (there are $\# W$ possible choices). Then:
\begin{enumerate}
\item There exists a unique torus $\widetilde{T}_v \subset \hG_{R_{\overline{\rho}, S}}$ lifting $T_v$ such that $\rho_S|_{\Gamma_{K_v}}$ takes values in $\widetilde{T}_v(R_{\overline{\rho}, S})$, and a unique isomorphism $\widetilde{\varphi} : \hT_{R_{\overline{\rho}, S}} \cong \widetilde{T}_v$ lifting $\varphi$.
\item The homomorphism $\widetilde{\varphi}^{-1} \circ \rho_S|_{I_{K_v}} : I_{K_v} \to \widehat{T}(R_{\overline{\rho}, S})$ has finite $l$-power order.
\end{enumerate}
\end{lemma}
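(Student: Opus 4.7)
The plan is to construct $\widetilde{T}_v$ directly as the centralizer of $\rho_S(\Frob_v)$, then prove that the image of inertia lands in this torus by an inductive Lie-theoretic calculation that exploits the congruence $q_v \equiv 1 \pmod{l}$.

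For part (i): since $\overline{\rho}(\Frob_v)$ is regular semisimple, its centralizer is the (connected) maximal torus $T_v$, and the regular semisimple locus $\hG^{\text{rs}}$ is open in $\hG$. Thus $\rho_S(\Frob_v) \in \hG^{\text{rs}}(R_{\overline{\rho}, S})$, and I take $\widetilde{T}_v := Z_\hG(\rho_S(\Frob_v))$, a maximal torus of $\hG_{R_{\overline{\rho}, S}}$ lifting $T_v$; uniqueness is automatic, because any torus of $\hG_{R_{\overline{\rho}, S}}$ containing $\rho_S(\Frob_v)$ must sit inside its centralizer, and if it lifts $T_v$ then dimension forces equality. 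Splitness of $\widetilde{T}_v$ descends from $T_v$ by rigidity of split tori over complete local rings. To build $\widetilde{\varphi}$, write $\varphi = \Ad(g_0)|_{\hT_k}$ for some $g_0 \in \hG^{\ad}(k)$, lift $g_0$ to $g \in \hG^{\ad}(R_{\overline{\rho}, S})$ by smoothness, and use smoothness of the scheme of maximal tori $\hG/N_\hG(\hT)$ to find $u \equiv 1 \pmod{\ffrm_{R_{\overline{\rho}, S}}}$ conjugating $\Ad(g)\hT_{R_{\overline{\rho}, S}}$ onto $\widetilde{T}_v$. Then $\widetilde{\varphi} := \Ad(ug)|_{\hT_{R_{\overline{\rho}, S}}}$ is the required lift, and it is unique because $\Aut(\hT)$ is the constant group scheme $\Aut(X^\ast(\hT))$, so any automorphism of $\hT_{R_{\overline{\rho}, S}}$ lifting the identity is itself the identity.

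The heart of the proof is part (ii). Since $\ker(\hG(A) \to \hG(k))$ is a finite $l$-group for every $A \in \cC_\cO$ and $\overline{\rho}|_{I_{K_v}}$ is trivial, $\rho_S|_{I_{K_v}}$ kills wild inertia (pro-$p$ with $p = \cha k(v) \ne l$) as well as the prime-to-$l$ tame inertia, hence factors through the pro-$l$ tame quotient $\bbZ_l(1)$, topologically generated by some $\sigma$ satisfying $\phi \sigma \phi^{-1} = \sigma^{q_v}$ for any lift of Frobenius $\phi$. Setting $g = \rho_S(\phi)$ and $h = \rho_S(\sigma)$, I would prove $h \in \widetilde{T}_v(R_{\overline{\rho}, S})$ by induction on $N$, showing $h_{N+1} := h \bmod \ffrm_R^{N+1} \in \widetilde{T}_v(R/\ffrm_R^{N+1})$; the base case $N = 0$ is immediate. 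For the inductive step, choose any lift $\tilde{h}_N \in \widetilde{T}_v(R/\ffrm_R^{N+1})$ of $h_N$ (possible by smoothness) and write $h_{N+1} = \tilde{h}_N(1 + X)$ with $X \in \widehat{\frg}_k \otimes_k \ffrm_R^N/\ffrm_R^{N+1}$, using the standard Lie-algebra description of $\ker(\hG(R/\ffrm_R^{N+1}) \to \hG(R/\ffrm_R^N))$. Using that $g$ centralizes $\widetilde{T}_v$ and that $\tilde{h}_N$ commutes with the infinitesimal $(1+X)$, the relation $ghg^{-1} = h^{q_v}$ collapses to
\[
(\Ad(g_0) - q_v)(X) = \tilde{h}_N^{q_v - 1} - 1,
\]
whose right-hand side lies in $\widetilde{\frt}_k \otimes \ffrm_R^N/\ffrm_R^{N+1}$ because $h_N^{q_v - 1} = 1$ by the inductive hypothesis. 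Since $q_v = 1$ in $k$, the operator $\Ad(g_0) - q_v$ vanishes on $\widetilde{\frt}_k$ and acts invertibly on each root space $\widehat{\frg}_{k, \alpha}$ (regular semisimplicity gives $\alpha(g_0) \ne 1$ for every root $\alpha$); decomposing $X$ along the root-space decomposition $\widehat{\frg}_k = \widetilde{\frt}_k \oplus \bigoplus_\alpha \widehat{\frg}_{k, \alpha}$ forces the root-space component of $X$ to vanish, whence $h_{N+1} \in \widetilde{T}_v(R/\ffrm_R^{N+1})$. The finite $l$-power order claim follows immediately: in each Artinian quotient $A$, the image lands in the finite $l$-group $\ker(\hT(A) \to \hT(k))$.

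The main obstacle is the infinitesimal computation above: one must carefully separate torus and root-space contributions and verify that $q_v \equiv 1 \pmod l$ conspires with the regular semisimplicity of $g_0$ to linearize the conjugation relation onto the root spaces alone. Everything else — openness of the regular semisimple locus, smoothness of the scheme of maximal tori, rigidity of split tori over complete local rings, and the usual infinitesimal manipulations — is either formal or well-documented.
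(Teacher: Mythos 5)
Your proof is correct and follows essentially the same route as the paper: take $\widetilde{T}_v$ to be the unique maximal torus through the regular semisimple section $\rho_S(\Frob_v)$, then show inertia lands in it by induction on powers of $\ffrm_{R_{\overline{\rho},S}}$, using the tame relation $\phi\sigma\phi^{-1}=\sigma^{q_v}$ together with the Cartan decomposition and the regular semisimplicity of $\overline{\rho}(\Frob_v)$ to kill the root-space components of the infinitesimal error term. One small point: your last sentence only shows the image of inertia is pro-$l$ (the kernel of $\hT(R_{\overline{\rho},S})\to\hT(k)$ need not be finite, since $R_{\overline{\rho},S}$ is not Artinian); finiteness instead follows from $h^{q_v-1}=1$ in $\hT(R_{\overline{\rho},S})$, which your induction already establishes level by level, or from local class field theory as in the paper.
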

\begin{proof}
Any two split maximal tori of $\hG_k$ are $\hG(k)$-conjugate, so we can choose $g \in \hG(k)$ such that $g \hT_k g^{-1} = T_v$ (this is what we mean by an inner isomorphism). We take $\varphi$ to be conjugation by this element. 

The representation $\rho_S|_{\Gamma_{K_v}}$ factors through the tame quotient $\Gamma_{K_v}^t$ of $\Gamma_{K_v}$. Let $\phi_v \in \Gamma_{K_v}^t$ be an (arithmetic) Frobenius lift and $t_v \in \Gamma_{K_v}^t$ a generator of the $l$-part of tame inertia, so that $\phi_v t_v \phi_v^{-1} = t_v^{q_v}$. Then there exists a unique maximal torus $\widetilde{T}_v \subset \hG_{R_{\overline{\rho}, S}}$ containing the element $\rho_S(\phi_v) \in \hG(R_{\overline{\rho}, S})$ (apply \cite[Exp. XIII, 3.2]{SGA3}). This torus is split, and we can even (\cite[Exp. IX, 7.3]{SGA3}) find an element $\widetilde{g} \in \hG(R_{\overline{\rho}, S})$ lifting $g$ such that $\widetilde{g} \hT_{R_{\overline{\rho}, S}} \widetilde{g}^{-1} = \widetilde{T}_v$. We take $\widetilde{\varphi}$ to be conjugation by this element. We will show that $\rho_S(\Gamma_{K_v})$ takes image in $\widetilde{T}_v(R_{\overline{\rho}, S})$. In particular, this image is abelian, and the first part of the lemma will follow. The second part will then follow by local class field theory.

To do this, we will show by induction on $i \geq 1$ that $\rho_S(t_v) \text{ mod }\ffrm^i$ lies in $\widetilde{T}_v(R_{\overline{\rho}, S} / \ffrm^i)$, where $\ffrm$ denotes the maximal ideal of $R_{\overline{\rho}, S}$. The case $i = 1$ is clear, as $\overline{\rho}$ is unramified at $v$. For the inductive step, we assume that $\rho_S(t_v) \text{ mod }\ffrm^i$ lies in $\widetilde{T}_v$, and show that the same is true mod $\ffrm^{i+1}$. Let $t'_v \in \widetilde{T}_v(R_{\overline{\rho}, S} / \ffrm^{i+1})$ be an element with $\rho_S(t_v) \equiv t'_v \text{ mod }\ffrm^i$. Thus $\rho_S(\phi_v) \text{ mod }\ffrm^{i+1}$ and $t_v'$ commute. We can write $t'_v = \rho_S(t_v) \epsilon$, for some element
\[ \epsilon \in \ker( \hG(R_{\overline{\rho}, S} / \ffrm^{i+1}) \to \hG(R_{\overline{\rho}, S} / \ffrm^{i})) \cong  \widehat{\frg}_k \otimes_k \ffrm^i / \ffrm^{i+1}. \]
(For the existence of this isomorphism, see for example \cite[Proposition 6.2]{Pin98}.) The conjugation action of $\hG(R_{\overline{\rho}, S} / \ffrm^{i+1})$ on the subgroup $ \widehat{\frg}_k \otimes_k \ffrm^i / \ffrm^{i+1}$ factors through the adjoint action of $\hG(k)$ on $\widehat{\frg}_k$. Thus the elements $t'_v$ and $\epsilon$ commute, because $t'_v \text{ mod }\ffrm = \overline{\rho}(t_v)$ is trivial.

In particular, we see that the relation $\phi_v t_v \phi_v^{-1} = t_v^{q_v}$ implies a relation
\[ \rho_S(\phi_v) \rho_S(t_v) \rho_S(\phi_v)^{-1} = \rho_S(\phi_v) t'_v \epsilon^{-1} \rho_S(\phi_v)^{-1} = t'_v \rho_S(\phi_v) \epsilon^{-1} \rho_S(\phi_v)^{-1} =\rho_S(t_v)^{q_v} = (t'_v \epsilon^{-1})^{q_v} = (t'_v)^{q_v} \epsilon^{-1}, \]
hence $(t'_v)^{q_v - 1} = \rho_S(\phi_v) \epsilon^{-1} \rho_S(\phi_v)^{-1} \epsilon$. We write $\epsilon =  X$, for some $X \in \widehat{\frg}_k \otimes_k \ffrm^i / \ffrm^{i+1}$, and decompose $X = X_0 + \sum_{\alpha \in \Phi(\hG_k, T_v)} X_\alpha$ with respect to the Cartan decomposition of $\widehat{\frg}_k$ (with respect to the torus $T_v \subset \hG_k$). We finally get
\[ \rho_S(\phi_v) \epsilon^{-1} \rho_S(\phi_v)^{-1} \epsilon =  - \Ad \overline{\rho}(\phi_v)(X) + X = (t'_v)^{q_v - 1}, \]
and the $\alpha$-component of this is $(1 - \alpha(\overline{\rho}(\phi_v)))X_\alpha = 0$. Since $\overline{\rho}(\phi_v)$ is regular semisimple, we find that $X_\alpha = 0$ for each $\alpha \in \Phi(\hG_k, T_v)$, or equivalently that $\epsilon \in \frt_v \otimes_k \ffrm^i / \ffrm^{i+1}$, where $\frt_v = \Lie T_v$. It follows that $\rho_S(t_v) \text{ mod }\ffrm^{i+1} \in \widetilde{T}_v(R_{\overline{\rho}, S} / \ffrm^{i+1})$. This is what we needed to prove.
\end{proof}
With this lemma in hand, we make the following definition.
\begin{definition}\label{def_taylor_wiles_datum}
A Taylor--Wiles datum for $\overline{\rho} : \Gamma_{K, S} \to \hG(k)$ is a pair $(Q, \{\varphi_v\}_{v \in Q})$ as follows:
\begin{enumerate}
\item $Q$ is a finite set of place $K$, disjoint from $S$, such that for each $v \in Q$, $\overline{\rho}(\Frob_v)$ is regular semisimple and $q_v \equiv 1 \text{ mod }l$.
\item For each $v \in Q$, $\varphi_v : \hT_k \cong Z_\hG(\overline{\rho}(\Frob_v))$ is a choice of inner isomorphism. In particular, the group $Z_\hG(\overline{\rho}(\Frob_v))$ is connected.
\end{enumerate}
If $Q$ is a Taylor--Wiles datum, then we define $\Delta_Q$ to be the maximal $l$-power order quotient of the group $\prod_{v \in Q} T(k(v))$.
\end{definition}
\begin{lemma}\label{lem_diamond_operators_in_deformation_ring_at_TW_primes}
If $(Q, \{\varphi_v\}_{v \in Q})$ is a Taylor--Wiles datum, then $R_{\overline{\rho}, S \cup Q}$ has a natural structure of $\cO[\Delta_Q]$-algebra, and there is a canonical isomorphism $R_{\overline{\rho}, S \cup Q} \otimes_{\cO[\Delta_Q]} \cO \cong R_{\overline{\rho}, S}$.
\end{lemma}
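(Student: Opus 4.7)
The plan is to construct the $\cO[\Delta_Q]$-algebra structure from the $l$-power order characters of inertia supplied by Lemma \ref{lem_local_structure_at_TW_primes}, and then show that killing these characters recovers the unramified-outside-$S$ condition defining $R_{\overline{\rho}, S}$.

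For each $v \in Q$, Lemma \ref{lem_local_structure_at_TW_primes} provides a canonical homomorphism $\chi_v := \widetilde{\varphi}_v^{-1} \circ \rho_S|_{I_{K_v}} : I_{K_v} \to \hT(R_{\overline{\rho}, S \cup Q})$ of finite $l$-power order. Local class field theory identifies the abelianized tame quotient of $I_{K_v}$ with $k(v)^\times$, and since wild inertia is pro-$p$ with $p = \cha k(v) \neq l$, the character $\chi_v$ factors through the maximal $l$-power quotient $k(v)^\times[l^\infty]$. Using $\hT(R) = \Hom_\bbZ(X^\ast(\hT), R^\times)$ and the identification $X^\ast(\hT) = X_\ast(T)$, tensor--Hom adjunction converts $\chi_v$ into a character
\[ X_\ast(T) \otimes_\bbZ k(v)^\times[l^\infty] \;\longrightarrow\; R_{\overline{\rho}, S \cup Q}^\times, \]
i.e.\ a character of $\Delta_v$, the maximal $l$-power quotient of $T(k(v)) = X_\ast(T) \otimes_\bbZ k(v)^\times$. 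Multiplying over $v \in Q$ and extending $\cO$-linearly gives the desired $\cO$-algebra homomorphism $\cO[\Delta_Q] \to R_{\overline{\rho}, S \cup Q}$. The uniqueness clauses of Lemma \ref{lem_local_structure_at_TW_primes} (for $\widetilde{T}_v$ and $\widetilde{\varphi}_v$) ensure that $\chi_v$ is invariant under conjugation by $\ker(\hG^\text{ad}(R_{\overline{\rho}, S\cup Q}) \to \hG^\text{ad}(k))$, so the construction descends from liftings to deformations and thus yields a canonical structure on the universal object.

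For the second assertion, I would compare universal properties. Given $A \in \cC_\cO$, a morphism $R_{\overline{\rho}, S \cup Q} \otimes_{\cO[\Delta_Q]} \cO \to A$ corresponds to a deformation of $\overline{\rho}$ to $A$ unramified outside $S \cup Q$ on which the pushforward of each $\chi_v$ is trivial. Since $\widetilde{\varphi}_v$ is an isomorphism $\hT_{R_{\overline{\rho}, S\cup Q}} \cong \widetilde{T}_v$ and Lemma \ref{lem_local_structure_at_TW_primes} asserts that $\rho_S|_{I_{K_v}}$ lands in $\widetilde{T}_v(R_{\overline{\rho}, S\cup Q})$, triviality of the specialized $\chi_v$ is equivalent to triviality of $\rho_A|_{I_{K_v}}$, i.e.\ to $\rho_A$ being unramified at $v$. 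Hence the tensor product represents the functor of deformations of $\overline{\rho}$ unramified outside $S$, which by definition is $R_{\overline{\rho}, S}$, and Yoneda yields the canonical isomorphism. The main piece of work is really contained in the first step -- correctly extracting a character of $\Delta_v$ from the inertia representation via local class field theory and the tensor--Hom/dual-torus dictionary; once that is set up, the identification of the quotient is a formal consequence of Lemma \ref{lem_local_structure_at_TW_primes} and the universal property.
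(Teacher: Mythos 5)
Your proposal is correct and follows essentially the same route as the paper: extract the $l$-power order inertia characters $\chi_v$ from Lemma \ref{lem_local_structure_at_TW_primes}, convert them via local class field theory and the duality of Lemma \ref{lem_local_langlands_for_split_tori} into characters of $T(k(v))$, multiply to get the $\cO[\Delta_Q]$-algebra structure, and identify the quotient by the augmentation ideal with the maximal quotient over which the universal deformation is unramified at $Q$. Your extra remarks on conjugation-invariance and the universal-property verification are correct elaborations of points the paper leaves implicit.
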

\begin{proof}
Let $\rho_{S \cup Q}$ denote a representative of the universal deformation. Lemma \ref{lem_local_structure_at_TW_primes} shows that for each $v \in Q$, inertia acts on $\rho_{S \cup Q}$  via a character $\chi_v = \widetilde{\varphi}_v^{-1} \circ \rho_{S \cup Q}|_{I_{K_v}} : I_{K_v} \to \hT(R_{\overline{\rho}, S \cup Q})$ which has finite $l$-power order and which is uniquely determined by $\varphi_v$.

This homomorphism $\chi_v$ factors through the quotient $I_{K_v} \to k(v)^\times$ given by local class field theory. The homomorphism $\chi_v : k(v)^\times \to \hT(R_{\overline{\rho}, S \cup Q})$ corresponds, by a simple version of Langlands duality (cf. Lemma \ref{lem_local_langlands_for_split_tori} below), to a character $\chi_v' : T(k(v)) \to R_{\overline{\rho}, S \cup Q}^\times$. After taking products we get an algebra homomorphism $\cO[\Delta_Q] \to R_{\overline{\rho}, S \cup Q}$. The quotient $R_{\overline{\rho}, S \cup Q} \otimes_{\cO[\Delta_Q]} \cO$ is identified with the maximal quotient over which $\rho_{S \cup Q}$ is unramified, hence with $R_{\overline{\rho}, S}$. This completes the proof.
\end{proof}
The following definition plays the role of `big' or `adequate' subgroups of $\GL_n(k)$ in previous works on automorphy lifting (compare \cite{Clo08, Tho12}).
\begin{definition}\label{def_abundant_subgroups}
We say that a subgroup $H \subset \hG(k)$ is $\hG$-abundant if it satisfies the following conditions:
\begin{enumerate}
\item The groups $H^0(H, \widehat{\frg}_k)$, $H^0(H, \widehat{\frg}^\vee_k)$, $H^1(H, \widehat{\frg}^\vee_k)$ and $H^1(H, k)$ all vanish. For each regular semisimple element $h \in H$, the torus $Z_{\hG}(h)^\circ \subset \hG_k$ is split.
\item For every simple $k[H]$-submodule $W \subset \widehat{\frg}^\vee_k$, there exists a regular semisimple element $h \in H$ such that $W^h \neq 0$ and $Z_{\hG}(h)$ is connected. (We recall that $Z_{\hG}(h)$ is always connected if $\hG_k$ is simply connected.)
\end{enumerate}
\end{definition}
\begin{proposition}\label{prop_existence_of_taylor_wiles_data}
Suppose that the group $\overline{\rho}(\Gamma_{K(\zeta_l)}) \subset \hG(k)$ is $\hG$-abundant and that for each $v \in S$, the group $H^0(K_v, \frg_k^\vee(1))$ is trivial. Then for each $N \geq 1$, there exists a Taylor--Wiles datum $(Q, \{ \varphi_v \}_{v \in Q})$ satisfying the following conditions:
\begin{enumerate}
\item For each $v \in Q$, $q_v \equiv 1 \text{ mod }l^N$, and $\# Q = h^1(\Gamma_{K, S}, \widehat{\frg}_k) = h^1(\Gamma_{K, S}, \widehat{\frg}_k^\vee(1))$.
\item We have $h^1_{Q\text{-triv}}(\Gamma_{K, S}, \widehat{\frg}_k^\vee(1)) = 0$. (By definition, this is the dimension of the kernel of the map $H^1(\Gamma_{K, S}, \widehat{\frg}_k^\vee(1)) \to \oplus_{v \in Q} H^1(K_v, \widehat{\frg}_k^\vee(1))$.) 
\item There exists a surjection $\cO \llbracket X_1, \dots, X_g \rrbracket \to R_{\overline{\rho}, S \cup Q}$ with $g = h^1(\Gamma_{K, S}, \widehat{\frg}_k) + (r-1)\# Q$, where $r = \rank \hG$.
\end{enumerate}
\end{proposition}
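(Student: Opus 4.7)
The plan is to follow the standard Taylor--Wiles Chebotarev argument, adapted to the $\hG$-valued setting via the $\hG$-abundance hypothesis. I first verify the cohomological input: $H^0(\Gamma_{K,S}, \widehat{\frg}_k) = 0$ is Lemma \ref{lem_triviality_of_scheme_theoretic_centralizer_in_very_good_characteristic}, and $H^0(\Gamma_{K,S}, \widehat{\frg}_k^\vee(1)) = 0$ follows from the abundance condition $H^0(H, \widehat{\frg}_k^\vee) = 0$ after restriction to $\Gamma_{K(\zeta_l)}$, on which the cyclotomic twist is trivial. Combined with the global Euler characteristic identity already used in Proposition \ref{prop_presentation_of_deformation_ring_in_semisimple_galois_case}, the hypothesis $H^0(K_v, \widehat{\frg}_k^\vee(1)) = 0$ for $v \in S$, and Tate global duality, one deduces the equality $h^1(\Gamma_{K,S}, \widehat{\frg}_k) = h^1(\Gamma_{K,S}, \widehat{\frg}_k^\vee(1))$; call this common integer $m$.

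The technical heart is the following Chebotarev non-vanishing lemma: for every nonzero class $[\psi] \in H^1(\Gamma_{K,S}, \widehat{\frg}_k^\vee(1))$, the set of places $v \notin S$ with $q_v \equiv 1 \pmod{l^N}$, $\overline{\rho}(\Frob_v)$ regular semisimple with connected centralizer in $\hG_k$, and $\psi|_{\Gamma_{K_v}} \neq 0$ has positive Chebotarev density. To prove it, set $L = K(\zeta_{l^N}) \cdot K^{\ker \overline\rho}$. The vanishing $H^1(\Gal(L/K), \widehat{\frg}_k^\vee) = 0$, obtained via Hochschild--Serre from the abundance inputs $H^1(H, \widehat{\frg}_k^\vee) = H^1(H, k) = 0$ together with the triviality of the $\Gamma_L$-action on $\widehat{\frg}_k^\vee(1)$, makes $\psi|_{\Gamma_L}$ a nonzero $\Gal(L/K)$-equivariant homomorphism $\Gamma_L \to \widehat{\frg}_k^\vee$. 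Choose a simple $k[H]$-submodule $W \subseteq \widehat{\frg}_k^\vee$ on which the projection of $\psi|_{\Gamma_L}$ is nonzero (hence surjective by simplicity of $W$), and, by abundance (ii), a regular semisimple $h \in H$ with connected centralizer and $W^h \neq 0$. Since $[K(\zeta_{l^N}) : K(\zeta_l)]$ is an $l$-power while $[\overline\rho(\Gamma_K) : H]$ divides $l-1$, we have $\overline{\rho}(\Gamma_{K(\zeta_{l^N})}) = H$, so $h$ lifts to some $\sigma_0 \in \Gamma_{K(\zeta_{l^N}), S}$. Then choose $\tau \in \Gamma_L$ so that the $W^h$-component of $\psi(\tau \sigma_0) = \psi(\tau) + \psi(\sigma_0)$ (using that $\tau$ acts trivially on $\widehat{\frg}_k^\vee(1)$) is nonzero. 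The resulting $\sigma := \tau \sigma_0$ has $\overline{\rho}(\sigma) = h$, $\sigma|_{\mu_{l^N}} = 1$, and $\psi(\sigma) \notin (h - 1)\widehat{\frg}_k^\vee$. Hence for any $v$ whose Frobenius is conjugate to $\sigma$ in a sufficiently large finite quotient of $\Gamma_{K, S}$ (containing $L$ and the field cut out by $\psi$), the restriction $\psi|_{\Gamma_{K_v}} \in H^1_{\mathrm{unr}}(K_v, \widehat{\frg}_k^\vee(1)) \cong \widehat{\frg}_k^\vee / (h-1)\widehat{\frg}_k^\vee$ is nonzero, and Theorem \ref{thm_chebotarev_density} supplies such $v$ in positive density.

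The set $Q$ is then built iteratively: starting from $V_0 := H^1(\Gamma_{K,S}, \widehat{\frg}_k^\vee(1))$ of dimension $m$, at step $i$ I pick any nonzero class in $V_{i-1}$, apply the lemma to obtain a place $v_i$ (not among $v_1, \dots, v_{i-1}$) at which the chosen class restricts non-trivially, and set $V_i := V_{i-1} \cap \ker(\mathrm{loc}_{v_i})$. After $m$ steps $V_m = 0$, giving conditions (i) and (ii) with $\# Q = m$. For (iii), the Wiles / Poitou--Tate formula for $\Gamma_{K, S \cup Q}$ with unrestricted local Selmer conditions at $S \cup Q$ reads
\[
h^1(\Gamma_{K, S \cup Q}, \widehat{\frg}_k) - h^1_{(S \cup Q)\text{-triv}}(\Gamma_{K, S \cup Q}, \widehat{\frg}_k^\vee(1)) = \sum_{v \in S \cup Q} \bigl( h^1(K_v, \widehat{\frg}_k) - h^0(K_v, \widehat{\frg}_k) \bigr).
\]
At $v \in S$ this summand equals $h^2(K_v, \widehat{\frg}_k) = h^0(K_v, \widehat{\frg}_k^\vee(1)) = 0$; at $v \in Q$ it equals $r$, by local Tate duality, self-duality of $\widehat{\frg}_k$ in very good characteristic, and the triviality of the cyclotomic twist when $q_v \equiv 1 \pmod l$. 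The dual Selmer term vanishes by the construction of $Q$, so $h^1(\Gamma_{K, S \cup Q}, \widehat{\frg}_k) = r \cdot \# Q = h^1(\Gamma_{K,S}, \widehat{\frg}_k) + (r-1) \# Q$, and Proposition \ref{prop_presentation_of_deformation_ring_in_semisimple_galois_case} applied to $S \cup Q$ supplies the required surjection. The main obstacle is the non-vanishing lemma: the simultaneous control of the residual Frobenius (as required by Lemma \ref{lem_local_structure_at_TW_primes}) and of a Galois cohomology class is the traditional crux of such arguments, and the $\hG$-abundance hypothesis is engineered precisely to enable it, with the cohomology vanishings clearing the inflation--restriction obstruction and condition (ii) guaranteeing that for each simple $k[H]$-constituent of $\widehat{\frg}_k^\vee$ there is a regular semisimple element of $H$ with connected centralizer and a nonzero fixed line on that constituent.
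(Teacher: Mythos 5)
Your proof is correct and follows essentially the same route as the paper's: the same reduction via the Greenberg--Wiles/Poitou--Tate bookkeeping to the two bullet conditions on $Q$, and the same Chebotarev argument using inflation--restriction, the vanishing $H^1(H,k)=0$ to handle $K(\zeta_{l^N})$, a simple submodule $W$ with $W^h \neq 0$ supplied by abundance, and the adjustment $\sigma = \tau\sigma_0$ with $\tau \in \Gamma_L$. The only (harmless) differences are cosmetic: the paper phrases the numerology via the Cassels--Poitou--Tate sequence rather than the Wiles formula, and it takes $W$ to be a simple submodule of the $k$-span of the image of $\psi|_{\Gamma_L}$ --- which you should do too, since ``the projection of $\psi|_{\Gamma_L}$ onto $W$'' presupposes $W$ is a direct summand of $\widehat{\frg}_k^\vee$, which abundance does not directly guarantee.
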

\begin{proof}
The proof is a variation on the usual themes. Fix an integer $N \geq 1$. We claim that it suffices to find a Taylor--Wiles datum $(Q, \{ \varphi_v \}_{v \in Q})$ satisfying just the following conditions:
\begin{itemize}
\item For each $v \in Q$, $q_v \equiv 1 \text{ mod }l^N$, and $\# Q = h^1(\Gamma_{K, S}, \widehat{\frg}_k^\vee(1))$.
\item We have $h^1_{Q\text{-triv}}(\Gamma_{K, S}, \widehat{\frg}_k^\vee(1)) = 0$. 
\end{itemize}
Indeed, given a Taylor--Wiles datum, the Cassels--Poitou--Tate exact sequence takes the form (see \cite[Theorem 6.2]{Ces15}):
\[ \xymatrix@1{ 0 \ar[r] & H^1_{Q\text{-triv}}(\Gamma_{K, S \cup Q}, \widehat{\frg}_k^\vee(1)) \ar[r] & H^2(\Gamma_{K, S \cup Q}, \widehat{\frg}_k) \ar[r] & \oplus_{v \in Q} H^2(K_v, \widehat{\frg}_k) \ar[r] & 0.} \]
Combining this with the Euler characteristic formula (already used in the proof of Proposition \ref{prop_presentation_of_deformation_ring_in_semisimple_galois_case}), we obtain the formula
\[ h^1(\Gamma_{K, S \cup Q}, \widehat{\frg}_k) - h^1_{Q\text{-triv}}(\Gamma_{K, S}, \widehat{\frg}_k^\vee(1)) = \sum_{v \in Q} (\rank \hG) = r \# Q. \]
(We have used the equalities  $h^2(K_v, \widehat{\frg}_k) = h^0(K_v, \widehat{\frg}^\vee_k(1)) = r$ ($v \in Q$) and $h^0(\Gamma_{K(\zeta_l)}, \widehat{\frg}_k) = 0$.) In particular, the case $Q = \emptyset$ gives the equality $h^1(\Gamma_{K, S}, \widehat{\frg}_k) = h^1(\Gamma_{K, S}, \widehat{\frg}_k^\vee(1))$. For $Q$ satisfying the above two bullet points, we obtain 
\[ h^1(\Gamma_{K, S \cup Q}, \widehat{\frg}_k) = r \# Q = h^1(\Gamma_{K, S \cup Q}, \widehat{\frg}_k) + (r-1) \# Q, \]
 hence the third point in the statement of the proposition.

By induction, it will suffice to show that for any non-zero cohomology class $[\psi] \in H^1(\Gamma_{K, S}, \widehat{\frg}_k^\vee(1))$, we can find infinitely many places $v \not\in S$ of $K$ such that $q_v \equiv 1 \text{ mod }l^N$, $\overline{\rho}(\Frob_v)$ is regular semisimple with connected centralizer in $\hG_k$, and $\res_{K_v} [\psi] \neq 0$. Indeed, we can then add one place at a time to kill off all the elements of the group $H^1(\Gamma_{K, S}, \widehat{\frg}_k^\vee(1))$. By the Chebotarev density theorem, it will even suffice to find for each non-zero cohomology class $[\psi] \in H^1(\Gamma_{K, S}, \widehat{\frg}_k^\vee(1))$ an element $\sigma \in \Gamma_{K(\zeta_{l^N})}$ such that $\overline{\rho}(\sigma)$ is regular semisimple with connected centralizer in $\hG_k$, and the $\sigma$-equivariant projection of $\psi(\sigma)$ to $\widehat{\frg}_k^\vee(1)^\sigma$ is non-zero. 

To this end, let $K_N = K(\zeta_{l^N})$, and let $L_N$ denote the extension of $K_N$ cut out by $\overline{\rho}$. Our hypothesis that $H^1(\overline{\rho}(\Gamma_{K(\zeta_l)}), k) = 0$ (part of Definition \ref{def_abundant_subgroups}) implies that we have $\overline{\rho}(\Gamma_{K_1}) = \overline{\rho}(\Gamma_{K_N})$. In particular, the group $\overline{\rho}(\Gamma_{K_N})$ is $\hG$-abundant, which implies (by inflation-restriction) that the element $\Res_{L_N} [\phi]$ determines a non-zero, $\Gamma_{K_N}$-equivariant homomorphism $f : \Gamma_{L_N} \to \widehat{\frg}^\vee_k(1)$. Let $W$ be a simple $k[\Gamma_{K_N}]$-submodule of the $k$-span of $f(\Gamma_{L_N})$, and choose $\sigma_0 \in \Gamma_{K_N}$ such that $\overline{\rho}(\sigma_0)$ is regular semisimple with connected centralizer in $\hG_k$ and $W^{\sigma_0} \neq 0$. We write $p_{\sigma_0} : \widehat{\frg}_k^\vee(1) \to \widehat{\frg}_k^\vee(1)^{\sigma_0}$ for the $\sigma_0$-equivariant projection. Then the condition $W^{\sigma_0} \neq 0$ is equivalent to the condition $p_{\sigma_0} W \neq 0$. 

If $p_{\sigma_0} \psi(\sigma_0) \neq 0$, then we're done on taking $\sigma = \sigma_0$. Otherwise, we can assume that this projection is zero, in which case we consider elements of the form $\sigma = \tau \sigma_0$ for $\tau \in \Gamma_{L_N}$. For such an element, we have
$\psi(\sigma) = f(\tau) + \psi(\sigma_0)$ and $\overline{\rho}(\sigma) = \overline{\rho}(\sigma_0)$, so the proof will be finished if we can find $\tau \in \Gamma_{L_N}$ such that $p_{\sigma_0} f(\tau) \neq 0$. Suppose for contradiction that there is no such $\tau$, or equivalently that $p_{\sigma_0} \circ f = 0$. Then the image of $f$ is contained in the unique $\sigma_0$-invariant complement of $\widehat{\frg}_k^\vee(1)^{\sigma_0} \subset \widehat{\frg}_k^\vee(1)$, implying that we must have $p_{\sigma_0} W = 0$. This is a contradiction.
\end{proof}

\section{Compatible systems of Galois representations}\label{sec_compatible_systems_of_galois_representations}

Let $\bbF_q$ be a finite field, and let $X$ be a smooth, projective, geometrically connected curve over $\bbF_q$, $K = \bbF_q(X)$. Let $\hG$ be a split reductive group over $\bbZ$, and fix an algebraic closure $\overline{\bbQ}$ of $\bbQ$.
\begin{definition}\label{def_compatible_sytems}
A compatible system of $\hG$-representations is a tuple $(S, (\rho_\lambda)_\lambda)$ consisting of the following data:
\begin{itemize}
\item A finite set $S$ of places of $K$.
\item A system of continuous and $\hG$-completely reducible representations $\rho_\lambda : \Gamma_{K, S} \to \hG(\overline{\bbQ}_\lambda)$, indexed by the prime-to-$q$ places $\lambda$ of $\overline{\bbQ}$, such that for any place $v \not\in S$ of $K$, the semisimple conjugacy class of $\rho_\lambda(\Frob_v)$ in $\hG$ is defined over $\overline{\bbQ}$ and independent of the choice of $\lambda$. 
\end{itemize}
If $\lambda_0$ is a prime-to-$q$ place of $\overline{\bbQ}$ and $\sigma : \Gamma_K \to \hG(\overline{\bbQ}_{\lambda_0})$ is a continuous, almost everywhere unramified representation, we say that the compatible system $(S, (\rho_\lambda)_\lambda)$ contains $\sigma$ if there is an isomorphism $\sigma \cong \rho_{\lambda_0}$ (i.e.\ these two representations are $\hG(\overline{\bbQ}_{\lambda_0})$-conjugate). 
\end{definition}
The semisimple conjugacy class of an element $g \in \hG(\overline{\bbQ}_\lambda)$ is by definition the conjugacy class of the semisimple part in its Jordan decomposition $g = g^s g^u$. The condition of being defined over $\overline{\bbQ}$ in this definition can be rephrased as follows: for any $f \in \bbZ[\hG]^\hG$, the number $f(\rho_\lambda(\Frob_v)) \in \overline{\bbQ}_\lambda$ in fact lies in $\overline{\bbQ}$ and is independent of $\lambda$.

If $\hG \neq \GL_n$, then compatible systems of $\hG$-representations are not generally determined by individual members. For this reason, Definition \ref{def_compatible_sytems} should be regarded as provisional. Our main observation in this section is that we recover this uniqueness property if we restrict to compatible systems of $\hG$-representations where one (equivalently, all) members have Zariski dense image.
\begin{definition}
Let $(S, (\rho_\lambda)_\lambda)$ and $(T, (\sigma_\lambda)_\lambda)$ be compatible systems of $\hG$-representations.
\begin{enumerate}
\item We say that these systems are weakly equivalent if for all $v \not\in S \cup T$, the semisimple conjugacy classes of $\rho_\lambda(\Frob_v)$ and $\sigma_\lambda(\Frob_v)$ in $\hG(\overline{\bbQ})$ are the same.
\item We say that these systems are equivalent if for all prime-to-$q$ places $\lambda$ of $\overline{\bbQ}$, the representations $\rho_\lambda$, $\sigma_\lambda$ are $\hG(\overline{\bbQ}_\lambda)$-conjugate. 
\end{enumerate}
\end{definition}
It is clear from the definition that equivalence implies weak equivalence. Note that if $\rho$ is a given representation, then any two compatible systems containing $\rho$ are, by definition, weakly equivalent.
\begin{lemma}\label{lem_weak_equivalence_of_compatible_systems}
Let $(S, (\rho_\lambda)_\lambda)$ and $(T, (\sigma_\lambda)_\lambda)$ be compatible systems of Galois representations. Then the following conditions are equivalent:
\begin{enumerate}
\item  These systems are weakly equivalent.
\item For every representation $R : \hG_{\overline{\bbQ}} \to \GL(V)$ of $\hG$ over $\overline{\bbQ}$, and for every prime-to-$q$ place $\lambda$ of $\overline{\bbQ}$  we have $R \circ \rho_\lambda \cong R \circ \sigma_\lambda$.
\item For every representation $R : \hG_{\overline{\bbQ}} \to \GL(V)$ of $\hG$ over $\overline{\bbQ}$, and for some prime-to-$q$ place $\lambda_0$ of $\overline{\bbQ}$  we have $R \circ \rho_{\lambda_0} \cong R \circ \sigma_{\lambda_0}$.
\end{enumerate}
\end{lemma}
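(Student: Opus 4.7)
The plan is to prove the chain of implications (ii) $\Rightarrow$ (iii) $\Rightarrow$ (i) $\Rightarrow$ (ii). The first implication is tautological, so I would spend no time on it. The rest of the argument divides cleanly into a ``characters separate conjugacy classes'' step and a Chebotarev step.

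For (iii) $\Rightarrow$ (i), I would fix $v \not\in S \cup T$. Taking traces, the hypothesized isomorphism $R \circ \rho_{\lambda_0} \cong R \circ \sigma_{\lambda_0}$ gives $\tr R(\rho_{\lambda_0}(\Frob_v)) = \tr R(\sigma_{\lambda_0}(\Frob_v))$ for every algebraic representation $R$ of $\hG_{\overline{\bbQ}}$. Since the characters of algebraic representations of the reductive group $\hG$ in characteristic zero span $\overline{\bbQ}[\hG]^{\hG}$ as a vector space (and hence, after base change, span $\overline{\bbQ}_{\lambda_0}[\hG]^{\hG}$), this forces $f(\rho_{\lambda_0}(\Frob_v)) = f(\sigma_{\lambda_0}(\Frob_v))$ for all $f \in \overline{\bbQ}_{\lambda_0}[\hG]^{\hG}$. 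By Proposition~\ref{prop_git_over_a_field}(iv), two elements of $\hG(\overline{\bbQ}_{\lambda_0})$ have this property if and only if their semisimple parts are $\hG(\overline{\bbQ}_{\lambda_0})$-conjugate. The defining property of a compatible system then guarantees that these conjugacy classes are already defined over $\overline{\bbQ}$ and independent of the auxiliary prime, so we obtain the required weak equivalence.

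For (i) $\Rightarrow$ (ii), I would fix $\lambda$ and $R$ and try to deduce $R \circ \rho_\lambda \cong R \circ \sigma_\lambda$ from Corollary~\ref{cor_equivalence_of_representations_with_similar_trace}. This requires two ingredients: agreement of Frobenius traces and semisimplicity of both pushforwards. Agreement of traces is immediate, using weak equivalence together with the standard fact $\tr R(g) = \tr R(g^{\mathrm{ss}})$, which kicks the comparison over to the semisimple conjugacy classes controlled by (i). For semisimplicity, I would argue that since $\rho_\lambda$ is $\hG$-completely reducible (part of the definition of a compatible system), the Zariski closure $H_\lambda$ of $\rho_\lambda(\Gamma_{K,S})$ inside $\hG_{\overline{\bbQ}_\lambda}$ is reductive --- this is where the characteristic-zero setting is essential --- so $R|_{H_\lambda}$ is completely reducible by classical representation theory, and hence so is $R \circ \rho_\lambda$ as a representation of $\Gamma_{K,S \cup T}$. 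The same reasoning applies to $\sigma_\lambda$, and Corollary~\ref{cor_equivalence_of_representations_with_similar_trace} then concludes.

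The only non-routine step is the appeal to complete reducibility of the pushforwards, but even that boils down to standard results for reductive groups over fields of characteristic zero; I expect no serious obstacle. The Chebotarev input and the density of representation characters in $\overline{\bbQ}[\hG]^{\hG}$ are both well-established, so the entire proof should be short.
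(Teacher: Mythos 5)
Your proposal is correct and follows essentially the same route as the paper: the paper's (terse) proof likewise rests on the three ingredients you identify, namely that $\hG$-complete reducibility forces the Zariski closure of the image to have reductive identity component (so each $R\circ\rho_\lambda$ is semisimple and determined by its character), that the characters of irreducible representations span $\overline{\bbQ}[\hG]^{\hG}$, and Corollary \ref{cor_equivalence_of_representations_with_similar_trace}. Your write-up merely makes explicit the cycle of implications that the paper leaves to the reader.
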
 
\begin{proof}
The condition that each $\rho_\lambda$ is $\hG$-completely reducible is equivalent to asking that the Zariski closure of the image of each $\rho_\lambda$ has reductive connected component (see \cite[Proposition 4.2]{Ser05}). This implies that for any representation $R$, the representation $R \circ \rho_\lambda$ is semisimple, and is therefore determined up to isomorphism by its character. The lemma now follows immediately from Corollary \ref{cor_equivalence_of_representations_with_similar_trace} and the fact that the ring $\overline{\bbQ}[\hG]^{\hG}$ is generated by the characters of the irreducible representations of $\hG_{\overline{\bbQ}}$.
\end{proof}
\begin{proposition}\label{prop_zariski_dense_and_locally_conjugate_implies_globally_conjugate}
Let $\lambda$ be a prime-to-$q$ place of $\overline{\bbQ}$, and let $\rho, \rho' : \Gamma_K \to \hG(\overline{\bbQ}_\lambda)$ be continuous almost everywhere unramified homomorphisms. Suppose that $\rho$ has Zariski dense image and that for all but finitely many places $v$ of $K$, the semisimple conjugacy classes of $\rho(\Frob_v)$ and $\rho'(\Frob_v)$ are the same. Then $\rho'$ also has Zariski dense image and $\rho, \rho'$ are $\hG(\overline{\bbQ}_\lambda)$-conjugate.
\end{proposition}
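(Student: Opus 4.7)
The plan is to reduce to a conjugacy statement inside an ambient general linear group and then promote that to conjugacy inside $\hG$ by exploiting the Zariski density of $\rho$. First, I would reduce to the case where $\rho'$ is $\hG$-completely reducible by replacing it with its semisimplification $\rho'^{\text{ss}}$ (defined as in Proposition \ref{prop_contraction_to_levi_quotient}). Since homomorphisms of algebraic groups preserve the Jordan decomposition, one verifies that $\rho'(\gamma)^{\text{ss}}$ and $\rho'^{\text{ss}}(\gamma)^{\text{ss}}$ are $\hG$-conjugate for every $\gamma \in \Gamma_K$, so the hypothesis on semisimple Frobenius classes is preserved. Once $\rho$ and $\rho'^{\text{ss}}$ are shown to be $\hG$-conjugate, the Zariski density of $\rho$ transfers to $\rho'^{\text{ss}}$, which forces the Levi $L \supseteq \rho'^{\text{ss}}(\Gamma_K)$ appearing in the construction of $\rho'^{\text{ss}}$ to equal $\hG$, and hence $\rho' = \rho'^{\text{ss}}$.

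Next, I would fix a faithful algebraic representation $i : \hG \hookrightarrow \GL(V)$. Since both $\rho$ and $\rho'^{\text{ss}}$ are $\hG$-completely reducible and we work in characteristic zero, Serre's results (see \cite[Proposition 4.2]{Ser05}) imply the composites $i \circ \rho$ and $i \circ \rho'^{\text{ss}}$ are semisimple $\GL(V)$-representations of $\Gamma_K$. The Frobenius hypothesis combined with Chebotarev density (Corollary \ref{cor_equivalence_of_representations_with_similar_trace}) gives an equality of characters, so $i \circ \rho \cong i \circ \rho'^{\text{ss}}$; choose $g \in \GL(V)(\overline{\bbQ}_\lambda)$ with $g \cdot i(\rho(\gamma)) \cdot g^{-1} = i(\rho'^{\text{ss}}(\gamma))$ for all $\gamma$. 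Zariski density of $\rho(\Gamma_K)$ in $\hG$ then yields
\[ g \cdot i(\hG) \cdot g^{-1} \;=\; \overline{g \cdot i(\rho(\Gamma_K)) \cdot g^{-1}}^{\,\text{Zar}} \;=\; \overline{i(\rho'^{\text{ss}}(\Gamma_K))}^{\,\text{Zar}} \;\subseteq\; i(\hG), \]
with equality holding by dimension and connectedness. So $g$ normalises $i(\hG)$ in $\GL(V)$, and conjugation by $g$ induces an algebraic automorphism $\sigma_g \in \Aut(\hG)$ satisfying $\rho'^{\text{ss}} = \sigma_g \circ \rho$.

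The key step, and main obstacle, is showing $\sigma_g$ is inner. For any algebraic representation $R$ of $\hG$, the same Chebotarev argument (with $R$ in place of $i$) gives $\chi_R(\rho(\gamma)) = \chi_R(\rho'^{\text{ss}}(\gamma)) = \chi_R(\sigma_g(\rho(\gamma)))$ for every $\gamma \in \Gamma_K$. The regular functions $\chi_R$ and $\chi_R \circ \sigma_g$ on $\hG$ therefore coincide on the Zariski dense subset $\rho(\Gamma_K) \subset \hG(\overline{\bbQ}_\lambda)$, hence coincide identically; as two semisimple $\hG$-representations with the same character are isomorphic, this yields $R \cong R \circ \sigma_g$ as $\hG$-representations. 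Letting $R$ range over the irreducible representations shows that $\sigma_g$ fixes every isomorphism class of irreducible $\hG$-representation, and the faithful action of $\Out(\hG)$ on dominant weights then forces $\sigma_g$ to be inner. Lifting along the surjection $\hG(\overline{\bbQ}_\lambda) \to \hG^{\text{ad}}(\overline{\bbQ}_\lambda)$ produces $\tilde h \in \hG(\overline{\bbQ}_\lambda)$ with $\tilde h \rho(\gamma) \tilde h^{-1} = \rho'^{\text{ss}}(\gamma) = \rho'(\gamma)$ for all $\gamma$, which completes the proof.
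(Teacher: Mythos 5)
Your proof is correct and follows essentially the same route as the paper's: reduce to the case where $\rho'$ is $\hG$-completely reducible, use Chebotarev through a faithful representation to produce $g \in \GL(V)(\overline{\bbQ}_\lambda)$ normalising $i(\hG)$ (which also gives the Zariski density of $\rho'$), and then show the induced automorphism of $\hG$ is inner. The only minor divergence is in that last sub-step: the paper shows that an automorphism acting trivially on $\hG \dquot \hG$ preserves a pinning and is therefore detected by its action on $\hT/W$, whereas you invoke the faithfulness of $\Out(\hG)$ on isomorphism classes of irreducible representations (equivalently, on dominant weights, which span $X^\ast(\hT)\otimes\bbQ$); both arguments are standard and correct.
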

\begin{proof}
We can assume without loss of generality that $\rho'$ is $\hG$-completely reducible. Fix a finite set $S$ of places of $K$ such that both $\rho, \rho'$ factor through $\Gamma_{K, S}$. We first observe that for any $\gamma \in \Gamma_{K, S}$, the elements $\rho(\gamma)$ and $\rho'(\gamma)$ have $\hG(\overline{\bbQ}_\lambda)$-conjugate semisimple part. Indeed, it suffices to show that for all $f \in \bbZ[\hG]^{\hG}$, we have $f(\rho(\gamma)) = f(\rho'(\gamma))$. This follows from the corresponding statement for Frobenius elements, by Corollary \ref{cor_equivalence_of_representations_with_similar_trace}. 

Choose a faithful representation $R : \hG_{\overline{\bbQ}} \to \GL(V)$ of $\hG$, and let $\hG' = R(\hG_{\overline{\bbQ}})$. Then the image $R(\rho(\Gamma_{K, S}))$ is Zariski dense in $\hG'$, and $R \circ \rho, R \circ \rho'$ are isomorphic. For dimension reasons, we therefore find that $R(\rho'(\Gamma_{K, S}))$ is Zariski dense in $\hG'$, and hence that $\rho'$ has Zariski dense image in $\hG$.

We now show that the representations $R \circ \rho$ and $R \circ \rho'$ are $\hG'(\overline{\bbQ}_\lambda)$-conjugate, as homomorphisms into $\hG'(\overline{\bbQ}_\lambda)$. Let $g \in \GL(V)(\overline{\bbQ}_\lambda)$ be such that $g( R \circ \rho )g^{-1} = R \circ \rho'$. Then $g \in N_{\GL(V)}(\hG')(\overline{\bbQ}_\lambda)$, and we need to show that $g$ induces an inner automorphism of $\hG'$. Let $\theta : \hG' \to \hG'$ denote the automorphisms induced by conjugation by $g$. We know that if $\Frob_v \in \Gamma_{K, S}$ is a Frobenius element, then $\theta$ leaves invariant the semisimple conjugacy class of $\rho(\Frob_v)$. This implies that $\theta$ leaves invariant all semisimple conjugacy classes of $\hG'$. Indeed, these semisimple conjugacy classes are in bijection with points of the quotient $\hG' \dquot \hG'$, and the $\overline{\bbQ}_\lambda$-points corresponding to elements $\rho(\Frob_v)$ are Zariski dense (as follows from an $l$-adic variant of the Chebotarev density theorem).

We therefore need to show that if $\theta$ is an automorphism of a reductive group $H$ over an algebraically closed field of characteristic 0, and $\theta$ acts trivially on $H \dquot H$, then $\theta$ is an inner automorphism. After composing $\theta$ with an inner automorphism, we can assume that $\theta$ preserves a pinning $(T, B, \{ X_\alpha \}_{\alpha \in R})$, where $T \subset B \subset H$ are a maximal torus and Borel subgroup, $R \subset \Phi(H, T)$ is the corresponding set of simple roots, and $X_\alpha$ ($\alpha \in R$) is a basis of the $\alpha$-root space $\frh_\alpha \subset \frh$. Then $\theta$ corresponds to a symmetry of the Dynkin diagram of $H$; in particular, it is the trivial automorphism if and only if its restriction to $T$ is the identity (see \cite[Ch. VIII, \S 5, No. 2]{Bou05}). Let $W$ denote the Weyl group of $G$, let $\Lambda = \bbZ \Phi(H, T) \subset X^\ast(T)$ denote the root lattice, and let $\Aut(\Lambda)$ denote the group of automorphisms of $\Lambda$ which leave $\Phi$, $\Phi^\vee$ invariant. Then there is a split short exact sequence
\[ \xymatrix@1{ 1 \ar[r] & W \ar[r] & \Aut(\Lambda) \ar[r] & \Out(\Lambda) \ar[r] & 1, } \]
where the splitting is given by lifting a class of ``outer'' automorphisms to the unique one which leaves $R$ invariant. In particular, the image of $\theta$ in $\Aut(\Lambda)$ lies in the image of this splitting, by construction. It follows that the restriction of $\theta$ to $T$ is non-trivial if and only if its restriction to the quotient $T / W$ is non-trivial. Our assumption that $\theta$ acts trivially on the quotient $H \dquot H \cong T / W$ then implies that $\theta$ is indeed the identity automorphism. This completes the proof.
\end{proof}
\begin{theorem}\label{thm_existence_of_compatible_systems_containing_a_given_representation}
Suppose that $\hG$ is semisimple. Let $\lambda_0$ be a prime-to-$q$ place of $\overline{\bbQ}$, and let $\rho: \Gamma_{K, S}  \to \hG(\overline{\bbQ}_{\lambda_0})$ be a continuous homomorphism with Zariski dense image. Then:
\begin{enumerate}
\item There exists a compatible system $(S, (\rho_\lambda)_\lambda)$ containing $\rho$. Moreover, each constituent representation $\rho_\lambda$ has Zariski dense image.
\item Any other compatible system containing $\rho$ is equivalent to $(S, (\rho_\lambda)_\lambda)$.
\end{enumerate}
\end{theorem}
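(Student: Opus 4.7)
The plan is to handle the two parts separately, with uniqueness (ii) reduced to (i) via Proposition \ref{prop_zariski_dense_and_locally_conjugate_implies_globally_conjugate}.

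For existence, the plan is to leverage Chin's theorem \cite{Chi04} for $\GL_n$ (which itself rests on L.~Lafforgue's work \cite{Laf02}) by means of a Tannakian descent. First, fix a faithful algebraic representation $R : \hG \hookrightarrow \GL(V)$. Since $\rho$ has Zariski dense image in the reductive group $\hG$, for any algebraic representation $\sigma$ of $\hG$ over $\overline{\bbQ}$ the composition $\sigma \circ \rho : \Gamma_{K, S} \to \GL(\sigma)(\overline{\bbQ}_{\lambda_0})$ is semisimple, and Chin's theorem produces a compatible system $(\tau^\sigma_\lambda)_\lambda$ containing it. By the uniqueness clause in the $\GL_n$-setting (semisimple representations with matching Frobenius traces are isomorphic, by Corollary \ref{cor_equivalence_of_representations_with_similar_trace}), the assignment $\sigma \mapsto \tau^\sigma_\lambda$ respects direct sums, tensor products, and duals; for each $\lambda$ this produces a symmetric monoidal exact $\overline{\bbQ}$-linear functor $\omega_\lambda : \mathrm{Rep}_{\overline{\bbQ}}(\hG) \to \mathrm{Rep}_{\overline{\bbQ}_\lambda}(\Gamma_{K, S})$ whose composition with the forgetful functor to $\overline{\bbQ}_\lambda$-vector spaces is naturally isomorphic, after the base change $\overline{\bbQ} \to \overline{\bbQ}_\lambda$, to the standard fiber functor on $\mathrm{Rep}(\hG)$. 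Tannakian duality for the split reductive group $\hG$ then yields a continuous homomorphism $\rho_\lambda : \Gamma_{K, S} \to \hG(\overline{\bbQ}_\lambda)$, well defined up to $\hG(\overline{\bbQ}_\lambda)$-conjugation; continuity follows from the continuity of each $\tau^\sigma_\lambda$ together with Proposition \ref{prop_continuous_pseudocharacters_biject_with_continuous_representations}. By construction the $\hG$-semisimple conjugacy class of $\rho_\lambda(\Frob_v)$ has the same image in $\hG \dquot \hG$ as $\rho(\Frob_v)$ and is independent of $\lambda$, so $(S, (\rho_\lambda)_\lambda)$ is the desired compatible system.

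Zariski density of each $\rho_\lambda$ is then checked as follows. Let $H_\lambda \subset \hG_{\overline{\bbQ}_\lambda}$ be the Zariski closure of $\rho_\lambda(\Gamma_{K, S})$; it is reductive because $\rho_\lambda$ is $\hG$-completely reducible. For any non-trivial irreducible representation $\sigma$ of $\hG_{\overline{\bbQ}}$, the dimension of $H^0(\Gamma_{K, S}, \sigma \circ \rho_\lambda)$ equals $\dim H^0(\Gamma_{K, S}, \sigma \circ \rho)$ by matching of Frobenius traces and Chebotarev density, and the latter vanishes because $\rho$ has Zariski dense image in $\hG$. Thus $\sigma^{H_\lambda} = 0$ for every non-trivial irreducible $\sigma$; since $\hG/H_\lambda$ is affine for $H_\lambda$ reductive, the $\hG$-representation $\overline{\bbQ}_\lambda[\hG/H_\lambda]$ has trivial isotypic components only if $H_\lambda = \hG_{\overline{\bbQ}_\lambda}$, as desired. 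For uniqueness (ii), any other compatible system $(T, (\sigma_\lambda)_\lambda)$ containing $\rho$ is weakly equivalent to $(S, (\rho_\lambda)_\lambda)$ (both contain $\rho$ at $\lambda_0$, so their Frobenius invariants agree outside $S \cup T$); combining the Zariski density of $\rho_\lambda$ established in (i) with Proposition \ref{prop_zariski_dense_and_locally_conjugate_implies_globally_conjugate} applied over $\overline{\bbQ}_\lambda$ gives $\hG(\overline{\bbQ}_\lambda)$-conjugacy of $\rho_\lambda$ and $\sigma_\lambda$ for every $\lambda$, which is the definition of equivalence. The main obstacle is the Tannakian descent, and especially arranging the functoriality of Chin's construction precisely enough that $\omega_\lambda$ is an honest symmetric monoidal fiber functor on the whole of $\mathrm{Rep}_{\overline{\bbQ}}(\hG)$ rather than only on a generating set of objects; once this functoriality is in place, the rest of the argument is routine.
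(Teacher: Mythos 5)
Your overall strategy (L.~Lafforgue/Chin for existence, then Proposition \ref{prop_zariski_dense_and_locally_conjugate_implies_globally_conjugate} for uniqueness) matches the paper's, and your uniqueness argument and your Zariski-density argument (vanishing of invariants in every non-trivial irreducible of $\hG$, detected via Frobenius traces and Chebotarev, forcing $\hG/H_\lambda$ to be a point) are both correct; the density argument is in fact a pleasant alternative to the paper's, which gets density for free because Chin's isomorphisms $\phi_\lambda$ identify each monodromy group $G_\lambda$ with a common form $G_0 \cong \hG_{\overline{\bbQ}}$.

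The existence step, however, has a genuine gap exactly where you flag ``the main obstacle.'' Applying L.~Lafforgue's theorem (not Chin's --- Chin's input is the independence of monodromy, not the existence of the system) separately to each $\sigma \circ \rho$ only produces, for each $\sigma$ and $\lambda$, an \emph{isomorphism class} of semisimple $\Gamma_{K,S}$-representations. An assignment on isomorphism classes of objects is not a functor: to run Tannakian reconstruction you must realize $\sigma \mapsto \tau^\sigma_\lambda$ on morphisms, coherently with $\oplus$, $\otimes$ and duals, and you must produce an actual isomorphism of fiber functors, not merely an equality of characters. Rigidifying these choices is precisely the content of \cite[Theorem 1.4, Theorem 6.12]{Chi04}, which the paper quotes in the following packaged form: given a compatible system of $\GL_n$-representations, pure of weight $0$, with connected monodromy group $G_{\lambda_0}$ at one place, there exist a reductive group $G_0$ over $\overline{\bbQ}$ and isomorphisms $\phi_\lambda : G_\lambda \cong G_{0,\overline{\bbQ}_\lambda}$ such that pushforward along any irreducible representation of $G_0$ again yields a compatible system. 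One then applies this to the single system through $R \circ \rho$ furnished by \cite[Th\'eor\`eme VII.6]{Laf02} (checking purity of weight $0$ via the triviality of the determinants of the irreducible constituents, which uses semisimplicity of $\hG$ and Zariski density of $\rho$ --- a point your write-up omits), and defines $\rho_\lambda = k_0^{-1}\circ \phi_\lambda \circ \alpha_\lambda$, where $k_0 : \hG_{\overline{\bbQ}} \cong G_0$ is chosen in the inner class of $\phi_{\lambda_0}\circ j_0$ so that $\rho_{\lambda_0}$ is genuinely $\hG(\overline{\bbQ}_{\lambda_0})$-conjugate to $\rho$ rather than an outer twist of it (your fiber-functor normalization faces the same inner/outer issue). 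In short: do not attempt to rebuild the tensor functor by hand; cite Chin's theorem in the form above and the argument closes.
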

\begin{proof}
We first show the existence of the compatible system. We apply \cite[Theorem 1.4]{Chi04}, which along with the other results of that paper (in particular, \cite[Theorem 6.12]{Chi04}) says the following:
\begin{itemize}
\item Let $(S, (\alpha_\lambda)_\lambda)$ be a compatible system of $\GL_n$-representations, each pure of weight 0. For each prime-to-$q$ place $\lambda$ of $\overline{\bbQ}$, let $G_\lambda$ denote the Zariski closure of the image of $\alpha_\lambda$. Suppose that $G_{\lambda_0}$ is connected.

Then there exists a reductive group $G_0$ over $\overline{\bbQ}$ and for each prime-to-$q$ place $\lambda$ an isomorphism $\phi_\lambda : G_\lambda \cong G_{0, \overline{\bbQ}_\lambda}$, with the following property: for any irreducible representation $\theta : G_0 \to \GL(V)$, the system $(S, (\theta_{\overline{\bbQ}_\lambda} \circ \phi_\lambda \circ \alpha_\lambda)_\lambda)$ is a compatible system of $\GL(V)$-representations.
\end{itemize}
To apply this, let $R : \hG_{\overline{\bbQ}} \to \GL(V)$ be a faithful representation, and let $\alpha_{\lambda_0} = R \circ \rho$. Then $\alpha_{\lambda_0}$ is semisimple, and we have a tautological isomorphism $j_0 : \hG_{\overline{\bbQ}_\lambda} \cong G_{\lambda_0}$. Each irreducible constituent of $\alpha_{\lambda_0}$ has trivial determinant (because $\rho$ has Zariski dense image and $\hG$ is semisimple, hence has no non-trivial characters). By \cite[Th\'eor\`eme VII.6]{Laf02}, $\alpha_{\lambda_0}$ lives in a compatible system $(S, (\alpha_\lambda)_\lambda)$ of $\GL_n$-representations, each pure of weight 0. Let us apply Chin's results, and define $\sigma_\lambda = \phi_\lambda \circ \alpha_\lambda$. We claim that $(S, (\sigma_\lambda)_\lambda)$ is a compatible system of $G_0$-representations. 

Since the ring of invariant functions on $G_0$ is generated by characters, it is enough to show that for any irreducible representation $\theta$ of $G_0$, and for any place $v \not\in S$ of $K$, the number $\tr \theta_{\overline{\bbQ}_\lambda} ( \sigma_\lambda ( \Frob_v) )$ lies in $\overline{\bbQ}$ and is independent of $\lambda$. However, this is exactly the statement that the representations $\theta_{\overline{\bbQ}_\lambda} \circ \sigma_\lambda = \theta_{\overline{\bbQ}_\lambda} \circ \phi_\lambda \circ \alpha_\lambda$ lie in a compatible system.

To obtain a compatible system of $\hG$-representations, we choose an isomorphism $k_0 : \hG_{\overline{\bbQ}} \cong G_0$ which is in the inner class of $\phi_{\lambda_0} \circ j_0$, and define $\rho_\lambda = k_{0, \overline{\bbQ}_\lambda}^{-1} \circ \sigma_\lambda$. Then $(S, (\rho_\lambda)_{\lambda})$ is a compatible system of $\hG$-representations, and $\rho_{\lambda_0}$ is $\hG(\overline{\bbQ}_{\lambda_0})$-conjugate to $\rho$. 

This completes the construction of the compatible system containing $\rho$, and shows that each constituent has Zariski dense image. We now show that any other compatible system containing $\rho$ is equivalent to $(S, (\rho_\lambda)_\lambda)$. If $(T, (\rho'_\lambda)_\lambda)$ is such a compatible system, then for every prime-to-$q$ place $\lambda$ of $\overline{\bbQ}$, the representations $\rho_\lambda$ and $\rho'_\lambda$ are weakly equivalent, by Lemma \ref{lem_weak_equivalence_of_compatible_systems}. Moreover, $\rho_\lambda$ has Zariski dense image. By Proposition \ref{prop_zariski_dense_and_locally_conjugate_implies_globally_conjugate}, $\rho_\lambda$ and $\rho'_\lambda$ are $\hG(\overline{\bbQ}_\lambda)$-conjugate. This implies that the two compatible systems are in fact equivalent.
\end{proof}
The following proposition is an application of results of Larsen \cite{Lar95}. 
\begin{proposition}\label{prop_big_image_in_compatible_systems}
Suppose that $\hG$ is semisimple and simply connected. Let $(S, (\rho_\lambda)_\lambda)$ be a compatible system of $\hG$-representations such that some (equivalently, every) representation $\rho_\lambda$ has Zariski dense image in $\hG(\overline{\bbQ}_\lambda)$. Then, after passing to an equivalent compatible system, we can find a number field $E \subset \overline{\bbQ}$ with the following properties:
\begin{enumerate}
\item For every prime-to-$q$ place $\lambda$ of $\overline{\bbQ}$, the image of $\rho_\lambda$ is contained inside $\hG(E_\lambda)$.
\item There exists a set $\cL'$ of rational primes of Dirichlet density 0 with the following property: if $l$ splits in $E / \bbQ$ and $l \not\in \cL'$, and $\lambda$ is a place of $\overline{\bbQ}$ above $l$, then $\rho_\lambda$ has image equal to $\hG(\bbZ_l)$.
\end{enumerate}
\end{proposition}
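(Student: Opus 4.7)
The plan is to import Larsen's strategy from \cite{Lar95}, originally developed for compatible systems over number fields, to the function field setting of this paper.

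First I would descend the compatible system to a number field $E$. Since $\bbZ[\hG]^{\hG}$ is a finitely generated $\bbZ$-algebra by \cite[Theorem 3]{Ses77}, and the Frobenius conjugacy classes $\rho_\lambda(\Frob_v)$ are defined over $\overline{\bbQ}$ and independent of $\lambda$, the associated pseudocharacters $\tr \rho_\lambda$ all agree and take values in a single finite extension $\Theta: \Gamma_{K,S}^{\ast} \to E$. Since $\rho_\lambda$ has Zariski dense image, its centralizer in $\hG^{\text{ad}}$ is trivial, so by Proposition \ref{prop_zariski_dense_and_locally_conjugate_implies_globally_conjugate} the representation $\rho_\lambda$ is determined up to $\hG(\overline{\bbQ}_\lambda)$-conjugacy by $\Theta$. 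A Galois-twisting argument, together with the observation that the obstruction to descent lives in a finite cohomology group (pinned down by the finite outer automorphism group of $\hG$), allows us, after enlarging $E$ once, to conjugate each $\rho_\lambda$ into $\hG(E_\lambda)$; the Bruhat--Tits argument from the proof of Theorem \ref{thm_reduction_modulo_l} then places the image in $\hG(\cO_{E_\lambda})$. This yields the first conclusion.

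For the second conclusion, I restrict attention to rational primes $l$ that split completely in $E$, so that $E_\lambda = \bbQ_l$ and $\cO_{E_\lambda} = \bbZ_l$. The argument splits in two: (a) show that for $l$ outside an exceptional set $\cL'$ of density zero, the residual image $\overline{\rho}_\lambda(\Gamma_{K,S})$ equals $\hG(\bbF_l)$; (b) lift this to $\rho_\lambda(\Gamma_{K,S}) = \hG(\bbZ_l)$. Step (b) is a standard Frattini-type argument, valid once $l$ is a very good characteristic: the pro-$l$ kernel of $\hG(\bbZ_l) \twoheadrightarrow \hG(\bbF_l)$ has Frattini quotient $\widehat{\frg}_{\bbF_l}$ on which $\overline{\rho}_\lambda(\Gamma_{K,S}) = \hG(\bbF_l)$ acts via the adjoint representation, which is irreducible under our hypotheses; hence any closed subgroup of $\hG(\bbZ_l)$ surjecting onto $\hG(\bbF_l)$ must equal $\hG(\bbZ_l)$. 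Step (a) is the content of Larsen's theorem: using Chebotarev (Theorem \ref{thm_chebotarev_density}) together with the compatibility and Zariski density of the system, one produces inside $\overline{\rho}_\lambda(\Gamma_{K,S})$ a regular semisimple element whose eigenvalues generate a ``large'' extension of $\bbF_l$, together with elements lifting every Weyl conjugacy class; combined with the classification of maximal subgroups of quasi-simple finite groups of Lie type, this forces the residual image to be all of $\hG(\bbF_l)$. The exceptional set $\cL'$ is defined by the failure of this genericity, and its density is bounded by Larsen's counting argument on characteristic polynomials of Frobenius.

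The main obstacle is verifying that Larsen's counting argument transports cleanly to the function field setting. The Chebotarev error term supplied by Theorem \ref{thm_chebotarev_density} is $O(q^{-n/2})$, which is actually sharper than what is available over number fields, so this is not a serious impediment; rather, the delicate part is checking that the ``bad'' locus of characteristic polynomials of Frobenius lying in proper subtori—whose Dirichlet density determines $\cL'$—has the correct behaviour when Frobenius traces are indexed by the closed points of $X$ rather than by rational primes of a number field. A secondary technical point in Step 1 is coordinating the Galois descent for each $\rho_\lambda$ so that the resulting collection still forms a compatible system; this is handled by working with the single pseudocharacter $\Theta$ from the outset and invoking Proposition \ref{prop_zariski_dense_and_locally_conjugate_implies_globally_conjugate} to recover each $\rho_\lambda$ uniquely up to conjugation.
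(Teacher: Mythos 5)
Your overall architecture matches the paper's: descend the system to a number field $E$ using the $\lambda$-independence of Frobenius data, then invoke Larsen-type maximality for split primes outside a density-zero set. However, there is a genuine gap in your Step 1. You assert that ``the obstruction to descent lives in a finite cohomology group (pinned down by the finite outer automorphism group of $\hG$).'' This is not correct: since $\rho_\lambda$ has Zariski dense image, the element $g$ with $g\rho_\lambda^\sigma g^{-1}=\rho_\lambda$ (for $\sigma\in\Gamma_{E_\lambda}$) is unique in $\hG^{\text{ad}}(\overline{\bbQ}_\lambda)$, so the descent obstruction is a priori a class in $H^1(E_\lambda,\hG^{\text{ad}})$ classifying \emph{inner} twists; it has nothing to do with $\Out(\hG)$, and for a local field this set is nontrivial in general. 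The paper's proof works around this with a specific device you are missing: using \cite[Lemma 6.4]{Chi04} one finds a place $v_0$ at which $\rho_\lambda(\Frob_{v_0})$ is regular semisimple and generates a maximal torus, and after enlarging $E$ and passing to an equivalent system one arranges that $\rho_\lambda(\Frob_{w_0})\in\hG(E)$ is independent of $\lambda$ and generates a \emph{split} maximal torus $T$ of $\hG_E$. The cocycle $\sigma\mapsto g$ then necessarily takes values in $Z_{\hG^{\text{ad}}}(\rho_\lambda(\Frob_{w_0}))=T$, and $H^1(E_\lambda,T(\overline{\bbQ}_\lambda))=0$ by Hilbert 90 for a split torus. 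Without fixing such a rational regular semisimple element first, the twisting argument does not close. (Also note that conclusion (i) only requires the image to lie in $\hG(E_\lambda)$; the Bruhat--Tits integrality step from Theorem \ref{thm_reduction_modulo_l} is not needed there.)

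Two secondary points on Step 2. Your Frattini argument in (b) is incomplete as stated: irreducibility of $\widehat{\frg}_{\bbF_l}$ alone does not preclude a closed subgroup surjecting onto $\hG(\bbF_l)$ that meets the congruence kernel trivially; one also needs $H^1(\hG(\bbF_l),\widehat{\frg}_{\bbF_l})=0$ and the non-splitness of $\hG(\bbZ/l^2\bbZ)\to\hG(\bbF_l)$ (this is exactly the content of Proposition \ref{prop_large_residual_image_implies_zariski_dense}). For (a), rather than re-deriving Larsen's counting argument over function fields, the paper simply cites \cite[Proposition 7.1]{Sno09} (deduced from \cite{Lar95}, which covers the function field case) to conclude that $\rho_\lambda(\Delta)$ is hyperspecial for some open normal $\Delta\subset\Gamma_K$, and then upgrades to $\rho_\lambda(\Gamma_K)=\hG(\bbZ_l)$ via the characterization of hyperspecial subgroups as compact subgroups of maximal volume; your plan to reprove this from the classification of maximal subgroups of finite groups of Lie type is a substantial project you have only gestured at, and is unnecessary given the available references.
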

\begin{proof}
Let $R : \hG_{\overline{\bbQ}} \to \GL(V)$ be a faithful representation. Then (\cite[Theorem 4.6]{Chi04}) we can find a number field $E \subset \overline{\bbQ}$ such that for every prime-to-$q$ place $\lambda$ of $\overline{\bbQ}$, and for every place $v \not\in S$ of $K$, the characteristic polynomial of $(R \circ \rho_\lambda)(\Frob_v)$ has coefficients in $E \subset E_\lambda$ and is independent of $\lambda$. By \cite[Lemma 6.4]{Chi04}, we can find a place $v_0 \not\in S$ of $K$ such that for every prime-to-$q$ place $\lambda$ of $\overline{\bbQ}$, the Zariski closure of the group generated by $(R \circ \rho_\lambda)(\Frob_{v_0})^\text{s}$ is a maximal torus of $R(\hG_{\overline{\bbQ}_\lambda})$; in particular, it is connected, and $\rho_\lambda(\Frob_{v_0})$ is a regular semisimple element of $\hG(\overline{\bbQ}_\lambda)$.

After possibly enlarging $E$, we can assume that for every place $v \not\in S$ of $K$, the conjugacy class of $\rho_\lambda(\Frob_v)$ in $\hG(\overline{\bbQ}_\lambda)$ is defined over $E$ and independent of $\lambda$. We can moreover assume that the characteristic polynomial of each $(R \circ \rho_\lambda)(\Frob_{v_0})$ has all of its roots in $E$, and that the conjugacy class of $\rho_\lambda(\Frob_{v_0})$ has a representative in $\hG(E)$. Choose a place $w_0$ of $K_S$ lying above $v_0$. After passing to an equivalent compatible system, we can suppose that $\rho_\lambda(\Frob_{w_0}) \in \hG(E)$ is independent of $\lambda$; the Zariski closed subgroup it generates is a split maximal torus of $\hG_E$.

 Let $\sigma \in \Gamma_{E_\lambda} = \Gal(\overline{\bbQ}_\lambda / E_\lambda)$. The representation $\rho_\lambda^\sigma$ is equivalent to $\rho_\lambda$, by Proposition \ref{prop_zariski_dense_and_locally_conjugate_implies_globally_conjugate}, so there exists a (necessarily unique) $g \in \hG^\text{ad}(\overline{\bbQ}_\lambda)$ such that $g \rho_\lambda^\sigma g^{-1} = \rho_\lambda$. In particular, we have $g \rho_\lambda(\Frob_{w_0}) g^{-1} = \rho_\lambda(\Frob_{w_0})$, hence $g$ lies in the centralizer $Z_{\hG^\text{ad}}(\rho_\lambda(\Frob_{w_0})) = T$, say. We have thus defined a 1-cocycle $\sigma \mapsto g$ with values in $T(\overline{\bbQ}_\lambda)$. (We note that this 1-cocycle is continuous when $T(\overline{\bbQ}_\lambda)$ is endowed with the discrete topology, since $\rho_\lambda$ can be defined over a finite extension of $\bbQ_l$, by the second part of Theorem \ref{thm_reduction_modulo_l}.) Since $T$ is a split torus over $E$, the group $H^1(E_\lambda, T(\overline{\bbQ}_\lambda))$ is trivial, showing that we can conjugate $\rho_\lambda$ by an element of $T(\overline{\bbQ}_\lambda)$ to force it to take values in $\hG(E_\lambda)$, as desired. This completes the proof of the first part of the proposition.

For the second part, we can assume that $E$ is Galois over $\bbQ$, and that the faithful representation $R : \hG_E \to \GL(V)$ is defined over $E$. We will apply \cite[Proposition 7.1]{Sno09} to the compatible system $(S, (R \circ \rho_\lambda)_\lambda)$. This result is deduced from the main theorem of \cite{Lar95}, and implies the following: there exists an open normal subgroup $\Delta \subset \Gamma_K$ and a set $\cL'$ of rational primes of Dirichlet density $0$, such that if $\lambda$ is a place of $\overline{\bbQ}$ above a prime $l$ split in $E$, and $l \not\in \cL'$, then $\rho_\lambda(\Delta)$ is a hyperspecial subgroup of $\hG(E_\lambda) = \hG(\bbQ_l)$. All hyperspecial subgroups of $\hG(\bbQ_l)$ are conjugate under the action of $\hG^\text{ad}(\bbQ_l)$, so we can further assume (after replacing $(S, (\rho_\lambda)_\lambda)$ by an equivalent compatible system) that for any such place $\lambda$, $\rho_\lambda(\Delta)$ actually equals $\hG(\bbZ_l)$. 

Then $\rho_\lambda(\Gamma_K)$ is a compact subgroup of $\hG(\bbQ_l)$ which contains the hyperspecial subgroup $\hG(\bbZ_l)$ as a subgroup of finite index. Since hyperspecial subgroups can be characterized as those compact subgroups of $\hG(\bbQ_l)$ of maximal volume, it follows that we in fact have $\rho_\lambda(\Gamma_K) = \hG(\bbZ_l)$. This completes the proof of the proposition.
\end{proof}
The following proposition will later be combined with the results of \S \ref{sec_galois_representations_and_their_deformation_theory} and \S \ref{sec_compatible_systems_of_galois_representations} to produce compatible systems of representations with Zariski dense image (see Proposition \ref{prop_construction_of_unramified_and_dense_compatible_system} below).
\begin{proposition}\label{prop_large_residual_image_implies_zariski_dense}
Suppose that $\hG$ is simple and simply connected, and let $l$ be a very good characteristic for $\hG$. Let $E \subset \overline{\bbQ}_l$ be a coefficient field, and let $H \subset \hG(\cO)$ be a closed subgroup such that its image in $\hG(k)$ contains $\hG(\bbF_l)$. Suppose that $l > 2 \dim_{\bbF_l} \widehat{\frg}_{\bbF_l}$. Then $H$ contains a conjugate of $\hG(\bbZ_l)$. In particular, $H$ is Zariski dense in $\hG_{\overline{\bbQ}_l}$.
\end{proposition}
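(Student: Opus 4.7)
The strategy is to use a log/exp dictionary on the pro-$l$ group $\hG(\cO)^{(1)} := \ker(\hG(\cO) \to \hG(k))$ together with the absolute irreducibility of the adjoint module $\widehat{\frg}_{\bbF_l}$ for $\hG(\bbF_l)$, then bootstrap to recover a conjugate of $\hG(\bbZ_l)$ inside $H$. Under the hypotheses ($l$ a very good characteristic for $\hG$ and $l > 2\dim \widehat{\frg}_{\bbF_l}$), the Baker--Campbell--Hausdorff series converges on $\varpi\widehat{\frg}_\cO$, giving an $\hG(\cO)$-equivariant bijection $\exp\colon \varpi\widehat{\frg}_\cO \xrightarrow{\sim} \hG(\cO)^{(1)}$ that converts the Lie bracket on the source into the commutator on the target. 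Set $N := H \cap \hG(\cO)^{(1)}$ and $\frn := \log N$, a closed $\bbZ_l$-Lie subalgebra of $\varpi\widehat{\frg}_\cO$; since $H$ normalises $N$ and the image of $H$ in $\hG(k)$ contains $\hG(\bbF_l)$, $\frn$ is stable under the adjoint action of $\hG(\bbF_l)$.

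The key lemma is that $\frn \neq 0$. If $N$ were trivial then $H$ would embed into $\hG(k)$, forcing $\hG(\bbF_l)$ to sit as a finite subgroup inside $\hG(\cO)$; but the regular unipotent elements of $\hG(\bbF_l)$ have order $l$, while any element of $\hG(\cO)$ of order $l$ is semisimple (torsion in characteristic zero), which in very good characteristic reduces to a semisimple element of $\hG(k)$ by compatibility of Jordan decomposition with reduction. This contradicts the regular unipotent's nilpotence, so $\frn \neq 0$.

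The mod-$\varpi$ reduction $\overline{\frn} \subset \widehat{\frg}_k$ is then a nonzero $\hG(\bbF_l)$-stable $\bbF_l$-subspace. In very good characteristic with $l > 2\dim \widehat{\frg}_{\bbF_l}$, $\widehat{\frg}_{\bbF_l}$ is an absolutely simple $\hG(\bbF_l)$-module (see \cite{Ser05}), whence by Schur $\overline{\frn} = \widehat{\frg}_{\bbF_l} \otimes_{\bbF_l} V$ for some nonzero $\bbF_l$-subspace $V \subset k$. After scaling (which may be absorbed into an $\hG(\cO)$-conjugation of $H$) we may assume $V \ni 1$, so $\overline{\frn} \supset \widehat{\frg}_{\bbF_l}$. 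Iterating this analysis on the higher graded pieces $\varpi^n\widehat{\frg}_\cO/\varpi^{n+1}\widehat{\frg}_\cO$, together with Nakayama's lemma applied to the closed $\bbZ_l$-module $\frn$, yields $\frn \supset \varpi\widehat{\frg}_{\bbZ_l}$, hence $N \supset \hG(\bbZ_l)^{(1)}$. Since $H$ surjects onto $\hG(\bbF_l) = \hG(\bbZ_l)/\hG(\bbZ_l)^{(1)}$, this gives $\hG(\bbZ_l) \subset H$ after conjugation, and Zariski density of $H$ in $\hG_{\overline{\bbQ}_l}$ is immediate since $\hG(\bbZ_l)$ is Zariski dense in the connected semisimple group $\hG_{\overline{\bbQ}_l}$. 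The main obstacle is the bootstrap step: extracting the correct $\bbZ_l$-integral structure from each graded quotient and verifying the Nakayama induction passes through every congruence level, for which absolute irreducibility (hence the bound on $l$) is essential.
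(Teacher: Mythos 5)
There is a genuine gap, and it sits exactly where you locate the ``key lemma.'' You claim that any element of $\hG(\cO)$ of order $l$ reduces to a semisimple element of $\hG(k)$. The truth is the opposite: an order-$l$ element $g$ of $\hG(\cO)$ has, in any faithful representation, eigenvalues that are $l$-th roots of unity, and since $\zeta_l \equiv 1$ modulo the maximal ideal of $\overline{\bbZ}_l$, the reduction $\overline{g}$ is \emph{unipotent}. Worse, when $E/\bbQ_l$ is ramified a nontrivial unipotent of $\hG(\bbF_l)$ really can lift to an order-$l$ element of $\hG(\cO)$ (already in $\GL_2$: $\left(\begin{smallmatrix} \zeta_l & 1 \\ 0 & 1\end{smallmatrix}\right)$ has order $l$ and reduces to $\left(\begin{smallmatrix} 1 & 1 \\ 0 & 1\end{smallmatrix}\right)$ over $\bbZ_l[\zeta_l]$). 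So your argument that $N = H \cap \ker(\hG(\cO)\to\hG(k))$ is nontrivial collapses. The correct statement, and the one the paper proves, is the much more specific assertion that the reduction map $\hG(\bbZ/l^2\bbZ)\to\hG(\bbF_l)$ is non-split, established by an explicit computation in a faithful representation using $l > 2n$: a preimage $\widetilde{\gamma} = 1+u+lv$ of an order-$l$ unipotent satisfies $\widetilde{\gamma}^l = (1+u)^l \neq 1$ in $\GL_n(\bbZ/l^2\bbZ)$. Note that this lives over the \emph{unramified} ring $\bbZ/l^2\bbZ$, not over $\cO/\varpi^2$; conflating the two is where your argument goes wrong.

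Beyond this, several steps of the bootstrap are unjustified as written. The exponential need not converge on $\varpi\widehat{\frg}_\cO$ when the ramification index of $E/\bbQ_l$ is $\geq l-1$, so the log/exp dictionary is not available in the generality you need. The reduction ``after scaling, absorbed into an $\hG(\cO)$-conjugation, we may assume $1 \in V$'' is not legitimate: for semisimple $\hG$ the adjoint action contains no nontrivial scalars, so conjugation cannot rescale the line $V \subset k$; extracting the correct $\bbZ_l$-form from a submodule $\widehat{\frg}_{\bbF_l}\otimes V$ is precisely the delicate point. Finally, your sketch never uses the vanishing of $H^1(\hG(\bbF_l), \widehat{\frg}_{\bbF_l})$ (V\"olklein's theorem), which is one of the three inputs the paper verifies and which is needed to align the group-theoretic sections across successive congruence levels in the Nakayama-style induction you gesture at. The paper's route is to reduce to a cited lemma whose hypotheses are exactly: $H^1$-vanishing, absolute irreducibility of $\widehat{\frg}_{\bbF_l}$ in very good characteristic, and non-splitness of $\hG(\bbZ/l^2\bbZ)\to\hG(\bbF_l)$; your proposal supplies only the second of these correctly.
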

\begin{proof}
 Results of this type will be studied exhaustively in \cite{Boc16}. After shrinking $H$, we can assume that the residue image of $H$ equals $\hG(\bbF_l)$. By the argument of \cite[Lemma 4.2]{Kha16}, it suffices to establish the following claims:
\begin{enumerate}
\item The group $H^1(\hG(\bbF_l), \widehat{\frg}_{\bbF_l})$ is zero.
\item The group homomorphism $\hG(\bbZ / l^2 \bbZ) \to \hG(\bbF_l)$ is non-split.
\item The module $\widehat{\frg}_{\bbF_l}$ is an absolutely irreducible $\bbF_l[\hG(\bbF_l)]$-module.
\end{enumerate}
We observe that our assumption $l > 2 \dim_{\bbF_l} \widehat{\frg}_{\bbF_l}$ implies in particular that $l > 5$. (We can assume that $\hG$ is not the trivial group.) Claim (i) therefore follows from the main theorem of \cite{Vol89}. Claim (iii) follows because the characteristic $l$ is very good for $\hG$ (see the table in \S \ref{sec_invariants_over_a_field}). Claim (ii) is implied by the following:
\begin{enumerate}
\setcounter{enumi}{3}
\item Let $\gamma \in \hG(\bbF_l)$ be an element of exact order $l$. Then no pre-image of $\gamma$ in $\hG(\bbZ / l^2 \bbZ)$ has exact order $l$.
\end{enumerate}
To show (iv), we note that the adjoint representation $\hG \to \GL(\widehat{\frg})$ has kernel $Z_{\hG}$, which has order prime to $l$ (because we work in very good characteristic). It is therefore enough to show that if $l > 2n$ and $\gamma \in \GL_n(\bbF_l)$ has exact order $l$, then no pre-image of $\gamma$ in $\GL_n(\bbZ / l^2 \bbZ)$ has order $l$. (We are applying this with $n = \dim_{\bbF_l} \widehat{\frg}_{\bbF_l}$.)

We show this by direct computation. After conjugation, we can assume that $\gamma = 1 + \overline{u}$, where $\overline{u}$ is an upper-triangular matrix in $M_n(\bbF_l)$ with 0's on the diagonal. Any pre-image of $\gamma$ in $\GL_n(\bbZ / l^2 \bbZ)$ has the form $\widetilde{\gamma} = 1 + u+ l v$, where $u$ is an upper-triangular matrix with 0's on the diagonal in $M_n(\bbZ / l^2 \bbZ)$, and $v \in M_n(\bbZ / l^2 \bbZ)$ is arbitrary. Moreover, $u \not\equiv 0 \text{ mod }l \bbZ$ (since $\gamma \neq 1$, by assumption). We calculate
\[ \widetilde{\gamma}^l = 1 + l (u + lv) + \frac{l(l-1)}{2}(u + lv)^2 + \dots + l (u + lv)^{l-1} + (u + lv)^l \equiv (1 + u)^l - u^l + (u + lv)^l \text{ mod }l^2 \bbZ. \]
We also have
\[ (u + lv)^l \equiv u^l + l( u^{l-1} v + u^{l-2} v u + \dots + v u^{l-1}) \text{ mod }l^2 \bbZ. \]
Since $l > 2n$ and $u^n = 0$, we find $(u+lv)^l \equiv 0 \text { mod }l^2 \bbZ$ and hence $\widetilde{\gamma}^l = (1 + u)^l$. Since $u$ is not zero mod $l$, $(1 + u)$ has exact order $l^2$ in $\GL_n(\bbZ / l^2 \bbZ)$, showing that $\widetilde{\gamma}^l \neq 1$, as claimed. This shows claim (iv) and completes the proof.
\end{proof}
\section{A local calculation}\label{sec_local_calculation}
In this section, we will analyse Hecke modules of the type that arise when considering Taylor--Wiles places. Some of the calculations are quite similar to those of \cite[\S 5]{Kha15}.

Let $K$ be a non-archimedean local field with residue field $\bbF_q$ of characteristic $p$, and fix a choice of uniformizer $\varpi_K$. Fix a prime $l \neq p$ and a square root $p^{1/2}$ of $p$ in $\overline{\bbQ}_l$, as well as a coefficient field $E \subset \overline{\bbQ}_l$ with ring of integers $\cO$ and residue field $k$.

Let $G$ be a split reductive group over $\cO_K$, and fix a choice of split maximal torus and Borel subgroup $T \subset B \subset G$ (defined over $\cO_K$); this determines a set $\Phi^+ \subset \Phi = \Phi(G, T)$ of positive roots, a root basis $\Delta \subset \Phi^+$, and sets $X_\ast(T)^+ \subset X_\ast(T)$ and $X^\ast(T)^+ \subset X^\ast(T)$ of dominant cocharacters and characters, respectively. We write $N$ for the unipotent radical of $B$. We set $W = W(G, T)$. We write $\hT \subset \hB \subset \hG$ for the dual group of $G$, viewed (as usual) as a split reductive group over $\bbZ$. We establish the following running assumptions, which will hold throughout \S \ref{sec_local_calculation}:
\begin{itemize}
\item $q \equiv 1 \text{ mod }l$.
\item $l \nmid \# W$.
\end{itemize}
We introduce open compact subgroups $U = G(\cO_K)$, $U_0 = $ pre-image of $B(\bbF_q)$ under $U \to G(\bbF_q)$, and $U_1 = $ maximal pro-prime-to-$l$ subgroup of $U_0$. Thus $U_0$ is an Iwahori subgroup of $G(K)$, and there is a canonical isomorphism of $U_0 / U_1$ with the maximal $l$-power quotient of $T(\bbF_q)$.  We have the following simple case of Langlands duality:
\begin{lemma}\label{lem_local_langlands_for_split_tori}
Let $A$ be a ring. Then there is a canonical bijection $\chi \leftrightarrow \chi^\vee$ between the following two sets:
\begin{enumerate}
\item The set of characters $\chi : T(K) \to A^\times$.
\item The set of homomorphisms $\chi^\vee : W_K \to \hT(A)$.
\end{enumerate}
It is uniquely characterized as follows: if $\lambda \in X_\ast(T) = X^\ast(\hT)$, then $\lambda \circ \chi^\vee \circ \Art_K = \chi \circ \lambda$ as characters $K^\times \to A^\times$.
\end{lemma}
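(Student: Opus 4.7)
The plan is to produce the bijection by combining three standard identifications: (a) the exponential isomorphism $T(K) \cong X_\ast(T) \otimes_\bbZ K^\times$ for a split torus, (b) tensor--hom adjunction, and (c) local class field theory. Since $\hT(A)$ is abelian, this is purely formal and no topology or continuity needs to be tracked.

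First, because $T$ is split over $\cO_K$ we may write $T = \Spec \cO_K[X^\ast(T)]$. Evaluating on $K$-points gives a canonical isomorphism
\[
 X_\ast(T) \otimes_\bbZ K^\times \;\stackrel{\sim}{\longrightarrow}\; T(K), \qquad \lambda \otimes x \longmapsto \lambda(x),
\]
using the perfect pairing between $X^\ast(T)$ and $X_\ast(T)$. Applying $\Hom(-, A^\times)$ and tensor--hom adjunction yields
\[
 \Hom_\bbZ(T(K), A^\times) \;\cong\; \Hom_\bbZ\!\bigl(X_\ast(T) \otimes_\bbZ K^\times,\; A^\times\bigr) \;\cong\; \Hom_\bbZ\!\bigl(K^\times,\; \Hom_\bbZ(X_\ast(T), A^\times)\bigr).
\]
Next, since $\hT = \Spec \bbZ[X^\ast(\hT)] = \Spec \bbZ[X_\ast(T)]$, there is a canonical identification $\hT(A) = \Hom_\bbZ(X_\ast(T), A^\times)$. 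So characters $\chi : T(K) \to A^\times$ are in natural bijection with group homomorphisms $\widetilde{\chi} : K^\times \to \hT(A)$. Finally, the Artin map $\Art_K : K^\times \to W_K^{\mathrm{ab}}$ is an isomorphism under our normalization, and because $\hT(A)$ is abelian, any homomorphism $W_K \to \hT(A)$ factors uniquely through $W_K^{\mathrm{ab}}$. Precomposing with $\Art_K^{-1}$ identifies $\Hom(K^\times, \hT(A))$ with $\Hom(W_K, \hT(A))$. Composing the three bijections gives the desired correspondence $\chi \leftrightarrow \chi^\vee$.

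The characterization is then a direct unwinding: for $\lambda \in X_\ast(T) = X^\ast(\hT)$, pairing with $\lambda$ identifies the $\lambda$-component of $\widetilde{\chi} = \chi^\vee \circ \Art_K : K^\times \to \hT(A)$ with the composition $\chi \circ \lambda : K^\times \to A^\times$, which is exactly the stated identity $\lambda \circ \chi^\vee \circ \Art_K = \chi \circ \lambda$. Since the cocharacters $\lambda$ separate points of $\hT(A)$, this identity also shows that $\chi^\vee$ is uniquely determined by $\chi$, so the bijection is canonical.

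There is no real obstacle here; the only care needed is to match normalizations, in particular checking that with the present convention (uniformizers mapping to geometric Frobenius) the Artin map is indeed an isomorphism $K^\times \cong W_K^{\mathrm{ab}}$, so that no information is lost in passing from $K^\times$ to $W_K$.
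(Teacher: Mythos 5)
Your proof is correct and follows essentially the same route as the paper, whose entire argument is the chain of canonical isomorphisms $\Hom(T(K), A^\times) \cong \Hom(X_\ast(T) \otimes_\bbZ K^\times, A^\times) \cong \Hom(K^\times, \Hom(X^\ast(\hT), A^\times)) \cong \Hom(K^\times, \hT(A))$. You are in fact slightly more explicit than the paper in spelling out the last step, namely that the Artin map identifies $K^\times$ with $W_K^{\mathrm{ab}}$ and that homomorphisms into the abelian group $\hT(A)$ factor through the abelianization.
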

\begin{proof}
It suffices to note that if $S$ is a split torus over a field $k$, and $A$ is a ring, then there are canonical isomorphisms
\[ \Hom(S(k), A^\times) \cong \Hom(X_\ast(S) \otimes_\bbZ k^\times, A^\times) \cong \Hom(k^\times, \Hom(X^\ast(\widehat{S}), A^\times)) \cong \Hom(k^\times, \widehat{S}(A)). \]
\end{proof}
If $V \subset G(K)$ is an open compact subgroup, then we write $\cH_V$ for the convolution algebra of compactly supported $V$-biinvariant functions $f : G(K) \to \cO$, with respect to the Haar measure that gives $V$ volume equal to 1. If $R$ is an $\cO$-algebra, then we write $\cH_{V, R} = \cH_{V} \otimes_\cO R$. This can be interpreted as a double coset algebra; see \cite[\S 2.2]{New15} or \S \ref{sec_automorphic_forms} below. In particular, it has a basis consisting of the characteristic functions $[V g V]$ of double cosets. If $\Pi$ is any smooth $R[G(K)]$-module, then $\Pi^V$ has a canonical structure of $\cH_{V, R}$-module. If $V \subset V'$ is another open compact subgroup, then there is an inclusion $\cH_{V'} \subset \cH_V$. However, this is not in general an algebra homomorphism (it does not preserve the unit unless $V = V'$).

We will be concerned with the actions of the algebras $\cH_{U_0}$ and $\cH_{U_1}$. We now make some comments on these in turn. The Iwahori--Hecke algebra $\cH_{U_0}$ is extremely well-studied. It has a presentation, the Bernstein presentation, which is an isomorphism
\begin{equation}\label{eqn_Bernstein_presentation} \cH_{U_0} \cong \cO[X_\ast(T)] \widetilde{\otimes} \cO[U_0 \backslash U / U_0].
\end{equation}
 The subalgebra $\cO[U_0 \backslash U / U_0] \subset \cH_{U_0}$ of functions supported in $U$ is finite free as an $\cO$-module, having a basis consisting of the elements $T_w = U_0 \dot{w} U_0$ ($w \in W$), because of the existence of the Bruhat decomposition of $G(\bbF_q)$. (Here $\dot{w}$ denotes a representative in $U$ of the Weyl element $w \in W$.) The other terms appearing are defined as follows. (We refer the reader to \cite{Hai10} for more details.)
\begin{itemize}
\item $\cO[X_\ast(T)]$ is the group algebra of $X_\ast(T)$, a free $\bbZ$-module. The embedding $\cO[X_\ast(T)] \to \cH_{U_0}$ is defined as follows: if $e_\lambda$ is the basis element in $\cO[X_\ast(T)]$ corresponding to a dominant cocharacter $\lambda \in X_\ast(T)^+$, then we send $e_\lambda$ to $q^{-\langle \rho, \lambda \rangle} [ U_0 \lambda(\varpi_K) U_0]$, where $\rho$ is the usual half-sum of the positive roots. One can show that this defines an algebra homomorphism $\cO[X_\ast(T)^+] \to \cH_{U_0}$, which then extends uniquely to a homomorphism $\cO[X_\ast(T)] \to \cH_{U_0}$.
\item The tensor product $\widetilde{\otimes}$ is the usual tensor product as $\cO$-modules, but with a twisted multiplication, which is characterized on basis elements by the formula (for $\alpha \in \Delta$, $\lambda \in X_\ast(T)$):
\begin{equation}\label{eqn_Bernstein_relation} T_{s_\alpha} e_\lambda = e_{s_\alpha(\lambda)} T_{s_\alpha} + ( q - 1)\frac{e_{s_\alpha(\lambda)} - e_\lambda}{1 - e_{- \alpha^\vee}}. 
\end{equation}
We note that the fraction, a priori an element of the fraction field of $\cO[X_\ast(T)]$, in fact lies in $\cO[X_\ast(T)]$.
\end{itemize}
The subalgebra $\cO[X_\ast(T)]^W \subset \cH_{U_0}$ is central. We define $T(K)_0 = T(K) / T(\cO_K)$. In what follows, we will use the identification $T(K)_0 \cong X_\ast(T)$, so that if, for example, $\Pi$ is a smooth $\cO[G(K)]$-module, then $\Pi^{U_0}$ gets the structure of $\cO[T(K)_0]$-module (via the inclusion $\cO[T(K)_0] \cong \cO[X_\ast(T)] \subset \cH_{U_0}$).

If $\chi : T(K) \to \overline{\bbQ}_l^\times$ is a smooth character, then we write $i_B^G \chi$ for the normalized induction. Explicitly, we have
\[ i_B^G \chi = \{ f : G(K) \to \overline{\bbQ}_l \text{ locally constant} \mid \forall b \in B(K), g \in G(K), f (bg) = \delta(b)^{1/2} \chi(b) f(g) \}, \]
with $G(K)$ acting on $f \in i_B^G \chi$ by right translation, and $\delta(t n) = | 2\rho(t) |_K$ the usual modulus character. The Iwahori subgroup $U_0 \subset G(K)$ has the following well-known property:
\begin{lemma}\label{lem_action_of_abelian_algebra_on_iwahori_invariants}
Let $\pi$ be an irreducible admissible $\overline{\bbQ}_l[G(K)]$-module. Then:
\begin{enumerate}
\item $\pi^{U_0} \neq 0$ if and only if $\pi$ is isomorphic to a submodule of a representation $i_B^G \chi$, where $\chi : T(K)_0 \to \overline{\bbQ}_l^\times$ is an unramified character. In this case, $\chi$ is determined up to the action of the Weyl group, and the characters of $\cO[T(K)_0]$ which appear in $\pi^{U_0}$ are among the $w(\chi)$, $w \in W$. 
\item If there exists an $\cO$-lattice $M \subset \pi^{U_0}$ which is stable under the action of $\cO[T(K)_0]$, then $\chi$ in fact takes values in $\overline{\bbZ}_l^\times$. Let $\overline{\chi} = \chi \text{ mod }\ffrm_{\overline{\bbZ}_l}$ denote the reduction modulo $l$ of $\chi$. If $\overline{\chi} : T(K)_0 \to \overline{\bbF}_l^\times$ has trivial stabilizer in the Weyl group, and $\ffrm_{w(\overline{\chi})}$ is the kernel of the homomorphism $\cO[T(K)_0] \to \overline{\bbF}_l$ associated to the character $w(\overline{\chi})$, for some $w \in W$, then $(\pi^{U_0})_{\ffrm_{w(\overline{\chi})}}$ has dimension either 0 or 1 as a $\overline{\bbQ}_l$-vector space.
\end{enumerate} 
\end{lemma}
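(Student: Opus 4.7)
The plan is to deduce both parts from the standard Borel--Casselman theory of Iwahori-fixed vectors, combined with the Bernstein presentation (\ref{eqn_Bernstein_presentation}). The key computational input needed throughout is the description of the $\cO[T(K)_0]=\cO[X_\ast(T)]$-spectrum on $(i_B^G \chi)^{U_0}$: by the Iwasawa decomposition this space is $|W|$-dimensional, with basis indexed by $U_0\backslash G(K)/B(K)$, and a direct computation (using the normalizing factor $q^{-\langle\rho,\lambda\rangle}$ chosen in the Bernstein embedding together with the relation (\ref{eqn_Bernstein_relation})) shows that the generalized $\cO[T(K)_0]$-eigencharacters appearing are precisely the Weyl translates $w(\chi)$, $w\in W$.

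For part (i), I would first invoke Borel's theorem that $\pi^{U_0}\neq 0$ if and only if the normalized Jacquet module $\pi_N$ is nonzero. When this holds, $\pi_N$ is a finite-dimensional admissible $T(K)/T(\cO_K)$-module, and by the standard Bernstein--Zelevinsky description of the cuspidal support of an irreducible admissible representation, its composition factors form a single Weyl orbit of unramified characters $\{w(\chi)\}_{w\in W}$. Picking $\chi$ to be a quotient character of $\pi_N$ and applying Frobenius reciprocity
\[ \Hom_{G(K)}(\pi, i_B^G \chi)=\Hom_T(\pi_N, \chi) \]
(with the normalizations aligned) produces the required embedding $\pi\hookrightarrow i_B^G\chi$, with $\chi$ unique up to the action of $W$. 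Since the inclusion $\pi^{U_0}\hookrightarrow (i_B^G\chi)^{U_0}$ is $\cO[T(K)_0]$-equivariant, the key computation above then bounds the characters occurring in $\pi^{U_0}$ by the set $\{w(\chi)\}$.

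For part (ii), the existence of the $\cO[T(K)_0]$-stable lattice $M\subset\pi^{U_0}$ forces integrality: each $t\in T(K)_0$ acts on $M$ by an $\cO$-linear endomorphism whose characteristic polynomial has coefficients in $\cO\subset\overline{\bbZ}_l$, so its eigenvalues on $\pi^{U_0}$ lie in $\overline{\bbZ}_l$, whence $\chi$ takes values in $\overline{\bbZ}_l^\times$. For the dimension bound, the hypothesis that $\overline{\chi}$ has trivial Weyl stabilizer ensures the $|W|$ reductions $w(\overline{\chi})$ are pairwise distinct; hence the characteristic-zero characters $w(\chi)$ themselves are pairwise distinct and have pairwise distinct reductions mod $\ffrm_{\overline{\bbZ}_l}$. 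Under the equivariant inclusion $\pi^{U_0}\hookrightarrow (i_B^G\chi)^{U_0}$, each of the $|W|$ generalized eigenspaces of the target is one-dimensional. Localization at $\ffrm_{w(\overline{\chi})}$ then picks out exactly the $w(\chi)$-generalized eigenspace (no other $w'(\chi)$ can reduce to $w(\overline{\chi})$), forcing $(\pi^{U_0})_{\ffrm_{w(\overline{\chi})}}$ to have dimension $0$ or $1$.

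The main obstacle I expect is the careful bookkeeping of normalizations: the modulus character $\delta^{1/2}$ in the definition of normalized induction and the factor $q^{-\langle\rho,\lambda\rangle}$ in the Bernstein embedding must be reconciled so that the $\cO[T(K)_0]$-eigencharacters on $(i_B^G\chi)^{U_0}$ come out as the unramified characters $w(\chi)$ themselves, with no spurious $\delta^{\pm 1/2}$-twist, and so that the statement of Frobenius reciprocity lines up with this normalization. Once this verification is performed (the essential point being Casselman's computation, a standard but nontrivial piece of bookkeeping), the remaining arguments are essentially formal.
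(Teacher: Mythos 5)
Your proposal follows essentially the same route as the paper's proof: Casselman's identification of $\pi^{U_0}$ with the unramified part of the normalized Jacquet module, Frobenius reciprocity, and the Bernstein--Zelevinsky uniqueness of cuspidal support, with part (ii) then formal; the paper reads off the $\cO[T(K)_0]$-eigencharacters from the Jordan--H\"older factors of $r_N(i_B^G\chi)$ rather than from a direct Bernstein-presentation computation, but these amount to the same thing. One caution: the equivalence you attribute to Borel, that $\pi^{U_0}\neq 0$ if and only if $\pi_N\neq 0$, is false as stated (a ramified principal series has $\pi_N\neq 0$ but $\pi^{U_0}=0$); what you actually need, and what the paper uses, is the finer statement that the projection $\pi^{U_0}\to r_N(\pi)^{T(\cO_K)}$ is an isomorphism of $\overline{\bbQ}_l[T(K)_0]$-modules (Casselman, Propositions 2.4 and 2.5 of \cite{Cas80}), which is also exactly what supplies the $\cO[T(K)_0]$-equivariance of the inclusion $\pi^{U_0}\hookrightarrow (i_B^G\chi)^{U_0}$ and settles the $\delta^{1/2}$-normalization bookkeeping you flag as the main obstacle.
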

\begin{proof}
If $\pi$ is any admissible $\overline{\bbQ}_l[G(K)]$-module, then there is a canonical isomorphism $\pi^{U_0} \cong r_N (\pi)^{T(\cO_K)}$ of $\overline{\bbQ}_l[T(K)_0]$-modules, where $r_N (\pi) = \pi_N \otimes \delta^{-1/2}$ is the normalized Jacquet module. Indeed, \cite[Proposition 2.4]{Cas80} says that the projection map $p : \pi^{U_0} \to r_N (\pi)^{T(\cO_K)}$ is a vector space isomorphism, and \cite[Proposition 2.5]{Cas80} says that for any $\lambda \in X_\ast(T)^+$, $v \in \pi^{U_0}$, we have $p([U_0 \lambda(\varpi_K) U_0] v) = \delta^{-1/2}(\lambda(\varpi_K)) (\lambda(\varpi_K) \cdot p(v))$. 

If $\pi$ is any irreducible admissible  $\overline{\bbQ}_l[G(K)]$-module, and $\chi : T(K) \to \overline{\bbQ}_l^\times$ is a smooth character, then Frobenius reciprocity gives a canonical isomorphism
\[ \Hom_{G(K)}(\pi, i_B^G \chi) \cong \Hom_{T(K)}(r_N (\pi), \chi). \]
We see that if $\pi^{U_0} \neq 0$, then $\pi$ embeds as a submodule of $i_B^G \chi$ for an unramified character $\chi$. Conversely, if $\pi$ is a submodule of a representation $i_B^G \chi$, then $\pi^{U_0}$ has $\chi$ as a quotient, hence is in particular non-zero. 

It follows from \cite[2.9, Theorem]{Ber77} that two representations $i_B^G \chi$ and $i_B^G \chi'$ have a common Jordan--H\"older factor if and only if $\chi, \chi'$ are conjugate under the action of the Weyl group. On the other hand, the Jordan--H\"older factor $\overline{\bbQ}_l[T(K)_0]$-modules of $r_N (i_B^G \chi)$ are, with multiplicity, the $w(\chi)$ ($w \in W$), so the factors of $\pi^{U_0}$ must be among the $w(\chi)$. This shows the first part of the lemma. Everything in the second part now follows easily.
\end{proof}
If $\overline{\chi} : T(K)_0 \to \overline{\bbF}_l^\times$ is a character, we will henceforth write $\ffrm_{\overline{\chi}} \subset \cO[T(K)_0]$ for the maximal ideal which is the kernel of the homomorphism $\cO[T(K)_0] \to \overline{\bbF}_l$ associated to the character $\overline{\chi}$, as in the statement of the lemma.

We now consider the algebra $\cH_{U_1}$. We define $T(\cO_K)^l \subset T(\cO_K)$ to be the maximal pro-prime-to-$l$ subgroup of $T(\cO_K)$, $T(\cO_K)_l = T(\cO_K) / T(\cO_K)^l$, and $T(K)_l = T(K) / T(\cO_K)^l$. There is a canonical isomorphism $U_0 / U_1 \cong T(\cO_K)_l$. We define a submonoid $T(K)_l^+ \subset T(K)_l$ as the set of elements of the form $t T(\cO_K)^l$ with $t T(\cO_K) = \lambda(\varpi_K) T(\cO_K)$ for some dominant cocharacter $\lambda$. Observe that the choice of $\varpi_K$ determines an isomorphism $T(K)_l \cong X_\ast(T) \times T(\cO_K)_l$, and that $\lambda$ is uniquely determined by the coset $t T(\cO_K)$. 
\begin{lemma}\label{lem_abelian_subalgebra_of_pro-p_Hecke_algebra}
\begin{enumerate} \item The assignment $e_t \in \cO[T(K)_l^+] \mapsto q^{-\langle \rho, \lambda \rangle} [U_1 t U_1] \in \cH_{U_1}$ determines an algebra homomorphism $\cO[T(K)_l^+] \to \cH_{U_1}$, which extends uniquely to an algebra homomorphism $\cO[T(K)_l] \to \cH_{U_1}$.
\item Let $\Pi$ be a smooth $\cO[G(K)]$-module, $t \in T(K)_l$, and $v \in \Pi^{U_0}$. Then $[U_0 t U_0] v = [U_1 t U_1] v$. In other words, the inclusion $\Pi^{U_0} \subset \Pi^{U_1}$ is compatible with the algebra map $\cO[T(K)_l] \to \cO[T(K)_0]$.
\end{enumerate}
\end{lemma}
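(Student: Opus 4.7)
For part (i), I would first establish the dominant case, showing that $t \mapsto q^{-\langle\rho,\lambda\rangle}[U_1 t U_1]$ is a monoid homomorphism $T(K)_l^+ \to \cH_{U_1}$. The strategy uses the Iwahori-type factorization $U_1 = U_0^- \cdot T(\cO_K)^l \cdot U_0^+$, where $U_0^\pm$ are the standard $\pm$-parts of the Iwahori, together with the containments $t U_0^+ t^{-1} \subset U_0^+$ and $t^{-1} U_0^- t \subset U_0^-$ valid when $t = t_\lambda$ with $\lambda \in X_\ast(T)$ dominant. These yield $U_1 \cap t^{-1} U_1 t = (t^{-1} U_0^- t) \cdot T(\cO_K)^l \cdot U_0^+$ and canonical coset representatives $\{u t : u \in U_0^+/(t U_0^+ t^{-1})\}$ for $U_1 t U_1 / U_1$, of cardinality $q^{\langle 2\rho, \lambda\rangle}$. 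For dominant $t_1, t_2$, pushing the middle $U_1$-element through via the opposite factorization $U_1 = U_0^+ T(\cO_K)^l U_0^-$ gives the set-theoretic identity $U_1 t_1 U_1 t_2 U_1 = U_1 t_1 t_2 U_1$, and evaluating the convolution at $g = t_1 t_2$ reduces (via $U_1 t_2 U_1 t_2^{-1} \cap N(K) = U_0^+$ and Bruhat uniqueness) to the contribution of the single coset $u = 1$, giving coefficient exactly $1$. Together with the normalizing factors $q^{-\langle\rho,\lambda_i\rangle}$, this yields the monoid homomorphism, which then extends by linearity to an algebra map $\cO[T(K)_l^+] \to \cH_{U_1}$.

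For the extension to $\cO[T(K)_l]$, one must verify that the images $e_t$ for $t$ dominant are units in $\cH_{U_1}$. I would obtain this from the analog of Bernstein's presentation for the pro-prime-to-$l$ Iwahori Hecke algebra; the standing hypotheses $q \equiv 1 \pmod{l}$ and $l \nmid \#W$ ensure that $q$ is a unit in $\cO$ and that the standard inversion formulas coming from the quadratic relations $T_s^2 = (q-1)T_s + q$ for simple reflections $s$ apply in this setting. One thereby constructs a commutative subalgebra of $\cH_{U_1}$ isomorphic to $\cO[T(K)_l]$, extending the dominant monoid map.

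For part (ii), the crucial observation is that the Iwahori factorizations of $U_0$ and $U_1$ differ only in their torus components, and any $t \in T(K)$ commutes with $T(\cO_K)$. Thus for any $t \in T(K)_l$ one obtains parallel decompositions
\[ U_j \cap t^{-1} U_j t = (U_0^- \cap t^{-1} U_0^- t) \cdot T_j \cdot (U_0^+ \cap t^{-1} U_0^+ t), \]
where $T_0 = T(\cO_K)$ and $T_1 = T(\cO_K)^l$. In particular $|U_0 t U_0 / U_0| = |U_1 t U_1 / U_1|$, and coset representatives can be chosen identically outside the torus factor. Since $v \in \Pi^{U_0}$ is invariant under both $U_0$ and $U_1$, and the Haar measure in each Hecke algebra is normalized so that the defining subgroup has volume $1$, both $[U_0 t U_0] v$ and $[U_1 t U_1] v$ collapse to the identical sum $\sum_i x_i v$ over the common set of representatives. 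The compatibility of the inclusion $\Pi^{U_0} \subset \Pi^{U_1}$ with the algebra map $\cO[T(K)_l] \to \cO[T(K)_0]$ then follows from this equality on dominant $t$ combined with part (i).

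The main technical obstacle is the extension argument in part (i): the required invertibility of dominant Hecke operators in $\cH_{U_1}$ (over $\cO$, not merely over $E$) hinges on a Bernstein-type structural result for the pro-prime-to-$l$ Iwahori Hecke algebra. While the analogous relations are essentially standard in the Iwahori case, their adaptation to $\cH_{U_1}$ requires careful bookkeeping with the finite Hecke subalgebra $\cO[U_1 \backslash U_0 / U_1]$ and its interaction with the translation part.
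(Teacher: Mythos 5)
Your proposal is correct and follows essentially the same route as the paper: part (ii) is proved in both cases by using the Iwahori decompositions of $U_0$ and $U_1$ (which share the same unipotent parts $U_0^{\pm}=U_1^{\pm}$ and differ only in the torus factor) to identify the coset spaces and hence choose common representatives for $U_0 t U_0/U_0$ and $U_1 t U_1/U_1$. For part (i) the paper simply cites the Bernstein-style presentation of the pro-$p$ (and by transposition, pro-prime-to-$l$) Iwahori--Hecke algebra from Vign\'eras and Flicker, which is exactly the argument you sketch, including the invertibility of the dominant operators needed to extend from $\cO[T(K)_l^+]$ to $\cO[T(K)_l]$.
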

\begin{proof}
Let $U_p \subset U$ denote the maximal pro-$p$-subgroup; then $U_p \subset U_1 \subset U_0$ and $U / U_p \cong T(\bbF_q)$. The Hecke algebra $\cH_{U_p}$ enjoys many of the same properties as the Iwahori--Hecke algebra $\cH_{U_0}$; in particular, it admits Iwahori--Matsumoto- and Bernstein-style presentations, see \cite{Vig05, Fli11}. The proofs of many of these properties can be transposed word-for-word to the algebra $\cH_{U_1}$. This is in particular the case for the first part of the current lemma; see e.g.\ \cite[Lemma 2.3]{Fli11} and the remark immediately following \cite[Proposition 4.4]{Fli11}. 

For the second part, we can assume without loss of generality that $t \in T(K)_l^+$. Recall that the action of these Hecke operators can be given as follows: if $v \in \Pi^{V}$, then we decompose $V g V = \coprod_i g_i V$, and set $[V g V] v = \sum_i g_i \cdot v$. It is therefore enough to show that given $t \in T(K)_l^+$, we can find elements $g_1, \dots, g_n \in G(K)$ such that $U_0 t U_0 = \coprod_i g_i U_0$ and $U_1 t U_1 = \coprod_i g_i U_1$. It even suffices to show that the natural map $U_1 / U_1 \cap t U_1 t^{-1} \to U_0 / U_0 \cap t U_0 t^{-1}$ is bijective. It is surjective, because $U_0 = U_1 T(\cO_K)$ and $T(\cO_K) \subset U_0 \cap t U_0 t^{-1}$.

To show injectivity, suppose that $u, v \in U_1$ have the same image in $U_0 / U_0 \cap t U_0 t^{-1}$. Then we can write $u = v w$ with $w \in U_0 \cap t U_0 t^{-1}$, hence $w \in U_1 \cap t U_0 t^{-1}$. To finish the proof, it is therefore enough to show that $U_1 \cap t U_0 t^{-1} = U_1 \cap t U_1 t^{-1}$. Let $N$ denote the unipotent radical of $B$, $\overline{N}$ the unipotent radical of the opposite Borel. We set $U_0^+ = N(K) \cap U_0$, $U_0^- = \overline{N}(K) \cap U_0$, and define $U_1^+, U_1^-$ similarly. Then we have the Iwahori decomposition: the product map $U_0^- \times T(\cO_K) \times U_0^+ \to U_0$ is a bijection. Similarly the map $U_1^- \times T(\cO_K)^l \times U_1^+ \to U_1$ is a bijection and we have $U_0^+ = U_1^+$, $U_0^- = U_1^-$. Since $t$ normalizes $N(K)$ and $\overline{N}(K)$, the result now follows from the existence of the Iwahori decomposition and the fact that the multiplication map $\overline{N} \times T \times N \to G$ is an open immersion: we have
\[ t U_0 t^{-1} = t U_0^- t^{-1} T(\cO_K) t U_0^+ t^{-1} = t U_1^- t^{-1} T(\cO_K) t U_1^+ t^{-1}, \]
hence $U_1 \cap t U_0 t^{-1} = U_1 \cap t U_1 t^{-1}$.
\end{proof}
\begin{lemma}\label{lem_irreducible_admissibles_with_non-zero_pro-p_invariants} Let $\pi$ be an irreducible admissible $\overline{\bbQ}_l[G(K)]$-module. Then:
\begin{enumerate}
\item $\pi^{U_1} \neq 0$ if and only if $\pi$ is isomorphic to a submodule of a representation $i_B^G \chi$, where $\chi : T(K) \to \overline{\bbQ}_l^\times$ is a smooth character which factors through $T(K) \to T(K)_l$. In this case, $\chi$ is determined up to the action of the Weyl group, and the characters of $\cO[T(K)_l]$ which appear in $\pi^{U_1}$ are among the $w(\chi)$, $w \in W$.
\item If $\pi^{U_1} \neq 0$ and there exists an $\cO$-lattice $M \subset \pi^{U_1}$ which is stable under the action of $\cO[T(K)_l]$, then $\chi$ in fact takes values in $\overline{\bbZ}_l^\times$. In this case the reduction modulo $l$ $\overline{\chi} : T(K)_l \to \overline{\bbF}_l^\times$ is unramified; if it has trivial stabilizer in the Weyl group, and $\ffrm_{w(\overline{\chi})}$ is the kernel of the homomorphism $\cO[T(K)_l] \to \overline{\bbF}_l$ associated to $w(\overline{\chi})$, for some $w \in W$, then $(\pi^{U_1})_{\ffrm_{w(\overline{\chi})}}$ has dimension either $0$ or $1$ as a $\overline{\bbQ}_l$-vector space. 
\item If $\pi^{U_1} \neq 0$, there exists an $\cO$-lattice $M \subset \pi^{U_1}$ which is stable under the action of $\cO[T(K)_l]$, $\overline{\chi}$ has trivial stabilizer in the Weyl group, and $(\pi^{U_1})_{\ffrm_{w(\overline{\chi})}} \neq 0$, then the action of $T(K)_l$ on this 1-dimensional vector space is by the character $w(\chi)$. 
\end{enumerate}
\end{lemma}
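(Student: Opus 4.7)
The plan is to imitate closely the proof of Lemma \ref{lem_action_of_abelian_algebra_on_iwahori_invariants}, but with the pro-prime-to-$l$ Iwahori $U_1$ in place of $U_0$, using the Iwahori decomposition $U_1 = U_1^- \cdot T(\cO_K)^l \cdot U_1^+$ (established during the proof of Lemma \ref{lem_abelian_subalgebra_of_pro-p_Hecke_algebra}) as the substitute for the ordinary one.

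First I will establish an analogue of Casselman's theorem for $U_1$: namely, for any admissible $\overline{\bbQ}_l[G(K)]$-module $\pi$, the natural map $\pi^{U_1} \to r_N(\pi)^{T(\cO_K)^l}$ induced by projection to the Jacquet module is an isomorphism of $\overline{\bbQ}_l[T(K)_l]$-modules (the Hecke-equivariance being with respect to the map of the first part of Lemma \ref{lem_abelian_subalgebra_of_pro-p_Hecke_algebra}). The verification follows \cite[Propositions 2.4, 2.5]{Cas80} verbatim, since the only input used there is the existence of an Iwahori decomposition plus the relation $[U_1 \lambda(\varpi_K) U_1] = q^{\langle \rho, \lambda \rangle} \lambda(\varpi_K)$ on invariants for dominant $\lambda$, which is built into our normalization. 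Combining this with Frobenius reciprocity
\[ \Hom_{G(K)}(\pi, i_B^G \chi) \cong \Hom_{T(K)}(r_N(\pi), \chi) \]
immediately gives part (i): $\pi^{U_1} \neq 0$ if and only if $r_N(\pi)$ has a non-zero quotient which is a character trivial on $T(\cO_K)^l$, and such a quotient corresponds precisely to an embedding $\pi \hookrightarrow i_B^G\chi$ with $\chi$ factoring through $T(K)_l$. Bernstein's theorem \cite[2.9]{Ber77} then says that $\chi$ is determined up to $W$-action, and that the Jordan--H\"older factors of $r_N(i_B^G\chi)$ as a $T(K)$-module are exactly the $w(\chi)$, each with multiplicity one; since $\pi \hookrightarrow i_B^G \chi$, the factors of $r_N(\pi)$, and therefore of $\pi^{U_1}$ as a $T(K)_l$-module, are among the $w(\chi)$ each with multiplicity at most one.

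For parts (ii) and (iii), suppose $M \subset \pi^{U_1}$ is a $\cO[T(K)_l]$-stable lattice; then every character of $\cO[T(K)_l]$ on $\pi^{U_1} \otimes \overline{\bbQ}_l$ has values in $\overline{\bbZ}_l^\times$, hence so does each $w(\chi)$, hence so does $\chi$ itself. The claim that $\overline\chi$ is unramified is formal: $T(\cO_K)_l$ is a finite abelian $l$-group (this is where $q \equiv 1 \bmod l$ enters, making it non-trivial in general), and any homomorphism from an $l$-group to $\overline{\bbF}_l^\times$ is trivial, so $\overline\chi$ factors through $T(K)_l / T(\cO_K)_l \cong X_\ast(T)$. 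Now assume $\overline\chi$ has trivial $W$-stabilizer, so that the characters $w(\overline\chi)$ are pairwise distinct. Then the maximal ideals $\ffrm_{w(\overline\chi)}$ of $\cO[T(K)_l]$ are pairwise distinct, and the localization $(\pi^{U_1})_{\ffrm_{w(\overline\chi)}}$ picks out exactly those $\overline{\bbQ}_l$-characters of $\cO[T(K)_l]$ on $\pi^{U_1}$ which reduce to $w(\overline\chi)$; by the trivial stabilizer assumption, the unique such character among the $w'(\chi)$ is $w(\chi)$ itself. By the multiplicity-one statement of the previous paragraph, the corresponding isotypic component has dimension $0$ or $1$, and if it is non-zero, $T(K)_l$ acts on it through the character $w(\chi)$; this is exactly (ii) and (iii).

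The only step requiring any real care is the transcription of Casselman's results to $U_1$: one must check that the averaging argument in \cite[Proposition 2.4]{Cas80} still produces an inverse to the projection $\pi^{U_1} \to r_N(\pi)^{T(\cO_K)^l}$ (using that $T(\cO_K)^l$ is pro-prime-to-$l$ so that projectors exist on $\overline{\bbQ}_l$-vector spaces) and that the coset-decomposition identity proved in Lemma \ref{lem_abelian_subalgebra_of_pro-p_Hecke_algebra}(ii) extends from $T(K)_l^+$ to all of $T(K)_l$ to give the Hecke-equivariance of the isomorphism. Neither of these is substantive; once they are in place, everything else is a formal consequence of Frobenius reciprocity, Bernstein's theorem, and the trivial stabilizer hypothesis.
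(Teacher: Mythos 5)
Your proposal is correct and follows essentially the same route as the paper: the key point in both is the isomorphism $\pi^{U_1} \cong r_N(\pi)^{T(\cO_K)^l}$ of $\overline{\bbQ}_l[T(K)_l]$-modules (which the paper obtains by citing the proof of \cite[Theorem 2.1]{Fli11}, i.e.\ exactly the pro-$p$-Iwahori transcription of Casselman's argument that you sketch), after which everything reduces to Frobenius reciprocity and \cite{Ber77} as in Lemma \ref{lem_action_of_abelian_algebra_on_iwahori_invariants}. Your fleshing-out of parts (ii) and (iii) — integrality from the stable lattice, unramifiedness of $\overline{\chi}$ because $T(\cO_K)_l$ is a finite $l$-group, and multiplicity one from the trivial-stabilizer hypothesis — matches what the paper leaves implicit.
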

\begin{proof}
The last two points follow easily from the first. For the first, we observe (cf. the proof of \cite[Theorem 2.1]{Fli11}) that for any admissible $\overline{\bbQ}_l[G(K)]$-module $\pi$, the projection $\pi^{U_1} \to r_N(\pi)^{T(\cO_K)^l}$ is an isomorphism of $\overline{\bbQ}_l[T(K)_l]$-modules. The remainder of the lemma then follows from \cite{Ber77} in the same way as in the proof of Lemma \ref{lem_action_of_abelian_algebra_on_iwahori_invariants}.
\end{proof}
If $\overline{\chi} : T(K)_0 \to \overline{\bbF}_l^\times$ is an unramified character, then we will write $\ffrm_{\overline{\chi}} \subset \cO[T(K)_l]$ for the maximal ideal which is the kernel of the homomorphism $\cO[T(K)_l] \to \overline{\bbF}_l$ which is associated to the character $\overline{\chi}$. This is an abuse of notation, since we have used the same notation to denote a maximal ideal of $\cO[T(K)_0]$. However, we hope that it will not cause confusion, because there is a canonical surjective homomorphism $\cO[T(K)_l] \to \cO[T(K)_0]$ which induces a bijection on maximal ideals. 

With these preliminaries out of the way, we can now start our work proper. Our assumption that $q \equiv 1 \text{ mod }l$ has the following important consequence:
\begin{lemma}\label{lem_iwahori_hecke_algebra_semidirect_product}
There is an isomorphism $\cH_{U_0} \otimes_\cO k \cong k[X_\ast(T) \rtimes W]$.
\end{lemma}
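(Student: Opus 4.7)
The plan is to deduce the isomorphism directly from the Bernstein presentation (\ref{eqn_Bernstein_presentation}) by reducing modulo $\varpi$, using that $q-1 \in \ffrm$.

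First I would recall the explicit structure of the finite Hecke subalgebra $\cO[U_0 \backslash U / U_0] \subset \cH_{U_0}$. This is the Iwahori--Hecke algebra associated to the Weyl group $W$ of $G(\bbF_q)$: it is $\cO$-free on the basis $\{T_w\}_{w \in W}$, with presentation given by the braid relations $T_{s_\alpha} T_{s_\beta} T_{s_\alpha} \cdots = T_{s_\beta} T_{s_\alpha} T_{s_\beta} \cdots$ (same length on both sides, determined by the order of $s_\alpha s_\beta$) and the quadratic relations $T_{s_\alpha}^2 = (q-1)T_{s_\alpha} + q$ for each simple reflection. Since $q \equiv 1 \pmod l$, reducing mod $\varpi$ turns the quadratic relations into $T_{s_\alpha}^2 = 1$, and these, together with the braid relations, are a Coxeter presentation of the group algebra $k[W]$. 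Hence the surjection $k[W] \to \cO[U_0 \backslash U / U_0] \otimes_\cO k$ sending $s_\alpha \mapsto T_{s_\alpha}$ is a well-defined algebra map, and it is an isomorphism because both sides are free $k$-modules of rank $\#W$ with matching bases.

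Next I would tensor the whole Bernstein presentation with $k$. The Bernstein relation (\ref{eqn_Bernstein_relation}) has its correction term $(q-1)\frac{e_{s_\alpha(\lambda)} - e_\lambda}{1 - e_{-\alpha^\vee}}$ killed mod $\varpi$, so in $\cH_{U_0} \otimes_\cO k$ we are left with the clean commutation relation $T_{s_\alpha} e_\lambda = e_{s_\alpha(\lambda)} T_{s_\alpha}$. Combined with the previous paragraph, this shows that the generators $e_\lambda$ ($\lambda \in X_\ast(T)$) and $T_{s_\alpha}$ ($\alpha \in \Delta$) of $\cH_{U_0} \otimes_\cO k$ satisfy exactly the defining relations of the semidirect product group $X_\ast(T) \rtimes W$ (with $W$ acting by the standard action on $X_\ast(T)$). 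We therefore obtain a surjective algebra homomorphism
\[ k[X_\ast(T) \rtimes W] \longrightarrow \cH_{U_0} \otimes_\cO k, \qquad e_\lambda \cdot w \longmapsto e_\lambda T_w. \]

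Finally, I would check this is an isomorphism by comparing $k$-bases: the left-hand side has the obvious $k$-basis $\{ e_\lambda w \}_{\lambda \in X_\ast(T),\, w \in W}$, while the right-hand side is, by the Bernstein presentation, free over $\cO[X_\ast(T)] \otimes_\cO k = k[X_\ast(T)]$ on $\{T_w\}_{w \in W}$, hence admits the $k$-basis $\{e_\lambda T_w\}$. The map sends one basis bijectively to the other, so it is an isomorphism. There is no serious obstacle here: the only observation needed is that both the Bernstein correction term and the linear part of the quadratic relation are divisible by $q-1$, and so disappear mod $l$; no use of the hypothesis $l \nmid \#W$ is required for this particular statement (that hypothesis will be used elsewhere, e.g.\ to ensure $k[W]$ is semisimple).
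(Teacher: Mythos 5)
Your proof is correct and is essentially the paper's argument, spelled out in more detail: the paper likewise observes that reducing the Bernstein presentation modulo $\varpi$ turns $k[U_0 \backslash U / U_0]$ into $k[W]$ (via the quadratic relations becoming $T_{s_\alpha}^2 = 1$) and kills the correction term in (\ref{eqn_Bernstein_relation}), yielding the semidirect product relation. Your closing remark that $l \nmid \# W$ is not needed here is also accurate.
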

\begin{proof}
This is just the reduction modulo $l$ of the Bernstein presentation (\ref{eqn_Bernstein_presentation}), on noting that $k[U_0 \backslash U / U_0] \cong k[W]$, and that the twisted tensor product (\ref{eqn_Bernstein_relation}) becomes the defining relation of the semidirect product $X_\ast(T) \rtimes W$, because $q = 1$ in $k$.
\end{proof}
\begin{lemma}\label{lem_hecke_projection_to_U_invariants}
Let $\Pi$ be a smooth $\cO[G(K)]$-module, flat over $\cO$, such that $\Pi^{U_0}$ is a finite free $\cO$-module. Suppose that there is exactly one maximal ideal of $\cO[X_\ast(T)]^W$ in the support of $\Pi^{U_0} \otimes_\cO k$, corresponding to the $W$-orbit of an unramified character $ \overline{\chi} : T(K) \to \overline{\bbF}_l^\times$ with trivial stabilizer in $W$. Then:
\begin{enumerate}
\item There is a decomposition $\Pi^{U_0} = \oplus_{w \in W} (\Pi^{U_0})_{\ffrm_{w(\overline{\chi})}}$.
\item The map $[U] : \Pi^{U_0} \to \Pi^U$ induced by the element $[U] = \sum_{w \in W} [U_0 \dot{w} U_0] \in \cH_{U_0}$ restricts to an isomorphism $(\Pi^{U_0})_{\ffrm_{\overline{\chi}}} \to \Pi^U$. 
\end{enumerate}
\end{lemma}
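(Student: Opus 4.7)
For part (i), I will exploit the action of the central subalgebra $\cO[X_\ast(T)]^W \subset \cH_{U_0}$ on the finite free $\cO$-module $\Pi^{U_0}$. Let $B$ and $A$ denote the images of $\cO[X_\ast(T)]^W$ and $\cO[X_\ast(T)]$ in $\End_\cO(\Pi^{U_0})$; both are finite $\cO$-algebras, in particular semilocal, with every maximal ideal containing $\varpi$. The support hypothesis then translates to $B$ being local. Since $\cO[X_\ast(T)]$ is module-finite over $\cO[X_\ast(T)]^W$, the ring $A$ is finite over $B$; its maximal ideals are the images of those maximal ideals of $\cO[X_\ast(T)]$ lying over the maximal ideal of $B$, namely the $\ffrm_{w(\overline{\chi})}$ for $w \in W$, and these are pairwise distinct because $\overline{\chi}$ has trivial $W$-stabilizer. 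The Chinese Remainder theorem for semilocal rings then gives $A \cong \prod_w A_{\ffrm_{w(\overline{\chi})}}$ and the decomposition $\Pi^{U_0} = \bigoplus_{w \in W} (\Pi^{U_0})_{\ffrm_{w(\overline{\chi})}}$.

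For part (ii), I first establish strong surjectivity. For $v \in \Pi^U$ and any $w \in W$ one has $[U_0 \dot{w} U_0]\, v = q^{\ell(w)} v$, since each of the $q^{\ell(w)}$ coset representatives of $U_0 \dot{w} U_0/U_0$ lies in $U$ and hence fixes $v$. Summing, $[U]$ acts on $\Pi^U$ by the constant $c := \sum_{w} q^{\ell(w)} = [U:U_0]$. Since $q \equiv 1 \pmod{l}$ and $l \nmid |W|$, one has $c \equiv |W| \not\equiv 0 \pmod{l}$, so $c \in \cO^\times$; hence $e_U := [U]/c \in \cH_{U_0}$ is an idempotent with $e_U \cdot \Pi^{U_0} = \Pi^U$. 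In particular $\Pi^U$ is a direct $\cO$-summand of $\Pi^{U_0}$ (and hence itself finite free over $\cO$), and $[U]\colon \Pi^{U_0} \twoheadrightarrow \Pi^U$ is surjective.

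It remains to show that $[U]$ restricts to an isomorphism $(\Pi^{U_0})_{\ffrm_{\overline{\chi}}} \to \Pi^U$. Both modules are finite free over $\cO$ of the same rank (as will be clear from the mod-$\varpi$ calculation), so by Nakayama it suffices to verify surjectivity modulo $\varpi$. By Lemma \ref{lem_iwahori_hecke_algebra_semidirect_product} one has $\cH_{U_0} \otimes k \cong k[X_\ast(T) \rtimes W]$, under which $[U] \mapsto \sum_w w$ and $e_U \mapsto \tfrac{1}{|W|}\sum_w w$; the latter is the standard idempotent projecting onto $W$-invariants, so $\Pi^U \otimes k = (\Pi^{U_0} \otimes k)^W$. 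Setting $M := (\Pi^{U_0})_{\ffrm_{\overline{\chi}}} \otimes k$, the relation $T_w e_\lambda = e_{w(\lambda)} T_w$ in $k[X_\ast(T) \rtimes W]$ shows that $T_w$ maps $M$ isomorphically onto $(\Pi^{U_0})_{\ffrm_{w(\overline{\chi})}} \otimes k$ for each $w$. These transports identify $\Pi^{U_0} \otimes k$ with the induced $k[W]$-module $k[W] \otimes_k M$ (left translation on $k[W]$, trivial action on $M$); taking $W$-invariants gives $k \cdot \bigl(\sum_w w\bigr) \otimes_k M$, and the map $v \mapsto [U]\, v = \bigl(\sum_w w\bigr) \otimes v$ realizes the desired isomorphism $M \xrightarrow{\sim} \Pi^U \otimes k$.

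The main technical point is the identification of $\Pi^{U_0} \otimes k$ as an induced $k[W]$-module: this hinges on the triviality of the $W$-stabilizer of $\overline{\chi}$, so that the summands from (i) are distinct and the $T_w$-transports assemble into a free $k[W]$-structure, together with the standing assumptions $q \equiv 1 \pmod{l}$ and $l \nmid |W|$, which are precisely what is needed for the Hecke algebra mod $\varpi$ to become a genuine group algebra and for the $W$-averaging operator to be a genuine idempotent projecting onto $\Pi^U \otimes k$.
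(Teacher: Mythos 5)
Your proof is correct and follows essentially the same route as the paper: part (i) via the support decomposition over $\cO[X_\ast(T)]$ forced by the hypothesis on $\cO[X_\ast(T)]^W$, and part (ii) by reducing mod $\varpi$, using that $[U:U_0]^{-1}[U]$ is an idempotent projecting onto $\Pi^U$ and that under $\cH_{U_0}\otimes_\cO k \cong k[X_\ast(T)\rtimes W]$ the elements $w\in W$ permute the summands $(\Pi^{U_0})_{\ffrm_{w(\overline{\chi})}}\otimes k$ simply transitively. The explicit coset count $[U_0\dot{w}U_0]v=q^{\ell(w)}v$ and the induced-module packaging are harmless elaborations of what the paper leaves implicit.
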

\begin{proof}
Since $\Pi^{U_0}$ is finite free as an $\cO$-module, we have a direct sum decomposition $\Pi^{U_0} = \oplus_\ffrm (\Pi^{U_0})_\ffrm$, where the direct sum runs over the set of maximal ideals of $\cO[T(K)_0]$ which are in the support of $\Pi^{U_0}$; equivalently, the set of maximal ideals of $k[T(K)_0]$ which are in the support of $\Pi^{U_0} \otimes_\cO k$. By assumption, if $\ffrm$ is a maximal ideal in the support corresponding to a character $\overline{\psi} : T(K) \to \overline{\bbF}_l^\times$, then its pullback to $\cO[X_\ast(T)]^W$ equals the pullback of $\ffrm_{\overline{\chi}}$. This in turn implies that $\overline{\psi}$ is a $W$-conjugate of $\overline{\chi}$.

For the second part, we note that both $(\Pi^{U_0})_{\ffrm_{\overline{\chi}}}$ and $\Pi^U$ are finite free $\cO$-modules. To show that the given map is an isomorphism, it therefore suffices to show that the map
\begin{equation}\label{eqn_U_projection_over_O} [U] : (\Pi^{U_0})_{\ffrm_{\overline{\chi}}} \otimes_\cO k \to \Pi^U \otimes_\cO k 
\end{equation}
is an isomorphism. Since $[U : U_0] \equiv \# W \text{ mod }l$, and $l \nmid \# W$ by assumption, we have $\Pi^U = [U] \Pi^{U_0}$, and $[U : U_0]^{-1} [U]$ is a projector onto $\Pi^U$. Writing $M = \Pi^{U_0} \otimes_\cO k$, a $\cH_{U_0} \otimes_\cO k$-module, it is therefore enough to show that the map
\begin{equation}\label{eqn_U_projection_over_k} [U] : M_{\ffrm_{\overline{\chi}}} \to [U] M 
\end{equation}
is an isomorphism. To show this, we note that if $w \in W$, then Lemma \ref{lem_iwahori_hecke_algebra_semidirect_product} implies that the action of $w \in \cH_{U_0} \otimes_\cO k$ sends $M_{\ffrm_{\overline{\chi}}}$ isomorphically to $M_{\ffrm_{w(\overline{\chi})}}$. Since $[U]$ corresponds to the element $\sum_{w \in W} w \in k[W] \subset k[X_\ast(T) \rtimes W]$ under the isomorphism of Lemma \ref{lem_iwahori_hecke_algebra_semidirect_product}, this finally shows that the map given by (\ref{eqn_U_projection_over_k}) is indeed an isomorphism.
\end{proof}
\begin{lemma}\label{lem_torus_characters_with_same_pseudocharacter}
Let $\Omega$ be an algebraically closed field, and let $\psi, \psi' : W_K \to \hT(\Omega)$ be smooth characters, and suppose $Z_{\hG}(\psi) = \hT_\Omega$. Let $\iota : \hT \to \hG$ denote the natural inclusion. Then the following are equivalent:
\begin{enumerate}
\item The characters $\psi$ and $\psi'$ are $W$-conjugate.
\item The $\hG$-pseudocharacters $\tr \iota \psi$ and $\tr \iota \psi'$ of $W_K$ are equal.
\end{enumerate}
\end{lemma}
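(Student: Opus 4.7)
The forward direction is immediate: if $\psi' = w \cdot \psi$ for some $w \in W$, then choosing any representative $n \in N_{\hG}(\hT)(\Omega)$ of $w$ gives $\iota\psi'(x) = n\,\iota\psi(x)\,n^{-1}$ for all $x \in W_K$, so $\iota\psi$ and $\iota\psi'$ are $\hG(\Omega)$-conjugate and therefore have the same pseudocharacter.

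For the converse, the plan is to invoke Theorem~\ref{thm_pseudocharacters_biject_with_representations_over_fields} to upgrade the equality $\tr\iota\psi = \tr\iota\psi'$ to an actual $\hG(\Omega)$-conjugacy, and then to use the hypothesis $Z_{\hG}(\psi) = \hT_\Omega$ to force the conjugator into $N_{\hG}(\hT)(\Omega)$. To apply that theorem one first needs $\iota\psi$ and $\iota\psi'$ to be $\hG$-completely reducible. This is automatic: both take values in $\hT(\Omega)$, so the Zariski closure of each image in $\hG_\Omega$ sits inside $\hT_\Omega$ and has identity component a subtorus, which is reductive. Complete reducibility then follows from Serre's criterion \cite[Proposition 4.2]{Ser05}. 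The theorem therefore produces $g \in \hG(\Omega)$ such that $g\,\iota\psi\,g^{-1} = \iota\psi'$.

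The key step is then a scheme-theoretic centralizer computation. Conjugation by $g$ gives
\[
Z_{\hG}(\iota\psi')_\Omega \;=\; g\cdot Z_{\hG}(\iota\psi)_\Omega\cdot g^{-1} \;=\; g\,\hT_\Omega\, g^{-1},
\]
using the hypothesis. On the other hand, since $\iota\psi'$ takes values in the commutative group $\hT(\Omega)$, the torus $\hT_\Omega$ centralizes the image of $\iota\psi'$, so $\hT_\Omega \subseteq Z_{\hG}(\iota\psi')_\Omega = g\,\hT_\Omega\,g^{-1}$. Both are connected smooth subgroups of $\hG_\Omega$ of the same dimension, hence are equal, so $g$ normalizes $\hT_\Omega$. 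Letting $w \in W$ denote the class of $g$, the relation $g\,\iota\psi\,g^{-1} = \iota\psi'$ translates to $\psi' = w\cdot\psi$, which is the desired $W$-conjugacy.

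There is no real obstacle here: the regularity hypothesis $Z_{\hG}(\psi) = \hT_\Omega$ is precisely what is needed to pin down the conjugator in the Weyl normalizer, and the required $\hG$-complete reducibility of characters into a torus is automatic.
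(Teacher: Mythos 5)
Your proof is correct and follows essentially the same route as the paper's: apply Theorem \ref{thm_pseudocharacters_biject_with_representations_over_fields} to produce a conjugating element $g$, then use the hypothesis $Z_{\hG}(\psi) = \hT_\Omega$ together with $\hT_\Omega \subseteq Z_{\hG}(\iota\psi')$ to conclude that $g$ normalizes $\hT_\Omega$ and hence represents an element of $W$. The only difference is cosmetic — you spell out the complete-reducibility of characters valued in a torus, which the paper takes for granted.
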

\begin{proof}
The first condition clearly implies the second. For the converse direction, we note that both $\psi$ and $\psi'$ are $\hG$-completely reducible, so it follows from Theorem \ref{thm_pseudocharacters_biject_with_representations_over_fields} that we can find $g \in \hG(\Omega)$ with $g \psi' g^{-1} = \psi$. In particular, the centralizer of the image of $\psi'$ is a maximal torus of $\hG_\Omega$; since the image is contained inside $\hT(\Omega)$, by assumption, we find that $Z_{\hG}(\psi') = \hT_\Omega$. It then follows  that $g$ normalizes $\hT_\Omega$, hence that $g$ represents an element of $W$.
\end{proof}
The following proposition, which is of a technical nature, will be used in our implementation of the Taylor--Wiles method below; compare \cite[Proposition 5.9]{Tho12}.
\begin{proposition}\label{prop_picking_our_iwahori_spherical_forms}
Let $R$ be a complete Noetherian local $\cO$-algebra with residue field $k$, and let $\Pi$ be a smooth $R[G(K)]$-module such that for each open compact subgroup $V \subset G(K)$, $\Pi^V$ is a finite free $\cO$-module, and $\Pi \otimes_\cO \overline{\bbQ}_l$ is a semisimple admissible $\overline{\bbQ}_l[G(K)]$-module. If $V \subset G(K)$ is an open compact subgroup, define $R_V$ to be the quotient of $R$ that acts faithfully on $\Pi^V$. It is a finite flat local $\cO$-algebra.

Suppose that there exists a homomorphism $\rho_{U_0} : \Gamma_K \to \hG(R_{U_0})$ satisfying the following conditions:
\begin{enumerate}
\item $\rho_{U_0}|_{W_K} \text{ mod }\ffrm_{R_{U_0}} : W_K \to \hG(k)$ is the composition of an unramified character $\overline{\chi}^\vee : W_K \to \hT(k)$ with trivial stabilizer in $W$ with the inclusion $\iota : \hT \to \hG$.
\item For each irreducible admissible $\overline{\bbQ}_l[G(K)]$-module $\pi$ which is a submodule of a representation $i_B^G \chi_\pi$, where $\chi_\pi : T(K) \to \overline{\bbQ}_l^\times$ is an unramified character, let $V_\pi$ denote the $\pi$-isotypic component of $\Pi \otimes_\cO \overline{\bbQ}_l$, and let $R_{U_0, \pi}$ denote the quotient of $R_{U_0}$ which acts faithfully on $\Pi^{U_0} \cap V_\pi$. Let $\rho(\pi)$ denote the representation $\Gamma_K \to \hG(R_{U_0}) \to \hG(R_{U_0, \pi})$. Then the  $\hG$-pseudocharacter of $\rho(\pi)|_{W_K}$ takes values in the scalars 
\[ \overline{\bbQ}_l \subset R_{U_0, \pi} \otimes_\cO \overline{\bbQ}_l \subset \End_{\overline{\bbQ}_l}(V_\pi^{U_0}), \]
 where it is equal to the $\hG$-pseudocharacter of the representation $\iota \chi_\pi^\vee$.
\end{enumerate}
Then there is a canonical isomorphism $(\Pi^{U_0})_{\ffrm_{\overline{\chi}}} \cong \Pi^U$ of $R_{U_0}$-modules.
\end{proposition}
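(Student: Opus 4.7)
The plan is to apply Lemma \ref{lem_hecke_projection_to_U_invariants} to $\Pi$ and the character $\overline{\chi}$. The resulting isomorphism $[U]\colon (\Pi^{U_0})_{\ffrm_{\overline{\chi}}} \xrightarrow{\sim} \Pi^U$ of $\cO$-modules is automatically $R_{U_0}$-linear, because the $R$-action on $\Pi$ commutes with the $\cH_{U_0}$-action, and both sides are then naturally $R_{U_0}$-modules. So the real work is to verify the hypothesis of that lemma, namely that the only maximal ideal of $\cO[X_\ast(T)]^W$ in the support of $\Pi^{U_0} \otimes_\cO k$ is the one corresponding to the $W$-orbit of $\overline{\chi}$.

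I would proceed as follows. Decompose $\Pi \otimes_\cO \overline{\bbQ}_l = \bigoplus_\pi V_\pi$ into isotypic components. For each $\pi$ with $V_\pi^{U_0} \neq 0$, Lemma \ref{lem_action_of_abelian_algebra_on_iwahori_invariants} provides an unramified character $\chi_\pi\colon T(K) \to \overline{\bbQ}_l^\times$ (unique up to $W$) such that $\pi$ embeds in $i_B^G \chi_\pi$, and the characters of $\cO[T(K)_0]$ appearing on $V_\pi^{U_0}$ are the $w(\chi_\pi)$, $w \in W$. Because $\Pi^{U_0}$ is a $\cH_{U_0}$-stable, finite free $\cO$-lattice, each $\chi_\pi$ takes values in $\overline{\bbZ}_l^\times$ and has a well-defined reduction $\overline{\chi_\pi}\colon T(K)_0 \to \overline{\bbF}_l^\times$. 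Consequently, the support of $\Pi^{U_0} \otimes_\cO k$ over $\cO[X_\ast(T)]^W$ consists exactly of the maximal ideals attached to the $W$-orbits of the various $\overline{\chi_\pi}$, so the problem reduces to showing that each $\overline{\chi_\pi}$ is $W$-conjugate to $\overline{\chi}$.

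To achieve this, I use hypothesis~(ii): the pseudocharacter $\tr(\rho(\pi)|_{W_K})$, viewed in $R_{U_0,\pi} \otimes_\cO \overline{\bbQ}_l$, is scalar-valued and equals $\tr(\iota \chi_\pi^\vee)$. Since $R_{U_0,\pi}$ is a quotient of $R_{U_0}$, its residue field is $k$, and reducing modulo the maximal ideal of $R_{U_0,\pi}$ together with hypothesis~(i) shows that $\tr(\iota \overline{\chi}^\vee) = \tr(\iota \overline{\chi_\pi}^\vee)$ as $\hG$-pseudocharacters of $W_K$ over $\overline{\bbF}_l$. The trivial-$W$-stabilizer assumption on $\overline{\chi}$ translates, via the dictionary of Lemma \ref{lem_local_langlands_for_split_tori}, into the regularity of $\overline{\chi}^\vee(\Frob)$ in $\hT_k$, and hence $Z_\hG(\iota \overline{\chi}^\vee) = \hT_{\overline{\bbF}_l}$. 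Lemma \ref{lem_torus_characters_with_same_pseudocharacter} applied over $\overline{\bbF}_l$ now forces $\iota \overline{\chi_\pi}^\vee$ to be $W$-conjugate to $\iota \overline{\chi}^\vee$, as required.

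The only non-routine step is this last one, where Lemma \ref{lem_torus_characters_with_same_pseudocharacter} serves as the bridge converting the Galois-theoretic pseudocharacter identity in hypothesis~(ii) into the automorphic statement that each character $\chi_\pi$ governing the Iwahori-invariants of $\Pi$ has residual class in the $W$-orbit of $\overline{\chi}$. The remaining ingredients (the Bernstein presentation, Casselman's description of $\pi^{U_0}$, and the averaging projector $[U]$) have been assembled earlier in the section, so no genuinely new computation is needed.
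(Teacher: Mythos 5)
Your proof is correct and follows essentially the same route as the paper: reduce via Lemma \ref{lem_hecke_projection_to_U_invariants} to showing that every character $\overline{\chi_\pi}$ occurring in $\Pi^{U_0}\otimes_\cO k$ is $W$-conjugate to $\overline{\chi}$, then use hypothesis (ii) to match pseudocharacters over $\overline{\bbF}_l$ and conclude with Lemma \ref{lem_torus_characters_with_same_pseudocharacter}. The only (harmless) difference is that the paper first conjugates $\rho_{U_0}$ into $\hT(R_{U_0})$ via \cite[Exp. IX, 7.3]{SGA3} and the argument of Lemma \ref{lem_local_structure_at_TW_primes} before comparing reductions, whereas you compare the residual pseudocharacters directly using hypothesis (i); both land in the same place.
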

\begin{proof}
The result will follow from Lemma \ref{lem_hecke_projection_to_U_invariants} if we can show that the only maximal ideal of $\cO[X_\ast(T)]^W$ in the support of $\Pi^{U_0} \otimes_\cO k$ is the one corresponding to the $W$-orbit of $\overline{\chi}$. This follows from the existence of $\rho_{U_0}$ and the compatibility condition on its pseudocharacter, as we now explain.

By \cite[Exp. IX, 7.3]{SGA3} and the argument of Lemma \ref{lem_local_structure_at_TW_primes}, we can assume, after perhaps conjugating $\rho_{U_0}$ by an element of $\hG(R_U)$ with trivial image in $\hG(k)$, that $\rho_{U_0}|_{W_K} = \iota \chi^\vee$ for some character $\chi^\vee : W_K \to \hT(R_U)$. If $\pi$ is an irreducible admissible $\overline{\bbQ}_l[G(K)]$-module such that $\pi^{U_0} \neq 0$, then it can be written as a submodule of $i_B^G \chi_\pi$ for some character $\chi_\pi : T(K)_0 \to \overline{\bbQ}_l^\times$. If further $(\Pi^{U_0} \otimes_\cO \overline{\bbQ}_l) \cap V_\pi \neq 0$, then the image of $\Pi^{U_0}$ under the $\cH_{U_0}$-equivariant projection $\Pi^{U_0} \otimes_\cO \overline{\bbQ}_l \to V_\pi^{U_0}$ is an $\cO$-lattice. This implies that $\chi_\pi$ must in fact take values in $\overline{\bbZ}_l^\times$. We need to show that $\chi_\pi \text{ mod }\ffrm_{\overline{\bbZ}_l}$ and $\overline{\chi}$ are $W$-conjugate as characters $T(K) \to \overline{\bbF}_l^\times$. 

Let $\chi(\pi) : T(K) \to R_{U_0, \pi}^\times$ denote the image of $\chi$ under the quotient $R_{U_0} \to R_{U_0, \pi}$. The point $(ii)$ above means that the compositions
\[ \iota \chi_\pi^\vee : W_K \to \hT(\overline{\bbZ}_l) \to \hG(\overline{\bbZ}_l) \]
and
\[ \iota \chi(\pi)^\vee : W_K \to \hT(R_{U_0, \pi}) \to \hG(R_{U_0, \pi}) \]
have the same associated pseudocharacter, which therefore takes values in $\overline{\bbZ}_l$ (viewed as a subring of the scalar endomorphisms inside $\End_{\overline{\bbQ}_l}(V_\pi^{U_0})$). This implies in particular that $\iota \chi_\pi^\vee \text{ mod }\ffrm_{\overline{\bbZ}_l}$ and $\iota \chi^\vee \text{ mod }\ffrm_{R_{U_0}} = \iota \chi(\pi)^\vee \text{ mod }\ffrm_{R_{U_0, \pi}}$ have the same associated $\hG$-pseudocharacter over $\overline{\bbF}_l$; and this implies by Lemma \ref{lem_torus_characters_with_same_pseudocharacter} that they are in fact $W$-conjugate. Applying again the bijection of Lemma \ref{lem_local_langlands_for_split_tori} now concludes the proof.
\end{proof}
We now state another proposition, analogous to \cite[Proposition 5.12]{Tho12}, that will be used in our implementation of the Taylor--Wiles method. A similar argument has been used by Guerberoff \cite{Gue11}.
\begin{proposition}\label{prop_picking_out_pro-p_spherical_forms}
Let $R$ be a complete Noetherian local $\cO$-algebra with residue field $k$, and let $\Pi$ be a smooth $R[G(K)]$-module such that for each open compact subgroup $V \subset G(K)$, $\Pi^V$ is a finite free $\cO$-module, and $\Pi \otimes_\cO \overline{\bbQ}_l$ is a semisimple admissible $\overline{\bbQ}_l[G(K)]$-module. If $V \subset G(K)$ is an open compact subgroup, define $R_V$ to be the quotient of $R$ that acts faithfully on $\Pi^V$, a finite flat local $\cO$-algebra. 

Suppose that there exists a homomorphism $\rho_{U_1} : \Gamma_K \to \hG(R_{U_1})$ satisfying the following conditions:
\begin{enumerate}
\item $\rho_{U_1}|_{W_K} \text{ mod }\ffrm_{R_{U_1}} : W_K \to \hG(k)$ is the composition of an unramified character $\overline{\chi}^\vee : W_K \to \hT(k)$ with trivial stabilizer in $W$ with the inclusion $\iota : \hT \to \hG$. (After conjugating $\rho_{U_1}$ by an element of $\hG(R_{U_1})$ with trivial image in $\hG(k)$, we can then assume that $\rho_{U_1}|_{W_K} = \iota \chi^\vee$ for a uniquely determined character $\chi : T(K) \to R_{U_1}^\times$.)
\item For each irreducible admissible $\overline{\bbQ}_l[G(K)]$-module $\pi$ which is a submodule of a representation $i_B^G \chi_\pi$, where $\chi_\pi : T(K) \to \overline{\bbQ}_l^\times$ is a character which factors through the quotient $T(K) \to T(K)_l$, let $V_\pi$ denote the $\pi$-isotypic component of $\Pi \otimes_\cO \overline{\bbQ}_l$, and let $R_{U_1, \pi}$ denote the quotient of $R_{U_1}$ which acts faithfully on $\Pi^{U_1} \cap V_\pi$. Let $\rho(\pi)$ denote the representation $\Gamma_K \to \hG(R_{U_1}) \to \hG(R_{U_1, \pi})$. Then the pseudocharacter of $\rho(\pi)|_{W_K}$ takes values in the scalars 
\[ \overline{\bbQ}_l \subset R_{U_1, \pi} \otimes_\cO \overline{\bbQ}_l \subset \End_{\overline{\bbQ}_l}(V_{\pi}^{U_1}), \]
 where it is equal to the pseudocharacter of the representation $\iota \chi_\pi^\vee$.
\end{enumerate}
Consider the two actions of the group $T(\cO_K)_l$ on  $(\Pi^{U_1})_{\ffrm_{\overline{\chi}}}$ defined as follows. The first is via the canonical isomorphism $T(\cO_K)_l \cong U_0 / U_1$. The second is defined as in Lemma \ref{lem_diamond_operators_in_deformation_ring_at_TW_primes} using the restriction of the character $\chi : T(K)_l \to R_{U_1}^\times$ to $T(\cO_K)_l$. Under the above conditions, these two actions are the same.
\end{proposition}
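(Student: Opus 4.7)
The plan is to reduce the comparison of the two $T(\cO_K)_l$-actions to a computation on each irreducible constituent of $\Pi \otimes_\cO \overline{\bbQ}_l$, where both actions can be identified using the results of \S\ref{sec_local_calculation}. First, since $\Pi^{U_1}$ is a finite free $\cO$-module and both actions are $\cO$-linear endomorphisms commuting with the $R_{U_1}$-structure, it suffices to prove the equality after tensoring with $\overline{\bbQ}_l$. By assumption, $\Pi \otimes_\cO \overline{\bbQ}_l = \bigoplus_\pi V_\pi$ decomposes as a direct sum of irreducible admissible $\overline{\bbQ}_l[G(K)]$-modules, and each isotypic piece $V_\pi^{U_1}$ is preserved by both actions: the first because it is a subaction of the $G(K)$-action (note that for $t \in T(\cO_K)_l$ the Hecke operator $[U_1 t U_1]$ simply coincides with left translation by $t$, since $t \in U_0$ normalizes $U_1$), and the second because the $R_{U_1}$-module structure commutes with $G(K)$.

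Next, fix an irreducible constituent $\pi$ and assume $(V_\pi^{U_1})_{\ffrm_{\overline{\chi}}} \neq 0$. By Lemma \ref{lem_irreducible_admissibles_with_non-zero_pro-p_invariants}, $\pi \hookrightarrow i_B^G \chi_\pi$ for some character $\chi_\pi : T(K)_l \to \overline{\bbZ}_l^\times$, and the residual character $\overline{\chi}_\pi$ lies in the $W$-orbit of $\overline{\chi}$; since $\overline{\chi}$ has trivial $W$-stabilizer, there is a unique $w \in W$ with $w(\overline{\chi}_\pi) = \overline{\chi}$, and part (iii) of that lemma identifies the first action of $T(K)_l$ on $(V_\pi^{U_1})_{\ffrm_{\overline{\chi}}}$ as scalar multiplication by $w(\chi_\pi)$. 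In particular, the first action of $T(\cO_K)_l$ is via $w(\chi_\pi)|_{T(\cO_K)_l}$.

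For the second action, let $\chi(\pi) : T(K)_l \to R_{U_1,\pi}^\times$ denote the image of $\chi$ in $R_{U_1,\pi}$. By hypothesis (ii), the $\hG$-pseudocharacters of $\iota \chi(\pi)^\vee$ and $\iota \chi_\pi^\vee$ agree when both are viewed with values in $R_{U_1,\pi} \otimes_\cO \overline{\bbQ}_l$. Applying Lemma \ref{lem_torus_characters_with_same_pseudocharacter} over an algebraic closure of $\Frac(R_{U_1,\pi})$, we conclude that there exists $w' \in W$ with $w'(\chi_\pi^\vee) = \chi(\pi)^\vee$, equivalently $w'(\chi_\pi) = \chi(\pi)$ via Lemma \ref{lem_local_langlands_for_split_tori}. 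Reducing modulo the maximal ideal of $R_{U_1,\pi}$ yields $w'(\overline{\chi}_\pi) = \overline{\chi}$, so by the uniqueness of $w$ we get $w' = w$, hence $\chi(\pi) = w(\chi_\pi)$. Thus the second action of $T(\cO_K)_l$ on the $\pi$-component is also via $w(\chi_\pi)|_{T(\cO_K)_l}$, matching the first. Summing over $\pi$ concludes the proof.

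The main obstacle is the matching of the two Weyl elements $w$ (arising from Hecke-theoretic localization) and $w'$ (arising from Galois-theoretic $W$-ambiguity of $\chi_\pi$), which is ultimately forced by the triviality of the $W$-stabilizer of $\overline{\chi}$ together with the compatibility between residual characters guaranteed by hypothesis (i). Everything else is bookkeeping using the duality of Lemma \ref{lem_local_langlands_for_split_tori} and the structure theory already developed.
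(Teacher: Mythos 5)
Your overall strategy --- decompose into $\pi$-isotypic pieces, identify the first action via Lemma \ref{lem_irreducible_admissibles_with_non-zero_pro-p_invariants}(iii) as multiplication by $w(\chi_\pi)$, and compare with $\chi(\pi)$ using hypothesis (ii) and Lemma \ref{lem_torus_characters_with_same_pseudocharacter} --- is the same as the paper's, and your identification $w'=w$ via the trivial $W$-stabilizer of $\overline{\chi}$ is correct. But the step where you ``apply Lemma \ref{lem_torus_characters_with_same_pseudocharacter} over an algebraic closure of $\Frac(R_{U_1,\pi})$'' has a genuine gap. The ring $R_{U_1,\pi}$ is a finite flat local $\cO$-algebra which need not be a domain, and $R_{U_1,\pi}\otimes_\cO\overline{\bbQ}_l$ is an Artinian $\overline{\bbQ}_l$-algebra which in general is neither a domain nor reduced, so there is no field into which it embeds. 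Lemma \ref{lem_torus_characters_with_same_pseudocharacter} is stated over an algebraically closed field, so the most you can extract from the pseudocharacter equality is that $p\circ\chi(\pi)=p\circ w(\chi_\pi)$ for every projection $p: R_{U_1,\pi}\otimes_\cO\overline{\bbQ}_l\to\overline{\bbQ}_l$, i.e.\ that $\chi(\pi)$ and $w(\chi_\pi)$ agree only modulo the nilradical. Since the second action on $(\Pi^{U_1})_{\ffrm_{\overline{\chi}}}\cap V_\pi$ is literally via $\chi(\pi)|_{T(\cO_K)_l}$ valued in $R_{U_1,\pi}$, which acts faithfully on $\Pi^{U_1}\cap V_\pi$, equality modulo nilpotents is not by itself enough to conclude.

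The paper closes this gap with one further observation: $T(\cO_K)_l$ is a finite group, so the characters $\chi(\pi)|_{T(\cO_K)_l}$ and $w(\chi_\pi)|_{T(\cO_K)_l}$ take values in roots of unity, hence in the maximal \'etale $\overline{\bbQ}_l$-subalgebra $A_\pi\subset R_{U_1,\pi}\otimes_\cO\overline{\bbQ}_l$ (a unit of the form $1+n$ with $n\neq 0$ nilpotent has infinite order in characteristic $0$). Since $A_\pi$ maps isomorphically onto the maximal \'etale (equivalently, reduced) quotient of this Artinian algebra, where the two characters have already been shown to agree, they agree on the nose. You need to add this step; note that it only rescues the restrictions to $T(\cO_K)_l$, which is exactly what the statement requires --- the full characters $\chi(\pi)$ and $w(\chi_\pi)$ of $T(K)_l$ need not coincide in $R_{U_1,\pi}\otimes_\cO\overline{\bbQ}_l$.
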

\begin{proof}
The proof is similar to, but not exactly the same as, the proof of Proposition \ref{prop_picking_our_iwahori_spherical_forms}. Let $\pi$ be an irreducible admissible $\overline{\bbQ}_l[G(K)]$-module, submodule of $i_B^G \chi_\pi$, where $\chi_\pi : T(K)_l \to \overline{\bbQ}_l^\times$ is a character. Let $\chi(\pi)$ denote the image of $\chi$ under the quotient $R_{U_1} \to R_{U_1, \pi}$. To prove the proposition, it is enough to show that the two actions of $T(\cO_K)_l$ on $(\Pi^{U_1})_{\ffrm_{\overline{\chi}}} \cap V_\pi$, one via the inclusion $T(\cO_K)_l \to T(K)_l$ and the other via the map $\chi(\pi)|_{T(\cO_K)_l} : T(\cO_K)_l \to  R_{U_1, \pi}^\times$, are the same. We can assume that $(\Pi^{U_1})_{\ffrm_{\overline{\chi}}} \cap V_\pi \neq 0$, which implies that $\chi_\pi$ takes values in $\overline{\bbZ}_l^\times$.

The point (ii) above says that the compositions
\[ \iota \chi_\pi^\vee : W_K \to \hT(\overline{\bbZ}_l) \to \hG(\overline{\bbZ}_l) \]
and
\[ \iota \chi(\pi)^\vee : W_K \to \hT(R_{U_1, \pi}) \to \hG(R_{U_1, \pi}) \]
have the same associated $\hG$-pseudocharacter. In particular, Lemma \ref{lem_torus_characters_with_same_pseudocharacter} shows that there exists a (necessarily unique) element $w \in W$ such that $\overline{\chi} = w(\overline{\chi}_\pi)$, and hence $\ffrm_{\overline{\chi}} = \ffrm_{w(\overline{\chi}_\pi)}$. By Lemma \ref{lem_irreducible_admissibles_with_non-zero_pro-p_invariants}, the action of $T(K)_l$ on $(\Pi^{U_1})_{\ffrm_{\overline{\chi}}} \cap V_\pi$ is via the character $w(\chi_\pi)$. Another application of Lemma \ref{lem_torus_characters_with_same_pseudocharacter} shows that for any projection $p : R_{U_1, \pi} \otimes_\cO \overline{\bbQ}_l \to \overline{\bbQ}_l$, the two characters $p \chi(\pi)$ and $p w(\chi_\pi)$ are equal. We must show that $\chi(\pi)|_{T(\cO_K)_l}$ and $w (\chi_\pi)|_{T(\cO_K)_l}$ are equal as characters $T(\cO_K)_l \to (R_{U_1, \pi} \otimes_\cO \overline{\bbQ}_l)^\times$. 

To see this, let $A_\pi$ denote the maximal \'etale $\overline{\bbQ}_l$-subalgebra of $R_{U_1, \pi} \otimes_\cO \overline{\bbQ}_l$; it maps isomorphically to the maximal \'etale quotient  $\overline{\bbQ}_l$-algebra of the Artinian $\overline{\bbQ}_l$-algebra $R_{U_1, \pi} \otimes_\cO \overline{\bbQ}_l$, over which we have shown that $\chi(\pi)|_{T(\cO_K)_l}$ and $w(\chi_\pi)|_{T(\cO_K)_l}$ are indeed equal. The proof is now complete on observing that these characters, being of finite order, in fact take values in $A_\pi^\times$. 
\end{proof}
\section{Automorphic forms}\label{sec_automorphic_forms}

In this section, the longest of this paper, we discuss spaces of automorphic forms with integral structures and prove an automorphy lifting theorem. We fix notation as follows: let $X$ be a smooth, projective, geometrically connected curve over the finite field $\bbF_q$ of residue characteristic $p$, and let $K = \bbF_q(X)$. Let $G$ be a split semisimple group over $\bbF_q$, and fix a choice of split maximal torus and Borel subgroup $T \subset B \subset G$. We write $\hG$ for the dual group of $G$ (considered as a split reductive group over $\bbZ$); it is equipped with a split maximal torus and Borel subgroup $\hT \subset \hB \subset \hG$, and there is a canonical identification $X_\ast(T) = X^\ast(\hT)$ (see \S \ref{sec_reductive_groups_over_Z}).

 The section is divided up as follows. In \S \ref{sec_cusp_forms_and_hecke_algebras} we establish basic notation and describe the Satake transform (which relates unramified Hecke operators on $G$ and on its Levi subgroups). In \S \ref{sec_summary_of_lafforgue} we summarize the work of V. Lafforgue, constructing Galois representations attached to cuspidal automorphic forms, in a way which is suitable for our intended applications. In \S \ref{sec_automorphic_forms_free_over_diamond_operators}, we prove an auxiliary result stating that under suitable hypotheses, certain spaces of cuspidal automorphic forms with integral structures are free over rings of diamond operators. Finally, in \S \ref{sec_R_equals_B} we combine Lafforgue's work with the Taylor--Wiles method, using the technical results established in \S \ref{sec_automorphic_forms_free_over_diamond_operators}, to prove our automorphy lifting result.

\subsection{Cusp forms and Hecke algebras}\label{sec_cusp_forms_and_hecke_algebras}

\begin{proposition}
\begin{enumerate}
\item $G(K)$ is a discrete subgroup of $G(\bbA_K)$.
\item For any compact open subgroup $U  \subset \prod_v G(\cO_{K_v})$, and for any $g \in G(\bbA_K)$, the intersection $gG(K)g^{-1} \cap U$ (taken inside $G(\bbA_K)$) is finite. 
\end{enumerate}
\end{proposition}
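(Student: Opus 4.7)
The plan is to reduce (i) to the classical fact that $K$ is discrete in $\bbA_K$, and then to obtain (ii) as a formal consequence of (i) together with local compactness.

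For part (i), I would choose a closed immersion of the affine scheme $G$ into some affine space $\bbA^N_{\bbF_q}$, coming from a presentation $\bbF_q[G] = \bbF_q[x_1,\dots,x_N]/I$. Then $G(\bbA_K)$ is a closed subset of $\bbA_K^N$, and the topology on $G(\bbA_K)$ coincides with the subspace topology inherited from the adelic topology on $\bbA_K^N$ (one checks that the topology defined via the integral model $G_{\cO_{K_v}}$ at almost all places agrees with this subspace topology, using that for large enough finite $S$ the image of $G(\prod_{v\notin S}\cO_{K_v})$ lies in $\prod_{v\notin S}\cO_{K_v}^N$). The standard consequence of the product formula \(equivalently, the compactness of $\bbA_K/K$\) is that $K$ is a discrete subgroup of $\bbA_K$; taking products gives that $K^N$ is discrete in $\bbA_K^N$. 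Since $G(K) = G(\bbA_K)\cap K^N$ inside $\bbA_K^N$, it follows that $G(K)$ is discrete in $G(\bbA_K)$.

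For part (ii), I would exploit that $G(\bbA_K)$ is a locally compact Hausdorff topological group, so the discrete subgroup $G(K)$ is automatically closed. Conjugation by a fixed element $g \in G(\bbA_K)$ is a homeomorphism of $G(\bbA_K)$, so the conjugate $gG(K)g^{-1}$ is likewise closed and discrete. The intersection $gG(K)g^{-1}\cap U$ is then a closed discrete subset of the compact set $U$, and hence finite.

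Neither step should present serious difficulty; the only point requiring care is the verification that the adelic topology on $G(\bbA_K)$ really is the subspace topology from $\bbA_K^N$ under a chosen closed immersion $G\hookrightarrow\bbA^N$. This is where one uses that $G$ is affine of finite type and that the restricted product topology is compatible with closed conditions cut out by polynomial equations with coefficients in $\cO_{K_v}$ for almost all $v$.
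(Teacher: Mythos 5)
Your argument is correct and follows the same route as the paper: part (i) is reduced to the discreteness of $K$ in $\bbA_K$ via an affine embedding of $G$, and part (ii) is the formal consequence that a conjugate of a closed discrete subgroup meets the compact set $U$ in a finite set. The paper states exactly this in two sentences; your additional care about the compatibility of the adelic topology with the embedding $G \hookrightarrow \bbA^N$ is the right point to check and is handled correctly.
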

\begin{proof}
The discreteness of $G(K)$ in $G(\bbA_K)$ follows from the discreteness of $K$ in $\bbA_K$ (note that $G$ is an affine group scheme). This implies that all intersections $g G(K) g^{-1} \cap U$ are finite. 
\end{proof}
We will define spaces of automorphic forms with integral coefficients. If $R$ is any $\bbZ[1/p]$-algebra and $U \subset G(\bbA_K)$ is any open compact subgroup, then we define $X_U = G(K) \backslash G(\bbA_K) / U$ and:
\begin{itemize}
\item $C(U, R)$ to be the $R$-module of functions $f : X_U \to R$;
\item $C_c(U, R) \subset C(U, R)$ to be the $R$-submodule of functions $f$ which have finite support;
\item and $C_{\text{cusp}}(U, R) \subset C(U, R)$ to be the $R$-submodule of functions $f$ which are cuspidal, in the sense that for all proper parabolic subgroups $P \subset G$ and for all $g \in G(\bbA_K)$, the integral
\[ \int_{n \in N(K) \backslash N(\bbA_K)} f(ng)\, dn \]
vanishes, where $N$ is the unipotent radical of $P$.
\end{itemize}
This last integral is normalized by endowing $N(K) \backslash N(\bbA_K)$ with its probability Haar measure. It makes sense because we are assuming that $p$ is a unit in $R$.
\begin{proposition}\label{prop_cusp_forms_on_G_have_compact_support}
Suppose that $R$ is a Noetherian $\bbZ[1/p]$-algebra which embeds in $\bbC$. For any open compact subgroup $U \subset G(\bbA_K)$, we have $C_{\text{cusp}}(U, R) \subset C_c(U, R)$, and $C_{\text{cusp}}(U, R)$ is a finite $R$-module. In particular, cuspidal automorphic forms are compactly supported in $X_U$. 
\end{proposition}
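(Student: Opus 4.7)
The plan is to reduce the statement to the classical, complex-analytic version via the embedding $R \hookrightarrow \bbC$, and then invoke Harder's reduction theory for reductive groups over function fields.

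First, I would check that the cuspidality condition is well-behaved with respect to base change. Since $R$ is a $\bbZ[1/p]$-algebra and $N(K)\backslash N(\bbA_K)$ is compact with $p$-primary prounipotent structure at each place, the probability Haar measure integral of the locally constant, $U$-invariant function $n \mapsto f(ng)$ reduces (on each orbit of a small enough open compact subgroup of $N(\bbA_K)$) to a finite $\bbZ[1/p]$-linear combination of values of $f$. Consequently any embedding $R \hookrightarrow \bbC$ induces an injection
\[ C_{\text{cusp}}(U, R) \hookrightarrow C_{\text{cusp}}(U, \bbC), \]
and an $R$-valued function is cuspidal if and only if it is so after composition with this embedding.

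Second, I would appeal to the classical theorem of Harder on reduction theory for a split semisimple group $G$ over the function field $K$: there exists a finite subset $\Omega_U \subset X_U$, depending only on $G$, $K$, and $U$, such that every cuspidal function $f \in C_{\text{cusp}}(U, \bbC)$ has support contained in $\Omega_U$. The key inputs here are (a) reduction theory, which covers $G(K)\backslash G(\bbA_K)$ by finitely many translates of Siegel sets associated to the standard parabolic subgroups, and (b) the vanishing of the constant term along each proper parabolic, which forces cuspidal functions to vanish on the non-compact portion of each Siegel set. Since $U$ is open in $G(\bbA_K)$, the quotient $X_U$ carries the discrete topology, so that ``compact support in $X_U$'' is the same as ``finite support in $X_U$.'' This yields the desired inclusion $C_{\text{cusp}}(U, R) \subset C_c(U, R)$.

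Finally, for finite generation as an $R$-module, the restriction map
\[ C_{\text{cusp}}(U, R) \longrightarrow C(\Omega_U, R) \cong R^{|\Omega_U|} \]
is injective by the support statement, realising $C_{\text{cusp}}(U, R)$ as a submodule of a free $R$-module of finite rank. Since $R$ is Noetherian, $C_{\text{cusp}}(U, R)$ is itself a finitely generated $R$-module. The only nontrivial ingredient is Harder's reduction theory; the remainder of the argument is formal manipulation of the definitions together with the Noetherian hypothesis on $R$, so the main obstacle is simply the invocation of the known reduction-theoretic input.
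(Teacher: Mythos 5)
Your proposal is correct and follows essentially the same route as the paper: both cite Harder's reduction-theoretic result \cite[Corollary 1.2.3]{Har74} giving a finite subset of $X_U$ supporting all complex cusp forms, transfer this to $R$-coefficients via the embedding $R \hookrightarrow \bbC$ (using that the cuspidality integral is a finite $\bbZ[1/p]$-linear combination of values of $f$), and then deduce finite generation from the Noetherian hypothesis. Your write-up just makes explicit the compatibility of cuspidality with the embedding, which the paper leaves implicit.
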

\begin{proof}
If $R = \bbC$, then the stronger statement that there exists a finite subset $Z \subset X_U$ such that all functions $f \in C_{\text{cusp}}(U, \bbC)$ are supported in $Z$ is proved in \cite[Corollary 1.2.3]{Har74}. In general this shows that $C_{\text{cusp}}(U, R)$ is contained in the finite free $R$-module consisting of functions supported on $Z$, and is therefore itself a finite $R$-module.
\end{proof}
Let $\Omega$ be an algebraically closed field of characteristic 0. The $\Omega$-vector space
\[ C_{\text{cusp}}(\Omega) = \ilim_U C_{\text{cusp}}(U, \Omega) \]
has a natural structure of semisimple admissible $\Omega[G(\bbA_K)]$-module. A cuspidal automorphic representation of $G(\bbA_K)$ over $\Omega$ is, by definition, an irreducible admissible $\Omega[G(\bbA_K)]$-module which is isomorphic to a subrepresentation of $C_{\text{cusp}}(\Omega)$. We observe that any cuspidal automorphic representation over $\bbC$ or $\overline{\bbQ}_l$ can in fact be defined over $\overline{\bbQ}$.

If $H$ is any locally profinite group and $U \subset H$ is an open compact subgroup, then we write $\cH(H, U)$ for the algebra of compactly supported $U$-biinvariant functions $f : H \to \bbZ$, with unit $[U]$ (the characteristic function of $U$). The basic properties of this algebra are very well-known, and can be found (for example) in \cite[\S 2.2]{New15}. In particular, if $M$ is a smooth $\bbZ[H]$-module, then the set $M^U$ of $U$-fixed vectors has a canonical structure of $\cH(H, U)$-module.
We will use this most often when $H = G(\bbA_K)$ and $U \subset G(\bbA_K)$ is an open compact subgroup: thus $\cH(G(\bbA_K), U)$ acts on all the spaces $C(U, R)$, $C_c(U, R)$, $C_{\text{cusp}}(U, R)$ via $R$-module homomorphisms, in a way compatible with the natural inclusions. 

If $v$ is a place of $K$, then the Satake isomorphism gives a complete description of the algebra $\cH(G(K_v), G(\cO_{K_v}))$ (see \cite{Gro98}): it is an isomorphism \begin{equation}\label{eqn_satake_isomorphism} \cH(G(K_v), G(\cO_{K_v})) \otimes_\bbZ \bbZ[q_v^{\pm 1/2}] \cong \bbZ[X^\ast(\hT)]^{W(\hG, \hT)}  \otimes_\bbZ \bbZ[q_v^{\pm 1/2}].
\end{equation}
Let $\Omega$ be an algebraically closed field of characteristic 0. If $V$ is an irreducible representation of $\hG_{\Omega}$, then the restriction of its character $\chi_V$ to $\hT$ is an element of $\bbZ[X^\ast(\hT)]^W$ (and these elements form a $\bbZ$-basis for $\bbZ[X^\ast(\hT)]^W$ as $V$ varies). If we fix a choice of square-root $p^{1/2}$ of $p$ (and hence of $q_v$) in $\Omega$, then the Satake isomorphism (\ref{eqn_satake_isomorphism}) determines from this data an operator $T_{V, v} \in \cH(G(K_v), G(\cO_{K_v})) \otimes_\bbZ \Omega$. We can characterize it uniquely using the following property: let $\chi : T(K_v) \to \Omega^\times$ be an unramified character. Then the space $(i_B^G \chi)^{G(\cO_{K_v})}$ is a $\cH(G(K_v), G(\cO_{K_v})) \otimes_\bbZ \Omega$-module, 1-dimensional as $\Omega$-vector space, and we have the equality ($\varpi_v \in \cO_{K_v}$ a uniformizer, $\chi^\vee$ as in Lemma \ref{lem_local_langlands_for_split_tori}):
\begin{equation}\label{eqn_relation_characterizing_satake_isomorphism} \chi_V(\chi^\vee(\Frob_v)) = \text{ eigenvalue of }T_{V, v}\text{ on }(i_B^G \chi)^{G(\cO_{K_v})}. 
\end{equation}
The Satake isomorphism has a relative version that we will also use. Let $P = MN$ be a standard parabolic subgroup of $G$ with its Levi decomposition. We can define a map (the Satake transform)
\[ \cS^G_P : \cH(G(K_v), G(\cO_{K_v}))  \otimes_\bbZ \bbZ[q_v^{\pm 1/2}]\to \cH(M(K_v), M(\cO_{K_v}))  \otimes_\bbZ \bbZ[q_v^{\pm 1/2}] \]
by the formula $(\cS^G_P f)(m) = \delta_P^{1/2}(m) \int_{n \in N(K_v)} f(mn) \, dn$, $\delta_P(m) = | \det \Ad(m)|_{\frn_{K_v}} |_v$. (The Haar measure on $N(K_v)$ is normalized by giving the subgroup $N(\cO_{K_v})$ volume 1.) This is transitive, in the sense that if $Q \subset P$ is another standard parabolic subgroup, then $\cS^G_Q = \cS^M_{Q \cap M} \circ \cS^G_P$. If $P = B$, then $\cS^G_B$ agrees with the usual Satake isomorphism under the identifications $\cH(T(K_v), T(\cO_{K_v})) = \bbZ[X_\ast(T)] = \bbZ[X^\ast(\hT)]$. For any choice of $P$ the map $\cS^G_P$ fits into a commutative diagram
\begin{equation}\label{eqn_normalized_satake_transform} \begin{aligned} \xymatrix{ \cH(G(K_v), G(\cO_{K_v}))\ar[d]_{\cS^G_P}  \otimes_\bbZ \bbZ[q_v^{\pm 1/2}] \ar[r]^-{\cS^G_B} & \bbZ[X^\ast(\hT)]^{W(\hG, \hT)}  \otimes_\bbZ \bbZ[q_v^{\pm 1/2}]\ar[d] \\
\cH(M(K_v), M(\cO_{K_v}))  \otimes_\bbZ \bbZ[q_v^{\pm 1/2}] \ar[r]^-{\cS^M_{B \cap M}} & \bbZ[X^\ast(\hT)]^{W(\hM, \hT)}  \otimes_\bbZ \bbZ[q_v^{\pm 1/2}], }\end{aligned} 
\end{equation}
where the right-hand arrow is the obvious inclusion. The Satake transform has the following compatibility with normalized induction: let $\pi$ be an irreducible admissible $\Omega[M(K_v)]$-module such that $\pi^{M(\cO_{K_v})} \neq 0$. Then this space is 1-dimensional and $\cH(M(K_v), M(\cO_{K_v}))$ acts on it via a character $\phi : \cH(M(K_v), M(\cO_{K_v})) \to \Omega$. Moreover, the normalized induction $\Pi = i_P^G \pi$ satisfies $\dim_{\Omega} \Pi^{G(\cO_{K_v})} = 1$, and the algebra $\cH(G(K_v), G(\cO_{K_v}))$ acts on this space via the character $\phi \circ \cS^G_P$. Specializing once more to the case $P = B$, we recover the relation (\ref{eqn_relation_characterizing_satake_isomorphism}). 

The existence of the Satake isomorphism leads to the unramified local Langlands correspondence, as a consequence of the following proposition.
\begin{proposition}\label{prop_chevalley_restriction_over_Z}
Let $M$ be a standard Levi subgroup of $G$. Then the natural restriction map $\bbZ[\hM]^\hM \to \bbZ[\hT]^{W(\hM, \hT)}$ is an isomorphism.
\end{proposition}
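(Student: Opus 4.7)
The plan is to show both sides are free $\bbZ$-modules of the same rank, then establish injectivity by reduction to the classical statement over $\bbQ$, and surjectivity via the integral theory of Weyl modules for the split reductive group $\hM$ over $\bbZ$. Throughout, we use the isomorphism $\bbZ[\hT] = \bbZ[X^\ast(\hT)]$, and fix notation $W = W(\hM,\hT)$.

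First I would note that $\bbZ[\hT]^W$ is free over $\bbZ$: a basis is given by the orbit sums $m_\lambda = \sum_{\mu \in W \cdot \lambda} e^\mu$, indexed by the set $X^\ast(\hT)^+$ of dominant weights with respect to the chosen Borel $\hB \cap \hM \subset \hM$. Next, the restriction map $\bbZ[\hM]^\hM \to \bbZ[\hT]^W$ is injective. Indeed, $\bbZ[\hM]^\hM$ is a submodule of the torsion-free $\bbZ$-module $\bbZ[\hM]$, and after $\otimes_\bbZ \bbQ$ the map becomes the classical Chevalley restriction isomorphism $\bbQ[\hM]^\hM \cong \bbQ[\hT]^W$ (see for instance \cite[\S 2]{Bar85} or Steinberg's theorem, which applies since $\hM_\bbQ$ is a connected reductive group over a field of characteristic $0$ and $\hT_\bbQ$ is a split maximal torus).

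For surjectivity, I would invoke the integral theory of Weyl modules. For each $\lambda \in X^\ast(\hT)^+$, the split reductive group $\hM$ over $\bbZ$ admits a Weyl module $V(\lambda)$, a finite free $\bbZ$-module with a representation of $\hM$ whose character $\chi_\lambda \in \bbZ[\hM]^\hM$ restricts on $\hT$ to the Weyl character, satisfying the Weyl character formula; concretely, one may take $V(\lambda) = H^0(\hM/\hB\cap\hM, \cL_{-w_0\lambda})^\vee$ with $w_0$ the longest Weyl element, which is defined over $\bbZ$ by \cite[Part~II, Ch.~8]{Jan03}. The restriction $\chi_\lambda|_{\hT}$ then expands as $m_\lambda + \sum_{\mu < \lambda} c_{\lambda,\mu}\, m_\mu$ in the orbit-sum basis, the sum running over dominant weights strictly less than $\lambda$ in the usual partial order. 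This unitriangular relation implies that $\{\chi_\lambda|_{\hT}\}_{\lambda \in X^\ast(\hT)^+}$ is itself a $\bbZ$-basis of $\bbZ[\hT]^W$, so the restriction map is surjective, hence an isomorphism.

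I expect the only real obstacle to be bookkeeping around the integral existence and character formula for Weyl modules; everything else is formal given injectivity over $\bbQ$ and the freeness of both sides. An alternative, which avoids invoking representation theory, would be to cite directly that for a split reductive group scheme $\hM$ over any base ring $R$ the morphism $\hM \to \hT \dquot W$ (the adjoint quotient) identifies $R[\hM]^\hM$ with $R[\hT]^W$; this is proved in \cite[Exp.~XXII]{SGA3} and would dispatch the proposition immediately, though the Weyl-module argument is arguably more transparent and self-contained.
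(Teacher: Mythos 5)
Your proposal is correct and follows essentially the same route as the paper: injectivity is checked after extending scalars to characteristic zero, and surjectivity comes from the fact that the characters of the highest-weight representations of $\hM_{\bbQ}$ span $\bbZ[\hT]^{W(\hM,\hT)}$ and lift to $\bbZ[\hM]^{\hM}$ via integral structures (the paper cites admissible lattices from \cite[I.10.4, Lemma]{Jan03} where you use Weyl modules, but these give the same characters). The unitriangularity with respect to the orbit-sum basis that you spell out is exactly the reason behind the paper's spanning claim, so the two arguments coincide in substance.
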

\begin{proof}
This is Chevalley's restriction theorem over $\bbZ$. The injectivity can be checked after extending scalars to $\overline{\bbQ}$. The surjectivity follows from the fact that the ring $\bbZ[\hT]^{W(\hM, \hT)}$ is spanned as a $\bbZ$-module by the restrictions to $\hT$ of the characters of the irreducible highest weight representations of $\hM_{\bbQ}$; and these representation admit $\hM$-stable $\bbZ$-lattices (see \cite[I.10.4, Lemma]{Jan03}), so their characters lie in $\bbZ[\hM]^{\hM}$. This completes the proof.
\end{proof}
The isomorphism classes of irreducible admissible $\bbC[G(K_v)]$-modules $\pi$ with $\pi^{G(\cO_{K_v})} \neq 0$ are in bijection with the homomorphisms $\cH(G(K_v), G(\cO_{K_v})) \to \bbC$ (see \cite[Ch. 1, 4.3]{Bus06}) . The above proposition, combined with the Satake isomorphism, shows that these are in bijection with the set $(\hG \dquot \hG)(\bbC)$, itself in bijection (see \S \ref{sec_invariants_over_a_field}) with the set of $\hG(\bbC)$-conjugacy classes of semisimple elements of $\hG(\bbC)$, or equivalently the equivalence classes of Frobenius-semisimple and unramified homomorphisms $W_{K_v} \to \hG(\bbC)$: this is the unramified local Langlands correspondence. The same discussion applies over any algebraically closed field $\Omega$ of characteristic 0 which is equipped with a $\bbZ[q^{\pm 1/2}]$-algebra structure. If $\pi$ is an irreducible admissible $\Omega[G(K_v)]$-module with non-zero $G(\cO_{K_v})$-invariants, then we will write $\cS(\pi) \in (\hG \dquot \hG)(\Omega)$ for the image in this quotient of the corresponding semisimple conjugacy class. We will also use the same notation with $G$ replaced by a standard Levi subgroup $M$.

We now discuss twisting of unramified representations. Let $\Omega$ be an algebraically closed field of characteristic 0. For any standard parabolic subgroup $P = MN$ of $G$, the dual torus of the split torus $C_M$ (the cocentre of $M$) is canonically identified with $Z_{\hM}^\circ$ (the connected component of the centre of $\hM$). We write $M(K_v)^1 \subset M(K_v)$ for the subgroup $\{ m \in M(K_v) \mid \forall \chi \in X^\ast(C_M), | \chi(m)|_v = 1\} $. Since $M$ is split, $M(K_v) \to C_M(K_v)$ is surjective and the quotient $M(K_v) / M(K_v)^1$ is isomorphic to the quotient of $C_M(K_v)$ by its maximal compact subgroup. There is a canonical isomorphism
 \begin{equation} M(K_v) / M(K_v)^1 \cong X_\ast(C_M) \cong X^\ast(Z_{\hM}^\circ), 
 \end{equation}
hence
\begin{equation}\label{eqn_cocenter_and_center}
\Hom(M(K_v) / M(K_v)^1, \Omega^\times) \cong Z_{\hM}^\circ(\Omega). 
\end{equation}
This isomorphism has the following reinterpretation in terms of the unramified local Langlands correspondence. The centre $Z_{\hM}$ acts on $\hM$ by left multiplication in a way commuting with the adjoint action of $\hM$; this action therefore passes to the quotient $\hM \dquot \hM$. Suppose that $\pi$ is an irreducible admissible $\Omega[M(K_v)]$-module with $\pi^{M(\cO_{K_v})} \neq 0$, and let $\psi : M(K_v) / M(K_v)^1 \to \Omega^\times$ correspond to the element $z_\psi \in Z_{\hM}^\circ(\Omega)$ under the isomorphism (\ref{eqn_cocenter_and_center}).  The representation $\pi \otimes \psi$ also has non-zero $M(\cO_{K_v})$-fixed vectors, and we have the equality
\begin{equation}\label{eqn_twisting_satake_parameters} \cS(\pi \otimes \psi) =  z_\psi \cdot \cS(\pi)
\end{equation}
inside $(\hM \dquot \hM)(\Omega)$.

There is also a global version of this twisting construction. Let $\| \cdot \| : K^\times \backslash \bbA_K^\times \to \bbR_{>0}$ denote the norm character, and define for any standard Levi subgroup $M$ of $G$
\[ M(\bbA_K)^1 = \{ m \in M(\bbA_K) \mid \forall \chi \in X^\ast(C_M), \text{ } \| \chi(m) \| = 1 \}. \]
The pairing $\langle m, \chi \rangle = - \log_q \| \chi(g) \|$ gives rise to an isomorphism $M(\bbA_K) / M(\bbA_K)^1 \cong \Hom(X^\ast(C_M), \bbZ) \cong X_\ast(C_M) \cong X^\ast(Z_{\hM}^\circ)$, hence an isomorphism
\begin{equation}\label{eqn_global_cocenter_and_center}
\Hom(M(\bbA_K) / M(\bbA_K)^1, \Omega^\times) \cong Z_{\hM}^\circ(\Omega). 
\end{equation}
The compatibility between the local and global isomorphisms (\ref{eqn_cocenter_and_center}) and (\ref{eqn_global_cocenter_and_center}) is expressed by the commutativity of the following diagram:
\[ \xymatrix{ \Hom(M(\bbA_K) / M(\bbA_K)^1, \Omega^\times) \ar[r] \ar[d] & Z_{\hM}^\circ(\Omega) \ar[d] \\ \Hom(M(K_v) / M(K_v)^1, \Omega^\times) \ar[r] & Z_{\hM}^\circ(\Omega), } \]
where the left vertical arrow is given by restriction and the right vertical arrow is given by multiplication by the degree $[k(v) : \bbF_q]$. In particular, if $\pi$ is an irreducible admissible $\Omega[M(\bbA_K)]$-module and $\psi : M(\bbA_K) / M(\bbA_K)^1 \to \Omega^\times$ is a character corresponding to the element $z_\psi \in Z_{\hM}^\circ(\Omega)$ under the isomorphism (\ref{eqn_global_cocenter_and_center}), and $v$ is a place of $K$ such that $\pi_v^{M(\cO_{K_v})} \neq 0$, then we have the equality
\begin{equation}\label{eqn_global_twisting_satake_parameters} \cS( (\pi \otimes \psi)_v ) =  z_\psi^{[k(v) : \bbF_q]} \cdot \cS(\pi_v) 
\end{equation}
inside $(\hM \dquot \hM)(\Omega)$, which is the global version of the equation (\ref{eqn_twisting_satake_parameters}).
\subsection{Summary of V. Lafforgue's work}\label{sec_summary_of_lafforgue}

We continue with the notation of the previous section, and now summarize some aspects of the construction of Galois representations attached to automorphic forms by V. Lafforgue \cite{Laf12}. Let $l \nmid q$ be a prime. Let $N = \sum_v n_v \cdot v \subset X$ be an effective divisor, and let $U(N) = \ker( \prod_v G(\cO_{K_v}) \to G(\cO_N) )$. Let $T$ denote the set of places $v$ of $K$ for which $n_v$ is non-zero. Lafforgue constructs for each finite set $I$, each tuple $(\gamma_i)_{i \in I} \in \Gamma_K^I$, and each function $f \in \bbZ[\hG^I]^{\hG}$, an operator $S_{I, (\gamma_i)_{i \in I}, f} \in \End_{\overline{\bbQ}_l}(C_{\text{cusp}}(U(N), \overline{\bbQ}_l))$, called an excursion operator. 
\begin{proposition}\label{prop_properties_of_excursion_operators}
The excursion operators enjoy the following properties:
\begin{enumerate}
\item The $\overline{\bbQ}_l$-subalgebra $\cB(U(N), \overline{\bbQ}_l)$ of $\End_{\overline{\bbQ}_l}(C_{\text{cusp}}(U(N), \overline{\bbQ}_l))$ generated by these operators (for all $I$, $(\gamma_i)_{i \in I}$, and $f$) is commutative. (Note that it is necessarily a finite $\overline{\bbQ}_l$-algebra, because it acts faithfully on the finite-dimensional $\overline{\bbQ}_l$-space $C_{\text{cusp}}(U(N), \overline{\bbQ}_l)$.)
\item The action of excursion operators on $C_{\text{cusp}}(U(N), \overline{\bbQ}_l)$ commutes with the action of the abstract Hecke algebra $\cH(G(\bbA_K), U(N))$.
\item If $v \not\in T$ and $\chi_V \in \bbZ[\hG]^{\hG}$ is the character of an irreducible representation $V$ of $\hG_{\overline{\bbQ}_l}$, then $S_{\{ 0 \}, \Frob_v, \chi_V} = T_{V, v}$ is the unramified Hecke operator corresponding to $V$ and $v$ under the Satake isomorphism. In particular, the algebra of excursion operators contains the algebra generated by all the unramified Hecke operators.
\item If $N \subset N'$ is another effective divisor, then the action of excursion operators commutes with the inclusion $C_{\text{cusp}}(U(N), \overline{\bbQ}_l) \subset C_{\text{cusp}}(U(N'), \overline{\bbQ}_l)$.
\end{enumerate}
\end{proposition}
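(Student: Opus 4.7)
The plan is to invoke V.~Lafforgue's construction directly, since this proposition is a summary of the principal outputs of \cite{Laf12}. I would begin by recalling that the excursion operator $S_{I,(\gamma_i)_{i\in I},f}$ is built from the cohomology $H_{I,V}$ of moduli stacks of $G$-shtukas with $I$ legs, where $V$ is an irreducible representation of $\hG^I$ corresponding to $f$ via the geometric Satake equivalence. The space $H_{I,V}$ carries commuting actions of $\Gamma_K^I$ (through the partial Frobenius morphisms on the legs) and of $\cH(G(\bbA_K),U(N))$ (through Hecke correspondences away from $T$). One then defines $S_{I,(\gamma_i)_{i\in I},f}$ as the composition
\[ C_{\text{cusp}}(U(N),\overline{\bbQ}_l) \xrightarrow{x} H_{I,V} \xrightarrow{(\gamma_i)} H_{I,V} \xrightarrow{\xi} C_{\text{cusp}}(U(N),\overline{\bbQ}_l), \]
where $x,\xi$ are the creation/annihilation maps coming from the unit and counit of the duality between $V$ and its dual (see \cite[\S 4]{Laf12}). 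That these operators actually preserve the cuspidal subspace, and depend only on $f$ and the $\gamma_i$, is the content of \cite[Th\'eor\`eme 0.1 and Proposition 6.2]{Laf12}.

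From this description, the easy assertions follow almost immediately. Property (ii) holds because the creation/annihilation maps and the $\Gamma_K^I$-action are all $\cH(G(\bbA_K),U(N))$-equivariant; I would simply invoke this equivariance, which is built into the construction. Property (iv) comes from the fact that shrinking the level just enlarges the cuspidal subspace in a way compatible with the correspondences defining both the Galois action and the creation/annihilation maps; it is the transition map $H_{I,V,U(N)} \hookrightarrow H_{I,V,U(N')}$ that provides the necessary compatibility. Property (iii) is the Lafforgue--Drinfeld compatibility with the Satake isomorphism: specializing the construction to $I = \{0\}$, $f = \chi_V$, and $\gamma_0 = \Frob_v$ at an unramified place reduces, via the Eichler--Shimura-type relation for shtukas and the normalization in (\ref{eqn_relation_characterizing_satake_isomorphism}), to the action of $T_{V,v}$; this is \cite[Proposition 6.2]{Laf12}.

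The key step, and the most delicate one, is the commutativity statement (i). This is not formal: it is the main theorem of \cite[\S 9--10]{Laf12} and rests on the fusion (factorization) structure on the cohomology of shtukas, which gives natural isomorphisms $H_{I\sqcup J, V\boxtimes W}\cong H_{I,V}\otimes\cdots$ controlling the interaction of excursion data indexed by disjoint sets, together with compatibility of the $\Gamma_K^I$-action with the symmetric group action permuting the legs. My approach would be to quote these facts from \cite{Laf12} rather than reprove them, emphasizing that the commutativity is what allows one to view $\cB(U(N),\overline{\bbQ}_l)$ as a commutative algebra of endomorphisms simultaneously refining the unramified Hecke algebra, which is precisely what is needed for the subsequent deformation-theoretic arguments.

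Thus the proof I would write is essentially a paragraph of citations to \cite{Laf12}, organizing the relevant theorems so that each of (i)--(iv) is explicitly pinned to a statement in that paper; the main ``obstacle'' is purely expository, namely presenting enough of Lafforgue's geometric framework to make the citations intelligible without reproducing it in full.
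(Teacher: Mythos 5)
Your proposal is correct and takes essentially the same route as the paper, which simply states that all four properties follow from the constructions in \cite{Laf12} (modulo the notational difference that the paper uses $f \in \bbZ[\hG^n]^{\hG}$ rather than $f \in \bbZ[\hG^{n+1}]^{\hG \times \hG}$). Your version merely expands the citation into an organized account of where in Lafforgue's framework each property comes from, which is more detailed than, but not different in substance from, the paper's one-line proof.
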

\begin{proof}
These properties follow from the constructions given in \cite{Laf12}. Note that our notation is slightly different (since we take $f \in \bbZ[\hG^n]^{\hG}$ rather than $f \in \bbZ[\hG^{n+1}]^{\hG \times \hG}$).
\end{proof}
The most important feature of the excursion operators is the following:
\begin{theorem}\label{thm_excursion_operators_form_a_pseudocharacter}
Define for each $n \geq 1$ a map $\Theta_{U(N), n} : \bbZ[\hG^n]^{\hG} \to \Map(\Gamma_K^n, \cB(U(N), \overline{\bbQ}_l))$ by 
\[ \Theta_{U(N), n}(f)(\gamma_1, \dots, \gamma_n) = S_{\{1, \dots, n\}, (\gamma_i)_{i=1}^n, f}. \]
Then $\Theta_{U(N)} = (\Theta_{U(N), n})_{n \geq 1}$ is a $\cB(U(N), \overline{\bbQ}_l)$-valued pseudocharacter, which factors through the quotient $\Gamma_K \to \Gamma_{K, T}$. It is continuous when $\cB(U(N), \overline{\bbQ}_l)$ is endowed with its $l$-adic $(\overline{\bbQ}_l$-vector space) topology.
\end{theorem}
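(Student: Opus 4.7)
The theorem is essentially a repackaging of intrinsic properties of the excursion operators already established by V.~Lafforgue; the task is to verify that these properties line up precisely with Definition~\ref{def_pseudocharacter}.

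My plan is to begin by recalling the two fundamental functoriality properties of the excursion operators from \cite{Laf12}: first, naturality in the indexing set, which says that for any map of finite sets $\zeta : J \to I$ (viewed as a morphism in the category of finite sets) and any $f \in \bbZ[\hG^J]^{\hG}$, one has the identity $S_{I, (\gamma_i)_{i \in I}, f^\zeta} = S_{J, (\gamma_{\zeta(j)})_{j \in J}, f}$, where $f^\zeta(g_1,\dots,g_n) = f(g_{\zeta(1)},\dots,g_{\zeta(m)})$; and second, the multiplicativity property that for $f \in \bbZ[\hG^n]^\hG$ and its lift $\hat{f} \in \bbZ[\hG^{n+1}]^{\hG}$, one has $S_{\{1,\dots,n+1\}, (\gamma_1,\dots,\gamma_{n+1}), \hat f} = S_{\{1,\dots,n\}, (\gamma_1,\dots,\gamma_n\gamma_{n+1}), f}$. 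These are precisely axioms (i) and (ii) of Definition~\ref{def_pseudocharacter} for the collection $\Theta_{U(N)} = (\Theta_{U(N), n})_{n \geq 1}$ as defined in the statement, so the verification that $\Theta_{U(N)}$ is a $\hG$-pseudocharacter amounts to citing these identities from Lafforgue's construction. That each $\Theta_{U(N), n}$ is an algebra homomorphism is immediate from the fact that the assignment $f \mapsto S_{I, (\gamma_i)_{i \in I}, f}$ is $\bbZ$-linear and multiplicative in $f$ (for fixed $I$ and fixed tuple $(\gamma_i)_{i \in I}$), which follows from the construction of $S_{I, (\gamma_i)_{i \in I}, f}$ as a composition involving creation/annihilation maps on cohomology of moduli of shtukas indexed by representations of $\hG^I$, together with the commutativity of $\cB(U(N), \overline{\bbQ}_l)$ from Proposition~\ref{prop_properties_of_excursion_operators}(i).

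Next I would address the claim that $\Theta_{U(N)}$ factors through $\Gamma_{K,T}$. Since the excursion operators are defined via partial Frobenius actions and cohomological correspondences on moduli of shtukas with level structure $N$ (supported on $T$), they depend on the $\gamma_i$ only through their action on this cohomology, which in turn is given by a representation of $\Gamma_{K,T}$. Thus for each $f$ and each $i$, the map $\gamma_i \mapsto S_{I,(\gamma_j)_{j},f}$ factors through the quotient $\Gamma_K \to \Gamma_{K,T}$, and applying Lemma~\ref{lem_change_of_base_ring_or_group_for_pseudocharacters}(iii) gives the factorization of the whole pseudocharacter.

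Finally, continuity: for fixed $I$ and $f$, the map $(\gamma_i)_{i \in I} \mapsto S_{I,(\gamma_i),f}$ is continuous when $\cB(U(N),\overline{\bbQ}_l)$ carries the $l$-adic topology induced from the finite-dimensional $\overline{\bbQ}_l$-vector space $\End_{\overline{\bbQ}_l}(C_{\mathrm{cusp}}(U(N),\overline{\bbQ}_l))$. This follows because each excursion operator is obtained by applying the (continuous) Galois action of $(\gamma_i)_{i \in I}$ to a fixed class in the cohomology of moduli of shtukas, followed by fixed linear maps; continuity of the Galois action on this finite-dimensional $\overline{\bbQ}_l$-space then yields the required continuity of $\Theta_{U(N),n}$.

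Essentially no new work is required beyond citing \cite{Laf12}: the main content of the theorem lies in the bookkeeping translating Lafforgue's functorial formalism (which is phrased in terms of maps of finite sets and collapse operations) into the slightly different indexing conventions of Definition~\ref{def_pseudocharacter}. The only mild subtlety, and the place where the greatest care is needed, is to check that the $I$-functoriality in Lafforgue's setup (which is formulated in terms of $\bbZ[\hG^{I+1}]^{\hG \times \hG}$ with an extra ``base'' copy of $\hG$) matches the version with $\bbZ[\hG^n]^{\hG}$ used here; this is done by fixing the base variable to be the identity and using the observation that $\bbZ[\hG^n]^{\hG}$ already encodes all the required $\hG$-invariance.
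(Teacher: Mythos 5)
Your proposal is correct and follows exactly the route the paper takes: the paper's proof is simply a citation of \cite[Proposition-D\'efinition 11.3]{Laf12}, where Lafforgue establishes the functoriality in the finite indexing set, the product relation, the unramifiedness outside $T$, and the continuity, and your write-up is an accurate unpacking of why those properties match axioms (i) and (ii) of Definition \ref{def_pseudocharacter}. The one ``subtlety'' you flag at the end --- reconciling $\bbZ[\hG^{I}]^{\hG}$ with Lafforgue's $\bbZ[\hG^{I+1}]^{\hG\times\hG}$ by fixing the base variable --- is indeed the only translation needed, and the paper acknowledges the same convention difference in the proof of Proposition \ref{prop_properties_of_excursion_operators}.
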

\begin{proof}
See \cite[Proposition-Definition 11.3]{Laf12}. 
\end{proof}
\begin{corollary}\label{cor_existence_of_Lafforgues_Galois_representations}
Let $\frp \subset \cB(U(N), \overline{\bbQ}_l)$ be a prime ideal. Then:
\begin{enumerate} \item There exists a continuous, $\hG$-completely reducible representation $\sigma_\frp : \Gamma_K \to \hG(\overline{\bbQ}_l)$ satisfying the following condition: for all excursion operators $S_{I, (\gamma_i)_{i \in I}, f}$, we have
\begin{equation}\label{eqn_compatibility_with_excursion_operators} f( (\sigma_\frp(\gamma_i))_{i \in I}) = S_{I, (\gamma_i)_{i \in I}, f} \text{ mod }\frp. 
\end{equation}
\item The representation $\sigma_\frp$ is uniquely determined up to $\hG(\overline{\bbQ}_l)$-conjugacy by (\ref{eqn_compatibility_with_excursion_operators}).
\item The representation $\sigma_\frp$ is unramified outside the support $T$ of $N$. If $v \not\in T$, then it satisfies the expected local-global compatibility relation at $v$: for all irreducible representations $V$ of $\hG_{\overline{\bbQ}_l}$, we have $T_{V, v} \in \cB(U(N), \overline{\bbQ}_l)$ and
\[ \chi_V( \sigma_\frp(\Frob_v) ) = T_{V, v} \text{ mod }\frp. \]
\end{enumerate}
\end{corollary}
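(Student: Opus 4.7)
The plan is to build $\sigma_\frp$ by reducing the excursion pseudocharacter of Theorem \ref{thm_excursion_operators_form_a_pseudocharacter} modulo $\frp$ and then invoking the dictionary of Theorem \ref{thm_pseudocharacters_biject_with_representations_over_fields}. Since $\cB(U(N), \overline{\bbQ}_l)$ is a finite-dimensional $\overline{\bbQ}_l$-algebra by Proposition \ref{prop_properties_of_excursion_operators}(i), its residue field at any prime ideal $\frp$ is a finite extension of the algebraically closed field $\overline{\bbQ}_l$, hence is $\overline{\bbQ}_l$ itself. Pushing forward $\Theta_{U(N)}$ along $\cB(U(N), \overline{\bbQ}_l) \to \overline{\bbQ}_l$ (using Lemma \ref{lem_change_of_base_ring_or_group_for_pseudocharacters}(i)) produces a pseudocharacter $\Theta_\frp$ of $\Gamma_K$ over $\overline{\bbQ}_l$; because $\Theta_{U(N)}$ factors through $\Gamma_{K, T}$, so does $\Theta_\frp$ by Lemma \ref{lem_change_of_base_ring_or_group_for_pseudocharacters}(iii).

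Applying Theorem \ref{thm_pseudocharacters_biject_with_representations_over_fields} to $\Theta_\frp$, viewed as a pseudocharacter of $\Gamma_{K, T}$, yields a unique (up to $\hG(\overline{\bbQ}_l)$-conjugacy) $\hG$-completely reducible homomorphism $\sigma_\frp : \Gamma_{K, T} \to \hG(\overline{\bbQ}_l)$ satisfying $\tr \sigma_\frp = \Theta_\frp$; inflating along $\Gamma_K \twoheadrightarrow \Gamma_{K, T}$ gives the desired representation. The compatibility relation in (i) is just the identity $\tr \sigma_\frp = \Theta_\frp$ unpacked: by the definition of $\Theta_{U(N)}$ in Theorem \ref{thm_excursion_operators_form_a_pseudocharacter} we have $\Theta_{\frp, n}(f)(\gamma_1, \dots, \gamma_n) = S_{\{1, \dots, n\}, (\gamma_i)_{i=1}^n, f} \bmod \frp$, while $\tr \sigma_\frp$ evaluates $f$ on the tuple $(\sigma_\frp(\gamma_i))$. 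Uniqueness (part (ii)) follows from the bijectivity asserted in Theorem \ref{thm_pseudocharacters_biject_with_representations_over_fields}. For continuity, I observe that the pushforward of a continuous pseudocharacter along a continuous $\overline{\bbQ}_l$-algebra map remains continuous, and since $\overline{\bbQ}_l$ is an algebraically closed topological field of characteristic $0$ admitting a rank-one valuation, Proposition \ref{prop_continuous_pseudocharacters_biject_with_continuous_representations}(ii) gives continuity of $\sigma_\frp$.

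Part (iii) is then a clean consequence. Unramifiedness outside $T$ is the assertion that $\sigma_\frp$ factors through $\Gamma_{K, T}$, which holds by construction. The local-global compatibility at $v \notin T$ is obtained by specializing the compatibility relation from (i) to $I = \{1\}$, $\gamma_1 = \Frob_v$, and $f = \chi_V$: the left-hand side becomes $\chi_V(\sigma_\frp(\Frob_v))$, while the right-hand side equals $S_{\{1\}, \Frob_v, \chi_V} \bmod \frp$, which by Proposition \ref{prop_properties_of_excursion_operators}(iii) is exactly $T_{V, v} \bmod \frp$ (the unramified Hecke operator lying in $\cB(U(N), \overline{\bbQ}_l)$ by the same statement). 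I do not foresee a genuine obstacle: all the substantive content is already packaged in the cited results, namely Lafforgue's production of the pseudocharacter $\Theta_{U(N)}$ with its key properties and the general pseudocharacter-to-representation correspondence of \S \ref{sec_pseudocharacters_and_their_deformation_theory}. The only point requiring minor care is the verification that the relevant continuity hypothesis is preserved under reduction modulo $\frp$, which is immediate from the $l$-adic topology on the finite-dimensional $\overline{\bbQ}_l$-algebra $\cB(U(N), \overline{\bbQ}_l)$.
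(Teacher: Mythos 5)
Your proposal is correct and follows exactly the route the paper takes: the paper's own proof simply cites Theorem \ref{thm_excursion_operators_form_a_pseudocharacter} together with Theorem \ref{thm_pseudocharacters_biject_with_representations_over_fields} for parts (i) and (ii), and Proposition \ref{prop_properties_of_excursion_operators}(iii) for part (iii). Your write-up merely fills in the routine details (residue field equals $\overline{\bbQ}_l$, continuity via Proposition \ref{prop_continuous_pseudocharacters_biject_with_continuous_representations}, factoring through $\Gamma_{K,T}$) that the paper leaves implicit.
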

\begin{proof}
The first two parts follow from Theorem \ref{thm_excursion_operators_form_a_pseudocharacter} and Theorem \ref{thm_pseudocharacters_biject_with_representations_over_fields}. The third part follows from the third part of Proposition \ref{prop_properties_of_excursion_operators}.
\end{proof}
\begin{definition}\label{def_automorphic_galois_representation}
We say that a continuous, $\hG$-completely reducible representation $\sigma : \Gamma_K \to \hG(\overline{\bbQ}_l)$ is automorphic if there exist $N$ and $\frp \subset \cB(U(N), \overline{\bbQ}_l)$ as above such that $\sigma$ and $\sigma_\frp$ are $\hG(\overline{\bbQ}_l)$-conjugate.
\end{definition}
We warn the reader that the defining property here implies, but is not a priori implied by, the existence of an automorphic representation with the correct Hecke eigenvalues at unramified places (because of the possible existence of homomorphisms $\sigma, \sigma'$  such that $\sigma|_{\Gamma_{K_v}}$ and $\sigma'|_{\Gamma_{K_v}}$ are $\hG(\overline{\bbQ}_l)$-conjugate for every place $v$ of $K$, but nevertheless $\sigma$ and $\sigma'$ are not $\hG(\overline{\bbQ}_l)$-conjugate). However, things are well-behaved in this way at least when $\sigma$ has Zariski dense image:
\begin{lemma}\label{lem_zariski_dense_and_classically_automorphic_implies_lafforgue_automorphic}
Let $\sigma : \Gamma_K \to \hG(\overline{\bbQ}_l)$ be a continuous representation with Zariski dense image. Then $\sigma$ is automorphic if and only if there exists a cuspidal automorphic representation $\pi$ of $G(\bbA_K)$ with the following property: for almost every place $v$ of $K$ such that $\pi^{G(\cO_{K_v})} \neq 0$, $\sigma|_{\Gamma_{K_v}}$ is unramified, and we have the relation ($V$ an irreducible representation of $\hG_{\overline{\bbQ}_l}$)
\begin{equation}\label{eqn_hecke_frobenius_satake_relation} \chi_V(\sigma(\Frob_v)) = \text{ eigenvalue of }T_{V, v}\text{ on }\pi_v^{G(\cO_{K_v})}. 
\end{equation}
If these equivalent conditions hold, then $\sigma|_{\Gamma_{K_v}}$ is unramified at every place $v$ of $K$ such that  $\pi^{G(\cO_{K_v})} \neq 0$, and satisfies the relation (\ref{eqn_hecke_frobenius_satake_relation}) there.
\end{lemma}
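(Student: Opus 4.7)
The plan is to argue both directions by exploiting the faithfulness of the $\cB(U(N), \overline{\bbQ}_l)$-action on $C_{\text{cusp}}(U(N), \overline{\bbQ}_l)$ together with the rigidity supplied by Zariski density via Proposition~\ref{prop_zariski_dense_and_locally_conjugate_implies_globally_conjugate}. Throughout, I will use that $\cB(U(N), \overline{\bbQ}_l)$ is a finite-dimensional commutative $\overline{\bbQ}_l$-algebra acting faithfully on the finite-dimensional space $C_{\text{cusp}}(U(N), \overline{\bbQ}_l)$, so its prime ideals are maximal and correspond to characters $\chi_{\frp} : \cB(U(N), \overline{\bbQ}_l) \to \overline{\bbQ}_l$ with nonzero generalized eigenspace.

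For the forward direction, suppose $\sigma \cong \sigma_{\frp}$ for some effective divisor $N$ of support $T$ and prime $\frp \subset \cB(U(N), \overline{\bbQ}_l)$. Decomposing the $\frp$-generalized eigenspace in $C_{\text{cusp}}(U(N), \overline{\bbQ}_l)$ under the commuting action of the Hecke algebra $\cH(G(\bbA_K), U(N))$, I would extract a cuspidal automorphic representation $\pi$ with $\pi^{U(N)} \neq 0$ whose image in $C_{\text{cusp}}(U(N), \overline{\bbQ}_l)$ meets this eigenspace. For every $v \notin T$, $\pi_v$ is unramified, and the Hecke eigenvalue of $T_{V,v}$ on $\pi_v^{G(\cO_{K_v})}$ must coincide with $T_{V,v} \bmod \frp$, which by Corollary~\ref{cor_existence_of_Lafforgues_Galois_representations}(iii) equals $\chi_V(\sigma_{\frp}(\Frob_v)) = \chi_V(\sigma(\Frob_v))$. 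This gives (\ref{eqn_hecke_frobenius_satake_relation}) for all $v \notin T$.

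For the converse, fix an embedding of $\pi$ into $C_{\text{cusp}}(\overline{\bbQ}_l)$ and an effective divisor $N$ with $\pi^{U(N)} \neq 0$; write $T$ for the support of $N$. Since $\cB(U(N), \overline{\bbQ}_l)$ commutes with the Hecke action by Proposition~\ref{prop_properties_of_excursion_operators}(ii), it preserves the $\pi$-isotypic subspace of $C_{\text{cusp}}(U(N), \overline{\bbQ}_l)$, and the finite dimension and commutativity of $\cB(U(N), \overline{\bbQ}_l)$ guarantee a joint eigenvector there, yielding a prime $\frp$ whose restriction to the unramified Hecke subalgebra recovers the Hecke eigenvalues of $\pi$ at all $v \notin T$. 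Combining the hypothesis $\chi_V(\sigma(\Frob_v)) = T_{V,v} \bmod \frp$ with Corollary~\ref{cor_existence_of_Lafforgues_Galois_representations}(iii) gives $\chi_V(\sigma(\Frob_v)) = \chi_V(\sigma_{\frp}(\Frob_v))$ for almost all $v$ and every irreducible representation $V$ of $\hG_{\overline{\bbQ}_l}$; since these characters generate $\overline{\bbQ}_l[\hG]^{\hG}$, the semisimple conjugacy classes of $\sigma(\Frob_v)$ and $\sigma_{\frp}(\Frob_v)$ agree at almost all $v$. The main obstacle, and the step where Zariski density is indispensable, is then to conclude $\sigma \cong \sigma_{\frp}$: this is exactly the content of Proposition~\ref{prop_zariski_dense_and_locally_conjugate_implies_globally_conjugate}, which would fail without Zariski density since an $\hG$-valued representation is generally not determined by the semisimple conjugacy classes of its Frobenii.

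For the last assertion, given any place $v$ with $\pi_v^{G(\cO_{K_v})} \neq 0$, I would choose $N = \sum_w n_w \cdot w$ with $n_w$ the conductor of $\pi_w$, so that $\pi^{U(N)} \neq 0$ and the support $T$ of $N$ omits $v$. Running the converse argument with this $N$, the ramification and compatibility assertion at $v$ follow by transferring through $\sigma \cong \sigma_{\frp}$ the conclusion of Corollary~\ref{cor_existence_of_Lafforgues_Galois_representations}(iii) at $v$.
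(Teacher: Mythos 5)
Your proof is correct and follows essentially the same route as the paper: in both directions one passes between $\pi$ and a prime (maximal) ideal $\frp$ of the excursion algebra via the $\pi$-isotypic part of $C_{\text{cusp}}(U(N),\overline{\bbQ}_l)$, matches Satake parameters with $\chi_V(\sigma_\frp(\Frob_v))$ using Corollary \ref{cor_existence_of_Lafforgues_Galois_representations}, and invokes Proposition \ref{prop_zariski_dense_and_locally_conjugate_implies_globally_conjugate} to upgrade agreement of Frobenius conjugacy classes to conjugacy of $\sigma$ and $\sigma_\frp$. The paper handles the final assertion by simply taking $N$ minimal with $\pi^{U(N)}\neq 0$, which is the same idea as your conductor choice.
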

\begin{proof}
As we have already noted, if $\sigma$ is automorphic then $\pi$ exists: if $\sigma = \sigma_\frp$, we can choose $\pi$ to be the representation generated by a non-zero vector in $C_{\text{cusp}}(U(N), \overline{\bbQ}_l)[\frp]$. Suppose conversely that there exists a cuspidal automorphic representation $\pi$ as in the statement of the lemma, and let $N$ be minimal with $\pi^{U(N)} \neq 0$. Then we can find a maximal ideal $\frp \subset \cB(U(N), \overline{\bbQ}_l)$ such that $(\pi^{U(N)})_\frp \neq 0$. It then follows from Proposition \ref{prop_zariski_dense_and_locally_conjugate_implies_globally_conjugate} that $\sigma$ and $\sigma_\frp$ are $\hG(\overline{\bbQ}_l)$-conjugate representations.
\end{proof}
Next, we state a theorem of Genestier and V. Lafforgue \cite{Gen17}, which gives a partial description of the restriction of the pseudocharacter $\Theta_{U(N)}$ to decomposition groups at ramified places. They actually prove a much more general result, but in the interest of simplicity we state here only what we need. This result will be used to understand the ramification of the pseudocharacter $\Theta_{U(N)}$ at Taylor--Wiles places.
\begin{theorem}\label{thm_local_global_compatibility}
Let $v$ be a place of $K$, and let $\pi_v$ be an irreducible admissible $\overline{\bbQ}_l[G(K_v)]$-module which is a submodule of $i_B^G \chi_v$ for some smooth character $\chi_v : T(K_v) \to \overline{\bbQ}_l^\times$. Let $V_{\pi_v} \subset C_{\text{cusp}}(\overline{\bbQ}_l)$ be the $\pi_v$-isotypic part, and let $\Theta_{U(N), \pi_v}$ denote the image of $\Theta_{U(N)}$ along the projection $\cB(U(N), \overline{\bbQ}_l) \to \End_{\overline{\bbQ}_l}(V_{\pi_v}^{U(N)})$. Then $\Theta_{U(N), \pi_v}|_{W_{K_v}}$ takes values in the scalars $\overline{\bbQ}_l \subset \End_{\overline{\bbQ}_l}(V_{\pi_v}^{U(N)})$, where it coincides with the pseudocharacter associated to the representation $\iota \chi_v^\vee$.
\end{theorem}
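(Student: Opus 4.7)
Since the statement is attributed to announced work of Genestier--Lafforgue, the plan is to reduce Theorem \ref{thm_local_global_compatibility} to their local-global compatibility theorem together with an explicit identification of the local parameter attached to $\pi_v$. Concretely, Genestier and V.~Lafforgue extend the excursion operator machinery of \cite{Laf12} to any place $v$ (not just places where the level structure is maximal) and attach to every irreducible admissible $\overline{\bbQ}_l[G(K_v)]$-module $\pi_v$ a semisimple local parameter $\sigma_{\pi_v}^{\mathrm{GL}} : W_{K_v} \to \hG(\overline{\bbQ}_l)$, determined up to $\hG(\overline{\bbQ}_l)$-conjugacy (equivalently, a continuous $\hG$-pseudocharacter $\Theta_{\pi_v}^{\mathrm{GL}}$ of $W_{K_v}$ over $\overline{\bbQ}_l$). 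The central result one invokes is their local-global compatibility statement: for every cuspidal automorphic representation $\Pi$ of $G(\bbA_K)$ occurring in $C_{\text{cusp}}(U(N),\overline{\bbQ}_l)$ with local component $\Pi_v\cong \pi_v$, the restriction $\sigma_\Pi|_{W_{K_v}}$ of the associated global Lafforgue parameter is $\hG(\overline{\bbQ}_l)$-conjugate to $\sigma_{\pi_v}^{\mathrm{GL}}$.

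The second ingredient is an explicit computation of $\sigma_{\pi_v}^{\mathrm{GL}}$ when $\pi_v$ embeds into a principal series $i_B^G \chi_v$. I would argue as follows: Genestier--Lafforgue's construction is, by design, compatible with the formation of constant terms, so that the parameter of a subquotient of $i_B^G \chi_v$ coincides with the image under $\iota_\ast : \{\hT\text{-parameters}\} \to \{\hG\text{-parameters}\}$ of the parameter of $\chi_v$, namely $\iota\chi_v^\vee$ in the notation of Lemma \ref{lem_local_langlands_for_split_tori}. (At unramified places with $\chi_v$ unramified this is just the Satake parametrization; at ramified places it can be deduced from the behaviour of the excursion operators under constant term along $B$, combined with the fact that $\iota\chi_v^\vee$ already takes values in a torus and is therefore automatically $\hG$-completely reducible, so that by Theorem \ref{thm_pseudocharacters_biject_with_representations_over_fields} the pseudocharacter $\tr(\iota\chi_v^\vee)$ determines it uniquely.) Thus $\Theta_{\pi_v}^{\mathrm{GL}} = \tr(\iota\chi_v^\vee)$.

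To conclude, decompose the cuspidal $\pi_v$-isotypic subspace as $V_{\pi_v} = \bigoplus_j \Pi_j$ where the $\Pi_j$ run over the cuspidal automorphic representations in $C_{\text{cusp}}(\overline{\bbQ}_l)$ with local component at $v$ isomorphic to $\pi_v$. Each $\Pi_j$ gives rise to a prime $\frp_j \subset \cB(U(N),\overline{\bbQ}_l)$, and by Corollary \ref{cor_existence_of_Lafforgues_Galois_representations} combined with Theorem \ref{thm_excursion_operators_form_a_pseudocharacter} the pseudocharacter $\Theta_{U(N)}$ projects along $\cB(U(N),\overline{\bbQ}_l) \to \End_{\overline{\bbQ}_l}(\Pi_j^{U(N)})$ into the scalar subring via $\tr \sigma_{\Pi_j}$. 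By the Genestier--Lafforgue theorem above, the restriction $(\tr \sigma_{\Pi_j})|_{W_{K_v}}$ equals $\Theta_{\pi_v}^{\mathrm{GL}} = \tr(\iota\chi_v^\vee)$, independently of $j$. Taking direct sums shows that $\Theta_{U(N),\pi_v}|_{W_{K_v}}$ lands in the diagonal scalar copy of $\overline{\bbQ}_l$ in $\End_{\overline{\bbQ}_l}(V_{\pi_v}^{U(N)})$ and agrees there with $\tr(\iota\chi_v^\vee)$, as required.

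The main obstacle is of course the invocation of the Genestier--Lafforgue local-global compatibility, which is a substantial theorem beyond \cite{Laf12}; secondary is the identification of the local parameter of a subquotient of $i_B^G \chi_v$, which requires knowing that their construction is compatible with parabolic induction in the appropriate sense. Granted these two inputs (both announced), the global-to-local passage and the scalarity assertion follow formally from the pseudocharacter formalism of \S \ref{sec_pseudocharacters_and_their_deformation_theory}, since torus-valued parameters are already completely reducible.
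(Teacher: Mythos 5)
The paper offers no internal proof of this statement: its entire ``proof'' is the sentence ``The result will be proved in a forthcoming work of Genestier--Lafforgue \cite{Gen15}.'' So your overall stance --- that the substantive content is an external input from the announced Genestier--Lafforgue work, with the statement here being a special case of their more general local-global compatibility --- is exactly the paper's position, and there is no argument in the paper to compare your reduction against.

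That said, one step of your formal deduction is weaker than it looks, and it is precisely the step that cannot be recovered from the results available in \cite{Laf12}. You assert that Corollary \ref{cor_existence_of_Lafforgues_Galois_representations} together with Theorem \ref{thm_excursion_operators_form_a_pseudocharacter} shows that $\Theta_{U(N)}$ projects into the \emph{scalars} of $\End_{\overline{\bbQ}_l}(\Pi_j^{U(N)})$ via $\tr\sigma_{\Pi_j}$. Corollary \ref{cor_existence_of_Lafforgues_Galois_representations} only controls the pseudocharacter modulo a prime ideal of $\cB(U(N),\overline{\bbQ}_l)$, i.e.\ its image in the residue field. Since multiplicity one is not known for general $G$, the image of the commutative algebra $\cB(U(N),\overline{\bbQ}_l)$ in $\End_{\overline{\bbQ}_l}(V_{\pi_v}^{U(N)})$ may be a non-reduced Artinian ring (acting through the multiplicity spaces), and a pseudocharacter whose reduction modulo every maximal ideal is $\tr(\iota\chi_v^\vee)$ need not itself be scalar-valued: it could differ from the scalar pseudocharacter by elements of the nilradical. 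The assertion that $\Theta_{U(N),\pi_v}|_{W_{K_v}}$ is literally scalar-valued over this Artinian ring --- not merely that its semisimplification at each prime is $\tr(\iota\chi_v^\vee)$ --- is exactly the form in which the theorem is applied later (in Propositions \ref{prop_picking_our_iwahori_spherical_forms} and \ref{prop_picking_out_pro-p_spherical_forms}, where the relevant quotients $R_{U_0,\pi}$ are allowed to be non-reduced), and it is part of what must be imported from Genestier--Lafforgue rather than deduced from the mod-$\frp$ compatibility. Your sketch should therefore either include this integral scalarity in the statement of the external input (as the paper does), or explain why the excursion algebra acts semisimply on $V_{\pi_v}^{U(N)}$, which is not known.
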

The notation is as in \S \ref{sec_local_calculation}: $\iota : \hT \to \hG$ is the natural inclusion, and $\chi_v^\vee : W_{K_v} \to \hT(\overline{\bbQ}_l)$ is the character dual to $\chi_v$ under the local Langlands correspondence for split tori (Lemma \ref{lem_local_langlands_for_split_tori}). 
Finally, we need to extend slightly the definition of the algebra of excursion operators. Our first observation is that if $U = \prod_v U_v \subset \prod_v G(\cO_{K_v})$ is any open compact subgroup, then we can define an operator $S_{I, (\gamma_i)_{i \in I}, f} \in \End_{\overline{\bbQ}_l}(C_{\text{cusp}}(U, \overline{\bbQ}_l))$ as follows: choose $N$ such that $U(N) \subset U$, and restrict the action of the operator in $C_{\text{cusp}}(U(N), \overline{\bbQ}_l)$. It follows immediately from Proposition \ref{prop_properties_of_excursion_operators} that this is well-defined and independent of the choice of $N$, and that the $\overline{\bbQ}_l$-subalgebra $\cB(U, \overline{\bbQ}_l) \subset \End_{\overline{\bbQ}_l}(C_{\text{cusp}}(U, \overline{\bbQ}_l))$ generated by all excursion operators is commutative (it is a quotient of $\cB(U(N), \overline{\bbQ}_l)$). We write $\Theta_U$ for the associated pseudocharacter valued in $\cB(U, \overline{\bbQ}_l)$ (image of $\Theta_{U(N)}$).

Our next observation is that the excursion operators preserve natural rational and integral structures. We fix a choice of coefficient field $E \subset \overline{\bbQ}_l$.
\begin{proposition}\label{prop_excursion_operators_preserve_integral_structures}
Let $U = \prod_v U_v \subset \prod_v G(\cO_{K_v})$ be an open compact subgroup. Let $I$ be a finite set, $(\gamma_i)_{i \in I} \in \Gamma_K^I$, and $f \in \bbZ[\hG^I]^{\hG}$. Then the operator $S_{I, (\gamma_i)_{i \in I}, f}$ on $C_{\text{cusp}}(U, \overline{\bbQ}_l)$ preserves the $\cO$-submodule $C_{\text{cusp}}(U, \cO)$.
\end{proposition}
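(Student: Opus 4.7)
My plan is to verify that the excursion operator $S_{I,(\gamma_i)_{i\in I},f}$ preserves $C_{\text{cusp}}(U,\cO)$ by running through V.~Lafforgue's construction entirely with $\cO$-coefficients, rather than extracting integrality after the fact. The first step is purely representation-theoretic: since $\hG$ is a split reductive group scheme over $\bbZ$, the ring $\bbZ[\hG^I]^{\hG}$ is spanned, via a Peter--Weyl-type decomposition, by matrix coefficients of integral $\hG^I$-representations. Hence one can choose a finite free $\bbZ$-module $W$ with a $\hG^I$-action defined over $\bbZ$, together with vectors $x \in (W)^{\hG}$, $\xi \in (W^\ast)^{\hG}$ (diagonal action), such that $f(g_1,\ldots,g_n) = \langle \xi,(g_1,\ldots,g_n)\cdot x\rangle$.

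Next I would re-run Lafforgue's construction from \cite[\S 11]{Laf12} with $\cO$-coefficients throughout. The three ingredients involved are: the \'etale cohomology (with $\cO$-coefficients) of the moduli stacks of iterated $G$-shtukas over suitable powers of $X$; the morphisms of sheaves on these stacks coming from the creation/annihilation maps $\mathbf{1} \to W \otimes W^{\ast}$ and $W \otimes W^\ast \to \mathbf{1}$, which by the integral Satake equivalence of Mirkovi\'c--Vilonen lift to morphisms of $\cO$-perverse sheaves on the affine Grassmannian once $W$ is integral; and the partial Frobenii, which act on the integral cohomology and are the source of the Galois action through which the elements $\gamma_i$ enter. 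Each of these operations makes sense over $\cO$ and base changes compatibly to $\overline{\bbQ}_l$, so the composite constructed by Lafforgue is an endomorphism of the $\cO$-integral cohomology whose base change to $\overline{\bbQ}_l$ is the operator $S_{I,(\gamma_i)_{i\in I},f}$.

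Finally, I would identify $C_{\text{cusp}}(U,\cO)$ inside this integral cohomology. Lafforgue realizes $C_{\text{cusp}}(U,\overline{\bbQ}_l)$ as a Hecke-equivariant direct summand of the cohomology of a moduli of shtukas with no legs (in degree $0$, via the Hecke-module structure on functions on the discrete double quotient $\Bun_{G,U}(\bbF_q)$); the cuspidal projector is given in terms of constant-term integrals along unipotent radicals of parabolics, and since the Haar measures are normalized with total mass $1$ on $N(K)\backslash N(\bbA_K)$ and $p$ is invertible in $\cO$, this projector preserves $\cO$-valued functions. Consequently $C_{\text{cusp}}(U,\cO)$ is precisely the image in $C_{\text{cusp}}(U,\overline{\bbQ}_l)$ of an $\cO$-submodule of integral cohomology that is stable under the operator produced in the previous paragraph, which is what we wanted.

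The main obstacle is the bookkeeping in the middle step: ensuring that every arrow in Lafforgue's construction---in particular the creation/annihilation morphisms supplied by geometric Satake and the identification of the cuspidal summand---admits an $\cO$-integral model that is compatible with the $\overline{\bbQ}_l$-version after inverting $l$. Once that is granted, the preservation of $C_{\text{cusp}}(U,\cO)$ is automatic because all structural maps are by construction $\cO$-linear and respect the integral lattice.
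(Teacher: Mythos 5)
Your proposal is correct and follows essentially the same route as the paper: the real content is the reduction of $f$ to an integral matrix coefficient $f(g_1,\dots,g_n)=\xi((g_1,\dots,g_n)\cdot x)$ with $W$ a free $\bbZ$-module and $x$, $\xi$ invariant under the diagonal $\hG$, after which the integrality of the geometric construction is exactly \cite[Proposition 13.1]{Laf12}, which the paper simply cites rather than re-running with $\cO$-coefficients as you sketch. The only thing you leave implicit that the paper makes explicit is the recipe for $W$, $x$, $\xi$ (take $W=(\hG^n(\bbC)\cdot f)\cap\bbZ[\hG^n]$ under left translation, $x=f$, and $\xi$ evaluation at the identity), which is cleaner and more direct than invoking a Peter--Weyl decomposition of $\bbZ[\hG^I]^{\hG}$.
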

\begin{proof}
This will follow from \cite[Proposition 13.1]{Laf12} if we can show that any function $f \in \bbZ[\hG^n]^{\hG \times \hG}$ (where $\hG \times \hG$ acts on $\hG^n$ by diagonal left and right translation) is of the form $f(g_1, \dots, g_n) = \xi( (g_1, \dots, g_n) \cdot x)$, where $W$ is a representation of $\hG^n$ on a free $\bbZ$-module and $x : \bbZ \to W$ and $\xi : W \to \bbZ$ are $\hG$-equivariant morphisms (where $\hG$ acts via the diagonal $\hG \to \hG^n$). 

We can show this using the recipe of \cite[Lemme 10.5]{Laf12}. Let $f \in \bbZ[\hG^n]^{\hG \times \hG}$, and let $W \subset \bbZ[\hG^n]$ denote the submodule generated by the left translates of $f$. This can be constructed as the intersection $W = (\hG^n(\bbC) \cdot f )\cap \bbZ[\hG^n]$. In particular, every element of $W$ is invariant under diagonal right translation. Let $x = f$, and let $\xi : W \to \bbZ$ be the restriction to $W$ of the functional `evaluation at the identity'. Then $x$ and $\xi$ are both $\hG$-invariant, and together they give the desired representation of the function $f$.
\end{proof}
We write $\cB(U, \cO)$ for the $\cO$-subalgebra of $\End_{\cO}(C_{\text{cusp}}(U, \cO))$ generated by all excursion operators. It follows from Proposition \ref{prop_excursion_operators_preserve_integral_structures} that it is a finite flat $\cO$-algebra, and it follows from the definitions that the pseudocharacter $\Theta_U$ in fact takes values in $\cB(U, \cO)$. 
\begin{corollary}\label{cor_existence_of_galois_representations_attached_to_maximal_ideals_in_excursion_algebra}
Let $\ffrm \subset \cB(U, \cO)$ be a maximal ideal, and choose an embedding $\cB(U, \cO) / \ffrm \hookrightarrow \overline{\bbF}_l$. Then:
\begin{enumerate}
\item There is a $\hG$-completely reducible representation $\overline{\sigma}_\ffrm : \Gamma_K \to \hG(\overline{\bbF}_l)$ satisfying the following condition: for all excursion operators $S_{I, (\gamma_i)_{i \in I}, f}$, we have
\begin{equation}\label{eqn_compatibility_with_mod_l_excursion_operators} f( (\overline{\sigma}_\ffrm(\gamma_i))_{i \in I}) = S_{I, (\gamma_i)_{i \in I}, f} \text{ mod }\ffrm. 
\end{equation}
\item The representation $\overline{\sigma}_\ffrm$ is uniquely determined up to $\hG(\overline{\bbF}_l)$-conjugacy by (\ref{eqn_compatibility_with_mod_l_excursion_operators}).
\item If $v$ is a place of $K$ such that $U_v = G(\cO_{K_v})$, then $\overline{\sigma}_\ffrm$ is unramified at $v$ and satisfies the expected local-global compatibility relation there: for all irreducible representations $V$ of $\hG_{\overline{\bbQ}_l}$, we have $T_{V, v} \in \cB(U, \cO)$ and
\[ \chi_V( \overline{\sigma}_\ffrm(\Frob_v) ) = T_{V, v} \text{ mod }\ffrm. \]
\end{enumerate} 
\end{corollary}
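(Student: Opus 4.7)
The plan is to apply the pseudocharacter/representation dictionary of Theorem \ref{thm_pseudocharacters_biject_with_representations_over_fields} to the reduction of $\Theta_U$ modulo $\ffrm$. By Proposition \ref{prop_excursion_operators_preserve_integral_structures}, the excursion operators stabilize $C_{\text{cusp}}(U, \cO)$, so the $\cO$-algebra $\cB(U, \cO)$ is a well-defined finite flat $\cO$-algebra and, unwinding Theorem \ref{thm_excursion_operators_form_a_pseudocharacter} together with the definition of $\cB(U, \cO)$, the assignment
\[ \Theta_{U, n}(f)(\gamma_1, \dots, \gamma_n) = S_{\{1, \dots, n\}, (\gamma_i)_{i=1}^n, f} \in \cB(U, \cO) \]
is a continuous $\hG$-pseudocharacter of $\Gamma_K$ over $\cO$ (continuous in the profinite/$l$-adic topology on $\cB(U, \cO)$, since it is continuous after inverting $l$ and takes integral values).

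Composing with the map $\cB(U, \cO) \twoheadrightarrow \cB(U, \cO)/\ffrm \hookrightarrow \overline{\bbF}_l$ and using the change-of-coefficients operation of Lemma \ref{lem_change_of_base_ring_or_group_for_pseudocharacters}, I obtain a continuous $\hG$-pseudocharacter $\overline{\Theta}_\ffrm$ of $\Gamma_K$ over $\overline{\bbF}_l$ (equipped with the discrete topology). Theorem \ref{thm_pseudocharacters_biject_with_representations_over_fields} then produces a $\hG$-completely reducible homomorphism $\overline{\sigma}_\ffrm : \Gamma_K \to \hG(\overline{\bbF}_l)$, unique up to $\hG(\overline{\bbF}_l)$-conjugacy, with $\tr \overline{\sigma}_\ffrm = \overline{\Theta}_\ffrm$; unwinding the definition of $\tr \overline{\sigma}_\ffrm$, this is precisely the compatibility relation (\ref{eqn_compatibility_with_mod_l_excursion_operators}), giving both (i) and (ii). Continuity of $\overline{\sigma}_\ffrm$ follows from part (iii) of Proposition \ref{prop_continuous_pseudocharacters_biject_with_continuous_representations}, since $\overline{\bbF}_l$ carries the discrete topology and $\overline{\Theta}_\ffrm$ is continuous.

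For the local-global compatibility of (iii), let $v$ be a place with $U_v = G(\cO_{K_v})$. Then $C_{\text{cusp}}(U, \cO)$ is a module over the spherical Hecke algebra $\cH(G(K_v), G(\cO_{K_v}))$, and by Proposition \ref{prop_properties_of_excursion_operators}(iii) the spherical Hecke operator $T_{V, v}$ agrees with the excursion operator $S_{\{0\}, \Frob_v, \chi_V}$ for any irreducible representation $V$ of $\hG_{\overline{\bbQ}_l}$. In particular $T_{V, v} \in \cB(U, \cO)$, and reducing the identity $S_{\{0\}, \Frob_v, \chi_V} = \Theta_{U, 1}(\chi_V)(\Frob_v)$ modulo $\ffrm$ gives $\chi_V(\overline{\sigma}_\ffrm(\Frob_v)) = T_{V, v} \bmod \ffrm$. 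The unramifiedness of $\overline{\sigma}_\ffrm$ at $v$ follows because the values $\chi_V(\overline{\sigma}_\ffrm(\Frob_v))$, as $V$ ranges over irreducibles, determine the semisimple conjugacy class of $\overline{\sigma}_\ffrm(\Frob_v)$, together with the fact (Theorem \ref{thm_excursion_operators_form_a_pseudocharacter} applied for some $U(N)$ with $N$ supported away from $v$) that $\Theta_U$ factors through $\Gamma_{K, T}$ for a finite set $T$ of places not containing $v$.

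There is essentially no new hard step here: the entire argument is formal once one has in hand the integral pseudocharacter $\Theta_U$ valued in $\cB(U, \cO)$ and the reconstruction theorem for pseudocharacters over algebraically closed fields. The only mild subtlety is to make sure that the reduction modulo $\ffrm$ of $\Theta_U$ is genuinely a pseudocharacter over $\overline{\bbF}_l$ (rather than only over a flat $\bbZ$-algebra), but this is precisely the content of the remark following Definition \ref{def_pseudocharacter}, which is why Theorem \ref{thm_pseudocharacters_biject_with_representations_over_fields} is applicable in positive characteristic.
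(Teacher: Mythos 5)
Your proposal is correct and follows the same route as the paper: Proposition \ref{prop_excursion_operators_preserve_integral_structures} makes $\Theta_U$ a $\cB(U,\cO)$-valued pseudocharacter, and the corollary follows by reducing modulo $\ffrm$ and applying Theorem \ref{thm_pseudocharacters_biject_with_representations_over_fields}. The extra details you supply (continuity via Proposition \ref{prop_continuous_pseudocharacters_biject_with_continuous_representations}(iii), and part (iii) via Proposition \ref{prop_properties_of_excursion_operators}(iii) together with the factoring of $\Theta_U$ through $\Gamma_{K,T}$) are exactly what the paper leaves implicit.
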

We note that the expression in $(iii)$ makes sense, because the character of the reduction of $V$ mod $l$ is defined independent of any choices.
\begin{proof}
Proposition \ref{prop_excursion_operators_preserve_integral_structures} implies that $\Theta_U$ is in fact a $\cB(U, \cO)$-valued pseudocharacter. The result follows on applying Theorem \ref{thm_pseudocharacters_biject_with_representations_over_fields} to the projection of $\Theta_U$ to the quotient $\cB(U, \cO) / \ffrm$.
\end{proof}
\subsection{Automorphic forms are free over $\cO[\Delta]$}\label{sec_automorphic_forms_free_over_diamond_operators}

Our goal in the remainder of \S \ref{sec_automorphic_forms} is to prove an automorphy lifting theorem. We will accomplish this in \S \ref{sec_R_equals_B} as an application of the Taylor--Wiles method. This requires us to first establish as a key technical input that certain integral spaces of automorphic forms are free over suitable integral group rings of diamond operators; this will be accomplished in the current section (see Theorem \ref{thm_freeness_over_rings_of_diamond_operators} below), as an application of the principle outlined in \cite[Appendix]{Boc06} (which appears here as Lemma \ref{lem_freeness_of_set_theoretic_action_of_diamond_operators}).  

\subsubsection{Construction of an unramified Hecke operator}
We first consider an abstract situation. The setup is as at the beginning of \S \ref{sec_automorphic_forms}. Suppose given the following data:
\begin{itemize}
\item A subring $R \subset \bbC$ which is a discrete valuation ring, which contains $q^{1/2}$, and with finite residue field $k$ of characteristic $l \neq p$.
\item A finite set $S$ of places of $K$ and an open compact subgroup $U = \prod_v U_v \subset G(\bbA_K)$ such that for all $v \not\in S$, $U_v = G(\cO_{K_v})$.
\item For each standard parabolic subgroup $P = MN$ of $G$ (including $G$ itself) a finite set $I_P$ of irreducible admissible $\bbC[M(\bbA_K)]$-modules $(\pi_i)_{i \in I_P}$ such that $(i_P^G \pi_i)^U \neq 0$. This implies that for each $i \in I_P$ and for each $v \not\in S$, the space $\pi_{i, v}^{M(\cO_{K_v})}$ is non-zero. We ask further that the associated homomorphism $\phi_{i, v} : \cH(M(K_v), M(\cO_{K_v})) \to \bbC$ giving the action of the Hecke algebra on this space takes values in $R$.
\item For each $i \in I_P$, a continuous homomorphism $\overline{\rho}_i : \Gamma_K \to \hM(k)$, unramified outside $S$, and such that for each $v \not\in S$, the homomorphism $\cH(M(K_v), M(\cO_{K_v})) \to k$ associated to the conjugacy class of $\overline{\rho}_i(\Frob_v)$ (which exists by Proposition \ref{prop_chevalley_restriction_over_Z}) is equal to the reduction of $\phi_{i, v}$ modulo $\ffrm_R$.
\item An element $i_0 \in I_G$ such that $\overline{\rho}_{i_0} : \Gamma_K \to \hG(k)$ is absolutely strongly $\hG$-irreducible, in the sense of Definition \ref{def_irreducibility_and_strong_irreducibility}.
\end{itemize}
We can associate to each $i \in I_P$ a homomorphism $f_{i, v} : \cH(G(K_v), G(\cO_{K_v})) \otimes_\bbZ R \to R[Z_{\hM}^\circ]$, which represents the formula $z_\psi \mapsto \cS( i_P^G( \pi_{i, v} \otimes \psi) )$ (notation as in \S \ref{sec_cusp_forms_and_hecke_algebras}). If $\Sigma$ is a finite set of places of $K$, disjoint from $S$, then we write $\bbT_\Sigma = \otimes_{v \in \Sigma} \cH(G(K_v), G(\cO_{K_v})) \otimes_\bbZ R$ and $f_{i, \Sigma} = \otimes_{v \in \Sigma} f_{i, v} : \bbT_\Sigma \to R[Z_{\hM}^\circ]$.
\begin{proposition}\label{prop_construction_of_special_hecke_operators}
With notation and assumptions as above, we can find $\Sigma$ and $t \in \bbT_\Sigma$ satisfying the following conditions:
\begin{enumerate}
\item For every standard parabolic subgroup $P \neq G$ and for every $i \in I_P$, we have $f_{i, \Sigma}(t) = 0$. In particular, for every character $\psi : M(\bbA_K) / M(\bbA_K)^1 \to \bbC^\times$, we have $t (i_P^G (\pi \otimes \psi) )^U = 0$. 
\item We have $f_{i_0, \Sigma}(t) \not\equiv 0 \text{ mod }\ffrm_R$.
\end{enumerate}
\end{proposition}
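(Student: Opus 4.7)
The strategy is to construct $t$ as a tensor product over $\Sigma$ of local Hecke operators, where $\Sigma$ and the local operators are chosen by invoking the Chebotarev density theorem together with the strong $\hG$-irreducibility of $\overline{\rho}_{i_0}$. The first step is to reinterpret $f_{i,v}$ via the Satake isomorphism: using $\cS^G_B$ to identify $\cH(G(K_v), G(\cO_{K_v})) \otimes_\bbZ R$ with $R[X^\ast(\hT)]^{W(\hG, \hT)}$ (which makes sense because $q^{1/2} \in R$), the map $f_{i,v}$ sends a $W(\hG,\hT)$-invariant function $T$ on $\hT$ to the element $z \mapsto T(z \cdot \alpha_{i,v})$ of $R[Z_{\hM}^\circ]$, where $\alpha_{i,v} \in \hT(R)$ is a lift of the Satake parameter of $\pi_{i,v}$. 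Consequently $f_{i,v}(T) = 0$ in $R[Z_{\hM}^\circ]$ precisely when $T$ vanishes on the coset $Z_{\hM}^\circ \cdot \alpha_{i,v} \subset \hT$; for $P$ proper with $M \neq T$ this is a proper subvariety.

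For each pair $(P,i)$ with $P \neq G$ I plan to produce a place $v_{P,i}$ outside $S$ and a local operator $t_{P,i} \in \cH(G(K_{v_{P,i}}), G(\cO_{K_{v_{P,i}}})) \otimes R$ with $f_{i, v_{P,i}}(t_{P,i}) = 0$ and $f_{i_0, v_{P,i}}(t_{P,i}) \in R^\times$. Given a place at which the reduction of $\alpha_{i_0, v}$ in $\hT(k) / W(\hG, \hT)$ lies outside $Z_{\hM}^\circ \cdot \alpha_{i, v}$, such $t_{P,i}$ exists by picking a Weyl-invariant function on $\hT_k$ that vanishes on the proper subvariety $Z_{\hM}^\circ \cdot \alpha_{i, v}$ but not at $\alpha_{i_0, v}$ and lifting to $R$. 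To arrange the required disjointness, observe that for each $z$ in the finite group $Z_{\hM}^\circ(k)$ the representation $\chi_z \cdot \iota_M \overline{\rho}_i : \Gamma_K \to \hG(k)$ factors through a proper parabolic of $\hG$ and is therefore not absolutely $\hG$-irreducible, where $\iota_M : \hM \hookrightarrow \hG$ is the natural inclusion and $\chi_z$ is the unramified character sending Frobenius to $z$. The absolute strong $\hG$-irreducibility of $\overline{\rho}_{i_0}$ then forces its $\hG$-pseudocharacter to differ from that of $\chi_z \cdot \iota_M \overline{\rho}_i$. Applying Theorem \ref{thm_chebotarev_density} to a common finite quotient of $\Gamma_{K,S}$ through which both representations factor produces, for each of the finitely many triples $(P, i, z)$, a density-positive set of places where the Frobenius classes separate in $\hG(k)$; intersecting these sets over $z \in Z_{\hM}^\circ(k)$ yields a single $v_{P,i}$ that works for each fixed $(P,i)$.

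Setting $\Sigma = \{v_{P,i}\}$ and $t = \bigotimes_{(P,i)} t_{P,i}$, both claims of the proposition follow. Because $R$ is a discrete valuation ring and $Z_{\hM}^\circ$ is a split torus, $R[Z_{\hM}^\circ]$ is a Laurent polynomial ring over $R$, and in particular an integral domain; $f_{i, \Sigma}(t)$ decomposes in this domain as a product of local contributions indexed by $\Sigma$ whose factor at $v_{P,i}$ vanishes by construction, hence $f_{i, \Sigma}(t) = 0$. Since $\hG$ is semisimple, $Z_{\hG}^\circ$ is trivial and $R[Z_{\hG}^\circ] = R$, so $f_{i_0, \Sigma}(t)$ is the product of units $f_{i_0, v_{P,i}}(t_{P,i}) \in R^\times$, which is nonzero modulo $\ffrm_R$.

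The main obstacle is the Borel case $P = B$: there $M = T$, $Z_{\hM}^\circ = \hT$, and the forbidden coset $Z_{\hM}^\circ \cdot \alpha_{i,v}$ fills out $\hT$ entirely, so the local construction above forces $t_v = 0$. To handle this one has to enlarge $\Sigma$ to contain several places per $(B,i)$ and to exploit the fact that a global unramified character $\psi$ with parameter $z \in \hT$ has local component at $v$ of parameter $z^{\deg v}$: the image of the global twist family inside $\hT^\Sigma$ is only $\dim \hT$-dimensional regardless of $|\Sigma|$. Choosing places of appropriately coprime residue degrees, and again invoking Chebotarev together with strong irreducibility, one arranges that the tuple $(\alpha_{i_0, v})_{v \in \Sigma}$ lies outside this image and builds $t$ from a $W(\hG,\hT)^\Sigma$-invariant function on $\hT^\Sigma$ vanishing on it. Coordinating this construction with that of the previous paragraphs, so that one common $\Sigma$ handles all proper parabolics at once, is the principal combinatorial obstacle.
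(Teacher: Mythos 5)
Your plan — build $t$ as a tensor product of local operators, one dedicated place per pair $(P,i)$ — has a genuine gap even before the Borel case. For the local operator $t_{P,i}$ at a single place $v$ to satisfy $f_{i,v}(t_{P,i})=0$ in $R[Z_{\hM}^\circ]$ while $f_{i_0,v}(t_{P,i})$ is a unit, you need the reduction of the Satake parameter $\alpha_{i_0,v}$ to avoid the \emph{entire} coset $W\cdot Z_{\hM}^\circ\cdot\overline{\alpha}_{i,v}$ at that one place. Your argument for the existence of such a place does not work: first, the relevant twists $z$ are geometric points of $Z_{\hM}^\circ$ (equivalently, points over an extension $l/k$ of degree up to $\#W$, since the parameters are defined over such an extension), not merely the $k$-points; second, and more seriously, intersecting the positive-density sets attached to the finitely many $z$ need not be nonempty in the way you want. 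What you actually need is a single conjugacy class $\gamma$ in a common finite quotient such that $\overline{\rho}_{i_0}(\gamma)$ is separated from $z\cdot\iota_M\overline{\rho}_i(\gamma)$ for \emph{all} $z$ simultaneously. Strong $\hG$-irreducibility only tells you that for each \emph{fixed global} character $\lambda$ the two pseudocharacters differ \emph{somewhere}; it does not produce a single $\gamma$ avoiding all local twists at once, and a priori every place could be bad (with the offending $z$ varying from place to place). For $P=B$ every place is bad by definition, which is why your construction collapses there; but the same local-versus-global issue already undermines the case $P\neq B$.

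The paper's proof avoids this by never trying to win at a single place. It regards $\Spec f_{i,\Sigma}\colon Z^\circ_{\hM,R}\to\Spec\bbT_\Sigma$ as a finite morphism, so the union of images over all $(P,i)$ is closed, and one only has to arrange that the single closed point $x_\Sigma$ corresponding to $\overline{\rho}_{i_0}$ lies outside it; then $t$ is produced by elementary commutative algebra (a function vanishing on the closed union but not at $x_\Sigma$). The avoidance is forced by choosing $\Sigma$ so that the Frobenii $\{\Frob_v\}_{v\in\Sigma}$ meet every conjugacy class of $\Gal(L/K)$, where $L$ contains the extensions cut out by all the $\overline{\rho}_i$ together with enough constant-field extension to accommodate unramified characters valued in $Z_{\hM}^\circ(l)$ with $[l:k]\le \#W$. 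A preimage $y$ of $x_\Sigma$ would then yield a \emph{single} character $\lambda$ with $f(\overline{\rho}_{i_0}(\gamma))=f((\overline{\rho}_i\otimes\lambda)(\gamma))$ for all invariant $f$ and all $\gamma\in\Gamma_K$ (not just Frobenii), whence $\overline{\rho}_i\otimes\lambda$ would be $\hG$-irreducible by strong irreducibility of $\overline{\rho}_{i_0}$ — impossible, since it lands in $\hM_k$. This is exactly the quantifier order ("one $\lambda$ good at all places" rather than "at some place, no $z$ is good") that your local approach cannot access; your closing sketch for the Borel case is essentially a gesture toward this global argument, but the covering choice of $\Sigma$ and the resulting contradiction are the actual content of the proof and are missing from your proposal.
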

\begin{proof}
Given a finite set $\Sigma$ of places of $K$, disjoint from $S$, let $x_\Sigma \in \Spec \bbT_{\Sigma}(k)$ be the (closed) point corresponding to $\overline{\rho}_{i_0}$. We want to show that we can choose $\Sigma$ so that for any standard parabolic subgroup $P = MN \neq G$ and for any $i \in I_P$, the image of $Z^\circ_{\hM, R}$ under the corresponding finite map $\Spec f_{i, \Sigma} : Z^\circ_{\hM, R} \to \bbT_{\Sigma}$ does not contain $x_\Sigma$. Indeed, the union of these finitely many images is closed, so if this union misses $x_\Sigma$ then we can find $t \in \bbT_{\Sigma}$ such that the distinguished affine open $D(t) = \{ \frp \in \Spec \bbT_{\Sigma} \mid t \not\in \frp \}$ contains $x_\Sigma$ but has empty intersection with this union. This element $t$ then has the desired properties (i) and (ii). 

Let $L'/K$ be the compositum of the extensions cut out by all of the $\overline{\rho}_i$, $i \in I_P$. Let $L = L' \cdot \bbF_q^d$, where $d$ is the exponent of the group $l^\times$, and $l/k$ is the compositum of all extensions of degree at most $\# W$. We choose $\Sigma$ so that every conjugacy class of the group $\Gal(L/K)$ contains an element of the form $\Frob_v$, $v \in \Sigma$. We claim that this choice of $\Sigma$ works.

Suppose for contradiction that $x_\Sigma$ is the image under some $f_{i, \Sigma}$ of a point $y \in Z^\circ_{\hM, R}$, where $P = MN$ is a proper standard parabolic subgroup of $G$. The residue field of $y$ is a finite extension of $k$ of degree at most $\# W$. It follows that there is a continuous character $\lambda : \Gamma_K \to \Gal(K \cdot \overline{\bbF}_q / K) \to Z^\circ_{\hM}(l)$ such that for each $v \in \Sigma$, the elements $\overline{\rho}_{i_0}(\Frob_v)$ and $(\overline{\rho}_i \otimes \lambda)(\Frob_v) \in \hG(l)$ determine the same homomorphism $\cH(G(K_v), G(\cO_{K_v})) \to l$. Moreover $\overline{\rho}_i \otimes \lambda$ factors through $\Gal(L/K)$.

Since the elements $\Frob_v$ ($v \in \Sigma$) cover $\Gal(L/K)$, we find that for all $f \in \bbZ[\hG]^{\hG}$ and for all $\gamma \in \Gamma_K$, we have $f(\overline{\rho}_{i_0}(\gamma)) = f (\overline{\rho}_i \otimes \lambda(\gamma))$. Applying the strong irreducibility property of $\overline{\rho}_{i_0}$, we conclude that $\overline{\rho}_i \otimes \lambda$ has image contained in no proper parabolic subgroup of $\hG_k$. However, it has (by construction) image contained inside $\hM_k$. This contradiction concludes the proof. 
\end{proof}
\subsubsection{Interlude on constant terms and Eisenstein series}\label{sec_constant_terms_and_eisenstein_series}
We now want to apply Proposition \ref{prop_construction_of_special_hecke_operators} to construct unramified Hecke operators which kill compactly supported automorphic forms which are not cuspidal. This will be accomplished in Lemma \ref{lem_construction_of_cuspidal_projector} below, but we must first recall some basic notions from the complex theory of automorphic forms. Our reference is the book of Moeglin--Waldspurger \cite{Moe95}. The additional `automorphic' notation introduced here (especially any object in script font $\mathscr{A}, \mathscr{B}, \mathscr{C}, \dots$) will be used only in this \S \ref{sec_constant_terms_and_eisenstein_series}.

Fix a place $v_0$ of $K$, and let $\frz$ denote the Bernstein center of the group $G(K_{v_0})$. (This auxiliary choice of place is used in the definition of spaces of automorphic forms in the following paragraphs. However, the objects constructed are independent of this choice, and it plays no role in our arguments; see \cite[\S I.3.6]{Moe95}.) Let $U_0 = G(\widehat{\cO}_K)$. For each standard parabolic subgroup $P = MN$ of $G$, there is a map $m_P : G(\bbA_K) \to M(\bbA_K) / M(\bbA_K)^1$ which sends $g  = nmu$ with $n \in N(\bbA_K)$, $m \in M(\bbA_K)$, $u \in U_0$, to the element $m_P(g) = m M(\bbA_K)^1$. This is well-defined because $M(\bbA_K) \cap U_0 \subset M(\bbA_K)^1$. The composite
\[ Z_M(\bbA_K) \to M(\bbA_K) \to M(\bbA_K) / M(\bbA_K)^1 \]
has image of finite index, and compact kernel. We define 
\[ \Re \fra_M = \Hom_\bbZ(X^\ast(C_M), \bbR) \cong M(\bbA_K) / M(\bbA_K)^1 \otimes_\bbZ \bbR. \]
If $P = M N$ is a standard parabolic subgroup of $G$, then we define a space $\mathscr{A}(N(\bbA_K) M(K) \backslash G(\bbA_K))$ of automorphic forms as the set of locally constant functions $\phi : N(\bbA_K) M(K) \backslash G(\bbA_K) \to \bbC$ satisfying the following conditions:
\begin{itemize}
\item $\phi$ has moderate growth (see \cite[I.2.3]{Moe95}).
\item $\phi$ is $U_0$-finite and $\frz$-finite. 
\end{itemize} 
We write $\mathscr{C}(N(\bbA_K) M(K) \backslash G(\bbA_K))$ for the space of all continuous functions $N(\bbA_K) M(K) \backslash G(\bbA_K) \to \bbC$, and $\mathscr{C}_c(N(\bbA_K) M(K) \backslash G(\bbA_K))$ for its subspace of continuous functions of compact support.

If $P = MN$ is a standard parabolic subgroup of $G$ and $\phi : N(K) \backslash G(\bbA_K) \to \bbC$ is a continuous function, then we define the constant term of $\phi$ along $P$ by the integral
\[ \phi_P(g) = \int_{n \in N(K) \backslash N(\bbA_K)} \phi(ng) \, dn. \]
Note that $N(K) \backslash N(\bbA_K)$ is compact and so has its canonical probability measure, induced from a left-invariant Haar measure on $N(\bbA_K)$. This means that the integral is well-defined. If $P' \subset P$ are standard parabolic subgroups of $G$ and $\phi \in \mathscr{A}(N(\bbA_K) M(K) \backslash G(\bbA_K))$, then in fact $\phi_{P'} \in \mathscr{A}(N'(\bbA_K) M'(K) \backslash G(\bbA_K))$. 

A function $\phi \in \mathscr{A}(N(\bbA_K) M(K) \backslash G(\bbA_K))$ is said to be cuspidal if for all proper standard parabolic subgroups $P' \subset P$, the constant term $\phi_{P'}$ vanishes. We write $\mathscr{A}_0(N(\bbA_K) M(K) \backslash G(\bbA_K))$ for the subspace of cusp forms. If $\chi : Z_{M}(K) \backslash Z_{M}(\bbA_K) \to \bbC^\times$ is a character, then we define
\[ \mathscr{A}(N(\bbA_K) M(K) \backslash G(\bbA_K))_\chi \subset \mathscr{A}(N(\bbA_K) M(K) \backslash G(\bbA_K)) \]
and 
\[\mathscr{A}_0(N(\bbA_K) M(K) \backslash G(\bbA_K))_\chi \subset \mathscr{A}_0(N(\bbA_K) M(K) \backslash G(\bbA_K)) \]
to be the subspaces of functions satisfying the relation $\phi(zg) = \chi(z) \phi(g)$ for all $z \in Z_{M}(\bbA_K)$. The group $G(\bbA_K)$ acts on all of the above-defined spaces of functions by right translation, and we have isomorphisms of $\bbC[G(\bbA_K)]$-modules
\[ \mathscr{A}(N(\bbA_K) M(K) \backslash G(\bbA_K)) \cong \Ind_{P(\bbA_K)}^{G(\bbA_K)} \mathscr{A}(M(K) \backslash M(\bbA_K)) \]
and
\[ \mathscr{A}_0(N(\bbA_K) M(K) \backslash G(\bbA_K)) \cong \Ind_{P(\bbA_K)}^{G(\bbA_K)} \mathscr{A}_0(M(K) \backslash M(\bbA_K)), \]
where the spaces of automorphic forms on $M$ are defined in the same way as for $G$ (see \cite[I.2.17]{Moe95}). 
\begin{proposition}\label{prop_cusp_forms_have_compact_support}
Let $P = MN$ be a standard parabolic subgroup of $G$, and let $U \subset U_0$ be an open compact subgroup. Then there exists a compact subset $C \subset G(\bbA_K)$ such that any function 
\[ \phi \in \mathscr{A}_0(N(\bbA_K) M(K) \backslash G(\bbA_K))^U \]
has support contained in $Z_{M}(\bbA_K) N(\bbA_K) M(K) C$. In particular, for any character $\chi : Z_{M}(K) \backslash Z_{M}(\bbA_K) \to \bbC^\times$, the space
\[ \mathscr{A}_0(N(\bbA_K) M(K) \backslash G(\bbA_K))^U_\chi \]
is finite-dimensional, and for any $\phi \in \mathscr{A}_0(N(\bbA_K) M(K) \backslash G(\bbA_K))^U$, the support of $\phi$ is compact modulo $Z_{M}(\bbA_K) P(K)$.
\end{proposition}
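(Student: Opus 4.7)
I would reduce the proposition to the compactness-modulo-center statement for cusp forms on the Levi $M$, via the Iwasawa decomposition $G(\bbA_K) = N(\bbA_K) M(\bbA_K) U_0$. For each $u \in U_0$, define $\phi_u \colon M(\bbA_K) \to \bbC$ by $\phi_u(m) := \phi(mu)$. One checks that $\phi_u \in \mathscr{A}_0(M(K) \backslash M(\bbA_K))$: cuspidality along a proper standard parabolic $Q = M' N'_M$ of $M$ is a Fubini calculation using the cuspidality of $\phi$ along the parabolic $P' := M' N'_M N \subsetneq P$ of $G$, while moderate growth and the various finiteness conditions transfer directly. The left $N(\bbA_K)$-invariance of $\phi$ gives $\phi(nmu) = \phi_u(m)$, so $\phi$ is recovered from the family $\{\phi_u\}_{u \in U_0}$. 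Since $U \subset U_0$ is open and $U_0$ is compact, $[U_0 : U]$ is finite; picking representatives $u_1, \dots, u_r$ of $U_0/U$, every $\phi_u$ equals some $\phi_{u_i}$, and each is right-invariant under the open compact subgroup $U_M := \bigcap_i (u_i U u_i^{-1} \cap M(\bbA_K))$ of $M(\bbA_K)$.

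The key analytic input is the reductive analogue of Proposition~\ref{prop_cusp_forms_on_G_have_compact_support} for $M$: any right $U_M$-invariant cusp form on $M(K) \backslash M(\bbA_K)$ is supported in $Z_M(\bbA_K) M(K) C_M$ for some compact $C_M \subset M(\bbA_K)$ depending only on $U_M$. This is standard and follows from Harder's reduction theory for the split reductive group $M$ (see e.g.~\cite[Lemma I.2.7]{Moe95}). Applying it to each $\phi_{u_i}$ and combining with the Iwasawa decomposition, we see that $\phi(g) \neq 0$ with $g = nmu$ forces $m \in Z_M(\bbA_K) M(K) C_M$; since $Z_M$ normalizes $N$ and commutes with $M(K)$, we conclude $\mathrm{supp}(\phi) \subset Z_M(\bbA_K) N(\bbA_K) M(K) C$ with $C := C_M U_0$ compact, proving the first claim.

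The remaining assertions follow by rearrangement. Let $K_N \subset N(\bbA_K)$ be a compact fundamental domain for $N(K) \backslash N(\bbA_K)$ (compact because $N$ is unipotent). Since $M(K)$ normalizes $N$, $N(\bbA_K) M(K) = M(K) N(\bbA_K) = M(K) N(K) K_N = P(K) K_N$, whence
\[ \mathrm{supp}(\phi) \subset Z_M(\bbA_K) P(K) \cdot (K_N C), \]
giving compactness of the support modulo $Z_M(\bbA_K) P(K)$. For the finite-dimensionality of the $\chi$-isotypic subspace: cover the compact set $K_N C$ by finitely many right $U$-cosets $c_1 U, \dots, c_N U$; the equivariance $\phi(z p g u) = \chi(z) \phi(g)$ for $(z, p, u) \in Z_M(\bbA_K) \times P(K) \times U$ then shows that any $\phi$ in the isotypic subspace is determined by the $N$ scalars $\phi(c_1), \dots, \phi(c_N)$, so the dimension is at most $N$.

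\textbf{The main obstacle} is thus the reductive version of Proposition~\ref{prop_cusp_forms_on_G_have_compact_support}: cusp forms on the reductive group $M$ over $K$ have support compact modulo $Z_M(\bbA_K) M(K)$. For $G$ semisimple this is essentially the content of Proposition~\ref{prop_cusp_forms_on_G_have_compact_support} and reduces ultimately to a finite Siegel-set covering of $G(K) \backslash G(\bbA_K)$; the reductive version requires separating out the split part of the centre and applying reduction theory to $M^{\mathrm{der}}$. It is standard but is the only genuinely nontrivial analytic input; everything else is bookkeeping with the Iwasawa decomposition.
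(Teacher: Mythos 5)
Your proof is correct and follows essentially the same route as the paper: reduce to the reductive analogue of Proposition \ref{prop_cusp_forms_on_G_have_compact_support} (the paper cites \cite[Theorem 1.2.1]{Har74}) for the Levi $M$, via the functions $\phi_u(m) = \phi(mu)$ and the Iwasawa decomposition. The only cosmetic difference is that the paper shrinks $U$ to be normal in $U_0$ where you instead intersect finitely many conjugates to produce $U_M$; your write-up of the ``in particular'' clauses, which the paper leaves implicit, is also fine.
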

\begin{proof}
If $P = G$, then $\mathscr{A}_0 = C_{\text{cusp}}$ and this follows from Proposition \ref{prop_cusp_forms_on_G_have_compact_support}. The general case can be reduced to Proposition \ref{prop_cusp_forms_on_G_have_compact_support}, or rather its generalization \cite[Theorem 1.2.1]{Har74} to reductive (and not just semisimple) groups, as we now explain. After possibly shrinking $U$, we can assume that $U$ is normal in $U_0$. Then for all $u \in U_0$, the function $\phi_u : m \mapsto \phi(mu)$ defines an element of $\mathscr{A}_0(M(K) \backslash M(\bbA_K))^{U \cap M(\bbA_K)}$. Applying the analogue of Proposition \ref{prop_cusp_forms_on_G_have_compact_support} to the reductive group $M$, we get a compact subset $C' \subset M(\bbA_K)$ such that any such function $\phi_u$ has support in $Z_{M}(\bbA_K) M(K) C'$. It follows that $\phi$ has support in $N(\bbA_K) Z_{M}(\bbA_K) M(K) C' U_0$. We can therefore take $C = C' U_0$.
\end{proof}
\begin{proposition}\label{prop_automorphic_forms_with_central_character}
Let $P = M N$ be a standard parabolic subgroup of $G$.
\begin{enumerate}
\item Given characters $\chi : Z_{M}(K) \backslash Z_{M}(\bbA_K) \to \bbC^\times$ and $\psi : M(\bbA_K) / M(\bbA_K)^1 \to \bbC^\times$, there are canonical isomorphisms
\[ \mathscr{A}(N(\bbA_K) M(K) \backslash G(\bbA_K))_\chi \cong \mathscr{A}(N(\bbA_K) M(K) \backslash G(\bbA_K))_{\chi \psi} \]
and
\[ \mathscr{A}_0(N(\bbA_K) M(K) \backslash G(\bbA_K))_\chi \cong \mathscr{A}_0(N(\bbA_K) M(K) \backslash G(\bbA_K))_{\chi \psi} \]
given by the formula $\phi \mapsto (g \mapsto \psi(m_P(g)) \phi(g))$.
\item Let $\bbC[\Re \fra_M]$ denote the ring of polynomials on the real vector space $\Re \fra_M$ with complex coefficients. Then there are canonical isomorphisms
\[ \bbC[\Re \fra_M] \otimes_\bbC \oplus_\chi \mathscr{A}(N(\bbA_K) M(K) \backslash G(\bbA_K))_\chi \cong \mathscr{A}(N(\bbA_K) M(K) \backslash G(\bbA_K)) \]
and
\[ \bbC[\Re \fra_M] \otimes_\bbC \oplus_\chi \mathscr{A}_0(N(\bbA_K) M(K) \backslash G(\bbA_K))_\chi \cong \mathscr{A}_0(N(\bbA_K) M(K) \backslash G(\bbA_K)) \]
given by the formula $Q \otimes \phi \mapsto (g \mapsto Q(m_P(g)) \phi(g))$.
\end{enumerate}
\end{proposition}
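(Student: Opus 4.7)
\textbf{Plan for Proposition \ref{prop_automorphic_forms_with_central_character}.}

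For part (i), I would verify directly that $\tau_\psi \colon \phi \mapsto (g \mapsto \psi(m_P(g))\phi(g))$ defines the desired isomorphism, with inverse $\tau_{\psi^{-1}}$. The function $\psi \circ m_P$ is well-defined on $G(\bbA_K)$ because $\psi$ factors through $M(\bbA_K)/M(\bbA_K)^1$; it is smooth, bounded, right-$U_0$-invariant, and left-invariant under $N(\bbA_K) M(K)$, the latter using that $M(K) \subset M(\bbA_K)^1$ by the product formula applied to characters of $C_M$. Writing $g = nmu$ in the Iwasawa decomposition, a short computation using that $Z_M \subset M$ normalizes $N$ gives $m_P(zg) = z \cdot m_P(g)$ for $z \in Z_M(\bbA_K)$, so multiplication by $\psi \circ m_P$ sends central character $\chi$ to $\chi\psi$. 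Moderate growth, $U_0$-finiteness, and $\frz$-finiteness are preserved because $\psi \circ m_P$ is bounded, smooth, and right-$U_0$-invariant. Cuspidality: for any proper standard parabolic $P' \subsetneq P$ with unipotent radical $N' \supsetneq N$, the quotient $N'/N$ is the unipotent radical of a proper parabolic of $M$, whose adelic points lie in $M(\bbA_K)^1$ (every character of $C_M$ vanishes on unipotents), so $\psi \circ m_P$ is left-$N'(\bbA_K)$-invariant and pulls out of the constant-term integral along $N'$.

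For part (ii), I would first reduce to the case $P=G$, $M=G$ via the $\bbC[\Re\fra_M]$-equivariant isomorphism $\mathscr{A}(N(\bbA_K) M(K)\backslash G(\bbA_K)) \cong \Ind_{P(\bbA_K)}^{G(\bbA_K)} \mathscr{A}(M(K)\backslash M(\bbA_K))$, under which the polynomial action via $m_P$ transports to the analogous action via the natural projection $m : M(\bbA_K) \to M(\bbA_K)/M(\bbA_K)^1$. Assuming now $G = M$, injectivity of the map follows from two facts, applied after evaluating a purported relation $\sum_i Q_i(m_G(g))\phi_i(g) = 0$ on translates $g \mapsto zg$ for $z \in Z_G(\bbA_K)$: linear independence of the distinct characters $\chi_i$ of $Z_G(K)\backslash Z_G(\bbA_K)$ first separates the sum by $\chi_i$-components, and then injectivity of the restriction $\bbC[\Re\fra_G] \hookrightarrow \mathrm{Map}(\Lambda,\bbC)$, for $\Lambda = G(\bbA_K)/G(\bbA_K)^1$, forces each $Q_i \otimes \phi_i$ to vanish.

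For surjectivity, fix $\phi \in \mathscr{A}(G(K)\backslash G(\bbA_K))$ and let $V_\phi$ be the $\bbC$-span of the left translates $\{L_z\phi : z \in Z_G(\bbA_K)\}$. I claim $V_\phi$ is finite-dimensional: the left $Z_G(\bbA_K)$-action commutes with the right $G(\bbA_K)$-action (as $Z_G$ is central), and $\frz$-finiteness at $v_0$ implies that $Z_G(K_{v_0})$ (whose image lies in $\frz$) acts locally finitely, while $U_0$-right-invariance elsewhere forces $Z_G(\cO_{K_v})$ to act trivially for $v \neq v_0$. Factoring out the trivial left action of $Z_G(K)$ on $\phi$, the finitely generated abelian group $A = Z_G(K) \backslash Z_G(\bbA_K)/(\text{maximal compact})$ acts on $V_\phi$. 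Decomposing $V_\phi = \bigoplus_\chi V_\phi^{\chi,\mathrm{gen}}$ into generalized $A$-eigenspaces, on each summand $L_z$ acts as $\chi(z)^{-1}$ times a unipotent operator depending polynomially on $m_G(z) \in \Lambda \subset \Re\fra_G$, by the structure theorem for finite-dimensional representations of $\bbZ^r$ with commuting unipotent action. An induction on nilpotent length then writes every element of $V_\phi^{\chi,\mathrm{gen}}$ as $\sum_j Q_j(m_G(g))\phi_j(g)$ with $\phi_j \in \mathscr{A}_\chi$, placing $\phi$ in the image. The cusp-form version is proved identically, using Proposition \ref{prop_cusp_forms_have_compact_support} in place of $\frz$-finiteness to guarantee that $V_\phi$ is finite-dimensional.

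The main obstacle will be the careful justification of the local finiteness of the left $Z_G(\bbA_K)$-action: namely, unpacking how $\frz$-finiteness at $v_0$ combined with $U_0$-finiteness everywhere else captures the action of $Z_G(\bbA_K)$ through its image in the appropriate Bernstein centre, so that $V_\phi$ really is a finite-dimensional representation of the finitely generated abelian group $A$ above. Everything after that is essentially a translation of the Jordan decomposition for commuting unipotent operators into the language of polynomials in $m_G$.
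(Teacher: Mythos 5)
The paper offers no argument of its own here: its proof is the single line ``See \cite[I.3]{Moe95}'', and your proposal is in substance a writeup of that standard Moeglin--Waldspurger argument, so you are following the same route. Two small corrections are worth recording. First, $\psi \circ m_P$ is \emph{not} bounded in general: $\psi$ is an arbitrary $\bbC^\times$-valued character of the lattice $M(\bbA_K)/M(\bbA_K)^1$, so $|\psi|$ can grow like $q^{s\langle \cdot,\chi\rangle}$; what you need (and what is true) is that $\psi\circ m_P$ and the polynomials $Q\circ m_P$ have moderate growth, which still suffices. Second, you rightly flag preservation of $\frz$-finiteness as the delicate point, but note that right $U_0$-invariance of $\psi\circ m_P$ alone does not deliver it: for $h\in G(K_{v_0})$ the quantity $m_P(gh)-m_P(g)$ depends on the $v_0$-component of the $U_0$-part of $g$, so $\tau_\psi$ does not commute with the right regular action of $G(K_{v_0})$; this is precisely what \cite[I.3.3]{Moe95} verifies. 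In part (ii), your finite-dimensionality of $V_\phi$ also uses the finiteness of $Z_M(K)\backslash Z_M(\bbA_K)/(Z_M(\bbA_K)\cap U_0)Z_M(K_{v_0})$, since $Z_M(\bbA_K)$ is not generated by the three subgroups you control directly; and in the last step one should note that the unipotent part of the lattice action factors through $m_P$ (its source modulo a compact subgroup injects into the lattice) and extends to a polynomial-entried $\exp$ on $\Re\fra_M$, so that the $\chi$-eigenfunctions $\psi_j(g)=\sum_k E_{jk}(-m_P(g))v_k(g)$ are defined for all $g$ and not only for $m_P(g)$ in the image of $Z_M(\bbA_K)$. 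All of these points are routine to fill in, and your outline is correct.
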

\begin{proof}
See \cite[I.3]{Moe95}.
\end{proof}
Let $P = MN$ be a proper standard parabolic subgroup of $G$. We write $\mathscr{C}_0(N(\bbA_K) M(K) \backslash G(\bbA_K))$ for the space of functions $N(\bbA_K) M(K) \backslash G(\bbA_K) \to \bbC$ spanned by functions of the form $g \mapsto b(m_P(g)) \phi(g)$, where $\phi \in \mathscr{A}_0(N(\bbA_K) M(K) \backslash G(\bbA_K))$ and $b : M(\bbA_K) / M(\bbA_K)^1 \to \bbC$ has compact support (see \cite[I.3.4]{Moe95}). It follows from Proposition \ref{prop_cusp_forms_have_compact_support} that any function $\phi \in \mathscr{C}_0(N(\bbA_K) M(K) \backslash G(\bbA_K))$ has compact support modulo $P(K)$. We can therefore define a linear map
\[ \begin{split} \Eis_P : \mathscr{C}_0(N(\bbA_K) M(K) \backslash G(\bbA_K)) & \to \mathscr{C}_c(G(K) \backslash G(\bbA_K)) \\ \phi & \mapsto \left( \Eis_P \phi : g \mapsto \sum_{\gamma \in P(K) \backslash G(K)} \phi(\gamma g) \right).
\end{split} \]
This is $G(\bbA_K)$-equivariant. If $\psi : G(K) \backslash G(\bbA_K) \to \bbC$ is any locally constant function, then we have the formula
\begin{equation}\label{eqn_constant_terms_of_pseudo_eisenstein_series} \int_{x \in N(\bbA_K) M(K) \backslash G(\bbA_K)} \phi(x) \psi_P(x) \, dx = \int_{g \in G(K) \backslash G(\bbA_K)} \Eis_P(\phi)(g) \psi(g) \, dg. 
\end{equation}
In particular, if $\psi \in \mathscr{A}_0(G(K) \backslash G(\bbA_K))$, then these integrals both vanish. 
\begin{lemma}\label{lem_density_of_cusp_forms_with_central_character}
Let $S$ be a finite set of places of $K$, and let $U = \prod_v U_v \subset G(\bbA_K)$ be an open compact subgroup such that $U_v = G(\cO_{K_v})$ if $v \not\in S$. Let $t \in \cH(G(\bbA_K^S), U^S) \otimes_\bbZ \bbC$ be an operator such that for any character $\chi : Z_{M}(K) \backslash Z_{M}(\bbA_K) \to \bbC^\times$, $t \mathscr{A}_0(N(\bbA_K) M(K) \backslash G(\bbA_K))_\chi^U = 0$. Then $t \mathscr{C}_c(G(K) \backslash G(\bbA_K))^U \subset \mathscr{A}_0(G(K)\backslash G(\bbA_K))^U$.
\end{lemma}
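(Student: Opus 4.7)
The plan is to reduce the statement, via a Langlands-type decomposition of $\mathscr{C}_c(G(K) \backslash G(\bbA_K))^U$, to a computation on pseudo-Eisenstein series, and then handle the latter by Fourier analysis on $M(\bbA_K)/M(\bbA_K)^1$.

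First, I would appeal to the standard decomposition of compactly supported functions on $G(K) \backslash G(\bbA_K)$ along parabolic subgroups (see \cite[Chapter II]{Moe95}, adapted to the function-field setting where $G(K) \backslash G(\bbA_K) / U$ is a discrete countable set and compactly supported means finitely supported): every $f \in \mathscr{C}_c(G(K) \backslash G(\bbA_K))^U$ can be written as
\[
f = f_{\mathrm{cusp}} + \sum_{P = MN \subsetneq G} \Eis_P(\phi_P),
\]
where $f_{\mathrm{cusp}} \in \mathscr{A}_0(G(K) \backslash G(\bbA_K))^U$ and each $\phi_P \in \mathscr{C}_0(N(\bbA_K) M(K) \backslash G(\bbA_K))^U$. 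Because Hecke operators preserve cuspidality and $\Eis_P$ is $G(\bbA_K)$-equivariant, $tf_{\mathrm{cusp}} \in \mathscr{A}_0^U$ and $t \cdot \Eis_P(\phi_P) = \Eis_P(t\phi_P)$. The problem therefore reduces to showing $t\phi_P = 0$ for each proper standard parabolic $P = MN$ and each $\phi_P \in \mathscr{C}_0^U$.

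By the definition of $\mathscr{C}_0$ combined with the second isomorphism of Proposition \ref{prop_automorphic_forms_with_central_character}(ii), I would write $\phi_P$ as a finite sum of products $b \cdot \phi_0$, where $b : M(\bbA_K)/M(\bbA_K)^1 \to \bbC$ is finitely supported and $\phi_0 \in \mathscr{A}_0(N(\bbA_K) M(K) \backslash G(\bbA_K))_\chi^U$ for some central character $\chi$ of $Z_M$. To show $t(b \cdot \phi_0) = 0$, I would apply Pontryagin--Fourier inversion on the discrete abelian group $M(\bbA_K)/M(\bbA_K)^1$, writing
\[
b = \int_{\widehat{M(\bbA_K)/M(\bbA_K)^1}} \widehat{b}(\psi)\,\psi\,d\psi.
\]
For each character $\psi$, the first isomorphism of Proposition \ref{prop_automorphic_forms_with_central_character}(i) identifies $\psi \cdot \phi_0$ as an element of $\mathscr{A}_0(N(\bbA_K) M(K) \backslash G(\bbA_K))_{\chi \cdot \psi|_{Z_M}}^U$, which is annihilated by $t$ by the hypothesis applied to the central character $\chi \cdot \psi|_{Z_M}$. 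By continuity of $t$, interchanging $t$ with the integral yields $t(b \cdot \phi_0) = \int \widehat{b}(\psi)\, t(\psi \cdot \phi_0)\,d\psi = 0$, as required.

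The main obstacle will be rigorously establishing the Langlands-type decomposition of $\mathscr{C}_c$ in the function-field setting; this is a standard but non-trivial input relying on reduction theory and the spectral theory of Eisenstein series. The Fourier-inversion step is comparatively routine, since the support of $b$ is finite and the integrand $\psi \mapsto \widehat{b}(\psi) \cdot t(\psi \phi_0)$ takes values in a fixed finite-dimensional space (on which $t$ acts continuously), so the interchange of $t$ with the integral is unproblematic.
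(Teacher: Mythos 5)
Your reduction to showing $t\phi_P=0$ for $\phi_P\in\mathscr{C}_0(N(\bbA_K)M(K)\backslash G(\bbA_K))^U$ is the right target, but the way you get there has a genuine gap. You assert an \emph{exact} decomposition $f = f_{\mathrm{cusp}} + \sum_{P\subsetneq G}\Eis_P(\phi_P)$ of an arbitrary finitely supported $U$-invariant function, with each $\phi_P$ lying in $\mathscr{C}_0$. What \cite[Proposition I.3.4]{Moe95} actually gives is a \emph{density} statement: the span of the pseudo-Eisenstein series $\Eis_P(\mathscr{C}_0(\cdots))$ is dense in the orthogonal complement of the cusp forms inside $L^2$. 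There is no reason the non-cuspidal projection of a given compactly supported function should itself be a finite sum of pseudo-Eisenstein series with compactly supported data, and I know of no reference establishing this; it is a strictly stronger claim than the spectral theory provides, and your proof stands or falls on it. The repair is to not ask for an exact decomposition at all: pass to the Hilbert space $\mathscr{H}=L^2(G(K)\backslash G(\bbA_K)/U)$, on which $t$ acts as a bounded operator preserving the finite-dimensional closed subspace $\mathscr{A}_0(G(K)\backslash G(\bbA_K))^U$. Writing $f=f_0+f_1$ with $f_0$ cuspidal and $f_1$ in the orthogonal complement, it suffices to show $tf_1=0$, and for that it suffices (by continuity of $t$) that $t$ annihilate the dense subspace $\sum_P \Eis_P\,\mathscr{C}_0(\cdots)^U$; since $t\circ\Eis_P=\Eis_P\circ t$, this reduces to $t\,\mathscr{C}_0(\cdots)^U=0$, which is where your second step takes over.

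That second step is correct and is a legitimate alternative to the paper's argument. You decompose $\phi_P$ into finitely many terms $b\cdot\phi_0$ with $b$ finitely supported on the lattice $M(\bbA_K)/M(\bbA_K)^1\cong X_\ast(C_M)$ and $\phi_0\in\mathscr{A}_0(\cdots)_\chi^U$ (this uses both parts of Proposition \ref{prop_automorphic_forms_with_central_character}, absorbing the polynomial factor into $b$), then expand $b$ by Fourier inversion over the compact unitary dual and use that each twist $\psi\cdot\phi_0$ lies in $\mathscr{A}_0(\cdots)_{\chi\cdot\psi|_{Z_M}}^U$ and is therefore killed by $t$; interchanging $t$ (a finite sum of right translations) with the integral is harmless. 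The paper instead avoids Fourier analysis by an adjoint trick: using the pairing $\langle\cdot,\cdot\rangle_P$ and the fact that $t^\ast t\phi$ has support meeting only finitely many fibres of $m_P$, it chooses $\psi\in\oplus_\chi\mathscr{A}_0(\cdots)_\chi$ agreeing with $\phi$ on that support and computes $\langle t\phi,t\phi\rangle=\langle t\phi,t\psi\rangle=0$. Both routes work once the reduction is done correctly; please replace your decomposition claim with the density-plus-continuity argument.
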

\begin{proof}
We introduce the Hilbert space $\mathscr{H} = L^2(G(K) \backslash G(\bbA_K) / U)$, which is the completion of the space $\mathscr{C}_c(G(K) \backslash G(\bbA_K))^U$ with respect to its natural pre-Hilbert structure given by the inner product
\[ (\phi, \psi) = \int_{g \in G(K) \backslash G(\bbA_K)} \overline{\phi}(g) \psi(g) \, dg. \]
 The operator $t$ induces a continuous linear endomorphism of $\mathscr{H}$ which leaves invariant the finite-dimensional closed subspace $\mathscr{A}_0(G(K) \backslash G(\bbA_K))^U$. To prove the lemma, it suffices to show that $t$ acts as 0 on a dense subspace of the orthogonal complement of $\mathscr{A}_0(G(K) \backslash G(\bbA_K))^U$. By \cite[Proposition I.3.4]{Moe95} and the relation (\ref{eqn_constant_terms_of_pseudo_eisenstein_series}), $\sum_{P \subsetneq G} \Eis_P \mathscr{C}_0(N(\bbA_K) M(K) \backslash G(\bbA_K))^U$ is such a subspace.
 
We therefore need show that for each proper standard parabolic $P = MN$ of $G$, we have \[ t \mathscr{C}_0(N(\bbA_K) M(K) \backslash G(\bbA_K))^U = 0. \]
There is a Hermitian pairing
\[ \langle \cdot, \cdot \rangle_P : \mathscr{C}_0(N(\bbA_K) M(K) \backslash G(\bbA_K))^U \times \mathscr{C}(N(\bbA_K) M(K) \backslash G(\bbA_K))^U\to \bbC, \]
given by the formula $\langle \phi, \psi \rangle_P = \int_{x \in N(\bbA_K) M(K) \backslash G(\bbA_K)} \overline{\phi}(x) \psi(x) \, dx$. Let $(\cdot)^\ast$ denote the anti-involution of $\cH(G(\bbA_K^S), U^S) \otimes_\bbZ \bbC$ given by the formula $s^\ast(g) = \overline{s(g^{-1})}$. Then for any $s \in \cH(G(\bbA_K^S), U^S) \otimes_\bbZ \bbC$ we have the formula $\langle s \phi, \psi \rangle = \langle \phi, s^\ast \psi \rangle$. 

For any $\phi \in \mathscr{C}_0(N(\bbA_K) M(K) \backslash G(\bbA_K))$ and any finite subset $X \subset M(\bbA_K) / M(\bbA_K)^1$, we can find $\psi \in \oplus_\chi \mathscr{A}_0(N(\bbA_K) M(K) \backslash G(\bbA_K))_\chi$ such that $\phi(g) = \psi(g)$ whenever $m_P(g) \in X$. Indeed, this follows from Proposition \ref{prop_automorphic_forms_with_central_character} and the definition of the space $\mathscr{C}_0(N(\bbA_K) M(K) \backslash G(\bbA_K))$. To finish the proof of the lemma, we choose $\phi \in \mathscr{C}_0(N(\bbA_K) M(K) \backslash G(\bbA_K))$. We can find  $\psi \in \oplus_\chi \mathscr{A}_0(N(\bbA_K) M(K) \backslash G(\bbA_K))_\chi$ such that the restriction of $\psi$ to the support of $t^\ast t \phi$ equals $\phi$. We then obtain
\[ \langle t \phi, t \phi \rangle = \langle t^\ast  t \phi, \psi \rangle = \langle t \phi, t \psi \rangle = 0, \]
since $t$ annihilates $\psi$, by hypothesis. This forces $t \phi = 0$. Since $\phi$ was arbitrary this shows the desired vanishing.
\end{proof}
In the statement (but not the proof) of the following lemma, we use the notation of \S \ref{sec_summary_of_lafforgue}.
\begin{lemma}\label{lem_construction_of_cuspidal_projector}
Let $S$ be a finite set of places of $K$, and let $U \subset G(\widehat{\cO}_K)$ be an open compact subgroup such that $U_v = G(\cO_{K_v})$ if $v \not\in S$. Let $l$ be a prime and let $E \subset \overline{\bbQ}_l$ be a coefficient field. Let $\ffrm \subset \cB(U, \cO)$ be a maximal ideal such that $\overline{\sigma}_\ffrm : \Gamma_{K, S} \to \hG(\overline{\bbF}_l)$ is strongly irreducible. Then, after possibly enlarging $E$, we can find $t \in \cH(G(\bbA_K^S), U^S) \otimes_\bbZ \cO$ such that $t \text{ mod } \ffrm \neq 0$ and $t C_c(U, \cO) \subset C_{\text{cusp}}(U, \cO)$.
\end{lemma}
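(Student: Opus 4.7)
The strategy is to combine Proposition~\ref{prop_construction_of_special_hecke_operators}, which yields an unramified Hecke operator annihilating prescribed parabolically induced constituents while acting non-trivially on a distinguished one, with Lemma~\ref{lem_density_of_cusp_forms_with_central_character}, which converts annihilation of all Eisenstein contributions from proper parabolics into projection onto cusp forms. The task is to assemble input data for Proposition~\ref{prop_construction_of_special_hecke_operators} so that the distinguished constituent corresponds to $\ffrm$.

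After enlarging $E$, I choose an irreducible cuspidal automorphic representation $\pi_0$ of $G(\bbA_K)$ with $\pi_0^U \neq 0$ whose character on $\cB(U,\cO)$ reduces modulo $\varpi$ to the character corresponding to $\ffrm$; such a $\pi_0$ exists because $C_{\text{cusp}}(U,\cO)_\ffrm$ is non-zero, and by Corollary~\ref{cor_existence_of_galois_representations_attached_to_maximal_ideals_in_excursion_algebra} its mod $l$ Galois representation is $\overline{\sigma}_\ffrm$. Set $I_G = \{\pi_0\}$ and $i_0 = \pi_0$, so $\overline{\rho}_{i_0} = \overline{\sigma}_\ffrm$ is absolutely strongly $\hG$-irreducible by hypothesis. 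For each proper standard parabolic $P = MN$, Proposition~\ref{prop_cusp_forms_have_compact_support} together with a standard finiteness argument for central characters with ramification bounded by $U \cap Z_M(\bbA_K)$, modulo the image of $\Hom(M(\bbA_K)/M(\bbA_K)^1,\bbC^\times)$, produces a finite collection $I_P$ of cuspidal automorphic $M$-representations $\pi_i$, with $\pi_i^{U \cap M(\bbA_K)} \neq 0$ and $(i_P^G \pi_i)^U \neq 0$, such that every cuspidal automorphic $\pi$ of $M$ with $(i_P^G \pi)^U \neq 0$ is of the form $\pi_i \otimes \psi$ for some $i \in I_P$ and some character $\psi : M(\bbA_K)/M(\bbA_K)^1 \to \bbC^\times$. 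After further enlarging $E$, all Satake eigenvalues of each $\pi_i$ lie in $\cO$, and the required mod $l$ Galois representations $\overline{\rho}_i : \Gamma_K \to \hM(k)$ satisfying the Satake compatibility demanded by Proposition~\ref{prop_construction_of_special_hecke_operators} are constructed by applying Theorem~\ref{thm_excursion_operators_form_a_pseudocharacter} and Corollary~\ref{cor_existence_of_galois_representations_attached_to_maximal_ideals_in_excursion_algebra} with $G$ replaced by the split reductive group $M$, specialized at the maximal ideal of Lafforgue's excursion algebra for $M$ cut out by the Hecke eigenvalues of $\pi_i$.

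Applying Proposition~\ref{prop_construction_of_special_hecke_operators} to this data produces a finite set $\Sigma$ of places disjoint from $S$ and an element $t \in \bbT_\Sigma \subset \cH(G(\bbA_K^S), U^S) \otimes_\bbZ \cO$ satisfying $f_{i,\Sigma}(t) = 0$ for every proper $P$ and every $i \in I_P$, and $f_{i_0,\Sigma}(t) \not\equiv 0 \pmod{\varpi}$. Because $G$ is semisimple, $Z_{\hG}^\circ$ is trivial, so $f_{i_0,\Sigma}(t) \in \cO$ is simply the eigenvalue of $t$ on $\pi_0^U$. By construction of the $I_P$ and the conclusion of the proposition, $t$ annihilates every irreducible constituent of $\mathscr{A}_0(N(\bbA_K) M(K) \backslash G(\bbA_K))^U_\chi$ for every proper $P$ and every central character $\chi$; hence Lemma~\ref{lem_density_of_cusp_forms_with_central_character}, applied over $\bbC$ via a chosen embedding $\cO \hookrightarrow \bbC$, yields $t \cdot C_c(U, \bbC) \subset C_{\text{cusp}}(U, \bbC)$. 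Because $t$ preserves the integral lattice this restricts to $t \cdot C_c(U, \cO) \subset C_{\text{cusp}}(U, \cO)$. Finally, the image of $t$ in $\cB(U,\cO)/\ffrm$ is non-zero because $t$ acts on $\pi_0^U$, which is contained in $C_{\text{cusp}}(U, \cO)_\ffrm \otimes_\cO \overline{\bbQ}_l$, by the scalar $f_{i_0,\Sigma}(t)$, which is non-zero mod $\varpi$.

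The principal obstacle will be the finiteness and twist-exhaustion statement underlying the choice of $I_P$: making precise that modulo twists by characters of $M(\bbA_K)/M(\bbA_K)^1$ only finitely many cuspidal automorphic $M$-representations of level $U \cap M(\bbA_K)$ contribute to any $\mathscr{A}_0(N(\bbA_K) M(K) \backslash G(\bbA_K))^U_\chi$. This reduces to a finiteness statement for adelic characters of $Z_M$ with bounded ramification, modulo restrictions of characters of the cocentre $M(\bbA_K)/M(\bbA_K)^1$, which is standard. Together with the invocation of Lafforgue's construction for the (reductive, not semisimple) group $M$ to produce $\overline{\rho}_i$, this is the only real bookkeeping; the rest of the argument is a formal composition of the two cited results.
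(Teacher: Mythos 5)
Your proposal is correct and follows essentially the same route as the paper: finiteness of central characters of $Z_M$ up to twisting by $\Hom(M(\bbA_K)/M(\bbA_K)^1,\bbC^\times)$ to build the finite sets $I_P$, Lafforgue's construction for the Levi subgroups to supply the $\overline{\rho}_i$, then Proposition \ref{prop_construction_of_special_hecke_operators} followed by Lemma \ref{lem_density_of_cusp_forms_with_central_character}. The only cosmetic difference is that you take $I_G=\{\pi_0\}$ where the paper takes all constituents at level $U$, which changes nothing since only $i_0$ matters in the conclusion.
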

\begin{proof}
If $P = M N$ is a standard parabolic subgroup of $G$, say that two characters $\chi, \chi' : Z_{M}(K) \backslash Z_{M}(\bbA_K) \to \bbC^\times$ are twist equivalent if they differ by multiplication by an element of the image of the map
\[ \Hom(M(\bbA_K) / M(\bbA_K)^1, \bbC^\times) \to \Hom(Z_{M}(K) \backslash Z_{M}(\bbA_K), \bbC^\times). \]
 There are finitely many twist equivalence classes of characters $\chi : Z_M(K) \backslash Z_{M}(\bbA_K) / (U \cap Z_{M}(\bbA_K)) \to \bbC^\times$, and each equivalence class contains a character of finite order. This being the case, we can find the following data:
\begin{itemize}
\item A number field $L \subset \bbC$.
\item For each standard parabolic subgroup $P = M N \subset G$, a finite set $X_P$ of representatives 
\[ \chi : Z_{M}(K) \backslash Z_{M}(\bbA_K) / (U \cap Z_{M}(\bbA_K)) \to \bbC^\times \]
 of the twist equivalence classes, each $\chi \in X_P$ taking values in $\cO_L^\times$ and being of finite order. 
\end{itemize}
After possibly enlarging $L$, we can assume that for each $\chi \in X_P$ and for each of the finitely many irreducible constituents $\pi \subset \mathscr{A}_0(N(\bbA_K) M(K) \backslash G(\bbA_K))_\chi$ with non-zero $U$-invariants, the unramified Hecke operators $T_{V, v}$ ($v \not \in S$) on $\pi^U$ have all eigenvalues in $\cO_L[1/p]$. Indeed, there is an isomorphism of admissible $\bbC[G(\bbA_K)]$-modules
\[ \mathscr{A}_0(N(\bbA_K) M(K) \backslash G(\bbA_K))_\chi \cong \Ind_{P(\bbA_K)}^{G(\bbA_K)} \mathscr{A}_0(M(K) \backslash M(\bbA_K))_\chi, \]
and $\mathscr{A}_0(M(K) \backslash M(\bbA_K))_\chi$ has a natural $\cO_L[1/p]$-structure which is preserved by the $T_{V, v}$ (consisting of those functions $\phi : M(K) \backslash M(\bbA_K) \to \bbC$ which take values in $\cO_L[1/p]$). 

Let $\lambda$ be a place of $L$ of residue characteristic $l$, and let $R = \cO_{L, (\lambda)} \subset \bbC$. Fix an isomorphism $\overline{\bbQ}_l \cong \bbC$ which induces the place $\lambda$ of $L$. After possibly enlarging $E$, we can assume that $E$ contains $L$ under this identification. If $P = MN$ is a standard parabolic subgroup of $G$, let $I_P$ denote the set of all irreducible constituents of $\mathscr{A}_0(N(\bbA_K) M(K) \backslash G(\bbA_K))_\chi$ with non-zero $U$-invariants, as $\chi$ ranges over $X_P$. We are now in the situation of Proposition \ref{prop_construction_of_special_hecke_operators}, so we obtain a Hecke operator $t \in \cH(G(\bbA_K^S), U^S) \otimes_\bbZ R$ such that $t$ has non-zero image in $\cB(U, \cO) / \ffrm$ and $t$ acts as 0 on any representation $i_{P(\bbA_K)}^{G(\bbA_K)} (\pi \otimes \psi)$ ($\pi \in I_P$, $\psi \in \Hom(M(\bbA_K) / M(\bbA_K)^1, \bbC^\times)$) when $P$ is a proper standard parabolic subgroup of $G$, hence on 
\[ \oplus_\chi  \mathscr{A}_0(N(\bbA_K) M(K) \backslash G(\bbA_K))_\chi^U. \]
It now follows from Lemma \ref{lem_density_of_cusp_forms_with_central_character} that $t$ has the properties claimed in the statement of the current lemma. 
\end{proof}

\subsubsection{Application to freeness of integral automorphic forms}

We now come to the main result of \S \ref{sec_automorphic_forms_free_over_diamond_operators}. We fix a prime $l$ and a  coefficient field $E \subset \overline{\bbQ}_l$.
\begin{theorem}\label{thm_freeness_over_rings_of_diamond_operators}
Let $U = \prod_v U_v$ be an open compact subgroup of $G(\widehat{\cO}_K)$, and let $V = \prod_v V_v \subset U$ be an open normal subgroup such that $U/V$ is abelian of $l$-power order. Let $v_0$ be a place of $K$, and let $l^M$ denote the order of an $l$-Sylow subgroup of $G(\bbF_{q_{v_0}})$. Let $V \subset W \subset U$ be a subgroup such that $(U/V)[l^M] \subset W/V$. Finally, let $\ffrm \subset \cB(W, \cO)$ be a maximal ideal such that $\overline{\sigma}_\ffrm$ is strongly $\hG$-irreducible. Then $C_{\text{cusp}}(W, \cO)_\ffrm$ is a finite free $\cO[U/W]$-module.
\end{theorem}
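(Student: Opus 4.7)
The plan is to realize $C_{\text{cusp}}(W,\cO)_\ffrm$ as an $\cO[U/W]$-direct summand of a free module $C_c(W,\cO)$, and then invoke the fact that $\cO[U/W]$ is a local ring (since $U/W$ is a finite abelian $l$-group and the residue characteristic of $\cO$ is $l$), so that any finitely generated projective $\cO[U/W]$-module is free. Note that $W$ is normal in $U$: $V$ is normal with $U/V$ abelian, hence every intermediate subgroup is normal. Consequently $U/W$ acts on $X_W := G(K)\backslash G(\bbA_K)/W$ by right translation, and this action commutes with the action of every $U$-biinvariant Hecke operator, in particular with everything in $\cB(W,\cO)$.

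The crucial first step is to show that the action of $U/W$ on $X_W$ is \emph{free}. For $g\in G(\bbA_K)$, one checks that the stabilizer of $[g]$ in $U/W$ is the image of the finite group $\Gamma_g := G(K)\cap gUg^{-1}$ under the homomorphism $\gamma\mapsto g^{-1}\gamma g\cdot W$. Because $U/V$ is an abelian $l$-group, for any $\gamma\in\Gamma_g$ the image of $g^{-1}\gamma g$ in $U/V$ equals the image of $g^{-1}\gamma_l g$, where $\gamma_l$ is the $l$-part of $\gamma$, and hence has order exactly the order of $\gamma_l$. The inclusion $K\hookrightarrow K_{v_0}$ is injective, so $(g^{-1}\gamma_l g)_{v_0}\in U_{v_0}\subset G(\cO_{K_{v_0}})$ has the same order as $\gamma_l$; since $l\neq p$ and the kernel of $G(\cO_{K_{v_0}})\to G(\bbF_{q_{v_0}})$ is pro-$p$, this element injects into $G(\bbF_{q_{v_0}})$ and its order divides $l^M$. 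Therefore the image of $g^{-1}\gamma g$ in $U/V$ lies in $(U/V)[l^M]\subset W/V$, so its image in $U/W$ is trivial.

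Given the free action, writing $X_W$ as a disjoint union of $U/W$-orbits yields a decomposition of $C_c(W,\cO)\cong\cO[X_W]$ as a (possibly infinite) direct sum of free rank-one $\cO[U/W]$-modules; thus $C_c(W,\cO)$ is free over $\cO[U/W]$. Next, enlarging the finite set $S$ of ``bad'' places so that $V_v = G(\cO_{K_v})$ for $v\notin S$, we have $W = W_S\times\prod_{v\notin S}G(\cO_{K_v})$, so Lemma~\ref{lem_construction_of_cuspidal_projector} applies to $W$ and produces (after perhaps enlarging $E$) a Hecke operator $t\in\cH(G(\bbA_K^S),U^S)\otimes_\bbZ\cO$ whose image in $\cB(W,\cO)/\ffrm$ is nonzero and which satisfies $tC_c(W,\cO)\subset C_{\text{cusp}}(W,\cO)$. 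Since $\cB(W,\cO)$ is a finite flat $\cO$-algebra, hence $\varpi$-adically complete and semilocal, it splits as $\cB(W,\cO) = \prod_{\ffrm'}\cB(W,\cO)_{\ffrm'}$, and we let $e_\ffrm$ be the resulting idempotent projecting onto the $\ffrm$-component. The operator $t$ becomes a unit in $\cB(W,\cO)_\ffrm$, and both $t$ and $e_\ffrm$ commute with the $\cO[U/W]$-action. The composite
\[ t^{-1}\circ e_\ffrm \circ t \,:\, C_c(W,\cO)\longrightarrow C_{\text{cusp}}(W,\cO)_\ffrm \]
is therefore an $\cO[U/W]$-linear retraction of the natural inclusion $C_{\text{cusp}}(W,\cO)_\ffrm\hookrightarrow C_c(W,\cO)$.

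It follows that $C_{\text{cusp}}(W,\cO)_\ffrm$ is an $\cO[U/W]$-direct summand of the free module $C_c(W,\cO)$, hence projective. It is finitely generated over $\cO$ by Proposition~\ref{prop_cusp_forms_on_G_have_compact_support}, thus finitely generated over $\cO[U/W]$, and freeness follows because $\cO[U/W]$ is local. The main technical obstacle is the freeness of the action of $U/W$ on $X_W$: this is precisely the role of the auxiliary place $v_0$ and of the hypothesis $(U/V)[l^M]\subset W/V$, which converts the \emph{a priori} uncontrolled $l$-torsion in the finite groups $\Gamma_g$ into bounded $l$-torsion via the pro-$p$ reduction at $v_0$.
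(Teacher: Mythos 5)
Your proof is correct and follows essentially the same route as the paper: the free action of $U/W$ on $X_W$ via the bounded $l$-torsion of the groups $G(K)\cap gUg^{-1}$ at the place $v_0$, the cuspidal projector $t$ from Lemma \ref{lem_construction_of_cuspidal_projector}, and the realization of $C_{\text{cusp}}(W,\cO)_\ffrm$ as an $\cO[U/W]$-direct summand of the free module $C_c(W,\cO)$. The only cosmetic difference is that you use the idempotent $e_\ffrm$ and the inverse of $t$ in $\cB(W,\cO)_\ffrm$ where the paper uses an auxiliary element $s\in\cB(W,\cO)$ with $sC_{\text{cusp}}(W,\cO)=C_{\text{cusp}}(W,\cO)_\ffrm$ and the composite $z=s\circ t$.
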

The starting point for the proof is the following lemma.
\begin{lemma}\label{lem_freeness_of_set_theoretic_action_of_diamond_operators}
Let $U = \prod_v U_v$ be an open compact subgroup of $G(\widehat{\cO}_K)$, and let $V = \prod_v V_v \subset U$ be an open normal subgroup such that $U/V$ is abelian of $l$-power order. Let $v_0$ be a place of $K$, and let $l^M$ denote the order of an $l$-Sylow subgroup of $G(\bbF_{q_{v_0}})$. Let $V \subset W \subset U$ be a subgroup such that $(U/V)[l^M] \subset W/V$. Then the quotient $U/W$ acts freely on $X_W = G(K) \backslash G(\bbA_K) / W$.
\end{lemma}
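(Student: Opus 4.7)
The plan is to compute the $U/W$-stabilizer of an arbitrary point $G(K) g W \in X_W$ and show it is trivial. First observe that since $U/V$ is abelian, every subgroup is normal, so $W/V \trianglelefteq U/V$ and $W \trianglelefteq U$; in particular $U/W$ is a genuine quotient group. Setting $\Lambda_g := g^{-1} G(K) g \cap U$ (intersection taken inside $G(\bbA_K)$), a direct unwinding of cosets shows that $u \in U$ fixes $G(K) g W$ iff $g u w^{-1} g^{-1} \in G(K)$ for some $w \in W$, iff $u \in \Lambda_g \cdot W$; hence the stabilizer in $U/W$ equals $\Lambda_g W/W \cong \Lambda_g / (\Lambda_g \cap W)$, and the problem reduces to showing $\Lambda_g \subseteq W$. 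The structural fact I will use about $\Lambda_g$ is that it is a \emph{finite} group, by the proposition at the start of \S\ref{sec_cusp_forms_and_hecke_algebras}.

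Next I would bring in the auxiliary place $v_0$ via reduction mod $v_0$. Since $K \hookrightarrow K_{v_0}$, the $v_0$-projection $\Lambda_g \to U_{v_0} \subseteq G(\cO_{K_{v_0}})$ is injective (if the $v_0$-component of $g^{-1}\gamma g$ is trivial, then $\gamma=1$). Let $L \subseteq \Lambda_g$ be a Sylow $l$-subgroup. Because the residue characteristic at $v_0$ equals $p = \cha \bbF_q \neq l$, the kernel of the reduction map $G(\cO_{K_{v_0}}) \to G(\bbF_{q_{v_0}})$ is pro-$p$ (as $G$ is smooth), and hence intersects the finite $l$-group $L$ trivially. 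Therefore the composite $L \hookrightarrow U_{v_0} \to G(\bbF_{q_{v_0}})$ is injective, and $|L|$ divides $l^M$.

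Finally, for any $\lambda \in \Lambda_g$, the image $\bar\lambda \in U/V$ lies in a finite abelian $l$-group, so has $l$-power order $l^a$. Since $\lambda^{|\Lambda_g|}=1$ forces $l^a \mid |\Lambda_g|_l = |L| \mid l^M$, we get $\bar\lambda \in (U/V)[l^M] \subseteq W/V$, i.e.\ $\lambda \in W$. Thus $\Lambda_g \subseteq W$, and the stabilizer is trivial. The only step with real content is the reduction-mod-$v_0$ bound on the $l$-Sylow of $\Lambda_g$; everything else is a coset computation and elementary finite group theory. I do not anticipate any serious obstacle beyond verifying the stabilizer formula, which is purely formal once one notes that $W$ is normal in $U$.
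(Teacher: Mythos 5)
Your proof is correct and follows essentially the same route as the paper's: reduce to showing $g^{-1}G(K)g\cap U\subset W$ via the stabilizer computation, bound the $l$-part of this finite group by $l^M$ using the injection of its $v_0$-component into $G(\cO_{K_{v_0}})$ and the pro-$p$ congruence kernel, and conclude that each element maps into $(U/V)[l^M]\subset W/V$. The only difference is cosmetic: you spell out the reduction-mod-$v_0$ step that the paper compresses into a single ``consequently.''
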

\begin{proof}
For any place $v$ of $K$ and for any $g \in G(\bbA_K)$, the finite group $g G(K) g^{-1} \cap U $ injects into $U_v$ under projection to the $v$-component. Consequently, the $l$-part of its order divides $l^M$. 

The group $U$ acts on the discrete sets $X_V$ and $X_W$ by right translation. We want to show that if $g \in G(\bbA_K)$, then $\Stab_U (G(K) g W) = W$. We have $\Stab_U (G(K) g W) = \Stab_U (G(K) g V) \cdot W$, so it suffices to show that $\Stab_U (G(K) g V) \subset W$. On the other hand, we have $\Stab_U (G(K) g V) = (U \cap g^{-1} G(K) g) \cdot V$, so it even suffices to show that $U \cap g^{-1} G(K) g \subset W$. 

Let $u \in U \cap g^{-1} G(K) g$. We must show that $u$ lies in the kernel of the composite group homomorphism
\[ \xymatrix@1{ U \cap g^{-1} G(K) g \ar[r]^-\alpha & U/V \ar[r]^-\beta & U/W. } \]
The element $\alpha(u)$ has $l$-power order, hence satisfies $\alpha(u)^{l^M} = e$, hence (by hypothesis) $(\beta \circ \alpha)(u) = e$. This completes the proof.
\end{proof}
The lemma implies in particular that $C_c(W, \cO)$ is a free $\cO[U/W]$-module (although of infinite rank). In order to prove Theorem \ref{thm_freeness_over_rings_of_diamond_operators}, we will show that $C_{\text{cusp}}(W, \cO)_\ffrm$ can be realized as a direct summand $\cO[U/W]$-module of $C_c(W, \cO)$. Let $S$ be a finite set of places of $K$ such that $U_v = V_v = G(\cO_{K_v})$ if $v \not\in S$. By Lemma \ref{lem_construction_of_cuspidal_projector}, we can find an operator $t \in \cH(G(\bbA_K^S), U^S) \otimes_\bbZ \cO$ such that $t C_c(W, \cO)  \subset C_{\text{cusp}}(W, \cO)$ and $t$ has non-zero image in $\cB(W, \cO)/\ffrm$. Since $\cB(W, \cO)$ is a finite $\cO$-algebra, we can find $s \in \cB(W, \cO)$ such that $s C_{\text{cusp}}(W, \cO) = C_{\text{cusp}}(W, \cO)_\ffrm$ and $s$ has non-zero image in $\cB(W, \cO)/\ffrm$. Let $z = s \circ t$, viewed as an endomorphism of $C_c(W, \cO)$. Then we have 
\[ z C_c(W, \cO) = z C_{\text{cusp}}(W, \cO)_\ffrm = C_{\text{cusp}}(W, \cO)_\ffrm, \]
and the restriction of $z$ to $C_{\text{cusp}}(W, \cO)_\ffrm$ is an automorphism. Since the action of $z$ commutes with the action of $\cO[U/W]$, we conclude that $C_{\text{cusp}}(W, \cO)_\ffrm$ is a direct summand $\cO[U/W]$-module of $C_c(W, \cO)$, and hence that $C_{\text{cusp}}(W, \cO)_\ffrm$ is a finite free $\cO[U/W]$-module, as required. This completes the proof of Theorem \ref{thm_freeness_over_rings_of_diamond_operators}.

\subsection{Automorphic forms are free over $R_{\overline{\rho}}$}\label{sec_R_equals_B}

We now show how to use our work so far to prove an `$R = \cB$' theorem, which identifies in certain cases part of the integral algebra $\cB(U, \cO)$ of excursion operators with a Galois deformation ring of the type introduced in \S \ref{sec_galois_deformation_theory}. Let $l$ be a prime not dividing $q$, let $E \subset \overline{\bbQ}_l$ be a coefficient field, and let $U = G(\widehat{\cO}_K)$. Let $\ffrm \subset \cB(U, \cO)$ be a maximal ideal, and let $\overline{\sigma}_\ffrm : \Gamma_K \to \hG(\overline{\bbF}_l)$ denote the representation associated to the maximal ideal $\ffrm$ by Corollary \ref{cor_existence_of_galois_representations_attached_to_maximal_ideals_in_excursion_algebra}. We can assume, after possibly enlarging $E$, that $\overline{\sigma}_\ffrm$ takes values in $\hG(k)$. We make the following assumptions:
\begin{enumerate}
\item $l \nmid \# W$. This implies in particular that $l$ is a very good characteristic for $\hG$.
\item The subgroup $Z_{\hG^\text{ad}}(\overline{\sigma}_\ffrm(\Gamma_K))$ of $\hG_k$ is scheme-theoretically trivial.
\item  The representation $\overline{\sigma}_\ffrm$ is absolutely strongly $\hG$-irreducible (Definition \ref{def_irreducibility_and_strong_irreducibility}).
\item The subgroup $\overline{\sigma}_\ffrm(\Gamma_{K(\zeta_l)})$ of $\hG(k)$ is $\hG$-abundant (Definition \ref{def_abundant_subgroups}). 
\end{enumerate}
We note that assumptions (i) and (iii) together imply that $Z_{\hG^\text{ad}}(\overline{\sigma}_\ffrm(\Gamma_K))$ is a finite \'etale group (by Theorem \ref{thm_richardson_simultaneous_conjuation_and_stability} and Theorem \ref{thm_all_subgroups_are_separable}). Point (ii) can therefore be checked at the level of geometric points.

The universal deformation ring $R_{\overline{\sigma}_\ffrm, \emptyset}$ is then defined (see \S \ref{sec_galois_deformation_theory}). We write $\Theta_{U, \ffrm}$ for the projection of the pseudocharacter $\Theta_U$ to $\cB(U, \cO)_\ffrm$, and $\sigma^\text{univ} : \Gamma_K \to \hG(R_{\overline{\sigma}_\ffrm, \emptyset})$ for a representative of the universal deformation.
\begin{lemma}\label{lem_existence_of_map_R_to_B}
There is a unique morphism $f_\ffrm : R_{\overline{\sigma}_\ffrm, \emptyset} \to \cB(U, \cO)_\ffrm$ of $\cO$-algebras such that $f_{\ffrm, \ast} \tr \sigma^\text{univ} = \Theta_{U, \ffrm}$. It is surjective.
\end{lemma}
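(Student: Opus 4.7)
The plan is to obtain $f_\ffrm$ by converting the pseudocharacter $\Theta_{U,\ffrm}$ into a deformation of $\overline{\sigma}_\ffrm$ and then invoking the universal property of $R_{\overline{\sigma}_\ffrm, \emptyset}$. All of the heavy lifting has already been done in Theorem~\ref{thm_pseudocharacters_biject_with_representations_over_artinian_rings}, so the main task is to verify its hypotheses and arrange the formalism of passage between Artinian quotients and the limit. Concretely, assumption~(ii) provides scheme-theoretic triviality of $Z_{\hG^\text{ad}}(\overline{\sigma}_\ffrm)$, and (iii) implies that $\overline{\sigma}_\ffrm$ is absolutely $\hG$-completely reducible (an absolutely $\hG$-irreducible subgroup lies in no proper parabolic, hence is trivially contained in a Levi of the only parabolic containing it, namely $\hG$ itself).

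Since $\cB(U,\cO)$ is finite over the complete DVR $\cO$, it is semi-local and complete, hence decomposes as a product of its localizations at maximal ideals; in particular $\cB(U,\cO)_\ffrm$ is a complete Noetherian local $\cO$-algebra with residue field $k$ (after enlarging $E$ if necessary so that $\cB(U,\cO)/\ffrm = k$). For each $n\ge 1$ the ring $B_n = \cB(U,\cO)_\ffrm/\ffrm^n$ lies in $\cC_\cO$, and the image $\Theta_n$ of $\Theta_{U,\ffrm}$ in $B_n$ is a pseudocharacter lifting $\tr\overline{\sigma}_\ffrm$. Applying Theorem~\ref{thm_pseudocharacters_biject_with_representations_over_artinian_rings} I get a unique deformation $\sigma_n \in \Def_{\overline{\sigma}_\ffrm}(B_n)$ with $\tr \sigma_n = \Theta_n$. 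The uniqueness clause forces the $\sigma_n$ to be compatible under the transition maps $B_{n+1}\to B_n$, so they classify a single deformation of $\overline{\sigma}_\ffrm$ to $\varprojlim_n B_n = \cB(U,\cO)_\ffrm$. The universal property of $R_{\overline{\sigma}_\ffrm,\emptyset}$ then yields the desired $\cO$-algebra map $f_\ffrm$, and by construction $f_{\ffrm,\ast}\tr \sigma^\text{univ} = \Theta_{U,\ffrm}$.

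Uniqueness of $f_\ffrm$ is immediate: a second such map would yield, via the same Carayol-type bijection, a second deformation of $\overline{\sigma}_\ffrm$ to $\cB(U,\cO)_\ffrm$ with trace $\Theta_{U,\ffrm}$, contradicting Theorem~\ref{thm_pseudocharacters_biject_with_representations_over_artinian_rings}. For surjectivity, the image of $f_\ffrm$ contains every value of $f_{\ffrm,\ast}\tr\sigma^\text{univ} = \Theta_{U,\ffrm}$, hence the image of every excursion operator $S_{I,(\gamma_i)_{i\in I},f}$ in $\cB(U,\cO)_\ffrm$. Since $\cB(U,\cO)$ is by definition generated as an $\cO$-algebra by the excursion operators, and the localization $\cB(U,\cO)\to\cB(U,\cO)_\ffrm$ is surjective (being the projection onto a factor in the semi-local decomposition), the image of $f_\ffrm$ exhausts $\cB(U,\cO)_\ffrm$. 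I do not anticipate any genuine obstacle here; the lemma is essentially a direct corollary of the pseudocharacter--representation correspondence of Section~\ref{sec_pseudocharacters_and_their_deformation_theory}, and the only mildly delicate point is verifying that the semi-local/complete structure on $\cB(U,\cO)$ makes $\cB(U,\cO)_\ffrm$ an admissible target for the universal property of $R_{\overline{\sigma}_\ffrm,\emptyset}$.
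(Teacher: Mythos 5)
Your proof is correct and follows the same route as the paper: existence and uniqueness come from the Carayol-type bijection of Theorem \ref{thm_pseudocharacters_biject_with_representations_over_artinian_rings} (whose hypotheses are exactly assumptions (ii) and (iii) on $\ffrm$), and surjectivity comes from the fact that the excursion operators generate $\cB(U,\cO)_\ffrm$ and each is realized as $f(\sigma^{\text{univ}}(\gamma_i)_{i\in I})$. Your extra care in passing through the Artinian quotients $\cB(U,\cO)_\ffrm/\ffrm^n$ and in verifying that $\cB(U,\cO)_\ffrm$ is a complete Noetherian local $\cO$-algebra is exactly the formalism the paper leaves implicit.
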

\begin{proof}
The existence and uniqueness of the map follows from Theorem \ref{thm_pseudocharacters_biject_with_representations_over_artinian_rings} and our assumption on the centralizer of the image of $\overline{\sigma}_\ffrm$ in the group $\hG^\text{ad}$. The map is surjective because, by definition, the ring $\cB(U, \cO)_\ffrm$ is generated by the excursion operators $S_{I, (\gamma_i)_{i \in I}, f}$, and each such operator can be explicitly realized as the image of the element  $f(\sigma^\text{univ}(\gamma_i)_{i \in I}) \in R_{\overline{\sigma}_\ffrm, \emptyset}$.
\end{proof}
The rest of this section is now devoted to proving the following result.
\begin{theorem}\label{thm_R_equals_B}
With assumptions as above,  $f_\ffrm$ is an isomorphism, $C_{\text{cusp}}(U, \cO)_\ffrm$ is a free $R_{\overline{\sigma}_\ffrm, \emptyset}$-module, and $R_{\overline{\sigma}_\ffrm, \emptyset}$ is a complete intersection $\cO$-algebra. 
\end{theorem}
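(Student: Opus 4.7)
The plan is to prove this by the Taylor--Wiles--Kisin patching method, using the auxiliary primes produced by Proposition \ref{prop_existence_of_taylor_wiles_data} as the source of extra freedom. For each integer $N \geq 1$, choose a Taylor--Wiles datum $(Q_N, \{\varphi_v\}_{v \in Q_N})$ as in Proposition \ref{prop_existence_of_taylor_wiles_data}, so that $\#Q_N = h^1(\Gamma_{K,\emptyset}, \widehat{\frg}_k)$, every $q_v \equiv 1 \bmod l^N$ for $v \in Q_N$, and the ring $R_{\overline{\sigma}_\ffrm, Q_N}$ admits a presentation as a quotient of $\cO\llbracket X_1, \dots, X_g\rrbracket$ with $g = h^1 + (r-1)\#Q_N$, where $r = \rank \hG$. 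At level $Q_N$, consider the open compact subgroups $U_0(Q_N)$ and $U_1(Q_N)$ that at places $v \in Q_N$ equal the subgroups $U_0, U_1 \subset G(\cO_{K_v})$ of \S \ref{sec_local_calculation}, and at places $v \notin Q_N$ equal $G(\cO_{K_v})$. Then the quotient $U_0(Q_N)/U_1(Q_N)$ admits $\Delta_{Q_N}$ as a natural quotient, and Theorem \ref{thm_freeness_over_rings_of_diamond_operators} (applied with the divisibility hypothesis arranged by choosing $N$ large relative to an auxiliary Sylow exponent) shows that $C_{\text{cusp}}(U_1(Q_N), \cO)_{\ffrm_{Q_N}}$ is a free $\cO[\Delta_{Q_N}]$-module, once the relevant maximal ideal $\ffrm_{Q_N}$ lifting $\ffrm$ has been fixed.

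Next I would promote the map $f_\ffrm$ to a map $f_{\ffrm, Q_N} : R_{\overline{\sigma}_\ffrm, Q_N} \to \cB(U_1(Q_N), \cO)_{\ffrm_{Q_N}}$, using Theorem \ref{thm_excursion_operators_form_a_pseudocharacter} and the infinitesimal pseudocharacter-to-representation bijection of Theorem \ref{thm_pseudocharacters_biject_with_representations_over_artinian_rings}, whose hypotheses are met by assumptions (ii) and (iii). The crucial step is then to identify the $\cO[\Delta_{Q_N}]$-action on $C_{\text{cusp}}(U_1(Q_N), \cO)_{\ffrm_{Q_N}}$ coming from level structure with the one induced from $R_{\overline{\sigma}_\ffrm, Q_N}$ through Lemma \ref{lem_diamond_operators_in_deformation_ring_at_TW_primes}, and similarly to identify $C_{\text{cusp}}(U_0(Q_N),\cO)_{\ffrm_{Q_N}}$ with $C_{\text{cusp}}(U,\cO)_\ffrm$ after localization. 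Both of these identifications are achieved by applying the local results of \S \ref{sec_local_calculation} (Propositions \ref{prop_picking_our_iwahori_spherical_forms} and \ref{prop_picking_out_pro-p_spherical_forms}) place-by-place in $Q_N$; the input they require about pseudocharacters restricted to $W_{K_v}$ for $v \in Q_N$ is furnished by the Genestier--Lafforgue local-global compatibility (Theorem \ref{thm_local_global_compatibility}), together with Lemma \ref{lem_local_structure_at_TW_primes} on the local structure of the universal deformation at Taylor--Wiles primes.

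Having arranged this, I would carry out the standard patching argument. Set $S_\infty = \cO\llbracket Y_1, \dots, Y_{r\cdot h^1}\rrbracket$, choose for each $N$ a surjection $S_\infty \twoheadrightarrow \cO[\Delta_{Q_N}]$ lifting the maximal ideal, and simultaneously a surjection $R_\infty := \cO\llbracket X_1, \dots, X_g\rrbracket \twoheadrightarrow R_{\overline{\sigma}_\ffrm, Q_N}$ as given by Proposition \ref{prop_existence_of_taylor_wiles_data}(iii). The patched module $M_\infty = \varprojlim_N M_N$, where $M_N$ is a suitable quotient of $C_{\text{cusp}}(U_1(Q_N), \cO)_{\ffrm_{Q_N}}$, is then a finitely generated $S_\infty\llbracket R_\infty \rrbracket$-module and, being patched from free $\cO[\Delta_{Q_N}]$-modules, is free as an $S_\infty$-module. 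By the Krull dimension comparison $\dim S_\infty = r \cdot h^1 = g - (r-1)\cdot 0 + \ldots$ matching the number of variables in $R_\infty$ (together with the fact that $R_\infty$-action factors through the $R_{\overline{\sigma}_\ffrm, Q_N}$-action on each $M_N$), a standard depth argument forces $R_\infty$ to act faithfully on $M_\infty$, for $M_\infty$ to be $R_\infty$-free, and for $R_\infty \to R_{\overline{\sigma}_\ffrm, Q_N}$ to factor through a complete intersection quotient of the expected dimension. Specializing back at $N = 0$ (i.e. killing the auxiliary variables), this yields that $f_\ffrm$ is an isomorphism, $C_{\text{cusp}}(U,\cO)_\ffrm$ is $R_{\overline{\sigma}_\ffrm, \emptyset}$-free, and $R_{\overline{\sigma}_\ffrm, \emptyset}$ is a complete intersection.

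The main obstacle is the compatibility at Taylor--Wiles primes: one must check that the two a priori different actions of $\Delta_{Q_N}$ (the ``Hecke'' one coming from level structure and the ``Galois'' one coming from the deformation ring via $\chi^\vee$) agree, since freeness over $\cO[\Delta_{Q_N}]$ is useful only if the Galois-side action is what gets patched. This is exactly the content of Proposition \ref{prop_picking_out_pro-p_spherical_forms}, but invoking it requires that the pseudocharacter $\Theta_{U_1(Q_N)}$ restricted to each $W_{K_v}$ ($v \in Q_N$), when pushed to the image on the $\pi_v$-isotypic component, factors through the scalars and coincides with $\iota \chi_{\pi_v}^\vee$; this is precisely the local-global compatibility statement of Theorem \ref{thm_local_global_compatibility}. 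All of the ``$\hG$-abundance'' hypotheses feed into this step to guarantee enough Taylor--Wiles primes exist and that the residual local characters have trivial Weyl stabilizer, as needed to apply the results of \S \ref{sec_local_calculation}.
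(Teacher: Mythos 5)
Your proposal is correct and follows essentially the same route as the paper: Taylor--Wiles data from Proposition \ref{prop_existence_of_taylor_wiles_data}, freeness over diamond operators via Theorem \ref{thm_freeness_over_rings_of_diamond_operators}, reconciliation of the Hecke and Galois actions of $\Delta_{Q_N}$ through Propositions \ref{prop_picking_our_iwahori_spherical_forms} and \ref{prop_picking_out_pro-p_spherical_forms} fed by Theorem \ref{thm_local_global_compatibility}, and then the standard patching and depth/Auslander--Buchsbaum argument with matching dimensions $\dim S_\infty = \dim R_\infty = 1 + r\,h^1$. The only bookkeeping differences from the paper are that it patches the $\cO$-duals $\Hom_\cO((H'_{Q,1,\frn_{Q,1}})^{l^{N-M}\Delta_Q},\cO)$ (so as to work with coinvariants and surjective transition maps), which you leave implicit in your choice of "suitable quotients" $M_N$.
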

Before giving the proof of Theorem \ref{thm_R_equals_B}, we explain the role played by our hypotheses (i) -- (iv) above. The first condition (i) is convenient; it removes the need to deal with some technical issues (such as possible non-smoothness of the map $\hG \to \hG^\text{ad}$). The condition (ii) is absolutely essential, since it is only in this case that the pseudocharacter in $\cB(U, \cO)$ constructed by Lafforgue can be upgraded to a true representation, as in Lemma \ref{lem_existence_of_map_R_to_B}. Since we know how to control deformations of representations using Galois cohomology, but not how to control deformations of pseudocharacters, we do not see a way to avoid this at the present time. 

The condition (iii) is used to establish an essential technical lemma (Lemma \ref{lem_freeness_over_rings_of_diamond_operators} below, which is a reformulation of Theorem \ref{thm_freeness_over_rings_of_diamond_operators}). We note that the $\hG$-irreducibility of $\overline{\sigma}_\ffrm$ is already implied by (ii), so it is the strong irreducibility that is important here. Finally, the condition (iv) is used to construct sets of auxiliary Taylor--Wiles places of $K$. Some condition of this type is essential. It is possible that this could be weakened in the future (as the notion of `bigness' has been replaced by `adequacy' in analogous theorems for $\GL_n$), but the condition of $\hG$-abundance is sufficient for our purposes.
\begin{corollary}\label{cor_automorphy_lifting}
Let $\rho : \Gamma_K \to \hG(E)$ be a continuous, everywhere unramified representation such that $\overline{\rho} \cong \overline{\sigma}_\ffrm$. Then there exists a cuspidal automorphic representation $\Pi$ of $G(\bbA_K)$ over $\overline{\bbQ}_l$ such that $\Pi^U \neq 0$ and for every place $v$ of $K$, and every irreducible representation $V$ of $\hG_{\overline{\bbQ}_l}$, we have
\[ \chi_V(\rho(\Frob_v)) = \text{eigenvalue of }T_{V, v}\text{ on }\Pi_v^{U_v}. \]
Fix a choice of character $\psi : N(K) \backslash N(\bbA_K) \to \overline{\bbZ}_l^\times$, and suppose further that there exists a minimal prime ideal $\frp \subset \cB(U, \cO)_\ffrm$ and $f \in C_\text{cusp}(U, \cO)_\ffrm[\frp]$ satisfying
\[ \int_{n \in N(K) \backslash N(\bbA_K)} f(n) \psi(n) \, dn \not\equiv 0 \text{ mod }\ffrm_{\overline{\bbZ}_l}. \]
Then we can moreover assume that $\Pi$ is generated by a vector $F \in C_\text{cusp}(U, \overline{\bbQ}_l)$ satisfying 
\[ \int_{n \in N(K) \backslash N(\bbA_K)} F(n) \psi(n) \, dn \neq 0. \]
 In particular, if $\psi$ is a generic character, then $\Pi$ is globally generic.
\end{corollary}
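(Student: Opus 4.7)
The plan is to combine the isomorphism $f_\ffrm : R_{\overline{\sigma}_\ffrm, \emptyset} \xrightarrow{\sim} \cB(U, \cO)_\ffrm$ with the freeness of $M := C_{\text{cusp}}(U, \cO)_\ffrm$ as a module over this ring, both furnished by Theorem \ref{thm_R_equals_B}. After enlarging $E$ via Theorem \ref{thm_reduction_modulo_l} if needed, we may arrange $\rho$ to land in $\hG(\cO_E)$ with $\overline{\rho} = \overline{\sigma}_\ffrm$ literally. Then $\rho$ defines a deformation of $\overline{\sigma}_\ffrm$, and so corresponds to a unique $\cO$-algebra map $\phi : R_{\overline{\sigma}_\ffrm, \emptyset} \to \cO_E$. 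Composing with $f_\ffrm^{-1}$ gives a surjection $\cB(U, \cO)_\ffrm \twoheadrightarrow \cO_E$ whose kernel $\frq$ is a minimal prime of $\cB(U, \cO)_\ffrm$, since $\cB(U, \cO)_\ffrm / \frq \cong \cO_E$ is one-dimensional while $\cB(U, \cO)_\ffrm$ is Cohen--Macaulay of pure dimension one.

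For the main assertion, freeness of $M$ implies that $M \otimes_{\cB(U, \cO)_\ffrm} \cO_E$ is a non-zero free $\cO_E$-module; its generic fibre is a non-zero $E$-subspace of $C_{\text{cusp}}(U, E)$ on which $\cB(U, \cO)_\ffrm$ acts through the composite $\cB(U, \cO)_\ffrm \twoheadrightarrow \cO_E \hookrightarrow E$. Take $\Pi$ to be an irreducible $G(\bbA_K)$-subrepresentation of $C_{\text{cusp}}(U, \overline{\bbQ}_l)$ generated by a non-zero vector of this space; then $\Pi^U \neq 0$ and every unramified Hecke operator $T_{V, v}$ acts on $\Pi_v^{U_v}$ through its image in $\cO_E$. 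Writing $T_{V, v} = S_{\{0\}, \Frob_v, \chi_V}$ (Proposition \ref{prop_properties_of_excursion_operators}(iii)) and applying the defining property $f_{\ffrm, *} \tr \sigma^{\text{univ}} = \Theta_{U, \ffrm}$ of $f_\ffrm$, this image is exactly $\chi_V(\rho(\Frob_v))$, giving the required Hecke--Frobenius matching.

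For the Whittaker improvement, we first argue that the minimal prime $\frp$ supplied by the hypothesis must coincide with $\frq$. The minimal primes of $\cB(U, \cO)_\ffrm$ correspond bijectively with the characteristic-zero points of $\Spec \cB(U, \cO)_\ffrm$ (each quotient $\cB(U, \cO)_\ffrm / \frp_i$ is a finite flat $\cO$-domain), and the $\cO$-linear Whittaker functional $W$ restricted to $M[\frp]$ detects only the component $V(\frp)$; for the hypothesis to bear on our $\rho$, we must therefore take $\frp = \frq$. Granting this, $W : M[\frq] \to \overline{\bbZ}_l$ takes a unit value on $f$ by assumption, so after inverting $l$ we obtain $F \in M[\frq] \otimes_\cO \overline{\bbQ}_l$ with $W(F) \neq 0$. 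Decomposing $F$ into $G(\bbA_K)$-isotypic summands and selecting one on which the Whittaker integral remains non-zero produces the required pair $(\Pi, F)$; the global-genericness clause when $\psi$ is generic is then immediate from the definition.

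The main obstacle is the Whittaker assertion, specifically the reduction $\frp = \frq$: the natural reading of the hypothesis makes this immediate, but a more robust version would require establishing that mod-$l$ Whittaker non-vanishing propagates between the various irreducible components of $\Spec \cB(U, \cO)_\ffrm$ --- a genuine additional input, though plausibly accessible via the rigidity of compatible systems with Zariski dense image (Theorem \ref{thm_existence_of_compatible_systems_containing_a_given_representation}). The first assertion, by contrast, is essentially a direct unwinding of Theorem \ref{thm_R_equals_B}.
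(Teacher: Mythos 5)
The first assertion is handled correctly and by essentially the same route as the paper: the prime $\frq = \ker(R_{\overline{\sigma}_\ffrm, \emptyset} \to \cO)$ determined by $\rho$ gives a non-zero eigenspace $C_{\text{cusp}}(U,\cO)_\ffrm[\frq]$ by freeness, and any non-zero vector there generates the desired $\Pi$.

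The Whittaker assertion, however, has a genuine gap, and it is exactly the one you flag at the end. Your reduction to ``$\frp = \frq$'' is not a legitimate reading of the hypothesis: the hypothesis supplies a Whittaker-nonvanishing vector $f$ on \emph{some} minimal prime $\frp$ of $\cB(U,\cO)_\ffrm$, while $\rho$ singles out a possibly different minimal prime $\frq$, and the entire content of the statement is that non-vanishing on the $\frp$-component forces non-vanishing on the $\frq$-component. This is not an optional strengthening --- it is how the corollary is used in the proof of Theorem \ref{thm_globally_generic_potential_automorphy}, where the integral Whittaker vector lives on the component of a Coxeter parameter and the conclusion is needed for the component of $\beta^\ast \rho_\lambda$. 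If one could always take $\frp = \frq$ the second clause would be nearly vacuous.

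The propagation you correctly identify as ``a genuine additional input'' is supplied by Theorem \ref{thm_R_equals_B} together with the complete intersection (hence Gorenstein) property of $R_{\overline{\sigma}_\ffrm,\emptyset} \cong \cB(U,\cO)_\ffrm$: writing $H_0 = \Hom_\cO(C_{\text{cusp}}(U,\cO)_\ffrm, \cO)$ and $H_0^\ast$ for its $\cO$-dual, both are \emph{free} $\cB(U,\cO)_\ffrm$-modules. After enlarging $\cO$ so that the Whittaker functional $\Psi$ is $\cO$-valued and $\cB(U,\cO)_\ffrm/\frp = \cO$, the perfect pairing $(H_0/\varpi H_0)[\ffrm] \times H_0^\ast/\ffrm H_0^\ast \to k$ together with the hypothesis $\overline{\Psi}(\overline{f}) \neq 0$ shows $\Psi$ is non-zero in $H_0^\ast/\ffrm H_0^\ast$; by Nakayama it is then part of a free basis of $H_0^\ast$, and a free generator of a free module cannot vanish identically on $H_0[\frq]$ for \emph{any} minimal prime $\frq$. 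This is the step your compatible-systems suggestion would not replace, since the issue is module-theoretic (which components of the Hecke module see the Whittaker functional), not Galois-theoretic.
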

\begin{proof}
The representation $\rho$ determines a homomorphism $R_{\overline{\sigma}_\ffrm, \emptyset} \to \cO$; let $\frq$ denote its kernel. Then $C_{\text{cusp}}(U, \cO)_\ffrm[\frq]$ is a non-zero finite free $\cO$-module. If $x \in C_{\text{cusp}}(U, \cO)_\ffrm[\frq]$ is a non-zero element, then for every place $v$ of $K$ and for every irreducible representation $V$ of $\hG_{\overline{\bbQ}_l}$, we have $T_{V, v} x = \chi_V(\rho(\Frob_v)) x$. We can take $\Pi$ to be the irreducible  $\overline{\bbQ}_l[G(\bbA_K)]$-module generated by $x$. This establishes the first claim.

For the second, we note that we are free to enlarge $\cO$. We can therefore assume that $\cB(U, \cO)_\ffrm / \frp = \cO$ (i.e.\ that the eigenvalues of the excursion operators on $f$ all lie in $\cO$) and that the functional 
\[ \Psi :C_{\text{cusp}}(U, \cO)_\ffrm \to \overline{\bbZ}_l, \, F \mapsto \Psi(F) = \int_{n \in N(K) \backslash N(\bbA_K)} F(n) \psi(n) \, dn, \]
in facts takes values in $\cO$, and therefore defines an element of $\Hom_\cO(C_{\text{cusp}}(U, \cO)_\ffrm, \cO) = H_0^\ast$, say. 

By Theorem \ref{thm_R_equals_B}, both $H_0$ and $H_0^\ast$ are free $\cB(U, \cO)_\ffrm$-modules. The duality $H_0 \times H_0^\ast \to \cO$ induces a perfect duality 
\[ (H_0 / \varpi H_0)[\ffrm] \times H_0^\ast / \ffrm H_0^\ast \to k. \]
Since $f \in H_0[\frp]$ and $\frp + (\varpi) = \ffrm$, the element $\overline{f} = f \text{ mod } \varpi H_0$ lies in $(H_0 / \varpi H_0)[\ffrm]$. Moreover, we have $\overline{\Psi}(\overline{f}) \neq 0$, showing that $\Psi$ defines a non-trivial element of $H_0^\ast / \ffrm H_0^\ast$. Let $\Psi = \Psi_1, \dots, \Psi_m$ be elements of $H_0^\ast$ which project to a basis of $H_0^\ast / \ffrm H_0^\ast$; by Nakayama's lemma, they are actually free $\cB(U, \cO)_\ffrm$-module generators for $H_0^\ast$. This shows that for any minimal prime ideal $\frq \subset \cB(U, \cO)_\ffrm$, the restriction of $\Psi$ to $H_0[\frq]$ is non-zero. For the second part of the corollary, we can therefore take $\Pi$ to be the irreducible $\overline{\bbQ}_l[G(\bbA_K)]$-module generated by an element of $H_0[\frq]$ which does not lie in the kernel of $\Psi$.
\end{proof}
\begin{proof}[Proof of Theorem \ref{thm_R_equals_B}]
We will use the Taylor--Wiles method. We first introduce some notation. We recall (Definition \ref{def_taylor_wiles_datum}) that a Taylor--Wiles datum for $\overline{\sigma}_\ffrm$ is a pair $(Q, \{ \varphi_v \}_{v \in Q})$, where:
\begin{itemize}
\item $Q$ is a finite set of places $v$ of $K$ such that $\overline{\sigma}_\ffrm(\Frob_v)$ is regular semisimple and $q_v \equiv 1 \text{ mod }l$.
\item For each $v \in Q$, $\varphi_v : \hT_k \cong Z_\hG(\overline{\sigma}_\ffrm(\Frob_v))$ is a choice of inner isomorphism.
\end{itemize}
We recall that we have fixed a choice $T \subset B \subset G$ of split maximal torus and Borel subgroup of $G$. If $Q$ is a Taylor--Wiles datum, then we define $\Delta_Q$ to be the maximal $l$-power order quotient of the group $\prod_{v \in Q} T(k(v))$. According to Lemma \ref{lem_local_structure_at_TW_primes}, the ring $R_{\overline{\sigma}_\ffrm, Q}$ then has a canonical structure of $\cO[\Delta_Q]$-algebra. We will write $\fra_Q \subset \cO[\Delta_Q]$ for the augmentation ideal; the same lemma shows that there is a canonical isomorphism $R_{\overline{\sigma}_\ffrm, Q} / (\fra_Q) \cong R_{\overline{\sigma}_\ffrm, \emptyset}$.

If $Q$ is a Taylor--Wiles datum, then we define open compact subgroups $U_1(Q) \subset U_0(Q) \subset U$ as follows:
\begin{itemize}
\item $U_0(Q) = \prod_v U_0(Q)_v$, where $U_0(Q)_v = U_v = G(\cO_{K_v})$ if $v \not\in Q$, and $U_0(Q)_v$ is the Iwahori group $U_0$ defined in \S \ref{sec_local_calculation} if $v \in Q$.
\item $U_1(Q) = \prod_v U_1(Q)_v$, where $U_1(Q)_v = U_v = G(\cO_{K_v})$ if $v \not\in Q$, and $U_1(Q)_v$ is the group $U_1$ defined in \S \ref{sec_local_calculation} if $v \in Q$.
\end{itemize}
Thus $U_1 \subset U_0$ is a normal subgroup, and there is a canonical isomorphism $U_0 / U_1 \cong \Delta_Q$. 

We now need to define auxiliary spaces of modular forms. We define $H'_0 = C_{\text{cusp}}(U, \cO)_\ffrm$. If $Q$ is a Taylor--Wiles datum, then there are surjective maps $\cB(U_1(Q), \cO) \to \cB(U_0(Q), \cO) \to \cB(U, \cO)$, and we write $\ffrm$ as well for the pullback of $\ffrm \subset \cB(U, \cO)$ to these two algebras. Just as in Lemma \ref{lem_existence_of_map_R_to_B}, we have surjective morphisms \[ R_{\overline{\sigma}_\ffrm, Q} \to \cB(U_1, \cO)_\ffrm \to \cB(U_0, \cO)_\ffrm. \]
We define $H'_{Q, 1} = C_{\text{cusp}}(U_1(Q), \cO)_\ffrm$ and $H'_{Q, 0} = C_{\text{cusp}}(U_0(Q), \cO)_\ffrm$. 

There is a structure of $R_{\overline{\sigma}_\ffrm, Q}[ \prod_{v \in Q} X_\ast(T) ]$-module on $H'_{Q, 0}$, where the copy of $X_\ast(T)$ corresponding to $v \in Q$ acts via the embedding $\cO[X_\ast(T)] \to \cH_{U_0(Q)_v}$ described in Lemma \ref{lem_action_of_abelian_algebra_on_iwahori_invariants} and the preceding paragraphs. We write $\frn_{Q, 0} \subset \cO[ \prod_{v \in Q} X_\ast(T) ]$ for the maximal ideal which is associated to the tuple of characters ($v \in Q$):
\begin{equation}\label{eqn_tuple_of_characters_associated_to_TW_datum} \varphi_v^{-1} \circ \overline{\sigma}_\ffrm|_{W_{K_v}} : W_{K_v} \to \hT(k), 
\end{equation}
as in the paragraph following Lemma \ref{lem_action_of_abelian_algebra_on_iwahori_invariants}. Then $H'_{Q, 0, \frn_{Q, 0}}$ is a direct factor $R_{\overline{\sigma}_\ffrm, Q}$-module of $H'_{Q, 0}$, and Proposition \ref{prop_picking_our_iwahori_spherical_forms} shows that there is a canonical isomorphism $H'_{Q, 0, \frn_{Q, 0}} \cong H'_0$ of $R_{\overline{\sigma}_\ffrm, Q}$-modules.  (We note that the key hypothesis in Proposition \ref{prop_picking_our_iwahori_spherical_forms} of compatibility of two different pseudocharacters is satisfied in our situation by Theorem \ref{thm_local_global_compatibility}. We note as well that we assume in \S \ref{sec_local_calculation} that if $v \in Q$, then the stabilizer in the Weyl group of the regular semisimple element $\overline{\rho}_\ffrm(\Frob_v) \in \hG(k)$ is trivial; this is equivalent to the condition that the centralizer in $\hG_k$ of $\overline{\rho}_\ffrm(\Frob_v)$ is connected, which is part of the definition of a Taylor--Wiles datum.)

Similarly, if $v \in Q$ then we write $T(K_v)_l$ for the quotient of $T(K_v)$ by its maximal pro-prime-to-$l$ subgroup. Then there is a structure of $R_{\overline{\sigma}_\ffrm, Q}[ \prod_{v \in Q} T(K_v)_l ]$-module on $H'_{Q, 1}$, where the copy of $T(K_v)_l$ corresponding to $v \in Q$ acts via the embedding $\cO[T(K_v)_l] \to \cH_{U_1(Q)_v}$ described in Lemma \ref{lem_abelian_subalgebra_of_pro-p_Hecke_algebra}. We write $\frn_{Q, 1} \subset \cO[ \prod_{v \in Q} T(K_v)_l ]$ for the maximal ideal which is associated to the tuple of characters (\ref{eqn_tuple_of_characters_associated_to_TW_datum})
as in the paragraph following Lemma \ref{lem_irreducible_admissibles_with_non-zero_pro-p_invariants}. Then $H'_{Q, 1, \frn_{Q, 1}}$ is a direct factor $R_{\overline{\sigma}_\ffrm, Q}$-module of $H'_{Q, 1}$, and Proposition \ref{prop_picking_out_pro-p_spherical_forms} shows that the two structures of $\cO[\Delta_Q]$-module on $H'_{Q, 1, \frn_{Q, 1}}$, one arising from the homomorphism $\cO[\Delta_Q] \to R_{\overline{\sigma}_\ffrm, Q}$ and the other from the homomorphism $\cO[\Delta_Q] \to \cO[\prod_{v \in Q} T(K_v)_l]$, are the same. (We are again invoking Theorem \ref{thm_local_global_compatibility} to justify the application of Proposition \ref{prop_picking_out_pro-p_spherical_forms}.)

It follows from the second part of Lemma \ref{lem_abelian_subalgebra_of_pro-p_Hecke_algebra} that the natural inclusion 
\[ C_{\text{cusp}}(U_0(Q), \cO) \subset C_{\text{cusp}}(U_1(Q), \cO) \]
induces an identification $H'_{Q, 0, \frn_{Q, 0}} = (H'_{Q, 1, \frn_{Q, 1}})^{\Delta_Q}$. In order to complete the proof, we will require one more key property of the modules $H'_{Q, 1}$.
\begin{lemma}\label{lem_freeness_over_rings_of_diamond_operators}
Fix a place $v_0$ of $K$, and let $l^M$ denote the order of the $l$-Sylow subgroup of $G(\bbF_{q_{v_0}})$. Let $Q$ be a Taylor--Wiles datum, and suppose given an integer $N \geq M$ such that for each $v \in Q$, $q_v \equiv 1 \text{ mod }l^N$. Then $(H'_{Q, 1})^{l^{N-M} \Delta_Q}$ is a free $\cO[\Delta_Q / l^{N-M} \Delta_Q]$-module.
\end{lemma}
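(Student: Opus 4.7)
The plan is to deduce the lemma directly from Theorem \ref{thm_freeness_over_rings_of_diamond_operators} by choosing the open compact subgroups appropriately and then checking the one non-trivial group-theoretic hypothesis. Concretely, I would apply that theorem with $\cU = U_0(Q)$, $V = U_1(Q)$, and $W$ equal to the preimage in $U_0(Q)$ of the subgroup $l^{N-M}\Delta_Q \subset \Delta_Q$ under the isomorphism $U_0(Q)/U_1(Q) \cong \Delta_Q$. The quotient $\cU/V \cong \Delta_Q$ is then abelian of $l$-power order, $\cU/W \cong \Delta_Q/l^{N-M}\Delta_Q$, and the maximal ideal $\ffrm$ pulls back (through the surjections $\cB(U_1(Q),\cO) \twoheadrightarrow \cB(W,\cO) \twoheadrightarrow \cB(U,\cO)$) to a maximal ideal of $\cB(W,\cO)$ with the same residual representation $\overline{\sigma}_\ffrm$, which is absolutely strongly $\hG$-irreducible by our standing assumption (iii).

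The only condition that requires verification is the inclusion $(\cU/V)[l^M] = \Delta_Q[l^M] \subset l^{N-M}\Delta_Q = W/V$. For each $v \in Q$, since $T$ is a split torus of rank $r$ and $q_v \equiv 1 \bmod l^N$, the $l$-part of $T(k(v)) = (k(v)^\times)^r$ is isomorphic to $(\bbZ/l^{N_v}\bbZ)^r$ for some $N_v \geq N$. Hence $\Delta_Q \cong \prod_{v \in Q}(\bbZ/l^{N_v}\bbZ)^r$, and the $l^M$-torsion in the $v$-component is $l^{N_v - M}(\bbZ/l^{N_v}\bbZ)^r$; since $N_v - M \geq N - M$, this sits inside $l^{N-M}(\bbZ/l^{N_v}\bbZ)^r$, establishing the required containment. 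Theorem \ref{thm_freeness_over_rings_of_diamond_operators} therefore yields that $C_\text{cusp}(W,\cO)_\ffrm$ is a free $\cO[\Delta_Q/l^{N-M}\Delta_Q]$-module.

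To finish, I would identify $C_\text{cusp}(W,\cO)_\ffrm$ with $(H'_{Q,1})^{l^{N-M}\Delta_Q}$. Since $W \supset U_1(Q)$ with $W/U_1(Q) = l^{N-M}\Delta_Q$, one has at the level of smooth functions
\[ C_\text{cusp}(W,\cO) = C_\text{cusp}(U_1(Q),\cO)^{l^{N-M}\Delta_Q}. \]
The action of $l^{N-M}\Delta_Q$ on $C_\text{cusp}(U_1(Q),\cO)$ commutes with the whole prime-to-$(S \cup Q)$ Hecke algebra and in particular with $\cB(U_1(Q),\cO)$, so localisation at $\ffrm$ commutes with taking $l^{N-M}\Delta_Q$-invariants, giving the desired identification.

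There is essentially no serious obstacle here: the work is all absorbed into Theorem \ref{thm_freeness_over_rings_of_diamond_operators} (whose proof in turn relies on Lemma \ref{lem_freeness_of_set_theoretic_action_of_diamond_operators} and the cuspidal projector of Lemma \ref{lem_construction_of_cuspidal_projector}). The one place where it is important not to lose information is in checking the $l^M$-torsion containment; this is precisely the reason for imposing the congruence $q_v \equiv 1 \bmod l^N$ in the hypothesis and for choosing the intermediate subgroup $W$ so that $\cU/W$ has exponent $l^{N-M}$.
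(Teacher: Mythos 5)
Your proof is correct and follows exactly the route the paper takes: the paper's own proof of this lemma is the single sentence ``This follows from Theorem \ref{thm_freeness_over_rings_of_diamond_operators}'', and you have simply supplied the routine verifications it leaves implicit (the choice $U = U_0(Q)$, $V = U_1(Q)$, $W$ the preimage of $l^{N-M}\Delta_Q$; the torsion containment $\Delta_Q[l^M] \subset l^{N-M}\Delta_Q$ from $q_v \equiv 1 \bmod l^N$; and the identification $C_{\text{cusp}}(W,\cO)_\ffrm \cong (H'_{Q,1})^{l^{N-M}\Delta_Q}$ using that the $\Delta_Q$-action commutes with the excursion algebra). All of these checks are sound.
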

\begin{proof}
This follows from Theorem \ref{thm_freeness_over_rings_of_diamond_operators}. (This is the point in the proof where we use our assumption that $\overline{\sigma}_\ffrm$ is strongly $\hG$-irreducible.)
\end{proof}
 This property implies in turn that $(H'_{Q, 1, \frn_{Q, 1}})^{l^{N-M} \Delta_Q}$ is a free $\cO[\Delta_Q / l^{N-M} \Delta_Q]$-module. Observe that the abelian group $\Delta_Q / l^{N-M} \Delta_Q$  is a free $\bbZ / l^{N-M} \bbZ$-module of rank $ r \# Q$, where $r = \rank \hG$.

Henceforth we fix a place $v_0$ of $K$ and let $l^M$ be as in Lemma \ref{lem_freeness_over_rings_of_diamond_operators}. If $Q$ is a Taylor--Wiles datum as in Lemma \ref{lem_freeness_over_rings_of_diamond_operators}, we will then define $H_Q = \Hom_\cO((H'_{Q, 1, \frn_{Q, 1}})^{l^{N-M} \Delta_Q}, \cO)$ and $H_0 = \Hom_\cO(H'_0, \cO)$, and endow these finite free $\cO$-modules with their natural structures of $R_{\overline{\sigma}_\ffrm, Q} \otimes_{\cO[\Delta_Q]} \cO[\Delta_Q / l^{N-M} \Delta_Q]$- and $R_{\overline{\sigma}_\ffrm, \emptyset}$-module, respectively. We can summarize the preceding discussion as follows:
\begin{itemize}
\item The module $H_Q$ is a finite free $\cO[\Delta_Q / l^{N-M} \Delta_Q]$-module, where $\cO[\Delta_Q / l^{N-M} \Delta_Q]$ acts via the algebra homomorphism 
\[ \cO[\Delta_Q / l^{N-M} \Delta_Q] \to R_{\overline{\sigma}_\ffrm, Q} \otimes_{\cO[\Delta_Q]}  \cO[\Delta_Q / l^{N-M} \Delta_Q]. \]
\item There is a natural surjective map $H_Q \to H_0$, which factors through an isomorphism $(H_Q)_{\Delta_Q} \to H_0$, and is compatible with the isomorphism $R_{\overline{\sigma}_\ffrm, Q} / (\fra_Q) \cong R_{\overline{\sigma}_\ffrm, \emptyset}$.
\end{itemize}
 Indeed, the freeness of $(H'_{Q, 1, \frn_{Q, 1}})^{l^{N-M} \Delta_Q}$ implies that $H_Q$ is itself a free $\cO[\Delta_Q / l^{N-M} \Delta_Q]$-module, and that there is a natural isomorphism
\[ (H_Q)_{\Delta_Q} \cong \Hom_\cO( (H'_{Q, 1, \frn_{Q, 1}})^{l^{N-M} \Delta_Q})^{\Delta_Q}, \cO) = \Hom_\cO( H'_{Q, 0, \frn_{Q, 0}}, \cO ) \cong \Hom_\cO( H_0', \cO) = H_0. \]
Let $q = h^1(\Gamma_{K, \emptyset}, \widehat{\frg}_k)$. By Proposition \ref{prop_existence_of_taylor_wiles_data}, we can find for each $N \geq 1$ a Taylor--Wiles datum $(Q_N, \{ \varphi_v \}_{v \in Q_N})$ which satisfies the following conditions:
\begin{itemize}
\item For each $v \in Q_N$, we have $q_v \equiv 1 \text{ mod }l^{N+M}$ and $\# Q_N = q$.
\item There exists a surjection $\cO \llbracket X_1, \dots, X_g \rrbracket \to R_{\overline{\sigma}_\ffrm, Q_N}$, where $g = qr$.
\end{itemize}
We now patch these objects together. Define $R_\infty = \cO \llbracket X_1, \dots, X_g \rrbracket$, $\Delta_\infty = \bbZ_l^g$, $\Delta_N = \Delta_\infty / l^{N}\Delta_\infty$, $S_\infty = \cO\llbracket \Delta_\infty \rrbracket$, $S_N = \cO[\Delta_N]$. We define $\frb_N = \ker (S_\infty \to S_N)$, $\frb_0 = \ker( S_\infty \to \cO)$ (i.e.\ $\frb_0$ is the augmentation ideal of this completed group algebra). We choose for each Taylor--Wiles datum $Q_N$ a surjection $\Delta_\infty \to \Delta_{Q_N}$; this endows each ring $R_{\overline{\sigma}_\ffrm, Q_N}$ with an $S_\infty$-algebra structure, and hence each ring $R_{\overline{\sigma}_\ffrm, Q_N} / (\frb_N)$ with an $S_N$-algebra structure. The discussion above shows that $H_{Q_N}$ has a natural structure of $R_{\overline{\sigma}_\ffrm, Q_N}/(\frb_N)$-module, and that $H_{Q_N}$ is free as an $S_N$-module, when $S_N$ acts via the map $S_N \to R_{\overline{\sigma}_\ffrm, Q_N}/(\frb_N)$. We also fix a choice of surjection $R_\infty \to R_{\overline{\sigma}_\ffrm, Q_N}$.

Let $a = \dim_k (H_0 \otimes_\cO k)$, and if $m \geq 1$ set $r_m = g(g+1)aml^m$. This integer is chosen so that for any $N \geq 1$, we have
\[ \ffrm_{R_{\overline{\sigma}_\ffrm, Q_N}}^{r_N} H_{Q_N} \subset \varpi^N H_{Q_N}. \]
If $m \geq 1$ is an integer, then we define a patching datum of level $m$ to be a tuple $(R_m, H_m, \alpha_m, \beta_m)$ consisting of the following data:
\begin{itemize}
\item $R_m$ is a complete Noetherian local $\cO$-algebra with residue field $k$. It is equipped with a homomorphism $S_m \to R_m$ and a surjective homomorphism $R_\infty \to R_m$ (both morphisms of $\cO$-algebras).
\item $H_m$ is a finite $R_m$-module.
\item $\alpha_m$ is an isomorphism $R_m / (\frb_0) \cong R_{\overline{\sigma}_\ffrm, \emptyset} / \ffrm_{R_{\overline{\sigma}_\ffrm, \emptyset}}^{r_m}$ of $\cO$-algebras.
\item $\beta_m$ is an isomorphism $H_m / (\frb_0) \cong H_0 / (\varpi^m)$ of $\cO$-modules.
\end{itemize}
We require these data to satisfy the following conditions:
\begin{itemize}
\item The maximal ideal of $R_m$ satisfies $\ffrm_{R_m}^{r_m} = 0$.
\item $H_m$ is a free $S_m / (\varpi^m)$-module, where $S_m$ acts via $S_m \to R_m$.
\item The isomorphisms $\alpha_m$, $\beta_m$ are compatible with the structure of $R_{\overline{\sigma}_\ffrm, \emptyset} / \ffrm_{R_{\overline{\sigma}_\ffrm, \emptyset}}^{r_m}$-module on $H_0 / (\varpi^m)$.
\end{itemize}
We define a morphism between two patching data $(R_m, H_m, \alpha_m, \beta_m)$, $(R'_m, H'_m, \alpha'_m, \beta'_m)$ of level $m$ to be a pair of isomorphisms $i : R_m \to R'_m$, $j : H_m \to H'_m$ satisfying the following conditions:
\begin{itemize}
\item $i$ is compatible with the structures of $R_\infty$- and $S_m$-algebra, and satisfies $\alpha_m = \alpha'_m i$.
\item $j$ is compatible with the structures of $R_m$- and $R'_m$-module via $i$, and satisfies $\beta_m = \beta'_m j$.
\end{itemize}
Then the collection $\cD_m$ of patching data of level $m$ forms a category (in fact a groupoid). This category has finitely many isomorphism classes of objects: indeed, our conditions imply that $R_m$ and $H_m$ have cardinality bounded solely in terms of $m$, and the finiteness follows from this. For any $m' \geq m$, there is a functor $F_{m', m} : \cD_{m'} \to \cD_m$ which sends $(R_{m'}, H_{m'}, \alpha_{m'}, \beta_{m'})$ to the datum $(R_m, H_m, \alpha_m, \beta_m)$ given as follows:
\begin{itemize}
\item We set $R_m = R_{m'} / (\frb_m, \ffrm_{R_{m'}}^{r_m})$.
\item We set $H_m = H_{m'} / (\frb_m, \varpi^m)$.
\item We let $\alpha_m = \alpha_{m'} \text{ mod }\ffrm_{R_{m'}}^{r_m}$ and $\beta_m = \beta_{m'} \text{ mod } \varpi^m$, noting that there are canonical isomorphisms $R_m / (\frb_0) \cong R_{m'} / (\frb_0, \ffrm_{R_{m'}}^{r_m})$ and $H_m / (\frb_0) \cong H_{m'} / (\frb_0, \varpi^m)$.
\end{itemize}
For any $1 \leq m \leq N$, we can write down a patching datum $\cP_{m, N} = (R_{m, N}, H_{m, N}, \alpha_{m, N}, \beta_{m, N})$ of level $m$ as follows:
\begin{itemize}
\item We set $R_{m, N} = R_{\overline{\sigma}_\ffrm, Q_N} / (\ffrm_{R_{\overline{\sigma}_\ffrm, Q_N}}^{r_m}, \frb_m)$. Our choices determine maps $R_\infty \to R_{m, N}$ and $S_m \to R_{m, N}$.
\item We set $H_{m, N} = H_{Q_N} / (\frb_m, \varpi^m)$.
\item We let $\alpha_{m, N}$ denote the reduction modulo $\ffrm_{R_{\overline{\sigma}_\ffrm, Q_N}}^{r_m}$ of the canonical isomorphism $R_{\overline{\sigma}_\ffrm, Q_N} / (\frb_0) \cong R_{\overline{\sigma}_\ffrm, \emptyset}$.
\item We let $\beta_{m, N}$ denote the reduction modulo $\varpi^m$ of the canonical isomorphism $H_{Q_N} / (\frb_0) \cong H_0$.
\end{itemize}
Using the finiteness of the skeleton category of $\cD_m$ for each $m \geq 1$, we find that we can choose integers $N_1 \leq N_2 \leq N_3 \leq \dots$ and for each $m \geq 1$ an isomorphism $F_{m+1, m}(\cP_{m+1, N_{m+1}}) \cong \cP_{m, N_m}$ of patching data. This means that we can pass to the inverse limit, setting
\[ R^\infty = \plim_m R_{m, N_m}, H_\infty = \plim_m H_{m, N_m}, \]
to obtain the following objects:
\begin{itemize}
\item $R^\infty$, a complete Noetherian local $\cO$-algebra with residue field $k$, which is equipped with structures of $S_\infty$-algebra and a surjective map $R_\infty \to R^\infty$.
\item $H_\infty$, a finite $R^\infty$-module.
\item $\alpha_\infty$, an isomorphism $R^\infty / (\frb_0) \cong R_{\overline{\sigma}_\ffrm, \emptyset}$.
\item $\beta_\infty$, an isomorphism $H_\infty / (\frb_0) \cong H_0$. 
\end{itemize}
These objects have the following additional properties:
\begin{itemize}
\item $H_\infty$ is free as an $S_\infty$-module.
\item The isomorphisms $\alpha_\infty$, $\beta_\infty$ are compatible with the structure of $R_{\overline{\sigma}_\ffrm, \emptyset}$-module on $H_0$.
\end{itemize}
We find that
\[ \dim R^\infty \geq \depth_{R^\infty} H_\infty \geq \depth_{S_\infty} H_\infty = \dim S_\infty = \dim R_\infty \geq \dim R^\infty, \]
and hence that these inequalities are equalities, $R_\infty \to R^\infty$ is an isomorphism, and (by the Auslander--Buchsbaum formula) $H_\infty$ is also a free $R^\infty$-module. It follows that $H_\infty / (\frb_0) \cong H_0$ is a free $R^\infty / (\frb_0) \cong R_{\overline{\sigma}_\ffrm, \emptyset}$-module, and that $R_{\overline{\sigma}_\ffrm, \emptyset}$ is an $\cO$-flat complete intersection. This in turn implies that $C_{\text{cusp}}(U, \cO)_\ffrm \cong \Hom_\cO(H_0, \cO)$ is a free $R_{\overline{\sigma}_\ffrm, \emptyset}$-module (complete intersections are Gorenstein). This completes the proof of the theorem.
\end{proof} 

\section{Application of theorems of L. Moret-Bailly}\label{sec_application_of_moret-bailly}

In this section we make some simple geometric constructions regarding torsors under finite groups. We first recall some basic facts. Let $H$ be a finite group. We recall that an $H$-torsor over a scheme $S$ is an $S$-scheme $X$, faithfully flat and of locally finite type, on which $H$ acts on the left by $S$-morphisms, and such that the natural morphism $\underline{H}_S \times_S X \to X \times_S X,$ $(h, x) \mapsto (hx, x)$, is an isomorphism. (Here $\underline{H}_S$ denotes the constant group over $S$ attached to $H$.) 

Two torsors $X, X'$ over $S$ are said to be isomorphic if there exists an $H$-equivariant $S$-isomorphism $f : X \to X'$. The \'etale sheaf $\Isom_{S,H}(X, X')$ of isomorphisms of $H$-torsors is representable by a finite \'etale $S$-scheme. If $X = \underline{H}_S$ is the trivial $H$-torsor over $S$, then we have a canonical identification $\Isom_{S,H}(X, X') = X'$. 

Now suppose that $S$ is connected, and let $\overline{s}$ be a geometric point of $S$. If $X$ is an $H$-torsor over $S$, then the choice of a geometric point $\overline{x}$ of $X$ above $\overline{s}$ determines a homomorphism $\varphi_{X, \overline{x}} : \pi_1(S, \overline{s}) \to H$, given by the formula $\gamma \cdot \overline{x} = \varphi_{X, \overline{x}}(\gamma) \cdot \overline{x}$ ($\gamma \in \pi_1(S, \overline{s})$). A different choice of $\overline{x}$ replaces $\varphi_{X, \overline{x}}$ by an $H$-conjugate. This assignment $X \mapsto \varphi_{X, \overline{x}}$ determines a bijection between the following two sets (see \cite[Exp. V, No. 5]{SGA1})  :
\begin{itemize}
\item The set of $H$-torsors $X$ over $S$, up to isomorphism.
\item The set of homomorphisms $\pi_1(S, \overline{s}) \to H$, up to $H$-conjugation.
\end{itemize}
We will apply these considerations in the following context. Let $X, Y$ be smooth, geometrically connected curves over $\bbF_q$, and set $K = \bbF_q(X)$, $F = \bbF_q(Y)$. Let $\overline{\eta}_X, \overline{\eta}_Y$ be the geometric generic points of $X$ and $Y$, respectively, corresponding to fixed choices of separable closures $K^s / K$ and $F^s / F$. We write $\overline{\bbF}_q$ for the algebraic closure of $\bbF_q$ inside $K^s$.

Let $\varphi : \pi_1(X, \overline{\eta}_X) \to H$, $\psi : \pi_1(Y, \overline{\eta}_Y) \to H$ be homomorphisms. We now consider the pullbacks of these homomorphisms to various different (although closely related) fundamental groups. The curve $Y_K$ is a smooth, geometrically connected curve over $K$, and the given data determines two torsors $X_\psi$ and $X_\varphi$ over $Y_K$. We define $Z_{\psi, \varphi} = \Isom_{Y_K, H}(X_\varphi, X_\psi)$. Let $\Omega$ denote a separable closure of the function field of $Y_{K^s}$, and let $\overline{\eta}$ denote the corresponding geometric generic point of $Y_{K^s}$. Fix a choice of $F$-embedding $F^s \hookrightarrow \Omega$. We then have a commutative diagram of fundamental groups:
\begin{equation}\begin{aligned}\label{eqn_diagram_of_fundamental_groups} \xymatrix{ & & & \pi_1(Y_{K^s}, \overline{\eta}) \ar[dl] \ar@{->>}[rd] \\
& & \pi_1(Y_K, \overline{\eta}) \ar@{->>}[dl] \ar@{->>}[rd] & & \pi_1(Y_{\overline{\bbF}_q}, \overline{\eta}_Y) \ar[dl] \\
& \pi_1(\Spec K, \overline{\eta}_X) \ar[dl]_{\varphi} & & \pi_1(Y, \overline{\eta}_Y) \ar[dr]^{\psi} \\
H & & & & H} 
\end{aligned}
\end{equation}
We recall (\cite[Exp. V, Proposition 6.9]{SGA1}) that if $S' \to S$ is a morphism of connected schemes, then the surjectivity of the map $\pi_1(S', \overline{\eta}_{S'}) \to \pi_1(S, \overline{\eta}_S)$ is equivalent to the following statement: for each connected finite \'etale cover $Z \to S$, $Z_{S'} \to S'$ is still connected. This condition is easily checked for the morphisms leading to the surjective arrows in the above diagram. For example, let $Y' \to Y$ be a connected finite \'etale cover, and let $\bbF_{q'}$ denote the algebraic closure of $\bbF_q$ in $\bbF_q(Y')$. Then $Y'$ is geometrically connected over $\bbF_{q'}$, which shows that $Y'_K$ is connected.
\begin{lemma}\label{lem_formalism_of_torsors} Let notation be as above.
\begin{enumerate}
\item Suppose that $\psi$ is surjective, even after restriction to $\pi_1(Y_{\overline{\bbF}_q}, \overline{\eta}_Y)$. Then $Z_{\psi, \varphi}$ is a geometrically connected finite \'etale $K$-scheme.
\item Let $K'/K$ be a finite separable extension, which is contained inside $K^s$, and let $z \in Z_{\psi, \varphi}(K')$. Let $y \in Y_K(K') = Y(K')$ denote the image of $z$, and suppose that $y \not\in \im(Y(\overline{\bbF}_q \cap K') \to Y(K'))$. Then $z$ determines an $\bbF_q$-embedding $\beta : F \hookrightarrow K'$ such that $\beta^\ast \psi$ and $\varphi|_{\Gamma_{K'}}$ are $H$-conjugate as homomorphisms $\Gamma_{K'} \to H$.
\end{enumerate}
\end{lemma}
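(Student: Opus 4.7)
The plan is to translate everything into the language of fundamental-group representations, using the diagram (\ref{eqn_diagram_of_fundamental_groups}). For (i), I would first pin down the structure of $Z_{\psi,\varphi} = \Isom_{Y_K,H}(X_\varphi,X_\psi)$ as a finite \'etale $Y_K$-scheme: after fixing trivializations of the fibres of $X_\varphi$ and $X_\psi$ over $\overline{\eta}$, the fibre $Z_{\psi,\varphi,\overline{\eta}}$ is identified with $H$, and an element $\gamma\in\pi_1(Y_K,\overline{\eta})$ acts on $h\in H$ by a formula of the form $h\mapsto \psi_{Y_K}(\gamma)\cdot h\cdot\varphi_{Y_K}(\gamma)^{-1}$, where $\varphi_{Y_K}$ and $\psi_{Y_K}$ denote the pullbacks of $\varphi$ and $\psi$ along the middle vertical arrows of the diagram.

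To verify geometric connectedness over $K$, I would restrict this action to the geometric fundamental group $\pi_1(Y_{K^s},\overline{\eta})$. By geometric connectedness of $Y$, there is a short exact sequence $1\to\pi_1(Y_{K^s},\overline{\eta})\to\pi_1(Y_K,\overline{\eta})\to\Gamma_K\to 1$; on its kernel $\varphi_{Y_K}$ is trivial (it factors through $\Gamma_K$), while $\psi_{Y_K}$ factors through the surjection $\pi_1(Y_{K^s},\overline{\eta})\twoheadrightarrow\pi_1(Y_{\overline{\bbF}_q},\overline{\eta}_Y)$ noted in the excerpt, composed with the restriction of $\psi$ that is surjective by hypothesis. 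The residual action on $H$ is therefore left translation through a surjection, hence transitive, so $(Z_{\psi,\varphi})_{K^s}$ is connected, giving (i).

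For (ii), I would first observe that the failure of $y\in Y(K')$ to factor through $Y(\overline{\bbF}_q\cap K')$ is exactly the dominance of $\Spec K'\to Y$, which yields the $\bbF_q$-embedding $\beta:F\hookrightarrow K'$. The $K'$-point $z$ represents an $H$-equivariant isomorphism $y^\ast X_\varphi\xrightarrow{\sim}y^\ast X_\psi$ of $H$-torsors over $\Spec K'$, so the two torsors correspond to $H$-conjugate homomorphisms $\Gamma_{K'}\to H$. A routine unwinding identifies $y^\ast X_\varphi$ with the torsor attached to $\varphi|_{\Gamma_{K'}}$ (since $X_\varphi$ is pulled back from $\Spec K$ through $Y_K\to\Spec K$) and $y^\ast X_\psi$ with the one attached to $\beta^\ast\psi$ (since the composition $\Spec K'\to Y_K\to Y$ is the morphism classified by $\beta$), whence the desired conjugacy. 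The main bookkeeping hurdle is to choose geometric base points compatibly on either side of $z$ so that the monodromy comparison yields an honest $H$-conjugacy, and not merely an abstract equivalence of torsors; once base points are fixed consistently, both parts are formal consequences of the torsor/$\pi_1$ dictionary.
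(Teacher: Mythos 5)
Your proposal is correct and follows essentially the same route as the paper: for (i) the paper likewise reduces to the observation that $X_\varphi$ trivializes over $K^s$ (so that $Z_{\psi,\varphi,K^s}\cong X_{\psi,K^s}$, which is connected because $\psi$ restricted to $\pi_1(Y_{K^s},\overline{\eta})$ is surjective via the diagram), which is just the intrinsic form of your monodromy computation on the fibre $H$; and for (ii) the paper makes the identical argument that $y$ must land on the generic point of $Y$, producing $\beta$ and the commutative diagram from which the $H$-conjugacy of $\beta^\ast\psi$ and $\varphi|_{\Gamma_{K'}}$ is read off. Your closing remark about base points matches the paper's caveat that $\beta^\ast\psi$ is only well defined up to $H$-conjugacy, depending on a choice of compatible embedding $F^s\hookrightarrow\Omega$.
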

\begin{proof}
For the first part, it suffices to note that $Z_{\psi, \varphi, K^s} \cong X_{\psi, K^s} \cong X_{\psi|_{\pi_1(Y_{K^s}, \overline{\eta})}}$ is connected. Indeed, the diagram (\ref{eqn_diagram_of_fundamental_groups}) shows, together with our assumption, that $\psi|_{\pi_1(Y_{K^s}, \overline{\eta})} : \pi_1(Y_{K^s}, \overline{\eta}) \to H$ is surjective.

For the second part, we first note that $\beta^\ast \psi$ depends on a choice of a compatible embedding $F^s \to K^s$; however, its $H$-conjugacy class is independent of any such choice, so the conclusion of the proposition makes sense. The point $z$ determines a morphism $y : \Spec K' \to Z_{\psi, \varphi} \to Y_K \to Y$. Our assumption that $y$ does not come from $Y(\overline{\bbF}_q)$ says that the point of $\Spec K'$ is mapped to the generic point of $Y$, hence determines an $\bbF_q$-embedding $\beta : F \hookrightarrow K'$, and a commutative diagram (where we now omit base points for simplicity):
\begin{equation}\label{eqn_diagram_of_galois_groups}\begin{aligned} \xymatrix{ & \pi_1(\Spec K') \ar[rd] \ar[dd]\ar[rr]^{\beta_\ast} && \pi_1(\Spec F) \ar[dd] \\
& & \pi_1(Y_K) \ar[dl] \ar[dr] \\
& \pi_1(\Spec K) \ar[dl]_\varphi & & \pi_1(Y) \ar[dr]^\psi \\
H & & & & H.} 
\end{aligned}
\end{equation}
 The existence of the point $z$ then says that the homomorphisms $\beta^\ast \psi$ and $\varphi|_{\Gamma_{K'}}$ are $H$-conjugate, as desired. 
\end{proof}
We can now apply a well-known theorem of Moret-Bailly (see \cite{MoretBailly}) to deduce the following consequence.
\begin{proposition}\label{prop_potential_agreement_of_residual_representations}
Let $X, Y$ be smooth, geometrically connected curves over $\bbF_q$, and set $K = \bbF_q(X)$, $F = \bbF_q(Y)$. Let $\varphi : \pi_1(X, \overline{\eta}_X) \to H$, $\psi : \pi_1(Y, \overline{\eta}_Y) \to H$ be homomorphisms such that $\psi$ is surjective, even after restriction to $\pi_1(Y_{\overline{\bbF}_q}, \overline{\eta}_Y)$. Let $L/K$ be a finite Galois extension. Then we can find a finite Galois extension $K' / K$ and an $\bbF_q$-embedding $\beta : F \hookrightarrow K'$ satisfying the following conditions:
\begin{enumerate}
\item The extension $K' / K$ is linearly disjoint from $L/K$, and $K' \cap \overline{\bbF}_q = \bbF_q$.
\item The homomorphisms $\varphi|_{\Gamma_{K'}}$ and $\beta^\ast \psi$ are $H$-conjugate. 
\end{enumerate}
\end{proposition}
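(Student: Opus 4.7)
The plan is to produce a closed point of the smooth geometrically integral $K$-variety $Z = Z_{\psi, \varphi}$ whose residue field can serve as $K'$. By Lemma \ref{lem_formalism_of_torsors}(i), $Z$ is finite \'etale over the smooth geometrically connected curve $Y_K$ and is itself geometrically connected over $K$, so it is a legitimate target for Moret-Bailly's theorem. Once such a closed point is located, Lemma \ref{lem_formalism_of_torsors}(ii) will directly yield both the $\bbF_q$-embedding $\beta$ and the required $H$-conjugacy.

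The principal ingredient is Moret-Bailly's theorem in its function-field form (as in \cite{MoretBailly}): given a smooth geometrically integral $K$-variety $V$, a finite Galois extension $L/K$, and non-empty open local conditions $\Omega_v \subset V(K_v)$ at a finite set $S$ of places of $K$, there exists a finite Galois extension $K'/K$, linearly disjoint from $L/K$, splitting completely at every $v \in S$, together with a $K'$-point of $V$ whose local image at each $v \in S$ lies in $\Omega_v$. I will apply this to $V = Z$. To force $K' \cap \overline{\bbF}_q = \bbF_q$, I will choose $S = \{v_1, v_2\}$, where $v_1, v_2$ are closed points of $X$ with coprime residue degrees $d_1, d_2$ over $\bbF_q$; the existence of such pairs (for $d_1, d_2$ sufficiently large) follows from the Weil bounds, which also guarantee that $Z(K_{v_i}) \neq \emptyset$ once the residue degree is taken large enough. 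Complete splitting of $K'$ at $v_1$ and $v_2$ forces the constant subfield $K' \cap \overline{\bbF}_q$ to embed into both $\bbF_{q^{d_1}}$ and $\bbF_{q^{d_2}}$, hence into $\bbF_q$.

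To arrange that the image of our closed point avoids the constant locus $Y(\overline{\bbF}_q \cap K') = Y(\bbF_q)$, I will choose each $\Omega_{v_i}$ so that its image under $Z(K_{v_i}) \to Y(K_{v_i})$ avoids the finite set $Y(\bbF_q) \subset Y(K_{v_i})$; since $Z \to Y_K$ is a finite \'etale morphism of smooth $K$-curves and $Y(\bbF_q)$ is finite, such non-empty open subsets exist. The global point $z \in Z(K')$ produced by Moret-Bailly then has image $y \in Y(K')$ whose $v_1$-adic localisation is not in $Y(\bbF_q)$, so $y \notin Y(\bbF_q) \subset Y(K')$, and the hypothesis of Lemma \ref{lem_formalism_of_torsors}(ii) is satisfied.

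The main obstacle is the simultaneous realisation of all four constraints on $K'$ (Galois, linearly disjoint from $L$, constant field equal to $\bbF_q$, and supporting a $K'$-point of $Z$ with generic image in $Y$), which is delivered in a single step by the function-field Moret-Bailly theorem once the local conditions at $v_1, v_2$ have been set up as above. The remainder is bookkeeping via Lemma \ref{lem_formalism_of_torsors}(ii).
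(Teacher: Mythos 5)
Your proposal is correct and follows essentially the same route as the paper: reduce to finding a $K'$-point of $Z_{\psi,\varphi}$ not lying over $Y(\bbF_q)$, apply Moret-Bailly's theorem, and use two auxiliary places of coprime residue degree to pin down the constant field. The only cosmetic difference is that you invoke a version of Moret-Bailly with linear disjointness from $L/K$ built into its conclusion, whereas the paper derives it by choosing $S$ (via Chebotarev) so that every $S$-split Galois extension is automatically linearly disjoint from $L$; both are standard and equivalent here.
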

\begin{proof}
By the first part of Lemma \ref{lem_formalism_of_torsors}, $Z_{\psi, \varphi}$ is a smooth, geometrically connected curve over $K$. By spreading out and the Weil bounds, the set $Z_{\psi, \varphi}(K_v)$ is non-empty for all but finitely many places $v$ of $K$. Let $S$ be a finite set of places of $K$ such that if $L/M/K$ is an intermediate field Galois over $K$, with $\Gal(M/K)$ simple and non-trivial, then there is $v \in S$ which does not split in $M$; and if $v \in S$, then $Z_{\psi, \varphi}(K_v)$ is non-empty (it is easy to construct such a set using the Chebotarev density theorem, i.e.\ Theorem \ref{thm_chebotarev_density}). We see that $S$ has the following property: any Galois extension $K'/K$ which is $S$-split is linearly disjoint from $L/K$. After adjoining two further places to $S$ of coprime residue degrees, we see that $K' /K$ will also have the property that $K' \cap \overline{\bbF}_q = \bbF_q$.

By Lemma \ref{lem_formalism_of_torsors}, the proposition will now follow from the following statement: there exists a finite Galois extension $K' / K$ satisfying the following conditions:
\begin{enumerate}
\item The extension $K' / K$ is $S$-split.
\item The set $Z_{\psi, \varphi}(K')$ contains a point which does not lie above $Y(\bbF_q)$.
\end{enumerate}
It follows from \cite[Th\'eor\`eme 1.3]{MoretBailly} that such an extension exists. This completes the proof.
\end{proof}
We conclude this section by recalling another useful result of Moret-Bailly and applying it to the existence of compatible systems (see \cite{Mor90}).
\begin{theorem}\label{thm_potential_inverse_galois}
Let $X$ be a smooth, geometrically connected curve over $\bbF_q$, and let $K = \bbF_q(X)$. Let $S$ be a finite set of places of $K$. Suppose given a finite group $H$ and for each $v \in S$, a Galois extension $M_v / K_v$ and an injection $\varphi_v : \Gal(M_v / K_v) \to H$. Then we can find inside $K^s$ a finite extension $K'/K$ and a Galois extension $M / K'$, satisfying the following conditions:
\begin{enumerate}
\item $K' / K$ is $S$-split and there is given an isomorphism $\varphi : \Gal(M/K') \to H$.
\item For each place $w$ of $M$ above a place $v$ of $S$, let $v' = w|_{K'}$, so that there is a canonical isomorphism $K_v \cong K'_{v'}$. Then there is an isomorphism $M_w \cong M_v$ of $K_v$-algebras such that the composite map
\[ \xymatrix@1{ \Gal(M_v / K_v) \ar[r]^\cong & \Gal(M_w / K'_{v'}) \ar[r] & \Gal(M/K') \ar[r]^-\varphi & H} \]
equals $\varphi_v$.
\end{enumerate}
\end{theorem}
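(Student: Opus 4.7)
The result is a Grunwald--Wang type density theorem: we want to realize prescribed local Galois $H$-data as the local behaviour of a global Galois $H$-extension of some $S$-split base change of $K$. My plan is to recast it as the existence of a rational point on a geometrically integral $K$-variety with prescribed local completions, and then apply the strong approximation theorem of Moret-Bailly \cite{MoretBailly, Mor90} exactly as in the proof of Proposition \ref{prop_potential_agreement_of_residual_representations}.

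First I would translate the data: each $\varphi_v$ yields a continuous homomorphism $\rho_v := \varphi_v \circ (\Gamma_{K_v} \twoheadrightarrow \Gal(M_v/K_v)) : \Gamma_{K_v} \to H$, and the theorem is equivalent to producing a finite $S$-split extension $K'/K$ inside $K^s$ together with a surjective continuous $\rho : \Gamma_{K'} \to H$ satisfying $\rho|_{\Gamma_{K'_{v'}}} = \rho_v$ for every $v \in S$ and every place $v'$ of $K'$ above $v$ (using $K'_{v'} \cong K_v$); one then takes $M := (K^s)^{\ker \rho}$. Next I would invoke the positive-characteristic inverse Galois results of Harbater and Pop to realize $H$ as the geometric Galois group of a finite \'etale cover $Y' \to Y$ of smooth, geometrically connected curves over $\bbF_q$ with $Y'$ geometrically connected; this yields a surjection $\psi : \pi_1(Y) \to H$ whose restriction to $\pi_1(Y_{\overline{\bbF}_q})$ is also surjective. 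After possibly enlarging $(Y,Y')$, one arranges for each $v \in S$ a $K_v$-point $y_v \in Y(K_v)$ with $\psi \circ (y_v)_*$ being $H$-conjugate to $\rho_v$.

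In direct analogy with the $K$-variety $Z_{\psi, \varphi}$ appearing in the proof of Proposition \ref{prop_potential_agreement_of_residual_representations}, I would then assemble a geometrically integral $K$-variety $Z$ whose $K'$-points parameterize points of $Y_K$ whose pullback of $Y' \to Y$ agrees with the prescribed local cover at each $v \in S$; by construction $Z$ carries a $K_v$-point above $y_v$ for each $v \in S$. Invoking \cite[Th\'eor\`eme 1.3]{MoretBailly} then produces a finite $S$-split extension $K'/K$ inside $K^s$ and a $K'$-point $y$ of $Z$ that is $v$-adically close to $y_v$ at each $v \in S$; the resulting $\rho := \psi \circ y_*$ has the required local behaviour. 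Surjectivity of $\rho$ (needed for $\varphi$ to be an isomorphism and not merely an injection) can be enforced by an auxiliary condition at a place $v_0 \notin S$, e.g.\ by prescribing there a point lying above a branch point of $Y' \to Y$ with full inertia.

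The main technical obstacle is the middle step: producing a single pair $(Y, Y' \to Y)$ that simultaneously realizes every $\rho_v$ as a pullback at a $K_v$-rational point. One expects to handle this via Harbater-style rigid or formal patching of the local covers $\Spec M_v \to \Spec K_v$ into a global \'etale Galois $H$-cover over a suitable curve, which is where the positive-characteristic geometry of curves enters substantively.
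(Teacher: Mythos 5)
The paper offers no proof of this statement at all: it is quoted verbatim from Moret-Bailly \cite{Mor90}, so the only fair comparison is with his argument. Your overall architecture (reduce to finding a surjection $\rho : \Gamma_{K'} \to H$ with prescribed local restrictions, produce a geometrically irreducible parameter variety with a $K_v$-point for each $v \in S$, then apply \cite[Th\'eor\`eme 1.3]{MoretBailly}) is indeed the right shape and is essentially his. But the step you yourself flag as "the main technical obstacle" is a genuine gap, and the tool you reach for to fill it is the wrong one. There is no need to realize $H$ as the Galois group of a cover of curves $Y' \to Y$ over $\bbF_q$ (an inverse-Galois-type input) and then patch the local extensions $M_v/K_v$ into it. The standard device, and the one underlying \cite{Mor90}, is a \emph{versal} $H$-torsor: let $H$ act on $\bbA^H$ by permuting coordinates, remove the locus where the action is not free to get $W$, and set $V = W/H$. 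Then $W \to V$ is an $H$-torsor with $W$ geometrically irreducible over $K$, and \emph{every} $H$-Galois algebra over \emph{every} overfield $L \supset K$ — in particular each $\Ind_{\Gal(M_v/K_v)}^{H} M_v$ over $K_v$ — occurs as the fibre over some $L$-point of $V$ (send a normal basis generator $\theta$ of $M_v$ to the orbit of $(\sigma\theta)_\sigma$). This makes the existence of the local points $x_v \in V(K_v)$ with the correct fibres immediate, with no patching and no curves; Krasner's lemma then guarantees that any $K'$-point sufficiently close to $x_v$ has the same fibre.

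Your proposed fix for surjectivity of $\rho$ is also broken. You suggest prescribing, at an auxiliary place $v_0 \notin S$, "a point lying above a branch point of $Y' \to Y$ with full inertia"; but your cover is finite \'etale, so it has no branch points, and more seriously the absolute Galois group of a local field of positive residue characteristic is prosolvable, so no local condition at a single place can force the image of $\rho$ to be a non-solvable $H$. Surjectivity has to be obtained either from the irreducibility clause built into Moret-Bailly's Skolem theorem (one can demand that the geometrically irreducible cover $W \to V$ remain irreducible over the chosen $K'$-point), or by imposing auxiliary \emph{unramified} conditions at several extra places: for each conjugacy class of maximal subgroups $H' < H$ choose $h_{H'} \in H$ lying in no conjugate of $H'$ (such an element exists, since a finite group is never the union of the conjugates of a proper subgroup) and prescribe an unramified local extension with Frobenius mapping to $h_{H'}$. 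As written, your argument establishes neither the simultaneous local realizability nor the surjectivity, and these are exactly the two points where the content of the theorem lives.
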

\begin{remark}  The main theorem of the article \cite{GL} of Gan and Lomel{\'{\i}} contains a strong automorphic analogue of this theorem.
\end{remark}

\begin{proposition}\label{prop_construction_of_unramified_and_dense_compatible_system}
Let $G$ be a split simple adjoint group over $\bbF_q$, and let $l$ be a prime such that $l > 2 \dim G$. Let $K = \bbF_q(X)$, where $X$ is a smooth, geometrically connected curve over $\bbF_q$, and let $S$ be a finite set of places of $K$. Then we can find the following data:
\begin{enumerate}
\item A finite extension $K'/K$ inside $K^s$ which is $S$-split.
\item A coefficient field $E \subset \overline{\bbQ}_l$ and a continuous, everywhere unramified homomorphism $\rho : \Gamma_{K'} \to \hG(\cO_E)$ such that $\overline{\rho} = \rho \text{ mod }\ffrm_E$ has image $\hG(\bbF_l)$.
\end{enumerate}
The representation $\rho = \rho_\lambda$ fits into a compatible system of continuous, everywhere unramified representations $(\emptyset, (\rho_\lambda)_\lambda)$, each of which has Zariski dense image in $\hG(\overline{\bbQ}_\lambda)$.
\end{proposition}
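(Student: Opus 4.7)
The plan is to construct a continuous surjective homomorphism $\overline{\rho}\colon \Gamma_{K'} \to \hG(\bbF_l)$, everywhere unramified, for some finite $S$-split extension $K'/K$, and then to lift it and embed it in a compatible system using the machinery developed in \S \ref{sec_galois_representations_and_their_deformation_theory}--\S \ref{sec_compatible_systems_of_galois_representations}. Given such $\overline{\rho}$, Theorem~\ref{thm_khare_wintenberger_method} applied with the adjoint representation $i\colon \hG \to \GL(\widehat\frg)$ produces a lift $\rho\colon \Gamma_{K',\emptyset} \to \hG(\cO_E)$ after possibly enlarging $E$: the kernel $Z_{\hG}$ is finite, the inequality $l > 2(\dim\widehat\frg - 1) = 2(\dim G - 1)$ is implied by $l > 2\dim G$, and the absolute irreducibility of $i\overline{\rho}$ follows from the fact (used also in claim (iii) of the proof of Proposition~\ref{prop_large_residual_image_implies_zariski_dense}) that $\widehat\frg_{\bbF_l}$ is an absolutely irreducible $\bbF_l[\hG(\bbF_l)]$-module in very good characteristic. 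Proposition~\ref{prop_large_residual_image_implies_zariski_dense}, whose hypothesis $l > 2\dim G$ is exactly what we assume, then forces the image of $\rho$ to contain a conjugate of $\hG(\bbZ_l)$, so $\rho$ is Zariski dense, and Theorem~\ref{thm_existence_of_compatible_systems_containing_a_given_representation} supplies a compatible system $(\emptyset,(\rho_\lambda)_\lambda)$ containing $\rho$, each of whose members has Zariski dense image.

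The main work is thus to produce $\overline{\rho}$, which I would carry out in two steps. First, one produces a smooth projective geometrically connected curve $Y$ over $\bbF_q$ together with a continuous homomorphism $\psi\colon \pi_1(Y) \to \hG(\bbF_l)$ which is surjective and remains so after restriction to the geometric fundamental group $\pi_1(Y_{\overline\bbF_q})$. Such $(Y,\psi)$ can be produced either by a direct geometric construction (for instance by working with a moduli space of $\hG$-bundles or shtukas with suitable level structure at $l$) or by appealing to general results on fundamental groups of curves in characteristic $p$: the geometric étale fundamental group of a high-genus smooth projective curve admits any finite group of order prime to $p$ as a quotient, and an additional Frobenius-descent argument (together with enlarging the genus as necessary) arranges for $Y$ to be geometrically connected over $\bbF_q$ itself and for $\psi$ to be arithmetic with geometrically surjective image. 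Second, transfer $\psi$ to $\Gamma_{K'}$ by a Moret--Bailly argument of the type used in Proposition~\ref{prop_potential_agreement_of_residual_representations} and Theorem~\ref{thm_potential_inverse_galois}: apply the density theorem of \cite{MoretBailly} to a smooth geometrically irreducible $K$-variety built from $Y_K$ by excising, for each proper subgroup $H' \subsetneq \hG(\bbF_l)$, the locus of $K$-points that would lift to the intermediate étale cover of $Y$ cut out by $\psi^{-1}(H')$, to produce $K'/K$ finite and $S$-split together with an $\bbF_q$-embedding $\beta\colon F = \bbF_q(Y) \hookrightarrow K'$ such that $\overline{\rho} := \beta^*\psi$ has image equal to $\hG(\bbF_l)$. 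Because $\beta$ extends (by properness of $Y$ and smoothness of the smooth projective model $X'$ of $K'$) to a morphism $X' \to Y$, the representation $\overline{\rho}$ factors through $\pi_1(X') = \Gamma_{K',\emptyset}$, hence is everywhere unramified, as required.

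The principal obstacle is the first step: producing a curve $Y/\bbF_q$ that is geometrically connected over $\bbF_q$ (not merely over some finite extension) and carries an everywhere unramified $\hG(\bbF_l)$-local system with both arithmetically and geometrically surjective monodromy. Ensuring $\bbF_q$-rationality of the cover, as opposed to just $\overline\bbF_q$-rationality, is the delicate point; once this is arranged, the Moret--Bailly step is reasonably standard (the key input being that the geometric connectedness of $Y$ makes the relevant variety over $K$ geometrically irreducible), and the remaining lifting and compatible-system steps follow directly from the results already in hand.
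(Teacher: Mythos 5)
Your endgame coincides with the paper's: lift $\overline{\rho}$ via Theorem \ref{thm_khare_wintenberger_method} applied to the adjoint representation (the numerics $l > 2\dim G > 2(\dim \widehat{\frg}-1)$ and the irreducibility of $\widehat{\frg}_{\bbF_l}$ as a $\hG(\bbF_l)$-module are exactly as you say), get Zariski density from Proposition \ref{prop_large_residual_image_implies_zariski_dense}, and invoke Theorem \ref{thm_existence_of_compatible_systems_containing_a_given_representation} for the compatible system. The divergence, and the gap, is in how you produce $\overline{\rho}$.

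You propose first to construct an auxiliary curve $Y/\bbF_q$ carrying an everywhere unramified $\hG(\bbF_l)$-local system with arithmetically and geometrically surjective monodromy, and then to transfer it to $K'$ by the torsor/Moret--Bailly formalism of \S \ref{sec_application_of_moret-bailly}. You correctly identify the first step as the principal obstacle, but you do not actually carry it out, and neither of your two suggestions closes it: the "any prime-to-$p$ group is a quotient of $\pi_1^{\mathrm{geom}}$ of a high-genus curve" argument does not apply, since $\#\hG(\bbF_l)$ will in general be divisible by $p$ (and the quasi-$p$ quotients of $\pi_1$ of a \emph{projective} curve are not under control), while the appeal to moduli of bundles or shtukas is not an argument. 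The descent to $\bbF_q$ with full arithmetic image is a further unaddressed difficulty. The paper avoids the auxiliary curve entirely: Theorem \ref{thm_potential_inverse_galois} (Moret--Bailly's theorem from \cite{Mor90}, already stated in \S \ref{sec_application_of_moret-bailly}) directly produces a finite $S$-split extension $K'/K$ and a surjective $\overline{\rho}: \Gamma_{K'} \to \hG(\bbF_l)$, unramified outside $S$; one then passes to a further extension $K'\cdot K''$ to make $\overline{\rho}$ everywhere unramified. So the transfer mechanism you had in mind (Moret--Bailly) is the right one, but it should be applied in the form of Theorem \ref{thm_potential_inverse_galois} to realize the Galois group directly over a base change of $K$, rather than to move a hypothetical local system from a curve $Y$ whose existence you have not established.
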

\begin{proof}
The last sentence will follow from the results of \S \ref{sec_compatible_systems_of_galois_representations} once we establish the rest, the Zariski density of the image of $\rho$ being a consequence of (ii) and Proposition \ref{prop_large_residual_image_implies_zariski_dense}. We first apply Theorem \ref{thm_potential_inverse_galois} to obtain the following data:
\begin{itemize}
\item A finite extension $K'/K$ inside $K^s$, which is $S$-split.
\item A continuous, $S$-unramified and surjective homomorphism $\overline{\rho} : \Gamma_{K'} \to \hG(\bbF_l)$.
\end{itemize}
After replacing $K'$ by an extension $K' \cdot K''$ and restricting $\overline{\rho}$ to the Galois group of this extension, we can further assume:
\begin{itemize}
\item $\overline{\rho}$ is everywhere unramified.
\end{itemize}
The result will now follow from Theorem \ref{thm_khare_wintenberger_method} (applied to the ring $R_{\overline{\rho}, \emptyset}$) if we can show that the following conditions are satisfied:
\begin{itemize}
\item $l$ is a very good characteristic for $\hG$.
\item $\overline{\rho}$ is absolutely $\hG$-irreducible.
\item There exists a representation $i : \hG \to \GL(V)$ of finite kernel such that $i \overline{\rho} : \Gamma_{K'} \to \GL(V_{\bbF_l})$ is absolutely irreducible and $l > 2 ( \dim V - 1)$.
\end{itemize}
Our hypothesis on $l$ implies that it is a very good characteristic for $\hG$, and that $l$ is larger than the Coxeter number of $\hG$. Consequently, $\hG(\bbF_l)$ is a $\hG$-absolutely irreducible subgroup of $\hG$ (because its saturation equals $\hG(\overline{\bbF}_l)$, see \cite[\S 5.1]{Ser05}). We can then satisfy the third point above by taking $V$ to be the adjoint representation of $\hG$.
\end{proof}
\section{A class of universally automorphic Galois representations}\label{sec_class_of_universally_automorphic_galois_representations}

In this section we introduce a useful class of Galois representations, which we call Coxeter parameters, and which can, in certain circumstances, be shown to be ``universally automorphic''; see the introduction for a discussion of the role played by this property, or Lemma \ref{lem_Coxeter_parameters_rational_over_Z_l} below for a precise formulation of what we actually use. We first describe these Coxeter parameters abstractly and establish their basic properties in \S \ref{sec_abstract_coxeter_parameters}, and then relate them to Galois representations and study their universal automorphy in \S \ref{sec_galois_coxeter_parameters}. We learned the idea of using Coxeter elements of Weyl groups to build Langlands parameters from the work of Gross and Reeder (see for example \cite{Gro10}).

\subsection{Abstract Coxeter parameters}\label{sec_abstract_coxeter_parameters}

Let $\hG$ be a split, simply connected and \textbf{simple} group over $\bbZ$ of rank $r$. Let $\hT \subset \hB \subset \hG$ be a choice of split maximal torus and Borel subgroup, and let $\Phi = \Phi(\hG, \hT)$ denote the corresponding set of roots, $R \subset \Phi$ the corresponding set of simple roots. We write $W = W(\hG, \hT) = N_{\hG}(\hT) / \hT$ for the Weyl group. Let $k$ be an algebraically closed field. We will assume throughout \S \ref{sec_class_of_universally_automorphic_galois_representations} that the characteristic of $k$ is either 0 or $l > 0$, where $l$ satisfies the following conditions:
\begin{enumerate}
\item $l > 2 h - 2$, where $h$ is the Coxeter number of $\hG$ (defined in the statement of Proposition \ref{prop_basic_properties_of_Coxeter_elements} below).
\item $l$ is prime to $\# W$. (In fact, this condition is implied by (i).)
\end{enumerate}
These conditions are satisfied by any sufficiently large prime $l$. Under these assumptions, it follows that if $H$ is a group and $\phi : H \to \hG(k)$ is a homomorphism, then $\phi$ is $\hG$-completely reducible if and only if $\widehat{\frg}_k$ is a semisimple $k[H]$-module (see \cite[Corollaire 5.5]{Ser05}).
\begin{definition}
We call an element $w \in W$ a Coxeter element if it is conjugate in $W$ to an element constructed as follows: choose an ordering $\alpha_1, \dots, \alpha_r$ of $R$, and take the product $s_{\alpha_1} \dots s_{\alpha_r}$ of the corresponding simple reflections in $W$.
\end{definition}
The properties of Coxeter elements are very well-known. We recall some of them here:
\begin{proposition}\label{prop_basic_properties_of_Coxeter_elements}
\begin{enumerate}
\item There is a unique conjugacy class of Coxeter elements in $W$. Their common order $h$ is called the Coxeter number of $\hG$.
\item If $w \in W$ is a Coxeter element, then it acts freely on $\Phi$, with $r = \#R$ orbits. In particular, we have $\# \Phi = r h$. 
\item Let $w \in W$ be a Coxeter element, and let $\dot{w} \in N_\hG(\hT)(k)$ denote a lift of $w$. Then the $\hG(k)$-conjugacy class of $\dot{w}$ is independent of any choices and $\hT_k^w = Z_{\hG_k}$. The element $\dot{w}$ is regular semisimple and $\dot{w}^h \in Z_{\hG}(k)$. We write $\dot{h}$ for the order of $\dot{w}$, which depends only on $\hG$.
\item Suppose that $t$ is a prime number not dividing $\text{char } k$ or $\# W$ and such that $t \equiv 1 \text{ mod }h$, and let $w \in W$ be a Coxeter element. Let $q \in \bbF_t^\times$ be a primitive $h^\text{th}$ root of unity. Then $\hT(k)[t]^{w = q}$ is a 1-dimensional $\bbF_t$-vector space, and every non-zero element $v$ of this space is regular semisimple in $\hG(k)$. Conversely, if $w' \in W$ and $v \in \hT(k)[t]^{w = q} - \{ 0 \}$, and $w' v = a v$ for some $a \in \bbF_t^\times$, then $w'$ is a power of $w$. In particular, if $w' v = q v$, then $w' = w$.
\end{enumerate}
\end{proposition}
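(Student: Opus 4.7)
Parts (i) and (ii) are classical results on Coxeter elements of Weyl groups and can be invoked directly from Bourbaki, Chapters V and VI; the key combinatorial input is that $1$ is not an eigenvalue of $w$ acting on the reflection representation $X_*(\hT) \otimes \bbQ$, from which one derives both the uniqueness of the conjugacy class of Coxeter elements and the free action on $\Phi$.

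For part (iii), the plan is to treat each assertion in turn. The independence of the $\hG(k)$-conjugacy class of a lift $\dot{w}$ reduces to the surjectivity on $k$-points of the morphism $\hT \to \hT$, $s \mapsto s \cdot {}^w s^{-1}$; this is immediate since it has finite kernel (as $1$ is not an eigenvalue of $w$) and $k$ is algebraically closed, so two lifts can be conjugated into each other by a suitable $s \in \hT(k)$. The scheme-theoretic identification $\hT^w = Z_{\hG}$ is equivalent, via character groups, to the equality $(w-1) X^*(\hT) = \bbZ \Phi$: the inclusion $\subseteq$ is direct from $(s_\alpha - 1)\chi = -\langle \chi, \alpha^\vee \rangle \alpha$ applied to a reduced expression for $w$, while equality would follow by comparing indices using the classical formula $|\det(1-w)| = [P:Q]$ on the weight lattice. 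For regular semisimplicity of $\dot{w}$: its order divides $\# W$, hence is prime to $l$, so $\dot{w}$ is semisimple, and the dimension of its centralizer is computed by decomposing $\widehat{\frg}_k = \widehat{\frt}_k \oplus \bigoplus_{\alpha} \widehat{\frg}_{k, \alpha}$ and observing, via (ii) and the fact that $\dot{w}^h \in Z_{\hG}$ acts trivially on every root space, that exactly one fixed line appears in each of the $r$ root-space orbits and none in $\widehat{\frt}_k$, yielding a centralizer of dimension $r$. Finally, $\dot{w}^h$ visibly lies in $\hT \cap Z_{\hG}(\dot{w}) = \hT^w = Z_{\hG}$.

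For part (iv), the first claim follows from the classical description of the eigenvalues of $w$ on $X_*(\hT) \otimes \bbC$ as $\zeta^{m_1}, \dots, \zeta^{m_r}$, where $\zeta = e^{2 \pi i / h}$ and the $m_j$ are the exponents, together with the case-by-case observation that each primitive $h$-th root of unity appears among the $\zeta^{m_j}$ with multiplicity exactly one; specializing to $\bbF_t$ via $\zeta \leftrightarrow q$ (using $t \equiv 1 \pmod h$) identifies $\hT(k)[t]^{w = q}$ with a $1$-dimensional $\bbF_t$-space. For the regularity of a nonzero $v$ in this eigenspace, I would argue by contradiction: if $\alpha(v) = 1$ for some root $\alpha$, the relation $\dot{w} v \dot{w}^{-1} = v^q$ forces $(w^k \alpha)(v) = 1$ for all $k$, so the whole $w$-orbit of $\alpha$ (of size $h$ by (ii)) annihilates $v$. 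The $\bbF_t$-span of this orbit is the regular representation of $\langle w \rangle$, so its $q^{-1}$-eigenspace under $w$ is nonzero and must coincide with the full $1$-dimensional $q^{-1}$-eigenspace of $w$ on $X^*(\hT) \otimes \bbF_t$; but the $W$-equivariant nondegenerate pairing between $X^*$ and $X_*$ restricts to a nondegenerate pairing between the $q^{-1}$-eigenspace on $X^*$ and the $q$-eigenspace on $X_*$, which contradicts that the orbit annihilates $v$. With regularity in hand, if $w' v = a v$ then $w' w w'^{-1}$ is a Coxeter element by (i) and a short computation shows it also sends $v$ to $q v$; hence $w^{-1} (w' w w'^{-1})$ fixes $v$, so it is trivial by regularity, and $w' \in Z_W(w) = \langle w \rangle$ (the centralizer of a Coxeter element being cyclic of order $h$ is standard). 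If $a = q$ this forces $w' = w$. The principal technical hurdle is the regularity step: everything rests on the nondegenerate duality between the $1$-dimensional $q$- and $q^{-1}$-eigenspaces, which in turn relies on the multiplicity-one occurrence of each primitive $h$-th root of unity among the $\zeta^{m_j}$.
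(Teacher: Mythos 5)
Parts (i)--(iii), the dimension count in the first claim of (iv), and your conjugation argument for the converse in (iv) are sound and run close to the paper's proof (which cites Gross--Reeder for the cokernel of $1-w$ on $X^\ast(\hT)$, and Springer both for the multiplicity-one statement about exponents and for $Z_W(w)=\langle w\rangle$); your observation that $\dot w^h\in\hT(k)^w=Z_{\hG}(k)$ is in fact a slightly cleaner route to that point than the paper's adjoint-action computation. The serious problem is the regularity step in (iv).

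You assert that the $\bbF_t$-span of the $w$-orbit of a root $\alpha$ is the regular representation of $\langle w\rangle$. It never is: since $1$ is not an eigenvalue of $w$ on $X^\ast(\hT)\otimes\bbF_t$ (as $t\nmid\#W$), the $w$-invariant element $\sum_{k=0}^{h-1}w^k\alpha$ must vanish, so the $h$ elements of the orbit are linearly dependent and the trivial character is absent from their span (more crudely, the orbit sits inside a space of dimension $r<h$). What your argument actually needs is that the component of $\alpha$ in the one-dimensional $q^{-1}$-eigenspace of $w$ on $X^\ast(\hT)\otimes\bbF_t$ be nonzero. But under the very duality you invoke, that nonvanishing is \emph{equivalent} to $\alpha(v)\neq 1$, i.e.\ to the statement being proved, so the argument is circular. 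This cannot be repaired formally: the regularity of the $\zeta$-eigenvectors of a Coxeter element is a genuine theorem of Kostant and Springer already over $\bbC$. The paper's route is to quote Springer's results over $\bbQ_t$ (the $\zeta$-eigenspace of $w$ in $\widehat{\frt}_{\bbQ_t}$ is one-dimensional, spanned by regular vectors, and preserved only by powers of $w$) and then descend to $\hT(k)[t]$ by a lifting argument, using that $X_\ast(\hT)\otimes_\bbZ\bbZ_t$ is a projective $\bbZ_t[W]$-module to lift a putative nontrivial stabilizer of $v$ to a nontrivial stabilizer of a characteristic-zero eigenvector, contradicting the $\bbQ_t$-statement. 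You need to supply this characteristic-zero input and the reduction step; once regularity is in place, your conjugation argument for the converse does go through.
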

\begin{proof}
The first two points can be checked after extending scalars to $\bbC$, in which case see \cite{Cox34} and \cite{Kos59}, respectively. For the first assertion of the third part, it is enough to show that if $\dot{w}'$ is another lift of $w$ to $N_{\hG}(\hT)(k)$, then $\dot{w}$ and $\dot{w}'$ are $\hG(k)$-conjugate. We will show that they are in fact $\hT(k)$-conjugate. Indeed, if $x \in \hT(k)$, then we have $x \dot{w} x^{-1} = x^{1-w} \dot{w}$; on the other hand, we have $\dot{w}' = y \dot{w}$ for some $y \in \hT(k)$. We therefore need to show that the map $\hT(k) \to \hT(k), x \mapsto x^{1-w}$, is surjective. This will follow if we can show that the scheme-theoretic centralizer of $w$ in ${\hT_k}$ is equal to $Z_{\hG_k}$. To show this, it suffices to observe that the map $1 - w : X^\ast(\hT) \to X^\ast(\hT)$ is injective, with cokernel of order equal to $Z_{\hG}(k)$. This can again be checked in the case $k = \bbC$, in which case it is a known fact; see e.g.\ \cite[Lemma 6.2]{Gro10}. For the second assertion of the third part, we observe that in the Cartan decomposition
\[ \widehat{\frg}_k = \widehat{\frt}_k \oplus \bigoplus_{\alpha \in \Phi(\hG, \hT)} \widehat{\frg}_\alpha, \]
$\dot{w}^h$ acts trivially on $\widehat{\frt}_k$ and leaves invariant each root space $\widehat{\frg}_\alpha$, by definition of the Coxeter number $h$. On the other hand, $\dot{w}$ permutes these root spaces freely (by the second part of the proposition). It now follows from the facts that $\widehat{\frt}_k^{\dot{w}} = 0$ and that $\dim_k \widehat{\frg}_k \geq r$ that $\dim_k \widehat{\frg}_k^{\dot{w}} = \# R = r$, hence that $\dot{w}$ is regular and $\dot{w}^h$ acts trivially on $\widehat{\frg}_k$. We deduce that $\dot{w}$ is regular semisimple and that $\dot{w}^h \in Z_{\hG}(k)$.

We now come to the fourth point. We first prove the analogous claims for the Lie algebra $\widehat{\frt}_{\bbQ_t}$: if $\zeta \in \bbQ_t^\times$ is any primitive $h^\text{th}$ root of unity, then the eigenspace $\widehat{\frt}_{\bbQ_t}^{w = \zeta}$ is 1-dimensional, and all of its non-zero elements are regular semisimple. Indeed, it follows from \cite[Theorem 4.2]{Spr74} (and the fact that Coxeter elements are regular and have $a(h) = 1$, in the notation of \emph{op. cit.}) that the eigenspace $\widehat{\frt}_{\bbQ_t}^{w = \zeta}$ is 1-dimensional and is spanned by regular semisimple elements, and that the centralizer of $w$ in $W$ is the cyclic group generated by $w$. It then follows from \cite[Proposition 3.5]{Spr74} that the only elements of $W$ which preserve the eigenspace $\widehat{\frt}_{\bbQ_t}^{w = \zeta}$ are the powers of $w$. 

We must now deduce the corresponding statement for the group $\hT(k)$. Since $t \nmid \# W$, $X_\ast(\hT) \otimes_\bbZ \bbZ_t$ is a projective $\bbZ_t[W]$-module, and we have isomorphisms of $\bbZ_t[W]$-modules $X_\ast(\hT) \otimes_\bbZ \mu_t(k) \cong \hT(k)[t]$ (by evaluation), $X_\ast(\hT) \otimes_\bbZ \bbQ_t \cong \widehat{\frt}_{\bbQ_t}$ (via the canonical isomorphism $X_\ast(\bbG_m) \cong \bbZ \cong \Lie \bbG_m$). The characteristic polynomial in $\bbF_t[X]$ of $w$ acting on $\hT(k)[t]$ has distinct roots. There is a unique $h^\text{th}$ root of unity $\zeta \in \bbQ_t$ lifting $q$. Let $v$ be a non-zero element of $\hT(k)[t]^{w = q}$, a 1-dimensional $\bbF_t$-vector space, and suppose that $v$ is not regular semisimple, hence that the stabilizer of $v$ in $W$ is non-trivial (because $\hG$ is assumed simply connected). Let $H_v = \Stab_W(v)$; then $H_v$ is normalized by $w$, and we write $\widetilde{H}_v$ for the subgroup of $W$ generated by $H_v$ and $w$. Thus $\widetilde{H}_v$ acts on $v$ by a character. By lifting this character to characteristic 0, we can find $\widetilde{v} \in (X_\ast(\hT) \otimes_\bbZ \bbZ_t)^{w = \zeta}$ such that $H_v \subset \Stab_W(\widetilde{v})$, which contradicts the above paragraph after passage to $\bbQ_t$. This shows that $\Stab_W(v)$ is trivial and that $v$ is in fact regular semisimple.

Finally, let $w' \in W$, and let $v$ be a non-zero element of $\hT(k)[t]^{w = q}$. Suppose  that $v$ is a $w'$-eigenvector. Let $H'_v$ denote the stabilizer in $W$ of the line $\bbF_t \cdot v = \hT(k)[t]^{w = q}$ (so that both $w, w'$ lie in $H'_v$). Then $\bbF_t \cdot v$ is a 1-dimensional subrepresentation of $\hT(k)[t]$ which occurs with multiplicity 1 (because $q$ appears with multiplicity 1 as an eigenvalue of $w$). By lifting this character of $H'_v$ to characteristic 0, we can find a lift $\widetilde{v} \in (X_\ast(\hT) \otimes_\bbZ \bbZ_t)^{w = \zeta}$ of $v$, such that the line $\bbZ_t \cdot \widetilde{v}$ is invariant by $H'_v$. Appealing once more to the case of Lie algebras now shows that $w'$ is a power of $w$.
\end{proof}
\begin{definition}\label{def_abstract_Coxeter_homomorphism}
Let $\Gamma$ be a group. We call a homomorphism $\phi : \Gamma \to \hG(k)$ an abstract Coxeter homomorphism if it satisfies the following conditions:
\begin{enumerate}
\item There exists a maximal torus $T \subset \hG_k$ such that $\phi(\Gamma) \subset N_{\hG_k}(T)$, and the image of $\phi(\Gamma)$ in $W(\hG_k, T)$ is the cyclic group generated by a Coxeter element $w$. Let $\phi^\text{ad}$ denote the composite $\Gamma \to \hG(k) \to \hG^\text{ad}(k)$, $T^\text{ad}$ the image of $T$ in $\hG^\text{ad}_k$.
\item There exists a prime $t \equiv 1 \text{ mod }h$ not dividing $\text{char }k$ or $\# W$ and a primitive $h^\text{th}$ root of unity $q \in \bbF_t^\times$ such that $\phi^\text{ad}(\Gamma) \cap T^\text{ad}(k)$ is cyclic of order $t$ and $w v w^{-1} = v^q$ for any $v \in \phi^\text{ad}(\Gamma) \cap T^\text{ad}(k)$.
\end{enumerate}
\end{definition}
The definition includes some useful technical conditions that we can ensure are satisfied in applications. We will soon  see (Proposition \ref{prop_basic_properties_of_abstract_Coxeter_homomorphisms}) that the maximal torus $T$ appearing in this definition is the unique one with the listed properties.
\begin{lemma}\label{lem_reduction_modulo_l_of_abstract_Coxeter_homomorphism}
Let $\phi : \Gamma \to \hG(\overline{\bbQ})$ be an abstract Coxeter homomorphism. Let $\lambda$ be a place of $\overline{\bbQ}$, $\phi_\lambda : \Gamma \to \hG(\overline{\bbQ}) \to \hG(\overline{\bbQ}_\lambda)$ the composite, and $\overline{\phi}_\lambda : \Gamma \to \hG(\overline{\bbF}_l)$ the reduction modulo $l$. Suppose that $l > 2 h - 2$ and that $l$ does not divide the order of $\phi(\Gamma)$. Then $\overline{\phi}_\lambda$ is an abstract Coxeter homomorphism.
\end{lemma}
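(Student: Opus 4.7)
The plan is to exploit the hypothesis $l \nmid |\phi(\Gamma)|$, which forces reduction to be injective on the image and lets the Coxeter data descend to an integrally defined maximal torus. First, I would apply Theorem \ref{thm_reduction_modulo_l} to replace $\phi_\lambda$ by a $\hG(\overline{\bbQ}_l)$-conjugate whose image $H := \phi_\lambda(\Gamma)$ lies in $\hG(\overline{\bbZ}_l)$. Because the kernel of the reduction map $\hG(\overline{\bbZ}_l) \to \hG(\overline{\bbF}_l)$ is pro-$l$ and $|H|$ is coprime to $l$, reduction gives a group isomorphism $H \cong \overline{H} := \overline{\phi}_\lambda(\Gamma)$, and what remains is to produce a maximal torus $\overline{T} \subset \hG_{\overline{\bbF}_l}$ witnessing the Coxeter axioms for $\overline{\phi}_\lambda$.

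Next I would locate an integral lift of the maximal torus $T$ appearing in Definition \ref{def_abstract_Coxeter_homomorphism}(i) for $\phi_\lambda$. Hypothesis (ii) together with Proposition \ref{prop_basic_properties_of_Coxeter_elements}(iv) gives that $H^\text{ad} \cap T^\text{ad}$ is cyclic of prime order $t$, with regular semisimple generators in $\hG^\text{ad}_{\overline{\bbQ}_l}$. Using $p^{-1}(T^\text{ad}) = T$ for the central isogeny $p : \hG \to \hG^\text{ad}$, I pick $v \in H \cap T(\overline{\bbQ}_l)$ with $p(v)$ of order $t$; then $v$ itself is regular semisimple in $\hG_{\overline{\bbQ}_l}$, and since $\hG$ is simply connected its centralizer is exactly $T$. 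The element $v$ has finite order $n$ coprime to $l$, so its eigenvalues on $\widehat{\frg}$ are $n$-th roots of unity, pairwise distinct away from $1$ because $v$ is regular; these reduce bijectively to distinct roots of unity in $\overline{\bbF}_l$, so $\overline{v}$ remains regular semisimple. Applying \cite[Exp. XIII, 3.2]{SGA3} (as in Lemma \ref{lem_local_structure_at_TW_primes}), there is then a unique maximal torus $\mathcal{T} \subset \hG_{\overline{\bbZ}_l}$ containing $v$; its generic fiber is $T$, and I set $\overline{T} := \mathcal{T}_{\overline{\bbF}_l}$.

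Finally I would verify the two axioms of Definition \ref{def_abstract_Coxeter_homomorphism} for $\overline{\phi}_\lambda$. Since $N_\hG(\mathcal{T})$ is closed in $\hG_{\overline{\bbZ}_l}$ and $H$ lies in its generic fiber, $H \subset N_\hG(\mathcal{T})(\overline{\bbZ}_l)$, and reduction shows $\overline{H}$ normalizes $\overline{T}$. Because $\mathcal{T}$ is split, the Weyl group $W$ is constant across both fibers, and $H \cong \overline{H}$ commutes with projection to $W$, so the image of $\overline{H}$ in $W$ is again $\langle w \rangle$, giving (i). The isomorphism also restricts to a bijection $H \cap \mathcal{T}(\overline{\bbZ}_l) \cong \overline{H} \cap \overline{T}(\overline{\bbF}_l)$ (both the kernel of the projection to $W$); since $|Z_\hG|$ is prime to $l$, passage to adjoint quotients yields an isomorphism $H^\text{ad} \cap T^\text{ad} \cong \overline{H}^\text{ad} \cap \overline{T}^\text{ad}$ of cyclic groups of order $t$, on which $w$ acts as $v \mapsto v^q$ with the same $q \in \bbF_t^\times$; and $t \nmid \#W$ and $t \equiv 1 \pmod h$ are inherited while $t \neq l$ because $t \mid |H|$. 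The principal technical obstacle is step two, the integral existence of $\mathcal{T}$: one needs the SGA3 centralizer result to extend the generic-fiber maximal torus $T$ to a maximal torus of $\hG_{\overline{\bbZ}_l}$ containing $v$, which is precisely the lemma already invoked in Lemma \ref{lem_local_structure_at_TW_primes}. Once this is in hand, everything else is formal from $|\phi(\Gamma)|$ being coprime to $l$.
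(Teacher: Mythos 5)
Your proof is correct in substance and rests on the same two pillars as the paper's: since $l \nmid \#\phi(\Gamma)$ the reduction map is injective on the image, and the Coxeter axioms are purely group-theoretic data that descend along this isomorphism. Where you diverge is in how you produce the residual torus. The paper's argument is more elementary: it first conjugates $\phi$ over $\overline{\bbQ}$ so that the torus of Definition \ref{def_abstract_Coxeter_homomorphism} is the standard $\hT$, observes that $\phi$ then automatically takes values in $\hG(\overline{\bbZ})$ (Weyl elements lift to $\hG(\bbZ)$ and the toral part is torsion, hence integral), and simply reduces this integral model ``physically'', so that the residual torus is just $\hT_{\overline{\bbF}_l}$ and no local machinery is needed. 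You instead conjugate $l$-adically into $\hG(\overline{\bbZ}_l)$ via Theorem \ref{thm_reduction_modulo_l} and reconstruct the torus integrally using \cite[Exp.\ XIII, 3.2]{SGA3} applied to a regular semisimple torsion element; this works, but costs you the extra verifications that regularity survives reduction (which you supply, though the relevant point is that $\alpha(v)\neq 1$ persists for each root $\alpha$, not that the eigenvalues on $\widehat{\frg}$ are pairwise distinct) and strictly speaking requires applying the SGA3 result over $\cO_E$ for a finite extension $E/\bbQ_l$ containing the entries of $v$, since $\overline{\bbZ}_l$ is not Noetherian. One point you gloss that the paper makes explicit: the identification of the physical reduction of your conjugated representation with $\overline{\phi}_\lambda$ (which by Definition \ref{def_reduction_modulo_l} is a \emph{semisimplification}) needs the remark that a homomorphism with image of order prime to $l$ is automatically $\hG$-completely reducible under the standing hypothesis $l>2h-2$ (via \cite[Corollaire 5.5]{Ser05}), hence already equal to its own semisimplification up to conjugacy. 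With that sentence added, your argument is complete; the paper's route buys brevity, yours buys a construction that stays entirely inside the $l$-adic world.
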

\begin{proof}
If $\phi : \Gamma \to \hG(\overline{\bbQ})$ is an abstract Coxeter homomorphism, we can assume (after conjugation) that the torus appearing in the definition is $\hT$. Since every element of the Weyl group $W(\hG, \hT)$ admits a representative in $\hG(\bbZ)$, this means we can even assume that $\phi$ takes values in $\hG(\overline{\bbZ})$ (i.e.\ the points of $\hG$ with values in the algebraic integers in $\overline{\bbQ}$), and that $\phi^\text{ad}$ takes values in $\hG(\bbZ[\zeta_t])$. Then the ``physical'' reduction of mod $\lambda$ (i.e.\ composition with $\hG(\bbZ[\zeta_t]) \to \hG(\overline{\bbF}_\lambda)$) of $\phi^\text{ad}$ has image of order prime to $l$, so is $\hG$-completely reducible, so is $\hG(\overline{\bbF}_l)$-conjugate to $\overline{\phi}_\lambda$ (i.e.\ the reduction modulo $\lambda$ of $\phi_\lambda$ defined using the pseudocharacter as in Definition \ref{def_reduction_modulo_l}). This implies that $\overline{\phi}_\lambda$ is itself an abstract Coxeter homomorphism. 
\end{proof}
\begin{lemma}\label{lem_adjoint_representation_of_abstract_Coxeter_homomorphism}
Let $\phi : \Gamma \to \hG(k)$ be an abstract Coxeter homomorphism, and let the torus $T$ be as in Definition \ref{def_abstract_Coxeter_homomorphism}. Then:
\begin{enumerate}
\item Let $\Delta = (\phi^\text{ad})^{-1}(T^\text{ad}(k))$. Then $\Gamma / \Delta$ is cyclic of order $h$. We write $w \in W(\hG_k, T)$ for the image of a generator. Then $w$ is a Coxeter element, and there are isomorphisms $\phi^\text{ad}(\Gamma) \cong \phi^\text{ad}(\Delta) \rtimes \langle w \rangle \cong \bbZ / t \bbZ \rtimes \bbZ / h \bbZ$, where $1 \in \bbZ / h \bbZ$ acts on the cyclic normal subgroup as multiplication by $q$.
\item Let $\frt = \Lie T$, $\widehat{\frg}_\alpha$ the $\alpha$-root space inside $\widehat{\frg}_k$ corresponding to a root $\alpha \in \Phi(\hG_k, T)$. Then there is an isomorphism of $k[\Gamma]$-modules:
\[ \widehat{\frg}_k \cong \frt \oplus \bigoplus_{\alpha \in \Phi(\hG_k, T) / w} \Ind_{\Delta}^\Gamma \widehat{\frg}_\alpha, \]
where each summand $\Ind_{\Delta}^\Gamma \widehat{\frg}_\alpha$ is an irreducible $k[\Gamma]$-module.
\end{enumerate}
\end{lemma}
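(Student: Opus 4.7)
The plan is to deduce (i) directly from the definition together with Schur--Zassenhaus, and (ii) from the Mackey/Clifford irreducibility criterion combined with Proposition \ref{prop_basic_properties_of_Coxeter_elements}(iv).

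For (i), the point is that since $\phi(\Gamma) \subset N_{\hG_k}(T)$, we have $\phi^\text{ad}(\Gamma) \subset N_{\hG^\text{ad}_k}(T^\text{ad})(k)$, and the quotient $\Gamma/\Delta$ is identified with the image of $\phi^\text{ad}(\Gamma)$ in $N_{\hG^\text{ad}_k}(T^\text{ad})(k)/T^\text{ad}(k) = W(\hG_k, T)$, which by Definition \ref{def_abstract_Coxeter_homomorphism}(i) is the cyclic group $\langle w \rangle$ of order $h$. Combined with $\phi^\text{ad}(\Delta) \cong \bbZ/t\bbZ$, this yields a short exact sequence
\[ 1 \to \bbZ/t\bbZ \to \phi^\text{ad}(\Gamma) \to \bbZ/h\bbZ \to 1. \]
Since $t$ is prime and $t \equiv 1 \pmod h$, we have $\gcd(t,h) = 1$, so Schur--Zassenhaus provides a splitting; the action of $w$ on $\phi^\text{ad}(\Delta)$ is as $v \mapsto v^q$ by Definition \ref{def_abstract_Coxeter_homomorphism}(ii).

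For (ii), since $T$ is preserved under $\phi(\Gamma)$-conjugation, the Cartan decomposition of $\widehat{\frg}_k$ is $\Gamma$-equivariant: $\Gamma$ preserves $\frt$ and permutes the root spaces via the $\langle w \rangle$-action on $\Phi(\hG_k, T)$. By Proposition \ref{prop_basic_properties_of_Coxeter_elements}(ii), $w$ acts freely with $r = \#R$ orbits each of size $h = [\Gamma:\Delta]$; grouping root spaces into $\langle w \rangle$-orbits produces
\[ \bigoplus_{\alpha \in \Phi} \widehat{\frg}_\alpha \cong \bigoplus_{[\alpha] \in \Phi/\langle w \rangle} \Ind_\Delta^\Gamma \widehat{\frg}_\alpha, \]
so the task reduces to showing that each $\Ind_\Delta^\Gamma \widehat{\frg}_\alpha$ is irreducible.

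Each such induction is of a $1$-dimensional $\Delta$-representation: since $\alpha$ vanishes on $Z_{\hG}$, it factors through $T^\text{ad}$, so $\Delta$ acts on $\widehat{\frg}_\alpha$ through the character $\chi_\alpha = \alpha \circ \phi^\text{ad}|_\Delta$. Because $\Gamma/\Delta$ is cyclic, Mackey's criterion identifies irreducibility of $\Ind_\Delta^\Gamma \widehat{\frg}_\alpha$ with pairwise distinctness of the characters $\chi_{w^i \cdot \alpha}$, $i = 0, \dots, h-1$. Let $v$ generate $\phi^\text{ad}(\Delta)$. Definition \ref{def_abstract_Coxeter_homomorphism}(ii) places $v$ in $T^\text{ad}(k)[t]^{w = q}$, so by Proposition \ref{prop_basic_properties_of_Coxeter_elements}(iv), $v$ is regular semisimple; as $t$ is prime, $\alpha(v)$ then has exact order $t$. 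From $w^{-i} v w^i = v^{q^{-i}}$ we compute $(w^i \cdot \alpha)(v) = \alpha(v)^{q^{-i}}$, which equals $\alpha(v)$ only when $q^{i} \equiv 1 \pmod t$, i.e.\ when $h \mid i$. In the range $0 \leq i < h$ this forces $i=0$, completing the proof. The main subtlety is merely keeping straight the $W$-action on characters versus on $T$ by conjugation; the substantive input is Proposition \ref{prop_basic_properties_of_Coxeter_elements}.
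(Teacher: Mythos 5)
Your proof is correct and follows essentially the same route as the paper: the semidirect decomposition in (i) and, in (ii), the reduction to irreducibility of the induced modules via distinctness of the conjugate characters $\alpha^{w^i}(\phi(\delta)) = \alpha(\phi(\delta))^{q^{\pm i}}$, using regularity of the generator of $T^{\text{ad}}(k)[t]^{w=q}$ from Proposition \ref{prop_basic_properties_of_Coxeter_elements}. The only cosmetic differences are that the paper obtains the splitting in (i) by noting that $\phi^{\text{ad}}$ of a lift of $w$ has exact order $h$ rather than invoking Schur--Zassenhaus, and that it records (via Springer) why an arbitrary generator of $\langle w\rangle$ is again a Coxeter element, a point you elide.
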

\begin{proof}
We note that $w$ is a Coxeter element, because it generates the same cyclic group as a Coxeter element (\cite[Proposition 4.7]{Spr74}). Let $\sigma \in \Gamma$ be an element which projects to $w$, and let $\dot{w} = \phi^\text{ad}(\sigma) \in \hG^\text{ad}(k)$. Proposition \ref{prop_basic_properties_of_Coxeter_elements} shows that $\dot{w}$ has order $h$, and this implies the existence of the semidirect decomposition in the first part of the proposition.

To finish the proof of the lemma, we must show that for any $\alpha \in \Phi(\hG_k, T)$,  there is an isomorphism
\[ \widehat{\frg}_\alpha \oplus \widehat{\frg}_{\alpha^w} \oplus \dots \oplus \widehat{\frg}_{\alpha^{w^{h-1}}} \cong \Ind_{\Delta}^\Gamma \widehat{\frg}_\alpha \]
of irreducible $k[\Gamma]$-modules. By Frobenius reciprocity, it is enough to show that the induced representation is irreducible; and this is a consequence of the fact that the representations $\widehat{\frg}_\alpha, \widehat{\frg}_{\alpha^w}, \dots, \widehat{\frg}_{\alpha^{w^{h-1}}}$ are pairwise non-isomorphic. Indeed, if $\delta \in \Delta$ has $\phi^\text{ad}(\delta) \neq 1$, then $\phi^\text{ad}(\delta)$ generates $T(k)[t]^{w = q}$, hence is regular semisimple, by Proposition \ref{prop_basic_properties_of_Coxeter_elements}, hence satisfies $\alpha(\phi(\delta)) \neq 1$, showing that $\alpha(\phi(\delta))$ is a primitive $t^\text{th}$ root of unity. Then $\alpha^{w^i}(\phi(\delta)) = \alpha(\phi(\delta))^{q^i}$, and these elements are distinct as $i = 0, 1, \dots, h-1$.
\end{proof}
\begin{lemma}\label{lem_Coxeter_homomorphisms_have_abundant_image}
Let $\phi : \Gamma \to \hG(k)$ be an abstract Coxeter homomorphism. Then $H^0(\Gamma, \widehat{\frg}_k) = H^0(\Gamma, \widehat{\frg}^\vee_k) = 0$ and for every simple  non-trivial $k[\Gamma]$-submodule $V \subset \widehat{\frg}_k^\vee$, there exists $\gamma \in \Gamma$ such that $\phi(\gamma)$ is regular semisimple with connected centralizer in $\hG_k$ and $V^\gamma \neq 0$. 
\end{lemma}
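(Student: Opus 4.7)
The plan is to exploit the explicit description of $\widehat{\frg}_k$ as a $k[\Gamma]$-module given by the preceding lemma on the adjoint representation. Under our running hypotheses on $l$, the characteristic is very good for $\hG$, so there is a non-degenerate $\hG$-invariant symmetric bilinear form on $\widehat{\frg}_k$; this yields an isomorphism $\widehat{\frg}_k \cong \widehat{\frg}_k^\vee$ of $\hG$-modules, hence of $k[\Gamma]$-modules. Thus it will suffice to prove both assertions with $\widehat{\frg}_k$ in place of $\widehat{\frg}_k^\vee$. Moreover, the $\Gamma$-action on $\widehat{\frg}_k$ factors through the finite group $\phi^{\text{ad}}(\Gamma)$ of order $th$ coprime to $l$, so $\widehat{\frg}_k$ is a semisimple $k[\Gamma]$-module.

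For the vanishing of $H^0(\Gamma, \widehat{\frg}_k)$, I will use the decomposition $\widehat{\frg}_k = \frt \oplus \bigoplus_{\alpha \in \Phi(\hG_k, T)/\langle w\rangle} \Ind_\Delta^\Gamma \widehat{\frg}_\alpha$. The $\Gamma$-action on $\frt$ factors through $\langle w \rangle$, and $\frt^w = \Lie(\hT_k^w) = \Lie Z_{\hG_k}$ by part (iii) of the proposition on Coxeter elements; since $l$ is very good, inspection of the classification shows $l \nmid |Z_{\hG}|$, so $Z_{\hG_k}$ is \'etale and $\frt^\Gamma = 0$. For each induced summand, Frobenius reciprocity identifies $\Gamma$-invariants with $\widehat{\frg}_\alpha^\Delta$; but $\Delta$ acts on $\widehat{\frg}_\alpha$ through the character $\alpha \circ \phi^{\text{ad}}$, which sends a generator of $\phi^{\text{ad}}(\Delta) = \langle a \rangle$ to a primitive $t$-th root of unity (as observed in the proof of the adjoint-representation lemma), so these invariants vanish as well.

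For the second claim, let $V \subset \widehat{\frg}_k^\vee \cong \widehat{\frg}_k$ be a simple non-trivial submodule. By the adjoint-representation lemma, $V$ is isomorphic either to a one-dimensional $\langle w \rangle$-eigenspace in $\frt$ or to some irreducible $V_\alpha := \Ind_\Delta^\Gamma \widehat{\frg}_\alpha$, and since the dimension of $V^\gamma$ depends only on the isomorphism class, it suffices to carry out the argument for the corresponding summand. Connectedness of $Z_{\hG_k}(\phi(\gamma))$ is automatic since $\hG$ is simply connected (Steinberg's theorem). In the torus case, I will take any $\gamma \in \Delta$ with $\phi^{\text{ad}}(\gamma) \neq 1$: part (iv) of the Coxeter proposition then shows $\phi(\gamma)$ is regular semisimple, while $T$ acts trivially on $\frt$ by the adjoint action, so the whole one-dimensional piece is fixed by $\gamma$. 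In the remaining case, I will take $\gamma \in \Gamma$ mapping to the Coxeter element $w \in W$; such a $\gamma$ has $\phi(\gamma) \in N_{\hG}(T)(k)$ a lift of $w$, which is regular semisimple by part (iii) of the Coxeter proposition. Using $1, \gamma, \dots, \gamma^{h-1}$ as coset representatives of $\Delta$ in $\Gamma$, write $V_\alpha = \bigoplus_{i=0}^{h-1} \gamma^i \otimes \widehat{\frg}_\alpha$; then $\gamma$ acts as a cyclic shift, with wraparound $\gamma \cdot (\gamma^{h-1} \otimes v) = 1 \otimes (\gamma^h \cdot v)$. The crux is that $\phi^{\text{ad}}(\gamma^h) = \dot{w}^h$ lies in $Z_{\hG_k^{\text{ad}}} = \{1\}$, so $\phi(\gamma^h) \in Z_{\hG}(k)$; since every root is trivial on the centre, $\gamma^h$ acts trivially on $\widehat{\frg}_\alpha$, and $\sum_{i=0}^{h-1} \gamma^i \otimes v$ is a nonzero $\gamma$-invariant for any $v \in \widehat{\frg}_\alpha \setminus \{0\}$. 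The only nontrivial step is this cyclic computation, which is short.
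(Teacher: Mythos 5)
Your proof is correct and follows essentially the same route as the paper: decompose $\widehat{\frg}_k$ via the adjoint-representation lemma, use self-duality of $\widehat{\frg}_k$ in very good characteristic, take $\gamma\in\Delta$ for constituents of $\frt$ and $\gamma$ projecting to the Coxeter element for the induced summands, with the invariant vector in the latter case coming from the fact that $\dot w^h$ is central. Your explicit cyclic-shift computation in $\Ind_\Delta^\Gamma\widehat{\frg}_\alpha$ is precisely the content the paper compresses into its citation of parts (ii) and (iii) of the proposition on Coxeter elements.
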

\begin{proof}
The vanishing follows easily from Lemma \ref{lem_adjoint_representation_of_abstract_Coxeter_homomorphism}. To show the second part, we again apply Lemma \ref{lem_adjoint_representation_of_abstract_Coxeter_homomorphism}, which tells us what the possible choices for $V$ are. Under our assumptions the $k[\Gamma]$-module $\widehat{\frg}_k$ is self-dual, so it is enough to show that for any simple $k[\Gamma]$-module $V \subset \widehat{\frg}_k$, there exists $\gamma \in \Gamma$ such that  $\phi(\gamma)$ is regular semisimple and $V^\gamma \neq 0$. If $V \subset \frt$, then we can choose $\gamma \in \Delta$. If $V \subset  \oplus_{\alpha \in \Phi(\hG_k, T) / w } \Ind_{\Delta}^\Gamma \widehat{\frg}_\alpha$, then we can choose $\gamma$ so that $\phi(\gamma)$ projects to a Coxeter element. It follows from the second and third parts of Proposition \ref{prop_basic_properties_of_Coxeter_elements} that $\phi(\gamma)$ has non-trivial invariants in $V$.
\end{proof}
\begin{proposition}\label{prop_basic_properties_of_abstract_Coxeter_homomorphisms}
Let $\phi : \Gamma \to \hG(k)$ be an abstract Coxeter homomorphism.
\begin{enumerate}
\item $\phi$ is $\hG$-irreducible and $Z_{\hG^\text{ad}_k}(\phi(\Gamma))$ is scheme-theoretically trivial.
\item  There is exactly one maximal torus $T \subset \hG_k$ such that $\phi(\Gamma) \subset N_{\hG_k}(T)$.
\item Suppose that $\psi : \Gamma \to \hG(k)$ is another homomorphism, and for all $\gamma \in \Gamma$, the elements $\phi(\gamma)$ and $\psi(\gamma)$ of $\hG(k)$ have the same image in $(\hG \dquot \hG)(k)$. Then $\phi$ and $\psi$ are $\hG(k)$-conjugate. 
\end{enumerate}
\end{proposition}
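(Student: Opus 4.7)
The plan is to establish the three assertions in order, using the explicit decompositions from Lemmas \ref{lem_adjoint_representation_of_abstract_Coxeter_homomorphism} and \ref{lem_Coxeter_homomorphisms_have_abundant_image} together with the invariant-theoretic foundations of \S \ref{sec_invariants_over_a_field}.

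For the first assertion in (i), I first argue that $\phi$ is $\hG$-completely reducible: the decomposition of $\widehat{\frg}_k$ in Lemma \ref{lem_adjoint_representation_of_abstract_Coxeter_homomorphism} exhibits the adjoint module as a semisimple $k[\Gamma]$-module with multiplicity-free isotypic components, so by the complete reducibility criterion recalled at the start of \S \ref{sec_class_of_universally_automorphic_galois_representations} (applicable because our hypotheses $l > 2h-2$ and $l \nmid \#W$ force $l$ to be very good and sufficiently large), $\phi$ is $\hG$-completely reducible. For $\hG$-irreducibility, I suppose for contradiction that $\phi(\Gamma)$ lies in a proper parabolic $P$; by complete reducibility and Proposition \ref{prop_contraction_to_levi_quotient}, we may conjugate so that $\phi(\Gamma)$ lies in a Levi $L$ of $P$. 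Then the nilpotent radical $\widehat{\frn}$ of $\Lie P$ is a nonzero $\phi(\Gamma)$-submodule consisting of ad-nilpotent elements. Since the Cartan summand $\widehat{\frt}$ in Lemma \ref{lem_adjoint_representation_of_abstract_Coxeter_homomorphism} consists of semisimple elements, $\widehat{\frn} \cap \widehat{\frt} = 0$, and by multiplicity-freeness of the isotypic decomposition $\widehat{\frn}$ must equal $\bigoplus_{\alpha \in S} V_\alpha$ for a $w$-stable subset $S$ of $\Phi(\hG_k, T)$. A nilpotent Lie subalgebra of $\widehat{\frg}_k$ spanned by root spaces is supported in an open half-space of $X^\ast(T)_\bbR$; but each $w$-orbit in $\Phi$ has root-sum zero (as $w$ has no fixed vector on $\widehat{\frt}_\bbR$, by Proposition \ref{prop_basic_properties_of_Coxeter_elements}(iii)), so no nonempty union of $w$-orbits fits in any open half-space. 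Hence $S = \emptyset$, $\widehat{\frn} = 0$, and $P = \hG$, a contradiction.

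The centralizer claim in (i) and the uniqueness in (ii) both pivot on the regular semisimple element $v \in T^\text{ad}(k) \cap \phi^\text{ad}(\Gamma)$ of order $t$. By Lemma \ref{lem_Coxeter_homomorphisms_have_abundant_image}, $H^0(\Gamma, \widehat{\frg}_k) = 0$, so $Z_{\hG^\text{ad}_k}(\phi(\Gamma))$ has trivial Lie algebra; Theorem \ref{thm_all_subgroups_are_separable} makes it smooth in very good characteristic, hence étale. A $k$-point $g$ of this centralizer commutes with $v$, which is regular semisimple in $\hG^\text{ad}_k$ by Proposition \ref{prop_basic_properties_of_Coxeter_elements}(iv), so $g \in T^\text{ad}(k)$; commuting further with a lift $\dot{w}$ of the Coxeter element forces $g \in T^\text{ad}(k)^w$, and an explicit analysis of the cokernel of $1-w$ on the root lattice $X^\ast(T^\text{ad})$ using the Coxeter structure yields $g = 1$. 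For (ii), suppose $T'$ is another maximal torus satisfying Definition \ref{def_abstract_Coxeter_homomorphism}. Lemma \ref{lem_adjoint_representation_of_abstract_Coxeter_homomorphism} identifies $\phi^\text{ad}(\Gamma)$ with $\bbZ/t\bbZ \rtimes \bbZ/h\bbZ$, and $t \equiv 1 \pmod h$ gives $t > h$, so Sylow's theorem guarantees a unique subgroup of order $t$, namely the normal cyclic factor. Both $\phi^\text{ad}(\Gamma) \cap T^\text{ad}(k)$ and $\phi^\text{ad}(\Gamma) \cap T'^\text{ad}(k)$ are cyclic of order $t$ and hence equal to this unique Sylow; their common generator is regular semisimple, and its centralizer is a unique maximal torus, forcing $T^\text{ad} = T'^\text{ad}$ and therefore $T = T'$.

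For (iii), I take the semisimplification $\psi^\text{ss}$ of $\psi$ as in Proposition \ref{prop_contraction_to_levi_quotient}, factoring through a minimal parabolic $P \supset \psi(\Gamma)$ and a Levi $L \subset P$. The construction preserves the pseudocharacter (as both $\psi$ and $\psi^\text{ss}$ correspond to the same point of $G^n \dquot G$), so $\psi^\text{ss}$, $\psi$, and $\phi$ share the same $\hG$-pseudocharacter. Theorem \ref{thm_pseudocharacters_biject_with_representations_over_fields} says that among $\hG$-completely reducible homomorphisms the pseudocharacter determines the $\hG(k)$-conjugacy class, so $\psi^\text{ss}$ is $\hG(k)$-conjugate to $\phi$; in particular $\psi^\text{ss}$ is $\hG$-irreducible by part (i). But $\psi^\text{ss}(\Gamma) \subset L$, so $\hG$-irreducibility forces $L = \hG$, whence $P = \hG$, and $\psi$ itself is $\hG$-irreducible; consequently $\psi$ is strictly equivalent to $\psi^\text{ss}$, and thus $\hG(k)$-conjugate to $\phi$. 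The main obstacle is the centralizer computation in (i): the reduction to $T^\text{ad}(k)^w$ is immediate from the regularity of $v$ and the Coxeter element, but concluding triviality at the level of $k$-points requires the explicit root-lattice computation for $1-w$, and is where the very good characteristic and simple-connectedness of $\hG$ play their decisive role.
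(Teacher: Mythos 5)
Your argument for part (iii) has a genuine gap. The hypothesis is only that for each \emph{single} $\gamma$ the elements $\phi(\gamma)$ and $\psi(\gamma)$ have the same image in $(\hG \dquot \hG)(k)$, i.e.\ $f(\phi(\gamma)) = f(\psi(\gamma))$ for all $f \in k[\hG]^{\hG}$. The $\hG$-pseudocharacter records the data $f(\rho(\gamma_1), \dots, \rho(\gamma_n))$ for all $n$-tuples and all $f \in k[\hG^n]^{\hG}$, which for $n \geq 2$ is strictly more information; so your claim that ``$\psi^{\text{ss}}$, $\psi$, and $\phi$ share the same $\hG$-pseudocharacter'' does not follow from the hypothesis, and Theorem \ref{thm_pseudocharacters_biject_with_representations_over_fields} cannot be invoked. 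This is not a technicality: if the $n=1$ data determined the conjugacy class of a completely reducible representation, the notion of strong $\hG$-irreducibility (Definition \ref{def_irreducibility_and_strong_irreducibility}(ii)) would be vacuous, and the whole point of this proposition (via Corollary \ref{cor_strong_irreducibility_and_abundance_of_abstract_Coxeter_homomorphism}) is to establish that property for Coxeter homomorphisms. The paper's proof of (iii) is accordingly much more hands-on: it first matches the adjoint representations $\widehat{\frg}_k \circ \phi \cong \widehat{\frg}_k \circ \psi$ (characters of $\GL(\widehat{\frg}_k)$-valued representations \emph{are} determined by the $n=1$ data, via Brauer--Nesbitt), deduces that $\psi$ is $\hG$-irreducible with finite image consisting of semisimple elements and that $\ker\phi^{\text{ad}} = \ker\psi^{\text{ad}}$, then conjugates $\psi$ so that it agrees with $\phi$ on the order-$t$ generator $v$ and on a lift of $w$ (using Proposition \ref{prop_basic_properties_of_Coxeter_elements}(iv) to pin down the Weyl image), and finally rules out a residual twist by a character $\omega : \Gamma \to Z_{\hG}(k)$ using that $t$ is prime to $\# Z_{\hG}(k)$. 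You need some version of this argument; the pseudocharacter shortcut is not available.

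Parts (i) and (ii) are essentially right but have smaller issues. In (i), your root-combinatorial proof of irreducibility silently assumes the decomposition of Lemma \ref{lem_adjoint_representation_of_abstract_Coxeter_homomorphism} is multiplicity-free (so that $\widehat{\frn}$ is forced to be a union of full $w$-orbits of root spaces); the lemma only asserts irreducibility of each summand, not pairwise non-isomorphism across orbits. The paper avoids this entirely: since $\phi(\Gamma)$ is completely reducible, failure of irreducibility would make it centralize a non-trivial torus, so irreducibility follows at once from $Z_{\hG_k}(\phi(\Gamma)) = Z_{\hG_k}$, which is proved by noting that a centralizing element commutes with a regular semisimple element of $T(k)$ (hence lies in $T$, using simple-connectedness of $\hG$ so that this centralizer is connected --- your passage through $T^{\text{ad}}$ is risky, as centralizers of semisimple elements in the adjoint group can be disconnected) and with a lift of $w$, hence lies in $T^w = Z_{\hG_k}$. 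In (ii), you assume $T'$ ``satisfies Definition \ref{def_abstract_Coxeter_homomorphism}'', but the hypothesis is only $\phi(\Gamma) \subset N_{\hG_k}(T')$; the missing step is that $\phi^{\text{ad}}(\Delta)$, having $t$-power order with $t \nmid \# W$, must map trivially to $W(\hG_k, T')$ and hence land in $T'$, after which $T' = Z_{\hG_k}(\phi^{\text{ad}}(\Delta)) = T$.
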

\begin{proof}
Since $\phi(\Gamma)$ has order prime to the characteristic of $k$, it is $\hG$-completely reducible. In particular, if it is not $\hG$-irreducible then it is contained in a Levi subgroup of a proper parabolic of $\hG_k$, so centralizes a non-trivial torus. To show the first part, it is therefore enough to show that $Z_{\hG^\text{ad}_k}(\phi(\Gamma))$ is scheme-theoretically trivial, or even that $Z_{\hG_k}(\phi(\Gamma))$ is equal to $Z_{\hG_k}$. (We note that this really depends on the fact that $\hG$ is simply connected, and would be false in general otherwise, as one sees already by considering the example $\hG = \PGL_2$.)

Let $T$ be a maximal torus of $\hG_k$ and $w \in W(\hG_k, T)$ a Coxeter element as in the definition of abstract Coxeter homomorphism. Since $\hG_k$ is simply connected, the centralizer of a regular semisimple element of $T$ is $T$ itself \cite[Ch. 2]{Hum95}. The definition of Coxeter homomorphism shows that the centralizer of $\phi(\Gamma)$ is therefore contained inside $T^w$. By Proposition \ref{prop_basic_properties_of_Coxeter_elements}, we have $T^w = Z_{\hG_k}$, and this group is \'etale over $k$ (because we work in very good characteristic). This shows the first part of the proposition.

For the second part, suppose that $T'$ is another maximal torus such that $\phi(\Gamma) \subset N_{\hG_k}(T')$, and let $\Delta = (\phi^\text{ad})^{-1}(T(k))$. Then $\phi^\text{ad}(\Delta)$ has trivial projection to $W(\hG_k, T')$ (because it has $t$-power order, and $t$ does not divide the order of the Weyl group). It follows that $\phi^\text{ad}(\Delta) \subset T'$, hence $T = Z_{\hG_k}(\phi^\text{ad}(\Delta)) = T'$.

For the third part, we observe that the given condition means that for all $\gamma \in \Gamma$, $\phi(\gamma)$ and $\psi(\gamma)$ have the same semisimple part, up to $\hG(k)$-conjuation. Let $\widehat{\frg}_k \circ \phi$ and $\widehat{\frg}_k \circ \psi$ denote the two $k[\Gamma]$-modules coming from the adjoint representation of $\hG$. They have the same character, so they are isomorphic (up to semisimplification). This implies that $\psi$ is also $\hG$-irreducible. Indeed, if not then we can replace $\psi$ by its semisimplification to obtain a $\hG$-completely reducible representation which centralizes a non-trivial torus; this would imply that $\widehat{\frg}_k \circ \psi \cong \widehat{\frg}_k \circ \phi$ contains a non-trivial subspace on which $\Gamma$ acts trivially, which is not the case. It follows (\cite[Corollaire 5.5]{Ser05}, and the assumptions at the beginning of \S \ref{sec_abstract_coxeter_parameters}) that $\widehat{\frg}_k \circ \psi$ is a semisimple $k[\Gamma]$-module, hence there is an isomorphism $\widehat{\frg}_k \circ \phi \cong \widehat{\frg}_k \circ \psi$. This shows that $\psi(\Gamma)$ has finite order prime to the characteristic of $k$. In particular, every element of the image of $\psi$ is semisimple, so we find that for all $\gamma \in \Gamma$, $\phi(\gamma)$ and $\psi(\gamma)$ are $\hG(k)$-conjugate, hence $\ker \phi = \ker \psi$ and $\ker \phi^\text{ad} = \ker \psi^\text{ad}$.

Let $H = \phi(\Gamma)$, $H_0 = \phi^\text{ad}(\Gamma)$. We identify both $\phi$ and $\psi$ with homomorphisms $H \to \hG(k)$. Let $g \in H$ be an element that maps to a generator of $H_0 \cap T^\text{ad}(k)$, a 1-dimensional $\bbF_t$-vector space. After replacing $\psi$ by a $\hG(k)$-conjugate, and $g$ by a power, we can assume that $\phi(g) = \psi(g) \in T(k)[t]$. In particular, $g \in H$ has order $t$. Let $v = \phi(g)$. Let $g' \in H$ be a pre-image of $w \in W(\hG_k, T)$. Then there exists a primitive $h^\text{th}$ root of unity $q \in \bbF_t^\times$ such that $w v w^{-1} = v^q$, hence $g' g (g')^{-1} = g^q$. 

We have $\psi(g') \in N_{\hG_k}(T)(k)$. Let $w'$ denote the image of $\psi(g')$ in $W(\hG_k, T)$. Then we have $w' v (w')^{-1} = \psi(g^q) = \psi(g)^q = v^q$, so Proposition \ref{prop_basic_properties_of_Coxeter_elements} implies that $w' = w$. After replacing $\psi$ by a $T(k)$-conjugate, we can therefore assume that $\psi(g') = \phi(g')$, without disturbing our assumption that $\psi(g) = \phi(g)$. Since $g, g'$ generate $H_0$, this shows that $\phi^\text{ad} = \psi^\text{ad}$ (equality, not just isomorphism). It follows that there exists a character $\omega : H \to Z_{\hG}(k)$ such that for all $x \in H$, we have $\phi(x) = \omega(x) \psi(x)$. To finish the proof, we must show that $\omega = 1$. However, the elements $\phi(x)$, $\psi(x)$ are $\hG(k)$-conjugate, so for any $x \in H$ we can find $w_x \in W(\hG_k, T)$ such that $\psi(x) = \omega(x)^{-1}  \phi(x)  = \phi(x)^{w_x}$, hence $\omega(x) = \phi(x)^{1 - w_x}$. If $\phi(x) \in T(k)$, then $\phi(x)^{1 - w_x}$ has order $t$, implying that $\omega(x) = 1$ (since $t$ is prime to the order of $Z_{\hG}(k)$). To complete the proof, we now just need to observe that $\phi(g') = \psi(g')$, and $H$ is generated by $g'$, together with the subgroup $\phi(\Gamma) \cap T(k)$.
\end{proof}
\begin{corollary}\label{cor_strong_irreducibility_and_abundance_of_abstract_Coxeter_homomorphism}
Let $\phi : \Gamma \to \hG(k)$ be an abstract Coxeter homomorphism. Then $\phi$ is strongly $\hG$-irreducible and $\phi(\Gamma) \subset \hG(k)$ is a $\hG$-abundant subgroup.
\end{corollary}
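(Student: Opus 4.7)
The plan is to derive both conclusions almost as bookkeeping from the two preceding results (Proposition \ref{prop_basic_properties_of_abstract_Coxeter_homomorphisms} and Lemma \ref{lem_Coxeter_homomorphisms_have_abundant_image}), with essentially one genuinely new observation needed: that the order of $H = \phi(\Gamma)$ is prime to $l = \text{char }k$.

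First, for strong $\hG$-irreducibility I would simply match the hypothesis of Definition \ref{def_irreducibility_and_strong_irreducibility}(ii) to Proposition \ref{prop_basic_properties_of_abstract_Coxeter_homomorphisms}(iii): a homomorphism $\psi : \Gamma \to \hG(k)$ with $f(\phi(\gamma)) = f(\psi(\gamma))$ for every $\gamma \in \Gamma$ and every $f \in k[\hG]^\hG$ is exactly one for which $\phi(\gamma)$ and $\psi(\gamma)$ have the same image in $(\hG \dquot \hG)(k)$ for all $\gamma$. That proposition then gives $\hG(k)$-conjugacy of $\psi$ with $\phi$, and combined with part (i) of the same proposition (absolute $\hG$-irreducibility of $\phi$) this forces $\psi$ to be absolutely $\hG$-irreducible.

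For $\hG$-abundance, I would verify the two clauses of Definition \ref{def_abundant_subgroups} directly. The vanishing of $H^0(H, \widehat{\frg}_k)$ and $H^0(H, \widehat{\frg}^\vee_k)$, as well as the existence of a regular semisimple $h \in H$ with $W^h \neq 0$ for each simple $k[H]$-submodule $W \subset \widehat{\frg}^\vee_k$, are provided by Lemma \ref{lem_Coxeter_homomorphisms_have_abundant_image} (the $H^0$-vanishing ensures that ``simple submodule'' and ``non-trivial simple submodule'' coincide, so the lemma applies to every simple submodule). Several remaining items are automatic: $Z_{\hG_k}(h)^\circ$ is a maximal torus of $\hG_k$ and hence split since $k$ is algebraically closed; and $Z_{\hG}(h)$ is connected because $\hG$ is simply connected (as parenthetically noted in Definition \ref{def_abundant_subgroups}). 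The only outstanding points are the vanishing of $H^1(H, \widehat{\frg}^\vee_k)$ and of $H^1(H, k)$.

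Both of these last vanishings reduce to showing $|H|$ is coprime to $l$, after which they follow from the exactness of $H$-invariants on $k[H]$-modules. This is the one step requiring a small check, but it is not a real obstacle. By Lemma \ref{lem_adjoint_representation_of_abstract_Coxeter_homomorphism}(i), the adjoint image $\phi^{\text{ad}}(\Gamma)$ has order $th$, where $t$ is coprime to $l$ by Definition \ref{def_abstract_Coxeter_homomorphism}(ii) and $h \mid \#W$ is coprime to $l$ by the running assumption at the start of \S \ref{sec_abstract_coxeter_parameters}. The kernel of $\phi(\Gamma) \twoheadrightarrow \phi^{\text{ad}}(\Gamma)$ embeds in $Z_{\hG}(k)$, and inspection of the table of very good characteristics in \S \ref{sec_invariants_over_a_field} shows that for $\hG$ simply connected and simple, $|Z_{\hG}(k)|$ divides the discriminant-type constant $(n+1, 2, 3, 1, \ldots)$ already excluded by ``$l$ very good''. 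Hence $|H|$ is prime to $l$, Maschke's theorem applies, and all the required cohomology vanishes. This completes both parts of the corollary.
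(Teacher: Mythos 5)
Your proof is correct and follows the same route as the paper, whose entire argument is the two citations you elaborate: strong irreducibility from Proposition \ref{prop_basic_properties_of_abstract_Coxeter_homomorphisms}(i),(iii) and abundance from Lemma \ref{lem_Coxeter_homomorphisms_have_abundant_image}. The one point the paper leaves implicit --- that the $H^1$ vanishings in Definition \ref{def_abundant_subgroups} need $\#\phi(\Gamma)$ prime to $l$, which you deduce from $\#\phi^{\text{ad}}(\Gamma)=th$ together with $l\nmid \#Z_{\hG}(k)$ in very good characteristic --- is exactly the right supplementary check, and your verification of it is sound.
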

\begin{proof}
The $\hG$-abundance follows from Lemma \ref{lem_Coxeter_homomorphisms_have_abundant_image}, and the strong irreducibility follows from Proposition \ref{prop_basic_properties_of_abstract_Coxeter_homomorphisms}.
\end{proof}
\subsection{Galois Coxeter parameters}\label{sec_galois_coxeter_parameters}

We now continue with the assumptions of \S \ref{sec_abstract_coxeter_parameters}, and specify a particular choice of $\Gamma$, as follows: we take $X$ to be a smooth, projective, geometrically connected curve over $\bbF_q$ of characteristic $p$, $K = \bbF_q(X)$, and $\Gamma = \Gamma_{K, S}$, where $S$ is a finite set of places of $K$. 
\begin{definition}\label{def_Galois_Coxeter_homomorphism}
Let $k$ be an algebraically closed field of characteristic 0 or $l > 2h - 2$. A Coxeter homomorphism over $k$ is a homomorphism $\phi : \Gamma_{K, S} \to \hG(k)$ which is an abstract Coxeter homomorphism, in the sense of Definition \ref{def_abstract_Coxeter_homomorphism}, and which is continuous, when $\hG(k)$ is endowed with the discrete topology.
\end{definition}
\begin{proposition}\label{prop_basic_properties_of_Galois_Coxeter_homomorphisms}
Let $\phi : \Gamma_{K, S} \to \hG(\overline{\bbQ})$ be a Coxeter homomorphism, and let $\lambda$ be a place of $\overline{\bbQ}$ of residue characteristic $l \neq p$. Let $\phi_\lambda$ be the composite of $\phi$ with the inclusion $\hG(\overline{\bbQ}) \subset \hG(\overline{\bbQ}_\lambda)$. Then:
\begin{enumerate}
\item  If $\psi : \Gamma_{K, S} \to \hG(\overline{\bbQ}_\lambda)$ is a continuous homomorphism such that for every place $v \not\in S$ of $K$, $\phi(\Frob_v)$ and $\psi(\Frob_v)$ have the same image in $(\hG \dquot \hG)(\overline{\bbQ}_\lambda)$, then $\phi$, $\psi$ are $\hG(\overline{\bbQ}_\lambda)$-conjugate.
\item If $l$ is prime to $ \# \phi(\Gamma_{K, S})$ and $l > 2 h - 2$, then the residual representation $\overline{\phi}_\lambda : \Gamma_K \to \hG(\overline{\bbF}_l)$ is a Coxeter homomorphism. In particular, it is strongly $\hG$-irreducible and $\overline{\phi}_\lambda(\Gamma_K)$ is a $\hG$-abundant subgroup of $\hG(\overline{\bbF}_l)$.
\item Let $L / K$ denote the extension cut out by $\phi^\text{ad}$, and let $K' / K$ be a finite separable extension linearly disjoint from $L$. Then $\phi|_{\Gamma_{K'}}$ is a Coxeter homomorphism. 
\end{enumerate}
\end{proposition}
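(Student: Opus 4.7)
The plan is to dispatch each part by a short appeal to the structural results already established earlier in this section: part (i) via Chebotarev plus the rigidity statement Proposition \ref{prop_basic_properties_of_abstract_Coxeter_homomorphisms}(iii); part (ii) via Lemma \ref{lem_reduction_modulo_l_of_abstract_Coxeter_homomorphism} together with Corollary \ref{cor_strong_irreducibility_and_abundance_of_abstract_Coxeter_homomorphism}; and part (iii) by observing that linear disjointness leaves the image of $\phi^\text{ad}$ untouched.

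For (i), I would first observe that $\phi$ has finite image: by Lemma \ref{lem_adjoint_representation_of_abstract_Coxeter_homomorphism}(i), the image of $\phi^\text{ad}$ is a subgroup of the finite group $\bbZ/t\bbZ \rtimes \bbZ/h\bbZ$, while $\ker \phi^\text{ad}$ maps under $\phi$ into $Z_{\hG}(\overline{\bbQ})$, which is finite since $\hG$ is simply connected. In particular $\phi_\lambda$ is continuous with respect to the $l$-adic topology on $\hG(\overline{\bbQ}_\lambda)$. For each $f \in \bbZ[\hG]^{\hG}$, the two continuous maps $f \circ \phi_\lambda$ and $f \circ \psi$ from $\Gamma_{K,S}$ to $\overline{\bbQ}_\lambda$ coincide on the set of Frobenius elements $\Frob_v$, $v \notin S$, which is dense in $\Gamma_{K,S}$ by Corollary \ref{cor_equivalence_of_representations_with_similar_trace}(i), so they agree everywhere. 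Hence $\phi_\lambda(\gamma)$ and $\psi(\gamma)$ have the same image in $(\hG \dquot \hG)(\overline{\bbQ}_\lambda)$ for every $\gamma \in \Gamma_{K,S}$, and Proposition \ref{prop_basic_properties_of_abstract_Coxeter_homomorphisms}(iii) then yields the required $\hG(\overline{\bbQ}_\lambda)$-conjugacy.

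For (ii), the hypotheses are precisely those of Lemma \ref{lem_reduction_modulo_l_of_abstract_Coxeter_homomorphism}, which produces an abstract Coxeter homomorphism $\overline{\phi}_\lambda : \Gamma_{K, S} \to \hG(\overline{\bbF}_l)$ in the sense of Definition \ref{def_abstract_Coxeter_homomorphism}. Continuity for the discrete topology is automatic, since $\overline{\phi}_\lambda$ factors through the same finite quotient as $\phi$, and so the residual representation is a Coxeter homomorphism in the sense of Definition \ref{def_Galois_Coxeter_homomorphism}. Strong $\hG$-irreducibility and $\hG$-abundance of the image are then an immediate application of Corollary \ref{cor_strong_irreducibility_and_abundance_of_abstract_Coxeter_homomorphism}.

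For (iii), the key observation is that the linear disjointness of $K'/K$ and $L/K$ makes the restriction map $\Gal(L \cdot K'/K') \to \Gal(L/K)$ an isomorphism, so $\phi^\text{ad}(\Gamma_{K'}) = \phi^\text{ad}(\Gamma_K)$ as subgroups of $\hG^\text{ad}(\overline{\bbQ})$. Both defining conditions of Definition \ref{def_abstract_Coxeter_homomorphism}, namely the image in $W(\hG_{\overline{\bbQ}}, T)$ being generated by a Coxeter element and the intersection $\phi^\text{ad}(\cdot) \cap T^\text{ad}(\overline{\bbQ})$ being the given cyclic group of order $t$ with the prescribed conjugation relation, are determined solely by $\phi^\text{ad}$ of the domain, so they transfer intact from $\Gamma_{K,S}$ to $\Gamma_{K'}$. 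Continuity is preserved under restriction. No step in the three arguments presents a serious obstacle; the only substantive content is the Chebotarev-plus-rigidity deduction in (i).
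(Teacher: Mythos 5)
Your proposal is correct and follows essentially the same route as the paper: part (i) by density of Frobenius elements plus Proposition \ref{prop_basic_properties_of_abstract_Coxeter_homomorphisms}(iii), part (ii) by Lemma \ref{lem_reduction_modulo_l_of_abstract_Coxeter_homomorphism} and Corollary \ref{cor_strong_irreducibility_and_abundance_of_abstract_Coxeter_homomorphism}, and part (iii) by showing $\phi^\text{ad}(\Gamma_{K'}) = \phi^\text{ad}(\Gamma_K)$ via linear disjointness. The extra remarks you add (finiteness of the image of $\phi$, continuity of $\overline{\phi}_\lambda$) are correct and only make explicit what the paper leaves implicit.
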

\begin{proof}
The representations $\phi$, $\psi$ determine continuous maps $\Gamma_{K, S} \to (\hG \dquot \hG)(\overline{\bbQ}_\lambda)$ which agree on the set of Frobenius elements. This set is dense in $\Gamma_{K, S}$, by Corollary \ref{cor_equivalence_of_representations_with_similar_trace}, so the first part of the proposition follows from Proposition \ref{prop_basic_properties_of_abstract_Coxeter_homomorphisms}. The second part follows from Lemma \ref{lem_reduction_modulo_l_of_abstract_Coxeter_homomorphism} and Corollary \ref{cor_strong_irreducibility_and_abundance_of_abstract_Coxeter_homomorphism}. For the third part, it is enough to show that $\phi^\text{ad}(\Gamma_{K'}) = \phi^\text{ad}(\Gamma_K)$. This follows immediately from our hypothesis.
\end{proof}
We now come to the most important result of this section. Let $G$ denote a split reductive group over $K$, and let us suppose that $\hG$ is in fact the dual group of $G$. We recall that we are assuming that $\hG$ is simply connected, so this implies that $G$ is in fact an adjoint group.
\begin{theorem}\label{thm_automorphy_of_Coxeter_homomorphisms}
Let $\phi : \Gamma_{K, \emptyset} \to \hG(\overline{\bbQ})$ be a Coxeter homomorphism, and let $\lambda$ be a place of $\overline{\bbQ}$ of residue characteristic $l \neq p$. Suppose that $\phi(\Gamma_{K \cdot \overline{\bbF}_q})$ is contained in a conjugate of $\hT(\overline{\bbQ})$. Then there exists a cuspidal automorphic representation $\pi$ over $\overline{\bbQ}_\lambda$ such that $\pi^{G(\widehat{\cO}_K)} \neq 0$ and which corresponds everywhere locally to $\phi|_{W_{K_v}}$ (under the unramified local Langlands correspondence). In other words, for every irreducible representation $V$ of $\hG_{\overline{\bbQ}_\lambda}$, and for every place $v$ of $K$, we have the relation
\[ \chi_V( \phi_\lambda(\Frob_v) ) = \text{eigenvalue of }T_{V, v} \text{ on }\pi^{G(\widehat{\cO}_K)}. \]
\end{theorem}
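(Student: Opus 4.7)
The plan is to identify $\phi$ with a Hecke character of an elliptic $K$-torus and invoke the work of Braverman--Gaitsgory \cite{Bra02} on geometric Eisenstein series to produce the sought cuspidal automorphic representation.

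First, I would unpack the data of $\phi$. Let $\Delta = (\phi^\text{ad})^{-1}(\hT^\text{ad}(\overline{\bbQ}))$, after conjugating so that the torus appearing in Definition \ref{def_abstract_Coxeter_homomorphism} is $\hT$. By Lemma \ref{lem_adjoint_representation_of_abstract_Coxeter_homomorphism}, $\Delta$ has index $h$ in $\Gamma_{K,\emptyset}$ and the quotient is cyclic, generated by a Coxeter element $w$. The hypothesis $\phi(\Gamma_{K \cdot \overline{\bbF}_q}) \subset \hT$, together with abelianness of $\Gamma_K/\Gamma_{K \cdot \overline{\bbF}_q} \cong \widehat{\bbZ}$, forces $\Delta = \Gamma_{K'}$ for $K' = K \cdot \bbF_{q^h}$. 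Dualising the $w$-twisted action of $\Gal(K'/K)$ on $X^*(\hT)$ produces a $K$-torus $T_\phi$ which splits over $K'$ and is anisotropic modulo the centre of $G$ (because a Coxeter element has no nonzero fixed vectors outside the centre, by Proposition \ref{prop_basic_properties_of_Coxeter_elements}). An equivariant version of Lemma \ref{lem_local_langlands_for_split_tori} then identifies $\phi|_{\Gamma_{K'}}$ with a continuous unramified Hecke character $\chi : T_\phi(K) \backslash T_\phi(\bbA_K) / T_\phi(\widehat{\cO}_K) \to \overline{\bbQ}_\lambda^\times$.

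Second, I would apply the geometric Eisenstein construction. The character $\chi$ corresponds, via geometric class field theory for the torus $T_\phi$, to a rank-one Kummer local system on $\mathrm{Bun}_{T_\phi}$; applying the geometric Eisenstein functor $\Eis$ of \cite{Bra02} yields a Hecke eigensheaf on $\mathrm{Bun}_G$. Taking Frobenius trace functions produces an unramified function $f_\chi$ on $G(K) \backslash G(\bbA_K) / G(\widehat{\cO}_K)$ which, by the Hecke compatibility property of $\Eis$, satisfies $T_{V,v} \cdot f_\chi = \chi_V(\phi_\lambda(\Frob_v)) \, f_\chi$ for every irreducible representation $V$ of $\hG_{\overline{\bbQ}_\lambda}$ and every place $v$ of $K$. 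The ellipticity of $T_\phi$ modulo centre ensures that $f_\chi$ is compactly supported.

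Third, I would establish cuspidality. For any proper standard parabolic $P = MN \subsetneq G$, compatibility of Eisenstein series with parabolic descent would express the constant term $(f_\chi)_P$ as a geometric Eisenstein series on $M$ attached to the image of $T_\phi$; non-vanishing would force that image to be conjugate into $\hM \subsetneq \hG$, contradicting the $\hG$-irreducibility of $\phi$ established in Proposition \ref{prop_basic_properties_of_abstract_Coxeter_homomorphisms}. Hence $(f_\chi)_P = 0$ for every proper $P$, so $f_\chi$ is cuspidal, and the irreducible $\overline{\bbQ}_\lambda$-automorphic representation $\pi$ it generates satisfies $\pi^{G(\widehat{\cO}_K)} \neq 0$ and has the prescribed unramified local parameters, as required.

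The main obstacle is passing from the sheaf-theoretic output of \cite{Bra02} to a classical, compactly supported cuspidal automorphic function with exact control of Satake parameters at \emph{every} place, including places of $K$ that are inert in $K'/K$. Verifying that Frobenius trace functions of the Braverman--Gaitsgory eigensheaf lie in the cuspidal spectrum, that the Hecke eigenvalue formula holds uniformly across all places, and that the Hecke-equivariance of $\Eis$ survives the Frobenius descent from the split cover $K'$ to $K$ (so that a non-split elliptic torus can be treated as input, rather than only a Levi of a parabolic), is where the bulk of the technical work is concentrated; this is precisely the content announced in Lemma \ref{lem_Coxeter_parameters_rational_over_Z_l}.
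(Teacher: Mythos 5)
Your proposal follows essentially the same route as the paper: the paper's proof simply cites \cite[Theorem 2.2.8]{Bra02} for the existence of a spherical function with the prescribed Satake parameters (noting the reduction to simply connected derived group via $z$-extensions) and attributes cuspidality to an unpublished argument of Gaitsgory using a geometrization of the constant term together with the $\hG$-irreducibility of $\phi$ — exactly the two mechanisms you describe. Your sketch fills in the intended content of that citation (the elliptic torus $T_\phi$, the Hecke character, the eigensheaf and its trace function), and your honest flagging of where the unverified technical work sits is accurate, though note that Lemma \ref{lem_Coxeter_parameters_rational_over_Z_l} is a consequence of this theorem rather than the place where that work is carried out.
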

\begin{proof}
\cite[Theorem 2.2.8]{Bra02} implies the existence of a spherical automorphic function 
\[ f : G(K) \backslash G(\bbA_K) / G(\widehat{\cO}_K) \to \overline{\bbQ}_\lambda\]
 with unramified Hecke eigenvalues which correspond to $\phi|_{W_{K_v}}$ under the unramified local Langlands correspondence. (It is assumed in this reference that $G$ is a reductive group with simply connected derived group; the general case can be reduced to this one using $z$-extensions \cite{Kot82}.) If we knew that this function was cuspidal and non-zero, then we could take $\pi$ to be the representation generated by $f$. These properties of the function $f$ are established in the two appendices to this paper by Gaitsgory. We note that the proof that $f$ is non-zero shows in fact that the first Whittaker coefficient is non-zero, implying that $\pi$ is in fact globally generic. 
\end{proof}
\begin{lemma}\label{lem_Coxeter_parameters_rational_over_Z_l}
Suppose given a prime $t \nmid p \# W$ such that $q \text{ mod }t$ has exact order $h$. Then we can find a Coxeter parameter $\phi : \Gamma_{K, S} \to \hG(\overline{\bbQ}
)$ with the following properties: 
\begin{enumerate}
\item $\phi(\Gamma_K) \subset \hG(\bbZ[\zeta_t])$.
\item For any prime-to-$p$ place $\lambda$ of $\overline{\bbQ}$, and for any finite separable extension $K' / K$, linearly disjoint from the extension of $K$ cut out by $\phi^\text{ad}$, and such that $\phi|_{\Gamma_{K'}}$ is everywhere unramified, the representation $\phi_\lambda|_{\Gamma_{K'}} : \Gamma_{K'} \to \hG(\overline{\bbQ}_\lambda)$ is automorphic in the sense of Definition \ref{def_automorphic_galois_representation}, being associated to a prime ideal $\frp$ of the excursion algebra $\cB(G(\widehat{\cO}_{K'}), \overline{\bbQ}_\lambda)$. 
\end{enumerate}
\end{lemma}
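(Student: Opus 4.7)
The plan is to construct $\phi$ as a homomorphism $\Gamma_{K,S} \to H := (\bbZ/t\bbZ) \rtimes \langle w\rangle \hookrightarrow N_{\hG}(\hT)(\bbZ[\zeta_t])$, where $w$ is a fixed Coxeter element, its lift $\dot w$ generates the quotient $\langle w\rangle \cong \bbZ/h\bbZ$, and the normal subgroup $\bbZ/t\bbZ$ is identified with the one-dimensional $\bbF_t$-vector space $\hT(\bbZ[\zeta_t])[t]^{w=q}$ supplied by Proposition~\ref{prop_basic_properties_of_Coxeter_elements}(iv). The projection $\phi\to \bbZ/h\bbZ$ is required to equal the arithmetic surjection $\chi_0 : \Gamma_K \twoheadrightarrow \widehat{\bbZ}\twoheadrightarrow \Gal(K_h/K)$, where $K_h := \bbF_{q^h}\cdot K$ is the constant-field extension. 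Property (i) will then hold by construction, and the hypothesis $\phi(\Gamma_{K\cdot\overline{\bbF}_q})\subset \hT(\overline{\bbQ})$ needed to apply Theorem~\ref{thm_automorphy_of_Coxeter_homomorphisms} is automatic, being the image of the $\bbZ/t$-part.

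The heart of the construction is producing a continuous character $\chi : \Gamma_{K_h,S_h}\to\bbF_t$ of exact order $t$ on which $\Gal(K_h/K)$ acts by the appropriate power of $q$. Since $t\nmid \#W$ forces $\gcd(t,h)=1$, the cyclic group cohomology $H^i(\bbZ/h\bbZ, \bbF_t(q))$ vanishes for $i \geq 1$, and the Lyndon--Hochschild--Serre spectral sequence yields an isomorphism
\[ H^1(\Gamma_{K,S}, \bbF_t(\chi_0)) \cong H^1(\Gamma_{K_h,S_h}, \bbF_t)^{q\text{-isotypic}}. \]
To exhibit a nonzero element of the right-hand side, I would choose by Chebotarev two places $v_0, v_1$ of $K$ with $h\mid \deg v_i$; then each $v_i$ splits completely in $K_h/K$ and each lift $w$ to $K_h$ satisfies $q_{w}\equiv 1\pmod t$. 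At each of the $2h$ lifts the tame characters into $\bbF_t$ form a $1$-dimensional space, and $\Gal(K_h/K)\cong \bbZ/h\bbZ$ permutes the two sets of $h$ lifts cyclically. The total $2h$-dimensional local space is thus the sum of two copies of the regular representation of $\bbZ/h\bbZ$ over $\bbF_t$, and, using $\gcd(t,h)=1$, has a $2$-dimensional $q$-isotypic piece. Function-field class field theory identifies the global characters unramified outside $S_h$ with the cokernel of the map from $\cO_{K_h,S_h}^\times/(\text{constants})$ into the local character space; since this image has codimension at most $2h-1$ in the ambient $2h$-dimensional space, a nonzero $q$-isotypic $\chi$ survives globally.

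With $\phi$ constructed, (i) is immediate. For (ii), fix $\lambda$ and $K'/K$ as in the statement. Proposition~\ref{prop_basic_properties_of_Galois_Coxeter_homomorphisms}(iii) shows $\phi|_{\Gamma_{K'}}$ is again a Coxeter homomorphism, and since $\Gamma_{K'\cdot\overline{\bbF}_q}\subset\Gamma_{K\cdot\overline{\bbF}_q}$ the inclusion $\phi|_{\Gamma_{K'}}(\Gamma_{K'\cdot\overline{\bbF}_q})\subset \hT(\overline{\bbQ})$ is inherited. Theorem~\ref{thm_automorphy_of_Coxeter_homomorphisms} then produces a cuspidal automorphic representation $\pi$ of $G(\bbA_{K'})$, everywhere unramified, whose unramified Hecke eigenvalues match those of $\phi_\lambda|_{\Gamma_{K'}}$ at every place $w$ of $K'$. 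Choosing $N$ with $\pi^{U(N)}\neq 0$ and a maximal ideal $\frp\subset \cB(U(N),\overline{\bbQ}_\lambda)$ in the support of $\pi^{U(N)}$, Corollary~\ref{cor_existence_of_Lafforgues_Galois_representations} yields $\sigma_\frp:\Gamma_{K'}\to\hG(\overline{\bbQ}_\lambda)$ satisfying $\chi_V(\sigma_\frp(\Frob_w)) = \chi_V(\phi_\lambda(\Frob_w))$ for every irreducible $V$ of $\hG_{\overline{\bbQ}_\lambda}$ and every unramified $w$. Proposition~\ref{prop_basic_properties_of_Galois_Coxeter_homomorphisms}(i) then forces $\sigma_\frp$ and $\phi_\lambda|_{\Gamma_{K'}}$ to be $\hG(\overline{\bbQ}_\lambda)$-conjugate, which is automorphy in the sense of Definition~\ref{def_automorphic_galois_representation}.

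The hard part will be the class-field-theoretic construction of the equivariant character $\chi$ together with a careful bookkeeping of its order and ramification; once that dimension count is in place, the rest is a formal combination of Theorem~\ref{thm_automorphy_of_Coxeter_homomorphisms}, Lafforgue's pseudocharacter formalism, and the rigidity of Coxeter parameters developed earlier in \S\ref{sec_class_of_universally_automorphic_galois_representations}.
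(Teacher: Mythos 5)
The second half of your argument (deducing (ii) from Theorem \ref{thm_automorphy_of_Coxeter_homomorphisms}, Corollary \ref{cor_existence_of_Lafforgues_Galois_representations} and the rigidity statement Proposition \ref{prop_basic_properties_of_Galois_Coxeter_homomorphisms}(i)) is exactly the paper's argument and is fine. The problems are in your construction of $\phi$, which the paper does quite differently.

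First, a genuine gap in your class-field-theoretic existence argument for $\chi$. You need not merely that the restriction map from global $\bbF_t$-valued characters to the $2h$-dimensional space of local tame characters has nonzero cokernel, but that its image fails to contain the full $2$-dimensional $q$-isotypic piece (equivalently, after duality, that the $q^{-1}$-isotypic piece of the image of $\cO_{K_h,S_h}^\times$ in $\bigoplus_{w} k(w)^\times\otimes\bbF_t$ is proper). The $S_h$-unit group modulo constants has rank $2h-1$ and, as a $\Gal(K_h/K)$-module, can perfectly well have a $2$-dimensional $q^{-1}$-eigenspace mapping onto the corresponding local eigenspace; your count ``codimension at most $2h-1$ in a $2h$-dimensional space'' only produces a nonzero element of the total cokernel, not of its $q$-isotypic part. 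Since $t\nmid h$ makes isotypic decomposition exact, the equivariant count is the whole content, and you have not supplied it. Second, a smaller but real oversight: the lift $\dot w$ of a Coxeter element has order $\dot h$ with $\dot w^{\,h}\in Z_{\hG}$, and in general $\dot h> h$ (already for $\hG=\SL_2$ every lift of the Coxeter element has order $4$). So there is no homomorphism $\bbZ/h\bbZ\to\hG$ sending $1\mapsto\dot w$, and your requirement that the projection of $\phi$ factor through $\Gal(\bbF_{q^h}K/K)$ cannot be met; the constant-field extension must have degree $\dot h$.

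The paper avoids both issues by an entirely explicit construction: take $\alpha\in K$ of valuation $1$ at some place and let $E_0$ be the splitting field of $Y^t-\alpha$. Since $q\bmod t$ has order $h$, one gets $\Gal(E_0/K)\cong\mu_t\rtimes\bbZ/h\bbZ$ with Frobenius acting on $\mu_t$ by $\zeta\mapsto\zeta^q$ for free, and the valuation hypothesis guarantees irreducibility even over $K\cdot\overline{\bbF}_q$; then one passes to $E=E_0\cdot\bbF_{q^{\dot h}}$ to accommodate the order of $\dot w$, and sends a generator of $\mu_t$ to a nonzero $v\in\hT(\bbZ[\zeta_t])[t]^{w=q}$ and Frobenius to $\dot w$. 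If you want to salvage your approach, you should either prove the equivariant statement about $S_h$-units or, more simply, replace it with this Kummer-extension construction.
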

\begin{proof}
Let $\alpha \in K$ be an element that has valuation 1 at some place of $K$, let $f(Y) = Y^t - \alpha$, and let $E_0 / K$ denote the splitting field of $f(Y)$. Then $f(Y) \in K[Y]$ is irreducible, even over $K \cdot \overline{\bbF}_q$, and there is an isomorphism $\Gal(E_0/K) \cong \mu_t \rtimes \bbZ / h \bbZ$, where $1 \in \bbZ / h \bbZ$ is a lift of Frobenius, which acts on $\mu_t$ by $\zeta \mapsto \zeta^q$. Let $w \in W(\hG, \hT)$ be a Coxeter element, and let $\dot{w} \in \hG(\bbZ)$ be a lift to $\hG$. Let $\dot{h}$ denote the order of $\dot{w}$, and $E = E_0 \cdot \bbF_{q^{\dot{h}}}$. Then there is an isomorphism $\Gal(E / K) \cong \mu_t \rtimes \bbZ / \dot{h} \bbZ$, where again $1 \in \bbZ / \dot{h} \bbZ$ acts on $\mu_t$ by $\zeta \mapsto \zeta^q$. 

By Proposition \ref{prop_basic_properties_of_Coxeter_elements}, the $\bbF_t$-vector space $\hT(\bbZ[\zeta_t])[t]^{w = q} = \hT(\overline{\bbQ})[t]^{w = q}$ is 1-dimensional; let $v$ be a non-zero element. Let $\phi : \Gal(E/K) \to \hG(\bbZ[\zeta_t])$ be the homomorphism which sends a generator of $\mu_t$ to $v$ and $1 \in \bbZ / \dot{h} \bbZ$ to $\dot{w}$. Then $\phi$ is a Coxeter homomorphism into $\hG(\overline{\bbQ})$ which takes values in $\hG(\bbZ[\zeta_t])$.

For any place $\lambda$ and any extension $K' / K$ as in the statement of the lemma, $\phi_\lambda|_{\Gamma_{K'}}$ is an everywhere unramified Coxeter parameter, which takes values in $\hT(\overline{\bbQ}_\lambda)$ after restriction to the geometric fundamental group. Theorem \ref{thm_automorphy_of_Coxeter_homomorphisms} implies the existence of an everywhere unramified cuspidal automorphic representation $\pi$ which corresponds to $\phi$ everywhere locally, hence an everywhere unramified automorphic Galois representation $\sigma_\frp$ (associated to a prime ideal $\frp$ of the algebra of excursion operators at level $G(\widehat{\cO}_{K'})$) such that for every place $v$ of $K'$, $\phi_\lambda(\Frob_v)$ and $\sigma_\frp(\Frob_v)$ have the same image in $(\hG \dquot \hG)(\overline{\bbQ}_\lambda)$. Proposition \ref{prop_basic_properties_of_Galois_Coxeter_homomorphisms} then implies that $\phi_\lambda|_{\Gamma_{K'}}$ and $\sigma_\frp$ are in fact $\hG(\overline{\bbQ}_\lambda)$-conjugate, showing that $\phi_\lambda|_{\Gamma_{K'}}$ is automorphic, as required.
\end{proof}

\section{Potential automorphy}\label{sec_potential_automorphy}
Let $X$ be a smooth, geometrically connected curve over $\bbF_q$, and let $K = \bbF_q(X)$. Let $G$ be a split semisimple group over $\bbF_q$. Our goal in this section the following result, which is the main theorem of this paper.
\begin{theorem}\label{thm_main_theorem}
Let $l \nmid q$ be a prime, and let $\rho : \Gamma_K \to \hG(\overline{\bbQ}_l)$ be a continuous, everywhere unramified representation with Zariski dense image. Then there exists a finite Galois extension $K'/ K$ such that $\rho|_{\Gamma_{K'}}$ is automorphic: there exists a cuspidal automorphic representation $\Pi$ of $G(\bbA_{K'})$ over $\overline{\bbQ}_l$ such that $\Pi^{G(\widehat{\cO}_{K'})} \neq 0$ and for every place $v$ of $K'$, $\rho|_{W_{K'_v}}$ and $\Pi_v$ are matched under the unramified local Langlands correspondence at $v$. In other words, for every irreducible representation $V$ of $\hG_{\overline{\bbQ}_l}$, and for every place $v$ of $K'$, we have the relation
\[ \chi_V( \rho(\Frob_v) ) = \text{eigenvalue of }T_{V, v} \text{ on }\Pi^{G(\widehat{\cO}_{K'})}. \]
\end{theorem}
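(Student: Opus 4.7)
The plan is to execute the ``chutes and ladders'' strategy outlined in the introduction (after \cite{Ell05}), linking $\rho$ via residual congruences to a universally automorphic Coxeter parameter and then transferring the resulting automorphy back through the compatible system containing $\rho$. After standard reductions I would assume $\hG$ is simple and simply connected (decomposing $\rho$ according to the simple factors of the simply connected cover of $\hG$, with Zariski density ensuring surjectivity onto each factor). By Theorem \ref{thm_existence_of_compatible_systems_containing_a_given_representation} I place $\rho = \rho_{\lambda_0}$ in a compatible system $(\emptyset, (\rho_\lambda)_\lambda)$ of everywhere unramified $\hG$-representations with Zariski dense image. Proposition \ref{prop_big_image_in_compatible_systems} then furnishes a number field $E$ of definition and a density-one set of rational primes split in $E$ at whose places $\lambda$ we have $\rho_\lambda(\Gamma_K) = \hG(\bbZ_\ell)$.

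I would next choose an auxiliary prime $\ell'$ in this density-one set, with $\ell' \neq p$, satisfying every numerical hypothesis invoked earlier in the paper ($\ell' > 2h - 2$ where $h$ is the Coxeter number of $\hG$, $\ell' \nmid \#W$, $\ell'$ very good, $\ell' > 2\dim \hG$), and such that there exists a prime $t$ coprime to $p\ell'$ for which the image of $q$ in $\bbF_t^\times$ has exact order $h$ (guaranteed by Dirichlet's theorem). Lemma \ref{lem_Coxeter_parameters_rational_over_Z_l} then produces a Coxeter parameter $\phi : \Gamma_K \to \hG(\overline{\bbQ})$ which is universally automorphic at the place $\lambda'$: for every finite separable extension $K'/K$ linearly disjoint from the extension $L_\phi$ cut out by $\phi^{\text{ad}}$, the restriction $\phi_{\lambda'}|_{\Gamma_{K'}}$ is automorphic. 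By Proposition \ref{prop_basic_properties_of_Galois_Coxeter_homomorphisms} the mod $\ell'$ reduction $\overline{\phi}_{\ell'}$ is itself a Coxeter homomorphism into $\hG(\overline{\bbF}_{\ell'})$, with finite image $H$ that is strongly $\hG$-irreducible and $\hG$-abundant by Corollary \ref{cor_strong_irreducibility_and_abundance_of_abstract_Coxeter_homomorphism}.

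The geometric heart of the argument is to produce, via the Moret-Bailly-type theorems of \S\ref{sec_application_of_moret-bailly}, a finite Galois extension $K'/K$ over which $\overline{\rho}_{\ell'}|_{\Gamma_{K'}}$ and $\overline{\phi}_{\ell'}|_{\Gamma_{K'}}$ become $\hG(\overline{\bbF}_{\ell'})$-conjugate as homomorphisms into a common finite group $H' \subset \hG(\overline{\bbF}_{\ell'})$. Concretely one passes to a scalar base change if necessary to arrange that $\overline{\phi}_{\ell'}$ is surjective onto $H$ even on the geometric fundamental group, and applies Proposition \ref{prop_potential_agreement_of_residual_representations} with $L$ taken to contain $L_\phi$ and the extensions cut out by $\overline{\rho}_{\ell'}$ and $\overline{\phi}_{\ell'}$. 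The output is the extension $K'/K$, linearly disjoint from $L$, together with the required conjugacy. This is the main obstacle: the Moret-Bailly construction has to be orchestrated so that both residual representations simultaneously land in (conjugate copies of) the same finite group inside $\hG(\overline{\bbF}_{\ell'})$, while preserving enough linear disjointness for the Coxeter side to remain universally automorphic and for Zariski density of the original $\rho$ to persist on restriction.

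With the residual match in hand, I would apply Corollary \ref{cor_automorphy_lifting} to $\rho_{\lambda'}|_{\Gamma_{K'}}$: its residual image is conjugate to $\overline{\phi}_{\ell'}|_{\Gamma_{K'}}$, which is Coxeter and therefore satisfies strong $\hG$-irreducibility, $\hG$-abundance, and triviality of the scheme-theoretic centralizer (Propositions \ref{prop_basic_properties_of_abstract_Coxeter_homomorphisms} and \ref{prop_basic_properties_of_Galois_Coxeter_homomorphisms}). Since $\phi_{\lambda'}|_{\Gamma_{K'}}$ is automorphic by Lemma \ref{lem_Coxeter_parameters_rational_over_Z_l}, its residual Galois representation is realized by a maximal ideal of the integral excursion algebra; $\overline{\rho}_{\ell'}|_{\Gamma_{K'}}$ shares that ideal, so Corollary \ref{cor_automorphy_lifting} concludes that $\rho_{\lambda'}|_{\Gamma_{K'}}$ is automorphic, realized by a cuspidal $\Pi'$ of $G(\bbA_{K'})$ with $(\Pi')^{G(\widehat{\cO}_{K'})} \neq 0$ matching it at every place. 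V. Lafforgue's construction attaches to $\Pi'$ an everywhere unramified compatible system of $\hG$-representations of $\Gamma_{K'}$, which at $\lambda'$ is $\rho_{\lambda'}|_{\Gamma_{K'}}$ and therefore, by the uniqueness part of Theorem \ref{thm_existence_of_compatible_systems_containing_a_given_representation} applied to a Zariski dense representation, coincides at every place $\lambda$ (in particular at $\lambda_0$) with the restriction to $\Gamma_{K'}$ of the original system. Since $\rho$ is everywhere unramified, $\Pi'$ then matches $\rho|_{\Gamma_{K'}}$ under the unramified local Langlands correspondence at every place of $K'$, proving Theorem \ref{thm_main_theorem}.
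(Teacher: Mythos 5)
Your overall skeleton — compatible systems, a universally automorphic Coxeter parameter, Moret-Bailly, and automorphy lifting — is the right one, but the central step as you describe it cannot work. You propose to find a finite Galois extension $K'/K$ over which $\overline{\rho}_{\ell'}|_{\Gamma_{K'}}$ and $\overline{\phi}_{\ell'}|_{\Gamma_{K'}}$ become $\hG(\overline{\bbF}_{\ell'})$-conjugate, at a \emph{single} auxiliary prime $\ell'$. Conjugate homomorphisms have equal images. Since $\ell'$ was chosen so that $\rho_{\ell'}(\Gamma_K)=\hG(\bbZ_{\ell'})$, and since you need $K'$ linearly disjoint from the field cut out by $\overline{\rho}_{\ell'}$ (to preserve Zariski density downstream), the image $\overline{\rho}_{\ell'}(\Gamma_{K'})$ is all of $\hG(\bbF_{\ell'})$; whereas $\overline{\phi}_{\ell'}(\Gamma_{K'})$ is contained in the small metacyclic image $\bbZ/t\bbZ\rtimes\bbZ/\dot{h}\bbZ$ of a Coxeter homomorphism. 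These groups have different orders, so no such $K'$ exists. Moreover Proposition \ref{prop_potential_agreement_of_residual_representations} does not assert what you need: it takes homomorphisms $\varphi,\psi$ of the fundamental groups of two \emph{different} curves $X$, $Y$ and produces an embedding $\beta:F\hookrightarrow K'$ matching $\varphi|_{\Gamma_{K'}}$ with the \emph{pullback} $\beta^*\psi$; it never conjugates two restrictions of representations of the same $\Gamma_K$. The paper's proof of Theorem \ref{thm_main_theorem_simply_connected_case} avoids exactly this trap by introducing an auxiliary function field $F$ carrying a compatible system $(R_\lambda)$ with full residual image (Proposition \ref{prop_construction_of_unramified_and_dense_compatible_system}) and by working with \emph{two} primes $l_0\neq l_1$: Moret-Bailly is applied to $H=\hG(\bbF_{l_0})\times\hG(\bbF_{l_1})$ to arrange simultaneously $\overline{\rho}_{\lambda_0}|_{\Gamma_{K'}}\cong\beta^*\overline{R}_{\lambda_0}$ and $\overline{\phi}_{\lambda_1}|_{\Gamma_{K'}}\cong\beta^*\overline{R}_{\lambda_1}$. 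The auxiliary system is the ladder: automorphy lifting at $l_1$ (Coxeter residual) gives automorphy of $\beta^*R_{\lambda_1}$, compatibility transports this to $\beta^*R_{\lambda_0}$, and a \emph{second} application of Corollary \ref{cor_automorphy_lifting} at $l_0$ (full residual image) gives automorphy of $\rho_{\lambda_0}|_{\Gamma_{K'}}$. Your one-prime shortcut removes the ladder entirely.

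Separately, the reduction to $\hG$ simple and simply connected is not ``standard''. Passing from $G$ to its adjoint group $H$ and back requires showing that the cusp form $f_H$ on $H(\bbA_{K'})$ produced by the simply connected case restricts to a \emph{nonzero} function on $\eta(G(\bbA_{K'}))$; this is Proposition \ref{prop_restriction_to_covering_group_non_zero}, whose proof is a genuine argument (twisting $f_H$ by the finitely many characters of $H(\bbA_K)/(H(K)\cdot\eta(G(\bbA_K))\cdot H(\widehat{\cO}_K))$, identifying the twists with $\rho\otimes\cP(\omega)$ on the Galois side, and using Zariski density to see the twists are linearly independent). You should not absorb this into a one-line reduction.
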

\begin{proof}
Let $H$ denote the adjoint group of $G$, $\eta : G \to H$ the canonical isogeny. Then $H$ is a product of its simple factors, and the theorem is therefore true for $H$ by Theorem \ref{thm_main_theorem_simply_connected_case} below. We have a dual isogeny $\widehat{\eta} : \hH \to \hG$, which presents $\hH$ as the simply connected cover of $\hG$. By \cite[Theorem 1.4]{Con15}, the representation $\rho$ lifts to a representation $\rho_H : \Gamma_K \to \hH(\overline{\bbQ}_l)$ such that $\widehat{\eta} \circ \rho_H = \rho$. Then $\rho_H$ has Zariski dense image as well, and becomes unramified after a finite base change. Moreover, it lives in a compatible system, by Theorem \ref{thm_existence_of_compatible_systems_containing_a_given_representation}. We can therefore apply Theorem \ref{thm_main_theorem_simply_connected_case} to find a finite Galois extension $K' / K$ and a cuspidal automorphic representation $\Pi_H$ of $H(\bbA_{K'})$ over $\overline{\bbQ}_l$ such that for every place $v$ of $K'$, $\Pi_{H, v}^{H(\cO_{K'_v})} \neq 0$, and $\Pi_{H, v}$ and $\rho_H|_{W_{K'_v}}$ are matched under the unramified local Langlands correspondence.

 We let $f_H : H({K'}) \backslash H(\bbA_{K'}) / H(\widehat{\cO}_{K'}) \to \overline{\bbQ}_l$ be a cuspidal function which generates $\Pi_H$, and set $f_G = f_H \circ \eta$. Then $f_G : G({K'}) \backslash G(\bbA_{K'}) / G(\widehat{\cO}_{K'}) \to \overline{\bbQ}_l$ is a cuspidal function, and if $f_G \neq 0$ then its Hecke eigenvalues are matched everywhere locally with $\rho$ (because the Satake isomorphism is compatible with isogenies). The proof of the theorem will be finished if we can show that $f_G \neq 0$ (as then we can take $\Pi = \Pi_G$ to be the cuspidal automorphic representation generated by $f_G$). This non-vanishing follows immediately from Proposition \ref{prop_restriction_to_covering_group_non_zero}.
\end{proof}
\begin{proposition}\label{prop_restriction_to_covering_group_non_zero}
Let $\eta : G \to H$ denote the adjoint group of $G$, and let $l \nmid q$ be a prime. Let $\rho : \Gamma_K \to \hH(\overline{\bbQ}_l)$ be a continuous, everywhere unramified representation with Zariski dense image, and suppose that there exists a non-zero cuspidal function $f : H(K) \backslash H(\bbA_K) / H(\widehat{\cO}_K) \to \overline{\bbQ}_l$ such that for every place $v$ of $K$ and every irreducible representation $V$ of $\hH_{\overline{\bbQ}_l}$, we have
\[ T_{V, v}(f) = \chi_V(\rho(\Frob_v)) f. \]
Then $f|_{\eta(G(\bbA_K))}$ is not zero.
\end{proposition}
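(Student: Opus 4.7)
The plan is by contradiction: assume $f \circ \eta \equiv 0$ on $G(\bbA_K)$ and deduce $f = 0$. I will decompose $f$ using characters of a finite abelian group that measures the failure of $\eta(G(\bbA_K))$ to fill up $H(\bbA_K)$ modulo $H(K)$ and $H(\widehat{\cO}_K)$, and then use the Zariski density of $\rho$ to see that the resulting twists are cuspidal Hecke eigenforms with pairwise distinct eigenvalue systems; character orthogonality will collapse their sum to a scalar multiple of the restriction of $f$ to a relevant subset $Y$, and the contradictory hypothesis will force $f = 0$.

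First I set up the finite group. Because $\ker \eta = Z_G$ is a finite central subgroup of $G$, the image $\eta(G(\bbA_K))$ is normal in $H(\bbA_K)$, and the quotient is abelian (it embeds placewise into $H^1(K_v, Z_G)$). Define
\[
  \Gamma \;:=\; H(\bbA_K) \,\big/\, \bigl(H(K) \cdot \eta(G(\bbA_K)) \cdot H(\widehat{\cO}_K)\bigr),
\]
an abelian group. Since $H$ is semisimple, the class set $H(K) \backslash H(\bbA_K) / H(\widehat{\cO}_K)$ is finite (Harder's function-field analogue of the Borel--Serre finiteness theorem), so $\Gamma$, being a further quotient, is a finite abelian group.

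For each $\chi \in \widehat{\Gamma}$, regarded as a character of $H(\bbA_K)$ trivial on $H(K) \eta(G(\bbA_K)) H(\widehat{\cO}_K)$, set $f^\chi(h) := \chi(h)\,f(h)$. This descends to an $H(\widehat{\cO}_K)$-invariant function on $H(K) \backslash H(\bbA_K)$, and remains cuspidal because unipotent radicals of proper parabolics of $H$ are connected unipotent groups, hence contained in $\eta(G)$, so $\chi$ vanishes on them. The technical heart is to identify $f^\chi$ as a Hecke eigenform matching the Galois parameter $\rho \cdot \chi^\vee$ for an appropriate character $\chi^\vee : \Gamma_K \to Z_{\widehat{H}}(\overline{\bbQ}_l)$. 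At each place $v$, $\chi|_{H(K_v)}$ factors through the finite group $H(K_v)/\eta(G(K_v)) \hookrightarrow H^1(K_v, Z_G)$ and, by local class field theory dual to the central isogeny $\widehat{\eta} : \widehat{H} \to \widehat{G}$ (a variant of Lemma \ref{lem_local_langlands_for_split_tori} applied after restriction to a maximal split torus), corresponds to an unramified character $\chi^\vee_v : W_{K_v} \to Z_{\widehat{H}}(\overline{\bbQ}_l)$; globally these assemble into a continuous character $\chi^\vee$. Because $\chi$ is $H(\cO_{K_v})$-biinvariant, it is constant on each Hecke double coset $H(\cO_{K_v}) \mu(\varpi_v) H(\cO_{K_v})$, and a direct unfolding then yields
\[
  T_{V,v}\,f^\chi \;=\; \chi_V\bigl((\rho \cdot \chi^\vee)(\Frob_v)\bigr) \cdot f^\chi.
\]

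The third step is linear independence of $\{f^\chi\}_{\chi \in \widehat{\Gamma}}$. If $\rho$ and $\rho \cdot \chi^\vee$ were $\widehat{H}(\overline{\bbQ}_l)$-conjugate via some $g$, then $[g, \rho(\gamma)] = \chi^\vee(\gamma) \in Z_{\widehat{H}}$ for every $\gamma$, and Zariski density of $\rho(\Gamma_K)$ would promote this to $[g, h] \in Z_{\widehat{H}}$ for all $h \in \widehat{H}$; since $\widehat{H}$ is connected and $Z_{\widehat{H}}$ is finite, the morphism $h \mapsto [g, h]$ is constant, forcing $g \in Z_{\widehat{H}}$ and $\chi^\vee = 1$. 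Thus for $\chi \ne 1$ the parameters $\rho$ and $\rho \cdot \chi^\vee$ are inequivalent, and Proposition \ref{prop_zariski_dense_and_locally_conjugate_implies_globally_conjugate} supplies a place where their Frobenius Satake classes differ; applying this to $\chi_1 \chi_2^{-1}$ for any two distinct characters shows the whole family has pairwise distinct Hecke eigensystems, hence is linearly independent. Finally, character orthogonality gives
\[
  \sum_{\chi \in \widehat{\Gamma}} f^\chi \;=\; |\Gamma| \cdot f \cdot \mathbf{1}_Y, \qquad Y := H(K) \eta(G(\bbA_K)) H(\widehat{\cO}_K),
\]
and the assumption $f \circ \eta \equiv 0$ is exactly $f|_Y = 0$; the sum therefore vanishes, linear independence gives $f^\chi = 0$ for every $\chi$, and in particular $f = f^{1} = 0$, the desired contradiction. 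The main obstacle is the local class field theory input that canonically matches $\widehat{\Gamma}$ locally with characters into $Z_{\widehat{H}}$ and verifies compatibility with the spherical Satake isomorphism; once this is in hand the finite-group Fourier analysis and the Zariski-density rigidity combine cleanly.
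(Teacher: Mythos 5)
Your strategy coincides with the paper's: twist $f$ by the characters of the finite abelian group $\cY = H(\bbA_K)/\bigl(H(K)\cdot\eta(G(\bbA_K))\cdot H(\widehat{\cO}_K)\bigr)$, show the twists have pairwise distinct Hecke eigensystems using Zariski density of $\rho$, and conclude by Fourier inversion on $\cY$. However, two of the steps as you state them do not hold up.

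First, your finiteness argument for $\cY$ rests on a false premise. Over a function field the double coset space $H(K)\backslash H(\bbA_K)/H(\widehat{\cO}_K)$ is the set of isomorphism classes of $H$-bundles on $X$ and is \emph{infinite} (already for $\bbG_m$ it is $\Pic(X)$; this is precisely why Proposition \ref{prop_cusp_forms_on_G_have_compact_support} asserts only that cusp forms have \emph{finite support} in $X_U$, not that $X_U$ is finite). So $\cY$ is not finite "as a quotient of the class set." The correct argument identifies $\cY$, via the flat cohomology of $1 \to Z_G \to G \to H \to 1$, with a quotient of $\bigoplus_v H^1(K_v, Z_G)/H^1(\cO_{K_v}, Z_G)$ by the image of $H^1(K, Z_G)$, and reduces finiteness to that of $\Pic(X)\otimes_\bbZ \bbZ/n\bbZ$.

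Second, the step you yourself call "the technical heart" — assembling the local characters $\chi^\vee_v$ into a single global unramified character $\chi^\vee : \Gamma_K \to \ker(Z_{\hH}(\overline{\bbQ}_l)\to Z_{\hG}(\overline{\bbQ}_l))$, and doing so \emph{injectively} in $\chi$ — is exactly what requires proof, and you do not supply it. It is obtained from the Cassels--Poitou--Tate exact sequence, which gives an isomorphism of $\cY^\ast$ with the group of everywhere unramified characters valued in $Z_G^D$, together with an explicit local cup-product computation (Lemma \ref{lem_reciprocity_and_cup_products}) verifying $\omega(\lambda(\varpi_v)) = \lambda(\cP(\omega)(\Frob_v))$. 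Injectivity is not cosmetic: if some $\chi\neq 1$ had $\chi^\vee = 1$, two distinct twists would share a Hecke eigensystem and your linear-independence step would collapse. Relatedly, in the eigenvalue computation it is not enough that $\chi$ is constant on each double coset: one must check it takes the \emph{same} value $\chi(\lambda(\varpi_v))$ on every coset $H(\cO_{K_v})\mu(\varpi_v)H(\cO_{K_v})$ in the support of $T_{V,v}$, which holds because $\lambda-\mu$ lies in the coroot lattice, so that $\mu(\varpi_v)\lambda(\varpi_v)^{-1}\in\eta(G(K_v))$. The remaining parts of your argument (cuspidality of the twists, the rigidity argument forcing $g\in Z_{\hH}$, and the Fourier inversion endgame) are correct and match the paper.
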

\begin{proof}
The idea of the proof is as follows: if $f(\eta(G(\bbA_K))) = 0$, then the automorphic representation generated by $f$ should be a lift from a twisted endoscopic group of $H$. This would contradict the Zariski density of the image of $\rho$. To implement this idea, we must get our hands dirty.

Let $T_G$ be a split maximal torus of $G$, and let $T_H$ denote its image in $H$. Then we have a commutative diagram of $\bbF_q$-groups with exact rows
\begin{equation}\label{eqn_covering_group_of_adjoint_group} \begin{aligned} \xymatrix{ 1 \ar[r] & Z_G \ar[d] \ar[r] & T_G \ar[r] \ar[d] & T_H \ar[r] \ar[d] & 1 \\
1 \ar[r] & Z_G \ar[r] & G \ar[r] & H \ar[r] & 1. } \end{aligned}
\end{equation}
We observe that $\eta(G(\bbA_K))$ is a normal subgroup of $H(\bbA_K)$ which contains the derived group of $H(\bbA_K)$, which implies that the subgroup of $H(\bbA_K)$ generated by $\eta(G(\bbA_K))$, $H(K)$, and $H(\widehat{\cO}_K)$ actually equals $H(K) \cdot \eta(G(\bbA_K)) \cdot H(\widehat{\cO}_K)$, and is normal.  Let $\cY = H(\bbA_K) /( H(K) \cdot \eta(G(\bbA_K)) \cdot H(\widehat{\cO}_K))$, and let $\cY^\ast$ denote the group of characters $\omega : \cY \to \overline{\bbQ}_l^\times$. If $\omega \in \cY^\ast$, we define a new function $f \otimes \omega : H(K) \backslash H(\bbA_K) / H(\widehat{\cO}_K) \to \overline{\bbQ}_l$ by the formula $(f \otimes \omega)(h) = f(h) \omega(h)$. Then $f \otimes \omega$ is also cuspidal. The proposition will follow from the following two claims:
\begin{enumerate}
\item The group $\cY$ is finite.
\item The set $\{ f \otimes \omega \}_{\omega \in \cY^\ast}$ is linearly independent over $\overline{\bbQ}_l$.
\end{enumerate}
Indeed, (i) implies that we can find (unique) constants $a_\omega \in \overline{\bbQ}_l$ such that for $h \in H(\bbA_K)$, we have
\[ \sum_{\omega \in \cY^\ast} a_\omega \omega(h) = \left\{ \begin{array}{ll} 1 & \text{ if }h\text{ maps to the trivial element in }\cY; \\ 0 & \text{ otherwise.} \end{array}\right. \]
If $f(\eta(G(\bbA_K))) = 0$, then we have $\sum_{\omega \in \cY^\ast} a_\omega (f \otimes \omega) = 0$, which contradicts (ii).

We therefore have to establish  the claims (i) and (ii) above. Taking $K$-points in the diagram (\ref{eqn_covering_group_of_adjoint_group}) and applying Theorem 90 leads to a commutative diagram
\[ \begin{aligned} \xymatrix{ 1 \ar[r] & Z_G(K) \ar[d] \ar[r] & T_G(K) \ar[r] \ar[d] & T_H(K) \ar[r] \ar[d] & H^1(K, Z_G) \ar[d]^= \ar[r] & 1 \ar[d]\\
1 \ar[r] & Z_G(K) \ar[r] & G(K) \ar[r] & H(K) \ar[r] & H^1(K, Z_G) \ar[r] & H^1(K, G), } \end{aligned}
\]
where the cohomology is flat cohomology. We find that the connecting homomorphism induces an isomorphism $H(K) / \eta( G(K) ) \cong H^1(K, Z_G)$. The same is true if $K$ is replaced by $K_v$ or $\cO_{K_v}$ (for any place $v$ of $K$). We obtain an isomorphism
\[ \cY \cong \ilim_S H(\cO_{K, S}) \backslash \left[ \prod_{v \in S} \frac{H(K_v)}{\eta(G(K_v)) \cdot H(\cO_{K_v})} \right]\cong \ilim_S H(\cO_{K, S}) \backslash \left[ \prod_{v \in S} \frac{H^1(K_v, Z_G)}{H^1(\cO_{K_v}, Z_G)} \right] \]
\[ \cong (\prod_v \res_v)(H^1(K, Z_G)) \backslash \left[ \bigoplus_v  \frac{H^1(K_v, Z_G)}{H^1(\cO_{K_v}, Z_G)} \right]. \]
This group is finite. Indeed, $Z_G$ is a finite $\bbF_q$-group with constant dual, so it suffices to show that for any $n \geq 1$, the group
\[
(\prod_v \res_v)(H^1(K, \mu_n)) \backslash \left[ \bigoplus_v  \frac{H^1(K_v, \mu_n)}{H^1(\cO_{K_v}, \mu_n)} \right] \cong K^\times \backslash \bbA_K^\times / \widehat{\cO}_K^\times (\bbA_K^\times)^n \cong \Pic(X) \otimes_\bbZ \bbZ / n \bbZ
\]
is finite, and this is true. This shows claim (i) above. In order to show claim (ii), let $Z_G^D$ denote the Cartier dual of $Z_G$, and let $\cX$ denote the group of everywhere unramified characters $\chi : \Gamma_K \to Z_G^D$ (a subgroup of $H^1(K, Z_G^D)$); note that $Z_G^D$ is a constant \'etale group scheme. Let $\inv_v : H^2(K_v, \bbG_m) \to \bbQ / \bbZ$ be the isomorphism of class field theory. For each place $v$ of $K$, local duality gives a perfect pairing $H^1(K_v, Z_G^D) \times H^1(K_v, Z_G) \to \bbQ / \bbZ$, defined explicitly by the formula $(\chi, h) \mapsto \inv_v(\chi \cup h)$. The Cassels--Poitou--Tate exact sequence (\cite[Theorem 6.2]{Ces15}) gives an exact sequence
\[
\xymatrix@1{ 0 \ar[r] & \cY \ar[r]^-{\cQ^\vee} & \Hom(\cX, \bbQ / \bbZ) \ar[r] & H^2(K, Z_G) \ar[r]^-{\prod_v \res_v} & \oplus_v H^2(K_v, Z_G),}
\]
where $\cQ^\vee$ is defined by $\cQ^\vee( h )(\chi) = \sum_v \inv_v(\chi \cup h)$. The last map is injective (we can again reduce to the analogous statement for $\mu_n$, where it follows from class field theory), so after dualizing we obtain an isomorphism $\cQ : \cX \to \Hom(\cY, \bbQ / \bbZ)$. 

We now fix an isomorphism $\bbQ / \bbZ \cong \overline{\bbQ}_l^\times[\text{tors}]$. Then there is a canonical isomorphism
\begin{equation}\label{eqn_dual_of_centre_is_centre_of_dual} Z_G^D \cong \ker( Z_{\hH}(\overline{\bbQ}_l) \to Z_{\hG}(\overline{\bbQ}_l) ),
\end{equation}
which allows us to identify the inverse of $\cQ$ with an isomorphism
\[ \cP : \cY^\ast \to  \Hom(\Gamma_{K, \emptyset}, \ker( Z_{\hH}(\overline{\bbQ}_l) \to Z_{\hG}(\overline{\bbQ}_l) ). \]
Let $V$ be an irreducible representation of $\hH_{\overline{\bbQ}_l}$ of highest weight $\lambda \in X^\ast(\hT_H)$ (with respect to the fixed Borel subgroup $\hB_H \subset \hH$). Let $v$ be a place of $K$. We note that $T_{V, v}$, considered as a compactly supported function on $H(K_v)$, is supported on double cosets of the form $H(\widehat{\cO}_K) \mu(\varpi_v) H(\widehat{\cO}_K)$, where $\mu \in X_\ast(T_H)$ has the property that $\lambda - \mu$ is a sum of positive coroots, hence lies in the image of $X_\ast(T_G) \to X_\ast(T_H)$. In particular, for any character $\omega \in \cY^\ast$, we have $\omega(\lambda(\varpi_v)) = \omega(\mu(\varpi_v))$. This allows us to calculate for $g \in H(\bbA_K)$:
\begin{equation}\label{eqn_hecke_eigenvalues_of_twist} \begin{split} (T_{V, v} (f \otimes \omega))(g) & = \int_{h \in H(K_v)} T_{V, v}(h) f(gh) \omega(gh) \, dh  \\
& = \omega(g) \int_{h \in H(K_v)} T_{V, v}(h) f(gh) \omega(\lambda(\varpi_v)) \, dh = \omega(\lambda(\varpi_v))  \chi_V(\rho(\Frob_v)) (f \otimes \omega)(g). \end{split}
\end{equation}
We now observe that claim (ii) above follows from the following:
\begin{enumerate}
\setcounter{enumi}{2}
\item For any place $v$ of $K$, for any irreducible representation $V$ of $\hH_{\overline{\bbQ}_l}$ of highest weight $\lambda \in X_\ast(T_H) = X^\ast(\hT_H)$, and for any $\omega \in \cY^\ast$, we have the equality
\begin{equation}\label{eqn_twisting_identity} \omega(\lambda(\varpi_v)) = \lambda( \cP(\omega)(\Frob_v) ). 
\end{equation}
\end{enumerate}
Indeed, the equations (\ref{eqn_hecke_eigenvalues_of_twist}) and (\ref{eqn_twisting_identity}) together imply the identity
\begin{equation}
T_{V, v} (f \otimes \omega) = \lambda( \cP(\omega)(\Frob_v) )  \chi_V(\rho(\Frob_v)) = \chi_V( (\rho \otimes \cP(\omega))(\Frob_v)), 
\end{equation}
showing that the function $f \otimes \omega$ has unramified Hecke eigenvalues matching $\rho \otimes \cP(\omega)$ under the unramified local Langlands correspondence. 

To show that the $f \otimes \omega$ ($\omega \in \cY^\ast$) are linearly independent, it suffices to show that they have pairwise distinct sets of Hecke eigenvalues. However, if two forms $f \otimes \omega$, $f \otimes \omega'$ have the same Hecke eigenvalues, it follows from Proposition \ref{prop_zariski_dense_and_locally_conjugate_implies_globally_conjugate} and our assumption that $\rho$ has Zariski dense image that $\rho \otimes \cP(\omega)$ and $\rho \otimes \cP(\omega')$ are actually $\hH(\overline{\bbQ}_l)$-conjugate, hence there exists $g \in \hH(\overline{\bbQ}_l)$ such that $g (\rho \otimes \cP(\omega) )g^{-1} = \rho \otimes \cP(\omega' )$. In particular, $g \not\in Z_{\hH}(\overline{\bbQ}_l)$. If $L / K$ denotes a finite extension such that $\omega$, $\omega'$ are trivial on $\Gamma_L$, then $\rho(\Gamma_L)$ is still Zariski dense in $\hH(\overline{\bbQ}_l)$ and we have $g \rho|_{\Gamma_L} g^{-1} = \rho|_{\Gamma_L}$, implying that $g \in Z_{\hH}(\overline{\bbQ}_l)$. This contradiction shows (conditional on the claim (iii) above) that no two forms $f \otimes \omega$, $f \otimes \omega'$ have the same Hecke eigenvalues, hence that claim (ii) is true.

To finish the proof of the proposition, we therefore just need to establish claim (iii). After unwinding the definitions, this is equivalent to the conclusion of Lemma \ref{lem_reciprocity_and_cup_products} below. This concludes the proof of the proposition.
\end{proof}
\begin{lemma}\label{lem_reciprocity_and_cup_products}
Let $v$ be a place of $K$, and consider an exact sequence
\begin{equation}\label{eqn_exact_sequence_of_split_tori} \xymatrix@1{ 0 \ar[r] & Z \ar[r] & T \ar[r]^-f & T' \ar[r] & 0,} 
\end{equation}
where $f$ is an isogeny of split tori over $K_v$. Let $\iota$ be the map defined by the dual exact sequence
\begin{equation} \xymatrix@1{ 0 \ar[r] & Z^D \ar[r]^-\iota & X^\ast(T') \otimes \bbQ / \bbZ \ar[r] & X^\ast(T) \otimes \bbQ / \bbZ \ar[r] & 0. } 
\end{equation}
Let $\chi \in H^1(K_v, Z^D)$ be an unramified element. Then for any $\lambda \in X^\ast(\hT') = X_\ast(T')$, we have the formula
\begin{equation} \inv_v( \chi \cup (\delta\lambda(\varpi_v))) = \lambda( \iota \chi(\Frob_v) ), 
\end{equation}
where the connecting homomorphism $\delta$ is defined by the exact sequence (\ref{eqn_exact_sequence_of_split_tori}).
\end{lemma}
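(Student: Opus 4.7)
The plan is to reduce the identity to a universal model case by functoriality, and then to verify that case by an explicit calculation using local class field theory. Both sides of the asserted equality are $\bbZ$-linear in $\lambda$ and compatible with direct sums in the data $(Z, T \xrightarrow{f} T')$. Choosing $\bbZ$-bases of $X_\ast(T)$ and $X_\ast(T')$ that put the isogeny $f$ into Smith normal form, the exact sequence $0 \to Z \to T \to T' \to 0$ decomposes as a product of sequences of the form $0 \to \mu_n \to \bbG_m \xrightarrow{[n]} \bbG_m \to 0$ for various positive integers $n$. It therefore suffices to verify the lemma in the case $T = T' = \bbG_m$, $f = [n]$, $Z = \mu_n$, $Z^D = \bbZ/n\bbZ$, and $\lambda \in \bbZ = X_\ast(\bbG_m)$.

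In this model case, the map $\iota \colon \bbZ/n\bbZ \hookrightarrow \bbQ/\bbZ$ is the obvious inclusion $a \mapsto a/n$, and the connecting map $\delta \colon K_v^\times = T'(K_v) \to H^1(K_v, \mu_n)$ coincides with the Kummer connecting homomorphism. Hence $\delta(\lambda(\varpi_v)) = \delta(\varpi_v^\lambda)$ is the Kummer class of $\varpi_v^\lambda$. An unramified class $\chi \in H^1(K_v, \bbZ/n\bbZ) = \Hom(\Gamma_{K_v}/I_{K_v}, \bbZ/n\bbZ)$ is determined by $\chi(\Frob_v) \in \bbZ/n\bbZ$, and the right-hand side of the claimed identity evaluates to $\lambda\, \chi(\Frob_v)/n \in \bbQ/\bbZ$. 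For the left-hand side, I would invoke the standard formula
\[
\inv_v\bigl(\chi \cup \delta_{\mathrm{K}}(a)\bigr) \;=\; \chi(\Frob_v)\, v(a)/n \pmod{\bbZ}
\]
for $\chi$ unramified and $a \in K_v^\times$, where $\delta_{\mathrm{K}}$ is the Kummer connecting map and $v$ is the normalized valuation on $K_v$. This formula is just the compatibility of the cup product $H^1(K_v, \bbZ/n\bbZ) \otimes H^1(K_v, \mu_n) \to H^2(K_v, \mu_n) \hookrightarrow H^2(K_v, \bbG_m)$ (using Hilbert 90 for the last injection) with local class field theory and the tame symbol, given that our convention fixed in \S \ref{sec_notation} is that $\Art_{K_v}$ sends uniformizers to geometric Frobenius. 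Specializing to $a = \varpi_v^\lambda$, so that $v(a) = \lambda$, yields $\lambda\,\chi(\Frob_v)/n$, matching the right-hand side.

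I expect the main obstacle to be the bookkeeping of normalizations and signs. In particular, one must check that the identification of $\iota$ with the connecting map in the long exact sequence obtained by tensoring $0 \to X^\ast(T') \to X^\ast(T) \to Z^D \to 0$ with $\bbZ \to \bbQ \to \bbQ/\bbZ$ is compatible with the Cartier duality pairing $Z^D \otimes Z \to \bbG_m$ implicit in the cup product $\chi \cup \delta y$, and with the normalization of $\inv_v$. None of these compatibilities is mysterious, and the functoriality step reducing the general isogeny to the $\mu_n$ case is a routine consequence of the naturality of cup products, connecting homomorphisms, and local duality in both variables.
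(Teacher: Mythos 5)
Your argument is correct and follows essentially the same route as the paper: linearity in $\lambda$ plus Smith normal form to reduce to the Kummer sequence $0 \to \mu_n \to \bbG_m \xrightarrow{[n]} \bbG_m \to 0$, and then the standard compatibility $\inv_v(\chi \cup \delta(b)) = \chi(\Art_{K_v}(b))$ (the paper cites \cite[Ch. XIV, \S 1, Proposition 3]{Ser62} for this, together with the remark that $\Art_{K_v}$ is the reciprocal of the map normalized there). The only cosmetic difference is that the paper further specializes $\lambda$ to the identity cocharacter before computing, whereas you carry a general $\lambda$ through the valuation formula; this changes nothing.
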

\begin{proof}
The desired formula is linear in $\lambda$. We can choose isomorphisms $T \cong \bbG_m^n$, $T' \cong \bbG_m^n$ such that $X^\ast(f)$ is given by a diagonal matrix (because of the existence of Smith normal form). We can therefore reduce to the case $T = T' = \bbG_m$, $f(x) = x^m$ for some non-zero integer $m$, $\lambda : \bbG_m \to \bbG_m$ the identity map. In this case we must show the identity
\[ \inv_v(   \chi \cup \delta(\varpi_v)) = \chi(\Frob_v). \]
In fact we have for any $b \in K_v^\times$, $\chi : \Gamma_{K_v} \to \bbZ / m \bbZ$:
\[  \inv_v(  \chi \cup \delta(b)) = - \inv_v( b \cup \delta(\chi) ) = \chi(\Art_{K_v}(b)), \]
using \cite[Ch. XIV, \S 1, Proposition 3]{Ser62} (and noting that $\Art_{K_v}$ is the reciprocal of the reciprocity map defined there). This is the desired result.
\end{proof}
We have now reduced Theorem \ref{thm_main_theorem} to the case where $G$ is a simple adjoint group over $\bbF_q$. This is the case treated by the following theorem.
\begin{theorem}\label{thm_main_theorem_simply_connected_case}
Suppose that $\hG$ is simple and simply connected, and let $(\emptyset, (\rho_\lambda)_\lambda)$ be a compatible system of representations $\rho_\lambda : \Gamma_{K, \emptyset} \to \hG(\overline{\bbQ}_\lambda)$, each of which has Zariski dense image. Then there exists a finite Galois extension $K' / K$ and a cuspidal automorphic representation $\Pi$ of $G(\bbA_{K'})$ over $\overline{\bbQ}$ such that $\Pi^{G(\widehat{\cO}_{K'})} \neq 0$ and for every place $v$ of $K'$, and for each prime-to-$q$ place $\lambda$ of $\overline{\bbQ}$, $\rho_\lambda|_{W_{K'_v}}$ and $\Pi_v$ are matched under the unramified local Langlands correspondence.
\end{theorem}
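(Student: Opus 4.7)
The plan is the chutes-and-ladders strategy: find a finite Galois extension $K'/K$ over which the residual representation of $\rho_\lambda$ agrees with that of a known-automorphic bridge, then conclude by the automorphy lifting theorem proved in \S\ref{sec_R_equals_B}.

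First I would normalize the setup. By Proposition~\ref{prop_big_image_in_compatible_systems}, after replacing our compatible system by an equivalent one we may assume the $\rho_\lambda$ take values in $\hG(E_\lambda)$ for some number field $E \subset \overline{\bbQ}$, and that for a Dirichlet density one set $\mathcal{L}$ of rational primes $\ell$ split in $E$, if $\lambda \mid \ell$ then $\rho_\lambda(\Gamma_K) = \hG(\bbZ_\ell)$, so $\overline{\rho}_\lambda$ is surjective onto $\hG(\bbF_\ell)$. Using Dirichlet's theorem, fix an auxiliary prime $t \nmid pq \# W$ such that $q$ has exact order $h$ in $\bbF_t^\times$, and invoke Lemma~\ref{lem_Coxeter_parameters_rational_over_Z_l} to produce a Coxeter parameter $\phi : \Gamma_K \to \hG(\bbZ[\zeta_t])$ which is \emph{universally automorphic}: for every prime-to-$p$ place $\mu$ and every $K'/K$ linearly disjoint from the splitting field of $\phi^{\mathrm{ad}}$, the restriction $\phi_\mu|_{\Gamma_{K'}}$ is automorphic. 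Then choose $\ell \in \mathcal{L}$ large enough that $\ell > 2h-2$, $\ell \nmid \#W \cdot \#\phi(\Gamma_K)$, $\ell \neq t$, and $\ell$ satisfies the hypotheses of Theorem~\ref{thm_khare_wintenberger_method} and Corollary~\ref{cor_automorphy_lifting} applied to the adjoint representation; fix $\lambda \mid \ell$. By Proposition~\ref{prop_basic_properties_of_Galois_Coxeter_homomorphisms}, $\overline{\phi}_\lambda$ is again a Coxeter homomorphism, with image in a specific finite subgroup $H_0 \subset \hG(\bbF_\ell)$ that is strongly $\hG$-irreducible and $\hG$-abundant by Corollary~\ref{cor_strong_irreducibility_and_abundance_of_abstract_Coxeter_homomorphism}.

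The heart of the argument is to use Proposition~\ref{prop_potential_agreement_of_residual_representations} to build an extension $K'/K$ on which $\overline{\rho}_\lambda|_{\Gamma_{K'}}$ becomes $\hG(\overline{\bbF}_\ell)$-conjugate to the residual of an automorphic representation. Since $\overline{\phi}_\lambda$ restricted to the geometric fundamental group has image only the cyclic subgroup of order $t$, the hypothesis of geometric surjectivity in Proposition~\ref{prop_potential_agreement_of_residual_representations} prevents a direct congruence between $\overline{\rho}_\lambda$ and $\overline{\phi}_\lambda$. I would therefore route through an auxiliary ``big-image'' compatible system $\{\rho'_\mu\}$ defined on a base change $K_0/K$, built using Proposition~\ref{prop_construction_of_unramified_and_dense_compatible_system} (or Theorem~\ref{thm_potential_inverse_galois} combined with the characteristic-zero lift of Theorem~\ref{thm_khare_wintenberger_method} and embedding into a compatible system via Theorem~\ref{thm_existence_of_compatible_systems_containing_a_given_representation}), arranged so that $\overline{\rho}'_\lambda$ surjects onto $\hG(\bbF_\ell)$ both arithmetically and geometrically, while at some other auxiliary prime $\ell'$ the residual $\overline{\rho}'_{\lambda'}$ is $\hG(\overline{\bbF}_{\ell'})$-conjugate to the Coxeter residual $\overline{\phi}_{\lambda'}$. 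Two applications of Proposition~\ref{prop_potential_agreement_of_residual_representations} (first matching $\overline{\rho}_\lambda$ with $\overline{\rho}'_\lambda$ at $\ell$, then $\overline{\rho}'_{\lambda'}$ with $\overline{\phi}_{\lambda'}$ at $\ell'$) then produce a common extension $K'/K_0$ on which all three pairs become suitably conjugate, with $K'$ linearly disjoint from all previously chosen fields so that the $\hG$-abundance and Taylor--Wiles hypotheses are preserved under restriction.

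With such $K'$ in hand, the automorphy propagates along the compatible systems. By Lemma~\ref{lem_Coxeter_parameters_rational_over_Z_l}, $\phi_{\lambda'}|_{\Gamma_{K'}}$ is automorphic; by Corollary~\ref{cor_automorphy_lifting} applied at $\ell'$ (whose hypotheses come from the Coxeter structure via Corollary~\ref{cor_strong_irreducibility_and_abundance_of_abstract_Coxeter_homomorphism}), the congruent $\rho'_{\lambda'}|_{\Gamma_{K'}}$ is automorphic, which by the compatible-system structure of $\{\rho'_\mu\}$ and Corollary~\ref{cor_equivalence_of_representations_with_similar_trace} transfers automorphy to every $\rho'_\mu|_{\Gamma_{K'}}$, in particular to $\rho'_\lambda|_{\Gamma_{K'}}$. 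A final application of Corollary~\ref{cor_automorphy_lifting} at $\ell$, using the congruence $\overline{\rho}_\lambda|_{\Gamma_{K'}} \cong \overline{\rho}'_\lambda|_{\Gamma_{K'}}$, yields a cuspidal automorphic representation $\Pi$ of $G(\bbA_{K'})$ matching $\rho_\lambda|_{\Gamma_{K'}}$ everywhere locally; compatibility across all $\lambda$ is then automatic from the definition of a compatible system together with the Chebotarev density theorem.

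The main obstacle is the zig-zag Moret-Bailly step of the second paragraph: one must simultaneously arrange geometric surjectivity of the sources at each bridge, linear disjointness with every previously constructed auxiliary field (to preserve strong $\hG$-irreducibility, $\hG$-abundance, and the Taylor--Wiles hypotheses after base change), unramifiedness of all intermediate representations, and ensure that the primes $\ell, \ell'$ avoid the density-zero exceptional sets at every stage while remaining very good characteristics for $\hG$ and satisfying the numerical bounds required by Theorem~\ref{thm_R_equals_B} and Theorem~\ref{thm_khare_wintenberger_method}. Meeting these constraints demands a careful sequential construction—first $\ell$, then $K_0$ and $\rho'$, then $\ell'$ (depending on $K_0$), then the final extension $K'$—in the spirit of the diagram of \cite[pp.~1136--1137]{Ell05}.
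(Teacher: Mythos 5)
Your overall architecture matches the paper's: a universally automorphic Coxeter seed, an auxiliary big-image compatible system as a bridge, Moret-Bailly to align residual representations, and two applications of Corollary \ref{cor_automorphy_lifting} linked by the compatible-system structure of the bridge; you also correctly identify that the Coxeter parameter cannot itself serve as the Moret-Bailly source because its geometric image is too small. The gap is in how you manufacture the two congruences. You propose either to build the auxiliary system $\{\rho'_\mu\}$ so that $\overline{\rho}'_{\lambda'}$ is already conjugate to $\overline{\phi}_{\lambda'}$, or to run Proposition \ref{prop_potential_agreement_of_residual_representations} twice and obtain ``a common extension $K'$''. Neither works as stated. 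For the first: no tool in the paper produces a compatible system with a prescribed Coxeter residual at $\ell'$ and geometrically surjective residual at $\ell$; lifting $\overline{\phi}_{\lambda'}$ by Theorem \ref{thm_khare_wintenberger_method} gives no control on the Zariski closure of the image (the residual image is a small finite group, so Proposition \ref{prop_large_residual_image_implies_zariski_dense} does not apply), and without Zariski density you cannot invoke Proposition \ref{prop_big_image_in_compatible_systems} to control the residual at the second prime. For the second: two separate applications of Proposition \ref{prop_potential_agreement_of_residual_representations} produce two different extensions and, crucially, two different embeddings $\beta_1, \beta_2 : F_0 \hookrightarrow K'$; the automorphy chain requires the congruence at $\ell'$ (with $\overline{\phi}_{\lambda'}$) and the congruence at $\ell$ (with $\overline{\rho}_{\lambda}$) to hold for pullbacks of $\rho'$ along the \emph{same} embedding, since automorphy of $\beta_1^\ast \rho'_{\lambda'}$ propagates through the compatible system only to $\beta_1^\ast \rho'_{\lambda}$, not to $\beta_2^\ast \rho'_{\lambda}$. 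Running the second application after the first does not repair this, because the pullback $\beta_1^\ast \overline{\rho}'_{\lambda}$ to the intermediate field need no longer be geometrically surjective.

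The paper resolves this with a \emph{single} application of Proposition \ref{prop_potential_agreement_of_residual_representations} for the product group $H = \hG(\bbF_{l_0}) \times \hG(\bbF_{l_1})$, taking $\varphi = \overline{\rho}_{\lambda_0}|_{\Gamma_{K_0}} \times \overline{\phi}_{\lambda_1}$ and $\psi = \overline{R}_{\lambda_0}|_{\Gamma_{F_0}} \times \overline{R}_{\lambda_1}|_{\Gamma_{F_0}}$; surjectivity of $\psi$, even on the geometric fundamental group, is checked via Goursat's lemma and perfectness, which is exactly why $l_0, l_1$ are chosen so that $\hG(\bbF_{l_0})$ and $\hG(\bbF_{l_1})$ are perfect with no isomorphic non-trivial quotients. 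This yields one $K'$ and one $\beta$ realizing both congruences simultaneously. A secondary point: your choice of $t$ with $o_t(q) = h$ is not quite enough. The paper arranges $h^{b+1} \mid o_t(q)$ and passes to $K_0 = K \cdot \bbF_{q^\alpha}$ precisely so that $[\bbF_{q^\alpha}(\zeta_{l_1}) : \bbF_{q^\alpha}]$ is prime to $h$, which is what guarantees that $\phi|_{\Gamma_{K'(\zeta_{l_1})}}$ is still a Coxeter parameter and hence that the $\hG$-abundance hypothesis of Theorem \ref{thm_R_equals_B} survives the cyclotomic base change occurring in the Taylor--Wiles step.
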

\begin{proof}
Let $Y$ be another geometrically connected curve over $\bbF_q$ and let $F = \bbF_q(Y)$. After possibly replacing $Y$ by a finite cover, we can find a  compatible system $(\emptyset, (R_\lambda)_\lambda)$ of everywhere unramified and Zariski dense $\hG$-representations of $\Gamma_F$ (apply Proposition \ref{prop_construction_of_unramified_and_dense_compatible_system}). By Proposition \ref{prop_big_image_in_compatible_systems}, we can replace $(\rho_\lambda)_\lambda$ and $(R_\lambda)_\lambda$ by equivalent compatible systems and find a number field $E$ and a set $\cL'$ of rational primes with the following properties:
\begin{itemize}
\item For each place $\lambda$ of $\overline{\bbQ}$, both $\rho_\lambda$ and $R_\lambda$ take values in $\hG(E_\lambda)$.
\item The set $\cL'$ has Dirichlet density 0. If $l \neq p$ is a rational prime split in $E$, and $l \not\in \cL'$, and $\lambda$ lies above $l$, then both $R_\lambda$ and $\rho_\lambda$ have image equal to $\hG(\bbZ_l)$. 
\end{itemize}
We recall that $h$ denotes the Coxeter number of $\hG$. The group $\Gal(E(\zeta_{h^\infty}) / E)$ embeds naturally (via the cyclotomic character) as a finite index subgroup of $\prod_{r | h} \bbZ_r^\times$; we fix an integer $b \geq 1$ such that it contains the subgroup of elements congruent to $1 \text{ mod } h^b$. Let $t > \# W$ be a prime such that $h^{b+1}$ divides the multiplicative order of $q \text{ mod } t$; there exist infinitely many such primes, by the main theorem of \cite{Mor05}.

By the Chebotarev density theorem, we can find primes $l_0, l_1$ not dividing $q$ and satisfying the following conditions:
\begin{itemize}
\item $l_0$ splits in $E$, $l_0 > \# W$, and $l_0 \not\in \cL'$.
\item $l_1$ splits in $E(\zeta_t)$, $l_1 > t$, and $l_1 \not\in \cL'$.
\item If $r | h$ is a prime, then $l_1 \equiv 1 \text{ mod }h^b$ but $l_1 \not\equiv 1 \text{ mod }r h^b$.
\item The groups $\hG(\bbF_{l_0})$ and $\hG(\bbF_{l_1})$ are perfect and have no isomorphic non-trivial quotients.
\end{itemize}
In particular, $l_0$ and $l_1$ are both very good characteristics for $\hG$. If $r$ is a prime and $g$ is an integer prime to $r$, let $o_r(g)$ denote the order of the image of $g$ in $\bbF_r^\times$. Thus $o_t(q)$ is divisible by $h^{b+1}$. Let $\alpha = o_t(q) / h$, so that $o_t(q^\alpha) = h$ and $q^\alpha \text{ mod }t$ is a primitive $h^\text{th}$ root of unity, and $\alpha$ is divisible by $h^b$. We now observe that:
\begin{itemize}
\item The degree $[ \bbF_{q^\alpha}(\zeta_{l_1}) : \bbF_{q^\alpha} ]$ is prime to $h$.
\end{itemize}
Indeed, the degree of this field extension equals $o_{l_1}(q^\alpha)$, which in turn divides $o_{l_1}(q^{h^b})$, which itself in turn divides $(l_1 - 1) / h^b$. This quantity is prime to $h$, by construction. 

Let $K_0 = K \cdot \bbF_{q^\alpha}$. We can now apply Lemma \ref{lem_Coxeter_parameters_rational_over_Z_l} to obtain a Coxeter parameter $\phi : \Gamma_{K_0} \to \hG(\overline{\bbQ})$ satisfying the following conditions:
\begin{itemize}
\item $\phi$ takes values in $\hG(\bbZ[\zeta_t])$.
\item For any prime-to-$p$ place $\lambda$ of $\overline{\bbQ}$ and for any separable extension $K' / K_0$ linearly disjoint from the extension of $K_0$ cut out by $\phi^\text{ad}$ such that $\phi|_{\Gamma_{K'}}$ is everywhere unramified, the composite $\phi_\lambda|_{\Gamma_{K'}} : \Gamma_{K'} \to \hG(\bbZ[\zeta_t]) \to \hG(\overline{\bbQ}_\lambda)$ is automorphic (that is, associated to a prime ideal $\frp$ of the algebra $\cB(G(\widehat{\cO}_{K'}), \overline{\bbQ}_\lambda)$ of excursion operators). 
\end{itemize}
Fix places $\lambda_0, \lambda_1$ of $\overline{\bbQ}$ of characteristics $l_0, l_1$, respectively. We see that the following additional condition is satisfied:
\begin{itemize}
\item Let $K' / K_0$ be a separable extension, linearly disjoint from the extension of $K_0$ cut out by $\phi^\text{ad}$. Then $\overline{\phi}_{\lambda_1}(\Gamma_{K'(\zeta_{l_1})})$ is a $\hG$-abundant subgroup of $\hG(\overline{\bbF}_{l_1})$, and $\overline{\phi}_{\lambda_1}|_{\Gamma_{K'}}$ is strongly $\hG$-irreducible.
\end{itemize}
Indeed, it suffices to show that the extension $K'(\zeta_{l_1}) / K_0$ is linearly disjoint from the extension $M/K_0$ cut out by $\phi^\text{ad}$, as then $\phi|_{\Gamma_{K'(\zeta_{l_1})}}$ is a Coxeter parameter (Proposition \ref{prop_basic_properties_of_Galois_Coxeter_homomorphisms}) and we can appeal to Corollary \ref{cor_strong_irreducibility_and_abundance_of_abstract_Coxeter_homomorphism}. To show this disjointness, note that the map $\Gamma_{K'} \to \Gal(M/K_0)$ is surjective, by construction; the image of $\Gamma_{K'(\zeta_{l_1})}$ in $\Gal(M/K_0)$ is a normal subgroup with abelian quotient of order dividing $o_{l_1}(q^\alpha)$. The abelianization of $\Gal(M/K_0)$ is cyclic of order $h$. Using that the $o_{l_1}(q^\alpha)$ is prime to $h$, we find that $\Gamma_{K'(\zeta_{l_1})} \to \Gal(M/K_0)$ is surjective and hence that the extensions $K'(\zeta_{l_1}) / K_0$ and $M / K_0$ are indeed linearly disjoint, as required.

Let $F_0 = F \cdot \bbF_{q^\alpha}$. We now apply Proposition \ref{prop_potential_agreement_of_residual_representations} with the following data:
\begin{itemize}
\item $H = \hG(\bbF_{l_0}) \times \hG(\bbF_{l_1})$.
\item $\varphi = \overline{\rho}_{\lambda_0}|_{\Gamma_{K_0}} \times \overline{\phi}_{\lambda_1}$.
\item $\psi = \overline{R}_{\lambda_0}|_{\Gamma_{F_0}} \times \overline{R}_{\lambda_1}|_{\Gamma_{F_0}}$. Note that $\psi$ is surjective, by Goursat's lemma. It is even surjective after restriction to the geometric fundamental group, because $H$ is perfect.
\item $L / K_0$ is the extension cut out by $\varphi$.
\end{itemize}
We obtain a finite Galois extension $K' / K_0$ and an $\bbF_{q^\alpha}$-embedding $\beta : F_0 \hookrightarrow K'$ satisfying the following conditions:
\begin{itemize}
\item The extension $K' / K_0$ is linearly disjoint from $L / K_0$, and $K' \cap \overline{\bbF}_q = \bbF_{q^\alpha}$.
\item The homomorphisms $\overline{\rho}_{\lambda_0}|_{\Gamma_{K'}}$ and $\beta^\ast \overline{R}_{\lambda_0}$ are $\hG(\bbF_{l_0})$-conjugate.
\item The homomorphisms $\overline{\phi}_{\lambda_1}|_{\Gamma_{K'}}$ and $\beta^\ast \overline{R}_{\lambda_1}$ are $\hG(\bbF_{l_1})$-conjugate.
\item The homomorphism $\phi_{\lambda_1}|_{\Gamma_{K'}}$ is everywhere unramified.
\end{itemize}
The first three points follow from Proposition \ref{prop_potential_agreement_of_residual_representations}; after possibly enlarging the field $K'$, we can ensure the fourth point also holds. Consequently $\phi_{\lambda_1}|_{\Gamma_{K'}}$ is automorphic. 

We are now going to apply our automorphy lifting theorem to $\phi_{\lambda_1}|_{\Gamma_{K'}}$. We first need to make sure that the hypotheses (i) -- (iv) at the beginning of \S \ref{sec_R_equals_B} are satisfied. The first condition ($l_1 \nmid \# W$) is satisfied by construction. The remaining conditions are satisfied because $\phi|_{\Gamma_{K'}}$ is a Coxeter parameter, and remains so after restriction to $\Gamma_{K'(\zeta_{l_1})}$, by construction. Corollary \ref{cor_automorphy_lifting} now applies, and we deduce that there exists an everywhere unramified cuspidal automorphic representation $\Pi$ of $G(\bbA_{K'})$ over $\overline{\bbQ}$ which corresponds everywhere locally to the representation $\beta^\ast R_{\lambda_1}$, hence to the representation $\beta^\ast R_{\lambda_0}$. The representation $\beta^\ast R_{\lambda_0} : \Gamma_{K'} \to \hG(\overline{\bbQ}_{\lambda_0})$ has Zariski dense image, so Lemma \ref{lem_zariski_dense_and_classically_automorphic_implies_lafforgue_automorphic} implies that $\beta^\ast R_{\lambda_0}$ is automorphic in the sense of Definition \ref{def_automorphic_galois_representation}.

On the other hand, the residual representation of $\beta^\ast R_{\lambda_0}$ is $\hG(\overline{\bbF}_{l_0})$-conjugate to that of $\overline{\rho}_{\lambda_0}|_{\Gamma_{K'}}$, which has image $\hG(\bbF_{l_0})$. We can again apply Corollary \ref{cor_automorphy_lifting} to deduce the existence of an everywhere unramified cuspidal automorphic representation $\pi$ of $G(\bbA_{K'})$ over $\overline{\bbQ}$ which corresponds everywhere locally to the representation $\rho_{\lambda_0}|_{\Gamma_{K'}}$. Since every representation in the compatible family $(\emptyset, \rho_\lambda|_{\Gamma_{K'}})$ has Zariski dense image, the theorem now follows from the existence of $\pi$ and another application of Lemma \ref{lem_zariski_dense_and_classically_automorphic_implies_lafforgue_automorphic}.
\end{proof}
\subsection{Descent and a conjectural application}

In this section, we discuss informally a conjectural application of Theorem \ref{thm_main_theorem} that links descent and the local Langlands conjecture, and which was our initial motivation for studying these problems. Let $G$ be a split semisimple group over $\bbF_q$, and let $l$ be a prime not dividing $q$.

Let $K = \bbF_q(X)$ be the function field of a smooth, geometrically connected curve, and let $v$ be a place of $K$.  If $\pi$ is a cuspidal automorphic representation of $G(\bbA_K)$, then the work of V. Lafforgue (see \S \ref{sec_summary_of_lafforgue}) associates to $\pi$ at least one continuous, $\hG$-completely reducible representation $\rho_\pi : \Gamma_K \to \hG(\overline{\bbQ}_l)$, which is unramified at those places where $\pi$ is, and which is compatible with $\pi_v$ under the unramified local Langlands correspondence at such places.

If $v$ is a place at which $\pi_v$ is ramified, then we still obtain a $\hG$-completely reducible representation $\rho_\pi|_{W_{K_v}}^\text{ss} : W_{K_v} \to \hG(\overline{\bbQ}_l)$. It is natural to expect that this representation depends only on $\pi_v$, and not on the choice of cuspidal automorphic representation $\pi$ which realizes $\pi_v$ as a local factor; and also that this representation depends only on $K_v$ as a local field, and not on its realization as a completion of the global field $K$.

These expectations have been announced by Genestier--Lafforgue \cite{Gen17}. We have already cited part of this work in Theorem \ref{thm_local_global_compatibility}, which played an essential role in the proof of our main automorphy lifting theorem (Theorem \ref{thm_R_equals_B}). If $F = \bbF_q((t))$, then this leads to a  map
\[ \text{LLC}^\text{ss}_F : \left\{ \begin{array}{cc} \text{Irreducible admissible representations} \\ \text{of } G(F), \text{ up to isomorphism} \end{array} \right\} \rightarrow  \left\{ \begin{array}{c} \hG\text{-completely reducible homomorphisms }\\W_{F} \to \widehat{G}(\overline{\bbQ}_l),\text{ up to }\widehat{G}\text{-conjugation} \end{array} \right\}, \]
which deserves to be called the semisimple local Langlands correspondence over $F$. 
The question we would like to answer is whether or not this map $\text{LLC}^\text{ss}_F$ is surjective. The most important case is whether all of the $\hG$-irreducible homomorphisms $W_F \to \hG(\overline{\bbQ}_l)$ appear in the image; one expects to be able to reduce to this case. In this case, we have the following proposition.
\begin{proposition}\label{prop_application_of_descent}
Assume Conjecture \ref{conj_existence_of_descent} below. Then the image of the map $\text{LLC}^\text{ss}_F$ contains all $\hG$-irreducible homomorphisms. 
\end{proposition}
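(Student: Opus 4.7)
The plan is to show that an arbitrary $\hG$-irreducible parameter $\phi_0 : W_F \to \hG(\overline{\bbQ}_l)$ arises as $\mathrm{LLC}^{\mathrm{ss}}_F(\pi_v)$ for some cuspidal automorphic $\pi$ on a suitable global field, by combining globalization, potential automorphy, hypothetical descent, and the local-global compatibility of Genestier--Lafforgue. The proof will proceed in four steps.

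\emph{Step 1: Globalize $\phi_0$.} First I would produce a global function field $K = \bbF_q(X)$, a place $v$ of $K$ with an $\bbF_q$-isomorphism $K_v \cong F$, and a continuous homomorphism $\rho : \Gamma_K \to \hG(\overline{\bbQ}_l)$ with Zariski dense image whose restriction to $W_{K_v}$ is $\hG(\overline{\bbQ}_l)$-conjugate to $\phi_0$. This is the key input. After choosing any smooth projective curve $X/\bbF_q$ with a place $v$ realizing $F$, I would fix an absolutely $\hG$-irreducible residual representation $\overline{\rho} : \Gamma_{K, S} \to \hG(\overline{\bbF}_l)$ whose restriction at $v$ realizes $\overline{\phi}_0$ (using Theorem \ref{thm_potential_inverse_galois} to realize arbitrary local behaviour), and then use the lifting results of \S \ref{sec_galois_representations_and_their_deformation_theory}, in particular Theorem \ref{thm_khare_wintenberger_method}, to produce a characteristic-zero lift $\rho$ whose restriction to $W_{K_v}$ agrees with $\phi_0$ up to conjugation. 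To guarantee Zariski density of $\rho$, I would either arrange the residual image to be large enough that Proposition \ref{prop_large_residual_image_implies_zariski_dense} applies, or deform $\rho$ further at auxiliary Taylor--Wiles places to force the image to escape every proper closed subgroup; $\hG$-irreducibility of $\phi_0$ ensures that this can be done without destroying the local condition at $v$.

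\emph{Step 2: Potential automorphy.} Having constructed $\rho$, I would apply Theorem \ref{thm_main_theorem} (extended mildly to allow the controlled ramification introduced in Step 1 at places away from $v$) to obtain a finite Galois extension $K'/K$ and a cuspidal automorphic representation $\Pi'$ of $G(\bbA_{K'})$ whose Satake parameters match $\rho|_{\Gamma_{K'}}$ at every unramified place of $K'$.

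\emph{Step 3: Descent.} Next I would invoke Conjecture \ref{conj_existence_of_descent} to descend automorphy from $K'$ to $K$: the assumption is precisely that a $\hG$-completely reducible Galois representation which becomes automorphic after a finite Galois base change is already automorphic over the original field. Applied to $\rho$ and $\Pi'$, this yields a cuspidal automorphic representation $\pi$ of $G(\bbA_K)$ attached to $\rho$ in the sense of V.~Lafforgue, i.e.\ corresponding to a prime ideal of the excursion algebra whose associated Galois representation is (a conjugate of) $\rho$.

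\emph{Step 4: Local-global compatibility at $v$.} Finally, the construction of $\mathrm{LLC}^{\mathrm{ss}}_F$ reviewed just before the proposition, together with the Genestier--Lafforgue compatibility cited in Theorem \ref{thm_local_global_compatibility}, gives $\mathrm{LLC}^{\mathrm{ss}}_F(\pi_v) = (\rho|_{W_{K_v}})^{\mathrm{ss}}$ up to $\hG(\overline{\bbQ}_l)$-conjugation, which by construction equals $\phi_0$. Hence $\phi_0$ lies in the image of $\mathrm{LLC}^{\mathrm{ss}}_F$, as required.

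The main obstacle is Step 1: producing a global Zariski-dense $\rho$ with prescribed local behaviour at a single place is delicate, because Theorem \ref{thm_khare_wintenberger_method} controls only the residual representation globally, and one must argue carefully that the local-at-$v$ condition is compatible with the global deformation problem and does not force $\rho$ into a proper closed subgroup of $\hG$. Once globalization is in hand, the remaining steps are either already proved (Step 2, Step 4) or explicitly assumed (Step 3).
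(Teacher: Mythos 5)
Your Steps 1, 2 and 4 match the paper's argument, but Step 3 contains a genuine gap. Conjecture \ref{conj_existence_of_descent} is stated only for \emph{cyclic} extensions $E/K$, so it can be iterated only through the abelian layers of a \emph{soluble} Galois extension. The extension $K'/K$ produced by potential automorphy is a general finite Galois extension whose Galois group typically contains large non-soluble subgroups (e.g.\ $\hG(\bbF_{l_0})\times\hG(\bbF_{l_1})$ from the Moret-Bailly step), so you cannot descend from $K'$ all the way down to $K$ as you propose; your paraphrase of the conjecture as applying to an arbitrary ``finite Galois base change'' is not what is assumed.

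The fix — and the one genuinely clever point in the argument — is that you do not need to descend to $K$ at all, only to some global field whose completion at a suitable place is still isomorphic to $F$. Let $w_0$ be a place of $K'$ above $v_0$ and let $K_0$ be the fixed field of the decomposition group $\Gal(K'_{w_0}/K_{v_0})$ inside $\Gal(K'/K)$. Then $K'/K_0$ is Galois with group isomorphic to $\Gal(K'_{w_0}/K_{v_0})$, which is soluble because Galois groups of extensions of non-archimedean local fields are soluble; moreover $w_0$ is the unique place of $K'$ above $u_0 = w_0|_{K_0}$ and $K_{0,u_0}\cong K_{v_0}\cong F$. Repeatedly applying the cyclic Conjecture \ref{conj_existence_of_descent} through the abelian layers of $K'/K_0$ produces a cuspidal automorphic representation $\pi_0$ of $G(\bbA_{K_0})$ attached to $\sigma|_{\Gamma_{K_0}}$, and then your Step 4 applies verbatim at the place $u_0$ of $K_0$ in place of the place $v$ of $K$. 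Without this reduction to the decomposition field, the descent step simply does not go through.
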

Conjecture \ref{conj_existence_of_descent} is as follows:
\begin{conjecture}\label{conj_existence_of_descent}
Let $E/K$ be a cyclic extension of global fields, as above, and let $\rho : \Gamma_K \to \hG(\overline{\bbQ}_l)$ be a continuous, almost everywhere unramified homomorphism of Zariski dense image. Suppose that there exists a cuspidal automorphic representation $\pi_E$ of $G(\bbA_E)$ over $\overline{\bbQ}_l$ such that at each place of $E$ at which $\pi_E$ and $\rho|_{\Gamma_E}$ are unramified, they correspond under the unramified local Langlands correspondence.

Then there exists a cuspidal automorphic representation $\pi_K$ of $G(\bbA_K)$ such that at each place of $K$ at which $\pi_K$ and $\rho$ are unramified, they correspond under the unramified local Langlands correspondenec. 
\end{conjecture}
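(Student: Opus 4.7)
The plan is to reduce Conjecture~\ref{conj_existence_of_descent} to the automorphy lifting theorem via a carefully chosen residual representation, and then to confront the real difficulty, which is a cyclic descent at the residual level. First, I would apply Theorem~\ref{thm_existence_of_compatible_systems_containing_a_given_representation} to embed $\rho = \rho_{\lambda_0}$ in a compatible system $(\rho_\lambda)_\lambda$ of Zariski dense $\hG$-valued representations of $\Gamma_K$. By Proposition~\ref{prop_big_image_in_compatible_systems}, after passing to an equivalent system we may choose a density-one set of primes $l$ such that, for $\lambda$ above $l$, the representation $\rho_\lambda$ takes values in $\hG(\bbZ_l)$ with residual image equal to $\hG(\bbF_l)$. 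For any such $\lambda$ with $l$ sufficiently large, the residual representation $\bar\rho_\lambda : \Gamma_K \to \hG(\overline{\bbF}_l)$ satisfies hypotheses (i)--(iv) at the start of Section~\ref{sec_R_equals_B}, so that Corollary~\ref{cor_automorphy_lifting} applies over $K$.

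It therefore suffices to produce one such $\lambda$ for which $\bar\rho_\lambda$ is automorphic over $K$: automorphy lifting then propagates to $\rho_\lambda$, and Proposition~\ref{prop_zariski_dense_and_locally_conjugate_implies_globally_conjugate}, coupled with the uniqueness of compatible systems in Theorem~\ref{thm_existence_of_compatible_systems_containing_a_given_representation}, transfers the conclusion back to $\rho = \rho_{\lambda_0}$. The hypothesis of the conjecture, specialised to this $\lambda$, supplies automorphy of $\bar\rho_\lambda|_{\Gamma_E}$ via the maximal ideal $\ffrm_E \subset \cB(G(\widehat{\cO}_E), \cO)$ cut out by $\pi_E$, and moreover this maximal ideal is $\Gal(E/K)$-stable because $\bar\rho_\lambda$ itself is defined over $K$. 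The task reduces to exhibiting a maximal ideal $\ffrm_K \subset \cB(G(\widehat{\cO}_K), \cO)$ whose Genestier--Lafforgue pseudocharacter recovers $\tr\bar\rho_\lambda$, that is, a \emph{cyclic descent} of prime ideals of the integral excursion algebra mirroring on the automorphic side the cyclic descent of Galois pseudocharacters.

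The hard part is precisely this cyclic descent of excursion data. For $\hG = \GL_n$ it is achieved by Arthur--Clozel cyclic base change~\cite{Art89}; no such tool is currently available for a general reductive $\hG$. An indirect route in the spirit of this paper would be to use the chutes-and-ladders argument of Section~\ref{sec_potential_automorphy}, together with Proposition~\ref{prop_potential_agreement_of_residual_representations}, to produce a Coxeter parameter $\phi : \Gamma_K \to \hG(\overline{\bbQ})$ and an auxiliary prime $l_1$ for which $\bar\phi_{l_1}$ is $\hG(\overline{\bbF}_{l_1})$-conjugate to $\bar\rho_{l_1}|_{\Gamma_K}$ (not merely after restriction to a proper extension). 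By Lemma~\ref{lem_Coxeter_parameters_rational_over_Z_l}, the universal automorphy of $\phi$ would then deliver residual automorphy of $\bar\rho_{l_1}$ over $K$ directly, with the cyclic hypothesis on $E/K$ serving only the auxiliary role of ensuring that such a matching $\phi$ can be chosen $\Gal(E/K)$-equivariantly from the compatible system over $E$.

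Making this indirect route work requires a global matching statement for mod~$l$ Coxeter parameters over $K$ itself, not merely after extension of scalars, which at present we do not know how to prove: Proposition~\ref{prop_potential_agreement_of_residual_representations} is intrinsically only a \emph{potential} statement. Consequently, I do not expect a proof of Conjecture~\ref{conj_existence_of_descent} in full generality without genuinely new automorphic input---either a trace-formula-based cyclic descent for general reductive $G$ (presumably via twisted endoscopy, in the spirit of~\cite{Art13} but over function fields), or a refinement of Genestier--Lafforgue's work capable of detecting $\Gal(E/K)$-invariance at the level of integral excursion pseudocharacters. The plan above at least succeeds in reducing the conjecture to this descent problem for a single maximal ideal of $\cB(G(\widehat{\cO}_K), \cO)$, which isolates the essential remaining difficulty.
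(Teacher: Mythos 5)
This statement is stated in the paper as Conjecture \ref{conj_existence_of_descent} precisely because the authors do not prove it: it is used only as an unproven hypothesis in Proposition \ref{prop_application_of_descent}, and the surrounding discussion makes clear that it is the function-field analogue of cyclic descent, known for $\GL_n$ by Arthur--Clozel but open for general $G$. You have correctly recognised this, and your diagnosis of the obstruction is the right one: the paper's own machinery (potential automorphy via Coxeter parameters, Proposition \ref{prop_potential_agreement_of_residual_representations}, Moret-Bailly) is intrinsically potential and cannot produce the required matching over $K$ itself, so some genuinely new automorphic input (a trace-formula descent, or a strengthening of the Genestier--Lafforgue local-global results detecting $\Gal(E/K)$-invariance of excursion data) is needed. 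Your preliminary reduction --- replacing descent of a characteristic-zero cuspidal representation by descent of a single maximal ideal of the integral excursion algebra at one well-chosen place $\lambda$, with Corollary \ref{cor_automorphy_lifting} and Proposition \ref{prop_zariski_dense_and_locally_conjugate_implies_globally_conjugate} handling the passage back to $\rho$ --- is consistent with the paper's methodology, though note that for a general semisimple $G$ you would first need the reduction to the adjoint/simply-connected case as in the proof of Theorem \ref{thm_main_theorem}, since Proposition \ref{prop_big_image_in_compatible_systems} requires $\hG$ simply connected. Since there is no proof in the paper to compare against, there is nothing further to check.
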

We now sketch the proof of Proposition \ref{prop_application_of_descent}. Let $\sigma_0 : W_F \to \hG(\overline{\bbQ}_l)$ be a $\hG$-irreducible homomorphism. Using similar techniques to those appearing in the proof of Proposition \ref{prop_construction_of_unramified_and_dense_compatible_system}, one can find the following:
\begin{itemize}
\item A global field $K = \bbF_q(X)$, together with a place $v_0$ and an $\bbF_q$-isomorphism $F \cong K_{v_0}$.
\item A continuous representation $\sigma : \Gamma_K \to \hG(\overline{\bbQ}_l)$ of Zariski dense image which is unramified outside $v_0$ and which satisfies $\sigma|_{W_{K_{v_0}}} \cong \sigma_0$. 
\end{itemize}
A slight generalization of Theorem \ref{thm_main_theorem} then allows us to find as well:
\begin{itemize}
\item A Galois extension $K' / K$ such that $\sigma|_{\Gamma_{K'}}$ is everywhere unramified.
\item An everywhere unramified cuspidal automorphic representation $\pi$ of $G(\bbA_{K'})$ such that at each place $v$ of $K'$, $\pi$ and $\sigma|_{\Gamma_{K'}}$ are related under the unramified local Langlands correspondence. 
\end{itemize}
Let $w_0$ be a place of $K'$ above $v_0$, and let $K_0$ denote the fixed field inside $K'$ of the decomposition group $\Gal(K'_{w_0} / K_{v_0})$. Let $u_0$ denote the place of $K_0$ below $w_0$. Then the extension $K' / K_0$ is Galois; $w_0$ is the unique place of $K'$ above $u_0$; the inclusion $K_{v_0} \to K_{0,u_0}$ is an isomorphism; and the inclusion $\Gal(K'_{w_0} / K_{0,u_0}) \to \Gal(K' /K_0)$ is an isomorphism. In particular, the extension $K' / K_0$ is soluble.

We can now repeatedly apply Conjecture \ref{conj_existence_of_descent} to the abelian layers of this soluble extension to obtain the following:
\begin{itemize}
\item A cuspidal automorphic representation $\pi_0$ of $G(\bbA_{K_0})$ such that at every place where $\pi_0$ and $\sigma|_{\Gamma_{K_0}}$ are unramified, they are related by the unramified local Langlands correspondence. 
\end{itemize}
In particular, if $\rho_0 : \Gamma_{K_0} \to \hG(\overline{\bbQ}_l)$ denotes one of the representations associated to $\pi_0$ by V. Lafforgue, then $\rho_0$ has Zariski dense image, is uniquely determined by $\pi_0$, and is $\hG(\overline{\bbQ}_l)$-conjugate to $\sigma|_{\Gamma_{K_0}}$ (see Lemma \ref{lem_zariski_dense_and_classically_automorphic_implies_lafforgue_automorphic}).

We can now conclude. The above discussion shows that the representation $\sigma|_{W_{K_{0, u_0}}}^\text{ss}$ is the image of $\pi_{0, u_0}$ under the map $ \text{LLC}^\text{ss}_{K_{0, u_0}}$. Now pulling back along the isomorphisms $F \cong K_{v_0} \cong K_{0, u_0}$ and using the identification $\sigma_0 \cong \sigma|_{W_{K_{0, u_0}}}^\text{ss}$ shows that the representation $\sigma_0$ is in the image of $\text{LLC}^\text{ss}_{F}$, as desired. 

\subsection{Existence of Whittaker models}

In this final section we sketch a variant of our main theorem. As usual, we let $X$ be a smooth, geometrically connected curve over $\bbF_q$, and let $K = \bbF_q(X)$. Let $G$ be a split semisimple group over $\bbF_q$, and fix a split maximal torus and Borel subgroup $T \subset B \subset G$. We recall that a character $\psi : N(\bbA_K) \to \overline{\bbQ}^\times$ is said to be generic if it is non-trivial on restriction to each simple root subgroup of $N(\bbA_K) / [N(\bbA_K), N(\bbA_K)] $ (cf. \cite{Jia07}).
\begin{definition}
Let $\pi$ be a cuspidal automorphic representation of $G(\bbA_K)$ over $\overline{\bbQ}$. We say that $\pi$ is globally generic if there exists an embedding $\phi : \pi \to C_{\text{cusp}}(\overline{\bbQ})$, a function $f \in \phi(\pi)$, and a generic character $\psi : N(K) \backslash N(\bbA_K) \to \overline{\bbQ}^\times$ such that the integral
\[ \int_{n \in N(K) \backslash N(\bbA_K)} f(n) \psi(n) \, dn \]
is non-zero.
\end{definition}
Globally generic automorphic representations play an important role in questions such as multiplicity one or the construction of L-functions. We are going to sketch a proof of the following result.
\begin{theorem}\label{thm_globally_generic_potential_automorphy}
Suppose given a $\hG$-compatible system $(\emptyset, (\rho_\lambda : \Gamma_K \to \hG(\overline{\bbQ}_\lambda))_\lambda)$ of continuous, everywhere unramified representations with Zariski dense image. Then there exists a finite extension $K'/ K$ and a globally generic, everywhere unramified, cuspidal automorphic representation $\Pi$ of $G(\bbA_{K'})$ over $\overline{\bbQ}$ such that $\rho|_{\Gamma_{K'}}$ and $\Pi$ are matched everywhere locally under the unramified local Langlands correspondence.
\end{theorem}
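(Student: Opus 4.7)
The plan is to rerun the proof of Theorem~\ref{thm_main_theorem_simply_connected_case} and to upgrade each use of Corollary~\ref{cor_automorphy_lifting} to its second, ``globally generic'' version. As in Theorem~\ref{thm_main_theorem}, we first reduce to the case that $\hG$ is simple and simply connected; global genericity is preserved under this reduction because the adjoint isogeny $\eta \colon G \to G^{\mathrm{ad}}$ restricts to an isomorphism on the unipotent radicals of Borels, so a generic character $\psi_H$ of $N_{G^{\mathrm{ad}}}(\bbA_{K'})$ pulls back to a generic character $\psi_G = \psi_H \circ \eta$ of $N_G(\bbA_{K'})$, and the Whittaker integral of $f_H \circ \eta$ against $\psi_G$ agrees, up to a finite constant, with that of $f_H$ against $\psi_H$.

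Now execute the chutes and ladders of Theorem~\ref{thm_main_theorem_simply_connected_case}, producing a finite Galois extension $K'/K$ and a Coxeter parameter $\phi$ for which both $\phi|_{\Gamma_{K'}}$ and the given compatible system $(\rho_\lambda|_{\Gamma_{K'}})$ are matched at the residual level with an auxiliary compatible system $(\beta^* R_\lambda)_\lambda$ pulled back from a second curve. The essential new step is the verification that the cuspidal automorphic representation $\Pi_\phi$ of $G(\bbA_{K'})$ associated to $\phi|_{\Gamma_{K'}}$ by Theorem~\ref{thm_automorphy_of_Coxeter_homomorphisms} can be chosen globally generic. At each place $v$ of $K'$, the local component $\Pi_{\phi, v}$ is an irreducible unramified principal series induced from a regular unramified character of the maximal torus (regularity of the Satake parameter follows from part~(iii) of Proposition~\ref{prop_basic_properties_of_Coxeter_elements}); in particular it admits a unique local Whittaker model and its spherical Whittaker integral is non-zero by the Casselman--Shalika formula. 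The global Whittaker period of the spherical cusp form produced by Braverman--Gaitsgory should decompose, by a geometric unfolding analogous to the classical Whittaker unfolding of an Eisenstein series (here legitimate because $\hG$-irreducibility of $\phi$ guarantees cuspidality without further projection), into the Euler product of these non-vanishing local Casselman--Shalika integrals. After enlarging the coefficient field $E$ and rescaling the generic character, this global integral may be arranged to be a $\varpi$-adic unit, verifying the hypothesis of Corollary~\ref{cor_automorphy_lifting}(2) at the prime $\lambda_1$.

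This produces a globally generic $\Pi_\phi$. By Theorem~\ref{thm_existence_of_compatible_systems_containing_a_given_representation} and Lemma~\ref{lem_zariski_dense_and_classically_automorphic_implies_lafforgue_automorphic}, $\Pi_\phi$ also matches $\beta^* R_{\lambda_0}$ locally everywhere, so its residual Galois representation coincides, up to $\hG(\overline{\bbF}_{l_0})$-conjugation, with that of $\rho_{\lambda_0}|_{\Gamma_{K'}}$. The globally generic form realizing $\Pi_\phi$ therefore supplies, after renormalization, an element of $C_{\mathrm{cusp}}(U, \cO)_\ffrm[\frp]$ (for $U = G(\widehat{\cO}_{K'})$, $\ffrm$ the common Hecke maximal ideal, and $\frp$ the minimal prime cutting out $\Pi_\phi$) whose Whittaker integral is a $\varpi$-adic unit. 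A second application of Corollary~\ref{cor_automorphy_lifting}(2), this time at $\lambda_0$, then produces a globally generic everywhere unramified cuspidal representation $\Pi$ of $G(\bbA_{K'})$ matching $\rho_{\lambda_0}|_{\Gamma_{K'}}$, and hence (by the uniqueness of compatible systems) the entire system $(\rho_\lambda|_{\Gamma_{K'}})_\lambda$. The main obstacle is the geometric Whittaker unfolding for the Braverman--Gaitsgory cusp form attached to a Coxeter parameter; the remaining adjustments (coefficient field and character rescaling) are routine, and everything else follows from the apparatus already developed in the paper.
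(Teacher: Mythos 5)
Your proposal correctly isolates the two ingredients — a Whittaker-normalized, \emph{integral} form realizing the automorphy of a Coxeter parameter, and the second part of Corollary \ref{cor_automorphy_lifting} as the mechanism for propagating global genericity through a congruence — and the paper likewise treats the first ingredient as a geometric black box (Theorem \ref{thm_Whittaker_models_for_Coxeter_parameters}, attributed to unpublished Zastava-space computations). But your plan to rerun the two-prime chutes-and-ladders argument of Theorem \ref{thm_main_theorem_simply_connected_case} verbatim has a genuine gap at the second rung. The hypothesis of Corollary \ref{cor_automorphy_lifting}(2) is not mere non-vanishing of a Whittaker period: it requires an element of $C_{\text{cusp}}(U,\cO)_{\ffrm}[\frp]$, integral at the relevant prime, whose Whittaker integral is a $\varpi$-adic \emph{unit}. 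Your first application of the corollary, at $\lambda_1$, outputs only a $\overline{\bbQ}_{l_1}$-valued form $F$ with $\int F\psi\,dn\neq 0$. To feed this into a second application at $\lambda_0$ you must produce an $l_0$-integral generator of the relevant eigenspace whose period is an $l_0$-adic unit, and "renormalization" does not achieve this: the primitive vector in the $\cO_{E_0}$-lattice may well have period divisible by $\varpi_{E_0}$, and there is no analogue of the geometric integrality statement for this new form. (A smaller slip: $\Pi_\phi$ matches $\phi_{\lambda_1}$, not $\beta^\ast R_{\lambda_0}$; the identification with the auxiliary compatible system only holds for the representation produced \emph{after} the first lifting step.)

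The paper circumvents exactly this obstruction by restructuring the argument so that Corollary \ref{cor_automorphy_lifting}(2) is applied only \emph{once}, with the unit-period hypothesis supplied directly by the geometric input. Concretely, it reverses the roles of the two curves: the Coxeter parameter $\phi$ is constructed on the auxiliary curve $Y$ with function field $F$, and Proposition \ref{prop_potential_agreement_of_residual_representations} is applied with $\varphi=\overline{\phi}_\lambda$ and $\psi=\overline{\rho}_\lambda|_{\Gamma_{K_0}}$, producing an extension $F'/F$ and an embedding $\beta\colon K_0\hookrightarrow F'$ such that $\overline{\phi}_\lambda|_{\Gamma_{F'}}$ and $\beta^\ast\overline{\rho}_\lambda$ are conjugate at a \emph{single} prime $l$. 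One then localizes at the Coxeter maximal ideal, invokes Theorem \ref{thm_R_equals_B}, and lifts once to $\beta^\ast\rho_\lambda$, taking $K'=F'$ viewed over $K$ via $\beta$. No auxiliary compatible system with large residual image, and no second prime, is needed. If you want to salvage your two-prime route you would have to prove that the unit-Whittaker-period property is inherited by integral generators of congruent Hecke eigensystems, which is not established anywhere in the paper and is not routine.
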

The two main points in the proof are Corollary \ref{cor_automorphy_lifting} and the following strengthening of Theorem \ref{thm_automorphy_of_Coxeter_homomorphisms}:
\begin{theorem}\label{thm_Whittaker_models_for_Coxeter_parameters}
Suppose that $G$ is simple and adjoint. Let $\phi : \Gamma_{K, \emptyset} \to \hG(\overline{\bbQ})$ be a Coxeter homomorphism, and let $\lambda$ be a place of $\overline{\bbQ}$ of residue characteristic $l \nmid q$. Suppose that $\phi(\Gamma_{K \cdot \overline{\bbF}_q})$ is contained in a conjugate of $\hT(\overline{\bbQ})$. Then we can find the following:
\begin{enumerate}
\item A coefficient field $E \subset \overline{\bbQ}_\lambda$.
\item A cuspidal automorphic representation $\pi$ of $G(\bbA_K)$ over $\overline{\bbQ}_l$ such that $\pi^{G(\widehat{\cO}_K)} \neq 0$.
\item An embedding $\phi : \pi \to C_{\text{cusp}}(\overline{\bbQ}_l)$ and a function $f \in C_\text{cusp}(G(\widehat{\cO}_K), \cO)$ such that $f$ spans $\phi(\pi)^{G(\widehat{\cO}_K)}$.
\item A generic character $\psi : N(K) \backslash N(\bbA_K) \to \overline{\bbZ}_l^\times$ such that 
\begin{equation}\label{eqn_whittaker_generic_coxeter_automorphic_form} \int_{n \in N(K) \backslash N(\bbA_K)} f(n) \psi(n) \, dn = 1. 
\end{equation}
\end{enumerate}
In particular, $\pi$ is globally generic. 
\end{theorem}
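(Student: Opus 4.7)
The plan is to extract the stronger Whittaker conclusion by inspecting the cuspidal automorphic form constructed in the proof of Theorem \ref{thm_automorphy_of_Coxeter_homomorphisms}. Recall that, by \cite[Theorem 2.2.8]{Bra02}, the spherical function $f_\pi$ generating $\pi^{G(\widehat{\cO}_K)}$ is obtained from the principal Eisenstein eigensheaf attached to the character $\chi$ of $T(\bbA_K)/T(K)T(\widehat{\cO}_K)$ corresponding, under Lemma \ref{lem_local_langlands_for_split_tori}, to the restriction $\phi|_{\Gamma_{K \cdot \overline{\bbF}_q}}$; the hypothesis that this restriction lies in a conjugate of $\hT(\overline{\bbQ})$ is precisely what makes this well-defined.

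First I would fix a generic unramified character $\psi : N(K)\backslash N(\bbA_K) \to \overline{\bbZ}_l^\times$, possible after enlarging the coefficient field $E$: since the abelianization of $N$ is a product of copies of $\bbG_a$ indexed by the simple roots, we may take $\psi$ to be a product of non-trivial additive characters of $\bbA_K/K$ with values in roots of unity of $\cO^\times$. Then I would compute the Whittaker period
\[c \;=\; \int_{N(K)\backslash N(\bbA_K)} f_\pi(n)\,\psi(n)\,dn\]
by the standard unfolding for principal Eisenstein series, which factors it as an Euler product $\prod_v W_v$; at each place $v$, the Casselman-Shalika formula gives
\[W_v \;=\; \prod_{\alpha \in \Phi^+}\frac{1 - q_v^{-1}\chi_v(\alpha^\vee(\varpi_v))}{1 - \chi_v(\alpha^\vee(\varpi_v))}.\]

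Next I would show that $c$ is a $\lambda$-adic unit. The denominators $1 - \chi_v(\alpha^\vee(\varpi_v))$ are units because $\chi_v(\alpha^\vee(\varpi_v))$ is a non-trivial root of unity: by Proposition \ref{prop_basic_properties_of_Coxeter_elements}(ii) the Coxeter element $w$ acts freely on $\Phi$, while the restriction of $\chi$ to the relevant cyclic subgroup is a non-trivial $w$-eigenvector of prime-to-$l$ order $t$. Grouping roots into $\langle w\rangle$-orbits of common size $h$, the global Euler product reorganizes into a finite product of Artin $L$-values $L(\mathrm{Ind}_{\langle w\rangle}^{\Gamma_K} \chi^{(\alpha)}, 0)$ attached to characters of an auxiliary cyclic extension; by Grothendieck's trace formula over $K$ these are polynomials in $q^{-1}$ with $\bbZ$-coefficients, whose specializations can be shown to lie in $\cO^\times$ using $l \nmid q$ and $l \nmid t$. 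Once $c \in \cO^\times$ is established, the form $c^{-1}f_\pi$ lies in $C_\text{cusp}(G(\widehat{\cO}_K),\cO)$, possibly after a further enlargement of $E$ to accommodate the integral structure coming from the sheaf-function dictionary applied to the Braverman-Gaitsgory eigensheaf, and satisfies the normalization (\ref{eqn_whittaker_generic_coxeter_automorphic_form}).

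The main obstacle will be verifying the $\lambda$-adic unit property of the global Euler product $c$. Local non-vanishing of each factor $W_v$ is essentially formal given the structure of Coxeter parameters, but controlling the mod-$\lambda$ behavior of the global product requires matching Braverman-Gaitsgory's normalizations with the standard Casselman-Shalika formula and ruling out trivial zeros of the resulting Artin $L$-functions at the relevant evaluation points. This step ties the geometric input of \cite{Bra02} to the abstract structure of Coxeter elements developed in \S\ref{sec_abstract_coxeter_parameters}, and is what allows the second half of Corollary \ref{cor_automorphy_lifting} to be invoked along the chutes-and-ladders argument in \S\ref{sec_potential_automorphy} to upgrade potential automorphy to potential \emph{generic} automorphy.
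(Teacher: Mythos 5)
Your computation rests on treating $f_\pi$ as a classical Borel Eisenstein series attached to a Hecke character $\chi$ of $T(K)\backslash T(\bbA_K)$, and this is where the argument breaks down. No such character exists: the hypothesis is only that $\phi(\Gamma_{K\cdot\overline{\bbF}_q})$ lands in a conjugate of $\hT(\overline{\bbQ})$, while a Frobenius lift maps to $\dot w \in N_{\hG}(\hT)$ for a Coxeter element $w$. The resulting $\hT$-local system on $X_{\overline{\bbF}_q}$ is Frobenius-equivariant only up to the action of $w$, so it does not descend to a character of $T(K)\backslash T(\bbA_K)/T(\widehat{\cO}_K)$; the function $f$ is the Frobenius trace of the Braverman--Gaitsgory eigensheaf equipped with a Weil structure coming from the Weyl symmetry $\Eis(\chi)\cong\Eis(w\chi)$, not the value of a classical Eisenstein series over $K$. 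The clearest symptom that your identification cannot be right is that $f$ is \emph{cuspidal} (this is exactly the content of the $\hG$-irreducibility of the Coxeter parameter, as in Theorem \ref{thm_automorphy_of_Coxeter_homomorphisms}), whereas an honest Eisenstein series induced from a proper parabolic over $K$ never is. Consequently the standard unfolding and the Euler product $c=\prod_v W_v$ with Casselman--Shalika local factors $\prod_{\alpha\in\Phi^+}\bigl(1-q_v^{-1}\chi_v(\alpha^\vee(\varpi_v))\bigr)\big/\bigl(1-\chi_v(\alpha^\vee(\varpi_v))\bigr)$ are not available as stated: the character $\chi_v$ you evaluate does not exist over $K_v$, only over the constant field extension trivializing the Coxeter twist, and the Whittaker period over $N(K)\backslash N(\bbA_K)$ is not computed by the one over that extension.

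Even setting this aside, the second half of your argument defers precisely the hard point: you assert that the resulting Artin $L$-values ``can be shown to lie in $\cO^\times$ using $l\nmid q$ and $l\nmid t$,'' but these are products of Weil numbers and their $\lambda$-adic valuations are not controlled by those two conditions alone; this would need a genuine argument, as would the integrality of $f_\pi$ itself. The paper's route is entirely geometric: the Whittaker functional applied to the eigensheaf is computed via the cohomology of Zastava spaces, which simultaneously produces the $\cO$-integral structure on $f$ (from the perverse/IC normalization) and shows the period is exactly $1$ rather than merely a unit. The paper explicitly leans on an unpublished computation here and omits the details, so a complete classical substitute would be valuable --- but it would have to start from the twisted (``dihedral-type'') nature of the parameter, for instance by relating $f$ to Eisenstein data over $K\cdot\bbF_{q^{\dot h}}$ and descending, rather than from an unfolding over $K$ that does not apply.
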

\begin{proof}
This is the same as Theorem \ref{thm_automorphy_of_Coxeter_homomorphisms}, except we now require the existence of an $\cO$-valued function $f$ satisfying the identity (\ref{eqn_whittaker_generic_coxeter_automorphic_form}). The existence of a function $f$ satisfying this condition is proved in Appendix B of this paper; the same computation has already been used in the proof of Theorem \ref{thm_automorphy_of_Coxeter_homomorphisms} to show that the function $f$ considered there is in fact non-zero.
\end{proof}
We now sketch the proof of Theorem \ref{thm_globally_generic_potential_automorphy}.
\begin{proof}[Proof of Theorem \ref{thm_globally_generic_potential_automorphy}]
We can reduce, as in at the beginning of \S \ref{sec_potential_automorphy}, to the case where $G$ is a simple adjoint group. By Proposition \ref{prop_big_image_in_compatible_systems}, we can find, after passing to an equivalent compatible system, a number field $M \subset \overline{\bbQ}$ and a set $\cL'$ of rational primes of Dirichlet density 0, all satisfying the following conditions:
\begin{itemize}
\item For each prime-to-$q$ place $\lambda$ of $\overline{\bbQ}$, $\rho_\lambda$ takes values in $\hG(M_\lambda)$.
\item If $l \nmid q$ is a rational prime split in $M$, and $l \not\in \cL'$, and $\lambda$ lies above $l$, then $\rho_{\lambda}$ has image equal to $\hG(\bbZ_{l})$. 
\end{itemize}
Let $h$ denote the Coxeter number of $\hG$. The group $\Gal(M(\zeta_{h^\infty}) / M)$ embeds naturally as a finite index subgroup of $\prod_{r | h} \bbZ_r^\times$; we fix an integer $b \geq 1$ such that it contains the subgroup of elements congruent to $1 \text{ mod } h^b$. Let $t > \# W$ be a prime such that $h^{b+1}$ divides the multiplicative order of $q \text{ mod } t$. By the Chebotarev density theorem, we can find a prime $l \nmid q$ satisfying the following conditions:
\begin{itemize}
\item $l$ splits in $M(\zeta_t)$, $l > t$, and $l \not\in \cL'$.
\item If $r | h$ is a prime, then $l \equiv 1 \text{ mod }h^b$ but $l \not\equiv 1 \text{ mod }r h^b$.
\item The group $\hG(\bbF_{l})$ is perfect.
\end{itemize}
If $r$ is a prime and $g$ is an integer prime to $r$, then we write  $o_r(g)$ for the order of the image of $g$ in $\bbF_r^\times$, as in the proof of Theorem \ref{thm_main_theorem_simply_connected_case}. Let $\alpha = o_t(q) / h$, so that $o_t(q^\alpha) = h$ and $q^\alpha \text{ mod }t$ is a primitive $h^\text{th}$ root of unity, and $\alpha$ is divisible by $h^b$. Then the degree $[ \bbF_{q^\alpha}(\zeta_{l}) : \bbF_{q^\alpha} ]$ is prime to $h$.

We can now apply Theorem \ref{thm_Whittaker_models_for_Coxeter_parameters} and similar arguments as in the proof of Lemma \ref{lem_Coxeter_parameters_rational_over_Z_l} to obtain the following:
\begin{itemize}
\item A smooth, projective, geometrically connected curve $Y$ over $\bbF_{q^\alpha}$ with function field $F = \bbF_{q^\alpha}(Y)$.
\item An everywhere unramified Coxeter parameter $\phi : \Gamma_F \to \hG(\overline{\bbQ})$ which in fact takes values in $\hG(\bbZ[\zeta_t])$.
\item For any place $\lambda$ of $\overline{\bbQ}$ above $l$ and for any finite separable extension $F' / F$ linearly disjoint from the extension of $F$ cut out by $\phi^\text{ad}$, a coefficient field $E \subset \overline{\bbQ}_{\lambda}$, a function $f : G(F') \backslash G(\bbA_{F'}) / G(\widehat{\cO}_{F'}) \to \cO$ which is matched everywhere locally with $\phi_{\lambda}$ under the unramified local Langlands correspondence, and a generic character $\psi : N(F') \backslash N(\bbA_{F'}) \to  \overline{\bbZ}_l^\times$ such that 
\[ \int_{n \in N(F') \backslash N(\bbA_{F'})} f(n) \psi(n) \, dn = 1. \]
\end{itemize}
Let $K_0 = K \cdot \bbF_{q^\alpha}$, and fix a choice of place $\lambda$ of $\overline{\bbQ}$ above $l$. We now apply Proposition \ref{prop_potential_agreement_of_residual_representations} with the following data:
\begin{itemize}
\item $H = \hG(\bbF_{l})$.
\item $\varphi = \overline{\phi}_{\lambda}$.
\item $\psi = \overline{ \rho}_{\lambda}|_{\Gamma_{K_0}}$.
\item $L / F$ is the extension cut out by $\overline{\phi}_{\lambda}$.
\end{itemize}
In order to avoid confusion, we note that the roles of $F$ and $K$ here are reversed relative to their roles in the statement of Proposition \ref{prop_potential_agreement_of_residual_representations}.
We obtain a finite Galois extension $F' / F$ and an $\bbF_{q^\alpha}$-embedding $\beta : K_0 \hookrightarrow F'$ satisfying the following conditions:
\begin{itemize}
\item The extension $F' / F$ is linearly disjoint from $L / F$, and $F' \cap \overline{\bbF}_q = \bbF_{q^\alpha}$.
\item The homomorphisms $\overline{\phi}_{\lambda}|_{\Gamma_{F'}}$ and $\beta^\ast \overline{\rho}_{\lambda}$ are $\hG(\bbF_{l})$-conjugate.
\end{itemize}
Let $E \subset \overline{\bbQ}_{\lambda}$, $f : G(F') \backslash G(\bbA_{F'}) / G(\widehat{\cO}_{F'}) \to \cO$, and $\psi : N(F') \backslash N(\bbA_{F'}) \to  \overline{\bbZ}_l^\times$ be the objects associated to $\phi_{\lambda}|_{\Gamma_{F'}}$ above. The representation $\beta^\ast \rho_{\lambda}$ takes values in $\hG(\bbZ_{l})$ and has residual representation conjugate to the Galois Coxeter homomorphism $\overline{\phi}_{\lambda}|_{\Gamma_{F'}}$. 

Let $U = G(\widehat{\cO}_{F'})$. There is a unique maximal ideal $\ffrm \subset \cB(U, \cO)$ such that $f \in C_{\text{cusp}}(U, \cO)_\ffrm$; indeed, this follows from Proposition \ref{prop_basic_properties_of_abstract_Coxeter_homomorphisms}. We can then apply Theorem \ref{thm_R_equals_B} to deduce that the natural map $R_{\overline{\phi}_{\lambda}|_{\Gamma_{F'}}, \emptyset} \to \cB(U, \cO)_\ffrm$ is an isomorphism, and that both of these rings are finite flat complete intersection $\cO$-algebras. It then follows from Proposition \ref{prop_triviality_of_tangent_space_in_generic_fibre_of_galois_deformation_ring} that they are even reduced, hence $\cB(U, \cO)_\ffrm[1/l]$ is an \'etale $E$-algebra.

It follows from Proposition \ref{prop_basic_properties_of_abstract_Coxeter_homomorphisms} and these observations that there is a unique minimal prime ideal $\frp \subset \cB(U, \cO)_\ffrm$ such that $f \in C_{\text{cusp}}(U, \cO)[\frp]$. We can now apply Corollary \ref{cor_automorphy_lifting} to deduce the existence of a function $f' \in C_{\text{cusp}}(U, \overline{\bbQ}_{\lambda})$ such that
\[ \int_{n \in N(F') \backslash N(\bbA_{F'})} f'(n) \psi(n) \neq 0 \]
and such that $f'$ is matched everywhere locally with $\beta^\ast \rho_{\lambda}$ under the unramified local Langlands correspondence. Since the space $C_\text{cusp}(U, \overline{\bbQ}_\lambda)$ with its Hecke action is defined over $\bbQ$, we can even assume that $f'$ lies in $C_{\text{cusp}}(U, \overline{\bbQ})$. The proof of the theorem is now complete on taking $\pi$ to be the cuspidal automorphic representation generated by the function $f'$, and $K'$ to be $F'$, viewed as an extension of $K$ via $\beta|_{K} : K \hookrightarrow F'$.
\end{proof}

\section*{Appendix A. Cuspidality of Eisenstein series, by D. Gaitsgory}
\addcontentsline{toc}{section}{Appendix A. Cuspidality of Eisenstein series, by D. Gaitsgory}
\bigskip

\noindent{\bf 1.}
We assume being in the setting of \cite[Theorem 2.2.8]{Bra02}. We are given a $\check{T}$-local system 
$\ol{E}_{\check{T}}$ on $\ol{X}$, such that the induced $\check{G}$-local system
$$\ol{E}_{\check{G}}:=\on{Ind}^{\check{G}}_{\check{T}}(\ol{E}_{\check{T}})$$
is equipped with a Weil structure, and as such is \emph{irreducible}. 

\medskip

According to \cite[Proposition 2.2.9]{Bra02}, $\ol{E}_{\check{T}}$ is \emph{regular} (i.e., 
the $\bbG_m$-local system $\alpha(\ol{E}_{\check{T}})$ is \emph{non-constant} for all roots $\alpha$ of $\check{G}$), and 
there exists an element $w\in W$ and its lift $\wt{w}\in N(\check{T})$ such that 
\begin{equation} \label{e:w}
Fr^*(\ol{E}_{\check{T}})\simeq  \ol{E}^w_{\check{T}},
\end{equation} 
and the identification 
$$\on{Ind}^{\check{G}}_{\check{T}}(\ol{E}_{\check{T}})\simeq \ol{E}_{\check{G}}\simeq Fr^*(\ol{E}_{\check{G}})\simeq 
\on{Ind}^{\check{G}}_{\check{T}}(Fr^*(\ol{E}_{\check{T}}))\overset{\text{\eqref{e:w}}}\simeq \on{Ind}^{\check{G}}_{\check{T}}(\ol{E}^w_{\check{T}})$$
is induced by $\wt{w}$. 

\bigskip

\noindent{\bf 2.}  Consider the object
$$\on{Aut}_{\ol{E}_{\check{G}}}:=\on{Eis}^G_T(\on{Aut}_{\ol{E}_{\check{T}}}).$$

We define the structure of Weil sheaf on $\on{Aut}_{\ol{E}_{\check{G}}}$ by
\begin{equation} \label{e:Weil}
Fr^*(\on{Eis}^G_T(\on{Aut}_{\ol{E}_{\check{T}}}))\simeq \on{Eis}^G_T(\on{Aut}_{Fr^*(\ol{E}_{\check{T}})})\overset{\text{\eqref{e:w}}}\simeq
\on{Eis}^G_T(\on{Aut}_{\ol{E}^w_{\check{T}}})\simeq \on{Eis}^G_T(\on{Aut}_{\ol{E}_{\check{T}}}),
\end{equation} 
where the last isomorphism is the Functional Equation for Eisenstein series (\cite[Theorem 2.2.4]{Bra02}). 

\medskip

Our goal is to show that the spherical automorphic function corresponding to $\on{Aut}_{\ol{E}_{\check{G}}}$ with 
the above Weil structure is cuspidal. 

\bigskip

\noindent{\bf 3.}  First, we claim that we can assume that $\ol{E}_{\check{T}}$ is \emph{strongly regular}, i.e., $\ol{E}^{w'}_{\check{T}}$
is non-isomorphic to $\ol{E}_{\check{T}}$ for any $w'\in W$. 

\medskip

Indeed, embed $G$ into $G_1:=G\overset{Z(G)}\times T'$, where $T'$ is a torus. There exists a $\check{G}_1$-local system $\ol{E}_{\check{G}_1}$,
equipped with a Weil structure so that $\ol{E}_{\check{G}}$ (equipped with its own Weil structure) is induced from $E_{\check{G}_1}$ 
by means of the homomorphism $\check{G}_1\to \check{G}$. 

\medskip

Let
$\ol{E}_{\check{T}_1}$ be the corresponding $\check{T}_1$-local system over $\ol{X}$. We claim that $\ol{E}_{\check{T}_1}$ is automatically
strongly regular. This follows from the fact that the derived group of $\cG_1$ is simply connected. Now, if we know the
cuspidality assertion for $G_1$, the cuspidality assertion for $G$ follows. 

\bigskip

\noindent{\bf 4.}  For a parabolic $P$ with Levi quotient $M$ let
$$\on{CT}^G_M:\on{Sh}(\ol{\Bun}_G)\to \on{Sh}(\ol{\Bun}_M)$$
be the corresponding constant term functor, i.e., 
$$\on{CT}^G_M=(\frq_P)_!\circ (\frp_P)^*.$$

\medskip

Consider the object
\begin{equation} \label{e:CT of Eis}
\on{CT}^G_M(\on{Eis}^G_T(\on{Aut}_{\ol{E}_{\check{T}}}))\in  \on{Sh}(\ol{\Bun}_M),
\end{equation} 
with the Weil structure induced by the Weil structure on $\on{Eis}^G_T(\on{Aut}_{\ol{E}_{\check{T}}})$, given by 
\eqref{e:Weil}.

\medskip

We need to show that the function on $\Bun_M({\mathbb F}_q)$ corresponding to \eqref{e:CT of Eis} is zero.

\bigskip

\noindent{\bf 5.} Let 
$$'\!\Eis^M_T: \on{Sh}(\ol{\Bun}_T)\to \on{Sh}(\ol{\Bun}_M)$$
be the \emph{non-compactified} Eisenstein series functor (see \cite[Sect. 2.2.10]{Bra02}). 

\medskip

The standard calculation of the constant term applied to Eisenstein series (see \cite[Proposition 10.8]{BG2} for the case $P=B$) says that
the functor 
\begin{equation} \label{e:CT of Eis funct}
\on{CT}^G_M\circ \on{Eis}^G_T
\end{equation}  
can be canonically written as an extension of functors 
$$\cS^{w'}: \on{Sh}(\ol{\Bun}_T)\to \on{Sh}(\ol{\Bun}_M),$$
where each $\cS^{w'}$ is an extension of functors 
$$\on{Sh}(\ol{\Bun}_T) \overset{\text{some Hecke functor}}\longrightarrow 
\on{Sh}(\ol{\Bun}_T) \overset{w'}\to \on{Sh}(\ol{\Bun}_T)  \overset{'\!\Eis^M_T}\longrightarrow 
\on{Sh}(\ol{\Bun}_M),$$
and $w'$ runs over a set of representatives of $W_M\backslash W$. 

\bigskip

\noindent{\bf 6.} Now, the assumption that $\ol{E}_{\check{T}}$ is strongly regular implies that 
\begin{equation} \label{e:orthogonality}
R\Hom(\cS^{w'_1}(\on{Aut}_{\ol{E}_{\check{T}}}),\cS^{w'_2}(\on{Aut}_{\ol{E}^{w'}_{\check{T}}}))=0
\end{equation} 
unless $w'_1=w'_2\cdot w\, \on{mod}\, W_M$, see \cite[Proposition 10.6]{BG2}.

\medskip 

In particular, taking in \eqref{e:orthogonality} $w'_1=1$, we obtain that the object \eqref{e:CT of Eis} is canonically a direct sum:
\begin{equation} \label{e:decom}
\on{CT}^G_M(\on{Eis}^G_T(\on{Aut}_{\ol{E}_{\check{T}}}))\simeq \underset{w'}\oplus\, \cS^{w'}(\on{Aut}_{\ol{E}_{\check{T}}}).
\end{equation} 

\bigskip

\noindent{\bf 6.} In terms of the isomorphism \eqref{e:decom} the Weil structure on \eqref{e:CT of Eis} 
is an isomorphism
$$Fr^*(\underset{w'}\oplus\, \cS^{w'}(\on{Aut}_{\ol{E}_{\check{T}}}))\overset{\text{\eqref{e:w}}}\simeq 
\underset{w'}\oplus\, \cS^{w'}(\on{Aut}_{\ol{E}^w_{\check{T}}})\simeq \underset{w'}\oplus\, \cS^{w'}(\on{Aut}_{\ol{E}_{\check{T}}}).$$

\medskip

Applying \eqref{e:orthogonality} again, we obtain that this isomorphism is given by a collection of isomorphisms
$$Fr^*(\cS^{w'}(\on{Aut}_{\ol{E}_{\check{T}}}))\to \cS^{w'_1}(\on{Aut}_{\ol{E}_{\check{T}}}),$$
where $w'_1$ is such that $w'_1=w'\cdot w\, \on{mod}\, W_M$.

\medskip

Note, however, that for no $w'$ do we have $w'=w'_1$. Indeed, this would mean that $w$ belongs to a subgroup
conjugate to $W_M$, contradicting the irreducibility of $\ol{E}_{\check{G}}$ as a Weil local system.  

\bigskip

\noindent{\bf 7.}
Now, the required vanishing of the function follows from the next general claim: let $\cY$ be a stack, and
let $\cF$ be an object of $\on{Sh}(\cY)$, equipped with a Weil structure. Assume that $\cF$ is written
as a direct sum
\begin{equation} \label{e:decom abs}
\cF=\underset{i\in I}\oplus\, \cF_i,
\end{equation} 
where $I$ is some finite set. 

\medskip

Assume that in terms of \eqref{e:decom abs}, the Weil structure on $\cF$ corresponds to a system of isomorphisms
$$Fr^*(\cF_i)\to \cF_{\phi(i)},$$
where $\phi:I\to I$ is an automorphism of $I$.  

\medskip

Assume that $\phi(i)\neq i$ for all $i\in I$. Then the function on $\cY({\mathbb F}_q)$ corresponding to $\cF$ vanishes.

\section*{Appendix B. Non-vanishing of Whittaker coefficients, by D. Gaitsgory}
\addcontentsline{toc}{section}{Appendix B. Non-vanishing of Whittaker coefficients, by D. Gaitsgory}
\noindent{\bf Temporary notation:} For a stack/scheme $\cY$ over $\bbF_q$, we will denote by $\overline{\cY}$ its base change to $\overline{\bbF}_q$.
We will denote by $\Sh(\overline{\cY})$ the derived category of sheaves on $\overline{\cY}$, and by $\Sh(\cY)$ the category of
objects in $\Sh(\overline{\cY})$, equipped with a Weil structure, i.e., pairs $(\cF\in \Sh(\overline{\cY}),\alpha:\on{Fr}_\cY^*(\cF)\simeq \cF)$). 

\medskip

Note that for $\cY=\on{pt}:=\Spec(\bbF_q)$, the category $\Sh(\on{pt})$ is that of objects of $\Vect$ (=graded $\ol\bbQ_\ell$-vector spaces) 
equipped with an automorphism. 

\bigskip

\noindent{\bf 1.} 
We assume being in the setting of \cite{Bra02}. We are given a $\check{T}$-local system 
$\ol{E}_{\check{T}}$ on $\ol{X}$, and let 
$$\on{Aut}_{\ol{E}_{\check{T}}}\in \on{Sh}(\Bun_T)$$
be the 1-dimensional local system that corresponds to it by geometric Class Field Theory.
Consider the object
$$\on{Aut}_{\ol{E}_{\check{G}}}:=\on{Eis}^G_T(\on{Aut}_{\ol{E}_{\check{T}}})\in \Sh(\ol\Bun_G).$$

Assume that $\on{Aut}_{\ol{E}_{\check{G}}}$ is equipped with a Weil structure. Our goal is to prove:

\medskip

\noindent{\bf Theorem.}  {\it The function on $\Bun_G(\bbF_q)$ that corresponds to $\on{Aut}_{\ol{E}_{\check{G}}}$
is non-zero.}

\medskip

We will prove Theorem 1 by showing that its first Whittaker coefficient is non-zero. 

\bigskip

\noindent{\bf 2.} Pick a square root $\omega^{\frac{1}{2}}_X$ of the canonical line bundle $\omega_X$ on $X$. 
Let $\rho(\omega_X)$ denote the $T$-bundle on $X$ induced from $\omega^{\frac{1}{2}}_X$ using the cocharacter
$2\rho:\bbG_m\to T$. 

\medskip

Let $\Bun_{N^-}^{\rho(\omega_X)}$ be the stack 
$$\Bun_{B^-}\underset{\Bun_T}\times \{\rho(\omega_X)\};$$
this is a twisted version of $\Bun_{N^-}$. We have a canonical map
$$B^-\to \underset{i\in I}\Pi\, B^-_i,$$
where $I$ is the set of vertices of the Dynkin diagram, and $B^-_i$ the negative Borel subgroup of the adjoint quotient of
the corresponding subminimal Levi. The above homomorphism defines a map of stacks
$$\Bun_{N^-}^{\rho(\omega_X)}\to \underset{i\in I}\Pi\, \Bun_{N^-_i}^{\rho_i(\omega_X)}.$$

\medskip

Note that each $\Bun_{N^-_i}^{\rho_i(\omega_X)}$ is the stack classifying extensions 
$$0\to \omega_X\to \cE_i\to \cO_X\to 0,$$
and hence admits a canonical map to $\bbA^1$. Let $\on{A-Sch}$ denote the Artin-Schreier sheaf on $\bbA^1$. 
Let $\chi$ denote the *-pullback to $\Bun_{N^-}^{\rho(\omega_X)}$ of $\on{A-Sch}$ along the map
$$\Bun_{N^-}^{\rho(\omega_X)}\to \underset{i\in I}\Pi\, \Bun_{N^-_i}^{\rho_i(\omega_X)} \to \underset{i\in I}\Pi\, \bbA^1
\overset{\on{sum}}\longrightarrow \bbA^1.$$

\medskip

We define the functor
$$\Whit:\Sh(\Bun_G)\to \Sh(\on{pt})$$
to be the composition
$$\Sh(\Bun_G) \to \Sh(\Bun_{N^-}^{\rho(\omega_X)}) \overset{-\otimes \chi}\longrightarrow \Sh(\Bun_{N^-}^{\rho(\omega_X)}) \to \Sh(\on{pt}),$$
where the first arrow is *-pullback with respect to the natural projection $\Bun_{N^-}^{\rho(\omega_X)}\to \Bun_G$, the the last arrow
is the functor of cohomology with compact supports. We will use the same symbol $\Whit$ to denote the corresponding functor
$$\Sh(\ol\Bun_G)\to \Vect$$
(i.e., when we ignore the Weil structure). 

\medskip

It is clear that for $\cF\in \Sh(\Bun_G)$ and the corresponding function $f$ on $\Bun_G(\bbF_q)$, the first Whittaker coefficient of $f$
equals the trace of the Frobenius on $\Whit(\cF)$. 

\medskip

Hence, Theorem 1 follows from the next result:

\medskip

\noindent{\bf Theorem.}  {\it There is an isomorphism
$$\Whit(\on{Aut}_{\ol{E}_{\check{G}}})\simeq \ol\bbQ_\ell,$$
up to a cohomological shift.} 

\bigskip

\noindent{\bf Notation change:} From now on we will work over $\ol\bbF_q$, and we will omit putting the bar over the objects
involved. So, for example, from now on $X$ is a curve over $\ol\bbF_q$, and $E_{\check{T}}$ is a $\check{T}$-local system
on $X$, etc. 

\bigskip

\noindent{\bf 2'.} We will in fact prove the following generalization of Theorem 2:

\medskip

\noindent{\bf Theorem.}  {\it The functor $$\Whit\circ \on{Eis}^G_T:\Sh(\Bun_T)\to \Vect$$ identifies canonically with the functor of *-fiber 
at the point $\rho(\omega_X)\in \Bun_T$.} 

\bigskip

\noindent{\bf 3.} Consider the diagram
\begin{equation} \label{e:sq}
\CD
\Bun_{N^-}^{\rho(\omega_X)}\underset{\Bun_G}\times \BunBb   @>{'\frp^-}>>  \BunBb  @>{\ol\frq}>>   \Bun_T \\
@V{'\ol\frp}VV  @VV{\ol\frp}V  \\
\Bun_{N^-}^{\rho(\omega_X)}  @>{\frp^-}>>  \Bun_G.
\endCD
\end{equation} 

By base change, the functor $\Whit\circ \on{Eis}^G_T$ is calculated as follows
$$\cF\mapsto 
H_c\left(\Bun_{N^-}^{\rho(\omega_X)}\underset{\Bun_G}\times \BunBb,(\ol\frq\circ {}'\frp^-)^*(\cF)\otimes 
({}'\frp^-)^*(\IC_{\BunBb})\otimes ({}'\ol\frp)^*(\chi)\right)[-\dim(\Bun_T)].$$

\medskip

The fiber product $\Bun_{N^-}^{\rho(\omega_X)}\underset{\Bun_G}\times \BunBb$ admits a decomposition into locally closed substacks
$$(\Bun_{N^-}^{\rho(\omega_X)}\underset{\Bun_G}\times \BunBb)^w,\quad w\in W$$
indexed by the relative position of the $N^-$-reduction and the $B$-reduction of a given $G$-bundle over the generic point of $X$.

\medskip

We have the following basic assertion:

\medskip

\noindent{\bf Proposition.}  {\it For any $w\neq 1$ and $\cF\in \Sh(\Bun_T)$, the cohomology
$$H_c\left((\Bun_{N^-}^{\rho(\omega_X)}\underset{\Bun_G}\times \BunBb)^w,(\ol\frq\circ {}'\frp^-)^*(\cF)\otimes 
({}'\frp^-)^*(\IC_{\BunBb}\otimes ({}'\ol\frp)^*(\chi)\right)$$
vanishes.} 

\medskip

The proof is obtained by repeating the argument of \cite[Sect. 10.9]{BG2}.

\bigskip

\noindent{\bf 4.} Denote 
$$\cZ:=(\Bun_{N^-}^{\rho(\omega_X)}\underset{\Bun_G}\times \BunBb)^1;$$
this is the open stratum, where the two reductions are mutually transversal. The stack $\cZ$ is known to be a scheme and is called
the \emph{Zastava} space, see \cite[Sect. 2.2]{BFGM}.

\medskip

From Proposition 3, we obtain that $\Whit\circ \on{Eis}^G_T$ can calculated be calculated by
$$\cF\mapsto H_c\left(\cZ,(\ol\frq\circ {}'\frp^-)^*(\cF)\otimes 
({}'\frp^-)^*(\IC_{\BunBb})\otimes ({}'\ol\frp)^*(\chi)\right)[-\dim(\Bun_T)],$$
where by a slight abuse of notation we continue to denote by ${}'\frp^-$ and $'\ol\frp$ the maps from \eqref{e:sq}, restricted to $\cZ$. 

\medskip

The scheme $\cZ$ splits into connected components, indexed by the elements of $\Lambda^{\on{pos}}$, the semi-group
of coweights of $G$ equal to non-negative linear combinations of positive simple coroots; for $\lambda\in \Lambda^{\on{pos}}$,
let $\cZ^\lambda$ denote the corresponding connected component. 

\medskip

Let $X^\lambda$ denote the corresponding partially
symmetrized power of the curve. I.e., if $\lambda=\underset{i\in I}\Sigma\, n_i\cdot \alpha_i$, then
$$X^\lambda=\underset{i\in I}\Pi\, X^{(n_i)}.$$

\medskip

According to \cite[Sect. 2.2]{BFGM}, there is a canonical map
$$\pi^\lambda:\cZ^\lambda\to X^\lambda,$$
such that the composition
$$\ol\frq\circ {}'\frp^-:\cZ^\lambda\to \Bun_T$$
equals 
$$\cZ^\lambda\overset{\pi^\lambda}\longrightarrow X^\lambda \overset{\on{AJ}}\longrightarrow \Bun_T,$$
where $\on{AJ}$ is a version of the Abel-Jacobi map that sends 
$$D\in X^\lambda \,\mapsto\, \rho(\omega_X)(-D).$$

\medskip

In addition, according to \cite{BFGM}, we have:
$$({}'\frp^-)^*(\IC_{\BunBb})\simeq \IC_{\cZ}[\dim(\Bun_G)-\dim(\Bun_{N^-}^{\rho(\omega_X)})].$$

Applying the projection formula, we obtain that $\Whit\circ \on{Eis}^G_T(\cF)$ is the direct sum over $\lambda\in \Lambda^{\on{pos}}$
of the expressions
\begin{equation} \label{e:direct summand}
H_c(X^\lambda,\pi^\lambda_!\circ ({}'\ol\frp)^*(\chi)\otimes \on{AJ}^*(\cF))[\dim(\Bun_G)-\dim(\Bun_{N^-}^{\rho(\omega_X)})-\dim(\Bun_T)].
\end{equation} 

\bigskip

\noindent{\bf 5.} We now apply the following result of \cite[Theorem 3.4.1]{Ras}:

\medskip

\noindent{\bf Theorem.}  {\it For $\lambda\neq 0$, the object 
$$\pi^\lambda_!\circ ({}'\ol\frp)^*(\chi)\in \Sh(X^\lambda)$$
is zero.} 

\medskip

Thus, we obtain that among the summands in \eqref{e:direct summand}, only the one with $\lambda=0$ is non-zero. In this case $\cZ^0=\on{pt}$,
and the assertion of Theorem 2' follows.  

\newcommand{\etalchar}[1]{$^{#1}$}
\def\polhk#1{\setbox0=\hbox{#1}{\ooalign{\hidewidth
  \lower1.5ex\hbox{`}\hidewidth\crcr\unhbox0}}}

\end{document}